\newtheorem{thm}{Theorem}[section]
\newtheorem{prop}[thm]{Proposition}
\newtheorem{cor}[thm]{Corollary}
\newtheorem{lemma}[thm]{Lemma}
\theoremstyle{definition}
\newtheorem{example}[thm]{Example}
\newtheorem{defi}[thm]{Definition}
\newtheorem{rmk}[thm]{Remark}
\newcommand{\lR}{{\mathbb{R}}}
\newcommand{\lC}{{\mathbb{C}}}
\newcommand{\lQ}{{\mathbb{Q}}}
\newcommand{\lD}{{\mathbb{D}}}
\newcommand{\lN}{{\mathbb{N}}}
\newcommand{\lZ}{{\mathbb{Z}}}
\newcommand{\lS}{{\mathbb{S}}}
\newcommand{\cM}{{\mathcal{M}}}
\newcommand{\cR}{{\mathcal{R}}}
\newcommand{\cA}{{\mathcal{A}}}
\newcommand{\cC}{{\mathcal{C}}}
\newcommand{\cI}{{\mathcal{I}}}
\newcommand{\cL}{{\mathcal{L}}}
\newcommand{\ind}{\operatorname{ind}}
\newcommand{\vir}{\operatorname{vir}}
\newcommand{\cyc}{\operatorname{cyc}}
\newcommand{\str}{\operatorname{str}}
\newcommand{\orb}{\operatorname{orb}}
\newcommand{\chord}{\operatorname{chord}}
\begin{document}

\title{Extension of Chekanov--Eliashberg algebra using annuli}
\author{Milica Dukic}

\begin{abstract}
We define an SFT-type invariant for Legendrian knots in the standard contact $\lR^3$. The invariant is a deformation of the Chekanov--Eliashberg differential graded algebra. The differential consists of a part that counts index zero $J$-holomorphic disks with up to two positive punctures, annuli with one positive puncture, and a string topological part. We describe the invariant and demonstrate its invariance combinatorially from the Lagrangian knot projection, and compute some simple examples where the deformation is non-vanishing.
\end{abstract}

\maketitle

\tableofcontents
\allowdisplaybreaks

\section{Introduction}\label{Sec:intro}
The purpose of this paper is to define an invariant of Legendrian knots in $\lR^3$ with the standard contact structure $\xi=\ker(dz-ydx)$ using ideas from symplectic field theory (SFT). More precisely, we extend the Chekanov--Eliashberg algebra \cite{Chekanov02} and the rational SFT \cite{Ng_rLSFT} for Legendrian knots to include pseudoholomorphic annuli with one positive boundary puncture. Furthermore, we explain how to compute the invariant combinatorially from the Lagrangian knot projection and demonstrate its invariance combinatorially.

\subsection*{Background}
Let $\Lambda$ be a Legendrian knot in $\lR^3$, i.e., a smooth knot everywhere tangent to the contact hyperplane distribution $\xi=\ker\lambda,\lambda=dz-ydx$. A Legendrian knot isotopy between Legendrian knots $\Lambda_0$ and $\Lambda_1$ is a smooth path of Legendrian knots $\Lambda_s,s\in[0,1]$ between them. We are interested in the question of distinguishing Legendrian knots up to Legendrian knot isotopy. There are two classical (homotopy theoretic) invariants of Legendrian knots, the Thurston--Bennequin number $\operatorname{tb}(\Lambda)$ and the rotation number $\operatorname{rot}(\Lambda)$. The Thurston--Bennequin number $\operatorname{tb}(\Lambda)$ is the writhe of the Lagrangian projection $\pi_{xy}(\Lambda)\subset\lR^2$ of $\Lambda$. It can also be seen as the linking number between $\Lambda$ and its push-off with respect to a normal framing everywhere tangent to the contact structure. The rotation number $\operatorname{rot}(\Lambda)$ of $\Lambda$ is equal to the rotation number of a parameterization of the Lagrangian projection of the knot in the $xy$-plane. 

The symplectization of a contact manifold $(M,\xi)$ with a contact form $\lambda$ is the symplectic manifold $(\lR\times M,d(e^r\lambda))$, where $r$ is the cylindrical $\lR$-coordinate. To a Legendrian knot $\Lambda\subset\lR^4$, we associate a Lagrangian cylinder $\lR\times\Lambda\subset\lR\times\lR^3$ in the symplectization. Following ideas coming from SFT \cite{intro}, we use pseudoholomorphic curves in $\lR\times\lR^3$ with boundary on $\lR\times\Lambda$ (with boundary punctures asymptotic to Reeb chords and of finite Hofer energy) to study the contact geometry of $\Lambda$ in $\lR^3$. The Reeb vector field on $\lR^3$ with respect to the standard contact form $\lambda=dz-ydx$ is given by $\partial_z$. A Reeb chord on $\Lambda$ is a map $\gamma:[0,l]\to\lR^3$ such that $\dot\gamma=\partial_z$ and $\gamma(0),\gamma(l)\in\Lambda$. 

The simplest Legendrian knot invariant coming from the SFT framework is the Chekanov--Eliashberg differential graded algebra (dga) \cite{Chekanov02}. The Chekanov--Eliashberg dga is freely generated by the Reeb chords on $\Lambda$. The differential is obtained by counting pseudoholomorphic disks in $\lR^4$ with boundary on $\lR\times\Lambda$, one positive and arbitrarily many negative boundary punctures asymptotic to Reeb chords. 

The Chekanov--Eliashberg dga was later extended to Legendrian rational SFT \cite{Ng_rLSFT} that includes pseudoholomorphic disks with arbitrarily many positive punctures. In addition to the usual vertical breaking into SFT buildings, top dimensional boundary strata of the moduli space of disks with more than one positive puncture contain curves with boundary nodes. In particular, for the 1-dimensional moduli space, we can have trivial strip bubbling which prevents the map defined by counting index zero pseudoholomorphic disks (with arbitrarily many positive punctures) from defining a differential. To overcome this, the differential includes a string topological operation that takes loop product with trivial strips over Reeb chords on $\Lambda$. More recently, this invariant was used to define an $L^\infty$ algebra structure associated to a Legendrian knot \cite{ng_linf}.

An important feature of these Legendrian knot invariants is that they are described combinatorially from the knot diagram and are easy to compute. This also leads to a combinatorial proof of invariance.

Here we take the next step in Euler characteristic and introduce an SFT invariant for Legendrian knots that also includes pseudoholomorphic annuli. More precisely, we include pseudoholomorphic annuli in $\lR^4$ with boundary on $\lR\times\Lambda$ and one positive boundary puncture. The boundary of the 1-dimensional moduli space of annuli contains nodal annuli in addition to the SFT buildings. To deal with the nodal breaking, we introduce what we call a corrected loop coproduct for strings. This approach is in part inspired by \cite{Fuk06} and \cite{CieLat}, but allows us to avoid taking the quotient by constant loops. The resulting invariant admits a combinatorial description and an associated invariance proof. The approach to boundary bubbling taken here should extend to more general curves and settings. We explore this in future work.

\subsection*{Main results}
To state the main results, we first introduce the notion of a second-order dga. Let $\Lambda$ be a Legendrian knot. Self-intersections of $\pi_{xy}(\Lambda)$ are in 1-1 correspondence with Reeb chords on $\Lambda$. Assume the Lagrangian projection $\pi_{xy}(\Lambda)$ is in general position and denote the Reeb chords by $\gamma_1,\dots,\gamma_n$. Fix an orientation on $\Lambda$ and a base point $T\in\Lambda$ different from the Reeb chord endpoints. For every $i\in\{1,\dots,n\}$, we introduce a variable $q_i$. Denote by $\widetilde\cA(\Lambda)=\widetilde\cA$ the tensor algebra over $\lQ$ generated by $t^+,t^-,q_i,i\in\{1,\dots,n\}$ with relation $t^+t^-=1=t^-t^+$. The space $\widetilde\cA$ can be viewed as the vector space of words in $q_i,t^\pm$. We have a grading on $\widetilde\cA$ given by $|q_i|=\mu_{CZ}(\gamma_i)$ and $|t^\pm|=\mp 2\operatorname{rot}(\Lambda)$, where $\mu_{CZ}(\gamma)$ is the Conley--Zehnder index of $\gamma_i$ \cite{ENS02}. Denote by $\widetilde\cA^{\cyc}(\Lambda)=\widetilde\cA^{\cyc}$ the corresponding vector space of cyclic words, i.e. the quotient space $\widetilde\cA/\cI$ for $\cI$ the vector subspace generated by 
$$\{vw-(-1)^{|v||w|}wv\,|\,v,w\in\widetilde\cA\text{ words}\}.$$ 
We consider the graded vector space 
$$\cA(\Lambda)=\cA=\widetilde\cA\oplus\hbar\, (\widetilde\cA\otimes\widetilde\cA^{\cyc}),$$
where $\hbar$ is a formal variable such that $|\hbar|=-1$. Elements in $\cA$ are denoted by $u+\hbar\, w$ for $u\in\widetilde\cA,w\in\widetilde\cA\otimes\widetilde\cA^{\cyc}$. The algebra structure on $\cA$ is given by
\begin{align*}
&w\cdot \hbar(v_1\otimes v_2)=(-1)^{|w|(|v_2|+1)}\hbar(wv_1\otimes v_2),\\
&\hbar(v_1\otimes v_2)\cdot w=\hbar(v_1 w\otimes v_2),\\
&\hbar(v_1\otimes v_2)\cdot\hbar(w_1\otimes w_2)=0,
\end{align*}
and concatenation for words in $\widetilde\cA$. 

\vspace{3.1mm}
We introduce the notion of \textit{second-order differential graded algebra} structure on $\cA$ (similar to quantum Batalin--Vilkovisky algebra). Consider the algebra structure on $\widetilde\cA\otimes\widetilde\cA$ given by 
\begin{align*}
(v_1\otimes v_2)\cdot(w_1\otimes w_2)=(-1)^{|v_1||w_2|}(v_1 w_1\otimes v_2 w_2).
\end{align*}
An \textit{antibracket} on $\widetilde\cA$ is a degree $0$ bilinear map $\{\cdot,\cdot\}:\widetilde\cA\times \widetilde\cA\to\widetilde\cA\otimes \widetilde\cA$ such that
\begin{align*}
&\{v,w_1w_2\}=\{v,w_1\}\cdot(w_2\otimes 1)+(-1)^{|v||w_1|}(1\otimes w_1)\cdot\{v,w_2\},\\
&\{v_1v_2,w\}=(v_1\otimes 1)\cdot\{v_2,w\}+(-1)^{|v_2||w|}\{v_1,w\}\cdot(1\otimes v_2),
\end{align*}
for all words $v,v_1,v_2,w,w_1,w_2$ in $\widetilde\cA$. An antibracket induces a degree $-1$ linear map $\{\cdot,\cdot\}_\hbar:\cA\otimes\cA\to\cA$ given by 
$$\{v,w\}_\hbar=\hbar\,\pi_{\cyc}\{\pi_{\widetilde\cA}v,\pi_{\widetilde\cA}w\},$$ 
where $\pi_{\cyc}:\widetilde\cA\otimes\widetilde\cA\to\widetilde\cA\otimes\widetilde\cA^{\cyc}$ is induced by the cyclic quotient.

We say a degree $-1$ linear map $d:\cA\to\cA$ is a \textit{second-order derivation} with respect to an antibracket $\{\cdot,\cdot\}$ on $\widetilde\cA$ if
\begin{align*}
&d(vw)=d(v)w+(-1)^{|v|}v d(w)+\{v,w\}_\hbar,\\
& d(\hbar(v\otimes w))=(-1)^{|w|+1}\hbar(d_0v\otimes w)-\hbar(v\otimes d_0^{\cyc}w),
\end{align*}
for all generators $v,w\in\cA$, where $d_0\coloneq\pi_{\widetilde\cA}\circ d\circ \iota_{\widetilde\cA}$ and $d_0^{\cyc}:\widetilde\cA^{\cyc}\to\widetilde\cA^{\cyc}$ is the linear map induced by $d_0$ on the cyclic quotient. Furthermore, we say $d:\cA\to\cA$ is a \textit{strong} second-order derivation with respect to $\{\cdot,\cdot\}$ if $d$ is additionally a derivation with respect to $\{\cdot,\cdot\}$, i.e. if
$$(d_0\otimes 1+1\otimes d_0)\{v,w\}=\{d_0v,w\}+(-1)^{|v|}\{v,d_0w\}\in\widetilde\cA\otimes\widetilde\cA$$
for all words $v,w\in\widetilde\cA$. Here we define $f\otimes g:\widetilde\cA\otimes\widetilde\cA\to
\widetilde\cA'\otimes\widetilde\cA'$
\begin{align*}
(f\otimes g)(v_1\otimes v_2)=(-1)^{|f||v_2|}f(v_1)\otimes g(v_2)
\end{align*}
for $f,g:\widetilde \cA\to\widetilde\cA'$ graded linear maps.
\begin{defi}
A \textit{second-order differential graded algebra} structure $(\cA,d,\{\cdot,\cdot\})$ on $\cA$ consists of an antibracket $\{\cdot,\cdot\}$ on $\widetilde\cA$ and a strong second-order derivation $d:\cA\to\cA$ with respect to $\{\cdot,\cdot\}$ such that $d^2=0$. 
\end{defi} 
Our main result is a construction of a second-order dga structure $(\cA(\Lambda),d_\Lambda,\{\cdot,\cdot\}_{d_\Lambda})$ on $\cA(\Lambda)$ for any Legendrian knot $\Lambda$, invariant under Legendrian knot isotopy up to stable tame equivalence (see Section \ref{Sec:stabilizationI} for the definition of stable tame equivalence). The differential counts index zero pseudoholomorphic annuli with one positive puncture, pseudoholomorphic disks with up to two positive punctures, and has a string topological component that cancels out the contribution of nodal curves.

\begin{thm}\label{Thm:invariance_intro}
Let $\Lambda_0$ and $\Lambda_1$ be front resolutions of two Legendrian isotopic knots, then the second-order dg algebras $(\cA(\Lambda_0),d_{\Lambda_0},\{\cdot,\cdot\}_{d_{\Lambda_0}}),(\cA(\Lambda_1),d_{\Lambda_1},\{\cdot,\cdot\}_{d_{\Lambda_1}})$ associated to $\Lambda_0,\Lambda_1$ are stable tame equivalent. In particular, their homology groups are isomorphic
\begin{align*}
H_*(\cA(\Lambda_0),d_{\Lambda_0})\cong H_*(\cA(\Lambda_1),d_{\Lambda_1}).
\end{align*}
\end{thm}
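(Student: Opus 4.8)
The plan is to follow the classical Chekanov--Eliashberg strategy of proving invariance combinatorially by decomposing an arbitrary Legendrian isotopy into elementary moves and checking each one. First I would recall that any two front resolutions of Legendrian isotopic knots are related by a finite sequence of the three Legendrian Reidemeister moves (together with planar isotopies of the Lagrangian projection that do not change the diagram combinatorially). The notion of stable tame equivalence built from these moves is generated by (i) \emph{tame isomorphisms} of $\cA$, meaning graded algebra isomorphisms respecting both the differential and the antibracket that are compositions of elementary automorphisms, and (ii) \emph{stabilizations} that add a canceling pair of generators $q_{n+1}, q_{n+2}$ with appropriate grading and model differential. Thus the theorem reduces to two kinds of local checks, carried out in a neighborhood of the part of the diagram where the move occurs.

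The heart of the argument is a local model computation. For each Reidemeister move I would analyze how the moduli spaces of index zero disks (with up to two positive punctures) and index zero annuli (with one positive puncture), together with the string-topological corrections, change across the move. The key point is that outside a small disk where the move takes place the rigid curves are unchanged, so the effect on $d_\Lambda$ and on the induced antibracket $\{\cdot,\cdot\}_{d_\Lambda}$ is computed entirely from a finite, combinatorially-controlled list of local curve configurations. For Reidemeister II (which introduces the canceling pair) I would verify that the new differential matches the stabilization model, and that the antibracket on the new generators is the expected one; for Reidemeister III and the triple-point type move I would exhibit an explicit tame isomorphism intertwining the two second-order dga structures. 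Throughout, one must track both the $\widetilde\cA$ part and the $\hbar(\widetilde\cA\otimes\widetilde\cA^{\cyc})$ part, checking compatibility with the product on $\cA$, the second-order derivation identity $d(vw)=d(v)w+(-1)^{|v|}v\,d(w)+\{v,w\}_\hbar$, and the strong derivation condition relating $d_0$ and $\{\cdot,\cdot\}$.

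The main obstacle I expect is the bookkeeping of the annulus and string-topological contributions under the moves, since these are precisely the new features beyond the classical Chekanov--Eliashberg and rational SFT settings. In particular, one must show that the \emph{corrected loop coproduct} behaves functorially under the local isomorphisms, so that the term $\{v,w\}_{d_\Lambda}$ transforms consistently with $d_\Lambda$ and the whole structure remains a second-order dga (i.e.\ $d^2=0$ and the strong-derivation compatibility persist) after each move. The subtlety is that a tame isomorphism that works for the disk-level differential $d_0$ need not automatically commute with the antibracket; I would need to argue that the canonical choice of isomorphism dictated by the disk count, when extended to the $\hbar$-sector via $\pi_{\cyc}$ and the string operations, does intertwine the antibrackets. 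This amounts to matching nodal-annulus boundary degenerations on the two sides of the move, and is where the care taken in defining the corrected loop coproduct (to avoid quotienting by constant loops) pays off.

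Once the local checks establish that each elementary move induces either a tame isomorphism or a stabilization of second-order dgas, the stable tame equivalence of $(\cA(\Lambda_0),d_{\Lambda_0},\{\cdot,\cdot\}_{d_{\Lambda_0}})$ and $(\cA(\Lambda_1),d_{\Lambda_1},\{\cdot,\cdot\}_{d_{\Lambda_1}})$ follows by composing the moves along the isotopy. The isomorphism of homology groups $H_*(\cA(\Lambda_0),d_{\Lambda_0})\cong H_*(\cA(\Lambda_1),d_{\Lambda_1})$ is then an immediate consequence, since both tame isomorphisms and stabilizations are well known to induce isomorphisms on homology (the added generators in a stabilization form an acyclic summand).
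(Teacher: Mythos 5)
Your overall strategy (decompose the isotopy into elementary moves, check each move induces a tame isomorphism or a stabilization) is the right skeleton and matches the paper's, but your list of elementary moves is incomplete in a way that is fatal for this particular invariant. Your assertion that ``outside a small disk where the move takes place the rigid curves are unchanged'' is true for disks but \emph{false} for annuli: the count of index zero $J$-holomorphic annuli is governed by the obstruction section $\Omega$ on the moduli space $\cM^\pi_{2,0}$ of rigid annuli in the Lagrangian projection, and $\Omega$ can cross zero at an isolated parameter value of an isotopy containing \emph{no} Reidemeister move whatsoever. At such a ``type IV'' degenerate knot an index $-1$ $J$-holomorphic annulus appears and the annulus count jumps by the number of gluings of this annulus to rigid disks (Corollary \ref{Corollary:counting_annuli_from_O}). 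The paper therefore adds type IV isotopies as a fourth elementary move (alongside Reidemeister II, III, and base point changes, the latter also missing from your decomposition) and handles it in Section \ref{Section:Invar_IV_degeneration} with the explicit tame isomorphism $\phi(q_a)=q_a-\epsilon(u_0)\,\hbar\,\overline w(u_0,e_2)$, where $u_0$ is the degenerate annulus. Without this step your induction over the isotopy does not close up.

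There is a second genuine gap: you treat all Reidemeister II moves uniformly and never engage with why the theorem is stated for \emph{front resolutions}. The paper's Reidemeister II argument only works for \emph{admissible} moves (Definition \ref{Def:admissible_ReidII}: no index zero disks with two positive punctures both at the new long chord $a$); inadmissible disk and annulus pinchings (Figures \ref{Fig:disk_pinching_inadmissible} and \ref{Fig:annulus_pinching_inadmissible}) would destroy the correspondence of moduli spaces across the move. The front-resolution hypothesis is used precisely to arrange, via the cusp-crossing argument of Remark \ref{Remark:typeII_extra_assumption}, an isotopy avoiding inadmissible moves. Moreover, even in the admissible case the verification is not the direct ``match the stabilization model'' check you describe: the natural morphism $\phi_0$ built from $d_0(q_a)$ intertwines the second-order structures only modulo the subspace $C$ of words containing $q_a,q_b$ (Lemmas \ref{Lemma:prop2a} and \ref{Lemma:prop2b}), and one must then run the action-filtered bootstrapping of Lemma \ref{Lemma:bootstrap}, correcting generator by generator with the chain homotopy $h$ of Lemma \ref{Lemma:Chain_homotopy}, to upgrade this to a genuine tame second-order dga isomorphism. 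You correctly flag that intertwining $d_0$ does not automatically intertwine the antibracket, but the filtration argument that resolves this is the actual content of the Reidemeister II step and is absent from your proposal.
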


Our next results allow us to describe the invariant combinatorially from the Lagrangian projection $\pi_{xy}(\Lambda)$ of $\Lambda$. Let $J$ be the almost complex structure on $\lR^4$ given by 
\begin{align*}
\hspace{40mm}&J\partial_x=\partial_y+y\partial_r,&&J\partial_y=-\partial_x-y\partial_z,\hspace{40mm}\\
&J\partial_z=-\partial_r,&&J\partial_r=\partial_z.
\end{align*}
There is a well-known bijection between holomorphic disks in $\lC$ with boundary on $\pi_{xy}(\Lambda)\subset\lC$ and corners at the self-intersection of $\pi_{xy}(\Lambda)\subset\lC$ and $J$-holomorphic disks on $\lR\times\Lambda$. Denote by $\cM^\pi_2$ the moduli space of holomorphic annuli in $\lC$ with boundary on $\pi_{xy}(\Lambda)$ and one positive corner (arbitrarily many negative corners), and let $\cM_{2,k}^\pi,k\in\lN_0$ be its $k$-dimensional part. The compactification $\overline\cM^\pi_{2,1}$ of $\cM^\pi_{2,1}$ is a 1-dimensional manifold with boundary. Its boundary points can be of two types, which we call \textit{split} and \textit{non-split}. A non-split boundary point consists of an index zero holomorphic disk on $\pi_{xy}(\Lambda)$ with two distinguished corners, one positive and one negative, at some self-intersection of $\pi_{xy}(\Lambda)$. A split boundary point consists of a holomorphic annulus in $\cM_{2,0}^\pi$ (the \textit{annular part}) and an index zero holomorphic disk attached to it at some positive or negative corner. 

\begin{prop}\label{Prop:intro_obstruction}
There exists a smooth section $\Omega:\cM^\pi_2\to\lR$ such that an annulus $u_0\in\cM_{2}^\pi$ can be lifted to a $J$-holomorphic annulus in $\lR^4$ with boundary on $\lR\times\Lambda$ if and only if $\Omega(u_0)=0$. Furthermore, there is an extension of $\Omega|_{\cM^\pi_{2,1}}$ to a continuous map $\Omega:\overline\cM^\pi_{2,1}\to\lR\cup\{+\infty,-\infty\}$ such that
\begin{itemize}
\item for $u$ a non-split boundary point, $\Omega(u)=\pm\infty$ (the distinction between $+\infty$ and $-\infty$ is described in Section \ref{Sec:Elliptic_hyperbolic}),
\item for $u$ a split boundary point, $\Omega(u)=\Omega(u_0)$, where $u_0\in\cM^\pi_{2,0}$ is the annular part of $u$.
\end{itemize}
\end{prop}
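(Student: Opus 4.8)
The plan is to construct $\Omega$ as an obstruction to the existence of an $r$-coordinate on the lift. Recall that a $J$-holomorphic annulus in $\lR^4$ projects under $\pi_{xy}$ to a holomorphic annulus in $\lC$ with boundary on $\pi_{xy}(\Lambda)$; conversely, given $u_0\in\cM_2^\pi$, one attempts to recover the missing $(z,r)$-coordinates. For a disk, the $r$-coordinate is recovered by integrating $d(e^r\lambda)$ against the map, and the standard argument (as for the Chekanov--Eliashberg disk count) shows the lift always exists because a disk is simply connected, so the relevant closed $1$-form has no periods. For an annulus, the domain has nontrivial $H_1$, and the lift exists if and only if the period of this $1$-form around the core circle of the annulus vanishes. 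First I would write down explicitly the $1$-form whose integral over a generating loop of the annulus must vanish: using the almost complex structure $J$ above, the $r$- and $z$-coordinates of a lift satisfy a Cauchy--Riemann-type system whose solvability is governed by the integral of $\pi_{xy}^*(y\,dx)$ (equivalently the symplectic action) over the core. Define $\Omega(u_0)$ to be precisely this period; smoothness of $\Omega$ on $\cM_2^\pi$ follows from smooth dependence of the holomorphic annulus on moduli, and $\Omega(u_0)=0$ is then exactly the integrality/closedness condition guaranteeing a single-valued $r$ on the lift.

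Next I would analyze the boundary behavior of $\Omega$ on the compactified $1$-dimensional moduli space $\overline\cM^\pi_{2,1}$. The two boundary types are split and non-split degenerations. For a split boundary point, where an index zero disk breaks off from the annular part $u_0\in\cM^\pi_{2,0}$, the core circle of the annulus is carried entirely by the annular part, so the period integral defining $\Omega$ limits to the period integral of the annular component; this gives $\Omega(u)=\Omega(u_0)$ directly, and continuity at such points reduces to continuity of the period integral under Gromov-type convergence away from the node. For a non-split boundary point, the annulus degenerates to a disk with a boundary node identifying a positive and a negative corner at a self-intersection point (i.e.\ the annulus pinches along an arc rather than splitting off a component). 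Here the core circle of the annulus shrinks onto the node, and the conformal modulus of the annulus tends to infinity (the annulus becomes infinitely long). I would show that the period integral $\Omega$ then diverges to $\pm\infty$: as the modulus blows up, the core loop wraps around a Reeb chord whose action is nonzero, and the accumulated period grows without bound, with sign determined by the local model at the node (elliptic versus hyperbolic, as described in the referenced section).

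The technical heart of the argument—and the step I expect to be the main obstacle—is establishing that $\Omega$ extends \emph{continuously} to the non-split boundary with a definite limit $\pm\infty$, and pinning down the sign. This requires a careful local model of the degenerating annulus near the pinching node: one must show the conformal structure degenerates in the expected way and control the period integral uniformly in terms of the modulus. I would set up coordinates in which the neck of the annulus is a long cylinder $[-R,R]\times S^1$ with $R\to\infty$, write the holomorphic map in these coordinates, and estimate the contribution of the neck to the period. The key estimate is that the integrand is asymptotically constant and nonzero along the neck (governed by the Reeb chord action at the self-intersection), so the integral grows linearly in $R$ and hence diverges; the sign is read off from the orientation of the node and the local structure of the two disk sheets meeting there, which is precisely the elliptic/hyperbolic dichotomy. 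Compactness of $\overline\cM^\pi_{2,1}$ as a $1$-manifold with boundary, which I would take from the Gromov compactness theory for holomorphic annuli in the cotangent-bundle-type geometry of $\lC$, ensures these two degeneration types exhaust the boundary and that the extension is well-defined on all of $\overline\cM^\pi_{2,1}$.
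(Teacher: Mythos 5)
Your outline matches the paper's strategy (define $\Omega$ as the period of a closed $1$-form over the core of the annulus, note disks always lift since $H^1_{dR}(\lD)=0$, show the period converges at split points and diverges linearly in the neck length at non-split points), but three concrete inaccuracies would derail the execution. First, the obstruction form is misidentified. The missing $r$-coordinate $\theta$ must solve $d\theta=\lambda\circ d\overline u_0\circ j$, where $\overline u_0=(u_0,w)$ and $w$ is the unique \emph{harmonic extension} to the interior of the $z$-coordinate of the boundary lift to $\Lambda$; in local coordinates this form is $(\partial_t w-v\partial_t u)\,ds+(-\partial_s w+v\partial_s u)\,dt$, and $\Omega(u_0)$ is its period over the core. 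The period of $\pi_{xy}^*(y\,dx)$ is not this obstruction: it omits the conjugate-differential contribution of $w$, and it is exactly the $w$-term that drives the divergence at non-split points (the paper's estimate $\int_{l_2^n}|\lambda(\partial_t w_n)|\,ds>\tfrac{R_n}{2}\,l(\gamma)$ in Lemma \ref{Lemma:OSection_hyperbolic}). Consequently the analytic heart is control of harmonic lifts on degenerating domains — the Poisson-kernel estimates for harmonic functions on strips (Lemma \ref{Lemma:harmonic_strip}, Corollary \ref{Cor:thin_necks}), the maximum principle applied to $w_n-w_0$ on truncated domains, and interior elliptic bootstrapping (Corollary \ref{Corollary:harmonic_bootstrapping}) to upgrade $C^0$- to $C^1$-convergence near the core loop. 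Your plan contains none of this machinery, and it is needed even for the split case: since the integrand involves first derivatives of $w_n$, "continuity of the period under Gromov convergence" does not follow without the $C^1$ control.

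Second, the local model at a non-split degeneration is a \emph{boundary} node: the neck is a long strip $(-R_n,R_n)\times[0,1]$ with both boundary arcs mapped near the self-intersection of $\pi_{xy}(\Lambda)$, carrying Lagrangian boundary conditions — not a long cylinder $[-R,R]\times S^1$. An interior-cylinder model sets up the wrong asymptotic problem (closed orbits rather than Reeb chords) and is incompatible with the Dirichlet-problem analysis above, which lives on strips. Third, the sign rule is misattributed: $\Omega(u)=+\infty$ versus $-\infty$ at a non-split point is determined by whether the \emph{outer} boundary component passes through the undercrossing or the overcrossing arc at the degenerate self-intersection. In the paper's terminology, "elliptic versus hyperbolic" distinguishes split from non-split boundary points (interior versus boundary nodes), not the sign of the limit, so your proposed sign mechanism would not produce the dichotomy the proposition asserts.
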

The map $\Omega$ from Proposition \ref{Prop:intro_obstruction} is constructed in Section \ref{Sec:obstruction_section} and is referred to as the \textit{obstruction section}. The count of zeros of $\Omega:\overline\cM^\pi_{2,1}\to\lR$, and therefore the count of index zero $J$-holomorphic annuli on $\lR\times\Lambda$, is uniquely determined by the values of $\Omega$ at the boundary whenever $\Omega\pitchfork 0$ and $\Omega|_{\partial\overline\cM_{2,1}^\pi}\subset[-\infty,0)\cup(0,+\infty]$ (which holds for $\Lambda$ generic). For a knot $\Lambda$ with split boundary points in $\partial\overline\cM^\pi_{2,1}$, calculating the values of $\Omega|_{\partial\overline\cM^\pi_{2,1}}$ is not easy. This can be avoided by introducing an object that we call a combinatorial obstruction section, whose zeros can, by our next result, be used in place of the $J$-holomorphic annuli to compute the invariant.

\begin{defi}
A smooth map $\Omega^{\vir}:\cM^\pi_{2,0}\sqcup\overline\cM^\pi_{2,1}\to\lR\cup\{+\infty,-\infty\}$ is called a \textit{combinatorial obstruction section} if it satisfies the following properties
\begin{itemize}
\item for every non-split boundary point $u\in\partial\overline\cM_{2,1}^\pi$ we have $\Omega^{\vir}(u)=\Omega(u)$, 
\item for every split boundary point $u\in\partial\overline\cM_{2,1}^\pi$ we have $\Omega^{\vir}(u)=\Omega^{\vir}(u_0)$, where $u_0\in\cM_{2,0}^\pi$ is the annular part of $u$,
\item $\Omega(\cM_{2,0}^\pi)\subset\lR\backslash\{0\}$,
\item $\Omega\pitchfork 0$.
\end{itemize}
\end{defi}
In the definition of the second-order dga $(\cA(\Lambda),d_{\Lambda},\{\cdot,\cdot\}_{d_\Lambda})$, instead of using the count of index zero $J$-holomorphic annuli on $\lR\times\Lambda$, we can use the count of zeros of $\Omega^{\vir}:\overline\cM^\pi_{2,1}\to\lR\cup\{+\infty,-\infty\}$ for any combinatorial obstruction section $\Omega^{\vir}$. We denote this differential by $d_{\Lambda,\Omega^{\vir}}$.

\begin{prop}\label{Prop:intro_combinatorial_count}
For any combinatorial obstruction section $\Omega^{\vir}:\cM^\pi_{2,0}\sqcup\overline\cM^\pi_{2,1}\to\lR\cup\{+\infty,-\infty\}$ on $\Lambda$, the second-order dg algebras $(\cA(\Lambda),d_{\Lambda,\Omega^{\vir}},\{\cdot,\cdot\}_{d_\Lambda})$ and $(\cA(\Lambda),d_{\Lambda},\{\cdot,\cdot\}_{d_\Lambda})$ are isomorphic.
\end{prop}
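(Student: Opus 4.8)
The plan is to construct an explicit tame automorphism $\Phi$ of $\cA$ with $\Phi\circ d_{\Lambda,\Omega^{\vir}}=d_\Lambda\circ\Phi$; equality of the two counts cannot be expected, since at a split boundary point $\Omega$ takes the value $\Omega(u_0)$ while $\Omega^{\vir}$ takes the value $\Omega^{\vir}(u_0)$, and these are only required to be nonzero, not of equal sign. First I would localize the discrepancy. The map $d_0=\pi_{\widetilde\cA}\circ d\circ\iota_{\widetilde\cA}$ records only the $\widetilde\cA$-valued disk part of $d(q_i)$, whereas the annulus count contributes to $d(q_i)$ only in the $\hbar$-summand; moreover the antibracket $\{\cdot,\cdot\}_{d_\Lambda}$ is read off from the two-positive-puncture disks and is therefore identical for both differentials. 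Hence $d_\Lambda$ and $d_{\Lambda,\Omega^{\vir}}$ share the same $d_0$, the same induced action on $\hbar(\widetilde\cA\otimes\widetilde\cA^{\cyc})$ (governed by the second-order derivation formula), and the same antibracket, so their difference $D\coloneq d_\Lambda-d_{\Lambda,\Omega^{\vir}}$ is a degree $-1$ map that vanishes on $t^\pm$ and on the $\hbar$-summand and satisfies $D(q_i)=\hbar\,\delta_i$ with $\delta_i\in\widetilde\cA\otimes\widetilde\cA^{\cyc}$ the difference of the two annulus counts with positive puncture at $\gamma_i$.

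Next I would posit $\Phi=\mathrm{id}+\Psi$, where $\Psi$ is the degree $0$ derivation determined by $\Psi(q_i)=\hbar\,\eta_i$ and $\Psi(t^\pm)=0$ for elements $\eta_i\in\widetilde\cA\otimes\widetilde\cA^{\cyc}$ to be found. Because $\Psi$ is valued in the square-zero ideal $\hbar(\widetilde\cA\otimes\widetilde\cA^{\cyc})$ one has $\Psi^2=0$, so $\Phi$ is an algebra automorphism with inverse $\mathrm{id}-\Psi$ and is manifestly tame. Since $\Psi$ annihilates the $\hbar$-summand, $\Psi\circ d_{\Lambda,\Omega^{\vir}}=\Psi\circ d_\Lambda$ on $\widetilde\cA$, and the intertwining relation collapses to the single condition $D=[\Psi,d_\Lambda]$ on generators (the $\hbar$-free parts match automatically, and both sides vanish on the $\hbar$-summand). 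Thus it remains to choose the $\eta_i$ so that $D$ is the graded commutator of $\Psi$ with $d_\Lambda$, that is, so that $D$ is a coboundary for the adjoint action of $d_\Lambda$ on ideal-valued derivations.

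To produce such a $\Psi$ I would run a continuation argument. Choose a generic path $\Omega_s$, $s\in[0,1]$, of sections over $\overline\cM^\pi_{2,1}$ from $\Omega^{\vir}$ to $\Omega$ that keeps the value $\pm\infty$ at every non-split boundary point and keeps each split-boundary value equal to the value on the annular part, while interpolating the latter on $\cM^\pi_{2,0}$; this is possible because $\Omega$ and $\Omega^{\vir}$ already agree at non-split points. The signed count of zeros of $\Omega_s$ over the $1$-manifold-with-boundary $\overline\cM^\pi_{2,1}$ is locally constant and changes only when a zero passes through a split boundary point, which occurs precisely at the finitely many $s_0$ where the value on some rigid annulus $u_0\in\cM^\pi_{2,0}$ crosses $0$ — that is, at the annuli whose two sections disagree in sign. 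I would then read off the resulting jump in the annulus part of the differential from the gluing description of the split boundary points: in the commutator $[\Psi_{s_0},d_\Lambda](q_i)$, the term $\Psi_{s_0}(d_\Lambda q_i)$ corresponds to inserting $u_0$ at a negative corner of a disk counted by $d_0q_i$, while $d_\Lambda(\Psi_{s_0}q_i)$ corresponds (via the $\hbar$-summand formula) to attaching an index zero disk at a corner of $u_0$; together these exhaust exactly the ends of $\overline\cM^\pi_{2,1}$ created by the sign change. Summing the elementary derivations $\Psi_{s_0}$ over all walls gives $\Psi$ with $[\Psi,d_\Lambda]=D$, and the corresponding composition of elementary tame automorphisms is the desired $\Phi$.

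Finally I would check that $\Phi$ is an isomorphism of second-order dgas, i.e.\ that it also preserves $\{\cdot,\cdot\}_{d_\Lambda}$; since the bracket is $\widetilde\cA\otimes\widetilde\cA$-valued, $\Psi$ lands in the $\hbar$-ideal, and the bracket agrees on both sides, this reduces to a direct cancellation of $\hbar$-corrections. The intertwining then also transports $d_\Lambda^2=0$ to $d_{\Lambda,\Omega^{\vir}}^2=0$. The main obstacle is the wall-crossing analysis of the third paragraph: one must show that the jump in the count of zeros of $\Omega_s$ at each split boundary point is matched on the nose — with the correct signs and the correct insertion positions in the cyclic word — by $[\Psi_{s_0},d_\Lambda]$. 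This is exactly where the gluing of an index zero disk onto a rigid annulus, and its compatibility with the formula for $d_\Lambda$ on the $\hbar$-summand, must be carried out with care; the remaining steps are formal.
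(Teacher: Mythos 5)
Your proposal is correct and takes essentially the same route as the paper: the paper deduces this proposition from Section \ref{Section:Invar_IV_degeneration}, where each sign change of the section value on a rigid annulus $u_0\in\cM^\pi_{2,0}$ (a type IV wall) is compensated by the elementary tame automorphism $q_a\mapsto q_a-\epsilon(u_0)\,\hbar\,\overline w(u_0,e_2)$ valued in the square-zero $\hbar$-ideal, and the key identity $d_{1,A}(q_i)-d_{0,A}(q_i)=\widetilde\phi\circ d_{0,\lD}(q_i)-d_{1,\lD}\circ\widetilde\phi(q_i)$ of Proposition \ref{typeIIIcompactness}, resting on Corollary \ref{Corollary:counting_annuli_from_O} and the boundary-continuity Lemmas \ref{Lemma:OSection_elliptic} and \ref{Lemma:OSection_hyperbolic}, is exactly your wall-crossing relation $D=[\Psi_{s_0},d_\Lambda]$. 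Your continuation through a generic path of sections $\Omega_s$ merely repackages the paper's isotopy through a type IV degenerate knot; the decomposition into walls, the form of the correction, and the split-boundary gluing analysis coincide.
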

This makes it possible to compute the second-order dga structure on $\cA(\Lambda)$ combinatorially from the Lagrangian knot projection.

\vspace{2.1mm}
The Chekanov--Eliashberg dg algebra has a natural interpretation in terms of Legendrian surgery, it is isomorphic to the wrapped Floer homology of the co-core disk after Lagrangian handle attachment \cite{bee}. The extended invariant studied here can also be understood from this perspective. We expect that the second-order dg algebra is related to the coproduct on the linearized contact homology/wrapped Floer homology after Legendrian surgery along $\Lambda$. More precisely, let $Y$ denote the contact manifold after the surgery and $\Gamma\subset Y$ the Legendrian boundary of the handle co-core. Consider the vector space $C^{\chord}_{Y,\Gamma}=\lQ\langle\cR(\Gamma)\rangle$ generated by the set of Reeb chords $\cR(\Gamma)$ on $\Gamma$, the vector space $C^{\orb}_{Y,\Gamma}=\lQ\langle\cR(Y)\rangle$ generated by the set of Reeb orbits $\cR(Y)$ on $Y$ and the complex $C=C_{Y,\Gamma}^{\chord}\oplus C_{Y,\Gamma}^{\orb}$. The differential on this complex counts pseudoholomorphic cylinders, strips, and disks with one negative interior and one positive boundary puncture. The complex has a coproduct that consists of a part $C_{Y,\Gamma}^{\orb}\to C_{Y,\Gamma}^{\orb}\otimes C_{Y,\Gamma}^{\orb}$ that counts pairs of pants, a part $C_{Y,\Gamma}^{\chord}\to C_{Y,\Gamma}^{\chord}\otimes  C_{Y,\Gamma}^{\chord}$ that counts three-punctured disks and a part $C_{Y,\Gamma}^{\chord}\to C_{Y,\Gamma}^{\chord}\otimes C_{Y,\Gamma}^{\orb}$ that counts strips with a negative interior puncture. The Legendrian surgery description of the component $C_{Y,\Gamma}^{\chord}\to C_{Y,\Gamma}^{\chord}\otimes C_{Y,\Gamma}^{\chord}$ corresponds to a point deformation of the dg algebra \cite{bee}. The part $C_{Y,\Gamma}^{\chord}\to C_{Y,\Gamma}^{\chord}\otimes C_{Y,\Gamma}^{\orb}$ corresponds to the annulus part of the invariant defined here, while the part $C_{Y,\Gamma}^{\orb}\to C_{Y,\Gamma}^{\orb}\otimes C_{Y,\Gamma}^{\orb}$ corresponds to the cyclic version of the invariant. The model before surgery is given as follows. Let $(\cA,d,\{\cdot,\cdot\}_d)$ be the second-order dga associated to a Legendrian knot $\Lambda$. Consider the quadratic complex $C=C^{\chord}\oplus(C^{\chord}\otimes C^{\orb})$, where $C^{\chord}=\widetilde\cA,C^{\orb}=\widetilde\cA^{\cyc}$. We consider the map $d_\lD:C\to C$ given by the part of the differential $d$ that comes from the usual Chekanov--Eliashberg differential
$$\pi_{\widetilde\cA}\circ d\circ \pi_{\widetilde\cA}+\pi_{\hbar\,(\widetilde\cA\otimes\widetilde\cA^{\cyc})}\circ d\circ \pi_{\hbar\,(\widetilde\cA\otimes\widetilde\cA^{\cyc})},$$ 
and the map $d_\hbar:C\to C$ given by the diagonal term
$$\pi_{\hbar\,(\widetilde\cA\otimes\widetilde\cA^{\cyc})}\circ d\circ \pi_{\widetilde\cA}.$$ 
More precisely, for $d_{CE}:C^{\chord}\to C^{\chord}$ the Chekanov--Eliashberg differential, we have
\begin{align*}
&d_\lD(x)=d_{CE}(x),\\
&d_{\lD}(x\otimes y)=(-1)^{|y|+1} d_{CE}x\otimes y-x\otimes d_{CE}^{\cyc} y,\\
&d_\hbar(x)=d(x)-d_\lD(x),
\end{align*}
for $x\in C^{\chord},y\in C^{\orb}$ words. Then 
$$d_\lD\circ d_\lD=0$$
and
\begin{align}
\label{Eq:diff_spectr}
d_\hbar\circ d_\lD+d_\lD\circ d_\hbar=d\circ d=0.
\end{align}
We denote by
$$H^{CE}_*(\Lambda)\coloneq H^{CE}_*\coloneq H_*(C,d_\lD)$$ 
the first page of the corresponding spectral sequence. We define
\begin{align*}
& D:H_*^{CE}\to H_*^{CE},\\
& D[x]=[d_\hbar x],\\
& D[z]=0,
\end{align*}
for $x\in C^{\chord},z\in C^{\chord}\otimes C^{\orb}$ such that $d_\lD(x)=0,d_{\lD}(z)=0$. From (\ref{Eq:diff_spectr}) we conclude that $D$ is well defined. Moreover, $D\circ D=0$ by definition. The second (and the final) page of the spectral sequence
\begin{equation}
\label{Eq:P2}
\begin{aligned}
H^\hbar_*(\Lambda)\coloneq H^\hbar_*\coloneq H_*(H^{CE}_*,D)
\end{aligned}
\end{equation}
is isomorphic to $H_*(\cA,d)$ and is an invariant of $\Lambda$ up to Legendrian knot isotopy.

\subsection*{Organization of the paper}
In Section \ref{Sec:moduli_spaces_all} we introduce the moduli spaces of curves used to define the invariant. Theorem \ref{Prop:intro_obstruction} is proven in Section \ref{Sec:counting_curves}. We define coherent orientations on our moduli spaces and discuss the signs in Section \ref{Section:Orientations}. In Section \ref{Sec:Invariant_definition}, we introduce the space of strings and string pairs on $\Lambda$ and give the first definition of the chain complex without introducing any algebraic structure. Corrected loop coproduct is introduced in Section \ref{Sec:loop_coproduct}. In Section \ref{Section:Algebraic_definition}, we introduce the algebraic structure and give the second definition of the invariant. More precisely, we define a second-order dga structure on $\cA(\Lambda)$. This definition is more suitable for computations. Theorem \ref{Thm:invariance_intro} is proven in Section \ref{Sec:Invariance}. Proposition \ref{Prop:intro_combinatorial_count} follows from Section \ref{Section:Invar_IV_degeneration}, where we prove invariance under an isotopy which passes through a Legendrian knot with an index $-1$ $J$-holomorphic annulus obtained by lifting an annulus in $\cM_{2,0}^\pi$. In Section \ref{Sec:Augmentations} we introduce the notion of a second-order augmentation and $\hbar$-linearization, and discuss how a second-order augmentation is obtained from a Lagrangian filling. In Section \ref{Sec:Examples} we compute some simple examples.

\subsection*{Acknowledgments}
The author is grateful to Tobias Ekholm for many valuable comments and discussions.

\section{Pseudoholomorphic disks and annuli in $\lR^4$}\label{Sec:moduli_spaces_all}

We introduce the moduli spaces of disks and annuli that are used to define the Legendrian knot invariant in Section \ref{Sec:Moduli_spaces} and give a combinatorial way to count them in Section \ref{Sec:counting_curves}. We discuss the generic asymptotic behavior of pseudoholomorphic curves in Section \ref{Sec:generic_asymptotic_behavior_admissible_disks}, which comes into play when we define the loop product and the corrected loop coproduct later on. Additionally, we define coherent orientations on the moduli spaces of disks and annuli in Section \ref{Section:Orientations}.

Let $\Lambda\subset\lR^3$ be a generic Legendrian knot, $\{\gamma_1,\dots,\gamma_n\}$  the set of Reeb chords on $\Lambda$ and $i^-,i^+\in\Lambda$ the starting point and the endpoint of $\gamma_i$. Fix additionally an orientation on $\Lambda$ and a base point $T\in\Lambda$ different from all Reeb chord endpoints. For $\beta:[a,b]\to\lR^2$ an immersed path, let $\operatorname{rot}(\beta)\in\lR$ be the rotation number of the unit tangent vector $\dot\beta/\|\dot\beta\|$ along $\beta$. Denote by $\beta_i:[0,1]\to\lR^2$ the (unique up to reparameterization) immersed path on $\pi_{xy}(\Lambda)$ starting at the overcrossing arc at $i$ and ending at the undercrossing arc that does not pass through $\pi_{xy}(T)$. Then the Conley--Zehnder index of $\gamma_i$ is defined as
\begin{align*}
\mu_{CZ}(\gamma_i)=\lfloor 2\operatorname{rot}(\beta_i)\rfloor,
\end{align*}
where $\lfloor\cdot\rfloor$ is the floor function. Denote by $L=\lR\times\Lambda$ the Lagrangian cylinder corresponding to $\Lambda$ in the symplectization $\lR\times\lR^3$. 

\subsection{Moduli spaces of pseudoholomorphic curves} \label{Sec:Moduli_spaces}
In this section, we introduce moduli spaces of disks and annuli that are used later to define the Legendrian knot invariant. Let $J$ be the almost complex structure on $\lR\times\lR^3$ given by 
\begin{equation}\label{Eq:intro_J_formula}
\begin{aligned}
&J\partial_x=\partial_y+y\partial_r,\,\,\,&&J\partial_y=-\partial_x-y\partial_z,\\
&J\partial_z=-\partial_r,\,\,\,&&J\partial_r=\partial_z,
\end{aligned}
\end{equation}
for $(r,x,y,z)\in\lR\times\lR^3$. This almost complex structure is compatible with the symplectic structure in the SFT sense (as defined in \cite{intro}). Denote the punctured Riemann surface by $\mathring\Sigma=\Sigma\backslash\{t_1,\dots,t_k\}$, for $(\Sigma,j)$ a Riemann surface with boundary, complex structure $j$ and $k$ distinct points $t_1,\dots,t_k\in\partial\Sigma$. A smooth map $u:\mathring\Sigma\to\lR^4$ is $J$-holomorphic (or pseudoholomorphic) if $J\circ du=du\circ j$. We are interested in pseudoholomorphic maps $u$ whose boundary is mapped to $L$ and that have finite Hofer energy, i.e., such that $u$ is positively or negatively asymptotic to some Reeb chord on $\Lambda$ at each puncture $t_i$. More precisely, we say $u$ is positively asymptotic to a Reeb chord $\gamma:[0,l]\to\lR^3$ at $t_i$ if for a holomorphic parameterization $\varphi:[0,+\infty)\times[0,1]\to\mathring\Sigma$ of a neighborhood of $t_i$ in $\mathring\Sigma$ we have 
\begin{align*}
&\lim_{s\to+\infty}\pi_{xyz}\circ u\circ\varphi(s,t)=\gamma(lt),\\
&\lim_{s\to+\infty}\pi_{r}\circ u\circ\varphi(s,t)=+\infty,
\end{align*}
where $\pi_{xyz}:\lR\times\lR^3\to\lR^3,\pi_r:\lR\times\lR^3\to\lR$ are projections (see also for example \cite{ENS02}). Similarly, we say $u$ is negatively asymptotic to $\gamma$ at $t_i$ if 
\begin{align*}
&\lim_{s\to+\infty}\pi_{xyz}\circ u\circ\varphi(s,t)=\gamma(l(1-t)),\\
&\lim_{s\to+\infty}\pi_{r}\circ u\circ\varphi(s,t)=-\infty.
\end{align*}
 
Let $\bm{\gamma}=(\gamma_{i_1},\dots,\gamma_{i_k})$ be a tuple of Reeb chords on $\Lambda$ together with signatures $\epsilon_i\in\{-1,1\},i=1,\dots,k$ and $a=(a_1,\dots,a_k)\in\lZ^k$. We denote by $\cM_1(\bm{\gamma},a)$ the moduli space of equivalence classes of pseudoholomorphic disks $u:(\lD\backslash\{t_1,\dots,t_{k}\},j)\to(\lR^4,J)$ with boundary puncture $t_j$ asymptotic to the Reeb chord $\gamma_{i_j}$ (positively if $\epsilon_j=1$ and negatively if $\epsilon_j=-1$) and boundary mapped to $\lR\times\Lambda$, such that $\pi_{xyz}\circ u|_{(t_j,t_{j+1})}$ passes through the base point $T$ $a_j$ times (counted with signs) if $\pi_{xyz}\circ u$ is transverse to $T$ (which holds for generic such curve). We say that two curves are equivalent if one can be obtained from the other by taking a holomorphic reparameterization of the domain preserving the data (punctures are preserved up to cyclic ordering) and by $\lR$-translation in the cylindrical direction.

Similarly, for $\bm{\gamma}=(\gamma_{i_1},\dots,\gamma_{i_{k_1}}),\bm{\beta}=(\beta_{i_1},\dots,\beta_{i_{k_2}}),k_1,k_2\in\lN$ tuples of Reeb chords together with signatures $\epsilon^1_i,\epsilon^2_j$ and $a=(a_1,\dots,a_{k_1})\in\lZ^{k_1},b=(b_1,\dots,b_{k_2})\in\lZ^{k_2}$, we denote by $\cM_2(\bm{\gamma},\bm{\beta},a,b)$ the moduli space of equivalence classes of pseudoholomorphic annuli $u:(\mathring\Sigma,j)\to(\lR^4,J),\Sigma=\{z\in\lC\,|\,1\leq\|z\|\leq r\}$ for some $1<r$, with boundary mapped to $\lR\times\Lambda$ and punctures $t_j$ and $t_l'$ on the two boundary components asymptotic to the corresponding Reeb chords in $\bm{\gamma},\bm{\beta}$ as above, such that $\pi_{xyz}\circ u|_{(t_j,t_{j+1})}$ ($\pi_{xyz}\circ u|_{(t_l',t_{l+1}')}$) passes through $T$ $a_j$ ($b_l$) times if the appropriate transversality condition is satisfied. We additionally allow $k_2=0$ and $\bm{\beta}=\emptyset$, in this case we take $b=b_0\in\lZ$. This determines the homology class of the corresponding boundary component on $\Lambda$. We say two such curves are equivalent if one can be obtained from the other by taking a holomorphic reparameterization of the domain preserving the data and by $\lR$-translation in the cylindrical direction.

The index of the moduli space $\cM_1(\bm{\gamma},a)$ is defined as
\begin{align*}
\operatorname{ind}(\bm{\gamma},a)\coloneqq k_++\mu_L(a)+\sum_{i=1}^{k}\epsilon_i\mu_{CZ}(\gamma_{i})-2,
\end{align*}
where $k_+$ is the number of positive punctures,  $\mu_{CZ}$ is the Conley--Zehnder index, and 
$$\mu_L(a)=2\sum a_i\operatorname{rot}(\Lambda)$$ 
is the Maslov number. Similarly, the index of the moduli space $\cM_2(\bm{\gamma},\bm{\beta},a,b)$ is defined as 
\begin{align*}\operatorname{ind}(\bm{\gamma},\bm{\beta},a,b)\coloneqq k_+^1+k_+^2+\mu_L(a,b)+\sum_{i=1}^{k_1}\epsilon_{\gamma,i}\mu_{CZ}(\gamma_{i})+\sum_{i=1}^{k_2}\epsilon_{\beta,i}\mu_{CZ}(\beta_{i})-1,
\end{align*}
where $k_+^1$ and $k_+^2$ are the numbers of positive punctures on two boundary components and 
$$\mu_L(a,b)=2\left(\sum a_i+\sum b_j\right)\operatorname{rot}(\Lambda).$$ 
For $u$ a pseudoholomorphic disk or an annulus, we denote by $\operatorname{ind}(u)$ the index of the corresponding moduli space. We say that a pseudoholomorphic curve $u$ is of index zero if $\operatorname{ind}(u)=0$.

Next, we describe the Gromov compactification of these moduli spaces. First, we need to introduce nodal pseudoholomorphic curves and SFT buildings.

A closed nodal Riemann surface is a union of closed Riemann surfaces $(\Sigma_i,j_i),i\in\{1,\dots,m\}$ together with finitely many distinct points in $\bigsqcup\Sigma_i$ subdivided into pairs $(z_j^1,z_j^2),j\in\cI$, called nodal pairs. An automorphism of a nodal Riemann surface is a biholomorphic map $\phi:\bigsqcup\Sigma_i\to\bigsqcup\Sigma_i$ that preserves the set of nodal pairs. A nodal Riemann surface is stable if its automorphism group is discrete. Stable nodal Riemann surfaces appear in the compactification of the moduli space of stable Riemann surfaces. Similar can be done for Riemann surfaces with boundary. In this case, points in a nodal pair are either both on the boundary or in the interior, and we distinguish boundary (hyperbolic) and interior nodal pairs. In addition to that, we have finitely many distinguished interior marked points $z^e_i,i\in\cI'$, which are called elliptic nodes, and appear when a boundary component shrinks to a point. 

For $(M,\omega)$ a symplectic manifold and $L\subset M$ a Lagrangian submanifold, a nodal pseudoholomorphic curve on $L$ consists of a nodal Riemann surface $\left(\{(\Sigma_i,j_i)\}_i,\{(z_j^1,z_j^2)\}_j,\{z_k^e\}_k\right)$ and a $J$-holomorphic map $u:\bigsqcup\Sigma_i\to M$ with boundary mapped to $L$, such that $u(z_j^1)=u(z_j^2)$ for all nodal pairs and $u(z^e_k)\in L$ for all elliptic nodes. Nodal pseudoholomorphic maps appear in the compactification of the moduli space of pseudoholomorphic curves with boundary on $L$ in a compact symplectic manifold $M$. When working with symplectizations of contact manifolds, we additionally need to consider breaking into pseudoholomorphic SFT buildings. For us, it is enough to define pseudoholomorphic 2-buildings in $\lR^4$ with no nodes and with one or two gluing pairs. Let $u_1,u_2$ be either two $J$-holomorphic disks or a disk and an annulus. Assume $u_1$ has a negative boundary puncture $t$ asymptotic to a Reeb chord $\gamma$ and $u_2$ a positive boundary puncture $t'$ asymptotic to $\gamma$. Then $(u_1,u_2)$, together with $(t,t')$, forms a pseudoholomorphic 2-building with one gluing pair. Similarly, assume $u_1,u_2$ are pseudoholomorphic disks such that $u_1$ has negative boundary punctures $t_1,t_2$ asymptotic to Reeb chords $\gamma,\gamma'$ and $u_2$ positive boundary punctures $t_1',t_2'$ asymptotic to $\gamma,\gamma'$. Then $(u_1,u_2)$, together with puncture pairs $(t_1,t_1'),(t_2,t_2')$, forms a pseudoholomorphic 2-building with two gluing pairs. The topological class of a 2-building (a disk or an annulus) and the cyclic ordering of the punctures are determined after gluing (topologically) at the gluing pair/pairs.

The following propositions are the main ingredient behind the definition of the algebraic invariant we introduce.

\begin{prop}\label{Prop:moduliI}
For a generic Legendrian knot $\Lambda$ and a Reeb chord tuple $\bm{\gamma}=(\gamma_1,\dots,\gamma_k)$ together with signatures $\epsilon_i$ and $a\in\lZ^k$ such that $\operatorname{ind}(\bm{\gamma},a)=0$, the moduli space $\cM_1(\bm{\gamma},a)$ is a compact manifold of dimension 0. 
\end{prop}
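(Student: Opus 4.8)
The plan is to combine a transversality statement, which shows that $\cM_1(\bm{\gamma},a)$ is a smooth manifold of dimension $\operatorname{ind}(\bm{\gamma},a)=0$, with a compactness statement, which rules out every possible degeneration and thereby shows that this $0$-manifold is compact, hence a finite set of points.

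For transversality, I would realize $\cM_1(\bm{\gamma},a)$ as the zero set of the operator $\bar\partial_J$, viewed as a smooth section of a Banach bundle over a Banach manifold of maps lying in a weighted Sobolev space, with exponential weights at the punctures chosen to match the asymptotic convergence to the Reeb chords $\gamma_{i_j}$ and to control the $r$-coordinate. The linearization of $\bar\partial_J$ is Fredholm, and a Riemann--Roch computation in terms of Conley--Zehnder indices identifies its index with $\operatorname{ind}(\bm{\gamma},a)$; this is exactly the content of the index formula of Section~\ref{Sec:Moduli_spaces}. Since $J$ is fixed, transversality must be obtained by varying the Legendrian knot $\Lambda$ rather than the almost complex structure. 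Following the analytic scheme of \cite{ENS02}, I would use that the disks in question are somewhere injective along their boundary, so that the variations induced by deforming $\Lambda$ surject onto the cokernel of the linearized operator; a Sard--Smale argument then yields $\bar\partial_J\pitchfork 0$ for generic $\Lambda$, making $\cM_1(\bm{\gamma},a)$ a smooth $0$-manifold.

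For compactness, the SFT compactness theorem guarantees that every sequence in $\cM_1(\bm{\gamma},a)$ has a subsequence converging to a pseudoholomorphic building with boundary on $L$, possibly carrying interior and boundary nodes; the uniform energy bound is supplied by the fixed actions of the asymptotic Reeb chords together with the fixed homology data $a$. I would then exclude every genuine degeneration by a dimension count. A two-level building glued along a single Reeb chord satisfies $\operatorname{ind}=\operatorname{ind}(u_1)+\operatorname{ind}(u_2)+1$, and the creation of a boundary or interior node is likewise a codimension-one phenomenon, so in each case the limit lies in a product of moduli spaces whose total formal dimension is at most $\operatorname{ind}(\bm{\gamma},a)-1=-1$. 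Arranging, by a single generic choice of $\Lambda$ (a countable intersection of residual conditions, each involving only the finitely many strata compatible with the energy bound), that all these lower strata are also transversely cut out, any moduli space of negative formal dimension is empty. Hence no breaking or node can form, every convergent subsequence has its limit in $\cM_1(\bm{\gamma},a)$, and a compact $0$-manifold is a finite set of points.

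The main obstacle is the transversality step: because $J$ is fixed and one may only deform $\Lambda$, one must verify that the boundary somewhere-injectivity needed to perturb the cokernel holds for all relevant configurations at once, and in particular rule out multiply covered components and branched covers of trivial strips over Reeb chords, which would otherwise obstruct the Sard--Smale argument. A secondary point is to ensure that the one generic $\Lambda$ achieving transversality for $\cM_1(\bm{\gamma},a)$ simultaneously makes all the negative-dimensional boundary strata empty, which is precisely what closes the compactness argument.
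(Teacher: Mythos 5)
Your overall architecture---realize $\cM_1(\bm{\gamma},a)$ as a transversely cut out zero set of $\bar\partial_J$ in a weighted Sobolev setting, identify the Fredholm index with $\operatorname{ind}(\bm{\gamma},a)$, obtain genericity by perturbing $\Lambda$ rather than $J$ via Sard--Smale, and then combine SFT compactness with emptiness of the negative-dimensional boundary strata---is exactly the argument the paper invokes: compactness is taken from the closed and relative compactness literature \cite{SFTcomp,EES05I,CELN17}, and regularity for generic $\Lambda$ is proved in Section \ref{Sec:hamiltonian_def}. Your gluing convention $\operatorname{ind}=\operatorname{ind}(u_1)+\operatorname{ind}(u_2)+1$ for two-level buildings is consistent with the breaking described in Proposition \ref{Prop:compactness_disk}, and the action of the asymptotic chords does supply both the energy bound and the finiteness of the strata one must make generic simultaneously.

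The one point you flag but do not close---boundary somewhere-injectivity and multiple covers---is a genuine hole in the proposal as written, and the paper closes it not by a delicate injectivity analysis but by a short index computation (Lemma \ref{Lemma:regularity_main}): if $u=v\circ\psi$ for a degree-$d$ branched cover $\psi$, $d\geq 2$, of a somewhere injective disk $v$, then counting punctures and Maslov contributions gives $\operatorname{ind} u=d\operatorname{ind} v+2d-2\geq 2$, since a somewhere injective $v$ is regular for generic $\Lambda$ and hence satisfies $\operatorname{ind} v\geq 0$; so curves of index $0$ or $1$ are never multiple covers. (Note also that a disk with exactly one positive puncture cannot be a nontrivial cover at all, since a $d$-fold cover carries $d$ times as many positive punctures; the $\lR$-invariant configurations over Reeb chords that you worry about enter only in the compactification, where they appear as trivial-strip bubbles in Proposition \ref{Prop:compactness_disk} and are absorbed by the string operation $\delta$, not as interior points of $\cM_1(\bm{\gamma},a)$.) With covers excluded, the perturbation of $\Lambda$ near the positive punctures as in \cite{EES07} provides the surjectivity needed for Sard--Smale. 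Finally, be aware that for the explicit $J$ of (\ref{Eq:intro_J_formula}) the paper has a more concrete route that your analytic scheme bypasses: $J$-holomorphic disks on $\lR\times\Lambda$ correspond bijectively to holomorphic polygons in $\lC$ with boundary on $\pi_{xy}(\Lambda)$ (Section \ref{Sec:counting_curves}), and index-zero disks project to immersed polygons with convex corners (Lemma \ref{Lemma:index_zero_gen_asym_beh}), which renders both the transversality and the finiteness of the count essentially combinatorial.
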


\begin{prop}\label{Prop:moduliII}
For a generic Legendrian knot $\Lambda$ and Reeb chord tuples $\bm{\gamma}=(\gamma_1,\dots,\gamma_{k_1}),\bm{\beta}=(\beta_1,\dots,\beta_{k_2})$ together with signatures $\epsilon^1,\epsilon^2$ and $a\in\lZ^{k_1}, b\in\lZ^{k_2}$ such that $\operatorname{ind}(\bm{\gamma},\bm{\beta},a,b)=0$, the moduli space $\cM_2(\bm{\gamma},\bm{\beta},a,b)$ is a compact manifold of dimension 0. 
\end{prop}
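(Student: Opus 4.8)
The proof has two components, a manifold (transversality) statement and a compactness statement, and the plan is to obtain both through the Lagrangian projection $\pi_{xy}$ together with the obstruction section of Proposition \ref{Prop:intro_obstruction}. A $J$-holomorphic annulus $u$ on $L=\lR\times\Lambda$ projects to a holomorphic annulus $\pi_{xy}\circ u$ in $\lC$ with boundary on $\pi_{xy}(\Lambda)$ and corners at the self-intersections prescribed by $\bm{\gamma},\bm{\beta}$; conversely, by Proposition \ref{Prop:intro_obstruction} an annulus in $\cM^\pi_2$ lifts to $\lR^4$ with boundary on $L$ exactly when $\Omega$ vanishes on it. I would therefore first identify $\cM_2(\bm{\gamma},\bm{\beta},a,b)$ with the vanishing locus $\{\Omega=0\}$ inside the component of $\cM^\pi_2$ carrying the prescribed corner data and winding numbers $(a,b)$. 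Since the extra real equation $\Omega=0$ accounts for one codimension (unlike the disk case of Proposition \ref{Prop:moduliI}, where every projected disk lifts), the index-zero hypothesis places this component in the $1$-dimensional stratum $\cM^\pi_{2,1}$, whose compactification $\overline\cM^\pi_{2,1}$ is a $1$-manifold with boundary.

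\emph{Manifold structure.} Because $J$ is fixed, regularity must be forced by perturbing $\Lambda$. The underlying analysis is the standard Fredholm package: a Banach manifold of $W^{1,p}$-maps from annuli of varying conformal modulus into $\lR^4$ with boundary on $L$, exponential weights at the punctures imposing the Reeb-chord asymptotics, and the homological constraints $(a,b)$; the operator $\bar\partial_J$ is a section of a Banach bundle whose linearization is Fredholm of index $\ind(\bm{\gamma},\bm{\beta},a,b)$. Forming the universal moduli space over a Banach manifold of Legendrian perturbations and applying Sard--Smale yields a generic $\Lambda$ for which every $u\in\cM_2(\bm{\gamma},\bm{\beta},a,b)$ is regular, so the moduli space is a $0$-manifold. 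In the projected picture this is exactly the transversality of the $\lC$-problem (so that $\cM^\pi_{2,1}$ is a $1$-manifold) together with $\Omega\pitchfork 0$, which cuts out $\{\Omega=0\}$ as a $0$-manifold inside $\cM^\pi_{2,1}$.

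\emph{Compactness.} By Proposition \ref{Prop:intro_obstruction}, $\Omega\to\pm\infty$ at the non-split boundary points of $\overline\cM^\pi_{2,1}$, and at the split boundary points $\Omega$ equals $\Omega(u_0)$ for the annular part $u_0\in\cM^\pi_{2,0}$, which is nonzero for generic $\Lambda$ (so that $\Omega(\cM^\pi_{2,0})\subset\lR\setminus\{0\}$). Hence the zero set $\{\Omega=0\}$ stays in a compact subset of the open $1$-manifold $\cM^\pi_{2,1}$ and is a finite collection of points, giving a compact $0$-manifold. For the general (several positive punctures) case one argues directly: SFT compactness sends any sequence to a pseudoholomorphic building, possibly with interior or boundary nodes or with the annulus degenerating (the modulus tending to the boundary of moduli, a boundary circle collapsing to an elliptic node, or a boundary node forming). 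For generic $\Lambda$ every limiting component is regular with non-negative index, and any nontrivial breaking, node, or conformal degeneration drops the dimension of the relevant stratum by at least one. Since the moduli space has dimension $0$, all such strata are empty and the limit lies in $\cM_2(\bm{\gamma},\bm{\beta},a,b)$, proving compactness.

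\emph{Main obstacle.} The delicate point is the transversality for annuli. Unlike disks, annuli may be multiply covered or have symmetric domains, so one must isolate a somewhere- (or boundary-)injective point to make the perturbation of $\Lambda$ effective, and one must correctly fold the conformal modulus and the extra obstruction $\Omega$ into the Fredholm index. Equally annulus-specific is the elliptic-node degeneration in which one boundary component collapses, a phenomenon absent for disks; controlling it in dimension zero is precisely what the nonvanishing of $\Omega$ on $\cM^\pi_{2,0}$ secures, and verifying that this value is generically nonzero is the crux that the projection and the obstruction section are designed to make accessible.
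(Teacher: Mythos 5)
Your proposal is correct and follows essentially the same route as the paper: regularity is obtained exactly as in the paper's Lemma \ref{Lemma:regularity_main}, namely by perturbing $\Lambda$ to make the projected moduli space $\cM^\pi_{2,1}$ regular and then using the equivalence of $\Omega\pitchfork 0$ with regularity of the lifts (Lemmas \ref{Lemma:obstruction_transversalityI} and \ref{Lemma:obstruction_transversalityII}, where the paper proves $\Omega\pitchfork 0$ by an explicit ``bump'' perturbation rather than an abstract Sard--Smale argument, which you also sketch as an alternative). Your compactness argument via the continuous extension of $\Omega$ to $\overline\cM^\pi_{2,1}$, with $\Omega=\pm\infty$ at non-split and $\Omega(u_0)\neq 0$ at split boundary points for generic $\Lambda$ (excluding type IV degenerations), is exactly the mechanism the paper develops in Section \ref{Sec:Elliptic_hyperbolic} and combines with the cited Gromov/SFT compactness for annuli.
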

\begin{prop}\label{Prop:compactness_disk}
For a generic Legendrian knot $\Lambda$ and a Reeb chord tuple $\bm{\gamma}=(\gamma_1,\dots,\gamma_k)$ together with signatures $\epsilon_i$ and $a\in\lZ^k$ such that $\operatorname{ind}(\bm{\gamma},a)=1$, the moduli space $\cM_1(\bm{\gamma},a)$ is a 1-dimensional manifold.  It has a natural compactification $\overline\cM_1(\bm{\gamma},a)$, which is a compact 1-dimensional manifold with boundary, obtained by adding the following boundary points (see Figure \ref{Fig:degeneration_disks})

\begin{itemize}
\item pseudoholomorphic two buildings $(u_1,u_2)$ with one gluing pair $(z_1,z_2)$ consisting of index zero pseudoholomorphic disks $u_1,u_2$, such that the order of the Reeb chords at the boundary punctures and the number of crossings of the arcs over the base point after gluing is equivalent to $\bm{\gamma},a$;
\item nodal curves with one (hyperbolic) boundary node consisting of an index zero pseudoholomorphic disk $u$ and a trivial strip over some Reeb chord, such that the order of the Reeb chords at the boundary punctures and the number of crossings of the arcs over the base point after resolving the node is equivalent to $\bm{\gamma},a$.
\end{itemize}
\end{prop}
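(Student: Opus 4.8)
The plan is to establish the manifold structure by a transversality argument and to describe the compactification by combining SFT (Gromov) compactness with a gluing analysis, using an index count to pin down which degenerations can occur. The three ingredients are standard in this setting, so I would organize the proof as transversality, then compactness plus index bookkeeping, then gluing.

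First I would argue that for generic $\Lambda$ the moduli space $\cM_1(\bm{\gamma},a)$ is cut out transversally, so that it is a smooth manifold of dimension equal to the index $\operatorname{ind}(\bm{\gamma},a)=1$. In the present situation this reduces, via the bijection recalled in the introduction between $J$-holomorphic disks on $\lR\times\Lambda$ and holomorphic disks in $\lC$ with boundary on $\pi_{xy}(\Lambda)$ and corners at the double points, to the transversality theory for such planar disks, which is achieved by taking $\pi_{xy}(\Lambda)$ in general position as in \cite{ENS02}. The base-point count $a$ is locally constant on $\cM_1$ and imposes no extra transversality condition, so this yields the asserted $1$-manifold.

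Next I would invoke Gromov--SFT compactness \cite{intro}: any sequence in the $1$-manifold $\cM_1(\bm{\gamma},a)$ has a subsequence converging to a broken configuration, that is, a pseudoholomorphic building with boundary on $L$, possibly carrying nodes. Since $(\lR^4,J)$ admits no nonconstant $J$-holomorphic spheres and $L=\lR\times\Lambda$ is exact, there are no interior sphere bubbles and no closed disk bubbles of positive area; the only degenerations are vertical breaking into SFT levels and the formation of boundary (hyperbolic) nodes. The index is additive, and each such operation -- breaking along a Reeb chord, or resolving a boundary node -- contributes exactly one gluing parameter, so a codimension-one degeneration of the $1$-dimensional family forces every surviving component to be rigid, i.e. of index zero. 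This leaves precisely the two listed possibilities: a two-level building $(u_1,u_2)$ glued along a single Reeb chord with $\operatorname{ind}(u_1)=\operatorname{ind}(u_2)=0$, and an index-zero nodal disk $u$ carrying a trivial strip over a Reeb chord at one boundary node. Configurations with more levels, with two or more gluing pairs, or with more than one node would have codimension $\geq 2$ and hence cannot appear in the boundary of a $1$-manifold.

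Finally I would complete the argument by gluing. For the two-level buildings this is the standard SFT gluing along a single Reeb chord, which identifies a neighborhood of such a configuration in $\overline\cM_1(\bm{\gamma},a)$ with a half-open interval $[0,\varepsilon)$, the gluing parameter being the neck length. The remaining case, the trivial-strip node, is the genuinely delicate one: one must show that resolving the boundary node produces a one-parameter family of honest index-one disks limiting to the nodal configuration and exhausting the corresponding end. This is the analytic heart of the statement -- it is exactly the boundary phenomenon peculiar to disks with more than one positive puncture that later forces the string-topological correction -- and I expect the gluing estimate for the node-resolution parameter to be the main obstacle. Granting both gluing results, each added configuration is a boundary point with a single outgoing end, so $\overline\cM_1(\bm{\gamma},a)$ is a compact $1$-manifold with boundary, as claimed.
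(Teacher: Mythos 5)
Your overall route --- generic transversality, SFT compactness with exactness ruling out sphere bubbles and closed disk bubbles, index bookkeeping to isolate the codimension-one strata, and gluing for the two types of boundary points --- is essentially the route the paper takes: the paper delegates compactness and gluing to the literature (\cite{SFTcomp} for the closed case, \cite{EES05I,CELN17} for disks with boundary, including the trivial-strip node resolution you correctly single out as the delicate analytic point), and the only part it proves itself is regularity, in Lemma \ref{Lemma:regularity_main} of Section \ref{Sec:hamiltonian_def}.

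The genuine gap is in your transversality step. You reduce to ``transversality theory for planar disks, achieved by taking $\pi_{xy}(\Lambda)$ in general position as in \cite{ENS02}'', but such general-position arguments only apply to somewhere injective curves, and for tuples $\bm{\gamma}$ with more than one positive puncture a disk in $\cM_1(\bm{\gamma},a)$ need not be somewhere injective: it can be a branched multiple cover of a disk with fewer punctures, and no perturbation of the boundary condition (i.e.\ of the knot diagram) makes a multiply covered curve transversally cut out. The paper closes exactly this hole before invoking the perturbation scheme of \cite{EES07}: if $u=v\circ\psi$ for $\psi$ a branched cover of degree $d\geq 2$ and $v$ somewhere injective, then
\begin{align*}
\operatorname{ind} u = d\operatorname{ind} v + 2d-2\geq 2,
\end{align*}
so disks of index $0$ or $1$ are automatically somewhere injective, and only then does genericity of $\Lambda$ yield regularity. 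Without this index arithmetic your first paragraph does not deliver the claimed $1$-manifold structure. A smaller remark: for the disk moduli space, buildings with two gluing pairs are not merely of codimension $\geq 2$ --- they are topologically excluded, since gluing two disks along two pairs of punctures produces an annulus; this is why such buildings appear only in the boundary described in Proposition \ref{Prop:compactness_annulus}. Apart from these points your plan matches the paper's treatment.
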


\begin{figure}
\def\svgwidth{111mm}
\begingroup%
  \makeatletter%
  \providecommand\rotatebox[2]{#2}%
  \newcommand*\fsize{\dimexpr\f@size pt\relax}%
  \newcommand*\lineheight[1]{\fontsize{\fsize}{#1\fsize}\selectfont}%
  \ifx\svgwidth\undefined%
    \setlength{\unitlength}{573.2132966bp}%
    \ifx\svgscale\undefined%
      \relax%
    \else%
      \setlength{\unitlength}{\unitlength * \real{\svgscale}}%
    \fi%
  \else%
    \setlength{\unitlength}{\svgwidth}%
  \fi%
  \global\let\svgwidth\undefined%
  \global\let\svgscale\undefined%
  \makeatother%
  \begin{picture}(1,0.3315578)%
    \lineheight{1}%
    \setlength\tabcolsep{0pt}%
    \put(0,0){\includegraphics[width=\unitlength,page=1]{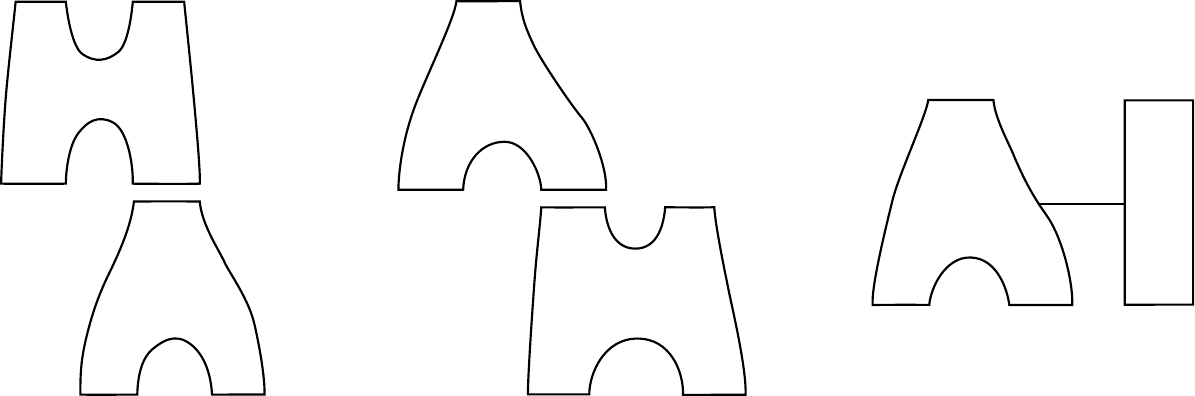}}%
  \end{picture}%
\endgroup%
\caption{Breaking for index one disks with two positive punctures.}
\label{Fig:degeneration_disks}
\end{figure}
\begin{prop}\label{Prop:compactness_annulus}
For a generic Legendrian knot $\Lambda$ and Reeb chord tuples $\bm{\gamma}=(\gamma_1,\dots,\gamma_{k_1}),\bm{\beta}=(\beta_1,\dots,\beta_{k_2})$ together with signatures $\epsilon^1,\epsilon^2$ with precisely one positive puncture and $a\in\lZ^{k_1},b\in\lZ^{k_2}$ such that $\operatorname{ind}(\bm{\gamma},\bm{\beta},a,b)=1$, the moduli space $\cM_2(\bm{\gamma},\bm{\beta},a,b)$ is a 1-dimensional manifold. It has a natural compactification $\overline\cM_2(\bm{\gamma},\bm{\beta},a,b)$, which is a compact 1-dimensional manifold with boundary, obtained by adding the following boundary points (see Figure \ref{Fig:degeneration_annuli})
\begin{itemize}
\item pseudoholomorphic 2-buildings $(u_1,u_2)$ with one gluing pair, where one of $u_1,u_2$ is an index zero disk and the other an index zero annulus, such that the order of the Reeb chords at the punctures and the crossings over the base point on the two boundary components after gluing is equivalent to $\bm{\gamma},\bm{\beta}$; 
\item pseudoholomorphic 2-buildings $(u_1,u_2)$ with two gluing pairs, where $u_1,u_2$ are pseudoholomorphic disks of index zero, such that the order of the Reeb chords at the punctures and the crossings over the base point on the two boundary components after gluing is equivalent to $\bm{\gamma},\bm{\beta},a,b$;
\item nodal curves with one (hyperbolic) boundary node consisting of an index zero pseudoholomorphic disk $u$ with one positive puncture and a nodal pair coming from a boundary self-intersection of $u$, such that the order of the Reeb chords at the punctures and the crossings over the base point on the two boundary components after resolving the node is equivalent to $\bm{\gamma},\bm{\beta},a,b$;
\item if $\beta=\emptyset$ and $b_0=0$, nodal curves consisting of an index zero pseudoholomorphic disk $u\in\cM(\bm{\gamma},a)$ together with one elliptic node coming from an interior intersection of $u$ with $\lR\times\Lambda$.
\end{itemize}
\begin{figure}
\def\svgwidth{139mm}
\begingroup%
  \makeatletter%
  \providecommand\rotatebox[2]{#2}%
  \newcommand*\fsize{\dimexpr\f@size pt\relax}%
  \newcommand*\lineheight[1]{\fontsize{\fsize}{#1\fsize}\selectfont}%
  \ifx\svgwidth\undefined%
    \setlength{\unitlength}{832.22965561bp}%
    \ifx\svgscale\undefined%
      \relax%
    \else%
      \setlength{\unitlength}{\unitlength * \real{\svgscale}}%
    \fi%
  \else%
    \setlength{\unitlength}{\svgwidth}%
  \fi%
  \global\let\svgwidth\undefined%
  \global\let\svgscale\undefined%
  \makeatother%
  \begin{picture}(1,0.26429562)%
    \lineheight{1}%
    \setlength\tabcolsep{0pt}%
    \put(0,0){\includegraphics[width=\unitlength,page=1]{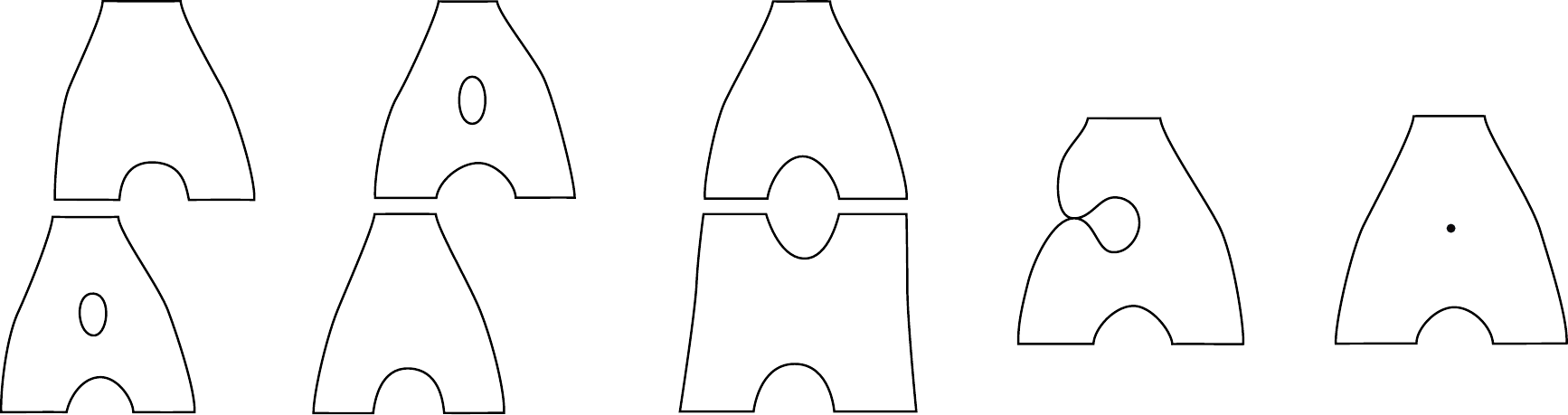}}%
  \end{picture}%
\endgroup%
\caption{Breaking for index one annuli with one positive puncture.}
\label{Fig:degeneration_annuli}
\end{figure}
\end{prop}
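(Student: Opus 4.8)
The plan is to combine Fredholm theory for the linearized Cauchy--Riemann operator with the SFT compactness theorem, and then to isolate the codimension-one degenerations by an index count, matching each one to a topological gluing/resolution that reproduces an annulus with one positive puncture. First I would establish the smooth-manifold statement. For a generic Legendrian knot $\Lambda$ the linearized operator $D\bar\partial_J$ is surjective at every somewhere-injective annulus in $\cM_2(\bm\gamma,\bm\beta,a,b)$, so by the implicit function theorem the moduli space is a smooth manifold whose dimension equals the Fredholm index $\ind(\bm\gamma,\bm\beta,a,b)=1$. The hypothesis of \emph{exactly one positive puncture} is used here: it pins down a single top-level asymptotic and, together with the fact that $J$ is chosen so that $\pi_{xy}$ is holomorphic, lets one invoke the transversality results of Ekholm--Etnyre--Sullivan/Dragnev type for curves that are embedded along the boundary, after passing to the underlying simple curve; a separate index count then shows multiply covered annuli do not appear in codimension $\le 1$. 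This is also what excludes the trivial-strip phenomena that complicate the two-positive-puncture disk count.

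Next I would invoke SFT compactness (Bourgeois--Eliashberg--Hofer--Wysocki--Zehnder): any sequence in $\cM_2$ has a subsequence converging to a holomorphic building with finitely many symplectization levels (vertical breaking along Reeb chords) together with domain degenerations, namely interior and boundary (hyperbolic) nodes and the collapse of a boundary circle to an interior point of $L$ (an elliptic node). Before the index count I would rule out energy leaking into bubbles: since $L=\lR\times\Lambda$ is an exact Lagrangian cylinder in the exact symplectization, there are no nonconstant closed holomorphic spheres and no nonconstant holomorphic disks with boundary on $L$, so all lost energy is accounted for by Reeb chords and by the listed nodal degenerations.

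The heart of the argument is a dimension count identifying the codimension-one strata. Using additivity of the Fredholm index under gluing, the fact that each vertical breaking and each node is a codimension-one condition, and the genericity statement that every nonconstant somewhere-injective curve in a level has non-negative index with rigid ($\ind=0$) curves isolated, I would argue that since the total reduced index is $1$, exactly one degeneration occurs and every surviving piece is rigid. A two-level building contributes $\ind(u_1)+\ind(u_2)=\ind-1=0$, forcing both levels rigid, and the requirement that the (topological) glued curve be an annulus with one positive puncture splits this into two cases: gluing along a single Reeb chord, which forces one piece to be a disk and the other an annulus (type (a)); and gluing along two Reeb chords, which forces two rigid disks whose topological gluing is the annulus (type (b)). A single-level limit with one hyperbolic boundary node is carried by one rigid disk with one positive puncture whose boundary self-intersection resolves to the annulus (type (c)); a $0$-dimensional family of such disks with an isolated boundary self-intersection is a boundary point of the $1$-manifold, the node-resolution parameter giving the adjacent collar. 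Finally, since a rigid disk and the Lagrangian are both real surfaces in $\lR^4$, they meet generically in isolated interior points; the collapse of a boundary circle onto such a point is an elliptic node, and this forces that circle to carry no puncture and to be null-homologous, i.e.\ $\bm\beta=\emptyset$ and $b_0=0$ (type (d)). Configurations with two or more simultaneous degenerations---a node together with a breaking, two nodes, or three or more levels---are codimension $\ge 2$ and so do not appear.

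The main obstacle I anticipate is the gluing analysis at the nodal strata, which is what makes $\overline\cM_2$ a manifold with boundary rather than merely a set-theoretic compactification. For types (a) and (b) the gluing is the standard SFT neck-stretching, modeled on $[0,\varepsilon)$, but for the hyperbolic node (type (c)) and especially the elliptic node (type (d)) one must analyze the simultaneous degeneration of the conformal modulus of the annulus $\{1\le\|z\|\le r\}$ and the target, and prove that each such configuration is the limit of a \emph{unique} end of $\cM_2$. The elliptic degeneration, where a boundary circle shrinks onto $L$, is the genuinely new phenomenon relative to the disk case of Proposition~\ref{Prop:compactness_disk}; controlling its gluing and showing it produces a single boundary collar is precisely the analytic input that the obstruction section $\Omega$ of Proposition~\ref{Prop:intro_obstruction} is built to encode, and I would structure the proof so that the delicate elliptic-node gluing is packaged through $\Omega$ rather than carried out by hand here.
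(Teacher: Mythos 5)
Your outline is correct and follows, in substance, the same route the paper takes: the paper gives no self-contained proof of this proposition but cites SFT compactness \cite{SFTcomp}, the relative case \cite{EES05I,CELN17}, and the annulus case \cite{Abbas_book}, supplying the genuinely new ingredient---regularity for generic $\Lambda$---separately in Section \ref{Sec:hamiltonian_def}. Your identification of the four codimension-one strata by index additivity, the exclusion of sphere and disk bubbling by exactness, and the observation that the elliptic degeneration forces $\bm{\beta}=\emptyset$, $b_0=0$ all match the intended argument. The one place where your route genuinely differs, and where as written it would not go through, is the transversality step: for the specific $J$ of (\ref{Eq:intro_J_formula}) the projection $\pi_{xy}$ is holomorphic, so surjectivity of $D\bar\partial$ at annuli cannot be extracted from EES/Dragnev-type boundary perturbations alone---a holomorphic annulus in the projection generically does not lift to $\lR^4$ at all, so the moduli space of lifted annuli is cut out inside the projection moduli space rather than being directly accessible to those perturbation arguments. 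The paper instead proves that regularity of $J$-holomorphic annuli in $\lR^4$ is \emph{equivalent} to transversality of the obstruction section $\Omega$ to the zero section (Lemma \ref{Lemma:obstruction_transversalityI}), and then establishes $\Omega\pitchfork 0$ for generic $\Lambda$ by an explicit bump-perturbation computation with the Poisson kernel (Lemma \ref{Lemma:obstruction_transversalityII}); multiple covers are excluded by essentially the index computation you sketch (Lemma \ref{Lemma:regularity_main}). So your instinct to package the delicate analysis through $\Omega$ is right, but it belongs at the transversality step and at the identification of the boundary collars via the continuous extension of $\Omega$ to $\overline\cM^\pi_{2,1}$ (Lemmas \ref{Lemma:OSection_elliptic} and \ref{Lemma:OSection_hyperbolic}), not only at the elliptic-node gluing; with that relocation your proposal agrees with the paper's argument.
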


The proof of Gromov compactness in the closed case (Riemann surfaces without boundary) can be found in \cite{SFTcomp}, the relative case (Riemann surfaces with boundary) goes similarly and has been treated in different settings, see for example \cite{EES05I,CELN17} for the proof of compactness and gluing in the case of pseudoholomorphic disks. The case of pseudoholomorphic annuli goes analogously \cite{Abbas_book}. We prove regularity for $J$-holomorphic curves for generic knot $\Lambda$ in Section \ref{Sec:hamiltonian_def}. For any Legendrian knot $\Lambda$, the propositions above also hold for any generic compatible almost complex structure on $\lR^4$.

\subsection{Counting pseudoholomorphic curves}
\label{Sec:counting_curves}

Let $\Lambda\subset\lR^3$ be a Legendrian knot and $J$ be the almost complex structure on $\lR\times\lR^3$ given by (\ref{Eq:intro_J_formula}). The main goal of this section is to describe $J$-holomorphic disks and annuli on $\lR\times\Lambda$ using holomorphic curves in $\lC$ with boundary on $\pi_{xy}(\Lambda)$. For pseudoholomorphic annuli, we define an obstruction section $\Omega:\cM^\pi_{2,1}\to\lR$, where $\cM^\pi_{2,1}$ is the 1-dimensional moduli space of holomorphic annuli on $\pi_{xy}(\Lambda)$, such that $u_0\in\cM_{2,1}^\pi$ lifts to a $J$-holomorphic annulus on $\lR\times\Lambda$ if and only if $\Omega(u_0)=0$. Furthermore, we define the notion of a combinatorial obstruction section over $\cM^\pi_{2,1}$, whose zeros can be used in place of $J$-holomorphic annuli when defining the invariant, and can be counted purely combinatorially.

Denote by $\cM^\pi(\Sigma)$ the moduli space of holomorphic maps $u:\mathring\Sigma\to\lC$ with boundary on $\pi_{xy}(\Lambda)$ and corners at the boundary punctures at the self-intersections of $\pi_{xy}(\Lambda)$. We define an obstruction section $\Omega:\cM^\pi(\Sigma)\to\lR^{b_1(\Sigma)}$, such that its zero set is homeomorphic to the moduli space of $J$-holomorphic curves. 

We say a holomorphic map $u:(R,\infty)\times[0,1]\to\lC$ with boundary mapped to $\pi_{xy}(\Lambda)$ has a positive (negative) corner at a self-intersection $\gamma$ of $\pi_{xy}(\Lambda)$ if $\lim_{s\to\infty} u(s,t)=\gamma$, $(R,\infty)\times\{0\}$ is mapped to the undercrossing (overcrossing) arc at $\gamma$ and $(R,\infty)\times\{1\}$ is mapped to the overcrossing (undercrossing) arc. A holomorphic map $u:\mathring\Sigma=\Sigma\backslash\{t_1,\dots,t_k\}\to\lC$ with boundary on $\pi_{xy}(\Lambda)$ has a positive (negative) corner at $t_i$ if the above holds for $u\circ\phi_i$, where $\phi_i:(R,\infty)\times[0,1]\to\mathring\Sigma$ is a holomorphic parameterization of a neighborhood of $t_i$. 

Let $\cM^\pi_1(\Lambda)=\cM^\pi_1$ denote the moduli space of holomorphic disks on $\pi_{xy}(\Lambda)$ with up to two positive corners and $\cM^\pi_2(\Lambda)=\cM^\pi_2$ the moduli space of holomorphic annuli on $\pi_{xy}(\Lambda)$ with one positive corner. Additionally, denote by $\cM^\pi_{2,k}\subset\cM^\pi_2,k\in\lN_0$ its $k$-dimensional component. In particular, $\cM^\pi_{2,1}$ is the union of the connected components that contain holomorphic annuli with precisely one boundary branch point or one non-convex corner, see Figure \ref{Fig:projection_moduli_space}. A holomorphic annulus in a 0-dimensional connected component $\cM^\pi_{2,0}$, i.e. an annulus with no branch points (see Figure \ref{Fig:rigid_annuli}, left), can generically not be lifted to $(\lR^4,\lR\times\Lambda)$. An index zero $J$-holomorphic annulus on $\lR\times\Lambda$ projected to the $xy$-plane is a holomorphic annulus in $\cM^\pi_{2,1}$ and generically has one boundary branch point (see Figure \ref{Fig:rigid_annuli}, right).
\begin{figure}
\def\svgwidth{110mm}
\begingroup%
  \makeatletter%
  \providecommand\rotatebox[2]{#2}%
  \newcommand*\fsize{\dimexpr\f@size pt\relax}%
  \newcommand*\lineheight[1]{\fontsize{\fsize}{#1\fsize}\selectfont}%
  \ifx\svgwidth\undefined%
    \setlength{\unitlength}{594.2601554bp}%
    \ifx\svgscale\undefined%
      \relax%
    \else%
      \setlength{\unitlength}{\unitlength * \real{\svgscale}}%
    \fi%
  \else%
    \setlength{\unitlength}{\svgwidth}%
  \fi%
  \global\let\svgwidth\undefined%
  \global\let\svgscale\undefined%
  \makeatother%
  \begin{picture}(1,0.30145936)%
    \lineheight{1}%
    \setlength\tabcolsep{0pt}%
    \put(0,0){\includegraphics[width=\unitlength,page=1]{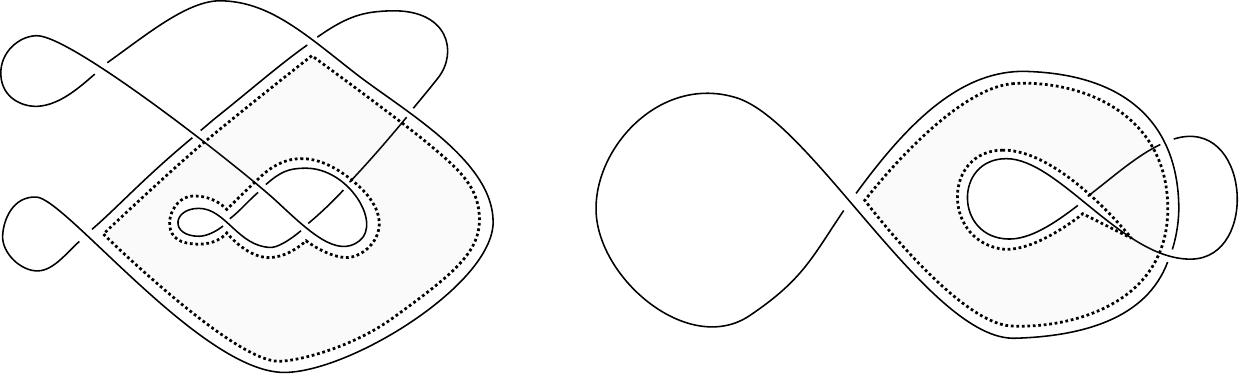}}%
  \end{picture}%
\endgroup%
\caption{Rigid annulus in the projection (left) and a projection of an index zero annulus on $\lR\times\Lambda$ (right).}
\label{Fig:rigid_annuli}
\end{figure}

Let $\widetilde u=(\theta,\overline u):\Sigma\backslash\{t_1,\dots,t_k\}\to\lR\times\lR^3$ be a $J$-holomorphic curve, where $J$ is the almost complex structure given by (\ref{Eq:intro_J_formula}), and $u=\pi_x\circ\overline u,v=\pi_y\circ\overline u,w=\pi_z\circ\overline u$. This is equivalent to the following system of partial differential equations 
\begin{equation}
\label{Eq:equiv_eq}
\begin{aligned}
&\partial_s u-\partial_t v=0,\\
&\partial_sv+\partial_t u=0,\\
&\partial_{ss}w+\partial_{tt}w=0,\\
&\partial_s\theta=\lambda(\partial_t\overline u)=\partial_t w-v(s,t)\partial_t u,\\
&\partial_t\theta=-\lambda(\partial_s\overline u)=-\partial_s w+v(s,t)\partial_s u,
\end{aligned}
\end{equation}
where $(s,t)$ are local holomorphic coordinates on $\Sigma$.  

The first two equations in (\ref{Eq:equiv_eq}) give us the Cauchy--Riemann equation for $(u,v)=\pi_{xy}\widetilde u$. The third equation implies that $w=\pi_z\widetilde u$ is a harmonic function. The last two equations are equivalent to
\begin{align*}
d\theta=\lambda\circ d\overline u\circ j.
\end{align*} 
\begin{lemma}
For $(\mathring\Sigma,j)$ a punctured Riemann surface and $\overline u=(u,v,w):\mathring\Sigma\to\lR^3$ a smooth map such that the first three equations in (\ref{Eq:equiv_eq}) are satisfied, the 1-form $\lambda\circ d \overline u\circ j$ on $\mathring\Sigma$ is closed.
\end{lemma}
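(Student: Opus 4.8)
The plan is to show that the $1$-form $\eta \coloneq \lambda\circ d\overline u\circ j$ is closed by computing $d\eta$ directly in local holomorphic coordinates $(s,t)$ and using the first three equations of (\ref{Eq:equiv_eq}). Writing $\overline u=(u,v,w)$ and recalling that $\lambda=dz-y\,dx$ pulls back along $\overline u$ to $\lambda\circ d\overline u = dw - v\,du$, the action of $j$ (which in the coordinate $(s,t)$ sends $\partial_s\mapsto\partial_t$, $\partial_t\mapsto-\partial_s$, so that $ds\mapsto -dt$, $dt\mapsto ds$ on the dual) produces the explicit expression
\begin{align*}
\eta=\lambda\circ d\overline u\circ j=\bigl(\partial_t w-v\,\partial_t u\bigr)\,ds-\bigl(\partial_s w-v\,\partial_s u\bigr)\,dt,
\end{align*}
which matches the right-hand sides of the last two equations in (\ref{Eq:equiv_eq}). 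Thus $\eta = P\,ds + Q\,dt$ with $P=\partial_t w - v\,\partial_t u$ and $Q=-\partial_s w + v\,\partial_s u$, and closedness is equivalent to the single scalar identity $\partial_s Q - \partial_t P = 0$.

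First I would expand $\partial_s Q$ and $\partial_t P$ by the product and chain rules. We get
\begin{align*}
\partial_s Q &= -\partial_{ss}w + (\partial_s v)(\partial_s u) + v\,\partial_{ss}u,\\
\partial_t P &= \partial_{tt}w - (\partial_t v)(\partial_t u) - v\,\partial_{tt}u.
\end{align*}
Subtracting,
\begin{align*}
\partial_s Q-\partial_t P = -\bigl(\partial_{ss}w+\partial_{tt}w\bigr) + (\partial_s v)(\partial_s u)+(\partial_t v)(\partial_t u) + v\bigl(\partial_{ss}u+\partial_{tt}u\bigr).
\end{align*}
Now I would invoke the hypotheses: the third equation of (\ref{Eq:equiv_eq}) gives $\partial_{ss}w+\partial_{tt}w=0$, so the first parenthesis vanishes. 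For the remaining terms I would use the first two (Cauchy--Riemann) equations $\partial_s u=\partial_t v$ and $\partial_s v=-\partial_t u$, which imply both that $u$ is harmonic, $\partial_{ss}u+\partial_{tt}u=0$ (differentiate $\partial_s u=\partial_t v$ by $s$ and $\partial_s v=-\partial_t u$ by $t$ and add), killing the last term, and that the cross terms cancel: $(\partial_s v)(\partial_s u)=(-\partial_t u)(\partial_t v)=-(\partial_t v)(\partial_t u)$. Hence $\partial_s Q-\partial_t P=0$.

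The only genuinely delicate point is coordinate-independence: the identity is verified in a local holomorphic chart, so I would remark that since $\eta$ is defined intrinsically as $\lambda\circ d\overline u\circ j$ and $d\eta$ is a globally defined $2$-form on $\mathring\Sigma$, vanishing of $d\eta$ in every holomorphic chart (the computation being identical in each, as it uses only $j^2=-1$ and the Cauchy--Riemann/harmonicity equations, all of which are conformally natural) suffices to conclude $d\eta=0$ on all of $\mathring\Sigma$. I expect the main obstacle to be purely bookkeeping—getting the signs in the action of $j$ on $1$-forms and in the expansion consistent—rather than anything conceptual, since the substance is the interplay of harmonicity of $w$, harmonicity of $u$, and the Cauchy--Riemann cancellation, all of which follow immediately from the first three equations of (\ref{Eq:equiv_eq}).
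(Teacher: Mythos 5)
Your proof is correct and follows essentially the same route as the paper: write $\lambda\circ d\overline u\circ j=(\partial_t w-v\,\partial_t u)\,ds+(-\partial_s w+v\,\partial_s u)\,dt$ in a holomorphic chart, expand the exterior derivative, and kill the three groups of terms using harmonicity of $w$, harmonicity of $u$ (derived from the Cauchy--Riemann equations), and the Cauchy--Riemann cancellation of the cross terms. The only addition is your explicit remark on coordinate-independence, which the paper leaves implicit but is a harmless and standard observation.
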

\begin{proof}
We have
\begin{align*}
\lambda\circ d\overline u\circ j=(\partial_t w-v\partial_t u)ds+(-\partial_s w+v\partial_s u)dt
\end{align*}
and
\begin{align*}
\partial_t(\partial_t w-v\partial_t u)-\partial_s(-\partial_s w+v\partial_s u)=\\
=\Delta w-v\Delta u-\partial_tv\partial_t u-\partial_s v\partial_s u=0.
\end{align*}
Therefore,
$$d(\lambda\circ d\overline u\circ j)=0.$$
\end{proof}

\subsubsection{Obstruction section}\label{Sec:obstruction_section}

We can now define the obstruction section for lifting holomorphic curves in the Lagrangian projection to $J$-holomorphic curves in the symplectization. We denote the form $\lambda\circ d\overline u\circ j$ on $(\Sigma,j)$ by $\beta(\Sigma,j,\overline u)$ (or simply $\beta(\overline u)$). 
\begin{defi}
The boundary component of a $J$-holomorphic annulus that contains the positive puncture is called the \textit{outer} boundary component, while the boundary component with no positive puncture is called \textit{inner}.
\end{defi}
As a corollary of the conclusions above, we get the following lemmas.
\begin{lemma}[\cite{Russell21}]
There exists a smooth map $\Omega:\cM^\pi_2\to\lR$ such that an annulus $u_0\in\cM^\pi_2$ can be lifted to a $J$-holomorphic annulus in $\lR^4$ with boundary on $\lR\times\Lambda$ if and only if $\Omega(u_0)=0$.
\end{lemma}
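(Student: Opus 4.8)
The plan is to construct the lift coordinate-by-coordinate: given a holomorphic annulus $u_0=(u,v)\in\cM^\pi_2$ with boundary on $\pi_{xy}(\Lambda)$, I would produce the remaining two coordinates $w=\pi_z\widetilde u$ and $\theta=\pi_r\widetilde u$ of a lift $\widetilde u=(\theta,u,v,w)$, and then show that exactly one real obstruction prevents the construction from going through. By the equivalence (\ref{Eq:equiv_eq}), $\widetilde u$ is $J$-holomorphic if and only if $w$ is harmonic and $d\theta=\beta(\overline u)$, where $\overline u=(u,v,w)$. The Lagrangian boundary condition $\widetilde u(\partial\mathring\Sigma)\subset\lR\times\Lambda$ imposes that $\overline u$ maps the boundary into $\Lambda$ while leaving $\theta|_{\partial}$ unconstrained, since $\lR\times\Lambda$ is invariant under translation in the $r$-direction. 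Because $\Lambda$ is Legendrian, its $z$-coordinate is a well-defined function along $\pi_{xy}(\Lambda)$ (with a jump equal to the Reeb chord length at each self-intersection), so $u_0$ prescribes the boundary values of $w$.

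First I would solve for $w$. The boundary data just described, together with the $z$-interpolating asymptotic profile at each boundary puncture, determines $w$ as the unique solution of a Dirichlet problem for the Laplacian on $\mathring\Sigma$; uniqueness follows from the maximum principle applied to the difference of two solutions. I would also record that this solution depends smoothly on $u_0$ (on both the map and the conformal modulus of the annulus). This yields a well-defined assignment $u_0\mapsto\overline u=(u,v,w)$, and hence the one-form $\beta(\overline u)$, which is closed by the preceding lemma.

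It then remains to solve $d\theta=\beta(\overline u)$. Since $\beta(\overline u)$ is closed and $\theta$ is subject to no boundary constraint, a single-valued primitive $\theta$ on $\mathring\Sigma$ exists precisely when every period of $\beta(\overline u)$ vanishes. The punctures lie on $\partial\Sigma$, so they admit no encircling loops inside $\mathring\Sigma$; concretely, the radial retraction of the annulus onto its core circle $\delta$ avoids the boundary punctures, showing that $H_1(\mathring\Sigma)\cong H_1(\Sigma)$ is generated by $\delta$ and $b_1(\Sigma)=1$. Thus the unique obstruction is the single period
$$\Omega(u_0):=\int_\delta\beta(\overline u),$$
well-defined (independent of the representative $\delta$) because $\beta(\overline u)$ is closed, and smooth in $u_0$ because $w$, hence $\beta(\overline u)$, is. When $\Omega(u_0)=0$ the primitive $\theta$ exists and is unique up to an additive constant, matching the $\lR$-translation in the definition of $\cM_2$.

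The hard part will be the analytic input at the punctures. I would need to establish existence, uniqueness, and smooth dependence of the harmonic function $w$ with the prescribed singular (corner) boundary behavior, and then verify that when $\beta(\overline u)$ is exact the resulting primitive $\theta$ genuinely blows up to $+\infty$ at positive corners and to $-\infty$ at negative corners, so that $\widetilde u$ is asymptotic to the correct Reeb chords rather than being a merely formal solution. These are the standard asymptotic estimates near boundary punctures; once they are in place, the lemma is a direct corollary of the computations above, with $\Omega$ the period map defined by the displayed formula.
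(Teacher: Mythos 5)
Your proposal is correct and follows essentially the same route as the paper: lift the boundary uniquely to $\Lambda$, take the unique harmonic extension $w$ of $\pi_z\circ\overline u_0|_\partial$, and define $\Omega(u_0)$ as the period of the closed form $\beta(\overline u_0)$ over the generator of $H_1(\Sigma)$, with exactness of $\beta$ (equivalently vanishing of the single period, since $b_1(\Sigma)=1$ and the boundary punctures contribute no extra homology) precisely giving the primitive $\theta$ and hence the $J$-holomorphic lift. The asymptotic issues you flag at the end are likewise treated in the paper only by deferral to the literature (the finite Hofer energy of the lifts is cited to \cite{RobinSalamon,ENS02,lifting_Rizell} immediately after the proof), so your account matches the paper's in both substance and level of detail.
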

\begin{proof}
Take an arbitrary annulus $u_0\in\cM^\pi_{2},u_0:(\mathring\Sigma,j)\to\lC$. Let $l\subset\operatorname{int}(\Sigma)$ be the generator of $H_1(\Sigma)$ oriented as the outer boundary component. Map $u_0|_\partial$ has a unique lift $\overline u_0|_\partial$ to $\Lambda\subset\lR^3$. Let $w:\mathring\Sigma\to\lR$ be the (unique) harmonic extension of $\pi_z\circ\overline u_0|_\partial$ to the interior, and denote $\overline u_0=(u_0,w)$. Note that $\overline u_0$ satisfies the first three equations in (\ref{Eq:equiv_eq}). We define 
$$\Omega(u_0)=\int_l\beta(\Sigma,j,\overline u_0).$$ 
Then we have $\Omega(u_0)=0$ if and only if $\beta(\Sigma,j,\overline u_0)$ is exact. This holds if and only if there exists $\theta:\mathring\Sigma\to\lR$ such that $d\theta=\lambda\circ d\overline u\circ j$, i.e. if and only if there exists a $J$-holomorphic map $(\theta,\overline u_0)$ in the symplectization with boundary on $\lR\times\Lambda$, which finishes the proof.
\end{proof} 

The map $\Omega$ defined above is called the \textit{obstruction section}. Similarly, using $H_{dR}^1(\lD^2)=0$, we find that every holomorphic disk with corners and boundary on $\pi_{xy}(\Lambda)\subset\lC$ can be lifted to a $J$-holomorphic disk on $\lR\times\Lambda$ in the symplectization. See also \cite{ENS02,lifting_Rizell}.

\begin{lemma}
Let $u_0$ be a holomorphic disk in $\lC$ with boundary on $\pi_{xy}(\Lambda)$ and corners at the self-intersections of $\pi_{xy}(\Lambda)$. Then there exists a $J$-holomorphic disk $(\theta,u)$ in the symplectization with boundary on $\lR\times\Lambda$ such that $\pi_{xy}\circ u=u_0$.
\end{lemma}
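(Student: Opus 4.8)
The plan is to mimic the preceding annulus lemma, reducing the existence of a lift to the exactness of the closed $1$-form $\beta(\Sigma,j,\overline u_0)=\lambda\circ d\overline u_0\circ j$, and then to observe that for a disk domain this obstruction vanishes automatically for cohomological reasons. Concretely, writing $u_0:\mathring\Sigma\to\lC$ with $\Sigma=\lD$, I want to produce $\overline u_0=(u_0,w):\mathring\Sigma\to\lR^3$ satisfying the first three equations of (\ref{Eq:equiv_eq}) and then find $\theta$ with $d\theta=\beta(\Sigma,j,\overline u_0)$; the pair $(\theta,\overline u_0)$ is then the desired $J$-holomorphic disk, and $\pi_{xy}\circ(\theta,\overline u_0)=u_0$ by construction.

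First I would lift the boundary of $u_0$ to $\Lambda$. Away from the self-intersections $\pi_{xy}|_\Lambda$ is injective, so each boundary arc of $u_0$ between consecutive corners lifts uniquely to an arc of $\Lambda$; at a corner the two adjacent arcs limit to a double point of $\pi_{xy}(\Lambda)$, whose two preimages in $\Lambda$ are the endpoints of the corresponding Reeb chord, and the positive/negative corner data records which of the over/undercrossing sheets each arc lies on, so the lift is determined uniquely. This produces boundary values $\pi_z\circ\overline u_0|_{\partial}$, and I take $w:\mathring\Sigma\to\lR$ to be their harmonic extension, so that $w$ is harmonic and $(u_0,w)$ solves the first three equations of (\ref{Eq:equiv_eq}). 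By the lemma above, $\beta(\Sigma,j,\overline u_0)$ is then a closed $1$-form on $\mathring\Sigma$.

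The crucial step is exactness. Unlike the annulus case, the domain is a punctured disk $\mathring\Sigma=\lD\setminus\{t_1,\dots,t_k\}$ with all $t_i$ on the boundary circle, and I claim this space is contractible: the straight-line homotopy $H(z,s)=(1-s)z$ retracts it to the center while remaining inside $\mathring\Sigma$, since for $s>0$ the image $(1-s)z$ lies in the open disk (which misses every $t_i$) and at $s=0$ it is the identity on $\mathring\Sigma$. Hence $H^1_{dR}(\mathring\Sigma)=0$ — equivalently, every loop in $\mathring\Sigma$ is null-homotopic, so the periods of $\beta(\Sigma,j,\overline u_0)$ all vanish — and the closed form is exact. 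Choosing a primitive $\theta$ with $d\theta=\lambda\circ d\overline u_0\circ j$ makes $(\theta,\overline u_0)$ satisfy all five equations of (\ref{Eq:equiv_eq}), i.e.\ be $J$-holomorphic, with boundary on $\lR\times\Lambda$ and projection $u_0$.

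The point requiring genuine care is not the exactness argument but the behavior at the punctures: I should verify that the harmonic extension $w$ and the primitive $\theta$ exhibit the correct asymptotics, so that $(\theta,\overline u_0)$ is honestly positively or negatively asymptotic to the Reeb chord at each corner of $u_0$ and has finite Hofer energy. This follows from the standard local model near a boundary corner, where the $z$-coordinate limits to the two ends of the Reeb chord while $\theta\to\pm\infty$ according to the sign of the corner, matching the asymptotic conventions fixed earlier. With this in hand the argument is otherwise identical to the annulus lemma, the single simplification being that here the obstruction integral is forced to vanish rather than being a nontrivial condition.
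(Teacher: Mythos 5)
Your proposal is correct and follows the paper's own route: the paper proves this lemma exactly by running the annulus construction (unique boundary lift, harmonic extension $w$, closedness of $\beta(\Sigma,j,\overline u_0)$) and invoking $H^1_{dR}(\lD^2)=0$ to get exactness and hence the primitive $\theta$, deferring the asymptotic/finite-energy analysis to the literature just as you do. If anything, you are slightly more careful than the paper in checking that the \emph{punctured} disk (punctures on $\partial\lD$) is still contractible, which is the correct justification for the vanishing periods.
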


The fact that the lifts have finite Hofer energy for generic $\Lambda$ follows from a more subtle analysis of the asymptotic behavior, see \cite{RobinSalamon,ENS02,lifting_Rizell}

This can easily be generalized to a definition of an obstruction section $\Omega:\cM^\pi(\Sigma)\to\lR^{b_1(\Sigma)}$ for any $\Sigma$ and $b_1(\Sigma)$ the first Betti number, such that $u\in\cM^\pi(\Sigma)$ lifts to a $J$-holomorphic curve on $\lR\times\Lambda$ if and only if $\Omega(u)=0$.

\subsubsection{Extension of the obstruction section to the boundary}\label{Sec:Elliptic_hyperbolic}

We describe compactification $\overline\cM^\pi_{2,1}$ of $\cM^\pi_{2,1}$ and extend the obstruction section to a map $\Omega:\overline \cM^\pi_{2,1}\to\lR\cup\{+\infty,-\infty\}$.

A generic point in $\cM^\pi_{2,1}$ has convex corners and one boundary branch point. A codimension 1 subset of $\cM^\pi_{2,1}$ consists of immersed holomorphic annuli with one non-convex corner and no branch points (see Figure \ref{Fig:projection_moduli_space}). Connected components of $\cM^\pi_{2,1}$ are parameterized by the image of the boundary branch point or the image of the non-convex corner. A boundary (limit) point of $\cM^\pi_{2,1}$ can be seen as a degenerate annulus in the Lagrangian projection where the branch point meets the boundary of the annulus. We distinguish two cases. First, the branch point meets the boundary component it does not lie on, and we say the degenerate curve is \textit{non-split}. Here, the degenerate annulus can be seen as an index zero holomorphic disk with two additional corners, one positive and one negative, at some self-intersection of $\pi_{xy}(\Lambda)$ (see Figure \ref{Fig:projection_moduli_space}, left). Second, the branch point meets the boundary component it lies on, and we say the degenerate curve is \textit{split}. Here, the degenerate map can be seen as a building consisting of a holomorphic annulus in $\cM_{2,0}^\pi$ (the annular part) and an index zero holomorphic disk attached to it at some positive or negative corner (see Figure \ref{Fig:projection_moduli_space}, right). We denote the moduli space compactified this way by $\overline{\cM}^\pi_{2,1}$.

\begin{figure}
\def\svgwidth{151mm}
\begingroup%
  \makeatletter%
  \providecommand\rotatebox[2]{#2}%
  \newcommand*\fsize{\dimexpr\f@size pt\relax}%
  \newcommand*\lineheight[1]{\fontsize{\fsize}{#1\fsize}\selectfont}%
  \ifx\svgwidth\undefined%
    \setlength{\unitlength}{797.23331351bp}%
    \ifx\svgscale\undefined%
      \relax%
    \else%
      \setlength{\unitlength}{\unitlength * \real{\svgscale}}%
    \fi%
  \else%
    \setlength{\unitlength}{\svgwidth}%
  \fi%
  \global\let\svgwidth\undefined%
  \global\let\svgscale\undefined%
  \makeatother%
  \begin{picture}(1,0.22497675)%
    \lineheight{1}%
    \setlength\tabcolsep{0pt}%
    \put(0,0){\includegraphics[width=\unitlength,page=1]{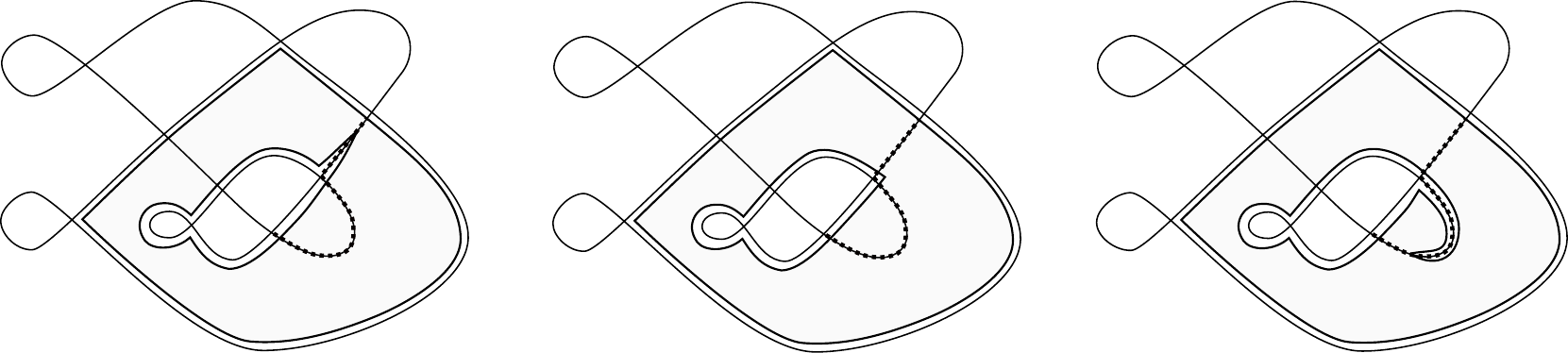}}%
  \end{picture}%
\endgroup%
\caption{Connected component of the moduli space $\overline{\cM}^\pi_{2,1}$ with a non-split and a split boundary point.}
\label{Fig:projection_moduli_space}
\end{figure}

The obstruction section $\Omega:\cM^\pi_{2,1}\to\lR$ can be extended continuously to a section $\Omega:\overline{\cM}^\pi_{2,1}\to\lR\cup\{+\infty,-\infty\}$ as follows. If $\overline u\in\partial\overline{\cM}^\pi_{2,1}$ is a non-split degenerate point, we define $\Omega(\overline u)=+\infty$ if the outer boundary component passes through the undercrossing arc near the self-intersection of $\pi_{xy}(\Lambda)$ at the branch point of $\overline u$, and $\Omega(\overline u)=-\infty$ otherwise (see for example Figure \ref{Fig:hyperbolic_boundary_obstruction}). If $\overline u\in\partial\overline{\cM}^\pi_{2,1}$ is a split degenerate point, we define $\Omega(\overline u)=\Omega(\overline u_0)$, where $\overline u_0\in\cM^\pi_{2,0}$ is the annular part of the building $\overline u$. We show that the extension defined this way is continuous at the split (Lemma \ref{Lemma:OSection_elliptic}) and non-split (Lemma \ref{Lemma:OSection_hyperbolic}) boundary points. For this, we need the following lemma.

\begin{figure}
\def\svgwidth{129mm}
\begingroup%
  \makeatletter%
  \providecommand\rotatebox[2]{#2}%
  \newcommand*\fsize{\dimexpr\f@size pt\relax}%
  \newcommand*\lineheight[1]{\fontsize{\fsize}{#1\fsize}\selectfont}%
  \ifx\svgwidth\undefined%
    \setlength{\unitlength}{636.99027828bp}%
    \ifx\svgscale\undefined%
      \relax%
    \else%
      \setlength{\unitlength}{\unitlength * \real{\svgscale}}%
    \fi%
  \else%
    \setlength{\unitlength}{\svgwidth}%
  \fi%
  \global\let\svgwidth\undefined%
  \global\let\svgscale\undefined%
  \makeatother%
  \begin{picture}(1,0.25727306)%
    \lineheight{1}%
    \setlength\tabcolsep{0pt}%
    \put(0.08080794,0.14009945){\makebox(0,0)[lt]{\lineheight{1.25}\smash{\begin{tabular}[t]{l}$q_1$\end{tabular}}}}%
    \put(0.2170555,0.164301){\makebox(0,0)[lt]{\lineheight{1.25}\smash{\begin{tabular}[t]{l}$q_2$\end{tabular}}}}%
    \put(0.21195459,0.11256271){\makebox(0,0)[lt]{\lineheight{1.25}\smash{\begin{tabular}[t]{l}$q_3$\end{tabular}}}}%
    \put(0.30195002,0.22466237){\makebox(0,0)[lt]{\lineheight{1.25}\smash{\begin{tabular}[t]{l}$q_4$\end{tabular}}}}%
    \put(0.31798174,0.13235929){\makebox(0,0)[lt]{\lineheight{1.25}\smash{\begin{tabular}[t]{l}$q_5$\end{tabular}}}}%
    \put(0.21183314,0.02645358){\makebox(0,0)[lt]{\lineheight{1.25}\smash{\begin{tabular}[t]{l}$q_6$\end{tabular}}}}%
    \put(0.33984284,0.04661455){\makebox(0,0)[lt]{\lineheight{1.25}\smash{\begin{tabular}[t]{l}$q_7$\end{tabular}}}}%
    \put(0,0){\includegraphics[width=\unitlength,page=1]{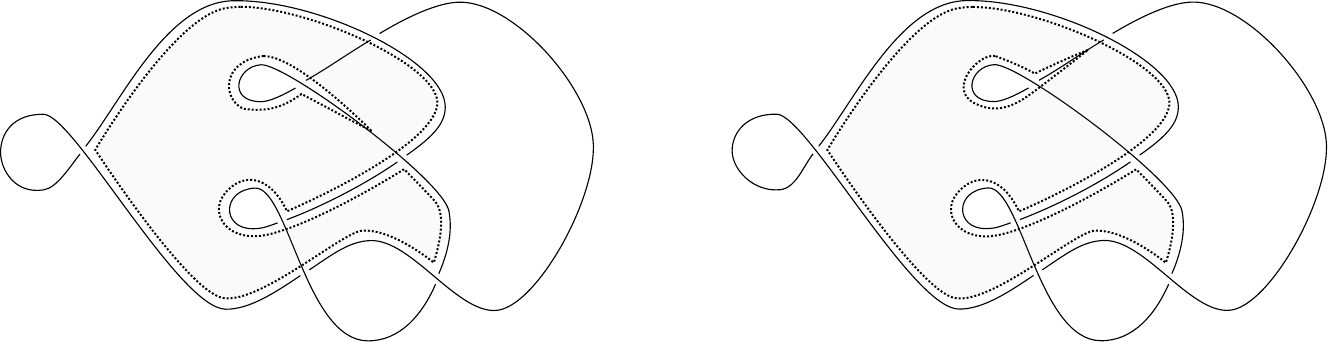}}%
  \end{picture}%
\endgroup%
\caption{Obstruction section $\Omega$ equal to $+\infty$ and $-\infty$ at the two non-split boundary points.}
\label{Fig:hyperbolic_boundary_obstruction}
\end{figure}

\begin{lemma}\label{Lemma:harmonic_strip}
Let $w_n:\lR\times[0,1]\to \lR$ be a sequence of harmonic maps (possibly with isolated boundary discontinuities) such that $w_n$ are uniformly bounded and 
\begin{align*}
\max_{(-2n,2n)\times\{0,1\}}|w_n|\overset{n\to\infty}{\longrightarrow}0.
\end{align*}
Then
\begin{align*}
\max_{(-n,n)\times[0,1]}|w_n|\overset{n\to\infty}{\longrightarrow}0.
\end{align*}
If additionally 
\begin{align*}
\|w_n|_{(-2n,2n)\times\{0,1\}}\|_{C^2}\overset{n\to\infty}{\longrightarrow}0,
\end{align*}
and $\|w_n|_{\lR\times\{0,1\}}\|_{C^2}$ are uniformly bounded, then
\begin{align*}
\|w_n|_{(-n,n)\times[0,1]}\|_{C^1}\overset{n\to\infty}{\longrightarrow}0.
\end{align*}
\end{lemma}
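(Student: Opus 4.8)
The plan is to prove both claims by comparison with an explicit harmonic barrier, and then to upgrade $C^0$-control to gradient control using standard elliptic estimates. For the first claim I would work on the rectangle $Q_n=[-2n,2n]\times[0,1]$, put $M=\sup_n\|w_n\|_{\infty}$ and $\epsilon_n=\max_{(-2n,2n)\times\{0,1\}}|w_n|\to0$, and compare $w_n$ with $\Phi_n=\epsilon_n+M\,h_n$, where $h_n$ is the harmonic function on $Q_n$ that vanishes on the horizontal edges $t\in\{0,1\}$ and equals $1$ on the vertical edges $s=\pm 2n$. On the horizontal part of $\partial Q_n$ one has $|w_n|\le\epsilon_n=\Phi_n$, and on the vertical part $|w_n|\le M\le\Phi_n$, so the maximum principle gives $|w_n|\le\Phi_n$ on $Q_n$. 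Separating variables yields $h_n(s,t)=\sum_{k\text{ odd}}\tfrac{4}{k\pi}\tfrac{\cosh(k\pi s)}{\cosh(2k\pi n)}\sin(k\pi t)$, whence $h_n(s,t)\le C e^{-\pi(2n-|s|)}$ (the rate $\pi$ being the square root of the first Dirichlet eigenvalue of $[0,1]$). In particular $|w_n|\le\epsilon_n+CM e^{-\pi n}$ on $(-n,n)\times[0,1]$, which tends to $0$.

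The isolated boundary discontinuities need a brief justification of the comparison step: there $w_n$ need not be continuous, but $w_n-\Phi_n$ is subharmonic and bounded above with only isolated boundary singularities, and such singularities are removable for the maximum principle of bounded functions (equivalently one excises shrinking neighborhoods of the singular points, the uniform bound $M$ ensuring their contribution vanishes in the limit). I expect this to be the only delicate point in the first claim.

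For the second claim I would combine the $C^0$-bound above with elliptic regularity. The same barrier gives $|w_n|\le\epsilon_n+CM e^{-\pi d_n}$ on $(-(2n-d_n),2n-d_n)\times[0,1]$; taking $d_n\to\infty$ with $d_n\le n-1$ produces $C^0$-smallness on a region eventually containing $(-n-1,n+1)\times[0,1]$. Away from the horizontal boundary, interior gradient estimates bound $|\nabla w_n|$ by a constant times $\|w_n\|_{C^0}$ on a unit neighborhood, which tends to $0$. Near a boundary point $(s_0,0)$ with $|s_0|\le n$, I would apply the boundary Schauder estimate on a fixed half-disk, $\|w_n\|_{C^1}\lesssim\|w_n\|_{C^0}+\|w_n|_{\partial}\|_{C^{1,\alpha}}$: the first term is small by the preceding step, and the second is bounded by $\|w_n|_{(-2n,2n)\times\{0,1\}}\|_{C^2}\to0$. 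Since these estimates are translation-invariant, the constants are uniform in $s_0$, giving $\|w_n\|_{C^1((-n,n)\times[0,1])}\to0$.

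The hardest part is controlling the normal derivative $\partial_t w_n$ along the horizontal boundary. The hypotheses supply the tangential derivative $\partial_s w_n$ directly, and via $\partial_{tt}w_n=-\partial_{ss}w_n$ also the second normal derivative on the boundary, but not $\partial_t w_n$ itself; a purely interior argument (or passage to the harmonic conjugate) controls $\partial_t w_n$ only up to an undetermined additive constant. This is precisely what the full boundary Schauder estimate resolves, as it bounds all first derivatives up to the boundary rather than only the tangential one. The uniform $C^2$-bound on $\lR\times\{0,1\}$ enters here: it guarantees that the Dirichlet data are regular enough for these estimates to hold with uniform constants across the transition into the region where only the $C^0$-bound is available and near the isolated singular points.
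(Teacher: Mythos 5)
Your proof is correct, but it takes a genuinely different route from the paper's. The paper works throughout with the explicit Poisson kernel for the strip (Widder's formula with $P(s,t)=\pi\sin\pi t/(\cosh\pi s-\cos\pi t)$): the $C^0$ claim comes from splitting the Poisson integral into the near part $(-2n,2n)$ and the exponentially suppressed tails, the tangential derivatives $\partial_s w_n,\partial_{ss}w_n$ come from differentiating the representation (here the global uniform $C^2$ bound on $\lR\times\{0,1\}$ is used to control the tails of the differentiated integrals), and the normal derivative is handled by a midline trick: $\partial_t w_n(s,1/2)$ is expressed via the explicit kernel $\partial_t P(\sigma-s,1/2)=-\pi^2/\cosh^2(\pi(\sigma-s))$ acting on the (small) boundary \emph{values}, and then $\partial_t w_n(s,t)=\partial_t w_n(s,1/2)+\int_{1/2}^{t}\partial_{tt}w_n(s,\tau)\,d\tau$ with $\partial_{tt}w_n=-\partial_{ss}w_n$. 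You instead prove the $C^0$ claim by comparison with the harmonic barrier $\epsilon_n+Mh_n$ (maximum principle plus separation of variables for $h_n$, with removability of the isolated boundary singularities), and the $C^1$ claim by interior gradient estimates plus translation-invariant boundary Schauder estimates on unit half-disks; your identification of the normal derivative as the crux, resolved by the full boundary estimate rather than a tangential argument, matches exactly the role the paper's midline trick plays. Each approach has its merits: yours is modular, makes the treatment of the isolated discontinuities explicit (the paper leaves this implicit in the kernel representation), and in fact only uses uniform boundedness plus \emph{local} $C^2$ smallness of the data, so the global $C^2$ bound is nearly superfluous in your version; the paper's is self-contained and elementary, yielding explicit exponential rates without invoking Schauder theory (its nearby Corollary \ref{Corollary:harmonic_bootstrapping} is in the same spirit as your estimates but is not used in this proof).
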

\begin{proof}
For $w_+:\lR\to\lR$ and $w_-:\lR\to\lR$ piecewise smooth functions, the unique harmonic function $w:\lR\times[0,1]\to\lR$ such that $w(s,0)=w_-(s),w(s,1)=w_+(s)$ is given by (see \cite{Widder61})
\begin{align*}
w(s,t)&=\frac{1}{2\pi}\int_{-\infty}^{+\infty}\left(P(\sigma-s,t)w_-(\sigma)+P(\sigma-s,1-t)w_+(\sigma)\right)d\sigma=\\
&=\frac{1}{2\pi}\int_{-\infty}^{+\infty}\left(P(\sigma,t)w_-(\sigma+s)+P(\sigma,1-t)w_+(\sigma+s)\right)d\sigma
\end{align*}
for $(s,t)\in\lR\times(0,1)$, where
\begin{equation}
\label{Eq:Pkernel}
\begin{aligned}
P(s,t)=\pi\frac{\sin \pi t}{\cosh \pi s-\cos \pi t}.
\end{aligned}
\end{equation}
Then for any $s\in(-n,n),t\in(0,1)$ we have
\begin{align*}
2\pi|w_n(s,t)|\leq&\int_{(-2n,2n)}P(\sigma-s,t)|w_n(\sigma,0)|d\sigma+\\
+&\int_{(-2n,2n)} P(\sigma-s,1-t)|w_n(\sigma,1)|d\sigma+\\
+&\int_{(-\infty,-2n)\cup(2n,+\infty)}P(\sigma-s,t)|w_n(\sigma,0)|d\sigma+\\
+&\int_{(-\infty,-2n)\cup(2n,+\infty)}P(\sigma-s,1-t)|w_n(\sigma,1)|d\sigma\leq\\
\leq&\, c\left(\max_{(-2n,2n)\times\{0\}\cup(-2n,2n)\times\{1\}}|w_n|+e^{-n}\right),
\end{align*}
where $c$ is a uniform constant. This follows from the fact that $w_n$ are uniformly bounded, $0< P(\sigma,t)<2\pi e^{-|\sigma|}$ for all $|\sigma|>1,t\in(0,1)$, and $\int_{-\infty}^\infty |P(\sigma,t)|d\sigma=\int_{-\infty}^\infty P(\sigma,t)d\sigma=2(\pi-\pi t)\leq 2\pi$ for $t\in(0,1)$. Therefore,
\begin{align*}
\max_{(-n,n)\times[0,1]}|w_n|\overset{n\to\infty}{\longrightarrow}0.
\end{align*}
Assume now that
\begin{align*}
\|w_n|_{(-2n,2n)\times\{0,1\}}\|_{C^2}\overset{n\to\infty}{\longrightarrow}0.
\end{align*}
For all $s\in\lR,t\in(0,1)$, we have
\begin{align*}
&\partial_sw_n(s,t)=\frac{1}{2\pi}\int_{-\infty}^{+\infty}\left(P(\sigma-s,t)\partial_s w_n(\sigma,0)+P(\sigma-s,1-t)\partial_s w_n(\sigma,1)\right)d\sigma,\\
&\partial_{ss}w_n(s,t)=\frac{1}{2\pi}\int_{-\infty}^{+\infty}\left(P(\sigma-s,t)\partial_{ss} w_n(\sigma,0)+P(\sigma-s,1-t)\partial_{ss} w_n(\sigma,1)\right)d\sigma,\\
&\partial_t w_n(s,1/2)=\frac{1}{2\pi}\int_{-\infty}^{+\infty}\left(\partial_t P(\sigma-s,1/2)w_n(\sigma,0)-\partial_t P(\sigma-s,1/2) w_n(\sigma,1)\right)d\sigma.
\end{align*}
Similarly, we get
\begin{align*}
&\max_{(-n,n)\times[0,1]}|\partial_{s}w_n|\overset{n\to\infty}{\longrightarrow}0,\\
&\max_{(-n,n)\times[0,1]}|\partial_{tt}w_n|=\max_{(-n,n)\times[0,1]}|\partial_{ss}w_n|\overset{n\to\infty}{\longrightarrow}0.
\end{align*}
Additionally,
\begin{align*}
\partial_t P(s,1/2)=-\pi^2\frac{1}{(\cosh\pi s)^2}
\end{align*}
similarly implies
\begin{align*}
\max_{s\in (-n,n)}|\partial_tw_n(s,1/2)|\overset{n\to\infty}{\longrightarrow}0.
\end{align*}
Now, using
\begin{align*}
\partial_tw_n(s,t)=\int_{\frac{1}{2}}^t\partial_{tt}w_n(s,t) dt+\partial_tw_n(s,1/2),
\end{align*}
we finally get
\begin{align*}
\max_{(-n,n)\times[0,1]}|\partial_{t}w_n|\overset{n\to\infty}{\longrightarrow}0.
\end{align*}
This finishes the proof that
\begin{align*}
\|w_n\|_{C^1((-n,n)\times[0,1])}\overset{n\to\infty}{\longrightarrow}0.
\end{align*}
\end{proof}

\begin{cor}\label{Cor:thin_necks}
Let $\Lambda_n,\Lambda$ be Legendrian knots such that $\Lambda_n\overset{C^\infty}{\to}\Lambda$, $u_n:\mathring\Sigma_n\to\lC$ a sequence of holomorphic maps with corners and boundary on $\pi_{xy}(\Lambda_n)$, and $\varphi_n:(-8n,8n)\times[0,1]\to\Sigma_n$ holomorphic embeddings such that $\|u_n\circ\varphi_n-\Gamma\|_{C^2((-8n,8n)\times\{0,1\})}\overset{n\to\infty}{\longrightarrow}0$ for $\Gamma\in\lC$ a self-intersection of $\pi_{xy}(\Lambda)$. Then, for $w_n:\mathring\Sigma_n\to\lR$ the unique harmonic lifts such that $(u_n,w_n)(\partial\mathring\Sigma_n)\subset\Lambda_n$, we have
$$\|w_n\circ\varphi_n-w\|_{C^1((-n,n)\times[0,1])}\overset{n\to\infty}{\longrightarrow}0,$$
where $w(s,t)=\pi_z\circ\gamma(t)$ or $w(s,t)=\pi_z\circ\gamma(1-t)$ for $\gamma:[0,1]\to\lR^3$ the Reeb chord on $\Lambda$ corresponding to $\Gamma$.
\end{cor}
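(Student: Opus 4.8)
The plan is to apply Lemma~\ref{Lemma:harmonic_strip} to the harmonic difference $\tilde w_n\coloneqq w_n\circ\varphi_n-w$ on the neck region $(-8n,8n)\times[0,1]$. First I would record that $\tilde w_n$ is indeed harmonic: $w_n$ is harmonic on $\mathring\Sigma_n$ and $\varphi_n$ is holomorphic, so $w_n\circ\varphi_n$ is harmonic, while $w(s,t)=\pi_z\circ\gamma(t)$ (or $\pi_z\circ\gamma(1-t)$) is affine in $t$, since $\gamma$ is a Reeb chord with $\dot\gamma=\partial_z$, and hence harmonic. The sequence $\tilde w_n$ is also uniformly bounded: by the maximum principle $w_n$ is bounded by the range of $\pi_z$ on $\Lambda_n$, which is uniformly bounded because $\Lambda_n\overset{C^\infty}{\to}\Lambda$, and $w$ is bounded.

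Next I would check that the horizontal boundary data of $\tilde w_n$ tends to $0$ in $C^2$ with uniformly bounded $C^2$ norm, so that the hypotheses of Lemma~\ref{Lemma:harmonic_strip} are met. The sides $\varphi_n((-8n,8n)\times\{0,1\})$ lie on $\partial\Sigma_n$, hence $(u_n\circ\varphi_n,w_n\circ\varphi_n)$ maps them into $\Lambda_n$. Near $\Gamma$ the projection $\pi_{xy}(\Lambda)$ is a union of two transverse arcs, and along each arc the $z$-coordinate of the lift to $\Lambda$ is a smooth function of the $xy$-coordinate, converging as $\Lambda_n\to\Lambda$ to the corresponding function for $\Lambda$; the two values of these functions at $\Gamma$ are exactly the endpoints $\pi_z\gamma(0)$ and $\pi_z\gamma(1)$ of the Reeb chord. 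Since $u_n\circ\varphi_n\to\Gamma$ in $C^2$ on each side, composing with this smooth lift shows that $w_n\circ\varphi_n$ converges in $C^2$ to the constant $\pi_z\gamma(0)$ on one side and to $\pi_z\gamma(1)$ on the other. The two horizontal sides approach the two distinct branches at $\Gamma$, which determines whether $w(s,t)=\pi_z\gamma(t)$ or $w(s,t)=\pi_z\gamma(1-t)$; either way $\tilde w_n|_{(-8n,8n)\times\{0,1\}}\overset{n\to\infty}{\longrightarrow}0$ in $C^2$.

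The main obstacle is that Lemma~\ref{Lemma:harmonic_strip} is stated on the full strip $\lR\times[0,1]$ while $\tilde w_n$ only lives on $(-8n,8n)\times[0,1]$; the generous gap between $8n$ and $n$ is precisely what lets me pass from one to the other. I would extend the (uniformly bounded, $C^2$-small) horizontal boundary data of $\tilde w_n$ from $(-8n,8n)$ to all of $\lR$ by a fixed smooth cutoff, and let $\hat w_n$ be its Poisson-kernel harmonic extension to $\lR\times[0,1]$. The quantitative estimates in the proof of Lemma~\ref{Lemma:harmonic_strip}, applied at radius $4n$, then give $\|\hat w_n\|_{C^1((-4n,4n)\times[0,1])}\overset{n\to\infty}{\longrightarrow}0$. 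The difference $\tilde w_n-\hat w_n$ is harmonic on $(-8n,8n)\times[0,1]$, vanishes on the horizontal boundary, and is uniformly bounded on the vertical ends $\{\pm 8n\}\times[0,1]$; since the harmonic measure of these ends seen from $(-n,n)\times[0,1]$ decays like $e^{-cn}$, the maximum principle makes $\tilde w_n-\hat w_n$ uniformly small there, and interior elliptic estimates upgrade this to $C^1$ smallness on $(-n,n)\times[0,1]$. Combining the two contributions yields $\|w_n\circ\varphi_n-w\|_{C^1((-n,n)\times[0,1])}\overset{n\to\infty}{\longrightarrow}0$, as claimed.
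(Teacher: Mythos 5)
Your proof is correct, but it resolves the main technical obstacle by a genuinely different route than the paper. The paper never works on the finite neck at all: it extends $\varphi_n$ to holomorphic parameterizations $\psi_n:\lR\times[0,1]\to\Sigma_n$ of a full infinite strip (cutting $\Sigma_n$ into a disk away from $\Gamma$, or using the universal cover by a strip for annuli), arranged so that $\varphi_n((-n,n)\times[0,1])\subset\psi_n((-2n,2n)\times[0,1])$ and $\psi_n((-4n,4n)\times[0,1])\subset\varphi_n((-8n,8n)\times[0,1])$ with uniformly bounded coordinate changes. Then $w_n\circ\psi_n-w$ is harmonic on all of $\lR\times[0,1]$ with boundary data $C^2$-small on the middle portion and merely uniformly bounded outside, which is exactly the situation Lemma \ref{Lemma:harmonic_strip} was tailored for (this is why the lemma only assumes smallness on $(-2n,2n)$ rather than everywhere), and the $C^1$ bound transports back through the bounded coordinate changes. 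You instead stay on the finite rectangle and tame the two vertical ends by hand: a cutoff extension plus the Poisson-kernel estimates from the lemma's proof for the principal part $\hat w_n$, and an exponential harmonic-measure/barrier bound plus Corollary \ref{Corollary:harmonic_bootstrapping} (with $f=0$) for the remainder. Both arguments are sound; the paper's route buys a verbatim application of the lemma at the cost of constructing the global parameterizations $\psi_n$ (which requires separate arguments for disks versus annuli), while yours is purely local on the neck at the cost of the barrier estimate and a uniform-constant check in the elliptic bootstrapping. Two small points: $\tilde w_n-\hat w_n$ vanishes on the horizontal boundary only where your cutoff equals $1$, so the maximum-principle step should be run on a slightly smaller rectangle, say $(-7n,7n)\times[0,1]$ --- the slack between $8n$ and $n$ absorbs this trivially. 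And your strand-graph identification of the boundary limits with $\pi_z\gamma(0),\pi_z\gamma(1)$ is the same step the paper compresses into ``since $\Lambda_n,\Lambda$ are smooth, $\|w_n\circ\psi_n-w\|_{C^2}\to 0$''; like the paper, you implicitly assume the two sides of the neck lie on the two distinct strands at $\Gamma$, which is part of the intended setup of the corollary rather than a gap.
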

\begin{proof}
There exist holomorphic embeddings $\psi_n:\lR\times[0,1]\to\Sigma_n$ such that $\varphi_n((-n,n)\times[0,1])\subset \psi_n((-2n,2n)\times[0,1])$ and $\psi_n((-4n,4n)\times[0,1])\subset \varphi_n((-8n,8n)\times[0,1])$, and such that the change of coordinates maps between $\varphi_n$ and $\psi_n$ have uniformly bounded derivatives. For disks, these can be obtained by taking strip-like holomorphic parameterization of the domain (with added punctures). Otherwise, we first cut the surface into a disk, where the cuts are chosen away from $\Gamma$. For annuli, similar can also be obtained by taking the holomorphic universal covering by a strip. If $u_n\circ\varphi_n(s,0)$ passes through the undercrossing arc near $\Gamma$ we take $w(s,t)=\pi_z\circ\gamma(t)$, otherwise we take $w(s,t)=\pi_z\circ\gamma(1-t)$. Map $w$ is obviously harmonic. Since $\|u_n\circ\psi_n-\Gamma\|_{C^2((-4n,4n)\times\{0,1\})}\overset{n\to\infty}{\longrightarrow}0$ and $\Lambda,\Lambda_n$ are smooth, we conclude $\|w_n\circ\psi_n-w\|_{C^2((-4n,4n)\times\{0,1\})}\overset{n\to\infty}{\longrightarrow}0$. Additionally, $w_n$ are uniformly bounded since $\Lambda_n,\Lambda$ are. Then, using Lemma \ref{Lemma:harmonic_strip}, we get $\|w_n\circ\varphi_n-w\|_{C^1((-n,n)\times[0,1])}\overset{n\to\infty}{\longrightarrow}0$.
\end{proof}
We give a short overview of some well-known elliptic bootstrapping results that will be used later.
\begin{thm}\cite[Theorem 6.3.1.2 and Theorem 6.3.2.5]{evansPDE}
Let $U'\subset\lC$ be a domain with smooth boundary, $U\subset U'$ a subdomain and $u: U\to\lR$ be a smooth solution of the Dirichlet problem
\begin{align*}
&\Delta u=\eta,\\
&u|_{\partial U'\cap U}=f,
\end{align*}
where $\eta: U\to\lR,f:\partial U'\cap U\to\lR$ are smooth functions. For all compact subsets $K\subset U$ and all $k\in\lN_0$, there exists a constant $c_k\in\lR$ (independent of $u,\eta$ and $f$) such that
\begin{align*}
\|u\|_{W^{k+2,2}(K)}\leq c_k\left(\|\eta\|_{W^{k,2}( U)} +\|f\|_{W^{k+2,2}(\partial U'\cap U)} + \|u\|_{L^2( U)}\right),
\end{align*}
where $\|\cdot\|_{W^{k,p}}$ denotes the Sobolev norm.
\end{thm}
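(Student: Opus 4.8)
The plan is to reconstruct this classical estimate by the method of difference quotients (Nirenberg), proving it in two stages—an interior estimate away from $\partial U'$ and a boundary estimate near $\partial U'\cap U$—and obtaining higher regularity in each by induction on $k$. The base case $k=0$ asserts that an $L^2$ solution of $\Delta u=\eta$ with boundary data $f$ lies in $W^{2,2}_{loc}$ with the stated control; the inductive step differentiates the equation and reapplies the base estimate.

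First I would treat the interior. Fix a compact set with positive distance to $\partial U$ and a cutoff $\zeta$ equal to $1$ on it and supported in $U$. Starting from the weak formulation $\int_U \nabla u\cdot\nabla\phi=-\int_U \eta\,\phi$, I would test against $\phi=D_l^{-h}(\zeta^2 D_l^h u)$, where $D_l^h$ denotes the difference quotient in the $l$-th coordinate direction. The standard manipulation—integration by parts for difference quotients, Cauchy--Schwarz, and absorption of the gradient terms using $\zeta$—gives a bound on $\|D_l^h\nabla u\|_{L^2}$ uniform in $h$, hence $u\in W^{2,2}$ locally with $\|u\|_{W^{2,2}}\le c(\|\eta\|_{L^2(U)}+\|u\|_{L^2(U)})$. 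For the inductive step, if $u\in W^{k+2,2}_{loc}$ then every derivative $\partial^\alpha u$ with $|\alpha|=k+1$ solves $\Delta(\partial^\alpha u)=\partial^\alpha\eta$ in the interior, and applying the base estimate to $\partial^\alpha u$ on nested compact subsets upgrades the regularity to $W^{k+3,2}$, yielding the interior inequality with the $f$-term absent.

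Next I would treat a neighborhood of a boundary point $p\in\partial U'\cap U$. Since $\partial U'$ is smooth, a smooth change of coordinates flattens the boundary, turning $\Delta$ into a second-order elliptic operator $L$ with smooth coefficients on a half-ball, with $u$ prescribed on the flat piece. I would reduce to homogeneous boundary data by subtracting a $W^{k+2,2}$ extension $F$ of $f$ whose norm is controlled by $\|f\|_{W^{k+2,2}(\partial U'\cap U)}$ (this is where that term enters), so that $v=u-F$ solves $Lv=\widetilde\eta$ with $v=0$ on the flat boundary. Now only the \emph{tangential} difference quotients $D_l^h$ with $l$ parallel to the boundary preserve the Dirichlet condition, and the same testing argument bounds all tangential second derivatives up to the boundary. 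The remaining normal-normal second derivative is recovered algebraically: solving $Lv=\widetilde\eta$ for $\partial_{nn}v$ expresses it through $\widetilde\eta$ and the already-controlled tangential and mixed derivatives. The induction on $k$ then proceeds as in the interior, differencing tangentially and re-extracting the top normal derivative from the equation at each stage.

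The main obstacle is precisely this recovery of normal derivatives at the boundary: unlike in the interior, one cannot difference freely in the normal direction without destroying the boundary condition, so at every level of the induction the highest normal derivative must be read off from the PDE and estimated by the tangential data together with $\eta$. Finally, patching the finitely many interior and boundary estimates (the cover of $K$ being finite by compactness) with a subordinate partition of unity, and summing the local bounds, produces the global constant $c_k$ and the asserted inequality. All of this is classical, and for the detailed execution I would refer to \cite{evansPDE}.
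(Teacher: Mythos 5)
Your reconstruction is correct and is precisely the argument of the cited reference: the paper offers no proof of its own here, deferring to Evans, whose Theorems 6.3.1.2 and 6.3.2.5 are proved exactly by the Nirenberg difference-quotient method you describe — interior estimates via $\phi=D_l^{-h}(\zeta^2 D_l^h u)$, boundary flattening with tangential difference quotients, recovery of the normal-normal derivative from the equation, induction on $k$, and a partition-of-unity patching over a finite cover of $K$. No gaps worth flagging; your outline matches the standard proof in both structure and key steps.
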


\begin{cor}\label{Corollary:harmonic_bootstrapping}
Let $ U'\subset\lC$ be a domain with smooth boundary, $ U\subset U'$ a bounded 
subdomain and $u: U\to\lR$ a smooth solution of the Dirichlet problem
\begin{align*}
&\Delta u=0,\\
&u|_{\partial U'\cap U}=f,
\end{align*}
where $f:\partial U'\cap U\to\lR$ is a smooth function. For all $K\subset U$ compact, there exists a constant $c\in\lR$ (independent of $u$ and $f$) such that
$$\|u\|_{C^1(K)}\leq c\left(\|f\|_{C^3(\partial U'\cap U)}+\|u\|_{C^0( U)}\right).$$
\end{cor}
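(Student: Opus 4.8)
The plan is to deduce this corollary from the preceding elliptic estimate (Evans, Theorem 6.3.1.2/6.3.2.5) by chaining Sobolev and Sobolev-embedding bounds, being careful that the constants depend only on the geometry of $K \subset U \subset U'$ and not on $u$ or $f$. Since the source term vanishes ($\Delta u = 0$, so $\eta = 0$), the cited inequality with $k = 2$ gives, for a slightly enlarged compact set $K' $ with $K \subset \operatorname{int}(K') \subset K' \subset U$,
\begin{align*}
\|u\|_{W^{4,2}(K')} \leq c_2\left(\|f\|_{W^{4,2}(\partial U' \cap U)} + \|u\|_{L^2(U)}\right).
\end{align*}
The strategy is then to convert these $W^{k,2}$ norms into the $C^1$ and $C^3$ norms appearing in the statement using the Sobolev embedding theorem in dimension two.

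First I would fix the interpolating domain: choose a bounded smooth open set $K'$ with $K \subset \operatorname{int}(K')$ and $\overline{K'} \subset U$, so that the Evans estimate applies with $U$ replaced by $K'$ on the left and a slightly smaller compact exhaustion on the right. Next I would invoke the Sobolev embedding $W^{4,2}(K') \hookrightarrow C^1(K')$, valid because on a two-dimensional domain $W^{k,2} \hookrightarrow C^{m}$ whenever $k - 1 > m$, and here $4 - 1 = 3 > 1$; this yields $\|u\|_{C^1(K)} \leq \|u\|_{C^1(K')} \leq C\|u\|_{W^{4,2}(K')}$ with $C$ depending only on $K'$. On the boundary side, I would bound $\|f\|_{W^{4,2}(\partial U' \cap U)} \leq C'\|f\|_{C^3(\partial U' \cap U)}$, since on the (one-dimensional, bounded) boundary the $W^{4,2}$ norm is controlled by the $C^3$ norm once $\partial U' \cap U$ has finite measure and $f$ together with its derivatives up to order $3$ are bounded (here one uses that $W^{4,2}$ of a curve involves up to third derivatives in $L^2$, each dominated by the sup norm of the corresponding $C^3$ derivative). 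Finally I would absorb the interior $L^2$ term via $\|u\|_{L^2(U)} \leq |U|^{1/2}\|u\|_{C^0(U)}$, again using boundedness of $U$. Composing these three substitutions into the $W^{4,2}$ estimate and collecting all constants into a single $c$ gives exactly
\begin{align*}
\|u\|_{C^1(K)} \leq c\left(\|f\|_{C^3(\partial U' \cap U)} + \|u\|_{C^0(U)}\right).
\end{align*}

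The main obstacle, and the only place requiring genuine care, is the bookkeeping of constants and domains rather than any hard analysis: one must ensure that the compact set $K'$ can be inserted so that the interior Sobolev estimate is applied on a set strictly containing $K$ yet compactly contained in $U$, and that the boundary piece $\partial U' \cap U$ is handled as a manifold with the correct dimension when converting $W^{4,2}$ to $C^3$. A secondary subtlety is matching the regularity indices: the cited theorem is stated for $k \in \lN_0$, and I chose $k = 2$ precisely so that the left-hand $W^{4,2}$ embeds into $C^1$ in two real dimensions while the right-hand boundary data sits in $W^{4,2}(\partial U' \cap U)$, which is controlled by $C^3$ on a one-dimensional set; any off-by-one in these embedding thresholds would break the estimate, so I would verify the dimension counts explicitly before finalizing the constants.
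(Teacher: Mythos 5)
Your overall strategy is the paper's: apply the quoted elliptic estimate, then convert the Sobolev norms to the $C^1$ and $C^3$ norms via Sobolev embedding on the two-dimensional interior and dominance of Sobolev by H\"older norms on the one-dimensional boundary. However, your specific choice $k=2$ introduces a genuine error at exactly the point you flagged as needing verification. The cited theorem with $k=2$ puts the boundary data in $W^{4,2}(\partial U'\cap U)$, and a $W^{4,2}$ norm on a curve involves weak derivatives up to order \emph{four} in $L^2$ --- not ``up to third derivatives,'' as you assert. Consequently the step $\|f\|_{W^{4,2}(\partial U'\cap U)}\leq C'\|f\|_{C^3(\partial U'\cap U)}$ is false with a constant independent of $f$: a family of smooth functions with uniformly bounded $C^3$ norm but fourth derivatives blowing up in $L^2$ has bounded right-hand side and unbounded left-hand side. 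Since the corollary demands a constant $c$ independent of $u$ and $f$, this off-by-one cannot be absorbed, and the chain of inequalities as written does not close.

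The repair is immediate and yields precisely the paper's proof: take $k=1$ in the cited theorem, giving
\begin{align*}
\|u\|_{W^{3,2}(K)}\leq c_1\left(\|f\|_{W^{3,2}(\partial U'\cap U)}+\|u\|_{L^2(U)}\right)\leq c_2\left(\|f\|_{C^3(\partial U'\cap U)}+\|u\|_{C^0(U)}\right),
\end{align*}
where now the boundary norm $W^{3,2}$ involves only derivatives up to order three and is genuinely dominated by the $C^3$ norm (using boundedness of $U$, hence finite length of $\partial U'\cap U$, for the $L^2$ bounds), and $\|u\|_{L^2(U)}\leq |U|^{1/2}\|u\|_{C^0(U)}$ as you observed. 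The interior embedding still goes through at this lower regularity: in dimension two, $W^{3,2}(K)\hookrightarrow C^1(K)$ since $3-\tfrac{2}{2}=2>1$, so nothing is lost by dropping from $k=2$ to $k=1$. Incidentally, your auxiliary enlarged set $K'$ is unnecessary --- the cited estimate already holds on any compact $K\subset U$ directly --- though it is harmless.
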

\begin{proof}
Using the theorem above, we get
\begin{align*}
\|u\|_{W^{3,2}(K)}\leq c_1\left(\|f\|_{W^{3,2}(\partial U'\cap U)} + \|u\|_{L^2( U)}\right)\leq c_2\left(\|f\|_{C^3(\partial U'\cap U)}+\|u\|_{C^0( U)}\right)
\end{align*} 
for some constants $c_1,c_2\in\lR$. The statement now follows from the fact that we have a natural continuous Sobolev embedding $W^{3,2}(K)\hookrightarrow C^{1}(K)$.
\end{proof}

To prove that the obstruction section extends continuously as described before, we show some special Gromov compactness type results.

\begin{lemma}\label{Lemma:OSection_elliptic}
Let $\overline u\in\partial\overline{\cM}^\pi_{2,1}$ be a split boundary point and $u_n\in\cM^\pi_{2,1},n\in\lN$ a sequence of annuli converging to $\overline u$. Then $\lim_{n\to\infty} \Omega(u_n)=\Omega(\overline u)$.
\end{lemma}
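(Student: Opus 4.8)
The plan is to analyze what happens to the obstruction integral $\Omega(u_n)=\int_{l_n}\beta(\Sigma_n,j_n,\overline u_n)$ as the annuli $u_n$ degenerate to the split building $\overline u$, whose annular part is $\overline u_0\in\cM^\pi_{2,0}$. At a split boundary point, the branch point of the annulus collides with the boundary component it already lies on; geometrically the degenerating surface $\Sigma_n$ develops a long thin neck (a strip $(-L_n,L_n)\times[0,1]$ with $L_n\to\infty$) that separates the annular part $\overline u_0$ from the bubbling index-zero disk. First I would fix representatives and set up conformal coordinates in which $\Sigma_n$ decomposes into three regions: a compact piece $A_n$ converging to the annular part $\overline u_0$, a neck region $N_n\cong(-L_n,L_n)\times[0,1]$, and a compact piece $D_n$ converging to the disk bubble. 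I would choose the homology generator $l_n\subset\operatorname{int}(\Sigma_n)$ used to define $\Omega(u_n)$ to lie inside $A_n$, away from the neck, so that it converges to the loop $l\subset\operatorname{int}(\Sigma)$ computing $\Omega(\overline u_0)$.

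The key point is that $\Omega$ is defined by integrating the closed $1$-form $\beta(\overline u_n)=\lambda\circ d\overline u_n\circ j$ over $l_n$, and since $\beta$ is closed (by the Lemma preceding Section~\ref{Sec:obstruction_section}), the integral depends only on the homology class of $l_n$ in $\Sigma_n$. Because the disk bubble caps off on the inner side of the neck, the homology class of $l_n$ in $\Sigma_n$ agrees with the class of the outer generator. The strategy is then to show that on the region $A_n$, where $l_n$ lives, the curves converge in $C^1$ to the annular part $\overline u_0$, so that $\beta(\overline u_n)\to\beta(\overline u_0)$ in $C^0$ on $A_n$ and the integrals converge: $\int_{l_n}\beta(\overline u_n)\to\int_l\beta(\overline u_0)=\Omega(\overline u)$. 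The horizontal components $(u_n,v_n)=\pi_{xy}\circ$(lift) converge because $u_n\to\overline u_0$ in $\cM^\pi$ by definition of convergence in the Lagrangian-projection moduli space, and Cauchy--Riemann elliptic estimates (Corollary~\ref{Corollary:harmonic_bootstrapping}) upgrade boundary convergence to interior $C^1$ convergence on compact subsets.

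The main obstacle is controlling the harmonic component $w_n=\pi_z\circ\overline u_n$ on $A_n$, since a priori the long neck $N_n$ could transmit uncontrolled $z$-values from the disk bubble into $A_n$ and spoil convergence of $\beta(\overline u_n)$ near $l$. This is precisely where Corollary~\ref{Cor:thin_necks} enters: along the neck the boundary values $u_n\circ\varphi_n$ converge in $C^2$ to the self-intersection point $\Gamma$ where the disk is attached, hence the harmonic lift $w_n$ on the neck is $C^1$-close to the constant-in-$s$ Reeb-chord profile $w(s,t)=\pi_z\circ\gamma(t)$. This shows the neck carries no nontrivial $z$-variation, so the harmonic extension problem on $A_n$ has boundary data converging in $C^2$ to that of $\overline u_0$ (the neck side contributing boundary values converging to the Reeb chord height), and then Corollary~\ref{Corollary:harmonic_bootstrapping} gives $w_n\to w_0$ in $C^1$ on compact subsets of the interior of $A_n$.

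Combining these, I would conclude $\overline u_n\to\overline u_0$ in $C^1$ on a neighborhood of $l$ inside $A_n$, whence
\begin{align*}
\lim_{n\to\infty}\Omega(u_n)=\lim_{n\to\infty}\int_{l_n}\beta(\Sigma_n,j_n,\overline u_n)=\int_l\beta(\Sigma,j,\overline u_0)=\Omega(\overline u_0)=\Omega(\overline u),
\end{align*}
which is the desired continuity. The delicate technical step throughout is the neck analysis: verifying that the conformal neck length genuinely diverges at a split degeneration and that the boundary-value convergence hypotheses of Corollary~\ref{Cor:thin_necks} are met uniformly, so that the harmonic lift does not develop energy concentration in the neck that would survive in the limit of $\beta$ over $l$.
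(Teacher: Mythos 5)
Your proposal is correct and follows essentially the same route as the paper: cut the degenerating annulus at the thin neck near $\Gamma$, use Corollary \ref{Cor:thin_necks} to show the harmonic lift on the neck converges in $C^1$ to the Reeb-chord profile, deduce via the maximum principle and Corollary \ref{Corollary:harmonic_bootstrapping} that $w_n\to w_0$ in $C^1$ near a core loop in the annular part, and conclude convergence of the integral of the closed form $\beta$. The only cosmetic difference is that the paper exploits the exact identity $u_n|_K=\overline u_0|_K$ (and $w_n|_b=w_0|_b$) away from the branch point rather than invoking Cauchy--Riemann elliptic estimates for the horizontal component, and it places the loop near the boundary component not containing the branch point, but these are bookkeeping variants of your argument.
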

\begin{proof}
Let $u_n\in\cM^\pi_{2,1},u_n:(\Sigma_n,j_n)\to(\lC,i),n\in\lN$ be a sequence of holomorphic annuli such that $\lim_{n\to\infty} u_n=\overline u$ and $w_n$ the corresponding harmonic lifts. Denote the boundary branch point of $u_n$ by $S_n\in\pi_{xy}(\Lambda)$, by $\Gamma$ the self-intersection of $\pi_{xy}(\Lambda)$ such that $S_n\overset{n\to\infty}{\longrightarrow} \Gamma$ and $\gamma:[0,1]\to\lR^3$ the corresponding Reeb chord on $\Lambda$. Let $\varphi_n:(-8R_n,8R_n)\times[0,1]\to\Sigma_n$ be a sequence of conformal embeddings parameterizing the thin necks on $u_n$ that are forming near $S_n$ such that $\|u_n\circ \varphi_n-\Gamma\|_{C^2((-8R_n,8R_n)\times\{0,1\})}\overset{n\to\infty}{\longrightarrow} 0$ and $R_n\to\infty$. 
Let $\alpha_n$ and $\beta_n$ denote the paths $t\to \varphi_n(-R_n+1,t)$ and $t\to \varphi_n(R_n-1,t),t\in [0,1]$. We can assume $\alpha_n$ is in the annular part (see Figure \ref{Fig:projection_lemma_1}). Denote $\overline\gamma(t)=\gamma(1-t)$ and define harmonic functions $w^+,w^-:\lR\times[0,1]\to\lR$, $w^+(s,t)=\pi_z\gamma(t),w^-(s,t)=\pi_z\overline\gamma(t)$. By Corollary \ref{Cor:thin_necks}, we have
\begin{align*}
\|w_n\circ \varphi_n-w^+\|_{C^1((-R_n,R_n)\times[0,1])}\overset{n\to\infty}{\longrightarrow}0,
\end{align*}
or
\begin{align*}
\|w_n\circ \varphi_n-w^-\|_{C^1((-R_n,R_n)\times[0,1])}\overset{n\to\infty}{\longrightarrow}0,
\end{align*}
which implies
$$w_n\circ\alpha_n\overset{n\to\infty}{\longrightarrow}\pi_z\circ\gamma$$
or
$$w_n\circ\alpha_n\overset{n\to\infty}{\longrightarrow}\pi_z\circ\overline\gamma$$
in $C^1([0,1])$. Denote the annular part of the degenerate annulus $\overline u$ by $\overline u_0:(\Sigma,j)\to\lC$ and by $w_0$ the corresponding harmonic lift. We can see $\alpha_n$ as paths in $\Sigma$ converging to the puncture at $\Gamma$. Similar as above, by Corollary \ref{Cor:thin_necks} we get $w_0\circ\alpha_n\overset{C^1([0,1])}{\longrightarrow}\pi_z\circ\gamma,n\to\infty$ or $w_0\circ\alpha_n\overset{C^1([0,1])}{\longrightarrow}\pi_z\circ\overline\gamma,n\to\infty$. Now, denote by $\widetilde\Sigma_n$ the subdomain of $\Sigma$ (can also be seen as a subdomain of $\Sigma_n$) obtained by cutting off the corner at the path $\alpha_n$. Then $(w_n-w_0)|_{\widetilde\Sigma_n}$ is a solution of the Dirichlet problem
\begin{align*}
&\Delta v=0,\\
&v|_{\partial\widetilde\Sigma_n}=\overline f_n,
\end{align*}
on $\widetilde\Sigma_n$ such that $\max_{\partial\widetilde\Sigma_n}|\overline f_n|\overset{n\to\infty}{\longrightarrow} 0$. This follows from $w_n|_{\partial\widetilde\Sigma_n\cap\Sigma_n}=w_0|_{\partial\widetilde\Sigma_n\cap\Sigma}$ and $w_n\circ\alpha_n(t)-w_0\circ\alpha_n(t)\overset{n\to\infty}{\longrightarrow} 0$ uniformly for $t\in[0,1]$ using the observations above. Using the maximum principle we get 
\begin{align}
\label{Eq:max0}
\max_{\widetilde\Sigma_n}|w_n-w_0|\overset{n\to\infty}{\longrightarrow} 0.
\end{align} 
Let $b$ be the boundary component of $\Sigma_n$ that does not contain the branch point, $K\subset\widetilde\Sigma_n$ a compact neighborhood of $b$ and $\tilde l\subset\operatorname{int}(K)$ a loop that generates $H_1(\widetilde \Sigma_n)$ oriented as the outer boundary component. We notice that $w_n|_{b}=w_0|_{b}$ because of the uniqueness of the lift at the boundary. Then, using (\ref{Eq:max0}) and Corollary \ref{Corollary:harmonic_bootstrapping} we get $\partial_t w_n\to\partial_t w_0$ and $\partial_s w_n\to\partial_s w_0,n\to\infty$ uniformly on $\tilde l$. Additionally, we have $u_n|_{K}=\overline u_0|_K$. This gives us
\begin{align*}
\Omega(u_n)=\int_{\tilde l}\lambda\circ d(u_n,w_n)\circ j_n\overset{n\to\infty}{\longrightarrow} \int_{\tilde l}\lambda\circ d(\overline u_0,w_0)\circ j=\Omega(\overline u),
\end{align*}
which finishes the proof.
\end{proof}

\begin{figure}
\def\svgwidth{63mm}
\begingroup%
  \makeatletter%
  \providecommand\rotatebox[2]{#2}%
  \newcommand*\fsize{\dimexpr\f@size pt\relax}%
  \newcommand*\lineheight[1]{\fontsize{\fsize}{#1\fsize}\selectfont}%
  \ifx\svgwidth\undefined%
    \setlength{\unitlength}{299.29435542bp}%
    \ifx\svgscale\undefined%
      \relax%
    \else%
      \setlength{\unitlength}{\unitlength * \real{\svgscale}}%
    \fi%
  \else%
    \setlength{\unitlength}{\svgwidth}%
  \fi%
  \global\let\svgwidth\undefined%
  \global\let\svgscale\undefined%
  \makeatother%
  \begin{picture}(1,0.59855651)%
    \lineheight{1}%
    \setlength\tabcolsep{0pt}%
    \put(0,0){\includegraphics[width=\unitlength,page=1]{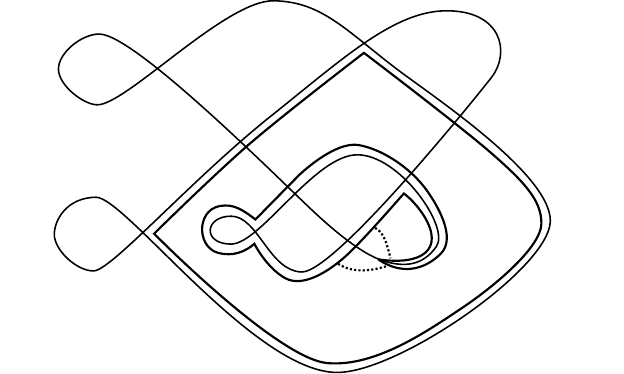}}%
    \put(0.82634528,0.22087993){\makebox(0,0)[lt]{\lineheight{1.25}\smash{\begin{tabular}[t]{l}$\tilde l$\end{tabular}}}}%
    \put(0.5510557,0.12708795){\makebox(0,0)[lt]{\lineheight{1.25}\smash{\begin{tabular}[t]{l}$\alpha_n$\end{tabular}}}}%
    \put(0.62301065,0.21121438){\makebox(0,0)[lt]{\lineheight{1.25}\smash{\begin{tabular}[t]{l}$\beta_n$\end{tabular}}}}%
    \put(0.54747586,0.41705566){\makebox(0,0)[lt]{\lineheight{1.25}\smash{\begin{tabular}[t]{l}$\widetilde\Sigma_n$\end{tabular}}}}%
    \put(-0.00238895,0.56418744){\makebox(0,0)[lt]{\lineheight{1.25}\smash{\begin{tabular}[t]{l}\textcolor{white}{.}\end{tabular}}}}%
  \end{picture}%
\endgroup%
\caption{Split boundary point of the 1-dimensional moduli space of holomorphic annuli on $\pi_{xy}(\Lambda)$ with corners.}
\label{Fig:projection_lemma_1}
\end{figure}

\begin{lemma}\label{Lemma:OSection_hyperbolic}
Let $\overline u\in\partial\overline{\cM}^\pi_{2,1}$ be a non-split boundary point and $u_n\in\cM^\pi_{2,1},n\in\lN$ a sequence of annuli converging to $\overline u$. Then $\lim_{n\to\infty} \Omega(u_n)=\Omega(\overline u)$.
\end{lemma}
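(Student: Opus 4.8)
The plan is to show that the real number $\Omega(u_n)=\int_l\beta(\overline u_n)$ diverges to $\pm\infty$ by localizing this period integral on the thin neck that forms as the interior branch point of $u_n$ approaches the opposite boundary component, and by extracting a contribution proportional to the length of the Reeb chord at $\Gamma$ times the (diverging) conformal length of the neck.

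I would first set up neck coordinates as in the proof of Lemma~\ref{Lemma:OSection_elliptic}: conformal embeddings $\varphi_n\colon(-M_n,M_n)\times[0,1]\to\Sigma_n$ with $M_n\to\infty$ parameterizing the thin neck forming near the self-intersection $\Gamma$ at which the branch point hits the boundary, with $\|u_n\circ\varphi_n-\Gamma\|_{C^2((-M_n,M_n)\times\{0,1\})}\to 0$; write $\gamma$ for the associated Reeb chord, $\Gamma=(x_0,y_0)$, and $\mathfrak a=z_2-z_1>0$ for its length. The decisive difference from the split case is topological: in the non-split degeneration the neck joins the two \emph{distinct} boundary circles of the annulus, so the vanishing arc runs from one boundary component to the other and the generator $l$ of $H_1(\Sigma_n)$ (oriented as the outer boundary) meets it exactly once. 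Hence, unlike in Lemma~\ref{Lemma:OSection_elliptic} where $l$ can be kept in the annular part away from the neck, here $l$ is forced to traverse the neck once. Since $\beta(\overline u_n)$ is closed, I may represent $l$ by a loop running along $\varphi_n((-M_n,M_n)\times\{1/2\})$ through the neck and closing up through a fixed compact region of the limit disk $\overline u_0$, where the maps converge in $C^1_{\mathrm{loc}}$ and the contribution stays bounded.

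The analytic core is the neck contribution $\int_{-M_n}^{M_n}\bigl(\partial_t w_n-v_n\,\partial_t u_n\bigr)(s,1/2)\,ds$, where $(u_n,v_n,w_n)$ now denote the real coordinates of $\overline u_n\circ\varphi_n$ as in (\ref{Eq:equiv_eq}). Because both strands of $\Lambda$ at $\Gamma$ project to the single point $(x_0,y_0)$, the boundary data of the harmonic functions $u_n,v_n$ converge to the \emph{constants} $x_0,y_0$, whereas $w_n$ has boundary data converging to $z_1$ on one edge and $z_2$ on the other; the corresponding linear harmonic profile is $w^+(s,t)=z_1+\mathfrak a t$ (when the $t=0$ edge lies on the undercrossing arc) or $w^-=z_2-\mathfrak a t$ (otherwise), so $\partial_t w^\pm=\pm\mathfrak a$. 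I would control the full neck—not merely its central part—by the Poisson-kernel representation from the proof of Lemma~\ref{Lemma:harmonic_strip}: splitting off the linear profile, the remaining harmonic correction is governed by its boundary data on the two long edges (which tends to $0$ uniformly) and by the data at the two ends $s=\pm M_n$ (bounded), whose influence decays exponentially into the strip by the bound $P(\sigma,t)<2\pi e^{-|\sigma|}$. Consequently $\int_{-M_n}^{M_n}\partial_t w_n(s,1/2)\,ds=\pm\mathfrak a\cdot 2M_n+o(M_n)+O(1)$, while the same reasoning applied to $u_n,v_n$ forces $\partial_t u_n\to0$ and keeps $\int v_n\,\partial_t u_n\,ds$ of lower order. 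Therefore the neck contribution diverges like $\pm\mathfrak a\cdot 2M_n$, and adding the bounded bulk contribution gives $\Omega(u_n)\to\pm\infty$.

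Finally I would verify that the sign matches the convention of Section~\ref{Sec:Elliptic_hyperbolic}: it is the product of the sign of $\partial_t w^\pm$ (equivalently, of whether the $t=0$ edge is the under- or overcrossing arc at $\Gamma$) with the sign coming from the orientation of $l$ as the outer boundary relative to $\partial_s$, and a direct check should give $+\infty$ exactly when the outer boundary component passes through the undercrossing arc. I expect the main obstacles to be this orientation/over--under bookkeeping and, on the analytic side, upgrading the central-neck estimate of Lemma~\ref{Lemma:harmonic_strip} to the full neck so as to rule out cancellation of the divergent flux near the neck ends—the exponential decay of the Poisson kernel being exactly what makes the end corrections negligible.
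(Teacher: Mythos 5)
Your proposal is correct and follows essentially the same route as the paper's proof: both localize the period integral to the thin neck at $\Gamma$, where the harmonic lift converges (via the Poisson-kernel estimates of Lemma \ref{Lemma:harmonic_strip}/Corollary \ref{Cor:thin_necks}) to the linear profile with $\partial_t w=\pm l(\gamma)$, producing a flux divergent like $\pm l(\gamma)R_n$, while the bulk contribution is bounded by $C^1$-convergence of $w_n$ to the limit disk's lift $w_0$ (which the paper justifies by the Dirichlet-problem/maximum-principle step plus Corollary \ref{Corollary:harmonic_bootstrapping}, the one point you assert rather than prove). The sign bookkeeping you describe matches the paper's convention ($+\infty$ for the undercrossing arc, $-\infty$ for the overcrossing arc), and your explicit topological remark that the $H_1$-generator must traverse the neck once is exactly what the paper's choice of loops $l_n$ implements.
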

\begin{proof}
The proof follows a similar approach as the previous lemma. Let $u_n\in\cM^\pi_{2,1},u_n:(\Sigma_n,j_n)\to(\lC,i),n\in\lN$ be a sequence of holomorphic annuli such that $\lim_{n\to\infty} u_n=\overline u$ and $w_n$ the corresponding harmonic lifts. Denote by $S_n\in\pi_{xy}(\Lambda)$ the boundary branch point of $u_n$ and by $\Gamma$ the self-intersection of $\pi_{xy}(\Lambda)$ such that $S_n\overset{n\to\infty}{\longrightarrow} \Gamma$. As before, we have a sequence of conformal embeddings $\varphi_n:(-8R_n,8R_n)\times[0,1]\to\Sigma_n$ such that $\|u_n\circ \varphi_n-\Gamma\|_{C^2((-8R_n,8R_n)\times\{0,1\})}\overset{n\to\infty}{\longrightarrow} 0$ and $R_n\overset{n\to\infty}{\longrightarrow}\infty$. Let $\alpha_n$ and $\beta_n$ denote the paths $t\to \varphi_n(-R_n+2,t)$ and $t\to \varphi_n(R_n-2,t),t\in [0,1]$ (see Figure \ref{Fig:projection_lemma_2}). As in the previous lemma, we have 
\begin{align*}
\|w_n\circ \varphi_n-w^+\|_{C^1((-R_n,R_n)\times[0,1])}\overset{n\to\infty}{\longrightarrow}0
\end{align*}
or
\begin{align*}
\|w_n\circ \varphi_n-w^-\|_{C^1((-R_n,R_n)\times[0,1])}\overset{n\to\infty}{\longrightarrow}0
\end{align*}
by Corollary \ref{Cor:thin_necks}, where $w^+(s,t)=\pi_z\gamma(t),w^-(s,t)=\pi_z\overline\gamma(t)$ for $\gamma$ the Reeb chord corresponding to $\Gamma$ and $\overline\gamma(t)=\gamma(1-t)$. From this we conclude $ w_n\circ\alpha_n,w_n\circ\beta_n\overset{C^1([0,1])}{\longrightarrow}\pi_z\circ\gamma$ or $ w_n\circ\alpha_n,w_n\circ\beta_n\overset{C^1([0,1])}{\longrightarrow}\pi_z\circ\overline\gamma,n\to\infty$. 

Denote $\widetilde\Sigma_n=\Sigma_n\backslash \varphi_n((-R_n+2,R_n-2)\times[0,1])$. The degenerate annulus $\overline u$ can be seen as a disk $\overline u_0:\Sigma\to\lC$ in the projection with an additional positive and negative corner at $\Gamma$. Denote the harmonic lift of $\overline u_0$ by $w_0$. The paths $\alpha_n,\beta_n$ can be seen as paths in $\Sigma$ converging to the punctures at $\Gamma$. Additionally, we have $ w_0\circ\alpha_n,w_0\circ\beta_n\overset{C^1([0,1])}{\longrightarrow}\pi_z\circ\gamma$ or $ w_0\circ\alpha_n,w_0\circ\beta_n\overset{C^1([0,1])}{\longrightarrow}\pi_z\circ\overline\gamma,n\to\infty$ as before using Corollary \ref{Cor:thin_necks}. This can also be seen using the fact that $\overline u_0$ is asymptotic to $\gamma$ at the two punctures. Then, $(w_n-w_0)|_{\widetilde\Sigma_n}$ is a solution of the Dirichlet problem
\begin{align*}
&\Delta v=0,\\
&v|_{\partial\widetilde\Sigma_n}=\overline f_n,
\end{align*}
where $\max_{\partial\widetilde\Sigma_n}|\overline f_n|\overset{n\to\infty}{\longrightarrow} 0$. This implies
$$\max_{\widetilde\Sigma_n}|w_n-w_0|\overset{n\to\infty}{\longrightarrow} 0.$$

Let $l_n\subset\Sigma_n$ be the loops obtained by slightly pushing the outer boundary component of $\Sigma_n$ into the interior at the punctures. Fix a small enough neighborhood $U$ of $\Gamma$ and denote $l_1^n=l_n\cap(\Sigma\backslash U),l_2^n=l_n\cap U$. As before, we get that $\|w_n-w_0\|_{C^1(K)}\to 0$ for $K\subset\Sigma\backslash U$ a compact neighborhood of $l_1^n$. Then
\begin{align*}
\left|\int_{l_1^n}\lambda\circ d(u_n,w_n)\circ j_n\right|
\end{align*}
are uniformly bounded, and for $U$ small enough
\begin{align*}
\left|\int_{l^n_2}\lambda\circ d(u_n,w_n)\circ j_n\right|=\int_{l^n_2}|\lambda\circ d(u_n,w_n)\circ j_n|>\\
>\frac{1}{2}\int_{-R_n}^{R_n} |\lambda(\partial_t w_n)|ds> \frac{R_n}{2}l(\gamma)\overset{n\to\infty}{\longrightarrow}\infty.
\end{align*}
The inequality follows as before using Lemma \ref{Lemma:harmonic_strip}. More precisely,
\begin{align*}
&\Omega(u_n)=\int_{l_n}\lambda\circ d(u_n,w_n)\circ j_n\overset{n\to\infty}{\longrightarrow}-\infty
\end{align*}
if the outer boundary passes through the overcrossing arc near $\Gamma$, and
\begin{align*}
&\Omega(u_n)=\int_{l_n}\lambda\circ d(u_n,w_n)\circ j_n\overset{n\to\infty}{\longrightarrow}+\infty
\end{align*}
if it passes through the undercrossing arc.
\end{proof}

\begin{figure}
\def\svgwidth{49mm}
\begingroup%
  \makeatletter%
  \providecommand\rotatebox[2]{#2}%
  \newcommand*\fsize{\dimexpr\f@size pt\relax}%
  \newcommand*\lineheight[1]{\fontsize{\fsize}{#1\fsize}\selectfont}%
  \ifx\svgwidth\undefined%
    \setlength{\unitlength}{230.72927092bp}%
    \ifx\svgscale\undefined%
      \relax%
    \else%
      \setlength{\unitlength}{\unitlength * \real{\svgscale}}%
    \fi%
  \else%
    \setlength{\unitlength}{\svgwidth}%
  \fi%
  \global\let\svgwidth\undefined%
  \global\let\svgscale\undefined%
  \makeatother%
  \begin{picture}(1,0.64214521)%
    \lineheight{1}%
    \setlength\tabcolsep{0pt}%
    \put(0,0){\includegraphics[width=\unitlength,page=1]{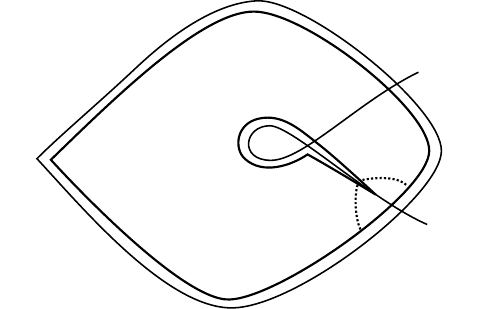}}%
    \put(0.6451562,0.18892397){\makebox(0,0)[lt]{\lineheight{1.25}\smash{\begin{tabular}[t]{l}$\alpha_n$\end{tabular}}}}%
    \put(0.76217639,0.28829838){\makebox(0,0)[lt]{\lineheight{1.25}\smash{\begin{tabular}[t]{l}$\beta_n$\end{tabular}}}}%
    \put(0.21468864,0.28086851){\makebox(0,0)[lt]{\lineheight{1.25}\smash{\begin{tabular}[t]{l}$\widetilde\Sigma_n$\end{tabular}}}}%
    \put(-0.00309886,0.59988799){\makebox(0,0)[lt]{\lineheight{1.25}\smash{\begin{tabular}[t]{l}\textcolor{white}{.}\end{tabular}}}}%
  \end{picture}%
\endgroup%
\caption{Non-split boundary point of the 1-dimensional moduli space of holomorphic annuli on $\pi_{xy}(\Lambda)$ with corners.}
\label{Fig:projection_lemma_2}
\end{figure}

\begin{cor}
The extended obstruction section $\Omega:\overline{\cM}^\pi_{2,1}\to\lR\cup\{+\infty,-\infty\}$ is continuous.
\end{cor}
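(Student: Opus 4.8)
The plan is to assemble the statement from the interior smoothness of the obstruction section together with the two boundary lemmas, after reducing topological continuity to sequential continuity. First I would observe that on the open manifold $\cM^\pi_{2,1}$ the map $\Omega$ is already smooth: it was constructed as the period integral $u_0\mapsto\int_l\beta(\Sigma,j,\overline u_0)$ of a form that depends smoothly on $u_0$ (the harmonic extension $w$ and the generator $l$ of $H_1(\Sigma)$ vary smoothly with $u_0$), so in particular $\Omega$ is continuous at every interior point. It therefore only remains to verify continuity at the points of $\partial\overline\cM^\pi_{2,1}$.

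Next I would invoke the classification of $\partial\overline\cM^\pi_{2,1}$ from Section \ref{Sec:Elliptic_hyperbolic}: every boundary point is either split or non-split, so these two cases are exhaustive. Since $\overline\cM^\pi_{2,1}$ is a compact $1$-dimensional manifold with boundary it is metrizable, and the target $\lR\cup\{+\infty,-\infty\}$ is metrizable (it is the two-point compactification $[-\infty,+\infty]$ of $\lR$); hence continuity of $\Omega$ at a boundary point $\overline u$ is equivalent to sequential continuity there, i.e. to $\Omega(u_n)\to\Omega(\overline u)$ for every sequence $u_n\to\overline u$. For a split boundary point this is exactly the content of Lemma \ref{Lemma:OSection_elliptic}, and for a non-split boundary point it is exactly Lemma \ref{Lemma:OSection_hyperbolic}. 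Combining the interior smoothness with these two lemmas yields continuity of $\Omega$ on all of $\overline\cM^\pi_{2,1}$.

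The genuine analytic work has already been carried out in the two lemmas, so the remaining points require only bookkeeping: confirming that the split/non-split dichotomy exhausts $\partial\overline\cM^\pi_{2,1}$ (so that no further boundary phenomena, such as several simultaneous branch/corner degenerations, occur generically), and that a sequence $u_n\to\overline u$ in $\overline\cM^\pi_{2,1}$ is precisely of the form treated in the lemmas (branch point $S_n$ of $u_n$ tending to the self-intersection $\Gamma$, with neck parameterizations $\varphi_n$ available). I expect the only mild subtlety, rather than a real obstacle, to concern the behaviour of the topology on the extended reals near $\pm\infty$: one must check that the divergence produced in Lemma \ref{Lemma:OSection_hyperbolic} is convergence in $[-\infty,+\infty]$, which it is, since that lemma shows $\Omega(u_n)$ tends monotonically to the correct signed infinity, the sign being dictated by whether the outer boundary runs along the over- or undercrossing arc at $\Gamma$.
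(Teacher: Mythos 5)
Your proposal is correct and follows the same route as the paper: the corollary is an immediate consequence of Lemma \ref{Lemma:OSection_elliptic} and Lemma \ref{Lemma:OSection_hyperbolic}, which cover the split and non-split boundary points respectively, combined with continuity of $\Omega$ on the interior $\cM^\pi_{2,1}$; your added remarks on metrizability and sequential continuity are harmless bookkeeping the paper leaves implicit. One small caution: Lemma \ref{Lemma:OSection_hyperbolic} establishes divergence of $\Omega(u_n)$ to the correct signed infinity but not monotonicity, though this does not matter since divergence alone is convergence in the two-point compactification $[-\infty,+\infty]$.
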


The values of the obstruction section $\Omega:\overline{\cM}^\pi_{2,1}\to\lR\cup\{+\infty,-\infty\}$ at the boundary points determine the count of index zero annuli on $\lR\times\Lambda$. If all boundary points in $\partial\cM^\pi_{2,1}$ are non-split, we can easily get the (algebraic) count. 

\begin{example}\label{Example:counting_annuli1}
Let $\Lambda$ be the figure-8 knot shown in Figure \ref{Fig:hyperbolic_boundary_obstruction}. There are three 1-parameter families of annuli with one positive corner in the projection, all of them have two non-split boundary points. The boundary values of the obstruction section for two of these families are $+\infty,-\infty$, and $+\infty,+\infty$ for the third one. From this we conclude that, counting with orientation signs introduced in Section \ref{Section:Orientations}, we get two index zero annuli $q_7q_5q_3p_1\otimes q_2$ and $ q_7q_2q_5p_1\otimes q_3$ on $\lR\times\Lambda$ with one positive puncture at $\gamma_1$. These annuli can also be seen by looking at the boundary of the 1-dimensional moduli space of annuli with two positive punctures. For example for the first one, we glue disks $q_7q_5q_3p_5q_2q_2p_4p_1$ and $p_2q_5q_4$ at the punctures at $\gamma_4,\gamma_5$. The second boundary point of the corresponding connected component has to be the nodal annulus consisting of $q_7q_5q_3p_1\otimes q_2$ and a trivial strip bubble at $\gamma_2$. Similarly for the second annulus.
\end{example}

Generic Legendrian knot isotopy can be seen in the Lagrangian projection as a sequence of Reidemeister II (Figure \ref{Figure:ReidII}) and Reidemeister III (Figure \ref{Figure:ReidIII}) moves.  The count of annuli can also change when the Legendrian knot isotopy passes through a Legendrian knot with a degenerate annulus of index $-1$, which we say is a degenerate knot of \textit{type IV}. More precisely, a Legendrian knot $\Lambda$ is degenerate of type IV if the obstruction section $\Omega:\overline{\cM}^\pi_{2,1}\sqcup\cM^\pi_{2,0} \to\lR\cup\{\pm\infty\}$ maps some point in $\cM^\pi_{2,0}$ to zero, or equivalently, some boundary point in $\partial\overline{\cM}^\pi_{2,1}$ to zero.

If there are split boundary points in $\partial\overline{\cM}^\pi_{2,1}$, it is more difficult to count index zero $J$-holomorphic annuli for a given knot diagram due to the fact that type IV degenerate Legendrian knots are difficult to recognize, unlike degenerate knots in Reidemeister II and III move. Using the previous two lemmas, we can understand how the count of annuli changes when the knot isotopy passes through a degenerate knot of type IV (see also Section \ref{Section:Invar_IV_degeneration}). 

\begin{cor}\label{Corollary:counting_annuli_from_O}
Let $\Lambda_s,s\in[0,1]$ be a generic Legendrian knot isotopy without Reidemeister moves such that $\Omega(u_0)=0$ for a rigid holomorphic annulus $u_0\in\cM^\pi_{2,0}(\Lambda_{s_0})$ on $\pi_{xy}(\Lambda_{s_0})$ for some $s_0\in(0,1)$. Then the difference between the count of annuli on $\Lambda_0$ and annuli on $\Lambda_1$ is equal to the number of ways $u_0$ can be glued to some rigid disk on $\lR\times\Lambda_0$ with one positive puncture.
\end{cor}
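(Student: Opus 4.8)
The plan is to prove the corollary by a parametrized wall-crossing argument for the signed count of zeros of the obstruction section, treating the count of index zero $J$-holomorphic annuli on $\lR\times\Lambda_s$ as the count of zeros of $\Omega$ over the compact $1$-manifold $\overline{\cM}^\pi_{2,1}(\Lambda_s)$. First I would form the total parametrized space $\widehat{\cM}=\{(s,u)\,|\,u\in\overline{\cM}^\pi_{2,1}(\Lambda_s)\}$ together with the function $\widehat\Omega(s,u)=\Omega(u)$. Because the isotopy $\Lambda_s$ contains no Reidemeister moves, the combinatorial type of $\cM^\pi_{2,1}$ is constant along the family, so $\widehat{\cM}$ is a compact $2$-manifold with boundary fibering over $[0,1]$, with end fibers $\overline{\cM}^\pi_{2,1}(\Lambda_0)$ and $\overline{\cM}^\pi_{2,1}(\Lambda_1)$ and a ``vertical'' boundary consisting of the configurations lying in $\partial\overline{\cM}^\pi_{2,1}(\Lambda_s)$. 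For a generic isotopy $\widehat\Omega\pitchfork 0$, so $\widehat\Omega^{-1}(0)$ is a compact $1$-manifold with boundary contained in $\partial\widehat{\cM}$.

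The key point is to locate where $\widehat\Omega^{-1}(0)$ meets the vertical boundary. By Lemma \ref{Lemma:OSection_hyperbolic} the non-split points carry $\widehat\Omega=\pm\infty$, so no zeros escape through them; by Lemma \ref{Lemma:OSection_elliptic} a split point carries $\widehat\Omega=\Omega(u_0(s))$, the obstruction value of its annular part $u_0(s)\in\cM^\pi_{2,0}(\Lambda_s)$. Hence $\widehat\Omega^{-1}(0)$ reaches the split locus exactly at those parameters $s$ for which a rigid projected annulus satisfies $\Omega(u_0(s))=0$, and by hypothesis this occurs only at $s=s_0$, for the distinguished annulus $u_0$. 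The split boundary points of $\overline{\cM}^\pi_{2,1}(\Lambda_{s_0})$ with annular part $u_0$ are by definition in bijection with the ways of attaching an index zero holomorphic disk to $u_0$ at a corner, equivalently the ways of gluing $u_0$ to a rigid disk on $\lR\times\Lambda_0$ with one positive puncture (disks always lift, and, with no Reidemeister moves, the rigid disks on $\lR\times\Lambda_s$ persist through the isotopy). All of these split points share the single value $\Omega(u_0(s))$, so they cross zero simultaneously at $s_0$, each transversally with sign $\operatorname{sgn}\tfrac{d}{ds}\Omega(u_0(s))|_{s_0}$.

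The difference in counts then follows from the oriented boundary relation $\sum_{\partial\widehat\Omega^{-1}(0)}\varepsilon=0$ for the compact $1$-manifold $\widehat\Omega^{-1}(0)$: its boundary consists of the zeros in the two end fibers, counted with the coherent orientations of Section \ref{Section:Orientations}, together with the escape points at $s_0$. Rearranging, the count of annuli on $\Lambda_0$ minus the count on $\Lambda_1$ equals the signed number of split crossings at $s_0$, which is the number of ways $u_0$ can be glued to a rigid disk on $\lR\times\Lambda_0$ (each gluing weighted by the sign coming from the coherent orientations), as claimed.

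The main obstacle is the local analysis at a split boundary point. One must show that as $\Omega(u_0(s))$ passes transversally through zero, exactly one branch of $\widehat\Omega^{-1}(0)$ enters or exits through each corresponding split point and no spurious zeros are hidden in the forming neck; this requires promoting the continuity of $\Omega$ at split points (Lemma \ref{Lemma:OSection_elliptic}) to a $C^1$ statement in the gluing/neck parameter, via the harmonic estimates of Lemma \ref{Lemma:harmonic_strip} and Corollary \ref{Cor:thin_necks}, and matching the sign of each crossing with the gluing of the disk to the annulus in the coherent orientation scheme. The delicate feature is that all split points with annular part $u_0$ degenerate at the same parameter $s_0$, so transversality of $\widehat\Omega$ must be arranged simultaneously at all of them.
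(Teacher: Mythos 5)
Your argument is correct in substance, but it takes a more elaborate route than the paper, which derives the corollary directly from Lemma \ref{Lemma:OSection_elliptic} and Lemma \ref{Lemma:OSection_hyperbolic} without any parametrized moduli space. The paper's (implicit) argument works slice by slice: for each fixed $s$ with $\Omega$ nonvanishing on $\partial\overline{\cM}^\pi_{2,1}(\Lambda_s)$, the signed count of transverse zeros of the continuous extension $\Omega:\overline{\cM}^\pi_{2,1}(\Lambda_s)\to\lR\cup\{\pm\infty\}$ on each compact interval component is determined purely by the signs of $\Omega$ at the two endpoints; one then compares $s=0$ with $s=1$ by tracking how these endpoint signs vary in $s$. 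Non-split endpoints carry $\pm\infty$ throughout, and the value at a split endpoint is $\Omega(u_0(s))$ for the annular part, which crosses zero only at $s_0$ and only for the distinguished $u_0$; each sign flip changes the count on the corresponding component by one, and the affected split endpoints are exactly the gluings of $u_0$ to rigid one-positive-puncture disks. What your cobordism formulation buys is a cleaner global bookkeeping via $\partial\widehat\Omega^{-1}(0)$, but at the cost of hypotheses the slicewise argument never needs: parametrized transversality of $\widehat\Omega$, and the $C^1$-in-the-neck refinement of Lemma \ref{Lemma:OSection_elliptic} you flag as the main obstacle. That refinement is genuinely avoidable---continuity of $\Omega$ up to the boundary plus nonvanishing boundary values already pin down the interior count by an intermediate-value argument, so no zeros can hide in the forming neck. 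Likewise your worry about simultaneous degeneration at $s_0$ dissolves: all split points with annular part $u_0$ share the single function $s\mapsto\Omega(u_0(s))$, so the one genericity condition $\tfrac{d}{ds}\Omega(u_0(s))|_{s_0}\neq 0$ handles every one of them at once, and in the slicewise picture even this is only used to know the sign flip is honest. Your sign discussion (each gluing weighted by the coherent orientations of Section \ref{Section:Orientations}) matches the paper's conventions, as used later in Proposition \ref{typeIIIcompactness}.
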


Using the previous corollary, we show in Section \ref{Section:Invar_IV_degeneration} that our invariant remains the same under isotopy passing through a degenerate knot of type IV. Due to the combinatorial nature of the proof, this allows us to compute the invariant using a "virtual" count of annuli, which can be computed easily combinatorially, instead of the actual count.
\begin{defi}\label{Def:virtual_obstruction_section}
A smooth map $\Omega^{\vir}:\cM^\pi_{2,0}\sqcup\overline\cM^\pi_{2,1}\to\lR\cup\{+\infty,-\infty\}$ is called a \textit{combinatorial obstruction section} if it satisfies the following properties
\begin{itemize}
\item for every non-split boundary point $u\in\partial\overline\cM_{2,1}^\pi$ we have $\Omega^{\vir}(u)=\Omega(u)$, 
\item for every split boundary point $u\in\partial\overline\cM_{2,1}^\pi$ we have $\Omega^{\vir}(u)=\Omega^{\vir}(u_0)$, where $u_0\in\cM_{2,0}^\pi$ is the annular part of $u$,
\item $\Omega^{\vir}(\cM^\pi_{2,0})\subset\lR\backslash\{0\}$,
\item $\Omega^{\vir}\pitchfork 0$.
\end{itemize}
\end{defi}
The \textit{virtual count} of annuli on $\Lambda$ with respect to a combinatorial obstruction section $\Omega^{\vir}$ is defined as the algebraic count of zeros of $\Omega^{\vir}$. To make this precise, we need to fix orientations on $\cM^{\pi}_{2,1}$. This is done in Section \ref{Section:Orientations}. The virtual count is determined by the values of $\Omega^{\vir}$ on $\cM^\pi_{2,0}$. In the definition of the invariant in Section \ref{Sec:Invariant_definition} and Section \ref{Section:Algebraic_definition}, instead of using the count of $J$-holomorphic annuli on $\lR\times\Lambda$, we can use the count of zeros of any combinatorial obstruction section. This allows us to compute the invariant combinatorially. 

\vspace{2.1mm}
Next, we discuss lemmas similar to Lemma \ref{Lemma:OSection_hyperbolic} and Lemma \ref{Lemma:OSection_elliptic} that will be used to show invariance under Reidemeister II move. First, we need the following.
\begin{lemma}
\label{Lemma:ReidII-local_form}
There exists a local model for Reidemeister II move consisting of Legendrian submanifolds $\Lambda_R,R\in\lR_{>0}\cup\{\infty\}$ with $\Lambda_R\overset{C^\infty}{\longrightarrow}\Lambda_\infty,R\to\infty$, such that there exist holomorphic embeddings $\varphi_R:(-R,R)\times[0,1]\to \lC$ with boundary on $\pi_{xy}(\Lambda_R)$ that satisfy 
$$\varphi_R\overset{C^\infty}{\to} \Gamma,R\to\infty,$$
where $\Gamma$ is the degenerate self-intersection of $\Lambda_\infty$.
\end{lemma}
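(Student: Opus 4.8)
The plan is to build the model by hand in the Lagrangian projection $\lC$ and to realize $\varphi_R$ as a normalized Riemann map onto a shrinking bigon. Choose coordinates so that the degenerate self-intersection is $\Gamma=0$ and model the two strands of $\Lambda_\infty$ near $\Gamma$ by the real-analytic arcs $\{y=0\}$ and $\{y=x^2\}$, which are tangent to first order at the origin; lift each strand to a Legendrian arc by $z(x)=\int_0^x y\,dx'$ and extend the two local arcs to a fixed closed Legendrian away from a neighborhood of $\Gamma$. For a parameter $\delta=\delta(R)\to 0$ as $R\to\infty$ (say $\delta(R)=1/R$), let $\Lambda_R$ be obtained by replacing the second strand with $\{y=x^2-\delta^2\}$, keeping the same lift. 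This is a genuine Reidemeister II move: the two strands now cross transversally at $(\pm\delta,0)$, and $z_B-z_A=\tfrac{x^3}{3}-\delta^2 x$ takes opposite signs there, so the move creates a canceling pair of crossings. Moreover $\Lambda_R\overset{C^\infty}{\longrightarrow}\Lambda_\infty$ because $x^2-\delta^2\to x^2$ with all derivatives, and the lifts converge accordingly.

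\textbf{Construction of $\varphi_R$.} The two strands bound the bigon
$$\Omega_\delta=\{(x,y)\ :\ |x|<\delta,\ x^2-\delta^2<y<0\},$$
a topological disk with two convex corners at $(\pm\delta,0)$ of opening angle $\arctan(2\delta)\to 0$. Since $\Omega_\delta$ has exactly two boundary corners of positive angle, it is conformally the bi-infinite strip: let $\Phi_\delta:\lR\times[0,1]\xrightarrow{\ \cong\ }\Omega_\delta$ be the Riemann map, normalized by the reflection symmetry $x\mapsto -x$ of $\Omega_\delta$ and by $\Phi_\delta(0,\tfrac12)=(0,-\tfrac{\delta^2}{2})$, so that the two corners correspond to $s\to\pm\infty$ and the long sides $\lR\times\{0\},\lR\times\{1\}$ map onto the two arcs. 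I then set $\varphi_R:=\Phi_{\delta(R)}|_{(-R,R)\times[0,1]}$. By construction $\varphi_R$ is a holomorphic embedding whose boundary $(-R,R)\times\{0,1\}$ maps into $\pi_{xy}(\Lambda_R)$, so it remains only to prove $\varphi_R\overset{C^\infty}{\longrightarrow}\Gamma$.

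\textbf{Convergence.} The $C^0$ bound is immediate: the image lies in $\Omega_{\delta(R)}\subset B_{2\delta(R)}(\Gamma)$, so $\|\varphi_R-\Gamma\|_{C^0}\le 2\delta(R)\to 0$ no matter how fast $R$ grows. For the derivatives I would show that $\sup_{\lR\times[0,1]}|\Phi_\delta^{(k)}|\to 0$ as $\delta\to 0$ for every $k\ge 1$, which gives $\|\varphi_R-\Gamma\|_{C^k}\to 0$. In the interior this follows from the Cauchy estimates applied to a holomorphic map whose image has diameter $O(\delta)$ on a strip of fixed width $1$. Up to the long sides, one first extends $\Phi_\delta$ holomorphically across the real-analytic arcs $\{y=0\}$ and $\{y=x^2-\delta^2\}$ by analytic reflection and then applies the interior estimate on the widened strip; alternatively one applies Corollary \ref{Corollary:harmonic_bootstrapping} to the harmonic components $\mathrm{Re}\,\Phi_\delta$ and $\mathrm{Im}\,\Phi_\delta$.

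\textbf{Main obstacle.} The delicate point is uniform control near the two ends $s\to\pm R$, where $\varphi_R$ approaches the corners of $\Omega_\delta$ precisely as the opening angle $\alpha=\arctan(2\delta)$ degenerates to a cusp. Here I would invoke the standard conformal corner asymptotics: near a boundary corner of angle $\alpha\in(0,\pi)$ the Riemann map differs from the corner value by a factor decaying exponentially in $s$ (modelled locally by $w=e^{\alpha\zeta}$ for the rescaled sector), so the derivatives \emph{decay} as $s\to\pm\infty$ rather than blow up. Convexity $\alpha<\pi$ is exactly what rules out a singularity, and the degeneration $\alpha\to 0$ only strengthens the decay; consequently $\sup_{\lR\times[0,1]}|\Phi_\delta^{(k)}|$ is attained in the central region and is $O(\delta)$. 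The single subtlety is to make the constants in these corner asymptotics uniform in $\delta$ as $\alpha\to 0$, which I expect to handle by passing to the explicit sector model for $\Omega_\delta$ rescaled to unit size and tracking the $\delta$-dependence of the normalization.
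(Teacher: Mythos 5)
Your proposal diverges from the paper's proof in a meaningful way: the paper does not uniformize an abstract domain at all, but writes down a completely explicit strip, $\psi_R(s,t)=-R^2i\bigl((R^2/(R^2-1))^{i(s+it)}+1\bigr)$, mapping onto the region between two disjoint concentric circles of radii $R^2$ and $R^2-1$, and then composes with the inversion $z\mapsto 1/z$ so that the circles become a line and a circle tangent at $\Gamma$; the $C^\infty$ convergence is then a direct computation on a closed formula. Note in particular that the paper's model sits on the \emph{no-crossing} side of the move (the two strands are disjoint for finite $R$, and the strip is a neck passing between them), which is the configuration actually used downstream in Lemmas \ref{Lemma:OSection_ReidII} and \ref{Lemma:OSection_ReidIIb}, where annuli on $\Lambda_{s_n}$ develop thin necks between disjoint strands as the tangency forms. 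Your model lives on the crossing side: your strip is the bigon between the two new crossings, i.e.\ essentially the disk $q_bp_a$, whose ends limit to the corners $(\pm\delta,0)$ rather than parameterizing a neck between disjoint strands. It can satisfy the literal statement of the lemma, but it does not provide the local picture the lemma is invoked for.

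Beyond this mismatch there are two concrete errors. First, your Legendrian lift is wrong as written: normalizing both strands by $z(x)=\int_0^x y\,dx'$ gives $z_A(0)=z_B(0)=0$, so the two lifted arcs of $\Lambda_\infty$ intersect at the origin in $\lR^3$ and $\Lambda_\infty$ is not embedded --- the isotopy passes through a self-intersection rather than through a tangency with a Reeb chord of positive length. You must add a constant $c\neq 0$ to one lift; with $c\neq 0$ and $\delta$ small, $z_B-z_A=c+x^3/3-\delta^2x$ has a \emph{fixed} sign at both crossings (the same strand is over at both, as in the standard Lagrangian Reidemeister II configuration), so your ``opposite signs'' observation is a symptom of the $c=0$ error, not a feature. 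Second, your treatment of the corner asymptotics gets the direction of the degeneration backwards: in your own local model $w=e^{\alpha\zeta}$, the exponential decay rate at a corner of angle $\alpha$ is proportional to $\alpha$, so as $\alpha=\arctan(2\delta)\to 0$ the decay \emph{weakens}; the claim that the degeneration ``only strengthens the decay'' is false, and the uniformity in $\delta$ that you defer is exactly the nontrivial point. In fact you can bypass corners entirely: since the image of $\Phi_\delta$ lies in a disk of radius $O(\delta)$ about $\Gamma$ and the two strands are real-analytic arcs with a uniform reflection tube, Schwarz reflection applies along the \emph{entire} open boundary lines $\lR\times\{0,1\}$ (the corners sit at $s=\pm\infty$ in the domain), extending $\Phi_\delta$ to a uniformly wider strip with sup norm still $O(\delta)$; Cauchy estimates then give $\|\Phi_\delta^{(k)}\|_{L^\infty(\lR\times[0,1])}=O_k(\delta)$ for every $k$, with no corner analysis needed. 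With the lift corrected (and ideally the model moved to the no-crossing side, e.g.\ strands $y=0$ and $y=x^2+\delta^2$), this repaired version of your argument would be a legitimate, more abstract alternative to the paper's explicit formula.
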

\begin{proof}
For $R\in\lR,R>1$, let $l_1^R,l_2^R$ be the pair of concentric circles with centers at $-R^2i\in\lC$ and radii $R^2$ and $R^2-1$. Denote by $\psi_R:(-R,R)\times[0,1]\to\lC$ the holomorphic strip given by
\begin{align*}
\psi_R(s,t)=-R^2i\left(\left(\frac{R^2}{R^2-1}\right)^{i(s+it)}+1\right),
\end{align*}
with boundary on $l_1^R\cup l_2^R$. After conformal transformation $F:z\to\frac{1}{z}$, $l_1^R,l_2^R$ have the form as shown in Figure \ref{Fig:ReidII_model} near the point $\Gamma=(0,0)$, and $\varphi_R\coloneq F\circ\psi_R$ is given by
\begin{align*}
\varphi_R(s,t)=\frac{i}{R^2}\left(\left(\frac{R^2}{R^2-1}\right)^{i(s+it)}+1\right)^{-1}.
\end{align*} 
Now, it is not difficult to check $\varphi_R\overset{C^\infty}{\to} 0,R\to\infty$, i.e., $\varphi_R$ and all its partial derivatives uniformly converge to zero. Note that $l^R_1$ converges to the $x$-axis $l^\infty_1$ and $l^R_2$ to the circle $l^\infty_2$ with center at $\frac{i}{2}$ and radius $\frac{1}{2}$. Legendrians $\Lambda_R,\Lambda_\infty$ are obtained by lifting $l_1^R\sqcup l_2^R,l_1^\infty\sqcup l_2^\infty$ in a neighborhood of $\Gamma$.
\end{proof}
\begin{figure}
\def\svgwidth{51mm}
\begingroup%
  \makeatletter%
  \providecommand\rotatebox[2]{#2}%
  \newcommand*\fsize{\dimexpr\f@size pt\relax}%
  \newcommand*\lineheight[1]{\fontsize{\fsize}{#1\fsize}\selectfont}%
  \ifx\svgwidth\undefined%
    \setlength{\unitlength}{258.75124791bp}%
    \ifx\svgscale\undefined%
      \relax%
    \else%
      \setlength{\unitlength}{\unitlength * \real{\svgscale}}%
    \fi%
  \else%
    \setlength{\unitlength}{\svgwidth}%
  \fi%
  \global\let\svgwidth\undefined%
  \global\let\svgscale\undefined%
  \makeatother%
  \begin{picture}(1,0.73782721)%
    \lineheight{1}%
    \setlength\tabcolsep{0pt}%
    \put(0.66748998,0.32176753){\makebox(0,0)[lt]{\lineheight{1.25}\smash{\begin{tabular}[t]{l}$l_1^R$\end{tabular}}}}%
    \put(0.56546326,0.55641926){\makebox(0,0)[lt]{\lineheight{1.25}\smash{\begin{tabular}[t]{l}$l_2^R$\end{tabular}}}}%
    \put(0.47535851,0.29899319){\makebox(0,0)[lt]{\lineheight{1.25}\smash{\begin{tabular}[t]{l}$\Gamma$\end{tabular}}}}%
    \put(0,0){\includegraphics[width=\unitlength,page=1]{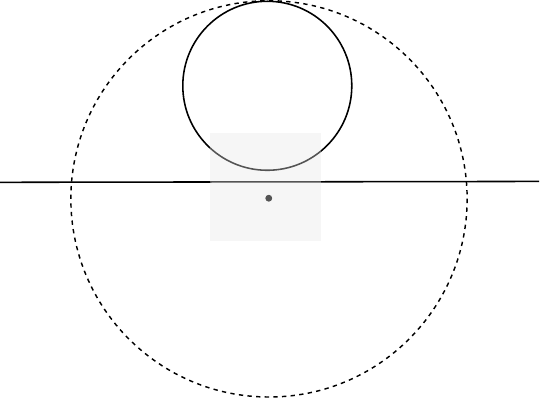}}%
  \end{picture}%
\endgroup%
\caption{Local model for Reidemeister II move.}
\label{Fig:ReidII_model}
\end{figure}
\begin{figure}
\def\svgwidth{106mm}
\begingroup%
  \makeatletter%
  \providecommand\rotatebox[2]{#2}%
  \newcommand*\fsize{\dimexpr\f@size pt\relax}%
  \newcommand*\lineheight[1]{\fontsize{\fsize}{#1\fsize}\selectfont}%
  \ifx\svgwidth\undefined%
    \setlength{\unitlength}{448.95957948bp}%
    \ifx\svgscale\undefined%
      \relax%
    \else%
      \setlength{\unitlength}{\unitlength * \real{\svgscale}}%
    \fi%
  \else%
    \setlength{\unitlength}{\svgwidth}%
  \fi%
  \global\let\svgwidth\undefined%
  \global\let\svgscale\undefined%
  \makeatother%
  \begin{picture}(1,0.27369349)%
    \lineheight{1}%
    \setlength\tabcolsep{0pt}%
    \put(0,0){\includegraphics[width=\unitlength,page=1]{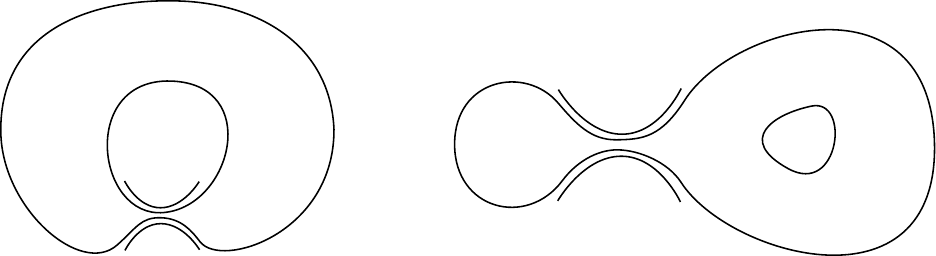}}%
  \end{picture}%
\endgroup%
\caption{Annulus pinching during Reidemeister II move.}
\label{Fig:annulus_pinch_reidII}
\end{figure}
\begin{lemma}\label{Lemma:OSection_ReidII}
Let $\Lambda_s,s\in[0,1]$ be a Legendrian knot isotopy that has the form given in Lemma \ref{Lemma:ReidII-local_form} in a neighborhood of $\Gamma\in\lC$ when $s\to1$, $s_n\in[0,1],n\in\lN$ a sequence such that $s_n\overset{n\to\infty}{\longrightarrow} 1$, and $u_n\in\cM^\pi_{2,0}(\Lambda_{s_n})$ be a sequence of holomorphic annuli with boundary on $\pi_{xy}(\Lambda_{s_n})$ that gets pinched as shown in Figure \ref{Fig:annulus_pinch_reidII}, left. Then
$$\lim_{n\to\infty}\Omega(u_n)=\pm\infty,$$
where the limit is $-\infty$ if the outer boundary passes through the overcrossing arc at the degenerate Reeb chord on $\Lambda_1$ at $\Gamma$, and $+\infty$ if it passes through the undercrossing arc.
\end{lemma}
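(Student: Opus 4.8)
The plan is to follow the proof of Lemma \ref{Lemma:OSection_hyperbolic} almost verbatim, the single new ingredient being that the pinching neck of $u_n$ is conformally identified with the explicit local-model strip $\varphi_R$ of Lemma \ref{Lemma:ReidII-local_form}. Once this identification is in place, the conclusion is obtained exactly as in the non-split case: the harmonic lift $w_n$ restricted to the neck converges to the $z$-coordinate of the degenerate Reeb chord $\gamma$ at $\Gamma$, and the period integral defining $\Omega(u_n)$ acquires an unbounded contribution of a definite sign from the long neck.

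First I would fix a small neighborhood $U$ of $\Gamma$ in which, by hypothesis, $\pi_{xy}(\Lambda_{s_n})$ agrees with the Reidemeister II local model, so that the two local strands are $l_1^{R_n}\cup l_2^{R_n}$ for a parameter $R_n\to\infty$ determined by $s_n\to 1$. Since $u_n\in\cM^\pi_{2,0}(\Lambda_{s_n})$ is pinched as in Figure \ref{Fig:annulus_pinch_reidII}, the component of $u_n^{-1}(U)$ containing the neck is a holomorphic strip of large conformal modulus with its two boundary components on $l_1^{R_n}$ and $l_2^{R_n}$. Because $u_n$ lies in $\cM^\pi_{2,0}$ it has no boundary branch points, so this neck is a genuinely embedded strip, and by the explicit description of the model it is, after reparameterization, given by $\varphi_{R_n}=F\circ\psi_{R_n}$ on a domain $(-R_n,R_n)\times[0,1]$. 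Since $\varphi_R\overset{C^\infty}{\to}\Gamma$ by Lemma \ref{Lemma:ReidII-local_form}, this provides conformal embeddings $\varphi_n$ with $\|u_n\circ\varphi_n-\Gamma\|_{C^2((-R_n,R_n)\times\{0,1\})}\overset{n\to\infty}{\longrightarrow}0$, which is precisely the hypothesis of Corollary \ref{Cor:thin_necks}.

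Given this, Corollary \ref{Cor:thin_necks} yields that $w_n\circ\varphi_n$ converges in $C^1$ to one of the harmonic functions $w^+(s,t)=\pi_z\gamma(t)$ or $w^-(s,t)=\pi_z\gamma(1-t)$, depending on which boundary arc of the neck lies on the undercrossing strand. I would then write the generator $l$ of $H_1(\Sigma_n)$ as $l_1^n\cup l_2^n$, with $l_1^n$ lying outside $U$ and $l_2^n$ crossing the neck. On the far piece $l_1^n$ the integral of $\lambda\circ d(u_n,w_n)\circ j_n$ stays uniformly bounded, since there $u_n$ and $w_n$ vary over a fixed compact region. On the neck the estimate of Lemma \ref{Lemma:OSection_hyperbolic} applies directly: using $\|w_n\circ\varphi_n-w^\pm\|_{C^1}\to 0$ together with $|\partial_t w^\pm|=l(\gamma)$, the transverse derivative forces
\begin{align*}
\left|\int_{l_2^n}\lambda\circ d(u_n,w_n)\circ j_n\right|>\frac{1}{2}\int_{-R_n}^{R_n}|\lambda(\partial_t w_n)|\,ds>\frac{R_n}{2}l(\gamma)\overset{n\to\infty}{\longrightarrow}\infty,
\end{align*}
and the orientation of $l$ as the outer boundary component fixes the sign: the limit is $-\infty$ when the outer boundary follows the overcrossing arc at $\Gamma$ and $+\infty$ when it follows the undercrossing arc. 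Combining the bounded far piece with the unbounded neck piece gives $\lim_{n}\Omega(u_n)=\pm\infty$ with the claimed sign.

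The main obstacle I anticipate is the neck identification in the second step: one must argue that the thin part of the degenerating annulus really is a long embedded holomorphic strip between the two local arcs, and extract both the rate $R_n\to\infty$ and the $C^2$ boundary convergence from the $C^\infty$ convergence of the local model. The fact that $u_n\in\cM^\pi_{2,0}$ carries no branch points is what makes the neck an honest strip and lets us match it with the explicit $\varphi_{R_n}$ rather than argue abstractly about uniqueness of holomorphic strips between the arcs; the remaining care is in controlling the conformal modulus of the neck as the annulus pinches. Once the neck is normalized in this way, the divergence estimate and the sign bookkeeping are identical to those already carried out in Lemma \ref{Lemma:OSection_hyperbolic}.
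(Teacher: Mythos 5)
Your proposal is correct and follows essentially the same route as the paper, which likewise transplants the proof of Lemma \ref{Lemma:OSection_hyperbolic}: conformal embeddings $\varphi_n$ of long strips parameterizing the pinching neck with $\|u_n\circ\varphi_n-\Gamma\|_{C^2}\to 0$, Corollary \ref{Cor:thin_necks} for the $C^1$ convergence of the harmonic lifts to $w^\pm$, comparison with the harmonic lift $w_0$ of the limiting disk (which acquires a positive and a negative corner at $\Gamma$) via a Dirichlet problem and the maximum principle, and a split of the period integral into a uniformly bounded far piece and a neck piece diverging like $\tfrac{R_n}{2}l(\gamma)$ with the sign dictated by whether the outer boundary runs along the over- or undercrossing arc. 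The only deviations are inessential: the paper never identifies the neck with the explicit model strip $\varphi_{R_n}$ of Lemma \ref{Lemma:ReidII-local_form} (it only posits embeddings with the stated boundary convergence, so your modulus-matching worry can be sidestepped entirely), and your assertion that the far-piece integral is bounded ``by compactness'' should, as in the paper, be backed by the $w_n-w_0$ maximum-principle comparison together with Corollary \ref{Corollary:harmonic_bootstrapping}, since uniform $C^1$ control of the harmonic lifts along $l_1^n$ does not follow from $C^0$ information alone.
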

\begin{proof}
The proof is similar to the proof of Lemma \ref{Lemma:OSection_hyperbolic}. Let $u_n:(\Sigma_n,j_n)\to(\lC,i),n\in\lN$ be a sequence of holomorphic annuli in $\cM^\pi_{2,0}(\Lambda_{s_n})$ as above and $\varphi_n:(-8R_n,8R_n)\times[0,1]\to\Sigma_n$ a sequence of conformal embeddings such that $\|u_n\circ \varphi_n-\Gamma\|_{C^2((-8R_n,8R_n)\times\{0,1\})}\overset{n\to\infty}{\longrightarrow} 0$ and $R_n\overset{n\to\infty}{\rightarrow}\infty$. Let $w_n$ denote the corresponding harmonic lifts and $\alpha_n,\beta_n$ the paths $t\to \varphi_n(-R_n+2,t),t\to \varphi_n(R_n-2,t),t\in [0,1]$. As before, we get 
\begin{align*}
\|w_n\circ \varphi_n-w^+\|_{C^1((-R_n,R_n)\times[0,1])}\overset{n\to\infty}{\longrightarrow}0
\end{align*}
or
\begin{align*}
\|w_n\circ \varphi_n-w^-\|_{C^1((-R_n,R_n)\times[0,1])}\overset{n\to\infty}{\longrightarrow}0
\end{align*}
for $w^+(s,t)=\pi_z\gamma(t),w^-(s,t)=\pi_z\overline\gamma(t)$, where $\gamma$ is the Reeb chord on $\Lambda_1$ corresponding to $\Gamma$ and $\overline\gamma(t)=\gamma(1-t)$. We denote $\widetilde\Sigma_n=\Sigma_n\backslash \varphi_n((-R_n+2,R_n-2)\times[0,1])$. The family of annuli degenerates into a disk $\overline u_0:\Sigma\to\lC$ with boundary on $\pi_{xy}(\Lambda_1)$ and additional positive and negative corner at $\Gamma$. Denote the harmonic lift of $\overline u_0$ by $w_0$. Similar as above, we get $ w_0\circ\alpha_n,w_0\circ\beta_n\overset{C^1([0,1])}{\longrightarrow}\pi_z\circ\gamma$ or $ w_0\circ\alpha_n,w_0\circ\beta_n\overset{C^1([0,1])}{\longrightarrow}\pi_z\circ\overline\gamma,n\to\infty$. Then, $(w_n-w_0)|_{\widetilde\Sigma_n}$ is a solution of the Dirichlet problem
\begin{align*}
&\Delta v=0,\\
&v|_{\partial\widetilde\Sigma_n}=\overline f_n,
\end{align*}
where $\max_{\partial\widetilde\Sigma_n}|\overline f_n|\overset{n\to\infty}{\longrightarrow} 0$, so the maximum principle implies $\max_{\widetilde\Sigma_n}|w_n-w_0|\overset{n\to\infty}{\longrightarrow} 0$.

Let $l_n\subset\Sigma_n$ be loops obtained by slightly pushing the outer boundary component of $\Sigma_n$ into the interior at the punctures. Fix a small enough neighborhood $U\subset\lC$ of $\Gamma$ and denote $l_1^n=l_n\cap(\Sigma_n\backslash U),l_2^n=l_n\cap U$. Similar as in Lemma \ref{Lemma:OSection_hyperbolic}, we get that
\begin{align*}
\left|\int_{l_1^n}\lambda\circ d(u_n,w_n)\circ j_n\right|
\end{align*}
are uniformly bounded,
\begin{align*}
\int_{l_2^n}\lambda\circ d(u_n,w_n)\circ j_n\overset{n\to\infty}{\longrightarrow}-\infty
\end{align*}
if the outer boundary passes through the overcrossing arc near $\Gamma$, and
\begin{align*}
\int_{l_2^n}\lambda\circ d(u_n,w_n)\circ j_n\overset{n\to\infty}{\longrightarrow}+\infty
\end{align*}
if the outer boundary passes through the undercrossing arc.
\end{proof}

\begin{lemma}\label{Lemma:OSection_ReidIIb}
Let $\Lambda_s,s\in[0,1]$ be a Legendrian knot isotopy that has the form given in Lemma \ref{Lemma:ReidII-local_form} in a neighborhood of $\Gamma\in\lC$ when $s\to 1$, $s_n\in[0,1],n\in\lN$ a sequence such that $s_n\overset{n\to\infty}{\longrightarrow} 1$, and $u_n\in\cM_{2,0}^\pi(\Lambda_{s_n})$ be a sequence of holomorphic annuli with boundary on $\pi_{xy}(\Lambda_{s_n})$ that gets pinched as shown in Figure \ref{Fig:annulus_pinch_reidII}, right. Denote the annular part of the degenerate annulus in the limit by $\overline u_0$. Then
$$\lim_{n\to\infty}\Omega(u_n)=\Omega(\overline u_0).$$
\end{lemma}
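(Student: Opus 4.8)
```latex
The plan is to run the same Gromov-compactness-and-maximum-principle argument as in Lemma \ref{Lemma:OSection_elliptic}, but now along the one-parameter family $\Lambda_{s_n}$ rather than within a fixed moduli space. The key geometric difference from Lemma \ref{Lemma:OSection_ReidII} is that here the annulus degenerates with the branch point running into the boundary component it already lies on, so in the limit we get a building whose annular part $\overline u_0$ survives as a genuine rigid annulus in $\cM^\pi_{2,0}(\Lambda_1)$ with a disk bubbling off at a corner. This is precisely the split-type degeneration, so the obstruction integral should localize on the surviving annular part and converge to $\Omega(\overline u_0)$, exactly as in the split boundary case.

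First I would set up the thin-neck parameterizations: take conformal embeddings $\varphi_n:(-8R_n,8R_n)\times[0,1]\to\Sigma_n$ with $\|u_n\circ\varphi_n-\Gamma\|_{C^2((-8R_n,8R_n)\times\{0,1\})}\to 0$ and $R_n\to\infty$, and let $w_n$ be the harmonic lifts. By Corollary \ref{Cor:thin_necks} the lifts converge in $C^1$ on the neck to $w^+$ or $w^-$, so along the cross-cut $\alpha_n=\varphi_n(-R_n+1,\cdot)$ placed in the annular part we get $w_n\circ\alpha_n\to\pi_z\circ\gamma$ (or $\pi_z\circ\overline\gamma$) in $C^1([0,1])$. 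Next I would compare to the limiting annular part: writing $\overline u_0:\Sigma\to\lC$ with harmonic lift $w_0$, the same corollary gives the matching boundary behavior of $w_0$ along the images of $\alpha_n$ in $\Sigma$, since $\overline u_0$ is asymptotic to $\gamma$ at the puncture at $\Gamma$.

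Then I cut off the neck at $\alpha_n$ to form $\widetilde\Sigma_n\subset\Sigma$ and observe that $(w_n-w_0)|_{\widetilde\Sigma_n}$ solves a Dirichlet problem with boundary data $\overline f_n$ satisfying $\max_{\partial\widetilde\Sigma_n}|\overline f_n|\to 0$; this uses $w_n=w_0$ on the un-degenerating boundary component by uniqueness of the boundary lift, together with the $C^1$ convergence along $\alpha_n$. The maximum principle then yields $\max_{\widetilde\Sigma_n}|w_n-w_0|\to 0$, and Corollary \ref{Corollary:harmonic_bootstrapping} upgrades this to $C^1$ convergence of $w_n$ to $w_0$ on a compact neighborhood of a generating loop $\tilde l$ of $H_1(\widetilde\Sigma_n)$ lying away from $\Gamma$. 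Since $u_n=\overline u_0$ on that compact set, the integrand $\lambda\circ d(u_n,w_n)\circ j_n$ converges uniformly on $\tilde l$, giving
\begin{align*}
\Omega(u_n)=\int_{\tilde l}\lambda\circ d(u_n,w_n)\circ j_n\overset{n\to\infty}{\longrightarrow}\int_{\tilde l}\lambda\circ d(\overline u_0,w_0)\circ j=\Omega(\overline u_0).
\end{align*}

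The main obstacle I anticipate is the bookkeeping showing that the generating loop $\tilde l$ of $H_1$ can be chosen in the surviving annular part away from the pinching neck, so that no contribution from the collapsing region enters the limiting integral; this is what distinguishes the present split case (where the whole obstruction integral converges to a finite value) from Lemma \ref{Lemma:OSection_ReidII} (where the loop is forced to cross the neck and the integral diverges to $\pm\infty$). Verifying that the cross-cut $\alpha_n$ genuinely separates the annular part from the bubbling disk, and that $\tilde l$ is homologous in $\widetilde\Sigma_n$ to the generator of $H_1(\Sigma)$ used to define $\Omega(\overline u_0)$, requires care with the conformal geometry of the degenerating family, but is otherwise parallel to Lemma \ref{Lemma:OSection_elliptic}.
```
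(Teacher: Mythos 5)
Your proposal is correct and follows essentially the same route as the paper, which itself gives no details and simply states that the proof goes as in Lemma \ref{Lemma:OSection_ReidII} and Lemma \ref{Lemma:OSection_elliptic}; your write-up is exactly that argument (thin-neck parameterization via Corollary \ref{Cor:thin_necks}, the Dirichlet problem with maximum principle, bootstrapping via Corollary \ref{Corollary:harmonic_bootstrapping}, and localization of the obstruction integral on a loop $\tilde l$ in the surviving annular part). The only cosmetic caveat is that, since the Lagrangian moves with $s_n$, one has $u_n\to\overline u_0$ in $C^\infty$ near $\tilde l$ rather than literal equality $u_n=\overline u_0$ there, which suffices for the uniform convergence of the integrand.
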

The proof goes similar to the proof of the previous lemma and Lemma \ref{Lemma:OSection_elliptic}.

\vspace{2.1mm}
Next, we show $J$-holomorphic annuli for generic $\Lambda$ are regular, i.e. transversally cut out by the Cauchy--Riemann operator (see for example \cite{mcduffsal04}).

\begin{lemma}\label{Lemma:obstruction_transversalityI}Let $\Lambda$ be a generic Legendrian knot. The obstruction section $\Omega:\cM^\pi_{2}\to\lR$ is transverse to the zero section if and only if all $J$-holomorphic annuli on $\lR\times\Lambda$ are regular.
\end{lemma}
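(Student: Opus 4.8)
The plan is to compare the linearized Cauchy--Riemann operator that governs $J$-holomorphic annuli on $\lR\times\Lambda$ with the differential of $\Omega$, exploiting the triangular structure of the system (\ref{Eq:equiv_eq}). Recall that, by the obstruction lemma above, the $J$-holomorphic annuli on $\lR\times\Lambda$ are in bijection with the zeros $u_0\in\cM^\pi_2$ of $\Omega$, via $u_0\mapsto\widetilde u=(\theta,\overline u_0)$, where $\overline u_0=(u_0,w)$ with $w$ the harmonic extension of $\pi_z\circ\overline u_0|_\partial$ and $d\theta=\lambda\circ d\overline u_0\circ j$. Regularity of $\widetilde u$ means surjectivity of the linearized operator $D_{\widetilde u}$ acting on the appropriate weighted Sobolev space of sections of $\widetilde u^*T\lR^4$ with totally real boundary condition along $T(\lR\times\Lambda)$ and exponential decay at the punctures. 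So I must show that $D_{\widetilde u}$ is surjective exactly when $d\Omega_{u_0}\colon T_{u_0}\cM^\pi_2\to\lR$ is nonzero, and then quantify over all $u_0\in\Omega^{-1}(0)$.

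First I would record the block structure of $D_{\widetilde u}$ induced by the splitting $T(\lR\times\lR^3)=\langle\partial_x,\partial_y\rangle\oplus\langle\partial_z,\partial_r\rangle$ together with the explicit form of $J$ in (\ref{Eq:intro_J_formula}). By (\ref{Eq:equiv_eq}) the first two equations involve only $(u,v)=\pi_{xy}\widetilde u$, the third involves only $w=\pi_z\widetilde u$ (coupled to $(u,v)$ solely through the boundary condition on $\Lambda$), and the last two determine $\theta=\pi_r\widetilde u$ from $(u,v,w)$ via $d\theta=\lambda\circ d\overline u\circ j$. Linearizing, $D_{\widetilde u}$ is therefore lower triangular in the ordering $(u,v)\rightsquigarrow w\rightsquigarrow\theta$, with diagonal blocks the linearized projected operator $D^\pi$ on the deformations $(\delta u,\delta v)$, the Dirichlet Laplacian for $\delta w$, and the operator $\delta\theta\mapsto d(\delta\theta)$ on the $\theta$-direction.

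The heart of the argument is to identify the cokernels block by block. The block $D^\pi$ is surjective, since this is exactly the statement that $\cM^\pi_2$ is a smooth manifold of holomorphic curves in $\lC$ with totally real boundary (cut out transversally for generic $\Lambda$, as in the regularity underlying Propositions \ref{Prop:moduliII} and \ref{Prop:compactness_annulus}). The $w$-block is the linearization of the Dirichlet problem $\Delta w=0$ with boundary values pinned to $\Lambda$; by unique solvability of the Dirichlet problem, with the elliptic estimate of Corollary \ref{Corollary:harmonic_bootstrapping} controlling the $w$-direction, it contributes no cokernel. Hence the only obstruction to surjectivity sits in the $\theta$-block: the equation $d(\delta\theta)=\delta\beta$ is solvable for $\delta\theta$ precisely when the linearized period $\int_l\delta\beta$ vanishes, where $l$ generates $H_1(\Sigma)\cong\lR$. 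Restricting this period functional to the homogeneous solutions of the $(u,v)$- and $w$-blocks, i.e. to $(\delta u,\delta v)\in\ker D^\pi=T_{u_0}\cM^\pi_2$ with $\delta w$ the induced harmonic variation, gives exactly $d\Omega_{u_0}$, by definition of $\Omega=\int_l\beta$. Chasing an arbitrary inhomogeneity through the triangular system then yields $\operatorname{coker}D_{\widetilde u}\cong\operatorname{coker}\bigl(d\Omega_{u_0}\colon T_{u_0}\cM^\pi_2\to\lR\bigr)$, which vanishes iff $d\Omega_{u_0}\neq0$. Thus $\widetilde u$ is regular iff $\Omega\pitchfork0$ at $u_0$, and quantifying over all zeros of $\Omega$ gives the stated equivalence.

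The main obstacle I anticipate is the careful functional-analytic bookkeeping needed to make the triangular decomposition rigorous: choosing compatible weighted Sobolev spaces at the Reeb-chord punctures so that all three blocks are Fredholm on the same domain, verifying that the off-diagonal (lower-order) coupling terms coming from the $y\partial_r$ and $y\partial_z$ entries of $J$ do not alter the cokernel computed from the diagonal blocks, and confirming that the closed one-form $\delta\beta$ genuinely linearizes the period integral so that its class in $H^1(\Sigma)$ equals $d\Omega_{u_0}$. Once the decomposition and these estimates are justified, the block-by-block cokernel identification and the final equivalence follow formally.
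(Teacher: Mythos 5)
Your proposal is correct in outline, but it takes a genuinely different route from the paper's. The paper argues on the \emph{kernel} side and nonlinearly: assuming $\Omega$ fails to be transverse at $v_0=\pi_{xy}\circ u_0$, it cuts the nearby annuli $v_t\in\cM^\pi_{2,1}$ along a family of embedded arcs joining the two boundary components, lifts the resulting holomorphic disks to $J$-holomorphic disks in $\lR^4$ whose two branches over the cut differ by exactly $\Omega(v_t)\partial_r$ (disks always lift since $H^1_{dR}(\lD)=0$, so the period obstruction is converted into a concrete jump), and then uses $\Omega'(v_0)=0$ to linearize this family into an element of $\ker D_{u_0}$ transverse to the $\lR$-translation direction, contradicting regularity; the converse is asserted to go similarly using regularity of $\cM^\pi_{2,1}$ for generic $\Lambda$. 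You instead compute the \emph{cokernel} of $D_{\widetilde u}$ directly via the triangular structure of (\ref{Eq:equiv_eq}), obtaining $\operatorname{coker}D_{\widetilde u}\cong\operatorname{coker}\bigl(d\Omega_{u_0}\bigr)$ in one stroke; note that both arguments consume the same genericity input, namely surjectivity of the projected operator (regularity of $\cM^\pi_2$, as in the paper's appeal to \cite{EES07}). What your route buys: the ``if and only if'' comes out symmetrically rather than one direction at a time, the one-dimensional cokernel is explicitly the period functional, and the argument generalizes verbatim to $\Omega:\cM^\pi(\Sigma)\to\lR^{b_1(\Sigma)}$ with cokernel $\lR^{b_1}/\operatorname{im}\,d\Omega$. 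What the paper's route buys: it sidesteps the weighted-Sobolev Fredholm setup for the triangular system entirely --- the only analytic input is smooth dependence of the lifted disk family --- so it is shorter given the lemmas already in place. Two points you should make explicit when writing your version up, beyond the bookkeeping you already flagged: the domain of $D_{\widetilde u}$ (and of $D^\pi$) must include variations of the annulus modulus, so that $\ker D^\pi$ really equals $T_{u_0}\cM^\pi_2$ modulo automorphisms; and since the harmonicity of $w$ in (\ref{Eq:equiv_eq}) is redundant with the $(z,r)$ Cauchy--Riemann pair, your finer split (Dirichlet block for $\delta w$ plus period block for $\delta\theta$) must be matched against the honest $(z,r)$-block for the \emph{inhomogeneous} problem --- that block has index zero with kernel exactly the $\lR$-translation constants and one-dimensional cokernel, which is precisely your period functional, confirming that the split computes the right thing.
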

\begin{proof}
We consider index zero annuli $\cM^\pi_{2,1}$, the proof goes the same in higher dimensions. Assume $\Omega$ is not transverse to the zero section at $\pi_{xy}\circ u_0\in\cM^\pi_{2,1}$ for some index zero annulus $u_0$. Denote by $v_t\in\cM^\pi_{2,1},t\in(-\varepsilon,\varepsilon)$ the holomorphic annuli in the neighborhood of $v_0=\pi_{xy}\circ u_0$. Fix a smooth family of embedded paths $\gamma_t,t\in(-\varepsilon,\varepsilon)$ on $v_t$ such that $\gamma_t$ starts on the outer and ends on the inner boundary component of $v_t$. Denote by $\widetilde v_t$ the holomorphic disks obtained by cutting $v_t$ along $\gamma_t$. We lift $\widetilde v_t$ to a smooth family of $J$-holomorphic disks $\widetilde u_t$ in $\lR^4$ with part of the boundary lifted from $\pi_{xy}(\Lambda)$ to $\lR\times\Lambda$ and the difference between the two branches lifted from $\gamma_t$ equal to $\Omega(v_t)\partial_r$. Since $\Omega'(v_0)=0$ and $\widetilde u_t$ are $J$-holomorphic, linearization of this family of disks gives us a well-defined smooth section along $u_0$ that is in the kernel of the linearized Cauchy--Riemann operator $D_{u_0}$ and is transverse to the $\lR$-translation direction in the kernel. The proof of the other direction goes similarly assuming that the moduli space $\cM^\pi_{2,1}$ is regular, which holds for generic $\Lambda$ similar as in \cite{EES07}.
\end{proof}
The following lemma now implies that index zero and index one $J$-holomorphic annuli on $\lR\times\Lambda$ with one positive puncture are regular for generic $\Lambda$. \begin{lemma}\label{Lemma:obstruction_transversalityII}
The obstruction section $\Omega:\cM^\pi_{2,k}\to\lR,k\in\{0,1,2\}$ is transverse to the zero section for generic $\Lambda$.
\end{lemma}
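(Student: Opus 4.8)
The plan is to prove this by a standard Sard--Smale parametric transversality argument, taking the Legendrian knot $\Lambda$ itself as the abstract perturbation parameter. First I would fix a reference knot and let $\Lambda$ range over a separable Banach manifold $\mathcal{L}$ of nearby Legendrian perturbations (for instance the Hamiltonian-type deformations of the boundary condition used in Section~\ref{Sec:hamiltonian_def}). Over $\mathcal{L}$ I would form the universal moduli space $\widetilde{\cM}_{2,k}=\{(\Lambda,u_0):u_0\in\cM^\pi_{2,k}(\Lambda)\}$ of holomorphic annuli in $\lC$ with boundary on $\pi_{xy}(\Lambda)$ and one positive corner. By the regularity of the boundary-value problem for holomorphic curves in $\lC$ with immersed Lagrangian boundary (the argument of \cite{EES07} carried out in the universal setting), $\widetilde{\cM}_{2,k}$ is a Banach manifold, and the obstruction sections assemble into a smooth universal section $\widetilde\Omega:\widetilde{\cM}_{2,k}\to\lR$, given fibrewise by $u_0\mapsto\int_l\beta(\Sigma,j,\overline u_0)$. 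It suffices to show that $\widetilde\Omega$ is transverse to the zero section, i.e. that its differential surjects onto $\lR$ at every point with $\widetilde\Omega=0$; the Sard--Smale theorem then yields a comeagre set of $\Lambda\in\mathcal{L}$ for which the fibre section $\Omega=\widetilde\Omega|_{\cM^\pi_{2,k}(\Lambda)}$ is transverse to zero. For $k=0$ the fibre is $0$-dimensional, so transversality to the zero section of this line bundle is precisely the condition $\Omega(\cM^\pi_{2,0})\subset\lR\setminus\{0\}$ required by Definition~\ref{Def:virtual_obstruction_section}.

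Next I would linearize $\widetilde\Omega$ at a zero $(\Lambda,u_0)$. Varying $\Lambda$ contributes to $\delta\Omega=\delta\int_l\beta(\Sigma,j,\overline u_0)$ in two ways: through the variation of the holomorphic map $u_0=(u,v)$ and its complex structure $j$, and through the variation of the harmonic lift $w$, whose boundary values are $\pi_z$ of the lifted boundary of $u_0$. Since $\beta=\lambda\circ d\overline u_0\circ j$ is closed, I would deform the cycle $l$ toward the inner boundary component and use the Legendrian condition $\lambda|_{\Lambda}=0$ (equivalently $\partial_s w-v\,\partial_s u=0$ along $\partial\Sigma$) to rewrite $\Omega$ as the integral of the normal data $\partial_t w-v\,\partial_t u$ along the inner boundary. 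This reduces surjectivity onto $\lR$ to exhibiting a single Legendrian perturbation that changes this boundary integral to first order.

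To produce such a perturbation I would localize near an interior point $p$ of the inner boundary component, where $\pi_{xy}(\Lambda)$ is a single embedded strand and $u_0$ restricts to an embedded holomorphic strip. A compactly supported deformation of this strand changes the height $\pi_z$ of the boundary lift near $p$ by a prescribed bump, hence changes the Dirichlet data of $w$; by the explicit Poisson-kernel description in Lemma~\ref{Lemma:harmonic_strip} together with Corollary~\ref{Corollary:harmonic_bootstrapping}, the corresponding Dirichlet-to-Neumann response changes the normal derivative $\partial_t w$, and thus the period $\int_l\beta$, by a nonzero amount. The main obstacle is that the same perturbation also moves $u_0$ and $j$, so one must verify that the harmonic contribution to $\delta\Omega$ is not cancelled by the curve-variation contribution. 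I would handle this by exploiting the regularity of $\cM^\pi_{2,k}$ established above to decouple the variation of $u_0$ from the prescribed boundary bump, choosing the support of the bump small enough that the Dirichlet-to-Neumann term, controlled by the estimates of Lemma~\ref{Lemma:harmonic_strip}, dominates at leading order. This is the analytically delicate step and is where those estimates are genuinely used.

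Finally, I would record the equivalent reading through Lemma~\ref{Lemma:obstruction_transversalityI}: transversality of $\Omega$ on $\cM^\pi_{2,1}$ is the same as regularity of the index-zero $J$-holomorphic annuli on $\lR\times\Lambda$, and likewise the index $\pm 1$ strata correspond to $\cM^\pi_{2,0}$ and $\cM^\pi_{2,2}$, so the statement may also be viewed as generic regularity of $J$-holomorphic annuli, paralleling the disk case of \cite{EES07}. Assembling the three strata $k\in\{0,1,2\}$, each a comeagre condition on $\mathcal{L}$, and intersecting the resulting generic sets completes the proof.
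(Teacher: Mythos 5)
Your overall framework is sound and runs parallel to the paper's: the paper uses a one-parameter version of your scheme, exhibiting a single perturbation $\Lambda_\tau$ with $\frac{d}{d\tau}\Omega_\tau(u_0)|_{\tau=0}\neq 0$ at a putative non-transverse point and concluding via Sard's theorem together with a local foliation argument, rather than full Sard--Smale over a Banach manifold of Legendrians. The genuine gap sits in your third paragraph, and you have correctly located it without closing it. Since $\Lambda$ is Legendrian, $dz=y\,dx$ along $\Lambda$, so a compactly supported deformation of the strand near a point $p$ on the inner boundary that changes the height $\pi_z$ of the boundary lift \emph{must} change the $xy$-projection of $\Lambda$ on its support; as $p$ lies on $\pi_{xy}u_0|_\partial$, the boundary condition of the holomorphic annulus changes, $u_0$ and $j$ move, and $\delta\Omega$ acquires a curve-variation term. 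Your proposed remedy --- shrink the support so that the Dirichlet-to-Neumann term ``dominates at leading order'' --- does not work as stated: both the Dirichlet-to-Neumann response and the curve-variation response are linear in the size of the perturbation, so there is no scaling regime in which one dominates the other; one would have to genuinely estimate or compute the curve term, which is precisely the difficulty you flagged as delicate and then deferred.

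The paper closes exactly this gap by a different choice of perturbation for which the curve-variation term vanishes identically. One picks $A_1,A_2\in\Lambda$ \emph{off} $\pi_{xyz}u_0|_\partial$ such that the segment $\widetilde S$ between them meets only one boundary component of $u_0$, and perturbs by $\Lambda_\tau=\Lambda+f_\tau\partial_z$ on $S$ with $f_\tau$ a \emph{constant}, interpolating by ``bumps'' in the Lagrangian projection only in small neighborhoods of $A_1,A_2$. Constant shifts in $z$ preserve the Legendrian condition without altering the projection, and the interpolating bumps avoid $\pi_{xy}u_0|_\partial$, so $\pi_{xy}u_0$ is verbatim a holomorphic annulus with boundary on $\pi_{xy}(\Lambda_\tau)$: the curve and its complex structure do not move at all, and only the Dirichlet data of the harmonic lift changes, by a piecewise constant function $\widetilde f_\tau$ on one boundary of the universal-cover strip (the other boundary is untouched because $\widetilde S$ meets only one component). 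The derivative of $\Omega$ then collapses to the explicit Poisson-kernel integral
\begin{align*}
\frac{d}{d\tau}\Omega_\tau(v_\tau)=\frac{\pi}{2}\int_0^l\int_{-\infty}^{+\infty}\frac{1}{\left(\cosh\pi(\sigma-s)\right)^2}\,\frac{d}{d\tau}\widetilde f_\tau(\sigma)\,d\sigma\, ds,
\end{align*}
which is manifestly nonzero for a sign-definite bump, by positivity of the kernel. If you substitute this perturbation into your Sard--Smale framework, your argument goes through; without it, or without an actual bound showing the curve-variation contribution is subleading, the surjectivity step --- and hence the proof --- is incomplete.
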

\begin{proof}
We consider $k=1$, the proof for $k\in\{0,2\}$ goes the same. Assume $\Lambda$ is in general position and there exists $u_0\in\cM^\pi_{2,1}$ such that $\Omega$ is not transverse to 0 at $u_0$. For $\Lambda_\tau,\tau\in[0,\varepsilon)$ a perturbation of $\Lambda$, we naturally identify $\cM^{\pi}_{2,1}(\Lambda_\tau)\cong\cM^{\pi}_{2,1}(\Lambda_0)=:\cM^{\pi}_{2,1}$ and consider the obstruction sections $\Omega_\tau:\cM_{2,1}^{\pi}\to\lR$. It is enough to find a perturbation $\Lambda_\tau$ such that $\frac{d}{d\tau}\Omega_\tau(u_0)|_{\tau=0}\neq 0$. Then, using Sard's theorem, we have $\Omega_{\tau_0}\pitchfork 0$ for generic $\tau_0\in[0,\varepsilon)$ in a small neighborhood of $u_0$ independent of $\tau_0$. Indeed, since $\frac{d}{d\tau}\Omega_\tau(u_0)|_{\tau=0}\neq 0$, $\Omega_\tau$ gives us a smooth foliation by sections on $U\times\lR$ for a small neighborhood $U$ of $u_0$. By looking at the local coordinates where the foliation is trivial, we see that a generic section in the foliation $\Omega_\tau$ is transverse to the zero section. 

The condition $\frac{d}{d\tau}\Omega_{\tau}(u_0)|_{\tau=0}\neq 0$ can be achieved as follows. Let $A_1,A_2\in\Lambda$ be two points that do not lie on $\pi_{xyz}u_0|_\partial$ such that the segment of $\Lambda$ from $A_1$ to $A_2$ intersects only one of the boundary components of $u_0$ (without loss of generality, the outer boundary component). Without this condition, the argument below still works similarly for generic $\Lambda$ for $A_1,A_2\not\in\pi_{xyz}u_0|_\partial$. We denote by $\widetilde S$ the segment of $\Lambda\cong\lR/\lZ$ from $A_1-\eta$ to $A_2+\eta$ for some $\eta>0$ small, and by $S$ segments of the outer boundary component of $u_0$ that map to $\widetilde S$. We construct a perturbation $\Lambda_\tau,\tau\in[0,\varepsilon)$ of $\Lambda_0=\Lambda$ by creating two "bumps" in the Lagrangian projection in small neighborhoods of $A_1,A_2$, such that $\Lambda_\tau$ is equal to $\Lambda$ away from $\widetilde S$ and $\Lambda_\tau=\Lambda+f_\tau\partial_z$ on $S$, where $f_\tau\in\lR$ is a constant. Since $\pi_{xyz}u_0|_\partial$ does not pass through $A_1,A_2$, holomorphic annulus $\pi_{xy}u_0$ can be seen as a holomorphic annulus on $\pi_{xy}(\Lambda_\tau)$, which we denote by $v_\tau$. Let $\widetilde u$ be the holomorphic universal cover of the domain of $v_\tau$ by a strip. Denote by $w_\tau$ the harmonic lift of $v_\tau\circ\widetilde u$ with boundary on $\Lambda_\tau$
\begin{align*}
w_\tau(s,t)=\frac{1}{2\pi}\int_{-\infty}^{+\infty}\left(P(\sigma-s,t)w_\tau(s,0)+P(\sigma-s,1-t)w_\tau(s,1)\right)d\sigma,s\in[0,l),t\in(0,1),
\end{align*}
where $P$ is given by (\ref{Eq:Pkernel}). Note that $w_\tau(s,0)=w_0(s,0)$ (inner boundary) since $\Lambda$ is not perturbed away from $\widetilde S$, and $w_\tau(s,1)=w_0(s,1)+\widetilde f_\tau(s)$, for $\widetilde f_\tau(s)$ a piecewise constant function (equal to $f_\tau$ on $S$ and zero otherwise). Then we have
\begin{align*}
\frac{d}{dt}w_\tau(s,1/2)&=\frac{d}{dt}w_0(s,1/2)-\frac{1}{2\pi}\int_{-\infty}^{+\infty}\partial_t P(\sigma-s,1/2)\widetilde f_\tau(\sigma)d\sigma=\\
&=\frac{d}{dt}w_0(s,1/2)+\frac{\pi}{2}\int_{-\infty}^{+\infty}\frac{1}{\left(\cosh\pi(\sigma-s)\right)^2}\widetilde f_\tau(\sigma)d\sigma,
\end{align*}
and
\begin{align*}
\frac{d}{d\tau}\Omega_\tau(v_\tau)=\int_{\widetilde l}\frac{d}{d\tau}\frac{d}{dt}w_\tau(s,1/2)ds=\frac{\pi}{2}\int_0^l\int_{-\infty}^{+\infty}\frac{1}{\left(\cosh\pi(\sigma-s)\right)^2}\frac{d}{d\tau}\widetilde f_\tau(\sigma)d\sigma ds,
\end{align*}
here $\widetilde l=\{t=1/2,s\in[0,l]\}$. For a suitable choice of "bumps" at $A_1,A_2$, this can clearly be made not-zero.
\end{proof}

\subsection{Generic asymptotic and relative asymptotic behavior}\label{Sec:generic_asymptotic_behavior_admissible_disks}

In this section, we discuss the asymptotic and the relative asymptotic behavior of $J$-holomorphic disks on $\lR\times\Lambda$.

For $u$ a $J$-holomorphic disk on $\lR\times\Lambda$, we define asymptotic representatives $\zeta_i=(\zeta^0_i,\zeta_i^1)$ of $u$ at each puncture $t_i$ as follows. The disk $u$ has a corner at $t_i$ at a self-intersection of $\pi_{xy}(\Lambda)$ that we denote by $k_i$. Let $\varphi_i:[0,+\infty)\times[0,1]\to\lD\backslash\{t_1,\dots,t_m\}$ be a holomorphic parameterization of a neighborhood of $t_i$ in $\lD^2\backslash\{t_1,\dots,t_m\}$, then we define 
\begin{align*}
\zeta_i^0=\lim_{s\to\infty}\frac{\pi_{xy}\circ u\circ\varphi_i(s,0)-k_i}{\|\pi_{xy}\circ u\circ\varphi_i(s,0)-k_i\|},\\
\zeta_i^1=\lim_{s\to\infty}\frac{\pi_{xy}\circ u\circ\varphi_i(s,1)-k_i}{\|\pi_{xy}\circ u\circ\varphi_i(s,1)-k_i\|}.
\end{align*}
Note that $\pi_{xy}\circ u\circ\varphi_i(s,\iota)\neq k_i$ for $s$ large enough, $\iota\in\{0,1\}$. 

Now we can define the notion of generic asymptotic behavior for a $J$-holomorphic disk $u$ on $L=\lR\times\Lambda$. We assume Legendrian knot $\Lambda$ is in general position. 
\begin{defi}\label{Def:generic_asym_beh}
We say a $J$-holomorphic disk $u$ on $L$ has \textit{generic asymptotic behavior} at a puncture $t_i$ if there exists a neighborhood $U_i\subset\lD\backslash\{t_1,\dots,t_m\}$ of $t_i$ such that $\pi_{xy}\circ u|_{U_i}$ is a bijection to a quadrant in $\lC$ at the corresponding self-intersection of $\pi_{xy}(\Lambda)$ and if $(-\zeta_i^0,\zeta_i^1)$ forms a positively oriented basis in $\lC$.
\end{defi}
Assume $u$ has generic asymptotic behavior at each puncture and let $t_j,t_l$ be two punctures on $u$ both positively or negatively asymptotic to some Reeb chord $\gamma_{k_{j,l}}$. Then, we define the relative asymptotic representative $\zeta_{j,l}=(\zeta_{j,l}^0,\zeta_{j,l}^1)$ for $t_j,t_l$ as follows. If small neighborhoods of $t_j,t_l$ occupy different quadrants at $k_{j,l}=\pi_{xy}(\gamma_{k_{j,l}})\in\lC$ in the Lagrangian projection, i.e. if $\zeta_j=-\zeta_l$, we define $(\zeta_{j,l}^0,\zeta_{j,l}^1)=(\zeta^0_j,\zeta^1_j)$. Otherwise, if $\zeta_j=\zeta_l$, we take a holomorphic parameterization $[0,+\infty)\times[0,1]$ of the corresponding quadrant and lift it to parameterizations $\varphi_j,\varphi_l:[0,+\infty)\times[0,1]\to\lD\backslash\{t_1,\dots,t_k\}$ of some neighborhoods of $t_j$ and $t_l$ in $\lD\backslash\{t_1,\dots,t_m\}$. Assume that each of the functions
$$F_0(s)\coloneq\pi_r\circ u\circ\varphi_j(s,0)-\pi_r\circ u\circ\varphi_l(s,0)$$ and $$F_1(s)\coloneq\pi_r\circ u\circ\varphi_j(s,1)-\pi_r\circ u\circ\varphi_l(s,1)$$ 
is either everywhere positive or everywhere negative for $s$ large enough. Then we define the relative asymptotic representative for $t_j,t_l$ by taking $\zeta_{j,l}^\iota=\operatorname{sgn}(F_\iota)\zeta_j^\iota,\iota\in\{0,1\}$.

Now we can define the notion of generic relative asymptotic behavior for a $J$-holomorphic disk $u$ on $L$. 
\begin{defi}\label{Def:generic_rel_asym_beh}
We say a $J$-holomorphic disk $u$ on $L$ with generic asymptotic behavior has \textit{generic relative asymptotic behavior} at punctures $t_j,t_l$ as above if $(-\zeta_{j,l}^0,\zeta_{j,l}^1)$ forms a positively oriented basis in $\lC$.
\end{defi}

\begin{lemma}\label{Lemma:index_zero_gen_asym_beh}
For $\Lambda$ a generic Legendrian knot, all index zero $J$-holomorphic disks on $\lR\times\Lambda$ have generic asymptotic behavior.
\end{lemma}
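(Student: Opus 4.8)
The plan is to reduce the statement to the local asymptotic analysis of $\pi_{xy}\circ u$ at each puncture, identify ``generic asymptotic behavior'' with convergence along the leading eigenvalue of the asymptotic operator, and then eliminate the remaining configurations by a Fredholm-index and genericity argument. First I would recall the standard asymptotic description (as in \cite{RobinSalamon,ENS02,EES07}): near a puncture $t_i$ sitting over a transverse double point $k_i$ of $\pi_{xy}(\Lambda)$, after choosing a strip-like coordinate $\varphi_i\colon[0,\infty)\times[0,1]\to\mathring\Sigma$, the map $\pi_{xy}\circ u\circ\varphi_i(s,t)-k_i$ admits a convergent expansion whose leading term is $c\,e^{-\lambda_i s}v_i(t)$, where $\lambda_i>0$ is an eigenvalue of the (constant-coefficient) asymptotic operator at $k_i$ and $v_i$ a corresponding eigenfunction. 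For a transverse crossing the eigenfunctions are explicit: the positive eigenvalues are $\lambda_m=\theta+m\pi$, $m\in\lN_0$, where $\theta\in(0,\pi)$ is the convex opening angle of the sector cut out by the two boundary arcs, and the eigenfunction $v_m$ sweeps the corner through angle $\theta+m\pi$. The asymptotic directions in Definition \ref{Def:generic_asym_beh} are exactly $\zeta_i^0=v_i(0)/\|v_i(0)\|$ and $\zeta_i^1=v_i(1)/\|v_i(1)\|$.

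Second I would match the two conditions in Definition \ref{Def:generic_asym_beh} with the \emph{leading} eigenvalue $\lambda_0=\theta$. When $u$ converges along $v_0$, the expansion controls $\pi_{xy}\circ u$ in $C^1$ near $t_i$, so on a small enough neighborhood $U_i$ the map is $C^1$-close to the embedded model $(s,t)\mapsto k_i+e^{-\theta(s+it)}$; hence $\pi_{xy}\circ u|_{U_i}$ is an orientation-preserving diffeomorphism onto a single (convex) sector at $k_i$, giving condition (a), and a direct computation with this model shows $(-\zeta_i^0,\zeta_i^1)$ is a positively oriented basis, giving condition (b). Conversely, convergence along $v_m$ with $m\geq 1$ forces the local image to wrap a reflex corner of angle $\theta+m\pi>\pi$: then the map is not a bijection onto one sector, so (a) fails (and the orientation in (b) reverses). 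Thus generic asymptotic behavior at $t_i$ is \emph{equivalent} to $u$ decaying at the extremal rate $\lambda_0$.

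Third, the key step is that failure of generic behavior at a puncture drops the dimension. I would set up the moduli space in weighted Sobolev spaces with a small exponential weight chosen below $\lambda_0$ at each puncture, so that the extremal asymptotics are admissible and the Fredholm index equals $\operatorname{ind}(\bm\gamma,a)$, matching Proposition \ref{Prop:moduliI}. A disk with non-generic behavior at $t_i$ decays strictly faster than $e^{-\lambda_0 s}$ there, hence lies in the moduli space $\cM_1^{\delta_i}(\bm\gamma,a)$ defined with the weight at $t_i$ pushed past the single eigenvalue $\lambda_0$; the resulting spectral flow changes the Fredholm index by exactly $-1$, so $\operatorname{ind}^{\delta_i}=\operatorname{ind}(\bm\gamma,a)-1$. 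For generic $\Lambda$ the universal moduli space is transversally cut out by the same Sard--Smale argument used in Lemma \ref{Lemma:obstruction_transversalityII} and in \cite{EES07}, so $\cM_1^{\delta_i}(\bm\gamma,a)$ is a manifold of dimension $\operatorname{ind}(\bm\gamma,a)-1$. When $\operatorname{ind}(\bm\gamma,a)=0$ this dimension is $-1$, hence the space is empty; taking the union over the finitely many punctures, no index-zero disk has non-generic asymptotic behavior at any puncture.

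The main obstacle is the bookkeeping in the third step: one must verify that shifting the weight past the leading eigenvalue $\lambda_0$ changes the Fredholm index by precisely $-1$ (a spectral-flow computation for the asymptotic operator at a transverse double point) and that the weighted moduli spaces are regular for generic $\Lambda$. Both are by now standard in the Legendrian contact homology literature (the eigenvalue counts of \cite{EES07} and the lifting analysis of \cite{lifting_Rizell}), but identifying the stratum of non-generic asymptotics precisely with the index-$(-1)$ weighted moduli space, and ruling out intermediate degenerations such as a branch point migrating into the interior, requires care in combining the $C^1$ asymptotic control of the second step with the transversality of the third.
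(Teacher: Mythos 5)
Your proof is correct in outline, but it takes a genuinely different and much heavier route than the paper. The paper's argument is essentially one line: by the lifting results of Section \ref{Sec:counting_curves}, index zero $J$-holomorphic disks on $\lR\times\Lambda$ are in bijection with holomorphic disks in $\lC$ with boundary on $\pi_{xy}(\Lambda)$, and the index formula in the projection counts boundary branch points and non-convex corners, so an index zero disk projects to an \emph{immersed polygon with convex corners and no branch points}; convexity of each corner is exactly the two conditions of Definition \ref{Def:generic_asym_beh} (local bijection onto a quadrant, and $(-\zeta_i^0,\zeta_i^1)$ positively oriented). The paper then records, without weights, that non-generic behavior at $k$ punctures is a codimension $k$ phenomenon --- which is precisely the spectral-flow fact you establish analytically. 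Your three steps (asymptotic expansion with eigenvalues $\theta+m\pi$ at a transverse double point, the equivalence of generic behavior with decay at the extremal rate $\lambda_0=\theta$, and the weighted Sobolev/Sard--Smale argument showing that pushing the weight past $\lambda_0$ drops the Fredholm index by one, so the non-generic stratum has dimension $\operatorname{ind}-1<0$) are all standard and sound, matching \cite{EES07,lifting_Rizell,RobinSalamon}; the only small points to make explicit are that the leading asymptotic coefficient is nonzero (infinite-order vanishing is excluded by the asymptotic formula, as in \cite{lifting_Rizell}), and that crossing a higher eigenvalue $m\geq 1$ drops the index by $m$, which only strengthens the emptiness conclusion. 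What your approach buys is generality: it makes no use of the two-dimensional projection trick and would survive in settings (higher-dimensional contact manifolds, general almost complex structures) where the planar combinatorics is unavailable. What the paper's approach buys is that, in $\lR^3$ with the specific $J$ of (\ref{Eq:intro_J_formula}), the entire Fredholm package is replaced by the elementary observation that rigid disks are immersed convex polygons, consistent with the regularity already established in Lemma \ref{Lemma:obstruction_transversalityII} and Proposition \ref{Prop:moduliI}; your worry about ``a branch point migrating into the interior'' is not needed for this lemma, since the statement concerns the disk itself rather than a limit of disks.
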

\begin{proof}
The proof follows easily from the fact that the projection of an index zero disk is an immersed polygon with convex corners at the punctures.

Moreover, for a generic $J$-holomorphic curve $u$ of any index, all corners in the projection are convex, therefore, $u$ has generic asymptotic behavior. Having non-generic asymptotic behavior at precisely $k$ punctures appears for a codimension $k$ subset of the moduli space.
\end{proof}

\begin{lemma}\label{Lemma:index_zero_gen_rel_asym_beh}
For $\Lambda$ a generic Legendrian knot, all index zero $J$-holomorphic disks on $\lR\times\Lambda$ have generic relative asymptotic behavior.
\end{lemma}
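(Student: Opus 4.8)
The plan is to reduce the statement to controlling the relative $r$-height of two ends of an index-zero disk that limit onto a common Reeb chord from inside the \emph{same} quadrant, and then to exploit the special form of the equations \eqref{Eq:equiv_eq}. Let $u$ be an index-zero disk and let $t_j,t_l$ be two punctures of $u$, both positively (or both negatively) asymptotic to the same Reeb chord. By Lemma \ref{Lemma:index_zero_gen_asym_beh}, $u$ already has generic asymptotic behavior, so $(-\zeta_j^0,\zeta_j^1)$ is positively oriented. If the two punctures occupy different quadrants, i.e.\ $\zeta_j=-\zeta_l$, then by definition $\zeta_{j,l}=\zeta_j$ and generic relative asymptotic behavior is immediate. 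Hence the entire content lies in the case $\zeta_j=\zeta_l$, where I must pin down the signs of $F_0$ and $F_1$.

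In that case I choose, as in the definition of $\zeta_{j,l}$, a single holomorphic parameterization $\psi$ of the quadrant and lift it to $\varphi_j,\varphi_l$ so that $\pi_{xy}\circ u\circ\varphi_j=\pi_{xy}\circ u\circ\varphi_l=\psi$. The point is that the difference of the two ends then lies entirely in the $(z,r)$-plane: writing $G=\pi_z\circ u\circ\varphi_j-\pi_z\circ u\circ\varphi_l$ and $\Theta=\pi_r\circ u\circ\varphi_j-\pi_r\circ u\circ\varphi_l$, we have $F_\iota(s)=\Theta(s,\iota)$. Because the two boundary arcs map to the \emph{same} sheet of $\Lambda$ (undercrossing for $t=0$ and overcrossing for $t=1$, in the positive case) over the \emph{same} points $\psi(s,\iota)$, the function $G$ vanishes identically on $\{t=0\}\cup\{t=1\}$ near the punctures, and $G$ is harmonic by the third equation in \eqref{Eq:equiv_eq}. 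Subtracting the last two equations in \eqref{Eq:equiv_eq} written for $\varphi_j$ and for $\varphi_l$, and using that the $x,y$-parts agree, gives $\partial_s\Theta=\partial_t G$ and $\partial_t\Theta=-\partial_s G$; that is, $H:=\Theta+iG$ is holomorphic on the half-strip. Since $G$ has zero boundary values on the long edges it decays like $\sum_{n\ge1}a_n e^{-n\pi s}\sin(n\pi t)$, so $G\to0$ and $dG\to0$ exponentially (this is exactly the kind of estimate behind Lemma \ref{Lemma:harmonic_strip}); consequently $d\Theta\to0$ exponentially, so $H$ is bounded and converges to a real constant $c$, and $F_0(s),F_1(s)\to c$ as $s\to\infty$. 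In particular, once $c\neq0$, each $F_\iota$ is eventually of constant sign $\operatorname{sgn}(c)$, so $\zeta_{j,l}$ is even well defined.

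It remains to prove $c\neq0$ for generic $\Lambda$, which I expect to be the main obstacle. The number $c$ is the accumulated $r$-height difference between the two ends, equivalently $c=\int_\delta(\lambda\circ d\overline u\circ j)$ for any path $\delta$ in the disk joining them, so it is a global quantity of the triple $(u;t_j,t_l)$. For a fixed generic $\Lambda$ there are only finitely many index-zero disks and finitely many same-quadrant pairs of punctures, so it suffices to show that the locus of $\Lambda$ with some $c=0$ has codimension at least one. I would establish this exactly as in Lemma \ref{Lemma:obstruction_transversalityII}: inserting a small height bump in $\pi_z$ along one of the two boundary arcs but not the other changes the corresponding integral $\int_\delta(\lambda\circ d\overline u\circ j)$ to first order, so by Sard's theorem a generic perturbation achieves $c\neq0$ simultaneously for all of the finitely many pairs.

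Granting $c\neq0$, the orientation conclusion is a short computation. We have $\operatorname{sgn}(F_0)=\operatorname{sgn}(F_1)=\operatorname{sgn}(c)=:\sigma$, hence $\zeta_{j,l}^\iota=\sigma\,\zeta_j^\iota$ and therefore $(-\zeta_{j,l}^0,\zeta_{j,l}^1)=\sigma\,(-\zeta_j^0,\zeta_j^1)$; rescaling an ordered basis of $\lC\cong\lR^2$ by a common nonzero real scalar multiplies its determinant by $\sigma^2>0$, so this pair has the same orientation as $(-\zeta_j^0,\zeta_j^1)$, which is positive by Lemma \ref{Lemma:index_zero_gen_asym_beh}. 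This is precisely generic relative asymptotic behavior. The two delicate points are thus the exponential convergence $\Theta\to c$, handled by the harmonic-strip estimates already in place, and the transversality input $c\neq0$, which is the essential genericity statement.
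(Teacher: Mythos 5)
Your proposal is correct and follows essentially the same route as the paper: reduce to the case of two ends occupying the same quadrant, show the $r$-difference of the lifted ends converges to a constant $c$, and rule out $c=0$ for generic $\Lambda$ by the same bump-perturbation/Sard argument as in Lemma \ref{Lemma:obstruction_transversalityII}, which is precisely what the paper invokes. The only real difference is that where the paper cites the exponential asymptotics $\pi_r\circ u\circ\varphi_i(s,t)\approx -c_i-l(\gamma_i)s$ from \cite[Lemma 7.1]{lifting_Rizell}, you derive the needed convergence self-containedly by noting that $\Theta+iG$ is holomorphic on the half-strip with exponentially decaying imaginary part --- a valid (and pleasant) use of the integrable structure of (\ref{Eq:equiv_eq}), but not a different proof strategy.
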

\begin{proof}
Let $u$ be an index zero $J$-holomorphic disk on $\lR\times\Lambda$ and $t_1,t_2$ two punctures on $u$ negatively asymptotic to $\gamma_i$. For $\Lambda$ generic, $u$ has generic asymptotic behavior at $t_1,t_2$. If the neighborhoods of the two punctures map to different quadrants at $\gamma_i$ in the Lagrangian projection, then $u$ has generic relative asymptotic behavior. Otherwise, let $\varphi:[0,\infty)\times [0,1]\to\lC$ be a holomorphic parameterization of the corresponding quadrant at $\gamma_i$ and $\varphi_1,\varphi_2:[0,\infty)\times [0,1]\to\lD$ parameterizations of some neighborhoods of $t_1,t_2$ obtained by lifting $\varphi$. By \cite[Lemma 7.1.]{lifting_Rizell}, there exist $c_1,c_2\in\lR,\lambda>0$ and $R_0\geq 0$ such that 
\begin{align*}
\|\pi_r\circ u\circ\varphi_1(s,t)+c_1+l(\gamma_i)s\|<e^{-\lambda s},\\
\|\pi_r\circ u\circ\varphi_2(s,t)+c_2+l(\gamma_i)s\|<e^{-\lambda s},
\end{align*}
for all $(s,t)\in[R_0,\infty)\times[0,1]$, where $l(\gamma_i)$ is the length of the Reeb chord $\gamma_i$. Moreover, for generic $\Lambda$ we have $c_1\neq c_2$. The proof of this follows similar to the proof of Lemma \ref{Lemma:obstruction_transversalityII}. This implies that $\pi_r\circ u\circ\varphi_1(s,t)-\pi_r\circ u\circ\varphi_2(s,t)$ is either everywhere positive or everywhere negative for $t\in\{0,1\}$ and $s$ large enough, from which the claim follows.
\end{proof}

\subsection{Coherent orientations}
\label{Section:Orientations}

Let $\Lambda\subset\lR^3$ be a Legendrian knot. We describe a combinatorial way to construct orientations on the moduli spaces of $J$-holomorphic disks and annuli on $\lR\times\Lambda$ of all dimensions. The orientations should additionally agree with gluing and string operations that will be defined in Section \ref{Sec:Invariant_definition}, i.e. they should satisfy certain coherency conditions (see Section \ref{Sec:orientation_and_algebraic_signs}.) 

The idea is to first define orientations on the moduli spaces of holomorphic curves in the Lagrangian projection. Then, these orientations are lifted to the moduli spaces $\cM_1,\cM_2$ of $J$-holomorphic disks and annuli on $\lR\times\Lambda$, using the obstruction section for annuli. Another more general way to construct orientations is by orienting Fredholm operators \cite{FloerHofer93,BMOrient04,EES05,FOOOII}, the two constructions are equivalent. The combinatorial construction gives us a more computable approach but is only applicable to $\lR^3$. Similar combinatorial construction for the moduli space of disks appears in \cite{ENS02,EES05}.

\subsubsection{Orienting the moduli space of disks}\label{Section:Orientations_disks}
First, we define orientations on the moduli space $\cM_1$ of $J$-holomorphic disks on $\lR\times\Lambda$. Let $\{\gamma_1,\dots,\gamma_n\}$ be the set of Reeb chords on $\Lambda$. As before, we denote by $\cM_1^\pi$ the moduli space of holomorphic disks in $\lC$ with boundary on $\pi_{xy}(\Lambda)$ and corners at the self-intersections of $\pi_{xy}(\Lambda)$. Fix an orientation on $\Lambda$ and define signs at each quadrant as in Figure \ref{Fig:final_signs}.

\begin{figure}
\def\svgwidth{66mm}
\begingroup%
  \makeatletter%
  \providecommand\rotatebox[2]{#2}%
  \newcommand*\fsize{\dimexpr\f@size pt\relax}%
  \newcommand*\lineheight[1]{\fontsize{\fsize}{#1\fsize}\selectfont}%
  \ifx\svgwidth\undefined%
    \setlength{\unitlength}{359.60308548bp}%
    \ifx\svgscale\undefined%
      \relax%
    \else%
      \setlength{\unitlength}{\unitlength * \real{\svgscale}}%
    \fi%
  \else%
    \setlength{\unitlength}{\svgwidth}%
  \fi%
  \global\let\svgwidth\undefined%
  \global\let\svgscale\undefined%
  \makeatother%
  \begin{picture}(1,0.40470379)%
    \lineheight{1}%
    \setlength\tabcolsep{0pt}%
    \put(0.09066555,0.18857455){\makebox(0,0)[lt]{\lineheight{1.25}\smash{\begin{tabular}[t]{l}$+1$\end{tabular}}}}%
    \put(0.23692899,0.18857286){\makebox(0,0)[lt]{\lineheight{1.25}\smash{\begin{tabular}[t]{l}$-1$\end{tabular}}}}%
    \put(0.1674483,0.27078066){\makebox(0,0)[lt]{\lineheight{1.25}\smash{\begin{tabular}[t]{l}$+1$\end{tabular}}}}%
    \put(0.16744673,0.1211127){\makebox(0,0)[lt]{\lineheight{1.25}\smash{\begin{tabular}[t]{l}$+1$\end{tabular}}}}%
    \put(0,0){\includegraphics[width=\unitlength,page=1]{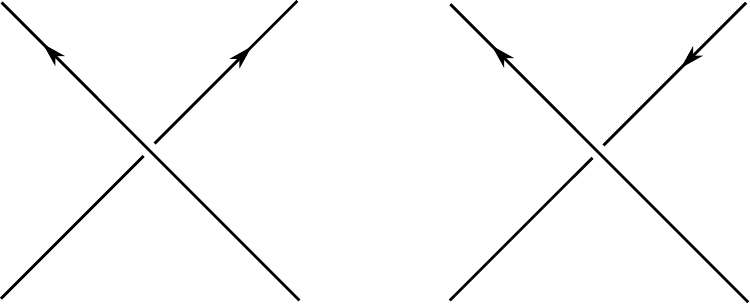}}%
    \put(0.68976338,0.18614019){\makebox(0,0)[lt]{\lineheight{1.25}\smash{\begin{tabular}[t]{l}$+1$\end{tabular}}}}%
    \put(0.83602598,0.1861385){\makebox(0,0)[lt]{\lineheight{1.25}\smash{\begin{tabular}[t]{l}$+1$\end{tabular}}}}%
    \put(0.76654586,0.2683463){\makebox(0,0)[lt]{\lineheight{1.25}\smash{\begin{tabular}[t]{l}$+1$\end{tabular}}}}%
    \put(0.76654465,0.11867834){\makebox(0,0)[lt]{\lineheight{1.25}\smash{\begin{tabular}[t]{l}$-1$\end{tabular}}}}%
  \end{picture}%
\endgroup%
\caption{Signs at the quadrants near an odd and an even self-intersection of $\pi_{xy}(\Lambda)$.}
\label{Fig:final_signs}
\end{figure}

To define orientations on $\cM^\pi_1$, we use the fact that every $u_0\in\cM^\pi_1$ is uniquely determined by the image of its boundary and its interior branch points. Denote by $\widetilde\cA^\infty$ the tensor algebra generated by $q_i,p_i,t^\pm,i\in\{1,\dots,n\}$ with relation $t^+t^-=1=t^-t^+$. The space $\widetilde\cA^\infty$ is generated by words in $q_i,p_i,t^\pm$ as a vector space. We additionally define a grading on $\widetilde\cA^\infty$ by taking $|q_i|=\mu_{CZ}(\gamma_i),|p_i|=-\mu_{CZ}(\gamma_i)-1,|t^\pm|=\mp 2\operatorname{rot}(\Lambda)$. Fix a word $\omega=s_1\dots s_r\in\widetilde\cA^\infty$ with $s_j=t^{\pm},q_{i_j}$ or $p_{i_j}$ for $i_j\in\{1,\dots,n\},j\in\{1,\dots,r\}$. We consider unparameterized oriented loops on $\pi_{xy}(\Lambda)$ with a marked point $\alpha$ and corners at the self-intersections of $\pi_{xy}(\Lambda)$. We say a loop has a convex corner if it turns left at the corresponding crossing of $\pi_{xy}(\Lambda)$, see Figure \ref{Fig:turning_left}. Denote by $\cC_1(\omega)$ the space of loops as above, such that the order of the corners and the crossings over the base point on $\Lambda$ corresponds to the word $\omega$. More precisely, for $s_j=t^+$ ($s_j=t^-$) we have a positive (negative) crossing over the base point, and for $s_j=p_{i_j}$ ($s_j=q_{i_j}$) we have a positive (negative) corner at $\pi_{xy}(\gamma_{i_j})$. The topology on $\cC_1(\omega)$ is determined by the image of the branch (singular) points, where additionally a branch point can disappear at a corner, creating a non-convex corner, and appear on the other arc (see Figure \ref{Fig:orientation_extends}). The subspace of loops in $\cC_1(\omega)$ with $k$ non-convex corners is of codimension $k$.
 
Take an arbitrary loop $l\in\cC_{1}(\omega)$ such that $l$ has only convex corners and denote by $k\in\lN_0$ the number of the branch points. Label the branch points by $a_1,\dots,a_k$ in the order starting from the marked point and let $n(a_i)=n_i\in\{0,\dots,r\}$ such that $a_i$ lays on the arc between $s_{n_i}$ and $s_{n_{i+1}}$ for all $i\in\{1,\dots,k\}$ ($s_{n_0}$ and $s_{n_{r+1}}$ correspond to the marked point $\alpha$). We can parameterize the neighborhood of $l\in\cC_{1}$ by the images of its branch points. Denote by $v_i$ the tangent vector in $T_l\cC_{1}$ obtained by moving the branch point $a_i$ forward, i.e. in the direction of the loop $l$ right before the branch point. 

We define an orientation $\sigma(l)$ of $T_l\cC_{1}$ by taking
\begin{align*}
\sigma(l)=\epsilon(\alpha)\left(\prod_{i=1}^k(-1)^{ \sum_{j=1}^{n_i}|s_j|}\right)\left(\prod_{i=1}^r\epsilon_i\right)\langle v_1,\dots,v_k\rangle,
\end{align*}
where $\epsilon_i$ is the sign at the $i^{th}$ corner of $l$ (as shown in Figure \ref{Fig:final_signs}) and 1 if $s_i=t^\pm$, $\epsilon(\alpha)=+ 1$ if $l$ has positive orientation near the marked point $\alpha$ with respect to the orientation on $\Lambda$ and $-1$ otherwise.

The orientation above does not depend on the marked point, only on the ordering of the punctures. Therefore, two loops are seen as equivalent if one is obtained from the other by moving the marked point on the arc between the first and the last corner. Indeed, if $l_2\in\cC_1(\omega)$ is obtained from $l_1\in\cC_1(\omega)$ by crossing the first branch point with the marked point, then the corresponding signs $\epsilon(\alpha_1),\epsilon(\alpha_2)$ differ, but the ordering of the branch points changes and the orientation remains the same. More precisely, we have 
\begin{align*}
\sigma(l_2)&=\epsilon(\alpha_2)(-1)^{\sum_{j=1}^{r}|s_j|}\left(\prod_{i=2}^k(-1)^{ \sum_{j=1}^{n_i}|s_j|}\right)\left(\prod_{i=1}^r\epsilon_i\right)\langle v_2\dots,v_k,v_1\rangle=\\
&=-\epsilon(\alpha_1)(-1)^{k}\left(\prod_{i=2}^k(-1)^{ \sum_{j=1}^{n_i}|s_j|}\right)\left(\prod_{i=1}^r\epsilon_i\right)\langle v_2\dots,v_k,v_1\rangle=\\
&=\sigma(l_1).
\end{align*}
Here we use $n^{l_1}(a_1^{l_1})=0,n^{l_2}(a_1^{l_1}=a_{k}^{l_2})=r$ and $k+\sum_{j=1}^{r}|s_j|\equiv 0\pmod 2$. The latter follows from the fact that the orientation sign changes as we go along $l$ precisely $k+\sum_{j=1}^r|s_j|$ many times.

Next, we show that these orientations extend over the loops with non-convex negative and positive corners. We consider the first case shown in Figure \ref{Fig:orientation_extends}, other cases follow analogously. Denote the string on the left by $l_1$ and the string on the right by $l_2$. Since one branch point crosses over an odd negative end and the corner does not change the sign, we have $\sigma(l_1)=\pm\langle\dots,v_i,\dots\rangle$ and $\sigma(l_2)=\mp\langle\dots,\widetilde v_i,\dots\rangle$. These orientations can be glued over the string with the non-convex corner.

Define $\cC_1(-\omega)$ as the space $\cC_1(\omega)$ with the opposite orientation. We define the space of cyclic words $\widetilde\cA^{\infty,\cyc}$ in $q_i,p_i,t^\pm$ as the quotient of $\widetilde\cA^\infty$ by the vector subspace generated by $xy-(-1)^{|x||y|}yx,x,y\in\widetilde\cA^\infty$. Let $\omega_1,\omega_2\in\widetilde\cA^\infty$ be two words such that $\omega_1=\pm\omega_2$ when seen as cyclic words. Then there is a bijection between $\cC_1(\omega_1)$ and $\cC_1(\pm\omega_2)$ obtained by moving the marked point. Moreover, it is not difficult to verify that this identification preserves the orientation. This gives us well-defined orientations on the space $\cC_1^{\cyc}(\omega)$ of unparameterized oriented loops with no marked point for any cyclic word $\omega\in\widetilde\cA^{\infty,\cyc}$ that is not bad, i.e. such that $\omega\neq -\omega$ as a cyclic word. This is important later when we define orientations for annuli.

This gives us orientations on the moduli space of index zero and index one $J$-holomorphic disks by lifting from the Lagrangian projection. Note that the Lagrangian projection of an index zero $J$-holomorphic disk has no branch points, while for index one disks it has one boundary branch point.
\vspace{2.1mm}

Finally, we define orientation signs $\epsilon(u)\in\{+1,-1\}$ for index zero $J$-holomorphic disks $u$ on $\lR\times\Lambda$. For $u\in\cM_1$ an index zero disk with one positive puncture, we define
\begin{align*}
\epsilon(u)=\epsilon_1\prod\epsilon_\iota^u,
\end{align*}
where $\epsilon_1$ is the orientation sign at a marked point right after the positive puncture, and $\epsilon_\iota^u$ are the signs at the corners of $\pi_{xy}\circ u$ as shown in Figure \ref{Fig:final_signs}. For an index zero disk $v\in\cM_1$ with two positive punctures $t^1,t^2$, we define
\begin{align*}
\epsilon(v,t^1,t^2)=\epsilon_2\prod\epsilon_\iota^v,
\end{align*}
where $\epsilon_2$ is the orientation sign at a marked point right after the positive puncture $t^2$, and $\epsilon_\iota^v$ are the signs at the corners of $\pi_{xy}\circ v$. The sign can depend on the choice of the order of $t^1$ and $t^2$.

\begin{figure}
\def\svgwidth{126mm}
\begingroup%
  \makeatletter%
  \providecommand\rotatebox[2]{#2}%
  \newcommand*\fsize{\dimexpr\f@size pt\relax}%
  \newcommand*\lineheight[1]{\fontsize{\fsize}{#1\fsize}\selectfont}%
  \ifx\svgwidth\undefined%
    \setlength{\unitlength}{707.28431003bp}%
    \ifx\svgscale\undefined%
      \relax%
    \else%
      \setlength{\unitlength}{\unitlength * \real{\svgscale}}%
    \fi%
  \else%
    \setlength{\unitlength}{\svgwidth}%
  \fi%
  \global\let\svgwidth\undefined%
  \global\let\svgscale\undefined%
  \makeatother%
  \begin{picture}(1,0.61994416)%
    \lineheight{1}%
    \setlength\tabcolsep{0pt}%
    \put(0.22323124,0.57421941){\makebox(0,0)[lt]{\lineheight{1.25}\smash{\begin{tabular}[t]{l}$v$\end{tabular}}}}%
    \put(0.62585331,0.43376373){\makebox(0,0)[lt]{\lineheight{1.25}\smash{\begin{tabular}[t]{l}$\widetilde v$\end{tabular}}}}%
    \put(0.22430238,0.34820477){\makebox(0,0)[lt]{\lineheight{1.25}\smash{\begin{tabular}[t]{l}$\sigma_1=\pm \langle v\rangle$\end{tabular}}}}%
    \put(0.6294587,0.3478513){\makebox(0,0)[lt]{\lineheight{1.25}\smash{\begin{tabular}[t]{l}$\sigma_2=\mp \langle \widetilde v\rangle$\end{tabular}}}}%
    \put(0.22512622,0.23734689){\makebox(0,0)[lt]{\lineheight{1.25}\smash{\begin{tabular}[t]{l}$v$\end{tabular}}}}%
    \put(0.6256057,0.09582028){\makebox(0,0)[lt]{\lineheight{1.25}\smash{\begin{tabular}[t]{l}$\widetilde v$\end{tabular}}}}%
    \put(0.22182662,0.00490545){\makebox(0,0)[lt]{\lineheight{1.25}\smash{\begin{tabular}[t]{l}$\sigma_1=\pm \langle v\rangle$\end{tabular}}}}%
    \put(0.63895934,0.00459503){\makebox(0,0)[lt]{\lineheight{1.25}\smash{\begin{tabular}[t]{l}$\sigma_2=\mp \langle \widetilde v\rangle$\end{tabular}}}}%
    \put(0.29714124,0.50994986){\makebox(0,0)[lt]{\lineheight{1.25}\smash{\begin{tabular}[t]{l}$\gamma$\end{tabular}}}}%
    \put(0.29604496,0.17008177){\makebox(0,0)[lt]{\lineheight{1.25}\smash{\begin{tabular}[t]{l}$\gamma$\end{tabular}}}}%
    \put(0.26717029,0.54989548){\makebox(0,0)[lt]{\lineheight{1.25}\smash{\begin{tabular}[t]{l}$+$\end{tabular}}}}%
    \put(0.66537836,0.46099527){\makebox(0,0)[lt]{\lineheight{1.25}\smash{\begin{tabular}[t]{l}$+$\end{tabular}}}}%
    \put(0.66411217,0.12652374){\makebox(0,0)[lt]{\lineheight{1.25}\smash{\begin{tabular}[t]{l}$-$\end{tabular}}}}%
    \put(0.26797813,0.21353584){\makebox(0,0)[lt]{\lineheight{1.25}\smash{\begin{tabular}[t]{l}$+$\end{tabular}}}}%
    \put(0,0){\includegraphics[width=\unitlength,page=1]{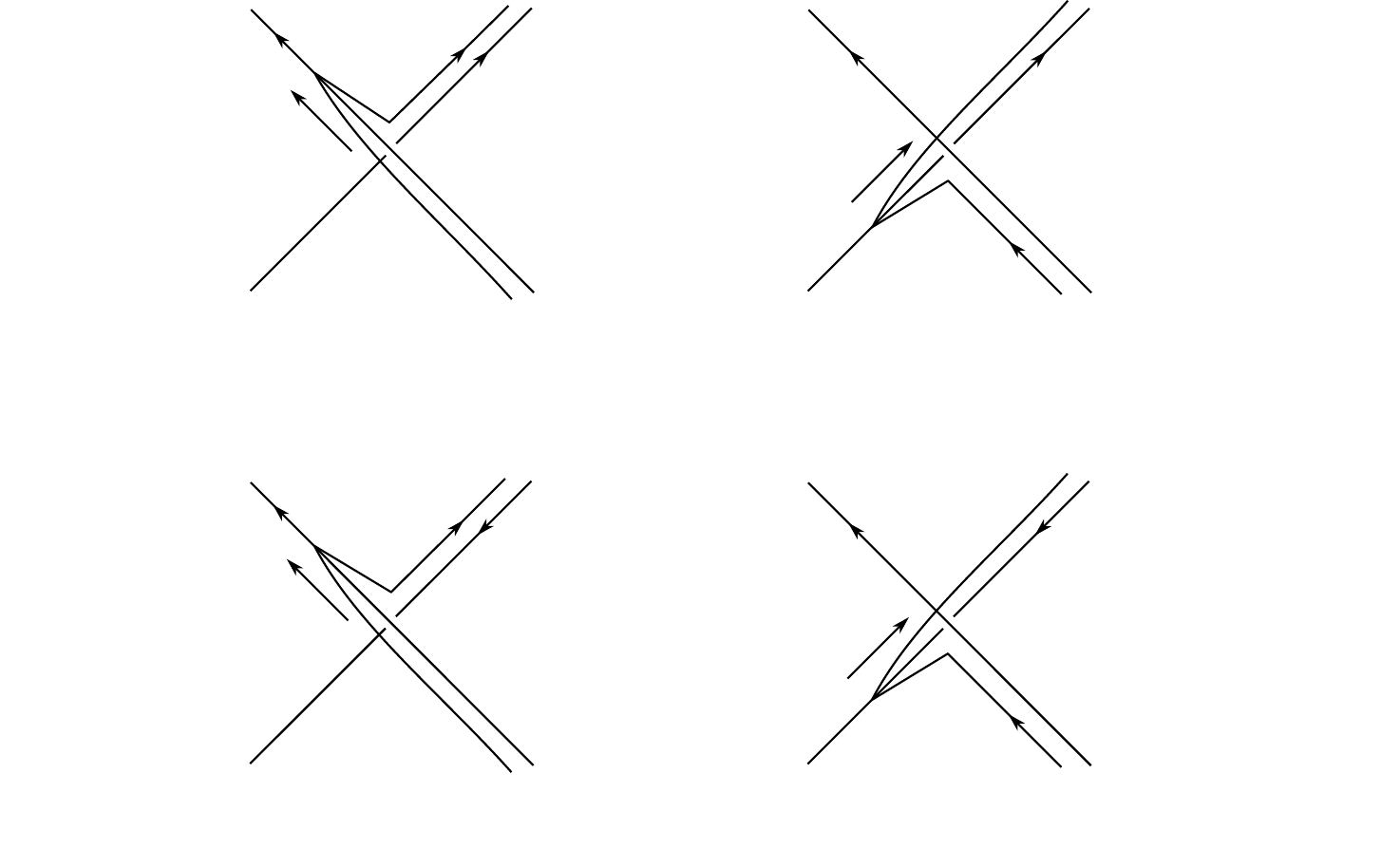}}%
    \put(-0.00101091,0.3165292){\makebox(0,0)[lt]{\lineheight{1.25}\smash{\begin{tabular}[t]{l}\textcolor{white}{.}\end{tabular}}}}%
  \end{picture}%
\endgroup%
\caption{The orientations extend over loops with non-convex corners, case of an odd and an even Reeb chord.}
\label{Fig:orientation_extends}
\end{figure}
\vspace{2.1mm}

\subsubsection{Orienting the moduli space of annuli}
\label{Section:Orientations_annuli}
Next, we orient the moduli space $\cM_2$ of $J$-holomorphic annuli on $\lR\times\Lambda$. Denote by $\cM_2^\pi$ the moduli space of holomorphic annuli in $\lC$ with corners and boundary on $\pi_{xy}(\Lambda)$ as before. We first orient the space $\cM^\pi_2$, and then use the obstruction section $\Omega:\cM^\pi_2\to\lR$ to get orientations on $\cM_2\cong\Omega^{-1}(0)$.

Given two words $\omega_1=s_1^1\dots s_{r_1}^1,\omega_2=s_1^2\dots s_{r_2}^2$ in $q_i,p_i,t^\pm,i\in\{1,\dots,n\}$ as before, we denote $\cC_2(\omega_1,\omega_2)=\cC_1(\omega_1)\times\cC_1(\omega_2)$. Let $l=(l_1,l_2)$ be an element of $\cC_2$ with convex corners. We can parameterize the neighborhood of $l\in\cC_2$ by the images of its branch points. Denote the branch points on $l_\iota,\iota\in\{1,2\}$ by $a_1^\iota,\dots,a_{k_\iota}^\iota$ in the order starting from the marked point $\alpha_\iota$, and by $v_i^\iota$ the tangent vector in $T_l\cC_2$ corresponding to $a_i^\iota$ as before. Then we define an orientation $\sigma(l_1,l_2)$ at $l$ by taking
\begin{align*}
&\sigma(l_1,l_2)=\sigma(l_1)\#\sigma(l_2)=\\
&=\epsilon(\alpha_1)\epsilon(\alpha_2)\left(\prod_{i=1}^{k_1}(-1)^{ \sum_{j=1}^{n_i^1}|s_j^1|}\right)\left(\prod_{i=1}^{k_2}(-1)^{ \sum_{j=1}^{n_i^2}|s_j^2|}\right)\left(\prod_{i=1}^{r_1}\epsilon_i^1\right)\left(\prod_{i=1}^{r_2}\epsilon_i^2\right)\langle v_1^1,\dots,v_{k_1}^1,v_1^2,\dots,v_{k_2}^2\rangle,
\end{align*}
with the same notation as before. Independence from the choice of the marked points and gluing over strings with non-convex corners goes as before.

As before, we use these orientations on $\cC_2$ to orient the moduli space $\cM^\pi_2$. To orient the moduli space $\cM_2$ of $J$-holomorphic annuli on $\lR\times\Lambda$, we use the following simple observation. For $N,M$ oriented manifolds, $K\subset M$ an oriented submanifold and $f:N\to M$ smooth such that $f\pitchfork K$, there is a canonical way to orient $f^{-1}(K)\subset N$. We have a homeomorphism $\cM_2\cong\Omega^{-1}(0)$ for $\Omega:\cM_2^\pi\to\lR$ obstruction section defined in Section \ref{Sec:obstruction_section}. Additionally, $\Omega\pitchfork 0$ for generic $\Lambda$. Then the chosen orientations on $\cM^\pi_2$ give us orientations on $\cM_2$.

To orient the moduli space of index one $J$-holomorphic annuli on $\lR\times\Lambda$, we also need to take into account the potential branch point in the interior. The same holds for orienting higher dimensional moduli spaces of disks and annuli. Interior branch points increase the dimension of the moduli space by two, and the extra dimensions have a canonical orientation coming from the complex structure on $\lC$. Index 1 $J$-holomorphic annuli generically have either 2 boundary branch points or one interior branch point of degree 2 in the Lagrangian projection. In the (interior of the) moduli space of $J$-holomorphic annuli, two boundary branch points can disappear and an interior branch point can appear. From the definition, one can directly see that the orientations on the two sides can be glued. More precisely, we use the fact that the disappearing of the boundary branch points is modeled by the swallowtail singularity and that the orientations $\sigma(l)$ are stable under the operation of removing or adding two neighboring branch points. This gives us orientations on the moduli space $\cM_{2,1}$ of index one $J$-holomorphic annuli. Similarly, we get orientations on the moduli spaces of disks and annuli of higher dimensions.
\vspace{2.1mm}

The orientations defined above determine an orientation sign $\epsilon(u,e_2)\in\{+1,-1\}$ for any index zero annulus $u\in\cM_2$ with one positive puncture and $e_2$ a marked point on the inner boundary component of $u$. More precisely, for $\overline w(u,e_2)$ the pair of words obtained by looking at the boundary of $u$ with the order determined by the positive puncture and $e_2$, we have a corresponding sign $\epsilon(u,e_2)$ determined as follows. If the orientation of $\cC_2(\overline w(u,e_2))$ is given by $\sigma\langle v\rangle,\sigma\in\{+1,-1\}$ at $\pi_{xy}u|_\partial$ (where $v$ is the tangent vector corresponding to the boundary branch point of $\pi_{xy}(u)$), and $\nabla\Omega(\pi_{xy}(u))=\epsilon cv$ for some $c>0,\epsilon\in\{+1,-1\}$, then we define 
$$\epsilon(u,e_2)=\epsilon\sigma.$$ 
In other words, the sign is equal to 1 if the orientation vector of $\cM_{2,1}^\pi$ at $\pi_{xy}(u)$ points from the area with $\Omega<0$ to the area with $\Omega>0$, and $-1$ otherwise. Each $\epsilon(u,e_2)$,$\overline w(u,e_2)$ can depend on the choice of $e_2$, but $\epsilon(u,e_2)\overline w(u,e_2)$ seen as a tensor product of a word and a cyclic word does not.
\vspace{2.1mm}

There is a standard way to orient moduli spaces of $J$-holomorphic curves by orienting linear Cauchy--Riemann operators \cite{FloerHofer93,BMOrient04,EES05,FOOOII}. We give a quick overview for the moduli spaces of disks and annuli. Our combinatorial approach should correspond to orientations obtained this way for some suitable choices that are fixed along the way. 

For $D:V\to W$ a Fredholm operator, determinant of $D$ is defined as $\det D=\wedge^{\operatorname{top}} \operatorname{coker}(D)^*\otimes \wedge^{\operatorname{top}}\ker(D)$. Orientation of $D$ is a choice of an orientation of $\det D$. First, we consider the case of curves with no punctures. Let $L^n\subset M^{2n}$ be a Lagrangian submanifold with a fixed choice of relative spin structure. For $\beta\in\pi_2(M,L)$, denote by $\cM_1(\beta)$ the moduli space of $J$-holomorphic disks and by $\cM_2(\beta)$ the moduli space of $J$-holomorphic annuli with boundary on $L$ of class $\beta$. To orient moduli spaces $\cM_1(\beta),\cM_2(\beta)$, we orient linearized Cauchy--Riemann operators $D_u\overline\partial$ for $u\in\cM_1(\beta)$ and $u\in\cM_2(\beta)$.

\noindent There is a canonical way to orient the Dolbeault operator $\overline\partial_{(E,\lambda)}$ for $E$ any complex bundle over a disk or an annulus with totally real boundary subbundle $\lambda$ with a fixed trivialization (in case of an annulus, the boundary components should be ordered), see \cite{FOOOII}. In short words, the orientation is obtained by decomposing the operator into operators on the trivial bundles over disks and an operator over a closed surface. The operator over the closed surface has a canonical orientation induced by the complex structure, and the orientations of the operators over the disks are determined by the trivializations of $\lambda$ over the boundary components \cite{FOOOII}. Index $\operatorname{ind}=n-3+\mu$ of the operator over a disk is odd for $n=2$, therefore, changing the order of the boundary components of the annulus changes the orientation.

\noindent To orient the moduli spaces $\cM_1(\beta),\cM_2(\beta)$, we fix some $J$-holomorphic curves $u_i$ in each homotopy class $\pi_2(M,L)$ and orient $D_{u_i}\overline\partial$ as above (taking an isotopy of the zeroth order term to zero through a contractible space) using the trivializations over the boundary induced by the relative spin structure on $L$. The orientation of $\det D_u\overline\partial$ for any other $u$ in the same connected component as $u_i$ is then determined by choosing a path from $u$ to $u_i$ and showing that the orientation does not depend on the choice.

Orienting the moduli space of curves with boundary punctures in the symplectization goes similarly. Here we additionally first fix an orientation of an operator on a disk with precisely one positive puncture asymptotic to $\gamma$ for each Reeb chord $\gamma$, which also canonically determines an orientation of an operator on a disk with precisely one negative puncture at $\gamma$. Then we orient $\det D_u\overline\partial$ for any punctured curve $u$ by decomposing the operator into these and an operator on a curve without punctures. For more details see \cite{FloerHofer93,BMOrient04,EES05,FOOOII}. We note that we have a natural ordering of the boundary connected components of annuli since we have precisely one positive puncture.

\section{The chain complex on the space of strings}\label{Sec:Invariant_definition}
In this section we define a chain complex $(\cC(\Lambda),d)$ associated to a Legendrian knot $\Lambda:\lS^1\to\lR^3$ (or more generally a Legendrian link $\Lambda:\bigsqcup\lS^1\to\lR^3$) whose homology group is invariant under Legendrian knot isotopy. We assume $\Lambda$ is in general position. The vector space $\cC(\Lambda)$ is generated by strings and pairs of strings on $\Lambda$ with jumps at the Reeb chords of $\Lambda$, defined in Section \ref{Sec:strings_def}. The boundary operator $d:\cC(\Lambda)\to\cC(\Lambda)$ is defined in terms of the SFT bracket $\{\cdot,\cdot\}:\cC(\Lambda)\otimes\cC(\Lambda)\to\cC(\Lambda)$ and the string operator $d_{\str}:\cC(\Lambda)\to\cC(\Lambda)$, defined in Section \ref{Sec:def_SFT_bracket} and Section \ref{Sec:def_string_operator}, respectively. More precisely, we define
\begin{align*}
d=\{\cdot,H\}+d_{\str},
\end{align*} 
see Section \ref{Sec:The_differential}. Here, $H\in\cC(\Lambda)$ is the sum of strings obtained by looking at the boundaries of index zero pseudoholomorphic disks with up to two and annuli with one positive puncture, see Section \ref{Sec:hamiltonian_def} for the precise definition. The SFT bracket models breaking  of index one pseudoholomorphic curves into SFT buildings. The string operator $d_{\str}=\delta+\nabla$ is a string topological operation. It consists of two terms, $\delta$ and $\nabla$. The map $\delta:\cC(\Lambda)\to\cC(\Lambda)$ can be understood as the loop product with trivial strips over Reeb chords, and it corrects bubbling for disks with two positive punctures (see also \cite{Ng_rLSFT}). The map $\nabla:\cC(\Lambda)\to\cC(\Lambda)$ is a correction of the loop coproduct tailored to our setting, and it corrects nodal breaking of annuli. Our approach is in part motivated by \cite{CieLat}, but we avoid taking quotient by constant loops. This approach to Legendrian SFT can be applied to more general settings, we explore this in future work.

The chain complex is an extension of the Chekanov--Eliashberg differential graded algebra for Legendrian knots defined in \cite{Chekanov02}. We give an alternative definition of the chain complex with more algebraic structure in Section \ref{Section:Algebraic_definition}. This definition is more appropriate for computations.

The quasi-isomorphism class of the chain complex defined for the front resolution of a given Legendrian knot is an invariant of the knot up to Legendrian knot isotopy. This will follow from Section \ref{Sec:Invariance}, where we state a stronger invariance result and prove it combinatorially for Legendrian knots, similar to \cite{Chekanov02,Ng_rLSFT}. The same should follow for Legendrian links using a similar method.

\subsection{Broken closed strings}\label{Sec:strings_def}
In this section, we define the space $\cC=\cC(\Lambda)$ of broken closed strings on $\Lambda$. 

Denote by $\cR=\{\gamma_1,\dots,\gamma_n\}$ the set of Reeb chords on $\Lambda$. Fix an orientation on $\Lambda$ and a base point $T\in\Lambda$ different from the Reeb chord endpoints. For $t_1,\dots,t_k\in \lS^1$ distinct points on $\lS^1$, let $\gamma:\lS^1\backslash\{t_1,\dots,t_k\}\to\Lambda$ be a smooth map such that, at each puncture $t_i,i\in\{1,\dots,k\}$
$$\lim_{t\to t_i^\pm}\gamma(t)={n_i}^\pm$$
or
$$\lim_{t\to t_i^\pm}\gamma(t)={n_i}^\mp$$ 
for some Reeb chord $\gamma_{n_i}\in\cR$, where $n_i^-$ is the starting point and $n_i^+$ is the end point of $\gamma_{n_i}$. Additionally, we require $\gamma'(t_i^\pm)\neq 0$ for $i\in\{1,\dots,k\}$ and that the curve $\pi_{xy}\circ \gamma$ has convex corners, i.e. makes a left turn at every puncture (see Figure \ref{Fig:turning_left}), where $\pi_{xy}:\lR^3\to\lR^2$ is the Lagrangian projection. In that case, we say $\gamma$ has generic asymptotic behavior. This condition has to do with the asymptotic behavior of pseudoholomorphic curves on $\lR\times\Lambda$, see Section \ref{Sec:generic_asymptotic_behavior_admissible_disks}. The orientation of $\gamma$ is always determined by the positive orientation on $\lS^1$.

In case 
$$\lim_{t\to t_i^\pm}\gamma(t)=n_i^\pm,$$
we say $\gamma$ has a \textit{positive} puncture at $t_i$ asymptotic to Reeb chord $\gamma_{n_i}$. Otherwise, if 
$$\lim_{t\to t_i^\pm}\gamma(t)=n_i^\mp,$$
we say $\gamma$ has a \textit{negative} puncture at $t_i$ asymptotic to $\gamma_{n_i}$.

We say two curves $\gamma_0,\gamma_1:\lS^1\backslash\{t_1,\dots,t_k\}\to\Lambda$ as above are equivalent if they are equivalent up to homotopy preserving the ends, i.e. if there exists a smooth family $\gamma_s:\lS^1\backslash\{t_1,\dots,t_k\}\to\Lambda,s\in[0,1]$ such that 
\begin{align*}
&\lim_{t\to t_i^\pm}\gamma_s(t)=\lim_{t\to t_i^\pm}\gamma_0(t),\\
&{\gamma_s}'(t_i^\pm)\neq 0,
\end{align*}
for all $s$ and $i\in\{1,\dots,k\}$. Additionally, two maps are equivalent if they differ by the move shown in Figure \ref{Fig:simult_change}, which will be referred to as the change of the asymptotic behavior.

\begin{figure}
\def\svgwidth{100mm}
\begingroup%
  \makeatletter%
  \providecommand\rotatebox[2]{#2}%
  \newcommand*\fsize{\dimexpr\f@size pt\relax}%
  \newcommand*\lineheight[1]{\fontsize{\fsize}{#1\fsize}\selectfont}%
  \ifx\svgwidth\undefined%
    \setlength{\unitlength}{475.27917812bp}%
    \ifx\svgscale\undefined%
      \relax%
    \else%
      \setlength{\unitlength}{\unitlength * \real{\svgscale}}%
    \fi%
  \else%
    \setlength{\unitlength}{\svgwidth}%
  \fi%
  \global\let\svgwidth\undefined%
  \global\let\svgscale\undefined%
  \makeatother%
  \begin{picture}(1,0.53968292)%
    \lineheight{1}%
    \setlength\tabcolsep{0pt}%
    \put(0,0){\includegraphics[width=\unitlength,page=1]{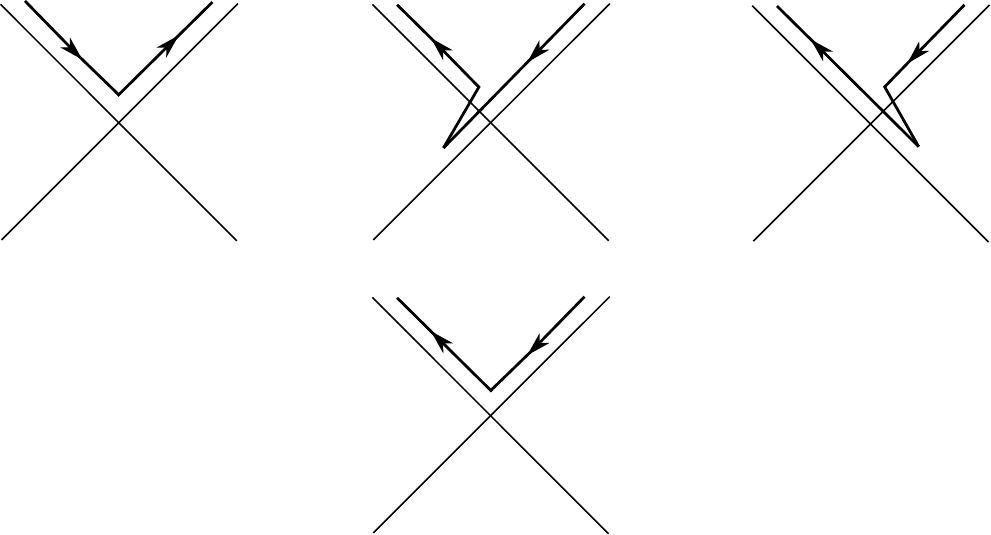}}%
  \end{picture}%
\endgroup%
\caption{Convex corners---top row, non-convex corner---bottom row.}
\label{Fig:turning_left}
\end{figure}

\begin{figure}
\def\svgwidth{71mm}
\begingroup%
  \makeatletter%
  \providecommand\rotatebox[2]{#2}%
  \newcommand*\fsize{\dimexpr\f@size pt\relax}%
  \newcommand*\lineheight[1]{\fontsize{\fsize}{#1\fsize}\selectfont}%
  \ifx\svgwidth\undefined%
    \setlength{\unitlength}{323.79168269bp}%
    \ifx\svgscale\undefined%
      \relax%
    \else%
      \setlength{\unitlength}{\unitlength * \real{\svgscale}}%
    \fi%
  \else%
    \setlength{\unitlength}{\svgwidth}%
  \fi%
  \global\let\svgwidth\undefined%
  \global\let\svgscale\undefined%
  \makeatother%
  \begin{picture}(1,0.35567267)%
    \lineheight{1}%
    \setlength\tabcolsep{0pt}%
    \put(0,0){\includegraphics[width=\unitlength,page=1]{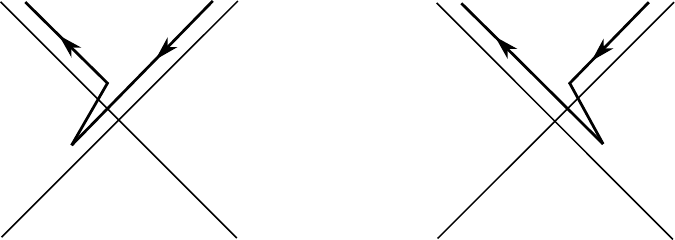}}%
    \put(0.48439721,0.15307739){\makebox(0,0)[lt]{\lineheight{1.25}\smash{\begin{tabular}[t]{l}$\cong$\end{tabular}}}}%
  \end{picture}%
\endgroup%
\caption{Change of the asymptotic behavior at a puncture.}
\label{Fig:simult_change}
\end{figure}

\begin{defi}
A \textit{string} on $\Lambda$ is an equivalence class of a map $\lS^1\backslash\{t_1,\dots,t_k\}\to\Lambda$ for some $\{t_1,\dots,t_k\}\subset\lS^1$ with generic asymptotic behavior at each puncture.
\end{defi}

Strings with one positive puncture have a natural choice of the ordering of the punctures, with the starting marked point right after the positive puncture. In general, we work with strings up to cyclic reordering of the punctures as described below. First we introduce grading. Define 
\begin{align*}
&|\gamma_i^-|=\mu_{CZ}(\gamma_i),\\
&|\gamma_i^+|=-\mu_{CZ}(\gamma_i)-1,
\end{align*}
for a Reeb chord $\gamma_i$. Let $\gamma_{i_1},\dots,\gamma_{i_k}$ be the Reeb chords at the negative punctures of a string $x$ and $\gamma_{j_1},\dots,\gamma_{j_l}$ the Reeb chords at the positive punctures. Then we define 
$$|x|=\sum_{a=1}^k\mu_{CZ}(\gamma_{i_a})-\sum_{b=1}^l(\mu_{CZ}(\gamma_{j_b})+1)-2 a \operatorname{rot}(\Lambda),$$ 
where $a\in\lZ$ is the algebraic count of the intersections between $\gamma$ and the base point $T\in\Lambda$ (we can perturb $\gamma$ to make it transverse to $T$). We say $\beta_1$ is equivalent to $(-1)^{AB}\beta_2$ for $\beta_\iota:\lS^1\backslash\{t_1^\iota,\dots,t_n^\iota\}\to\Lambda,\iota\in\{0,1\}$ strings such that $\beta_2\circ\varphi$ is equal to $\beta_1$, where $\varphi$ is a diffeomorphism of $\lS^1$ such that $t_1^1,\dots, t_n^1$ are sent to $t_{k+1}^2,\dots,t_n^2,t_1^2,\dots,t_k^2$, and 
\begin{align*}
&A=\sum_{\substack{j=1,\dots,k\\ t_j^2\text{ pos./neg. asympt. to }\gamma_{i_j}}}|\gamma_{i_j}^\pm|,\\
&B=\sum_{\substack{j=k+1,\dots,n\\ t_j^2\text{ pos./neg. asympt. to }\gamma_{i_j}}}|\gamma_{i_j}^\pm|.
\end{align*}
We say a string $\beta$ is \textit{bad} if  $\beta$ is equivalent to $-\beta$. This happens when punctures in $\beta$ form a word that is an even cover of an odd word.

Denote by $\widetilde\cC'$ the $\lQ$ vector space generated by strings on $\Lambda$ that have either one or two positive punctures (and arbitrarily many negative punctures), and by $\widetilde\cC$ the quotient of $\widetilde\cC'$ by the relation described above. We can write $\widetilde\cC=\widetilde\cC^1\oplus\widetilde\cC^2$, where $\widetilde\cC^\iota\subset\widetilde\cC,\iota\in\{1,2\}$ is the subspace generated by strings with exactly $\iota$ positive punctures. Let additionally $\overline\cC=\widetilde\cC^1\otimes\widetilde\cC^0$, where $\widetilde\cC^0$ is generated by strings with zero positive punctures.
\begin{defi}
The \textit{space of broken closed strings} on $\Lambda$ is defined as 
$$\cC\coloneq\cC(\Lambda)\coloneq\widetilde\cC\oplus\overline\cC.$$ 
\end{defi}
We extend the grading to $\cC$ by taking $|x\otimes y|=|x|+|y|-1$ for $x\in\widetilde\cC^1,y\in\widetilde\cC^0$.

\subsection{The SFT bracket}
\label{Sec:def_SFT_bracket}
In this section we define the SFT bracket, the first ingredient in the definition of the chain complex associated to $\Lambda$.

\subsubsection{Disk building contribution}\label{Sec:gluing_broken_strings}
We start by defining the restricted SFT bracket $\{\cdot,\cdot\}_1:\widetilde\cC\otimes\widetilde\cC\to\widetilde\cC$ which glues strings at one pair of punctures, following \cite{Ng_rLSFT}.

Let $\gamma:\lS^1\backslash\{t_1,\dots,t_r\}\to\Lambda$ and $\gamma':\lS^1\backslash\{t_1',\dots,t_{r'}'\}\to\Lambda$ be two strings and $t_i,t_j'$ a pair of punctures on $\gamma,\gamma'$ such that
$$\lim_{t\to t_i^\pm}\gamma(t)=\lim_{t\to t_j'^\mp}\gamma'(t),$$
i.e. one string has a positive, and the other a negative puncture at some Reeb chord. Then we define a string $\gamma\prescript{}{t_i\hspace{-0.65mm}}{*}_{t_j'}\gamma'$ on the domain with $r+r'-2$ punctures $t_1,\dots,t_{i-1},t'_{j+1},\dots,t'_{r'},t'_1,\dots,t'_{j-1},t_{i+1},\dots,t_r$ (in this order) equal to 
$$\gamma|_{[t_s,t_{s+1}]}\text{ on }[t_s,t_{s+1}]\text{, for }s\neq i-1,i,$$
$$\gamma'|_{[t'_s,t'_{s+1}]}\text{ on }[t'_s,t'_{s+1}],\text{ for }s\neq j-1,j,$$
and
\begin{align*}
\gamma|_{[t_{i-1},t_{i}]}*\gamma'|_{[t'_j,t'_{j+1}]}\text{ on }[t_{i-1},t'_{j+1}],\\
\gamma'|_{[t'_{j-1},t'_{j}]}*\gamma|_{[t_i,t_{i+1}]}\text{ on }[t'_{j-1},t_{i+1}],
\end{align*}
where $*$ is concatenation of curves (see Figure \ref{Fig:gluing}). 

\begin{figure}
\def\svgwidth{70mm}
\begingroup%
  \makeatletter%
  \providecommand\rotatebox[2]{#2}%
  \newcommand*\fsize{\dimexpr\f@size pt\relax}%
  \newcommand*\lineheight[1]{\fontsize{\fsize}{#1\fsize}\selectfont}%
  \ifx\svgwidth\undefined%
    \setlength{\unitlength}{338.55096651bp}%
    \ifx\svgscale\undefined%
      \relax%
    \else%
      \setlength{\unitlength}{\unitlength * \real{\svgscale}}%
    \fi%
  \else%
    \setlength{\unitlength}{\svgwidth}%
  \fi%
  \global\let\svgwidth\undefined%
  \global\let\svgscale\undefined%
  \makeatother%
  \begin{picture}(1,0.35265534)%
    \lineheight{1}%
    \setlength\tabcolsep{0pt}%
    \put(0,0){\includegraphics[width=\unitlength,page=1]{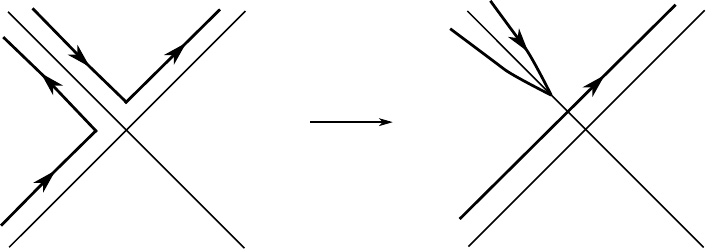}}%
  \end{picture}%
\endgroup%
\caption{Gluing at a puncture.}
\label{Fig:gluing}
\end{figure}

The restricted SFT bracket $\{\cdot,\cdot\}_1:\widetilde\cC\otimes\widetilde\cC\to\widetilde\cC$ is a degree 1 linear map defined as follows. For strings $\gamma,\gamma'\in\widetilde\cC^2$, we take $\{\gamma,\gamma'\}_1=0$. Otherwise, for two strings $\gamma,\gamma'$ we define
\begin{align*}
&\{\gamma,\gamma'\}_1=\sum_{t_i,t'_j}\epsilon(\gamma,\gamma',t_i,t_j')\gamma\prescript{}{t_i\hspace{-0.65mm}}{*}_{t'_j}\gamma',
\end{align*}
where the sum goes over all pairs of punctures $t_i,t'_j$ on $\gamma,\gamma'$ such that $\lim_{t\to t_i^\pm}\gamma(t)=\lim_{t\to t_j'^\mp}\gamma'(t)$. The signs $\epsilon(\gamma,\gamma',t_i,t_j')\in\{1,-1\}$ are defined as follows. For $\gamma$ a string and $t_I$ its puncture, we denote 
$$P(\gamma,t_I)=\sum_{\substack{j=1,\dots I-1,t_j\text{ pos./neg.}\\\text{asympt. to }\gamma_{k_j}}}|\gamma_{k_j}^\pm|.$$
Then we define (see also \cite[Section 3.1]{Ng_rLSFT}, note that here we write $\{y,x\}_1$ instead of $\{x,y\}$)
\begin{align*}
\epsilon(\gamma,\gamma',t_i,t_j')=\begin{cases}
(-1)^{P(\gamma,t_i)\left(|\gamma'|+1\right)+P(\gamma',t_{j+1}')\left(|\gamma'|+P(\gamma',t_{j+1}')\right)},& t_i\text{ negative puncture}\\
-(-1)^{P(\gamma,t_i)\left(|\gamma'|+1\right)+P(\gamma',t_{j+1}')\left(|\gamma'|+P(\gamma',t_{j+1}')\right)},& t_i\text{ positive puncture}
\end{cases}
\end{align*}
It is easy to see that the map is well defined, i.e. it does not depend on the representative of the broken closed string and descends to $\widetilde\cC$. We can extend $\{\cdot,\cdot\}_1$ to include strings with zero positive punctures in the obvious way.

\begin{lemma}[\cite{Ng_rLSFT}]\label{Lemma:PropertiesReduced}
The restricted SFT bracket $\{\cdot,\cdot\}_1:\widetilde\cC\otimes\widetilde\cC\to\widetilde\cC$ satisfies the following
\begin{itemize}
\item[a)]$\{x,y\}_1=-(-1)^{(|x|+1)(|y|+1)}\{y,x\}_1$,
\item[b)]$\{\{x,y\}_1,z\}_1+(-1)^{(|y|+|z|)(|x|+1)}\{\{y,z\}_1,x\}_1+(-1)^{(|x|+|y|)(|z|+1)}\{\{z,x\}_1,y\}_1=0$,
\end{itemize}
for all $x,y,z\in\widetilde\cC$.
\end{lemma}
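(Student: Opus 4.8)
The cleanest way to organize the proof is to regard $(\widetilde\cC,\{\cdot,\cdot\}_1)$ as a graded Lie algebra with respect to the \emph{shifted} grading $\|x\|\coloneq|x|+1$. Since $\{\cdot,\cdot\}_1$ has degree $1$ we have $\|\{x,y\}_1\|=\|x\|+\|y\|$, so in this grading the bracket has degree $0$, property (a) reads $\{x,y\}_1=-(-1)^{\|x\|\|y\|}\{y,x\}_1$, and property (b) is the ordinary graded Jacobi identity with Koszul signs $(-1)^{\|x\|(\|y\|+\|z\|)}$, etc. In both cases the proof reduces to two ingredients: a geometric bijection between the terms on the two (resp. three) sides, realized by cyclically reordering the punctures of a glued string, and a bookkeeping of the Koszul sign picked up by that reordering (the factor $(-1)^{AB}$ from the equivalence relation on strings) against the explicit gluing signs $\epsilon$. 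This mirrors the computation of \cite{Ng_rLSFT}, adapted to the present string model; I would verify that the cyclic-reordering equivalence on strings reproduces exactly the cyclic-word signs used there.

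For part (a), the underlying geometric identity is that gluing $\gamma$ to $\gamma'$ at a matched pair $(t_i,t_j')$ and gluing $\gamma'$ to $\gamma$ at $(t_j',t_i)$ produce the same underlying curve, and that the two orderings of the punctures prescribed by the definitions of $\gamma\,{}_{t_i}*_{t_j'}\,\gamma'$ and $\gamma'\,{}_{t_j'}*_{t_i}\,\gamma$ differ by a cyclic rotation. Hence the two glued strings coincide in $\widetilde\cC$ up to the reordering sign $(-1)^{AB}$. The sums defining $\{\gamma,\gamma'\}_1$ and $\{\gamma',\gamma\}_1$ range over the same set of matched pairs, giving a term-by-term bijection, and it remains to check the scalar identity $\epsilon(\gamma,\gamma',t_i,t_j')=-(-1)^{(|\gamma|+1)(|\gamma'|+1)}(-1)^{AB}\,\epsilon(\gamma',\gamma,t_j',t_i)$ for each pair. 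This is a finite computation from the definition of $\epsilon$ in terms of $P(\gamma,t_i)$, using that the removed pair of punctures contributes total grading $-1$ so that $|\gamma\,{}_{t_i}*_{t_j'}\,\gamma'|=|\gamma|+|\gamma'|+1$; I would split into the cases $t_i$ positive and $t_i$ negative, the extra overall minus sign in the positive case being exactly what is needed.

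For part (b), I would expand each double bracket as a sum over configurations in which $x,y,z$ are glued along two matched pairs of punctures. Because each bracket glues at a single pair and the outer bracket attaches the third string to the already-glued string, every resulting term corresponds to a \emph{path} on the three vertices $x,y,z$: the two edges join one "middle" string to each of the other two, and no direct edge between the two outer strings occurs. The key combinatorial observation is that each path configuration appears in exactly two of the three cyclically permuted double brackets; for instance the path with middle vertex $x$ (edges $\{z,x\}$ and $\{x,y\}$) occurs in $\{\{x,y\}_1,z\}_1$ and in $\{\{z,x\}_1,y\}_1$, but not in $\{\{y,z\}_1,x\}_1$. I would then show that, once the Jacobi prefactors are included, the two occurrences carry opposite signs, so all terms cancel in pairs and the alternating sum vanishes.

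The main obstacle is precisely the sign bookkeeping in (b). The two gluing signs $\epsilon$ in a given path term depend on the $P$-functions of the intermediate glued string, which depend on the order in which the two gluings are performed and on how punctures are relabelled after the first gluing; the two occurrences of a fixed path differ exactly in this order. The crux is therefore to rewrite the total sign of a path term in a form that is manifestly symmetric in the data of its two edges, expressing everything through the gradings $\|x\|,\|y\|,\|z\|$ and the $P$-weights of the four involved punctures measured in the \emph{original} strings, and then to check that the two Jacobi prefactors $(-1)^{(|y|+|z|)(|x|+1)}$ and $(-1)^{(|x|+|y|)(|z|+1)}$ convert one ordering's sign into the negative of the other's. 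Once this normal form is established the cancellation is automatic, and the antisymmetry of (a) can be used to reduce the number of cases to be checked.
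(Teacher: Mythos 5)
Your proposal is correct and takes essentially the same route as the paper's proof: part (a) by a term-by-term matching of the glued configurations with a direct check of the $\epsilon$-signs, and part (b) by expanding each double bracket into gluings along two puncture pairs, observing that each two-edge ``path'' configuration with a fixed middle string appears in exactly two of the three cyclic terms, and cancelling the signs pairwise (the paper carries out exactly this unsigned rearrangement of double sums and then verifies the sign cancellation on a representative configuration, as in Figure \ref{Fig:string_gluing_signs}). Your reformulation via the shifted grading $\|x\|=|x|+1$ is a cosmetic repackaging of the same argument, not a different method.
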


\begin{proof}
The first property is trivial to check. The second property follows from (without signs)
\begin{align*}
&\{\gamma,\gamma'\}_1=\sum_{\substack{t_i,t_j'\\\lim_{t\to t_i^\pm}\gamma(t)=\lim_{ t\to t_j'^\mp}\gamma'( t)}}\gamma\prescript{}{ t_i\hspace{-0.65mm}}{*}_{ t_j'}\gamma';\\
&\{\{\gamma,\gamma'\}_1,\gamma''\}_1=\sum_{\substack{ t_i, t_j'}}\sum_{ t_k'',\tilde t\neq t_i, t_j'}\left(\gamma\prescript{}{ t_i\hspace{-0.65mm}}{*}_{ t_j'}\gamma'\right)\prescript{}{\tilde t}{*}_{ t_k''}\gamma''=\\
&=\sum_{\substack{ t_i, t_j'}}\sum_{ t_k'', t_s\neq t_i}\left(\gamma\prescript{}{ t_i\hspace{-0.65mm}}{*}_{ t_j'}\gamma'\right)\prescript{}{ t_s\hspace{-0.85mm}}{*}_{ t_k''}\gamma''+\sum_{\substack{ t_i, t_j'}}\sum_{ t_k'', t_s'\neq t_j'}\left(\gamma\prescript{}{ t_i\hspace{-0.65mm}}{*}_{ t_j'}\gamma'\right)\prescript{}{ t_s'\hspace{-0.45mm}}{*}_{ t_k''}\gamma''=\\
&=\sum_{ t_k'', t_s}\sum_{\substack{ t_i\neq t_s, t_j'}}\gamma'\prescript{}{ t_j'\hspace{-0.4mm}}{*}_{ t_i}\left(\gamma\prescript{}{ t_s\hspace{-0.85mm}}{*}_{ t_k''}\gamma''\right)+\sum_{ t_k'', t_s'}\sum_{\substack{ t_i, t_j'\neq t_s'}}\gamma\prescript{}{ t_i\hspace{-0.65mm}}{*}_{ t_j'}\left(\gamma'\prescript{}{ t_s'\hspace{-0.45mm}}{*}_{ t_k''}\gamma''\right)=\\
&=\sum_{ t_k'', t_s}\sum_{\substack{ t_i\neq t_s, t_j'}}\gamma'\prescript{}{ t_j'\hspace{-0.45mm}}{*}_{ t_i}\left(\gamma\prescript{}{ t_s\hspace{-0.85mm}}{*}_{ t_k''}\gamma''\right)+
\sum_{ t_k'', t_s}\sum_{\substack{ t_l''\neq t_k'', t_j'}}\gamma'\prescript{}{ t_j'\hspace{-0.45mm}}{*}_{ t_l''}\left(\gamma\prescript{}{ t_s\hspace{-0.85mm}}{*}_{ t_k''}\gamma''\right)+\\
&+\sum_{ t_l'', t_j'}\sum_{\substack{ t_k''\neq t_l'', t_s}}\gamma\prescript{}{ t_s\hspace{-0.85mm}}{*}_{ t_k''}\left(\gamma' \prescript{}{ t_j'\hspace{-0.4mm}}{*}_{ t_l''}\gamma''\right)
+\sum_{ t_k'', t_s'}\sum_{\substack{ t_i, t_j'\neq t_s'}}\gamma\prescript{}{ t_i\hspace{-0.65mm}}{*}_{ t_j'}\left(\gamma'\prescript{}{ t_s'\hspace{-0.4mm}}{*}_{ t_k''}\gamma''\right)=\\
&=\{\{\gamma'',\gamma\}_1,\gamma'\}_1+\{\{\gamma',\gamma''\}_1,\gamma\}_1.
\end{align*}
\begin{figure}
\def\svgwidth{49mm}
\begingroup%
  \makeatletter%
  \providecommand\rotatebox[2]{#2}%
  \newcommand*\fsize{\dimexpr\f@size pt\relax}%
  \newcommand*\lineheight[1]{\fontsize{\fsize}{#1\fsize}\selectfont}%
  \ifx\svgwidth\undefined%
    \setlength{\unitlength}{134.36606835bp}%
    \ifx\svgscale\undefined%
      \relax%
    \else%
      \setlength{\unitlength}{\unitlength * \real{\svgscale}}%
    \fi%
  \else%
    \setlength{\unitlength}{\svgwidth}%
  \fi%
  \global\let\svgwidth\undefined%
  \global\let\svgscale\undefined%
  \makeatother%
  \begin{picture}(1,1.24081089)%
    \lineheight{1}%
    \setlength\tabcolsep{0pt}%
    \put(0.35765306,1.00250746){\makebox(0,0)[lt]{\lineheight{1.25}\smash{\begin{tabular}[t]{l}$x$\end{tabular}}}}%
    \put(0.70158851,0.56262596){\makebox(0,0)[lt]{\lineheight{1.25}\smash{\begin{tabular}[t]{l}$y$\end{tabular}}}}%
    \put(0.23689738,0.14997418){\makebox(0,0)[lt]{\lineheight{1.25}\smash{\begin{tabular}[t]{l}$z$\end{tabular}}}}%
    \put(0.2085896,1.17542449){\makebox(0,0)[lt]{\lineheight{1.25}\smash{\begin{tabular}[t]{l}$t_1$\end{tabular}}}}%
    \put(0.45179515,1.190575){\makebox(0,0)[lt]{\lineheight{1.25}\smash{\begin{tabular}[t]{l}$t_n$\end{tabular}}}}%
    \put(0.31424434,0.80862276){\makebox(0,0)[lt]{\lineheight{1.25}\smash{\begin{tabular}[t]{l}$t_{i-1}$\end{tabular}}}}%
    \put(0.60250275,0.91627074){\makebox(0,0)[lt]{\lineheight{1.25}\smash{\begin{tabular}[t]{l}$t_{i+1}$\end{tabular}}}}%
    \put(0.45208084,0.696987){\makebox(0,0)[lt]{\lineheight{1.25}\smash{\begin{tabular}[t]{l}$t'_1$\end{tabular}}}}%
    \put(0.36874981,0.48567752){\makebox(0,0)[lt]{\lineheight{1.25}\smash{\begin{tabular}[t]{l}$t'_{j-1}$\end{tabular}}}}%
    \put(0.667362,0.34944105){\makebox(0,0)[lt]{\lineheight{1.25}\smash{\begin{tabular}[t]{l}$t'_{j+1}$\end{tabular}}}}%
    \put(0.69128365,0.7845371){\makebox(0,0)[lt]{\lineheight{1.25}\smash{\begin{tabular}[t]{l}$t'_{m-1}$\end{tabular}}}}%
    \put(0.25099204,0.37605786){\makebox(0,0)[lt]{\lineheight{1.25}\smash{\begin{tabular}[t]{l}$t''_1$\end{tabular}}}}%
    \put(0.52808658,0.22335686){\makebox(0,0)[lt]{\lineheight{1.25}\smash{\begin{tabular}[t]{l}$t''_{l-1}$\end{tabular}}}}%
    \put(0.44165929,0.90900629){\makebox(0,0)[lt]{\lineheight{1.25}\smash{\begin{tabular}[t]{l}$t_{i}$\end{tabular}}}}%
    \put(0.5883128,0.65364717){\makebox(0,0)[lt]{\lineheight{1.25}\smash{\begin{tabular}[t]{l}$t'_{m}$\end{tabular}}}}%
    \put(0.55732113,0.49114896){\makebox(0,0)[lt]{\lineheight{1.25}\smash{\begin{tabular}[t]{l}$t'_{j}$\end{tabular}}}}%
    \put(0,0){\includegraphics[width=\unitlength,page=1]{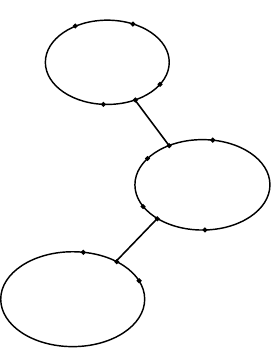}}%
    \put(0.37112966,0.23182525){\makebox(0,0)[lt]{\lineheight{1.25}\smash{\begin{tabular}[t]{l}$t''_{l}$\end{tabular}}}}%
  \end{picture}%
\endgroup%
\caption{The string can be glued in two ways. It appears as a summand in $\{\{x,y\}_1,z\}_1$ and $\{\{y,z\}_1,x\}_1$.}
\label{Fig:string_gluing_signs}
\end{figure}
To check that the signs cancel out, consider for example the configuration shown in Figure \ref{Fig:string_gluing_signs}. Other cases follow similarly. Without loss of generality, we assume that the last puncture on $x,y,z$ ($t_n,t'_m,t''_l$ respectively) is a positive puncture. The glued string appears in $\{\{x,y\}_1,z\}_1$ and $(-1)^{(|y|+|z|)(|x|+1)}\{\{y,z\},x\}=-\{x,\{y,z\}_1\}_1$. The signs for the two summands are given by
\begin{align*}
S_1=(-1)^{P(x,t_i)(|y|+1)+(P(x,t_i)+P(y,t_j'))(|z|+1)},
\end{align*}
and
\begin{align*}
S_2=-(-1)^{P(y,t_j')(|z|+1)+P(x,t_i)(|y|+|z|)}=-S_1.
\end{align*}
This finishes the proof.
\end{proof}

\subsubsection{Annulus building contribution}\label{Sec:gluing_annulus}
Next, we describe the part of the SFT bracket that glues strings at two pairs of punctures and define the SFT bracket $\{\cdot,\cdot\}:\cC\otimes\cC\to\cC$.
 
Let $\gamma:\lS^1\backslash\{t_1,\dots,t_n\}\to\Lambda,\gamma\in\widetilde\cC^1$ be a string with a positive puncture without loss of generality at $t_{n}$, $\gamma':\lS^1\backslash\{t_1',\dots,t_m'\}\to\Lambda,\gamma'\in\widetilde\cC^2$ a string with two positive punctures, and $t_i,t'_j;t_k,t'_l$ be two pairs of punctures on $\gamma,\gamma'$ such that $i<k, j\neq l$ and
\begin{align*}
&\lim_{t\to t_i^\pm}\gamma(t)=\lim_{t\to t_j'^\mp}\gamma'(t)=\gamma_1^\mp,\\
&\lim_{t\to t_k^\pm}\gamma(t)=\lim_{t\to t_l'^\mp}\gamma'(t)=\gamma_2^\mp,
\end{align*}
for some Reeb chords $\gamma_1,\gamma_2\in\cR$. Assume additionally without loss of generality that $l=m$ (take cyclic reordering of the punctures on $\gamma'$). We define a pair of strings $\gamma(t_i,t'_j;t_k,t'_l)\gamma'=\gamma_1\otimes\gamma_2\in\overline\cC$ with punctures $t_1,\dots,t_{i-1},t'_{j+1},\dots,t'_{l-1},t_{k+1},\dots,t_n$ on $\gamma_1\in\widetilde\cC^1$ and $t'_{l+1},\dots,t'_{j-1},t_{i+1},\dots,t_{k-1}$ on $\gamma_2\in\widetilde\cC^0$, consisting of the string $\gamma_1$ given by
\begin{align*}
\gamma|_{[t_s,t_{s+1}]}\text{ on }[t_s,t_{s+1}],\text{ for }s\in\{k+1,\dots,i-2\},\\
\gamma'|_{[t'_s,t'_{s+1}]}\text{ on }[t'_s,t'_{s+1}],\text{ for }s\in\{j+1,\dots,l-2\},
\end{align*}
and 
\begin{align*}
\gamma|_{[t_{i-1},t_{i}]}*\gamma'|_{[t'_j,t'_{j+1}]},\text{ on }[t_{i-1},t'_{j+1}],\\
\gamma'|_{[t'_{l-1},t'_{l}]}*\gamma|_{[t_k,t_{k+1}]},\text{ on }[t'_{l-1},t_{k+1}];
\end{align*}
and the string $\gamma_2$ given by 
\begin{align*}
\gamma'|_{[t'_s,t'_{s+1}]}\text{ on }[t'_s,t'_{s+1}],\text{ for }s\in\{l+1,\dots,j-2\},\\
\gamma|_{[t_s,t_{s+1}]}\text{ on }[t_s,t_{s+1}],\text{ for }s\in\{i+1,\dots,k-2\},
\end{align*}
and
\begin{align*}
\gamma'|_{[t'_{j-1},t'_{j}]}*\gamma|_{[t_i,t_{i+1}]},\text{ on }[t'_{j-1},t_{i+1}],\\
\gamma|_{[t_{k-1},t_{k}]}*\gamma'|_{[t'_l,t'_{l+1}]},\text{ on }[t_{k-1},t'_{l+1}].
\end{align*}
Now, we define $\{\cdot,\cdot\}_2:\cC\otimes\cC\to\cC$ as a degree 1 linear map given as follows. For strings $\gamma\in\widetilde\cC^1,\gamma'\in\widetilde\cC^2$ as above, we take
\begin{align*}
\{\gamma,\gamma'\}_2=\sum_{\substack{(t_i,t'_j);(t_k,t'_l)\\i<k}}\epsilon(\gamma,\gamma',t_i,t'_j;t_k,t'_l)\gamma(t_i,t'_j;t_k,t'_l)\gamma',
\end{align*}
where the sum goes over all pairs of punctures satisfying the conditions above. The sign $\epsilon(\gamma,\gamma',t_i,t'_j;t_k,t'_l)\in\{+1,-1\}$ is given by
\begin{align*}
\epsilon(\gamma,\gamma',t_i,t'_j;t_k,t'_l)=(-1)^
{P(\gamma,t_k)\left(P(\gamma',t_{l}',t'_j)+P(\gamma,t_i,t_k)+|\gamma'|\right)},
\end{align*}
where $P(x,t_I,t_J)=\sum_{\substack{j=I+1,\dots,J-1,\\t_j\text{ pos./neg. asym. to }\gamma_{i_j}}}|\gamma_{i_j}^\pm|$. To simplify, we assume $l=m$ as above. \newline
Additionally, we define $\{\gamma',\gamma\}_2=-(-1)^{(|\gamma|+1)(|\gamma'|+1)}\{\gamma,\gamma'\}_2$ for $\gamma\in\widetilde\cC^1,\gamma'\in\widetilde\cC^2$. In case $x,y\in\widetilde\cC^1$ or $x,y\in\widetilde\cC^2$ or $x\in\overline\cC$ or $y\in\overline\cC$, we define $\{x,y\}_2=0$.

We additionally extend the restricted SFT bracket to a linear map $\{\cdot,\cdot\}_1:\cC\otimes\cC\to\cC$ as follows. For generators $\gamma_1\otimes\gamma_2\in\overline\cC$ and $\gamma\in\widetilde\cC^1$, we define
\begin{align*}
&\{\gamma_1\otimes\gamma_2,\gamma\}_1=(-1)^{(|\gamma|+1)(|\gamma_2|+1)}\{\gamma_1,\gamma\}_1\otimes\gamma_2+\gamma_1\otimes\{\gamma_2,\gamma\}_1,\\
&\{\gamma,\gamma_1\otimes\gamma_2\}_1=\{\gamma,\gamma_1\}_1\otimes\gamma_2+(-1)^{(|\gamma|+1)(|\gamma_1|+1)}\gamma_1\otimes\{\gamma,\gamma_2\}_1,
\end{align*}
and zero otherwise.

Finally, the SFT bracket is a degree $1$ linear map $\{\cdot,\cdot\}:\cC\otimes
\cC\to\cC$  given by
\begin{align*}
\{\gamma,\gamma'\}=\{\gamma,\gamma'\}_1+\{\gamma,\gamma'\}_2.
\end{align*}

\begin{lemma}
For every $x,y,z\in\cC$, we have
$$\{\{x,y\},z\}+(-1)^{(|y|+|z|)(|x|+1)}\{\{y,z\},x\}+(-1)^{(|x|+|y|)(|z|+1)}\{\{z,x\},y\}=0.$$
\end{lemma}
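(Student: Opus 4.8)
The plan is to reduce the statement to the Jacobi identity for the restricted bracket (Lemma \ref{Lemma:PropertiesReduced}) by splitting off the annulus contribution. I would write $\{\cdot,\cdot\}=\{\cdot,\cdot\}_1+\{\cdot,\cdot\}_2$ and expand the three double brackets in the Jacobiator, distributing each of the two bracket slots over $\{\cdot,\cdot\}_1$ and $\{\cdot,\cdot\}_2$. Since $\{\cdot,\cdot\}_2$ takes values in $\overline\cC$ and vanishes as soon as one argument lies in $\overline\cC$, every term that would use $\{\cdot,\cdot\}_2$ twice is zero; hence each surviving term uses $\{\cdot,\cdot\}_2$ at most once. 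This sorts the Jacobiator into a \emph{pure} part, in which both brackets are $\{\cdot,\cdot\}_1$, and a \emph{mixed} part, in which exactly one of the two brackets is $\{\cdot,\cdot\}_2$.

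First I would dispose of the pure part. If any of $x,y,z$ lies in $\overline\cC$, then all $\{\cdot,\cdot\}_2$-terms vanish and the Jacobiator is built entirely from $\{\cdot,\cdot\}_1$; since $\{\cdot,\cdot\}_1$ acts on the tensor factors of $\overline\cC$ by a graded Leibniz rule, the graded Jacobi identity of Lemma \ref{Lemma:PropertiesReduced} propagates from $\widetilde\cC$ to these arguments and the expression vanishes. If instead $x,y,z\in\widetilde\cC$, the pure part keeps all intermediate results inside $\widetilde\cC$ and is exactly the $\{\cdot,\cdot\}_1$-Jacobiator, which is zero by Lemma \ref{Lemma:PropertiesReduced}. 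In either case the pure part contributes nothing, and it remains to show that the mixed part vanishes.

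For the mixed part I would first observe that a mixed term is nonzero only when the positive-puncture counts of $x,y,z$ form the multiset $\{1,1,2\}$, so that the output indeed lies in $\overline\cC$: each single gluing removes one positive puncture and the double gluing two, so the four incoming positive punctures leave exactly one. I would then expand every mixed term combinatorially as a signed sum of glued pairs $\sigma_1\otimes\sigma_2\in\overline\cC$, just as in the proof of Lemma \ref{Lemma:PropertiesReduced}: a term of type $\{\{\cdot,\cdot\}_2,\cdot\}_1$ first forms the annular pair by a double gluing and then attaches the remaining string at a single puncture (to one of the two factors, via the extension of $\{\cdot,\cdot\}_1$ to $\overline\cC$), while a term of type $\{\{\cdot,\cdot\}_1,\cdot\}_2$ performs a single gluing first and a double gluing afterwards. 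A fixed final pair $\sigma_1\otimes\sigma_2$ is assembled from the three inputs along three matched puncture pairs, two of which bound the loop recorded by $\{\cdot,\cdot\}_2$; the order in which these three gluings are performed is what distinguishes the two realizations. I would set up a bijection matching, for each resulting pair, its type-(a) realization with its type-(b) realization inside the cyclic sum.

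The crux --- and the step I expect to be the main obstacle --- is the sign cancellation. I would verify that the two realizations of a given $\sigma_1\otimes\sigma_2$ carry opposite signs, by comparing the composite of a single-gluing sign $\epsilon(\cdot,\cdot,\cdot,\cdot)$ with the double-gluing sign $\epsilon(\gamma,\gamma',t_i,t_j';t_k,t_l')$ against the reversed composite, while absorbing the Koszul prefactors $(-1)^{(|y|+|z|)(|x|+1)}$ and $(-1)^{(|x|+|y|)(|z|+1)}$ of the cyclic sum together with the signs in the extension formulas for $\{\cdot,\cdot\}_1$ on $\overline\cC$. This is a direct but lengthy parity computation of the same flavour as the verification that $S_2=-S_1$ at the end of Lemma \ref{Lemma:PropertiesReduced}, now complicated by the two-pair gluing and by the ordering of the two boundary components of the output; the essential input is that the quantities $P(\cdot,t_I)$ and $P(\cdot,t_I,t_J)$ count exactly the degrees passed over during each reindexing, so the exponents attached to the two realizations differ by an odd integer. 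Once this parity identity is checked on each combinatorial type, the mixed terms cancel in pairs and the Jacobi identity follows.
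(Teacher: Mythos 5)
Your overall skeleton matches what the paper intends: the paper omits this proof entirely, saying only that it is ``similar to Lemma \ref{Lemma:PropertiesReduced}'', and your reduction --- splitting $\{\cdot,\cdot\}=\{\cdot,\cdot\}_1+\{\cdot,\cdot\}_2$, noting that $\{\cdot,\cdot\}_2$ vanishes on $\overline\cC$ so it occurs at most once per term, disposing of the pure part via Lemma \ref{Lemma:PropertiesReduced} and its Leibniz extension to $\overline\cC$, and cancelling the mixed part by a gluing-order argument --- is the right frame and consistent with that indication.

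There is, however, a concrete gap in your treatment of the mixed part: the claimed bijection matching each final pair's ``type-(a)'' realization $\{\{\cdot,\cdot\}_2,\cdot\}_1$ with a ``type-(b)'' realization $\{\{\cdot,\cdot\}_1,\cdot\}_2$ is not the correct pairing. A term of type $\{\{x,y\}_1,z\}_2$ sums over choices of the two gluing pairs of the outer bracket, and these fall into three cases: both pairs attach to punctures originating from $x$, both from $y$, or one from each. In the first two cases your matching works (the double-first partner is of type $\{\{z,x\}_2,y\}_1$, respectively $\{\{y,z\}_2,x\}_1$). But in the straddling (``triangle'') case, where the two pairs of the double gluing connect $z$ to one puncture of $x$ and one puncture of $y$, there is \emph{no} double-first realization at all, since $\{\cdot,\cdot\}_2$ only glues two strings along two pairs; such a configuration can only be assembled single-gluing-first. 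These terms cancel instead against single-first terms of the \emph{same} type in a different cyclic summand (e.g.\ $\{\{x,y\}_1,z\}_2$ against $\{\{y,z\}_1,x\}_2$) --- exactly the mechanism in the proof of Lemma \ref{Lemma:PropertiesReduced}, where cancellation also occurs between like terms across the cyclic sum rather than between different operation orders. As written, your bijection misses these configurations, and the parity computation you flag as the crux would be carried out against the wrong partner. Two further points your count glosses over: triangle configurations in which the directions of the two double-gluing pairs are opposite leave the surviving positive puncture on the closed factor, so the result lies outside $\overline\cC=\widetilde\cC^1\otimes\widetilde\cC^0$ and such terms must be discarded rather than matched; and in the single-attach step the remaining string can be glued to either tensor factor via the extension of $\{\cdot,\cdot\}_1$ to $\overline\cC$, which must be tracked in the pairing. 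With the corrected matching the strategy goes through, but the proof as proposed would fail at the bijection step.
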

The proof of the lemma is similar to Lemma \ref{Lemma:PropertiesReduced} and we omit it here.

\subsection{The string operator}
\label{Sec:def_string_operator}
In this section we define the string operator $d_{\str}:\cC\to\cC$. The map $d_{\str}$ consists of two parts
$$d_{\str}=\delta+\nabla,$$
defined below. 

Recall $\cC=\widetilde\cC\oplus\overline\cC$, where $\widetilde\cC=\widetilde\cC^1\oplus\widetilde\cC^2$ is the vector space generated by strings with one or two positive punctures, and $\overline\cC=\widetilde\cC^1\otimes\widetilde\cC^0$ is generated by pairs of strings with one and zero positive punctures.

\subsubsection{Disk bubble contribution---loop product}\label{Sec:loop_product_operator}
In this section, we define the map $\delta:\cC\to\cC$. Our definition is similar to the definition in \cite{Ng_rLSFT}. Note that here $\delta$ does not vanish only on strings with one positive puncture.

First, for every string $\gamma:\lS^1\backslash\{t_1,\dots,t_k\}\to\Lambda$ transverse to the Reeb chord endpoints and $\tau\in\lS^1\backslash\{t_1,\dots,t_k\}$ such that $\gamma(\tau)=i^\pm$ for some Reeb chord $\gamma_i\in\cR$, we introduce a string $\delta(\gamma,\tau)$ with $k+2$ punctures given by inserting a trivial strip at the point $\tau$. More precisely, close to $\tau$ we create two new punctures (with generic asymptotic behavior), first a negative puncture and then a positive puncture at $\gamma_i$ if $\gamma(\tau)=i^+$, and first a positive and then a negative puncture at $\gamma_i$ if $\gamma(\tau)=i^-$ (see Figure \ref{Fig:insert_triv_cyl}). See also \cite[Section 3.2]{Ng_rLSFT}. 

\begin{figure}
\def\svgwidth{96mm}
\begingroup%
  \makeatletter%
  \providecommand\rotatebox[2]{#2}%
  \newcommand*\fsize{\dimexpr\f@size pt\relax}%
  \newcommand*\lineheight[1]{\fontsize{\fsize}{#1\fsize}\selectfont}%
  \ifx\svgwidth\undefined%
    \setlength{\unitlength}{445.52728661bp}%
    \ifx\svgscale\undefined%
      \relax%
    \else%
      \setlength{\unitlength}{\unitlength * \real{\svgscale}}%
    \fi%
  \else%
    \setlength{\unitlength}{\svgwidth}%
  \fi%
  \global\let\svgwidth\undefined%
  \global\let\svgscale\undefined%
  \makeatother%
  \begin{picture}(1,0.2627446)%
    \lineheight{1}%
    \setlength\tabcolsep{0pt}%
    \put(0,0){\includegraphics[width=\unitlength,page=1]{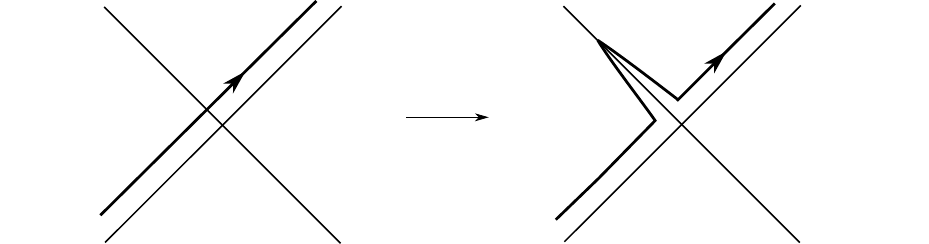}}%
    \put(0.26593548,0.23403062){\makebox(0,0)[lt]{\lineheight{1.25}\smash{\begin{tabular}[t]{l}$\gamma$\end{tabular}}}}%
    \put(0.7001291,0.23395004){\makebox(0,0)[lt]{\lineheight{1.25}\smash{\begin{tabular}[t]{l}$\delta(\gamma;\tau)$\end{tabular}}}}%
    \put(0.18477146,0.13302753){\makebox(0,0)[lt]{\lineheight{1.25}\smash{\begin{tabular}[t]{l}$\tau$\end{tabular}}}}%
    \put(-0.00160484,0.2181593){\makebox(0,0)[lt]{\lineheight{1.25}\smash{\begin{tabular}[t]{l}\textcolor{white}{.}\end{tabular}}}}%
  \end{picture}%
\endgroup%
\caption{Inserting a trivial strip.}
\label{Fig:insert_triv_cyl}
\end{figure}

Now, we define a degree $-1$ linear map $\delta:\cC\to\cC$ as follows. Choosing a representative of a string $\gamma\in\widetilde\cC^1$ that is transverse to all Reeb chord endpoints, we define
\begin{align*}
\delta(\gamma)=\sum_{\tau}\epsilon(\gamma,\tau)\delta(\gamma,\tau),
\end{align*}
where the sum goes over all $\tau\in\lS^1\backslash\{t_1,\dots,t_k\}$ such that $\gamma(\tau)=i^+$ or $\gamma(\tau)=i^-$ for some Reeb chord $\gamma_i\in\cR$, and $\delta(\gamma)=0$ for $\gamma\in\widetilde\cC^2$ or $\gamma\in\overline\cC$. The sign $\epsilon(\gamma,\tau)\in\{-1,+1\}$ is given by 
\begin{align*}
\epsilon(\gamma,\tau)=(-1)^{P(\gamma,\tau)}\epsilon(\tau)p(\tau),
\end{align*}
where $\epsilon(\tau)$ is $+1$ if the orientation of $\gamma$ at $\tau$ matches the orientation of $\Lambda$ and $-1$ otherwise, $p(\tau)$ is $1$ if $\gamma(\tau)=i^+$ and $-1$ if $\gamma(\tau)=i^-$, and $P(\gamma,\tau)$ is the sum of gradings at the punctures $t_1,\dots,t_j$ for $\tau\in(t_j,t_{j+1})$. The map $\delta$ is well defined, i.e. descends to $\cC(\Lambda)$ and does not depend on the representative of the broken closed string, see Figure \ref{Fig:cyl_insert_well_def}.

\begin{figure}
\def\svgwidth{145mm}
\begingroup%
  \makeatletter%
  \providecommand\rotatebox[2]{#2}%
  \newcommand*\fsize{\dimexpr\f@size pt\relax}%
  \newcommand*\lineheight[1]{\fontsize{\fsize}{#1\fsize}\selectfont}%
  \ifx\svgwidth\undefined%
    \setlength{\unitlength}{743.66935678bp}%
    \ifx\svgscale\undefined%
      \relax%
    \else%
      \setlength{\unitlength}{\unitlength * \real{\svgscale}}%
    \fi%
  \else%
    \setlength{\unitlength}{\svgwidth}%
  \fi%
  \global\let\svgwidth\undefined%
  \global\let\svgscale\undefined%
  \makeatother%
  \begin{picture}(1,0.32049218)%
    \lineheight{1}%
    \setlength\tabcolsep{0pt}%
    \put(0.9100545,0.06278912){\makebox(0,0)[lt]{\lineheight{1.25}\smash{\begin{tabular}[t]{l}$\cong$\end{tabular}}}}%
    \put(0.97477514,0.06083744){\makebox(0,0)[lt]{\lineheight{1.25}\smash{\begin{tabular}[t]{l}$0$\end{tabular}}}}%
    \put(0.26967242,0.05563939){\makebox(0,0)[lt]{\lineheight{1.25}\smash{\begin{tabular}[t]{l}$-$\end{tabular}}}}%
    \put(0.49054526,0.27276609){\makebox(0,0)[lt]{\lineheight{1.25}\smash{\begin{tabular}[t]{l}$\delta$\end{tabular}}}}%
    \put(0.71544223,0.2412365){\makebox(0,0)[lt]{\lineheight{1.25}\smash{\begin{tabular}[t]{l}$-$\end{tabular}}}}%
    \put(0.9093658,0.24831263){\makebox(0,0)[lt]{\lineheight{1.25}\smash{\begin{tabular}[t]{l}$\cong$\end{tabular}}}}%
    \put(0.48895706,0.08441443){\makebox(0,0)[lt]{\lineheight{1.25}\smash{\begin{tabular}[t]{l}$\delta$\end{tabular}}}}%
    \put(0.71457428,0.06080777){\makebox(0,0)[lt]{\lineheight{1.25}\smash{\begin{tabular}[t]{l}$-$\end{tabular}}}}%
    \put(0.27209395,0.23906243){\makebox(0,0)[lt]{\lineheight{1.25}\smash{\begin{tabular}[t]{l}$-$\end{tabular}}}}%
    \put(0.97408644,0.24636106){\makebox(0,0)[lt]{\lineheight{1.25}\smash{\begin{tabular}[t]{l}$0$\end{tabular}}}}%
    \put(-0.00096145,0.26156006){\makebox(0,0)[lt]{\lineheight{1.25}\smash{\begin{tabular}[t]{l}\textcolor{white}{.}\end{tabular}}}}%
    \put(0,0){\includegraphics[width=\unitlength,page=1]{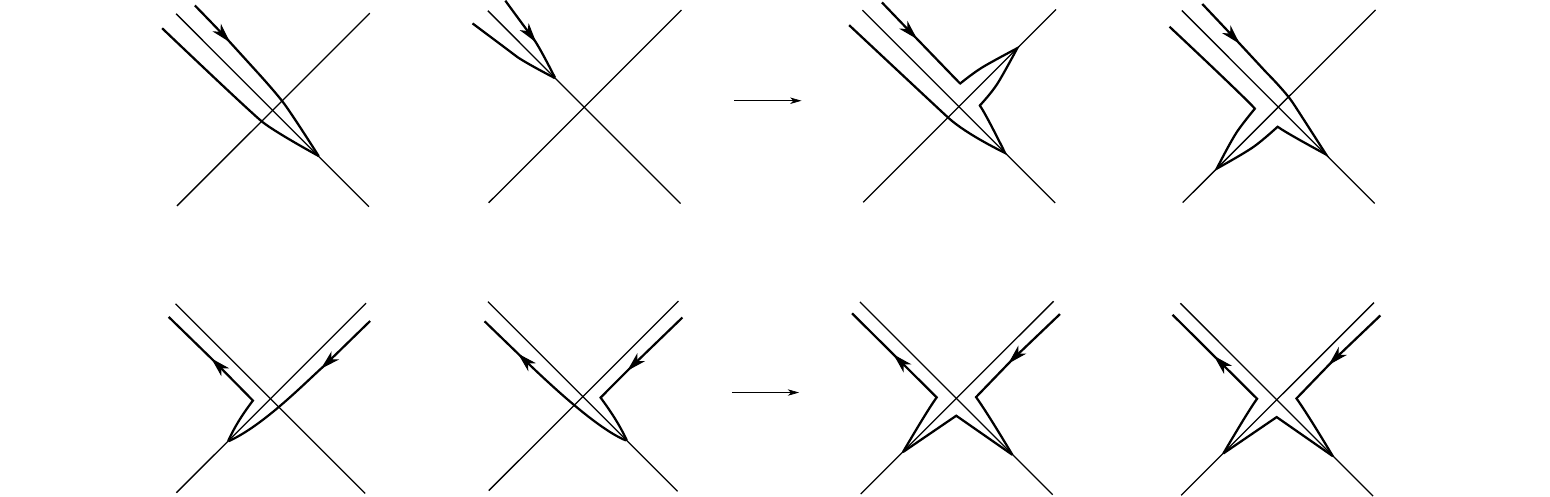}}%
  \end{picture}%
\endgroup%
\caption{Map $\delta:\cC\to\cC$ is well defined.}
\label{Fig:cyl_insert_well_def}
\end{figure}

\begin{lemma}[\cite{Ng_rLSFT}]\label{Lemma:reduced_prop}
The following holds
\begin{itemize}
\item[a)] $\delta\circ\delta(x)=0$,
\item[b)] $\delta\{x,y\}_1=\{x,\delta y\}_1-(-1)^{|y|}\{\delta x,y\}_1,$
\end{itemize}
for all $x,y\in\cC$.
\end{lemma}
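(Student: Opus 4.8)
The plan is to establish both identities by the combinatorial sign-matching technique underlying the analogous statements in \cite{Ng_rLSFT}: I would expand each side as a signed sum of decorated broken strings, organize the terms according to where the insertion of a trivial strip occurs relative to the gluing data, set up a term-by-term bijection, and verify that the explicit signs agree. Throughout I would use two elementary facts: the total grading of an inserted trivial strip is $|\gamma_i^+|+|\gamma_i^-|=-1$ (odd), and the local factors $\epsilon(\tau)p(\tau)$ in $\epsilon(\gamma,\tau)=(-1)^{P(\gamma,\tau)}\epsilon(\tau)p(\tau)$ depend only on the germ of the string at $\tau$, hence are insensitive to operations performed away from $\tau$.

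For part (a), I would write $\delta^2(\gamma)=\sum_{\tau_0\neq\tau_1}\epsilon(\gamma,\tau_0)\,\epsilon(\delta(\gamma,\tau_0),\tau_1)\,\delta(\delta(\gamma,\tau_0),\tau_1)$. First note that inserting a trivial strip is localized near one double point of $\pi_{xy}(\Lambda)$ and generically creates no new interior crossing over a Reeb chord endpoint (the two endpoints become the new punctures, not interior points), so there are no ``diagonal'' contributions and the sum runs over unordered pairs of distinct crossing points, each appearing in both insertion orders. For a fixed pair with, say, $\tau_0$ preceding $\tau_1$ in the chosen representative, the two resulting decorated strings are identical, while the signs differ exactly by the factor $(-1)^{P}$ at the later point: inserting at $\tau_0$ first shifts $P(\,\cdot\,,\tau_1)$ by the odd quantity $-1$, whereas inserting at $\tau_1$ first leaves $P(\,\cdot\,,\tau_0)$ unchanged. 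Concretely,
\begin{align*}
\epsilon(\gamma,\tau_0)\,\epsilon(\delta(\gamma,\tau_0),\tau_1)&=-(-1)^{P(\gamma,\tau_0)+P(\gamma,\tau_1)}\epsilon(\tau_0)p(\tau_0)\epsilon(\tau_1)p(\tau_1),\\
\epsilon(\gamma,\tau_1)\,\epsilon(\delta(\gamma,\tau_1),\tau_0)&=+(-1)^{P(\gamma,\tau_0)+P(\gamma,\tau_1)}\epsilon(\tau_0)p(\tau_0)\epsilon(\tau_1)p(\tau_1),
\end{align*}
so the two terms cancel and $\delta^2=0$.

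For part (b), I would expand $\delta\{x,y\}_1$ as a double sum over compatible gluing pairs $(t_i,t_j')$ and over crossing points $\tau$ of the glued string obtained by joining $x$ and $y$ at $(t_i,t_j')$, and classify $\tau$ into three types: crossings inherited from $x$, crossings inherited from $y$, and the two crossings created at the gluing site (the glued string passes twice through the double point of the gluing chord, once on the $p^+$ and once on the $p^-$ branch). The type $x$ terms, after reorganizing them as ``insert in $x$, then glue,'' should reproduce $-(-1)^{|y|}\{\delta x,y\}_1$, and the type $y$ terms should reproduce $\{x,\delta y\}_1$; matching these is a bookkeeping of the insertion sign $\epsilon(\gamma,\tau)$ against the bracket sign $\epsilon(\gamma,\gamma',t_i,t_j')$, where the degree shift $|y|$ and the commutation of $P(\,\cdot\,,\tau)$ past the gluing chord account for the signs.

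The main obstacle is the gluing-site terms. The two insertions on the $p^+$ and $p^-$ branches must cancel, by the very mechanism that makes $\delta$ well defined (the change-of-asymptotic-behavior equivalence of Figure \ref{Fig:cyl_insert_well_def}, which identifies the two nearby insertions up to sign); simultaneously one must check that the ``gluing at a newly created puncture'' terms on the right-hand side --- present because $\{x,\delta y\}_1$ and $\{\delta x,y\}_1$ may glue along the punctures produced by $\delta$ --- cancel in the same way. I expect this cancellation, together with its compatibility with the global signs, to be the most delicate point, and I would handle it exactly as the corresponding step in \cite{Ng_rLSFT}, verifying on the explicit sign formulas that the gluing-site and new-puncture contributions pair off, leaving precisely $\{x,\delta y\}_1-(-1)^{|y|}\{\delta x,y\}_1$.
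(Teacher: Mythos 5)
Your part (a) argues against a different operator than the one defined in this paper. Here $\delta$ vanishes by definition on $\widetilde\cC^2$ and on $\overline\cC$, and inserting a trivial strip creates one new positive (and one new negative) puncture, so $\delta(\widetilde\cC^1)\subset\widetilde\cC^2$; hence $\delta\circ\delta=0$ is immediate, which is exactly what the paper means by ``follows trivially by definition.'' The double sum you expand, $\sum_{\tau_0\neq\tau_1}\epsilon(\gamma,\tau_0)\,\epsilon(\delta(\gamma,\tau_0),\tau_1)\,\delta(\delta(\gamma,\tau_0),\tau_1)$, never arises: each $\delta(\gamma,\tau_0)$ already has two positive punctures and is annihilated by the outer $\delta$. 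Your parity observation (the inserted pair has odd total degree $|\gamma_i^+|+|\gamma_i^-|=-1$) is the right cancellation mechanism for the untruncated operator of rational SFT, but for the operator of this lemma it is moot.

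For part (b) there is a genuine gap at the step you yourself flag as delicate. Your classification of insertion points (inherited from $x$, inherited from $y$, created at the gluing site) and your treatment of the order-independent terms match the paper, which likewise cancels those by reversing the order of the two operations. But your claim that the gluing-site insertions and the gluings along newly created strip punctures ``pair off'' by the mechanism that makes $\delta$ well defined, checkable on explicit sign formulas, does not hold up: a type-(i) term (glue at $(t_i,t_j')$, then insert at a concatenation point, where the glued string passes through an endpoint of the gluing chord) and a type-(ii)/(iii) term (insert a strip into $y$ at an interior point $s$ with $y(s)=x(t_i^\pm)$, then glue $x$'s end to a strip puncture) are different broken strings indexed by different data, and the multiplicity of the latter is the number of crossings of $y$ over the point $x(t_i^\pm)$ --- a global count that no local sign identity controls. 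The paper's proof supplies precisely the missing idea: it forms the compact $1$-manifold $P=\bigsqcup_{i,j}\overline P_{i,j}/_\sim$ of intersection times $x(t)=y(s)$ (cut out transversely from the diagonal in $\Lambda\times\Lambda$, compactified by the boundary sets $A_{i,j},B_{i,j},C_{i,j}$ and glued along the pairs $D_{i,j}$ using generic asymptotic behavior), and decorates its components by broken strings via the locally constant map $\varphi$; the three special term types are exactly the decorated boundary points, and they cancel in pairs because each component of $P$ has two ends carrying equivalent decorations, with opposite contributions in $\delta\{x,y\}_1+\{\delta x,y\}_1+\{x,\delta y\}_1$. Without this one-parameter family (or the equivalent step in the cited proof of Ng, to which you defer but whose mechanism you replace by an incorrect one), the counts of type-(i) versus type-(ii)/(iii) terms need not match, so your proposed bookkeeping cannot close.
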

\begin{proof}
The first property follows trivially by definition. The second property is proven in \cite[Proposition 3.8]{Ng_rLSFT}. We give another proof based on a similar idea. We omit the signs, which are easy to check. See also Lemma \ref{Lemma:signs_FH} for an extension to strings with a marked point.

Let $x:\lS^1\backslash\{t_1,\dots,t_k\}\to\Lambda$ and $y:\lS^1\backslash\{s_1,\dots,s_l\}\to\Lambda$ be strings in $\widetilde\cC^1$ and $x_i:(t_i,t_{i+1})\to\Lambda,y_j:(s_j,s_{j+1})\to\Lambda$ their restrictions to arcs $i\in\{1,\dots,k\},j\in\{1,\dots,l\}$. We can assume that $x$ and $y$ have distinct critical values in $\Lambda$. Then for $\Delta\subset\Lambda\times\Lambda$ the diagonal, the map $(x_i,y_j):(t_i,t_{i+1})\times (s_j,s_{j+1})\to\Lambda\times\Lambda$ is transverse to $\Delta$ and we have a 1-dimensional manifold
\begin{align*}
P_{i,j}=(x_i,y_j)^{-1}\Delta\subset (t_i,t_{i+1})\times(s_j,s_{j+1})\subset \lS^1\times\lS^1
\end{align*}
that can be compactified by adding points
\begin{align*}
A_{i,j}=\{(t_i,s)\,|\,y_j(s)=x_i(t_i^+),s\in(s_j,s_{j+1})\}\cup\{(t_{i+1},s)\,|\,y_j(s)=x_i(t_{i+1}^-),s\in(s_j,s_{j+1})\},
\end{align*}
\begin{align*}
B_{i,j}=\{(t,s_j)\,|\,x_i(t)=y_j(s_j^+),t\in(t_i,t_{i+1})\}\cup\{(t,s_{j+1})\,|\,x_i(t)=y_j(s_{j+1}^-),t\in(t_i,t_{i+1})\},
\end{align*}
and 
\begin{align*}
C_{i,j}=&\left\{(t_i,s_{j+1})\,\middle|\,x_i(t_i^+)=y_j(s_{j+1}^-),\frac{x_i'(t_i^+)}{\|x_i'(t_i^+)\|}=\frac{y_j'(s_{j+1}^-)}{\|y_j'(s_{j+1}^-)\|}\right\}\cup\\
\cup&\left\{(t_{i+1},s_j)\,\middle|\,x_i(t_{i+1}^-)=y_j(s_j^+),\frac{x_i'(t_{i+1}^-)}{\|x_i'(t_{i+1}^-)\|}=\frac{y_j'(s_j^+)}{\|y_j'(s_j^+)\|}\right\},
\\
D_{i,j}=&\left\{(t_i,s_j)\,\middle|\,x_i(t_i^+)=y_j(s_j^+),\frac{x_i'(t_i^+)}{\|x_i'(t_i^+)\|}=\frac{y_j'(s_j^+)}{\|y_j'(s_j^+)\|}\right\}\cup\\
\cup&\left\{(t_{i+1},s_{j+1})\,\middle|\,x_i(t_{i+1}^-)=y_j(s_{j+1}^-),\frac{x_i'(t_{i+1}^-)}{\|x_i'(t_{i+1}^-)\|}=\frac{y_j'(s_{j+1}^-)}{\|y_j'(s_{j+1}^-)\|}\right\}.
\end{align*}
We notice that the points in $\bigsqcup_{i,j} D_{i,j}$ come in pairs because of the generic asymptotic behavior at the punctures. We look at the 1-dimensional manifold with boundary $P=\bigsqcup_{1\leq i\leq k,1\leq j\leq l} \overline P_{i,j}/_\sim$ glued at the pairs of boundary points $\bigsqcup_{i,j}D_{i,j}$.

We can decorate the connected components of $P$ with broken closed strings as follows. For any $\tau=(s,t)\in P_{i,j}$ in the interior, we glue $x$ and $y$ at $t$ and $s$ into a broken closed string which we denote by $x{}_t{*}_s y$. At a boundary point in $A_{i,j}$, we first insert a trivial strip into $y$ at the point $s$, and then glue the positive (or negative) end of $x$ at $t_i$ or $t_{i+1}$ with the newly created negative (or positive) end on $y$. Similar works for boundary point in $B_{i,j}$. For a boundary point in $C_{i,j}$, we first glue the two strings at the punctures corresponding to the boundary point and then insert a trivial strip at the newly created crossing with the Reeb chord endpoint. 

It is not difficult to see that the map $\varphi:\{\text{Connected components of }P\}\to\widetilde\cC$ is well defined, see Figure \ref{Fig:decoration_AB} and Figure \ref{Fig:decoration_C}. 
\begin{figure}
\def\svgwidth{109mm}
\begingroup%
  \makeatletter%
  \providecommand\rotatebox[2]{#2}%
  \newcommand*\fsize{\dimexpr\f@size pt\relax}%
  \newcommand*\lineheight[1]{\fontsize{\fsize}{#1\fsize}\selectfont}%
  \ifx\svgwidth\undefined%
    \setlength{\unitlength}{553.7258976bp}%
    \ifx\svgscale\undefined%
      \relax%
    \else%
      \setlength{\unitlength}{\unitlength * \real{\svgscale}}%
    \fi%
  \else%
    \setlength{\unitlength}{\svgwidth}%
  \fi%
  \global\let\svgwidth\undefined%
  \global\let\svgscale\undefined%
  \makeatother%
  \begin{picture}(1,0.88837339)%
    \lineheight{1}%
    \setlength\tabcolsep{0pt}%
    \put(0,0){\includegraphics[width=\unitlength,page=1]{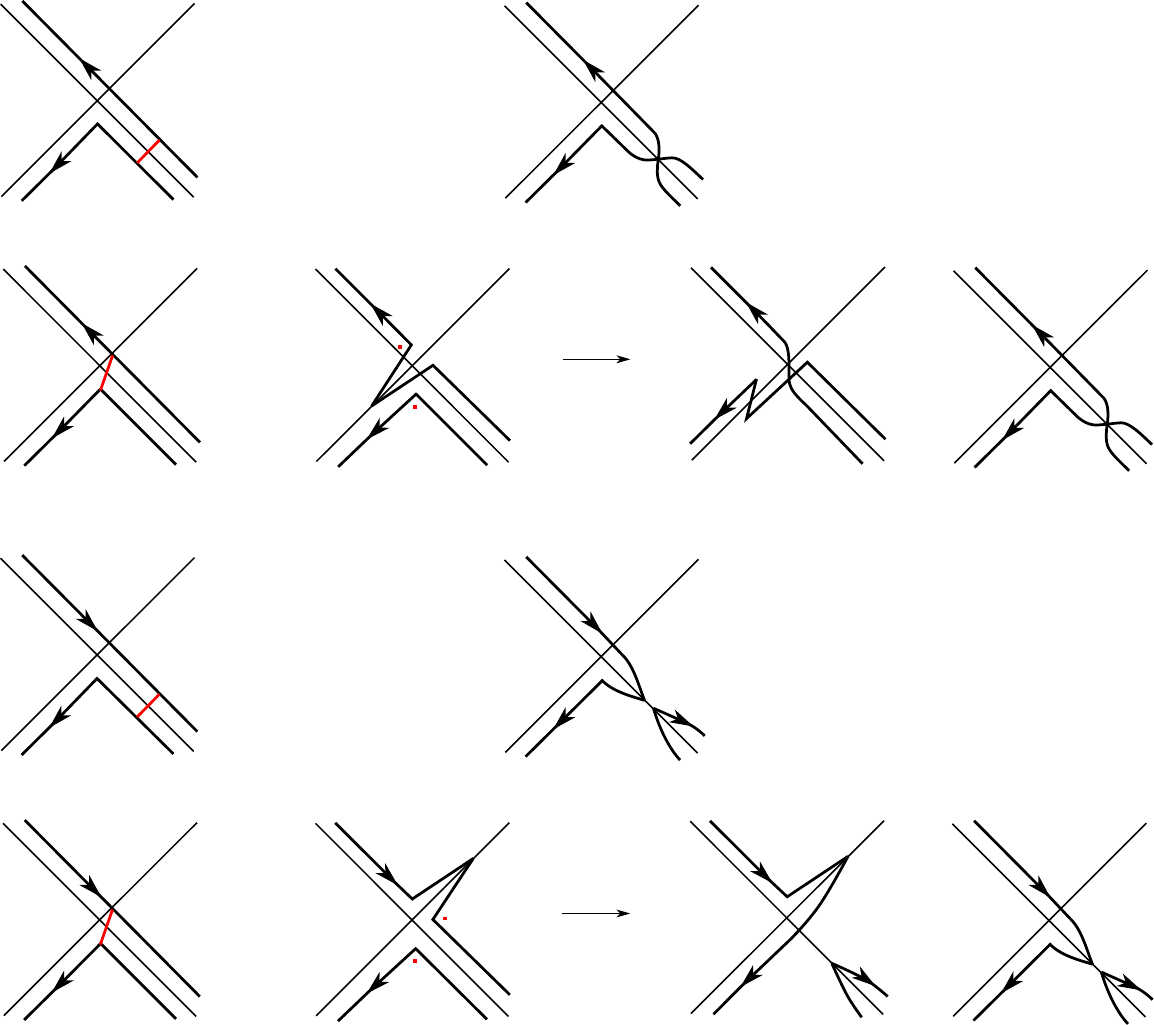}}%
    \put(0.78556592,0.56107189){\makebox(0,0)[lt]{\lineheight{1.25}\smash{\begin{tabular}[t]{l}$\cong$\end{tabular}}}}%
    \put(0.78352432,0.08355828){\makebox(0,0)[lt]{\lineheight{1.25}\smash{\begin{tabular}[t]{l}$\cong$\end{tabular}}}}%
    \put(0.21502392,0.79746311){\makebox(0,0)[lt]{\lineheight{1.25}\smash{\begin{tabular}[t]{l}:\end{tabular}}}}%
    \put(0.21511419,0.56239559){\makebox(0,0)[lt]{\lineheight{1.25}\smash{\begin{tabular}[t]{l}:\end{tabular}}}}%
    \put(0.21511419,0.31481943){\makebox(0,0)[lt]{\lineheight{1.25}\smash{\begin{tabular}[t]{l}:\end{tabular}}}}%
    \put(0.21511417,0.08465767){\makebox(0,0)[lt]{\lineheight{1.25}\smash{\begin{tabular}[t]{l}:\end{tabular}}}}%
  \end{picture}%
\endgroup%
\caption{Map $\varphi$ near boundary points $A_{i,j}$, $B_{i,j}$.}
\label{Fig:decoration_AB}
\end{figure}
\begin{figure}
\def\svgwidth{90mm}
\begingroup%
  \makeatletter%
  \providecommand\rotatebox[2]{#2}%
  \newcommand*\fsize{\dimexpr\f@size pt\relax}%
  \newcommand*\lineheight[1]{\fontsize{\fsize}{#1\fsize}\selectfont}%
  \ifx\svgwidth\undefined%
    \setlength{\unitlength}{426.7556192bp}%
    \ifx\svgscale\undefined%
      \relax%
    \else%
      \setlength{\unitlength}{\unitlength * \real{\svgscale}}%
    \fi%
  \else%
    \setlength{\unitlength}{\svgwidth}%
  \fi%
  \global\let\svgwidth\undefined%
  \global\let\svgscale\undefined%
  \makeatother%
  \begin{picture}(1,0.52450081)%
    \lineheight{1}%
    \setlength\tabcolsep{0pt}%
    \put(0,0){\includegraphics[width=\unitlength,page=1]{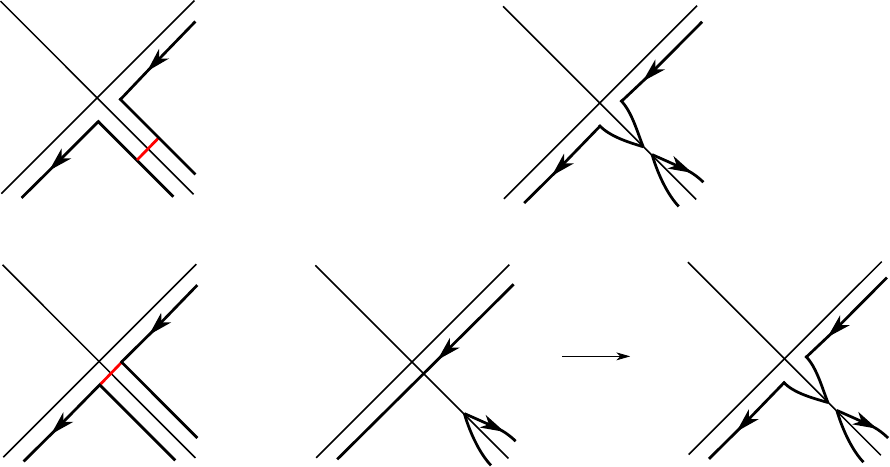}}%
    \put(0.27885893,0.40982409){\makebox(0,0)[lt]{\lineheight{1.25}\smash{\begin{tabular}[t]{l}:\end{tabular}}}}%
    \put(0.27897601,0.10481821){\makebox(0,0)[lt]{\lineheight{1.25}\smash{\begin{tabular}[t]{l}:\end{tabular}}}}%
  \end{picture}%
\endgroup%
\caption{Map $\varphi$ near boundary points $C_{i,j}$.}
\label{Fig:decoration_C}
\end{figure}
Moreover, strings in 
$$\sum_{\alpha\in\partial P}\varphi(\alpha)$$ 
correspond to summands in
$$\delta\{x,y\}_1+\{\delta x,y\}_1+\{x,\delta y\}_1$$
that arise in one of the following ways
\begin{itemize}
\item[i)] after gluing $x$ and $y$ at a gluing pair, we insert a trivial strip at the newly created crossing of the glued curve with the Reeb chord endpoint;
\item[ii)] after inserting a trivial strip into $x$, we glue the newly created puncture to a puncture in $y$;
\item[iii)] after inserting a trivial strip into $y$, we glue the newly created puncture to a puncture in $x$.
\end{itemize} 
Using the fact that $\varphi$ is well defined, we conclude that these summands cancel out. It is easy to see that all the other summands in $\delta\{x,y\}_1+\{\delta x,y\}_1+\{x,\delta y\}_1$ come in pairs and cancel out since the order of the operations for these summands can be reversed. This finishes the proof when $x,y\in\widetilde\cC^1$. For $x$ or $y$ in $\widetilde\cC^2$ or $\overline\cC$, the statement follows trivially.
\end{proof}
\subsubsection{Nodal annulus contribution---corrected loop coproduct}\label{Sec:loop_coproduct}
In this section, we define the corrected loop coproduct $\nabla:\cC\to\cC$. Here $\nabla$ does not vanish only on strings with one positive puncture.

Let $u:\lD\backslash\{t_1,\dots,t_m\}\to\lR^4$ be a smooth map. We say disk $u$ is positively asymptotic to a Reeb chord $\gamma\in\cR$ at a puncture $t_i,i\in\{1,\dots,m\}$ if for $\phi:[0,\infty)\times[0,1]\to\lD\backslash\{t_1,\dots,t_m\}$ a holomorphic parameterization of a neighborhood of $t_i$ and some $\alpha_i\in\lR$, 
$$u\circ\phi(s+R,t)|_{(0,\infty)\times[0,1]}-(l_\gamma R+\alpha_i)\partial_r$$ 
$C^1$-converges to the trivial strip $(s,t)\to (l_\gamma s,\gamma(t))$ over $\gamma$ as $R\to\infty$. Similarly, it is negatively asymptotic to $\gamma$ if $u\circ\phi(s+R,t)|_{(0,\infty)\times[0,1]}+(l_\gamma R+\alpha_i)\partial_r$ converges to $(s,t)\to (-l_\gamma s,\gamma(1-t))$ as $R\to\infty$. 

A \textit{punctured disk} on $L=\lR\times\Lambda$ is defined as a smooth map $u:\lD\backslash\{t_1,\dots,t_m\}\to\lR^4$ that is positively or negatively asymptotic to some Reeb chord on $\Lambda$ at each puncture $t_i$, with boundary mapped to $L=\lR\times\Lambda$, such that there exist neighborhoods $U_i\subset\lD\backslash\{t_1,\dots,t_k\}$ of $t_i$ such that $u$ is an embedding when restricted to $\bigsqcup_{i=1}^m \partial U_i$ and $\pi_{xy}\circ u|_{U_i}$ is a local embedding for all $i$.

For a punctured disk $u$ on $L$, we define generic asymptotic and generic relative asymptotic behavior same as before, see Definition \ref{Def:generic_asym_beh} and Definition \ref{Def:generic_rel_asym_beh}.

\begin{defi}
A punctured disk $u$ on $L$ is called \textit{admissible} if its restriction to the boundary is an immersion, it has generic asymptotic behavior at every positive or negative puncture and generic relative asymptotic behavior at all pairs of positive and pairs of negative punctures asymptotic to the same Reeb chord. 
\end{defi}

Let $u$ be an admissible punctured disk with one positive puncture. The boundary of $u$ is immersed and, after a small perturbation away from the punctures, we can assume that the self-intersections of $\overline\gamma\coloneq u|_\partial\subset\lR\times\Lambda=L$ are transverse. We say a self-intersection $A$ of $\overline\gamma$ is positive if the tangent vectors to $\overline\gamma$ at $A$ in the order of appearance starting from the positive puncture form a positive basis in $T_AL$. If the intersection $A$ is positive, we define $\epsilon(A)=1$, and $-1$ otherwise.

Consider the map $\operatorname{sh}(u):\lD\backslash\{t_1,\dots,t_k\}\to\lR^4$ obtained from $u$ by taking a small shift near the boundary in direction $J\overline\gamma'(t),t\in\lS^1\backslash\{t_1,\dots,t_k\}$. We can assume that $\operatorname{sh}(u)$ intersects $L$ transversally by perturbing away from the boundary. Note that $J\overline\gamma'(t)\not\in TL$ since $L$ is a Lagrangian submanifold and $\overline\gamma'\neq 0$, therefore, the boundary of $\operatorname{sh}(u)$ does not intersect $L$. We say an intersection $B$ of the image of $\operatorname{sh}(u)$ and the Lagrangian cylinder $L$ is positive if a positive basis of $L$ and a positive basis of $\operatorname{sh}(u)$ at $B\in\operatorname{sh}(u)\cap L\subset\lR^4$ form a positive basis in $\lR^4$. If the intersection $B$ is positive, we define $\epsilon(B)=1$, and $-1$ otherwise.

\vspace{2.1mm}
Next, we define the corrected loop coproduct $\nabla:\cC\to\cC$. Let $\gamma:\lS^1\backslash\{t_1,\dots,t_k\}\to\Lambda$ be a string in $\widetilde\cC^1$ with a positive puncture without loss of generality at $t_k$, and $u:\lD\backslash\{t_1,\dots,t_k\}\to\lR^4$ an admissible punctured disk on $L$ such that $\pi_{xyz}\circ u|_\partial=\gamma$. Denote $\overline\gamma=u|_\partial$. We can assume that the self-intersections of $\overline\gamma\subset L$ are transverse and that $\operatorname{sh}(u)$ is transverse to $L$ as above. 

For every self-intersection $A$ of $\overline\gamma$, we define a pair of strings
$$\nabla_1(u,A)=\pi_{xyz}\circ\overline\gamma_1\otimes \pi_{xyz}\circ\overline\gamma_2,$$
where $\overline\gamma_1,\overline\gamma_2$ are two punctured loops on $L$ obtained from $\overline\gamma$ by resolving the self-intersection $A$ as shown in Figure \ref{Fig:resolv_intersect} (here $\overline \gamma_1$ contains the positive puncture). If $A$ is in the intersection of the arcs $\overline\gamma(t_i,t_{i+1})$ and $\overline\gamma(t_j,t_{j+1})$ for $i\leq j$, the ordering of the punctures on $\pi_{xyz}\circ\overline\gamma_2$ is given by $i+1,\dots, j$ and on $\pi_{xyz}\circ\overline\gamma_1$ by $1,\dots,i,j+1,\dots,k$.
\begin{figure}
\def\svgwidth{76mm}
\begingroup%
  \makeatletter%
  \providecommand\rotatebox[2]{#2}%
  \newcommand*\fsize{\dimexpr\f@size pt\relax}%
  \newcommand*\lineheight[1]{\fontsize{\fsize}{#1\fsize}\selectfont}%
  \ifx\svgwidth\undefined%
    \setlength{\unitlength}{334.89893964bp}%
    \ifx\svgscale\undefined%
      \relax%
    \else%
      \setlength{\unitlength}{\unitlength * \real{\svgscale}}%
    \fi%
  \else%
    \setlength{\unitlength}{\svgwidth}%
  \fi%
  \global\let\svgwidth\undefined%
  \global\let\svgscale\undefined%
  \makeatother%
  \begin{picture}(1,0.34350899)%
    \lineheight{1}%
    \setlength\tabcolsep{0pt}%
    \put(0,0){\includegraphics[width=\unitlength,page=1]{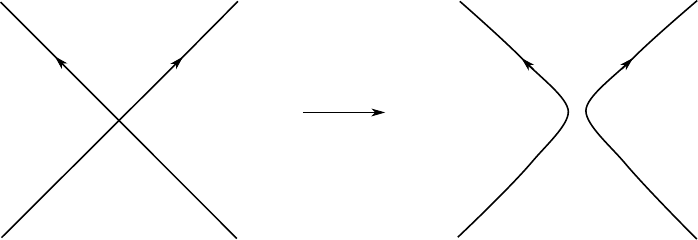}}%
  \end{picture}%
\endgroup%
\caption{Resolving a string at a self-intersection.}
\label{Fig:resolv_intersect}
\end{figure}
Furthermore, for every intersection $B$ between the Lagrangian cylinder $L$ and the shifted map $\operatorname{sh}(u)$ as before, we define a pair of strings
$$\nabla_2(u,B)=\gamma\otimes \pi_{xyz}(B),$$
where $\pi_{xyz}(B)\in\widetilde\cC^0$ is the constant string.

Now we define a degree $-1$ linear map $\nabla:\cC\to\cC$ as follows. For $\gamma\in\widetilde\cC^1$ a string and $u$ an admissible punctured disk such that $\pi_{xyz}\circ u|_\partial=\gamma$, we define
\begin{align*}
&\nabla(\gamma)=\nabla(u)=\sum_A
\epsilon_1(u,A)\nabla_1(u,A)+\sum_B\epsilon_2(u,B)\nabla_2(u,B),
\end{align*}
where the first sum goes over all self-intersections $A$ of $u|_\partial$ and the second sum goes over all intersections $B$ between $L$ and the shifted map $\operatorname{sh}(u)$. The signs are given by 
\begin{align*}
&\epsilon_1(u,A)=(-1)^{\left(P(\gamma,t_{i+1})+1\right)|\gamma_2|+1}\epsilon(A),\\
&\epsilon_2(u,B)=\epsilon(B),
\end{align*}
where $\epsilon(A),\epsilon(B)$ are the signs of the intersection points $A,B$, $t_i$ is the puncture of $\gamma$ such that $A$ is in the intersection of arcs $u|_\partial(t_i,t_{i+1})$ and $ u|_\partial(t_j,t_{j+1})$ for $i\leq j$, and $\gamma_2=\pi_{xyz}\circ\overline\gamma_2$ is the second string in $\nabla_1(u,A)$. Additionally, for $\gamma\in\widetilde\cC^2$ and $\gamma\in\overline\cC$, we define $\nabla(\gamma)=0$.
\begin{lemma}\label{Lemma:corrected_intersection_class}
The map $\nabla:\cC\to\cC$ is well defined, i.e. $\nabla(\gamma)\in\cC$ does not depend on the representative of the broken closed string $\gamma$ and the choice of an admissible punctured disk $u$ such that $\pi_{xyz}\circ u|_\partial=\gamma$.
\end{lemma}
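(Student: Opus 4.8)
The plan is to prove that the element $\nabla(u)\in\cC$ is unchanged under a generic one-parameter deformation of the data, and then to realize every ambiguity in the statement as such a deformation. First note that, since $\lR^4$ is contractible and $\pi_2(\lR^4)=0$, an admissible filling $u$ is determined up to homotopy rel $\partial\lD$ by its boundary $\overline\gamma=u|_\partial$; moreover, under such a homotopy the interior intersections of $\operatorname{sh}(u)$ with $L$ are created and destroyed only in pairs of opposite sign $\epsilon(B)$, so $\nabla(u)$ depends only on $\overline\gamma$. It therefore suffices to prove independence of the boundary lift $\overline\gamma\subset L$ of $\gamma$. Two such lifts differ by a choice of the symplectization ($\lR$-)coordinate together with a homotopy of $\gamma$ in $\Lambda$ and the change-of-asymptotic-behavior move of Figure \ref{Fig:simult_change}, and all of these can be realized by a path $\overline\gamma_s$, $s\in[0,1]$, of boundary loops in $L$ with corresponding fillings $u_s$. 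Hence it is enough to show that $s\mapsto\nabla(u_s)$ is locally constant for a generic such path.

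I would next choose the path generic, so that $\overline\gamma_s$ is a family of immersed loops in the surface $L$ with only transverse double points and $\operatorname{sh}(u_s)\pitchfork L$ for all but finitely many parameters. Between consecutive exceptional parameters the double points $A$ and the intersection points $B$ move smoothly, the resolved strings $\nabla_1(u_s,A)$ and $\nabla_2(u_s,B)$ stay in fixed classes of $\cC$, and the signs $\epsilon(A),\epsilon(B),\epsilon_1,\epsilon_2$ are locally constant, so $\nabla(u_s)$ is constant there. The exceptional parameters are the usual codimension-one degenerations, treated one at a time. A Reidemeister II move among the strands of $\overline\gamma_s$ creates or cancels a pair of double points inside a bigon carrying opposite signs $\epsilon(A)$ and equal puncture data, hence opposite signs $\epsilon_1$ and homotopic resolved pairs, which cancel in $\sum_A$. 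A Reidemeister III (triple point) event permutes three double points; here I would verify, by the same sign manipulation as in Lemma \ref{Lemma:PropertiesReduced}, that the three resolved terms reassemble into the same element of $\cC$. A double point $A$ sweeping across a puncture $t_i$ changes the partition of $\overline\gamma_s$ into $\overline\gamma_1,\overline\gamma_2$ exactly by the move of Figure \ref{Fig:simult_change}, which is a defining relation of $\cC$, so the class is preserved. Finally, a pair of intersection points $B$ of $\operatorname{sh}(u_s)$ with $L$ that is born or dies contributes two copies of $\gamma\otimes\pi_{xyz}(B)$ with opposite signs $\epsilon(B)$, cancelling in $\sum_B$.

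The one remaining degeneration is the coupled event, which is the reason for the correction term $\nabla_2$. It occurs when a double point $A$ whose resolved sub-loop $\overline\gamma_2$ is contractible shrinks to a self-tangency of $\overline\gamma_s$: the small loop collapses onto a point $p\in L$, so $A$ disappears while $\pi_{xyz}\circ\overline\gamma_2$ tends to the constant string at $\pi_{xyz}(p)$ and $\pi_{xyz}\circ\overline\gamma_1$ tends to $\gamma$. Because $L$ is Lagrangian, $J\overline\gamma'\notin TL$, so the shifted collar $\operatorname{sh}(u_s)$ meets $L$ transversally in a fixed co-orientation; I would show that precisely at this instant a single transverse intersection $B$ of $\operatorname{sh}(u_s)$ with $L$ appears (or disappears) at $p$. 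Thus the jump of $\sum_A$ by $\pm\,\gamma_1\otimes\gamma_2$, which in the limit equals $\pm\,\gamma\otimes(\text{constant string})$, is matched by the opposite jump of $\sum_B$ by $\mp\,\gamma\otimes\pi_{xyz}(B)$, so that $\nabla(u_s)$ remains continuous across the event.

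I expect the main obstacle to be the sign verification in this coupled event: one must check that the collapse sign $\epsilon_1(u,A)=(-1)^{(P(\gamma,t_{i+1})+1)|\gamma_2|+1}\epsilon(A)$, evaluated as $|\gamma_2|\to 0$ and $\gamma_1\to\gamma$, is the negative of $\epsilon_2(u,B)=\epsilon(B)$ once the co-orientation of $\operatorname{sh}(u)$ along $J\overline\gamma'$ is matched with the transverse sign of the vanishing double point $A$. This ties together the orientation of the shrinking loop, the sign of the new intersection $B$, and the puncture-ordering conventions, and together with the Reidemeister III reorganization it is the only step that is not a routine application of the string-topology cancellation arguments.
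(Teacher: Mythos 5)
Your overall strategy is the same as the paper's: connect the two choices by a generic one-parameter family of admissible disks and check that $\nabla$ is unchanged at each codimension-one event. Your treatment of three of the four events matches the paper's singularity types: the self-tangency of $\overline\gamma_s$ (paper's type (ii), cancelling pair of double points $A$), the non-transverse intersection of $\operatorname{sh}(u_s)$ with $L$ (type (iii), cancelling pair of points $B$, using that all constant strings are homotopic), and, most importantly, the coupled event where a small loop collapses (paper's type (i), where $\overline\gamma_{s_0}$ fails to be immersed --- a cusp, not a self-tangency as you call it): there the paper verifies, via the local model $\widetilde\gamma_s(t)=t^2\partial_\tau+(t^3-(s-s_0)t)\partial_r$ and the explicit basis computation for the shifted collar, exactly the sign matching $\nabla_1(u_{s_0+\theta},A)=\nabla_2(u_{s_0-\theta},B)$ that you flag as the main obstacle. (Your Reidemeister III worry is superfluous: at a triple point no double points are born or die, and each resolution deforms continuously, so nothing jumps --- this is why the paper does not list it as a singularity type.)

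The genuine gap is your handling of the puncture event. You dispose of it in one sentence --- ``a double point $A$ sweeping across a puncture $t_i$ changes the partition \ldots exactly by the move of Figure \ref{Fig:simult_change}, which is a defining relation of $\cC$'' --- but this is not what happens, and the single-puncture equivalence move cannot do the work. A boundary double point cannot continuously cross a puncture: the ends of $\overline\gamma_s$ escape to $r=\pm\infty$, and what actually occurs (the paper's type (iv)) is a change of the \emph{relative} asymptotic behavior of two punctures $t_j,t_k$ asymptotic to the same Reeb chord $\gamma_\iota$, at which a \emph{pair} of boundary intersections $A,A'$ is created or destroyed simultaneously, one near each boundary arc of the shared quadrant (Figure \ref{Fig:asympt_degeneration}). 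Showing that their two $\nabla_1$-contributions cancel is a nontrivial computation: the second strings of $\nabla_1(u,A_1)$ and $\nabla_1(u,A_2)$ differ by a cyclic shift of one puncture, so they agree in $\cC$ only up to the Koszul sign $(-1)^{|\gamma_\iota^-|(|\gamma_2|+|\gamma_\iota^-|)}$, while the paper shows $\epsilon(A)=-\epsilon(A')$ for $|\gamma_\iota|$ even and $\epsilon(A)=\epsilon(A')$ for $|\gamma_\iota|$ odd, and the coefficients $\epsilon_1(u,A_i)$ differ through $P(\gamma,t_j)$ versus $P(\gamma,t_{j+1})$; only the combination of these three sign discrepancies yields cancellation. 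This is precisely the step that forces the graded cyclic equivalence built into $\cC$ (and the $P$-dependence in the definition of $\epsilon_1$), and it is also where bad strings would obstruct the argument; your proposal neither identifies the paired nature of the event nor performs this sign bookkeeping, so as written the proof is incomplete at exactly this point.
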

\begin{proof}
Let $\gamma,\widetilde\gamma$ be two representatives of a broken closed string in $\widetilde\cC^1$ and $u,\widetilde u$ admissible punctured disks such that $\pi_{xyz}\circ u|_\partial=\gamma$ and $\pi_{xyz}\circ \widetilde u|_\partial=\widetilde\gamma$ that satisfy the transversality conditions above. We can find a generic smooth isotopy $u_s,s\in[0,1]$ from $u$ to $\widetilde u$ such that $\pi_{xyz}\circ u_s|_\partial$ is a broken closed string and $u_s$ is an admissible punctured disk satisfying the transversality conditions as before except for finitely many singular points $s_0\in(0,1)$ of the following four types
\begin{enumerate}
\item[i)] $\gamma_{s_0}$ is not immersed,
\item[ii)] $\gamma_{s_0}$ has a non-transverse self-intersection,
\item[iii)] $\operatorname{sh}(u_{s_0})$ has a non-transverse intersection with $L$,
\item[iv)] at $s_0$ we have a change of the asymptotic behavior,
\end{enumerate}
where $\gamma_s=u_s|_\partial$. If there are no such singularities in the interval $[s_1,s_2]$, then the isotopy easily gives us $\nabla(u_{s_1})=\nabla(u_{s_2})$.

First, assume $[s_0-\theta,s_0+\theta]$ has one singularity at $s_0$ of the first type. We can see that one boundary self-intersection appears/disappears while one interior intersection disappears/appears (see Figure \ref{Fig:wind_degeneration}). More precisely, since the chosen isotopy is generic, we can model a neighborhood of the singular boundary point by $\widetilde\gamma_s(t)=t^2\partial_\tau+(t^3-(s-s_0)t)\partial_r$ in local coordinates $(r,\tau)$ on $L$ (or reversed). The boundaries of the shifted maps $\operatorname{sh}(u_{s}),s\in[s_0-\theta,s_0+\theta]$ in this neighborhood are given by 
$$B(s,t)=\widetilde\gamma_s(t)+\varepsilon J\widetilde\gamma'_s(t)=t^2\partial_\tau+(t^3-(s-s_0)t)\partial_r+2\varepsilon tJ\partial_\tau+(3\varepsilon t^2-\varepsilon(s-s_0))J\partial_r,$$ 
for $\varepsilon>0$ a small constant. The image of the map $B(s,t)$ gives us a cobordism between the boundaries of $\operatorname{sh}(u_{s_0-\theta})$ and $\operatorname{sh}(u_{s_0+\theta})$, therefore, the change of the interior intersection number is equal to the count of the intersections between $L$ and $B$. It is easy to see that they intersect only at $(s,t)=(s_0,0)$. Moreover, this intersection is negative since the vectors $\partial_r, \partial_\tau,\partial_tB(s_0,0)=2\varepsilon J\partial_\tau,\partial_sB(s_0,0)=-\varepsilon J\partial_r$ form a negative basis in $\lR^4$. This shows that one negative interior intersection disappears in the isotopy, which we denote by $B$.  Additionally, it is clear that one positive boundary self-intersection appears, which we denote by $A$. Then, we have (see Figure \ref{Fig:wind_degeneration})
\begin{align*}
\nabla(u_{s_0-\theta})-\nabla(u_{s_0+\theta})=-\nabla_2(u_{s_0-\theta},B)+\nabla_1(u_{s_0+\theta},A)=0.
\end{align*}

If $s_0$ is a singularity of the second type, clearly two boundary self-intersections of opposite signs appear or disappear. Then we have (see Figure \ref{Fig:trans_knot_degeneration})
\begin{align*}
\nabla(u_{s_0-\theta})-\nabla(u_{s_0+\theta})=\pm\left(\nabla_1(u_{s_0-\theta},A_1)-\nabla_1(u_{s_0-\theta},A_2)\right)=0.
\end{align*}

Similarly if $s_0$ is a singularity of the third type, we have (see Figure \ref{Fig:trans_cyl_degeneration})
\begin{align*}
\nabla(u_{s_0-\theta})-\nabla(u_{s_0+\theta})=\pm\left(\nabla_2(u_{s_0-\theta},B_1)-\nabla_2(u_{s_0-\theta},B_2)\right)=0.
\end{align*}

Finally, assume $s_0$ is a singularity of the fourth type, i.e. we have a change of the relative asymptotic behavior at two punctures $t_j,t_k,j< k$ asymptotic to a Reeb chord $\gamma_\iota$. This can be seen as two boundary intersections $A,A'$ simultaneously appearing/disappearing at the two ends of $t_j,t_k$, see Figure \ref{Fig:asympt_degeneration}. It is not difficult to see that $\epsilon(A)=-\epsilon(A')$ if $|\gamma_\iota|$ is even and $\epsilon(A)=\epsilon(A')$ if $|\gamma_\iota|$ is odd. Additionally, the order of the punctures of the second word of $\nabla_1(u_{s_0+\theta},A_1)$ is shifted by one compared to the second word of $\nabla_1(u_{s_0+\theta},A_2)$. In particular, we have 
$$\nabla_1(u_{s_0+\theta},A_1)=(-1)^{|\gamma_\iota^-|(|\gamma_2|+|\gamma_\iota^-|)}\nabla_1(u_{s_0+\theta},A_2)$$ 
in $\overline\cC$. Additionally, 
$$\epsilon_1(u_{s_0+\theta},A_1)=-(-1)^{(P(\gamma_{s_0},t_j)+1)|\gamma_2|}\epsilon(A_1)$$ 
and 
$$\epsilon_1(u_{s_0+\theta},A_2)=-(-1)^{(P(\gamma_{s_0},t_{j+1})+1)|\gamma_2|}\epsilon(A_2)=-(-1)^{(P(\gamma_{s_0},t_{j})+1)|\gamma_2|+|\gamma_\iota^-||\gamma_2|+|\gamma_\iota^-|+1}\epsilon(A_1).$$ 
Then we have (see Figure \ref{Fig:asympt_degeneration})
\begin{align*}
\nabla(u_{s_0-\theta})-\nabla(u_{s_0+\theta})=\pm\left(\nabla_1(u_{s_0+\theta},A_1)+(-1)^{|\gamma_\iota^-||\gamma_2|+|\gamma_\iota^-|+1}\nabla_1(u_{s_0+\theta},A_2)\right)=0,
\end{align*}
which finishes the proof.
\end{proof}

\begin{figure}
\def\svgwidth{70mm}
\begingroup%
  \makeatletter%
  \providecommand\rotatebox[2]{#2}%
  \newcommand*\fsize{\dimexpr\f@size pt\relax}%
  \newcommand*\lineheight[1]{\fontsize{\fsize}{#1\fsize}\selectfont}%
  \ifx\svgwidth\undefined%
    \setlength{\unitlength}{414.58593086bp}%
    \ifx\svgscale\undefined%
      \relax%
    \else%
      \setlength{\unitlength}{\unitlength * \real{\svgscale}}%
    \fi%
  \else%
    \setlength{\unitlength}{\svgwidth}%
  \fi%
  \global\let\svgwidth\undefined%
  \global\let\svgscale\undefined%
  \makeatother%
  \begin{picture}(1,0.54034273)%
    \lineheight{1}%
    \setlength\tabcolsep{0pt}%
    \put(0,0){\includegraphics[width=\unitlength,page=1]{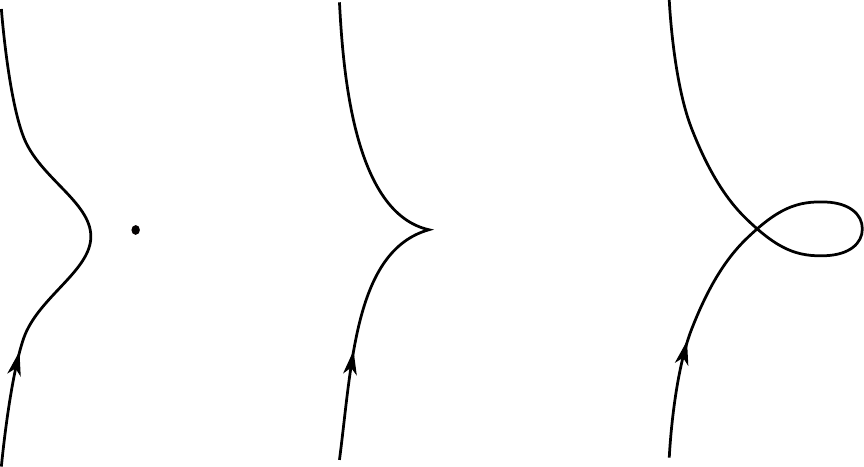}}%
    \put(0.85652708,0.31672838){\makebox(0,0)[lt]{\lineheight{1.25}\smash{\begin{tabular}[t]{l}$A$\end{tabular}}}}%
    \put(0.13934903,0.31230818){\makebox(0,0)[lt]{\lineheight{1.25}\smash{\begin{tabular}[t]{l}$B$\end{tabular}}}}%
  \end{picture}%
\endgroup%
\caption{Singularity of the first type.}
\label{Fig:wind_degeneration}
\end{figure}

\begin{figure}
\def\svgwidth{101mm}
\begingroup%
  \makeatletter%
  \providecommand\rotatebox[2]{#2}%
  \newcommand*\fsize{\dimexpr\f@size pt\relax}%
  \newcommand*\lineheight[1]{\fontsize{\fsize}{#1\fsize}\selectfont}%
  \ifx\svgwidth\undefined%
    \setlength{\unitlength}{650.85231583bp}%
    \ifx\svgscale\undefined%
      \relax%
    \else%
      \setlength{\unitlength}{\unitlength * \real{\svgscale}}%
    \fi%
  \else%
    \setlength{\unitlength}{\svgwidth}%
  \fi%
  \global\let\svgwidth\undefined%
  \global\let\svgscale\undefined%
  \makeatother%
  \begin{picture}(1,0.80829641)%
    \lineheight{1}%
    \setlength\tabcolsep{0pt}%
    \put(0,0){\includegraphics[width=\unitlength,page=1]{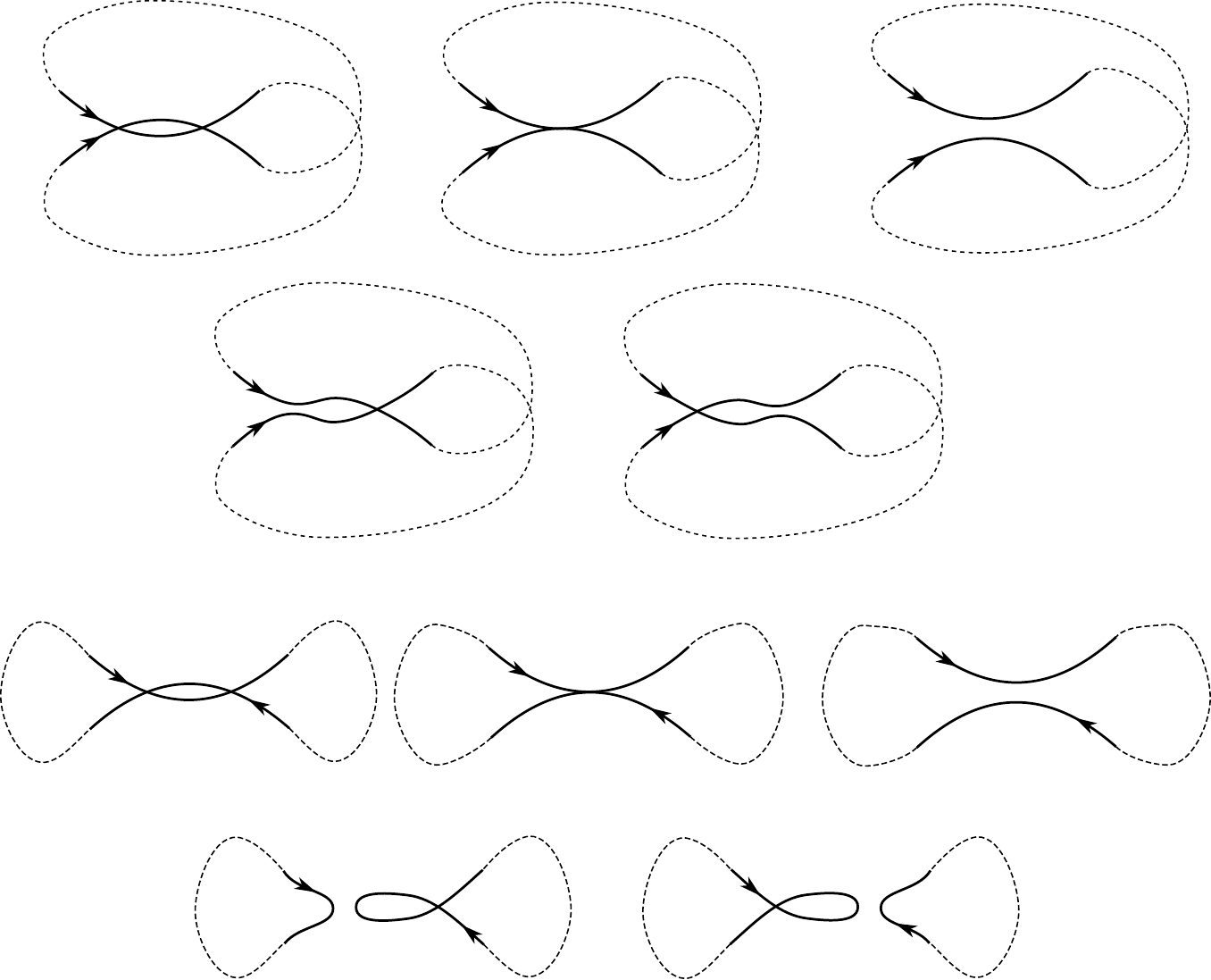}}%
    \put(0.46916431,0.46223683){\makebox(0,0)[lt]{\lineheight{1.25}\smash{\begin{tabular}[t]{l}$\cong$\end{tabular}}}}%
    \put(0.48757857,0.05213678){\makebox(0,0)[lt]{\lineheight{1.25}\smash{\begin{tabular}[t]{l}$\cong$\end{tabular}}}}%
  \end{picture}%
\endgroup%
\caption{Singularity of the second type.}
\label{Fig:trans_knot_degeneration}
\end{figure}

\begin{figure}
\def\svgwidth{90mm}
\begingroup%
  \makeatletter%
  \providecommand\rotatebox[2]{#2}%
  \newcommand*\fsize{\dimexpr\f@size pt\relax}%
  \newcommand*\lineheight[1]{\fontsize{\fsize}{#1\fsize}\selectfont}%
  \ifx\svgwidth\undefined%
    \setlength{\unitlength}{347.73590195bp}%
    \ifx\svgscale\undefined%
      \relax%
    \else%
      \setlength{\unitlength}{\unitlength * \real{\svgscale}}%
    \fi%
  \else%
    \setlength{\unitlength}{\svgwidth}%
  \fi%
  \global\let\svgwidth\undefined%
  \global\let\svgscale\undefined%
  \makeatother%
  \begin{picture}(1,0.24091618)%
    \lineheight{1}%
    \setlength\tabcolsep{0pt}%
    \put(0,0){\includegraphics[width=\unitlength,page=1]{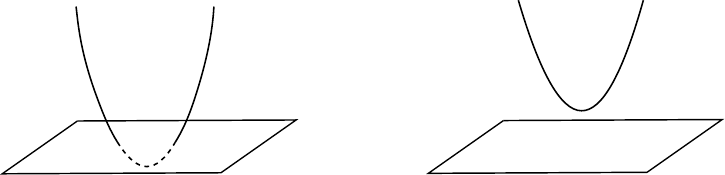}}%
    \put(0.11163284,0.02565332){\makebox(0,0)[lt]{\lineheight{1.25}\smash{\begin{tabular}[t]{l}$B$\end{tabular}}}}%
    \put(0.25071945,0.02552956){\makebox(0,0)[lt]{\lineheight{1.25}\smash{\begin{tabular}[t]{l}$B'$\end{tabular}}}}%
  \end{picture}%
\endgroup%
\caption{Singularity of the third type, $\gamma\otimes \pi_{xyz}(B)\cong\gamma\otimes\pi_{xyz}(B')$.}
\label{Fig:trans_cyl_degeneration}
\end{figure}

\begin{figure}
\def\svgwidth{60mm}
\begingroup%
  \makeatletter%
  \providecommand\rotatebox[2]{#2}%
  \newcommand*\fsize{\dimexpr\f@size pt\relax}%
  \newcommand*\lineheight[1]{\fontsize{\fsize}{#1\fsize}\selectfont}%
  \ifx\svgwidth\undefined%
    \setlength{\unitlength}{343.09611846bp}%
    \ifx\svgscale\undefined%
      \relax%
    \else%
      \setlength{\unitlength}{\unitlength * \real{\svgscale}}%
    \fi%
  \else%
    \setlength{\unitlength}{\svgwidth}%
  \fi%
  \global\let\svgwidth\undefined%
  \global\let\svgscale\undefined%
  \makeatother%
  \begin{picture}(1,0.80103024)%
    \lineheight{1}%
    \setlength\tabcolsep{0pt}%
    \put(0.50086528,0.13789051){\makebox(0,0)[lt]{\lineheight{1.25}\smash{\begin{tabular}[t]{l}$\cong$\end{tabular}}}}%
    \put(0.60429727,0.58751657){\makebox(0,0)[lt]{\lineheight{1.25}\smash{\begin{tabular}[t]{l}$A'$\end{tabular}}}}%
    \put(0.93585195,0.58660195){\makebox(0,0)[lt]{\lineheight{1.25}\smash{\begin{tabular}[t]{l}$A$\end{tabular}}}}%
    \put(0,0){\includegraphics[width=\unitlength,page=1]{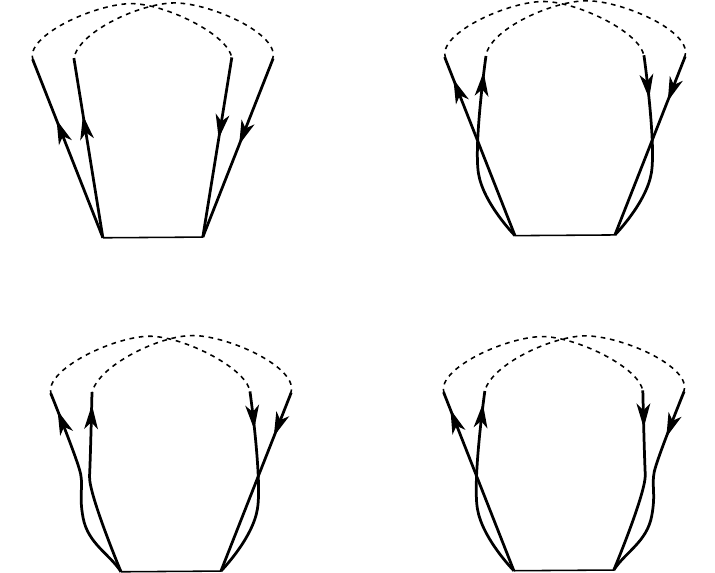}}%
    \put(-0.00208396,0.75309725){\makebox(0,0)[lt]{\lineheight{1.25}\smash{\begin{tabular}[t]{l}\textcolor{white}{.}\end{tabular}}}}%
  \end{picture}%
\endgroup%
\caption{Singularity of the fourth type at an even Reeb chord.}
\label{Fig:asympt_degeneration}
\end{figure}

Finally, we define the string operator.
\begin{defi}
The string operator $d_{\str}:\cC\to\cC$ is defined as
$$d_{\str}=\delta+\nabla.$$
\end{defi}
\subsection{Properties of the SFT bracket and the string operator}\label{Sec:properties}
In this section we discuss some additional properties of the SFT bracket and the string operator.

We trivially get that the string operator squares to zero by definition, since $d_{\str}$ vanishes on $\widetilde\cC^2,\overline\cC$ and $d_{str}(\widetilde\cC^1)\subset\widetilde\cC^2\oplus\overline\cC$.
\begin{lemma}
For all $s\in\cC$ we have $d_{\str}^2(s)=0$.
\end{lemma}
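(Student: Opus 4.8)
The plan is to reduce the identity $d_{\str}^2=0$ to pure bookkeeping with the number of positive punctures, since all of the genuine content (the well-definedness of $\delta$ and $\nabla$) has already been established. First I would record the decomposition $\cC=\widetilde\cC^1\oplus\widetilde\cC^2\oplus\overline\cC$ coming from $\widetilde\cC=\widetilde\cC^1\oplus\widetilde\cC^2$ and $\overline\cC=\widetilde\cC^1\otimes\widetilde\cC^0$, and note that by their very definitions both $\delta$ and $\nabla$ are declared to vanish on $\widetilde\cC^2$ and on $\overline\cC$. Consequently $d_{\str}=\delta+\nabla$ vanishes on the whole summand $\widetilde\cC^2\oplus\overline\cC$.

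Next I would track where $d_{\str}$ sends the remaining summand $\widetilde\cC^1$. Inserting a trivial strip, which is the operation defining $\delta(\gamma,\tau)$, creates exactly one new positive and one new negative puncture, so a string with a single positive puncture is sent to one with two positive punctures; hence $\delta(\widetilde\cC^1)\subset\widetilde\cC^2$. On the other hand, each term of $\nabla$ is of the form $\pi_{xyz}\circ\overline\gamma_1\otimes\pi_{xyz}\circ\overline\gamma_2$ or $\gamma\otimes\pi_{xyz}(B)$, where by construction the first factor retains the unique positive puncture and the second factor carries none; thus $\nabla(\widetilde\cC^1)\subset\widetilde\cC^1\otimes\widetilde\cC^0=\overline\cC$. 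Neither operator has a component landing back in $\widetilde\cC^1$, and combining the two inclusions gives $d_{\str}(\widetilde\cC^1)\subset\widetilde\cC^2\oplus\overline\cC$.

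Finally I would assemble the pieces. Writing an arbitrary $s\in\cC$ as $s=s_1+s_2+\bar s$ with $s_1\in\widetilde\cC^1$, $s_2\in\widetilde\cC^2$ and $\bar s\in\overline\cC$, linearity together with the vanishing of $d_{\str}$ on $\widetilde\cC^2\oplus\overline\cC$ gives $d_{\str}(s)=d_{\str}(s_1)\in\widetilde\cC^2\oplus\overline\cC$, and then the first step applies once more to yield $d_{\str}\bigl(d_{\str}(s)\bigr)=0$. The only thing to verify is the puncture count in each of the two inclusions, which is immediate from the definitions; there is no real obstacle here, precisely because the substantive work---ensuring that $\delta$ and $\nabla$ descend to $\cC$ independently of the chosen representatives and admissible disks (Lemma \ref{Lemma:reduced_prop} and Lemma \ref{Lemma:corrected_intersection_class})---has already been carried out. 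This is why the statement is genuinely trivial by construction, as asserted in the text preceding it.
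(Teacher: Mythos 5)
Your proof is correct and follows exactly the paper's argument: the paper also observes that $d_{\str}$ vanishes on $\widetilde\cC^2\oplus\overline\cC$ while $d_{\str}(\widetilde\cC^1)\subset\widetilde\cC^2\oplus\overline\cC$, so the square vanishes for grading reasons on the number of positive punctures. You have merely spelled out the puncture-counting details ($\delta(\widetilde\cC^1)\subset\widetilde\cC^2$ and $\nabla(\widetilde\cC^1)\subset\overline\cC$) that the paper leaves implicit.
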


The string operator is additionally a derivation with respect to the SFT bracket.
\begin{lemma}
The string operator $d_{\str}:\cC\to\cC$ is a derivation with respect to the SFT bracket $\{\cdot,\cdot\}$, i.e. 
$$d_{\str}\{x,y\}=\{x,d_{\str}y\}-(-1)^{|y|}\{d_{\str}x,y\}$$
for all $x,y\in\cC$.
\end{lemma}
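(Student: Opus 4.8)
The plan is to reduce the identity to a single nontrivial relation and then establish that relation by analyzing the self-intersections of glued boundary loops, in the spirit of Lemma~\ref{Lemma:reduced_prop}. First I would use bilinearity to verify the identity on homogeneous generators $x,y$ each lying in one of $\widetilde\cC^1,\widetilde\cC^2,\overline\cC$. Since $d_{\str}$ vanishes on $\widetilde\cC^2\oplus\overline\cC$, since $\{\cdot,\cdot\}_2$ is nonzero only on $\widetilde\cC^1\otimes\widetilde\cC^2$ and its transpose, and since the extended $\{\cdot,\cdot\}_1$ is nonzero only on $\widetilde\cC^1\otimes\widetilde\cC^1$, on $\widetilde\cC^1\otimes\widetilde\cC^2$ (and transposes), and on $\widetilde\cC^1\otimes\overline\cC$ (and transpose), a direct inspection shows that both sides vanish whenever at least one of $x,y$ lies in $\widetilde\cC^2$ or $\overline\cC$. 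Thus it suffices to treat $x,y\in\widetilde\cC^1$.

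For $x,y\in\widetilde\cC^1$ we have $\{x,y\}=\{x,y\}_1\in\widetilde\cC^1$ and $\{x,y\}_2=0$, so the identity splits along $d_{\str}(\widetilde\cC^1)\subset\widetilde\cC^2\oplus\overline\cC$ into two separate statements. The $\widetilde\cC^2$-component is $\delta\{x,y\}_1=\{x,\delta y\}_1-(-1)^{|y|}\{\delta x,y\}_1$, which is precisely Lemma~\ref{Lemma:reduced_prop}(b). Collecting instead all terms landing in $\overline\cC$ (namely $\nabla\{x,y\}_1$ on the left, and $\{x,\nabla y\}_1$, the $\{\cdot,\cdot\}_2$-part of $\{x,\delta y\}$, and their $y$-counterparts on the right), the remaining, and only substantive, claim is
\begin{equation}\label{Eq:nabla_derivation}
\nabla\{x,y\}_1=\{x,\nabla y\}_1+\{x,\delta y\}_2-(-1)^{|y|}\bigl(\{\nabla x,y\}_1+\{\delta x,y\}_2\bigr).
\end{equation}

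To prove \eqref{Eq:nabla_derivation} I would argue geometrically. Fix admissible punctured disks $u_x,u_y$ representing $x,y$, and for each gluing pair of punctures form the glued admissible disk $u_w$ representing the corresponding summand $w$ of $\{x,y\}_1$; by Lemma~\ref{Lemma:corrected_intersection_class} the value $\nabla(w)=\nabla(u_w)$ is independent of these choices, so I may assume the neck produced by the gluing is arbitrarily long and that the $x$- and $y$-parts of $\overline w=u_w|_\partial$ are separated in the $\lR$-direction away from a small neighborhood of the neck. I then classify the self-intersections $A$ of $\overline w$ and the shifted intersections $B$ of $\operatorname{sh}(u_w)$ with $L$ by the two boundary arcs they involve: (a) both in the $x$-part, (b) both in the $y$-part, (c) one in each part, necessarily localized near the neck. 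Resolving a type (a) or (b) intersection commutes with the single gluing producing $w$: a pair consisting of a self-intersection of $x$ and a gluing pair corresponds, via Fubini, to resolving $\nabla x=\sum x_1\otimes x_2$ and then attaching $y$ along whichever of $x_1,x_2$ carries the gluing puncture, reproducing exactly $-(-1)^{|y|}\{\nabla x,y\}_1$ and symmetrically $\{x,\nabla y\}_1$; the shifted intersections of types (a),(b) likewise reproduce the $\nabla_2$-terms $\{x,y\}_1\otimes\mathrm{pt}$ inside these brackets. The mixed intersections of type (c) are the heart of the matter: using the long-neck model I would show each one occurs where one boundary arc passes through a Reeb chord endpoint while the other is asymptotic to that chord, so that resolving it simultaneously pinches the neck — this is exactly the insertion of a trivial strip ($\delta$) followed by the two-pair gluing ($\{\cdot,\cdot\}_2$), whence the type (c) terms match $\{x,\delta y\}_2-(-1)^{|y|}\{\delta x,y\}_2$.

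The main obstacle will be the type (c) analysis together with the accompanying sign bookkeeping. For the geometry the key input is the long-neck description of glued holomorphic buildings, which forces the mixed intersections to be localized at the neck and to take the trivial-strip form; for the signs I would compare the quadrant/orientation signs $\epsilon_1(u,A),\epsilon_2(u,B)$ defining $\nabla$ with the bracket signs of $\{\cdot,\cdot\}_1,\{\cdot,\cdot\}_2$ and the strip-insertion signs $\epsilon(\gamma,\tau)$ of $\delta$, proceeding exactly as in the cancellations of Lemma~\ref{Lemma:PropertiesReduced} and Lemma~\ref{Lemma:reduced_prop}, and checking the global factor $(-1)^{|y|}$ against the grading shift $|x\otimes y|=|x|+|y|-1$ on $\overline\cC$. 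Once \eqref{Eq:nabla_derivation} and its signs are verified, combining it with Lemma~\ref{Lemma:reduced_prop}(b) and the vanishing cases from the first step yields the full derivation property $d_{\str}\{x,y\}=\{x,d_{\str}y\}-(-1)^{|y|}\{d_{\str}x,y\}$.
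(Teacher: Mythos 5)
Your reduction coincides with the paper's proof: the vanishing cases for $x$ or $y$ in $\widetilde\cC^2\oplus\overline\cC$, the split of the identity along $d_{\str}(\widetilde\cC^1)\subset\widetilde\cC^2\oplus\overline\cC$ into Lemma~\ref{Lemma:reduced_prop}(b) plus the relation $\nabla\{x,y\}_1=\{x,\nabla y\}_1-(-1)^{|y|}\{\nabla x,y\}_1+\{x,\delta y\}_2-(-1)^{|y|}\{\delta x,y\}_2$ (the paper's (\ref{Eq:FullDeriv})), and the strategy of representing each summand of $\{x,y\}_1$ by a glued admissible disk and sorting its intersection points, are all exactly what the paper does. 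Your types (a) and (b), the $\nabla_2$-terms, and the trivial-strip part of type (c) correspond to the paper's $\bm{A}_x,\bm{A}_y,\bm{B}_x,\bm{B}_y$ and $A_{k,r}$ analysis.

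The gap is your assertion that \emph{every} mixed intersection is of trivial-strip type and hence matches a summand of $\{x,\delta y\}_2-(-1)^{|y|}\{\delta x,y\}_2$. This fails whenever $x$ has another negative puncture $\tau_l$ asymptotic to the same Reeb chord as the glued puncture $\tau_i$ (and symmetrically on the $y$ side): because the asymptotic representative at a negative puncture is opposite to that at a positive puncture, gluing $y$ into $\tau_i$ forces the two boundary strands that ran to $(-\infty,i^{\pm})$ to turn around and cross the strands approaching $\tau_l$, creating an extra boundary self-intersection $C_{i,l}$ of the glued loop (Figure~\ref{Fig:add_intersections_cancelation}), no matter how long you take the neck. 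Resolving $C_{i,l}$ produces no $\delta$-insertion and corresponds to no term on the right-hand side; in the paper these contributions cancel only after summing over all gluing pairs, since $\nabla_1(u_{\tau_i,\tau_j'},C_{i,l})$ and $\nabla_1(u_{\tau_l,\tau_j'},C_{l,i})$ are equivalent strings occurring with opposite signs (the $e_1'',e_2''$ computation, after moving the second marked point). Your long-neck argument treats each glued summand of $\{x,y\}_1$ in isolation and therefore cannot see this cancellation \emph{between different summands}; as written, $\nabla\{x,y\}_1$ retains these unmatched $C$-terms and your equation does not close. Once this family of intersections is identified and cancelled pairwise, the rest of your plan, including the sign bookkeeping against the grading on $\overline\cC$, goes through as in the paper.
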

\begin{proof}
Let $x:\lS^1\backslash\{\tau_1,\dots,\tau_s\}\to\Lambda$ and $y:\lS^1\backslash\{\tau_1',\dots,\tau_{s'}'\}\to\Lambda$ be representatives of two broken closed strings in $\widetilde\cC^1$. From Lemma \ref{Lemma:reduced_prop} we have $\delta\{x,y\}_1=\{x,\delta y\}_1-(-1)^{|y|}\{\delta x,y\}_1$, therefore, it is enough to show
\begin{align}\label{Eq:FullDeriv}
\nabla\{x,y\}_1=\{x,\nabla y\}_1-(-1)^{|y|}\{\nabla x,y\}_1+\{x,\delta y\}_2-(-1)^{|y|}\{\delta x,y\}_2.
\end{align}

First, we work without signs. Let $u_x:\lD^2\backslash\{\tau_1,\dots,\tau_s\}\to\lR^4$ and $u_y:\lD^2\backslash\{\tau_1',\dots,\tau_{s'}'\}\to\lR^4$ be two admissible punctured disks on $L$ such that $\pi_{xyz}\circ u_x|_\partial=x,\pi_{xyz}\circ u_y|_\partial=y$. Denote by $\bm{A}_x$ and $\bm{A}_y$ the sets of self-intersections of $u_x|_\partial$ and $u_y|_\partial$, and by $\bm{B}_x$ and $\bm{B}_y$ the sets of intersections of the shifted maps $\operatorname{sh}(u_x)$ and $\operatorname{sh}(u_y)$ with $L$.

Let $\tau_j'$ be the positive puncture on $y$ and $\tau_i$ a negative puncture on $x$ asymptotic to the same Reeb chord. We construct an admissible disk $u_{\tau_i,\tau_j'}$ such that $\pi_{xyz}\circ u_{\tau_i,\tau_j'}|_\partial=x\prescript{}{\tau_i\hspace{-0.65mm}}{*}_{\tau_j'}y$ by gluing punctured disks $u_x,u_y$ at $\tau_i,\tau_j'$. Let $\gamma$ be the Reeb chord at the negative puncture $\tau_k,k\neq i$ of $x$. Denote the intersection points of $y$ with the trivial strip over $\gamma$ by $\tau'_{k,r}\in\lS^1\backslash\{\tau_1',\dots,\tau_{s'}'\},r=1,\dots, r_k$ and $A_{k,r}=y(\tau'_{k,r})$. These points appear as boundary self-intersections of $u_{\tau_i,\tau_j'}$. We additionally have one boundary self-intersection point $C_{i,l}$ for each negative puncture $\tau_l\neq\tau_i$ on $x$ negatively asymptotic to the same Reeb orbit as $\tau_i$ because of the difference between the asymptotic representatives at a negative and a positive puncture at a Reeb chord (see Figure \ref{Fig:add_intersections_cancelation}). The self-intersections of $u_{\tau_i,\tau_j'}|_\partial$ consist of self-intersections $\bm{A}_x$ of $u_x|_\partial$, self-intersections $\bm{A}_y$ of $u_y|_\partial$, intersections $A_{k,r},k\neq i,r=1,\dots,r_k$ and intersections $C_{i,l}$ as above. Additionally, it is easy to see that
\begin{align*}
&\nabla_1(u_{\tau_i,\tau_j'},A_x)=\nabla_1(u_x,A_x)\prescript{}{\tau_i\hspace{-0.65mm}}{*}_{\tau_j'}y,\text{ for }A_x\in\bm{A}_x\\
&\nabla_1(u_{\tau_i,\tau_j'},A_y)=x\prescript{}{\tau_i\hspace{-0.65mm}}{*}_{\tau_j'}\nabla_1(u_y,A_y),\text{ for }A_y\in\bm{A}_y\\
&\nabla_1(u_{\tau_i,\tau_j'},A_{k,r})=x(\tau_i,\tau_j';\tau_k,\tau_+')\delta(y,\tau'_{k,r}),
\end{align*}
where $\tau_+'$ denotes the positive puncture on $\delta(y,\tau'_{k,r})$ coming from the inserted trivial strip. Similar holds when we glue the positive puncture of $x$ to a negative puncture on $y$. Moreover, it is not difficult to see that the strings $\nabla_1(u_{\tau_i,\tau_j'},C_{i,l})$ and $\nabla_1(u_{\tau_l,\tau_j'},C_{l,i})$ are equivalent (see Figure \ref{Fig:add_intersections_cancelation}).
\begin{figure}
\def\svgwidth{79mm}
\begingroup%
  \makeatletter%
  \providecommand\rotatebox[2]{#2}%
  \newcommand*\fsize{\dimexpr\f@size pt\relax}%
  \newcommand*\lineheight[1]{\fontsize{\fsize}{#1\fsize}\selectfont}%
  \ifx\svgwidth\undefined%
    \setlength{\unitlength}{957.27540989bp}%
    \ifx\svgscale\undefined%
      \relax%
    \else%
      \setlength{\unitlength}{\unitlength * \real{\svgscale}}%
    \fi%
  \else%
    \setlength{\unitlength}{\svgwidth}%
  \fi%
  \global\let\svgwidth\undefined%
  \global\let\svgscale\undefined%
  \makeatother%
  \begin{picture}(1,0.85896806)%
    \lineheight{1}%
    \setlength\tabcolsep{0pt}%
    \put(0,0){\includegraphics[width=\unitlength,page=1]{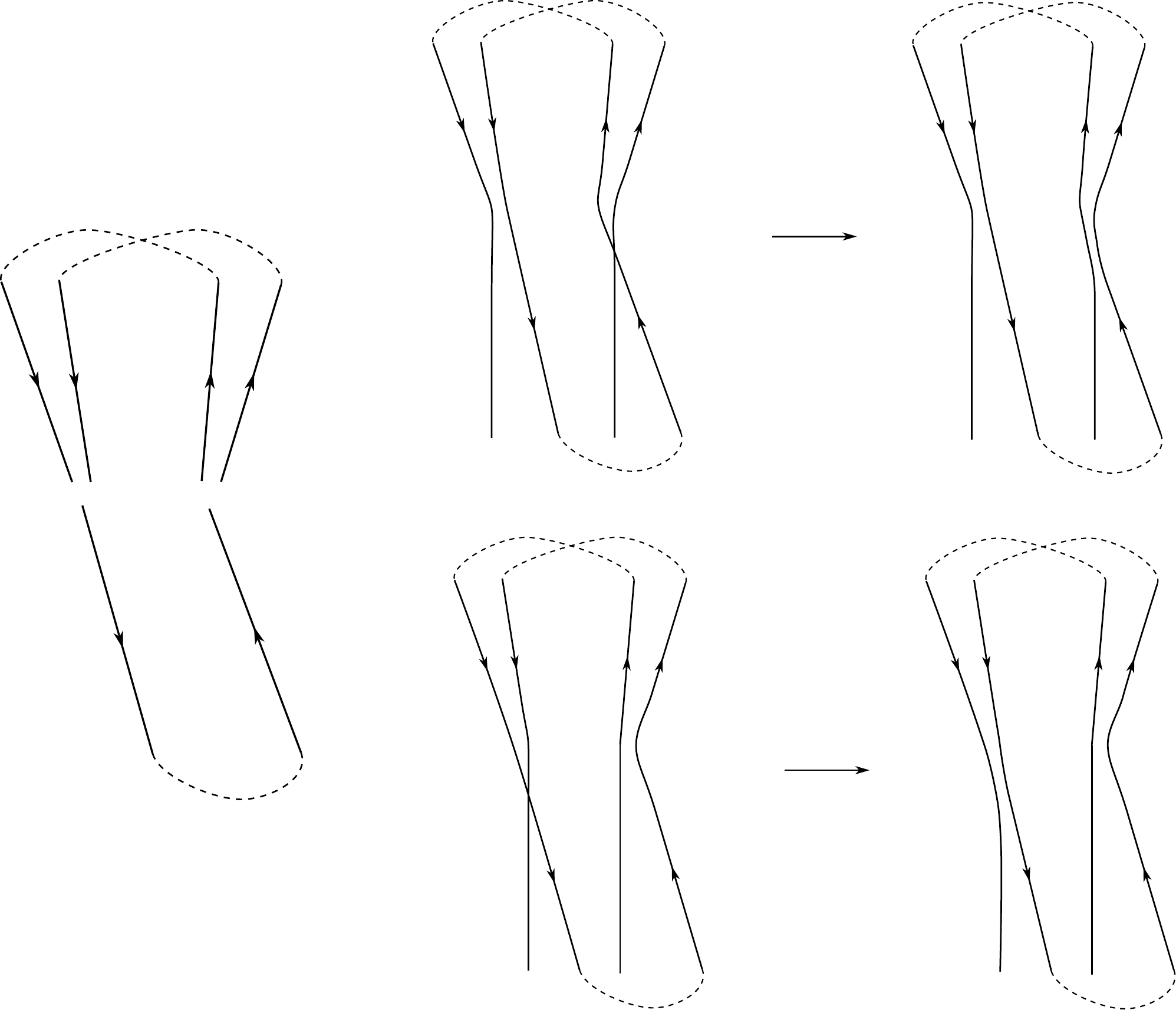}}%
    \put(0.10844368,0.53909753){\makebox(0,0)[lt]{\lineheight{1.25}\smash{\begin{tabular}[t]{l}$x$\end{tabular}}}}%
    \put(0.14591056,0.30799818){\makebox(0,0)[lt]{\lineheight{1.25}\smash{\begin{tabular}[t]{l}$y$\end{tabular}}}}%
    \put(0.02876428,0.45081424){\makebox(0,0)[lt]{\lineheight{1.25}\smash{\begin{tabular}[t]{l}$i$\end{tabular}}}}%
    \put(0.08435191,0.45126192){\makebox(0,0)[lt]{\lineheight{1.25}\smash{\begin{tabular}[t]{l}$l$\end{tabular}}}}%
    \put(0.53479557,0.64027765){\makebox(0,0)[lt]{\lineheight{1.25}\smash{\begin{tabular}[t]{l}$C_{l,i}$\end{tabular}}}}%
    \put(0.36591439,0.17325275){\makebox(0,0)[lt]{\lineheight{1.25}\smash{\begin{tabular}[t]{l}$C_{i,l}$\end{tabular}}}}%
  \end{picture}%
\endgroup%
\caption{Strings $\nabla_1(u_{\tau_l,\tau_j'},C_{l,i})$ and $\nabla_1(u_{\tau_i,\tau_j'},C_{i,l})$ are equivalent.}
\label{Fig:add_intersections_cancelation}
\end{figure}
Additionally, the intersections of the shift of $u_{\tau_i,\tau_j'}$ with the cylinder $L$ consist of the intersections $\bm{B}_x$ of $\operatorname{sh}(u_x)$ and the intersections $\bm{B}_y$ of $\operatorname{sh}(u_y)$ with $L$, and we have 
\begin{align*}
&\nabla_2(u_{\tau_i,\tau_j'},B_x)=\nabla_2(u_x,B_x)\prescript{}{\tau_i\hspace{-0.65mm}}{*}_{\tau_j'}y,\text{ for }B_x\in\bm{B}_x\\
&\nabla_2(u_{\tau_i,\tau_j'},B_y)=x\prescript{}{\tau_i\hspace{-0.65mm}}{*}_{\tau_j'}\nabla_2(u_y,B_y),\text{ for }B_y\in\bm{B}_y.
\end{align*}
Taking the sum over all gluing pairs of punctures on $x$ and $y$, we get (\ref{Eq:FullDeriv}) up to signs.

\begin{figure}
\def\svgwidth{120mm}
\begingroup%
  \makeatletter%
  \providecommand\rotatebox[2]{#2}%
  \newcommand*\fsize{\dimexpr\f@size pt\relax}%
  \newcommand*\lineheight[1]{\fontsize{\fsize}{#1\fsize}\selectfont}%
  \ifx\svgwidth\undefined%
    \setlength{\unitlength}{587.05727895bp}%
    \ifx\svgscale\undefined%
      \relax%
    \else%
      \setlength{\unitlength}{\unitlength * \real{\svgscale}}%
    \fi%
  \else%
    \setlength{\unitlength}{\svgwidth}%
  \fi%
  \global\let\svgwidth\undefined%
  \global\let\svgscale\undefined%
  \makeatother%
  \begin{picture}(1,0.45626169)%
    \lineheight{1}%
    \setlength\tabcolsep{0pt}%
    \put(0.16455939,0.15621343){\makebox(0,0)[lt]{\lineheight{1.25}\smash{\begin{tabular}[t]{l}$t'_k$\end{tabular}}}}%
    \put(0.09214768,0.13602519){\makebox(0,0)[lt]{\lineheight{1.25}\smash{\begin{tabular}[t]{l}$t'_{k+1}$\end{tabular}}}}%
    \put(0.33670302,0.219487){\makebox(0,0)[lt]{\lineheight{1.25}\smash{\begin{tabular}[t]{l}$t'_1$\end{tabular}}}}%
    \put(0.20073371,0.44169887){\makebox(0,0)[lt]{\lineheight{1.25}\smash{\begin{tabular}[t]{l}$t_1$\end{tabular}}}}%
    \put(0.09552766,0.30108298){\makebox(0,0)[lt]{\lineheight{1.25}\smash{\begin{tabular}[t]{l}$t_{i-1}$\end{tabular}}}}%
    \put(0.24349308,0.29312906){\makebox(0,0)[lt]{\lineheight{1.25}\smash{\begin{tabular}[t]{l}$t_{i+1}$\end{tabular}}}}%
    \put(0.27516604,0.26827847){\makebox(0,0)[lt]{\lineheight{1.25}\smash{\begin{tabular}[t]{l}$t_{j-1}$\end{tabular}}}}%
    \put(0.42464327,0.29162802){\makebox(0,0)[lt]{\lineheight{1.25}\smash{\begin{tabular}[t]{l}$t_{j+1}$\end{tabular}}}}%
    \put(0.72057829,0.44476366){\makebox(0,0)[lt]{\lineheight{1.25}\smash{\begin{tabular}[t]{l}$t_1$\end{tabular}}}}%
    \put(0.6907814,0.29038934){\makebox(0,0)[lt]{\lineheight{1.25}\smash{\begin{tabular}[t]{l}$t_{i-1}$\end{tabular}}}}%
    \put(0.83955928,0.27815532){\makebox(0,0)[lt]{\lineheight{1.25}\smash{\begin{tabular}[t]{l}$t_{i+1}$\end{tabular}}}}%
    \put(0.72705253,0.25266218){\makebox(0,0)[lt]{\lineheight{1.25}\smash{\begin{tabular}[t]{l}$t'_1$\end{tabular}}}}%
    \put(0.94009981,0.10634692){\makebox(0,0)[lt]{\lineheight{1.25}\smash{\begin{tabular}[t]{l}$t'_k$\end{tabular}}}}%
    \put(0.9271945,0.17942107){\makebox(0,0)[lt]{\lineheight{1.25}\smash{\begin{tabular}[t]{l}$t'_{k+1}$\end{tabular}}}}%
    \put(0.5996336,0.10376583){\makebox(0,0)[lt]{\lineheight{1.25}\smash{\begin{tabular}[t]{l}$t'_{j+1}$\end{tabular}}}}%
    \put(0.63365111,0.17861651){\makebox(0,0)[lt]{\lineheight{1.25}\smash{\begin{tabular}[t]{l}$t'_{j}$\end{tabular}}}}%
    \put(0.27290869,0.37482816){\makebox(0,0)[lt]{\lineheight{1.25}\smash{\begin{tabular}[t]{l}$x$\end{tabular}}}}%
    \put(0.31036001,0.13349929){\makebox(0,0)[lt]{\lineheight{1.25}\smash{\begin{tabular}[t]{l}$y$\end{tabular}}}}%
    \put(0.78041109,0.36198198){\makebox(0,0)[lt]{\lineheight{1.25}\smash{\begin{tabular}[t]{l}$x$\end{tabular}}}}%
    \put(0.77912187,0.20324695){\makebox(0,0)[lt]{\lineheight{1.25}\smash{\begin{tabular}[t]{l}$y$\end{tabular}}}}%
    \put(0.1122898,0.19802575){\makebox(0,0)[lt]{\lineheight{1.25}\smash{\begin{tabular}[t]{l}$\tau$\end{tabular}}}}%
    \put(-0.00121794,0.09117889){\makebox(0,0)[lt]{\lineheight{1.25}\smash{\begin{tabular}[t]{l}\textcolor{white}{.}\end{tabular}}}}%
    \put(0.17258558,0.2690697){\makebox(0,0)[lt]{\lineheight{1.25}\smash{\begin{tabular}[t]{l}$t_{i}$\end{tabular}}}}%
    \put(0.36215511,0.27097553){\makebox(0,0)[lt]{\lineheight{1.25}\smash{\begin{tabular}[t]{l}$t_{j}$\end{tabular}}}}%
    \put(0.61453946,0.14001103){\makebox(0,0)[lt]{\lineheight{1.25}\smash{\begin{tabular}[t]{l}$A$\end{tabular}}}}%
    \put(0.94369324,0.14100347){\makebox(0,0)[lt]{\lineheight{1.25}\smash{\begin{tabular}[t]{l}$A$\end{tabular}}}}%
    \put(0,0){\includegraphics[width=\unitlength,page=1]{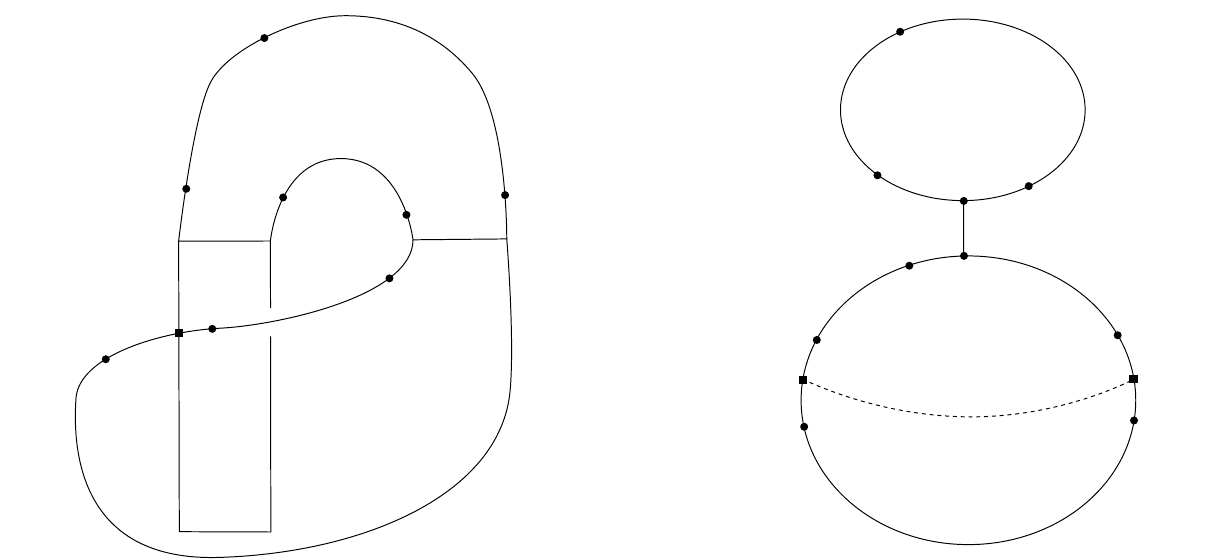}}%
    \put(0.77679866,0.30270604){\makebox(0,0)[lt]{\lineheight{1.25}\smash{\begin{tabular}[t]{l}$t_{i}$\end{tabular}}}}%
  \end{picture}%
\endgroup%
\caption{The first string can be glued in two ways and appears in $\nabla\{x,y\}_1$ and $\{x,\delta y\}_2$. Similarly for the second string, which appears in $\nabla\{x,y\}_1$ and $\{x,\nabla y\}_1$.}
\label{Fig:string_op_derivation_signs}
\end{figure}

To check the signs, we first consider for example the gluing configuration shown in Figure \ref{Fig:string_op_derivation_signs}, left. Without loss of generality, assume $\tau_s$ and $\tau_{s'}'$ are the positive punctures on $x$ and $y$, i.e. the marked point is right after the positive puncture. The glued string appears in $\nabla\{x,y\}_1$ and $\{x,\delta y\}_2$. It is not difficult to see that the string in $\nabla\{x,y\}_1$ appears with the sign
\begin{align*}
e_1=\epsilon(\tau)(-1)^{P(x,t_j)(|y|+1)+(P(x,t_i)+1)(P(y,t_{k+1}')+P(x,t_{i-1},t_j))}
\end{align*} 
and the second marked point (on the string with no positive puncture) after the puncture $t_k'$. The corresponding string in  $\{x,\delta y\}_2$ appears with the sign
\begin{align*}
e_2&=\epsilon(\tau)(-1)^{P(y,t_{k+1}')+P(x,t_j)(P(y,t_{k+1}')+P(x,t_{i-1},t_j)+|y|+1)}=\\
&=(-1)^{P(y,t_{k+1}')P(x,t_{i-1},t_j)} e_1
\end{align*}
and the second marked point before the puncture $t_1'$. Punctures are labeled as in the figure. Moving the second marked point cancels the remaining sign $(-1)^{P(y,t_{k+1}')P(x,t_{i-1},t_j)}$.

Similarly for the gluing configuration shown in Figure \ref{Fig:string_op_derivation_signs}, right. The glued curve appears in $\nabla\{x,y\}_1$ and $\{x,\nabla y\}_1$. It is not difficult to see that the string in $\nabla\{x,y\}_1$ appears with the sign
\begin{align*}
e_1'=-\epsilon(A)(-1)^{P(x,t_i)(|y|+1)+(P(x,t_i)+P(y,t_{j+1}')+1)P(y,t_j',t_k')}.
\end{align*} 
The corresponding string in  $\{x,\nabla y\}_1$ appears with the sign
\begin{align*}
e_2'=-\epsilon(A)(-1)^{(P(y,t_{j+1}')+1)P(y,t_j',t_k')+P(x,t_i)(P(y,t_j',t_k')+|y|+1)}=e_1'.
\end{align*}
Other cases go similarly.

Finally, it is not difficult to see that the signs corresponding to summands $\nabla_1(u_{t_i,t_{s'}'},C_{i,j})$ and $\nabla_1(u_{t_j,t_{s'}'},C_{j,i})$ in $\nabla\{x,y\}_1$ for $t_i,t_j,i<j$ punctures on $x$ negatively asymptotic to the same Reeb chord as the positive puncture $t_{s'}'$ on $y$ cancel out. Assume for example the Reeb chord at $t_i,t_j$ is even. For strings $x,y$ as shown in Figure \ref{Fig:add_intersections_cancelation}, $\nabla_1(u_{t_i,t_{s'}'},C_{i,l})$ comes with the sign
\begin{align*}
e_1''=(-1)^{P(x,t_i)(|y|+1)+(P(x,t_i)+1)(P(y,t_{s'}')+P(x,t_i,t_j))}
\end{align*}  
and the second marked point after the positive puncture at $y$. The string $\nabla_1(u_{t_j,t_{s'}'},C_{l,i})$ comes with the sign
\begin{align*}
e_2''=-(-1)^{P(x,t_j)(|y|+1)+(P(x,t_i)+1)(P(y,t_{s'}')+P(x,t_i,t_j))}=-(-1)^{P(y,t_{s'}')P(x,t_i,t_j)}e_1''
\end{align*}
and the second marked point right before $t_{i+1}$, therefore, the summands cancel out.
\end{proof}

\subsection{The Hamiltonian}\label{Sec:hamiltonian_def}
Next, we define the Hamiltonian $H\in\cC(\Lambda)$ associated to a Legendrian knot $\Lambda$ and prove the SFT master equation (\ref{Eq:master_eq_prop}). Let $J$ be the almost complex structure given by (\ref{Eq:intro_J_formula}). 

Let $u:\lD^2\backslash\{t_1,\dots,t_k\}\to\lR\times\lR^3$ be a punctured $J$-holomorphic disk with boundary on $\lR\times\Lambda$ and one positive puncture at $t_k$. Then $\pi_{xyz}\circ u|_\partial$ gives us a string that we denote by $\omega(u)\in\widetilde\cC$, with a marked point right after the positive puncture and the corresponding sign $\epsilon(u)$ defined in Section \ref{Section:Orientations_disks}. Similarly, for $u$ a disk with two positive punctures at $t^1,t^2$, we have a string $\omega(u,t^1,t^2)\in\widetilde\cC$ obtained by looking at the boundary of $u$ with a marked point right after $t^2$ and the corresponding sign $\epsilon(u,t^1,t^2)$. Additionally, for $v$ a punctured $J$-holomorphic annulus with one positive puncture and $e_2$ a marked point on the inner boundary component, we have a pair of strings $\omega(v,e_2)=\omega_1(v)\otimes\omega_2(v,e_2)$ obtained by looking at the two boundary components, with a marked point right after the positive puncture for the first and at $e_2$ for the second component, and the corresponding sign $\epsilon(u,e_2)$ defined in Section \ref{Section:Orientations_annuli}.

The Hamiltonian $H=H(\Lambda)$ associated to the Legendrian knot $\Lambda$ is an element in $\cC$ given by
\begin{align*}
H(\Lambda)\coloneq &\sum_{u\in\cM_{1}^1(J),\operatorname{ind}(u)=0}\epsilon(u)\omega(u)+\sum_{u\in\cM^2_{1}(J),\operatorname{ind}(u)=0}\epsilon(u,t^1,t^2)\omega(u,t^1,t^2)+\\
&+\sum_{v\in\cM_2(J),\operatorname{ind}(v)=0}(-1)^{|\omega_2(v,e_2)|}\epsilon(v,e_2)\omega(v,e_2),
\end{align*}
where $\cM_{1}^\iota(J),\iota\in\{1,2\}$ is the moduli space of $J$-holomorphic disks with $\iota$ positive punctures and $\cM_2(J)$ is the moduli space of $J$-holomorphic annuli with one positive puncture. Note that
$$|H(\Lambda)|=-2.$$

The following lemma is a corollary of Lemma \ref{Lemma:index_zero_gen_asym_beh} and Lemma \ref{Lemma:index_zero_gen_rel_asym_beh}. 
\begin{lemma}\label{Lemma:u_are_admissible}
For a generic Legendrian knot $\Lambda$ and $u$ an index zero $J$-holomorphic disk on $\lR\times\Lambda$, $u$ is an admissible punctured disk.
\end{lemma}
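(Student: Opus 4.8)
The plan is to verify, one by one, the defining conditions of admissibility for an index zero $J$-holomorphic disk $u$ on $L=\lR\times\Lambda$, noting first that two of the three conditions are exactly the two preceding lemmas: generic asymptotic behavior at every puncture is Lemma \ref{Lemma:index_zero_gen_asym_beh}, and generic relative asymptotic behavior at all pairs of positive (resp. negative) punctures asymptotic to the same Reeb chord is Lemma \ref{Lemma:index_zero_gen_rel_asym_beh}. Thus the genuine content is to check that $u$ is a punctured disk in the sense of the definition, and that its restriction $u|_\partial$ to the boundary is an immersion.

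For the punctured disk conditions, the asymptotic convergence of $u$ to a trivial strip over a Reeb chord at each puncture is part of the finite Hofer energy hypothesis built into the moduli space $\cM_1(\bm{\gamma},a)$, and the boundary is mapped to $L$ by definition; so it remains to exhibit neighborhoods $U_i$ of the punctures on which $\pi_{xy}\circ u$ is a local embedding and $u|_{\partial U_i}$ is an embedding. I would deduce the former directly from Lemma \ref{Lemma:index_zero_gen_asym_beh}: generic asymptotic behavior says that, after shrinking $U_i$, the map $\pi_{xy}\circ u|_{U_i}$ is a bijection onto a quadrant at the corresponding self-intersection of $\pi_{xy}(\Lambda)$, and a holomorphic bijection onto a quadrant is an embedding. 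For the embedding of $u|_{\partial U_i}$ I would appeal to the exponential asymptotic estimate of \cite{lifting_Rizell} used in the proof of Lemma \ref{Lemma:index_zero_gen_rel_asym_beh}: the two boundary segments of $\partial U_i$ approach the two distinct endpoints of the Reeb chord while the $\pi_r$-coordinate grows monotonically, so for $U_i$ small enough the pieces of $\partial U_i$ are pairwise disjoint and embedded.

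For the boundary immersion, the key geometric input is the fact recorded in the proof of Lemma \ref{Lemma:index_zero_gen_asym_beh} that the projection of an index zero disk is an immersed polygon with convex corners; in particular $\pi_{xy}\circ u|_\partial$ is an immersion on each open boundary arc. Since $L=\lR\times\Lambda$ and the Lagrangian projection $\pi_{xy}|_\Lambda$ is an immersion, the differential $d\pi_{xy}$ restricted to $TL$ has kernel exactly $\lR\partial_r$; therefore $d(\pi_{xy}\circ u|_\partial)=d\pi_{xy}\circ d(u|_\partial)$ being nowhere zero forces $d(u|_\partial)\neq 0$, so $u|_\partial$ is an immersion. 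This establishes all three conditions and shows that $u$ is an admissible punctured disk.

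The main obstacle, and the only non-formal step, is the immersed-polygon description of index zero disks in the Lagrangian projection, on which both the near-puncture local embedding and the boundary immersion rest. This is the standard regularity and positivity input for rigid curves in the $\lR^3$ setting (compare \cite{ENS02,EES05}); once it is granted, the statement reduces to assembling it with the two cited lemmas, so the lemma is indeed a corollary as claimed.
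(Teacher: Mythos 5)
Your proposal is correct and follows the same route as the paper, which presents this lemma precisely as a corollary of Lemma \ref{Lemma:index_zero_gen_asym_beh} and Lemma \ref{Lemma:index_zero_gen_rel_asym_beh} without further argument. The additional verifications you supply---that the punctured-disk conditions (local embedding of $\pi_{xy}\circ u$ near each puncture, embedding on $\bigsqcup_i\partial U_i$) and the boundary immersion all follow from the immersed-polygon-with-convex-corners description of index zero disks in the Lagrangian projection together with the asymptotic estimates of \cite{lifting_Rizell}---are exactly the routine checks the paper leaves implicit, and they are carried out correctly.
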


Next, we prove the crucial property of the Hamiltonian $H(\Lambda)$ for the definition of the chain complex, often referred to as the \textit{master equation}. 
\begin{prop}\label{Prop:Master_eq1}
For $H\in\cC(\Lambda)$ the Hamiltonian associated to a generic Legendrian knot $\Lambda$, we have
\begin{equation}
\label{Eq:master_eq_prop}
\begin{aligned}
\frac{1}{2}\{H,H\}+d_{\str}H=0.
\end{aligned}
\end{equation}
\end{prop}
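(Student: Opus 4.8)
The plan is to prove (\ref{Eq:master_eq_prop}) by the familiar symplectic field theory strategy: interpret every summand of $\frac12\{H,H\}+d_{\str}H$ as a boundary point of a compactified one-dimensional moduli space of index one curves, and conclude that the total signed count vanishes because a compact oriented one-manifold has zero signed boundary. Three families of one-dimensional moduli spaces enter, with compactifications described in Proposition \ref{Prop:compactness_disk} and Proposition \ref{Prop:compactness_annulus}: index one disks with one positive puncture, index one disks with two positive punctures, and index one annuli with one positive puncture. Write $H=H_1+H_2+H_a$ for the contributions of index zero disks with one positive puncture, disks with two positive punctures, and annuli, respectively, so that $H_1\in\widetilde\cC^1$, $H_2\in\widetilde\cC^2$, $H_a\in\overline\cC$. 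The core of the argument is a term-by-term matching of the boundary strata of these three moduli spaces with the summands of the algebraic expression, carried out separately on each factor of $\cC=\widetilde\cC^1\oplus\widetilde\cC^2\oplus\overline\cC$.

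On the $\widetilde\cC^1$ component only $\frac12\{H_1,H_1\}_1$ survives, and this is exactly the boundary count of index one disks with one positive puncture: every boundary point is a two-building with one gluing pair assembled from two index zero disks (Proposition \ref{Prop:compactness_disk}), which is the master equation underlying $d_{CE}^2=0$ from \cite{Chekanov02}. On the $\widetilde\cC^2$ component the identity reads $\frac12\{H_1,H_2\}_1+\delta H=0$: the boundary of the moduli space of index one disks with two positive punctures consists of two-buildings with one gluing pair (matching $\{H_1,H_2\}_1$) together with the trivial strip bubbling of Proposition \ref{Prop:compactness_disk}, which is modeled algebraically by $\delta$, since inserting a trivial strip at a Reeb chord endpoint is precisely the operation $\delta(\gamma,\tau)$; this reproduces the rational SFT argument of \cite{Ng_rLSFT}.

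The new content lives on the $\overline\cC$ component, where the equation becomes $\frac12\{H,H\}_1+\frac12\{H,H\}_2+\nabla H=0$ and records the boundary of the moduli space of index one annuli with one positive puncture. I would match the four boundary strata of Proposition \ref{Prop:compactness_annulus} as follows: the disk–annulus two-buildings with one gluing pair give the $\{H_1,H_a\}_1$ and $\{H_a,H_1\}_1$ terms; the two-buildings with two gluing pairs, formed by gluing an index zero one-positive-puncture disk to an index zero two-positive-puncture disk along two pairs of punctures, give the $\{H_1,H_2\}_2$ terms; the nodal curves with a hyperbolic boundary node coming from a self-intersection of the boundary of an index zero disk give the $\nabla_1 H$ terms, each self-intersection resolving into the pair of loops $\nabla_1(u,A)$; and the nodal curves with an elliptic node coming from an interior intersection with $\lR\times\Lambda$ give the $\nabla_2 H$ terms $\gamma\otimes\pi_{xyz}(B)$. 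The factor $\frac12$ in front of $\{H,H\}$ accounts for the two orderings in which the two pieces of an unbroken configuration can be fed into the bracket, which agree by the graded antisymmetry of $\{\cdot,\cdot\}$.

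The hard part will be the signs. One must check that the coherent orientations constructed in Section \ref{Section:Orientations} induce at each boundary point exactly the combinatorial sign attached to the matching term in $\{\cdot,\cdot\}_1$, $\{\cdot,\cdot\}_2$, $\delta$, or $\nabla$, a gluing-sign computation that is most delicate for the two-gluing-pair breakings and for the exchange between hyperbolic and elliptic nodal degenerations. I expect the latter to be controlled by the same local model used in Lemma \ref{Lemma:corrected_intersection_class}: the singularity of the first type, in which a boundary self-intersection is traded for an interior intersection, is precisely what knits $\nabla_1$ and $\nabla_2$ into a well-defined $\nabla$, and it is also what forces the hyperbolic and elliptic boundary strata to assemble into a single coherently oriented family whose total contribution is $\nabla H$. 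Finally, I would invoke Lemma \ref{Lemma:u_are_admissible} to guarantee that every index zero disk in $H$ is admissible, so that $\nabla$ genuinely applies to the strings appearing in $H$, and that the generic asymptotic analysis of Section \ref{Sec:generic_asymptotic_behavior_admissible_disks} identifies the geometric nodal boundary points with the combinatorial resolutions.
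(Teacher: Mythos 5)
Your overall strategy is exactly the paper's: identify each summand of $\tfrac12\{H,H\}+d_{\str}H$ with a boundary point of the compactified one-dimensional moduli spaces of Propositions \ref{Prop:compactness_disk} and \ref{Prop:compactness_annulus}, split the check over $\widetilde\cC^1\oplus\widetilde\cC^2\oplus\overline\cC$, use Lemma \ref{Lemma:u_are_admissible} so that $\nabla$ applies to the strings in $H$, and defer the sign verification to the coherent orientations. Your matching of the four boundary strata of the annulus moduli space (disk--annulus buildings with one gluing pair $\leftrightarrow\{H_1,H_a\}_1$, two-gluing-pair disk buildings $\leftrightarrow\{H_1,H_2\}_2$, hyperbolic nodes $\leftrightarrow\nabla_1$, elliptic nodes $\leftrightarrow\nabla_2$) is the correct correspondence, the same one the paper uses (with the nodal boundary points interpreted via the removal-of-singularities Lemma \ref{Lemma:sing_removal} as index zero disks with a boundary self-intersection or an interior intersection with $\lR\times\Lambda$).

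However, there is one genuine gap: you never address \emph{bad strings}. The space $\widetilde\cC$ (and $\widetilde\cC^0$ for the inner boundary of an annulus) is a quotient in which cyclic reordering introduces signs, so a string whose puncture word is an even cover of an odd word satisfies $\beta\sim-\beta$ and hence vanishes over $\lQ$. Consequently the ``signed boundary count of a compact oriented one-manifold is zero'' argument does not close term by term: a connected component of $\overline\cM_2$ can have one endpoint a building whose annular piece has a bad inner boundary word --- contributing \emph{zero} to the algebraic expression --- while its other endpoint contributes $\pm1$. The paper treats this as a necessary step of the proof: it shows in Lemma \ref{Lemma:no_bad_words} that for a Legendrian knot in $\lR^3$ no bad index zero disk or annulus exists (using that the projection of a bad boundary component would force two consecutive boundary arcs to sweep all of $\pi_{xy}(\Lambda)$), and in the general setting Remark \ref{Rem:Cancelations_bad_curves} supplies a pairing argument: gluing the disk to the $2p$ iterations of the underlying odd word produces an even number of such endpoints, exactly half with each orientation sign, so the nonzero endpoints cobordant to bad buildings cancel in pairs. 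Without one of these two ingredients your term-by-term matching is incomplete; with Lemma \ref{Lemma:no_bad_words} added, your argument becomes the paper's proof.
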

\begin{rmk}
We can write $\{x,y\}$ as a sum of two terms $x\leftarrow y$ and $(-1)^{|x||y|+|x|+|y|} y\leftarrow x$, where $s_1\leftarrow s_2$ consists of terms obtained by gluing positive punctures of $s_2$ to negative punctures of $s_1$. Then we can write $\frac{1}{2}\{H,H\}$ as  $H\leftarrow H$, see also \cite[Remark 3.14]{Ng_rLSFT}. 
\end{rmk}
\begin{proof}
The boundary points of the compactified moduli space $\overline\cM_1(\bm{\gamma},a)$ of index 1 $J$-holomorphic disks with up to two positive punctures and the compactified moduli space $\overline\cM_2(\bm{\gamma},\bm{\beta},a,b)$ of index 1 $J$-holomorphic annuli with one positive puncture were described in Proposition \ref{Prop:compactness_disk} and Proposition \ref{Prop:compactness_annulus}. There are three kinds of boundary points, pseudoholomorphic disk and annulus buildings, disks with a trivial strip bubble, and hyperbolic and elliptic nodal annuli. We notice that the boundaries of the glued buildings are in correspondence with the summands in $H\leftarrow H$, the boundaries of the disks with an inserted trivial strip are in correspondence with the summands in $\delta H$, while the boundaries of the nodal annuli are in correspondence with the summands in $\nabla H$ (see Lemma \ref{Lemma:sing_removal}) by definition. Here we use the fact that every index zero $J$-holomorphic disk $u$ is admissible (see Lemma \ref{Lemma:u_are_admissible}). For $u$ a $J$-holomorphic disk, the intersections of the shifted disk $\operatorname{sh}(u)$ with the Lagrangian cylinder $L$ correspond to interior intersections of the disk with the cylinder. Moreover, the self-intersections of the boundary of $u$ in $L$ and the interior intersections with $L$ are generically transverse. 

To complete the proof, we need to consider index zero $J$-holomorphic curves with a bad string on a boundary component. Strings in (\ref{Eq:master_eq_prop}) corresponding to a building obtained by gluing a disk to an annulus with a bad inner boundary are seen as zero, while the other boundary point of the corresponding connected component of the moduli space $\overline\cM_2$ is potentially not. In general, one can show that there is an even number of such bad buildings in $\partial\overline\cM_2$ and a pairing of summands in (\ref{Eq:master_eq_prop}) that gives us cancellations (see Remark \ref{Rem:Cancelations_bad_curves}). In summary, since a bad word is an even cover of an odd word, we have an even number of bad buildings obtained by attaching the disk at different iterations of the odd word. Moreover, precisely half of them come with a sign $+1$. For Legendrians in $\lR^3$, we show in Lemma \ref{Lemma:no_bad_words} that there can actually be no such bad index zero curve.

The observations above imply $\frac{1}{2}\{H, H\}+d_{\str}H=0$ up to signs. The sign cancellation follows from Section \ref{Sec:orientation_and_algebraic_signs}.  
\end{proof}
In the proof of the master equation, we use Propositions \ref{Prop:moduliI}-\ref{Prop:compactness_annulus}. This holds since all the curves are regular.

\begin{lemma}\label{Lemma:regularity_main}
For a generic Legendrian knot $\Lambda$, all $J$-holomorphic annuli with one positive puncture and all $J$-holomorphic disks with arbitrarily many positive punctures, with boundary on $\lR\times\Lambda$ and of index 0 or 1 are regular.
\end{lemma}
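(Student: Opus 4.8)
The plan is to reduce this regularity statement entirely to the transversality of the obstruction section, which has already been established, and to handle disks and annuli separately according to the dimension $b_1(\Sigma)$ of the obstruction target $\lR^{b_1(\Sigma)}$.

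The annulus case is essentially immediate from the two transversality lemmas above. By Lemma \ref{Lemma:obstruction_transversalityI}, regularity of all $J$-holomorphic annuli on $\lR\times\Lambda$ is equivalent to $\Omega\pitchfork 0$, and the proof of that lemma is componentwise (``the proof goes the same in higher dimensions''), so it suffices to know $\Omega\pitchfork 0$ on the components carrying index $0$ and index $1$ annuli. I would then spell out the index bookkeeping: since $\Omega$ drops dimension by one wherever it is transverse, the index $0$ annuli correspond to $\Omega^{-1}(0)\cap\cM^\pi_{2,1}$ and the index $1$ annuli to $\Omega^{-1}(0)\cap\cM^\pi_{2,2}$. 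Thus the cases $k\in\{1,2\}$ of Lemma \ref{Lemma:obstruction_transversalityII} give exactly what is needed, and regularity of index $0$ and index $1$ annuli with one positive puncture follows.

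For disks one has $b_1(\lD)=0$, so the obstruction target is trivial and every holomorphic disk with convex corners in the projection lifts uniquely to a $J$-holomorphic disk; the moduli space $\cM_1$ is therefore identified with $\cM_1^\pi$ with no puncture count playing any role. Repeating the linearization of the proof of Lemma \ref{Lemma:obstruction_transversalityI} in the trivial-obstruction case, I would show that a $J$-holomorphic disk is regular if and only if the corresponding point of $\cM_1^\pi$ is a regular point of the Cauchy--Riemann problem in $\lC$ with boundary on $\pi_{xy}(\Lambda)$. Transversality of this projected problem for generic $\Lambda$ is the standard transversality theory for holomorphic polygons on immersed Lagrangian curves, achieved by perturbing $\Lambda$ through Legendrian knots precisely as in the proof of Lemma \ref{Lemma:obstruction_transversalityII} (compare \cite{EES07,ENS02,EES05}).

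The only genuinely delicate point, and the one I would treat most carefully, is producing a single $\Lambda$ for which all relevant moduli spaces are simultaneously regular. For each fixed asymptotic type (a labeling of finitely many punctures by the finitely many Reeb chords, together with the winding integers, and the choice disk or annulus) transversality holds on a comeager set of Legendrian perturbations by the arguments above. Since these asymptotic types form a countable family, a Baire category argument over the family yields a comeager set of $\Lambda$ on which every index $0$ and index $1$ disk with arbitrarily many positive punctures and every index $0$ and index $1$ annulus with one positive puncture is regular. The main technical care lies in arranging, for the disk case, that the bump perturbation of $\Lambda$ acts nontrivially on the linearized operator at a prescribed curve while preserving the Legendrian condition, which is exactly the construction already carried out in the proof of Lemma \ref{Lemma:obstruction_transversalityII}.
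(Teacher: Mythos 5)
Your treatment of the annulus case follows the paper exactly: regularity of lifted annuli is reduced via Lemma \ref{Lemma:obstruction_transversalityI} to transversality of the obstruction section, which Lemma \ref{Lemma:obstruction_transversalityII} supplies for generic $\Lambda$ (together with regularity of the projected moduli spaces $\cM^\pi_{2,k}$, which you should state explicitly, since the proof of Lemma \ref{Lemma:obstruction_transversalityI} assumes it). Your Baire category argument over the countably many asymptotic types is also fine and is implicit in the paper.

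However, there is a genuine gap in the disk case: you never address multiply covered curves. The lemma covers disks with \emph{arbitrarily many} positive punctures, and such a disk can a priori factor as $u=v\circ\psi$ for $\psi:\lD\to\lD$ a branched cover of degree $d\geq 2$ and $v$ somewhere injective (e.g.\ a branched double cover of a one-positive-puncture disk produces a disk with two positive punctures). Your appeal to ``standard transversality theory for holomorphic polygons, achieved by perturbing $\Lambda$'' fails precisely at these configurations: a perturbation of the Legendrian acts identically on all sheets of the cover, so surjectivity of the universal linearized operator cannot be deduced, and no genericity statement for multiple covers follows from perturbing $\Lambda$ alone. The paper closes this hole before invoking \cite{EES07}, with an index computation: if $u=v\circ\psi$ with $\deg\psi=d\geq 2$, then
\begin{align*}
\operatorname{ind} u = d\operatorname{ind} v + 2d-2 \geq 2,
\end{align*}
using $\operatorname{ind} v\geq 0$ for the somewhere injective underlying curve, so disks of index $0$ or $1$ are automatically somewhere injective and the perturbation argument applies only to those. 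Without this step (or some substitute ruling out multiple covers in the relevant index range), your reduction to the projected Cauchy--Riemann problem does not establish the lemma as stated.
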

\begin{proof}
Let $u$ be a $J$-holomorphic disk with $k>1$ positive punctures. For $\Lambda$ generic, $u$ is either somewhere injective or a multiple cover of a somewhere injective curve. In the second case, there exists a branched cover $\psi:\lD\to\lD$ of degree $d\in\lN,d\geq 2$ and a somewhere injective pseudoholomorphic disk $v$ such that $u=v\circ\psi$. Denote by $l=\frac{k}{d}$ the number of positive punctures of $v$ and by $\widetilde\gamma_i,i=1,\dots,r$ the Reeb chords at the punctures of $v$. Then 
\begin{align*}
&\operatorname{ind} u=k+\mu_L([u])+d\sum_i\epsilon_i\mu_{CZ}(\widetilde\gamma_i)-2=\\
&=dl+d\mu_L([v])+d\sum_i\epsilon_i\mu_{CZ}(\widetilde\gamma_i)-2=\\
&=d\operatorname{ind} v+2d-2\geq 2,
\end{align*}
from which we conclude that disks of index $<2$ are not multiple covers.

For disks with one positive puncture, we can use \cite{EES07} to perturb $\Lambda$ near the positive puncture to get regularity. Similar as in \cite{EES07}, we can achieve transversality for index zero and one disks with arbitrarily many positive punctures. To get regularity for the moduli space of index zero and one $J$-holomorphic annuli, we perturb the Legendrian knot as in \cite{EES07} to get regularity of the holomorphic annuli in the Lagrangian projection. Then, we use Lemma \ref{Lemma:obstruction_transversalityI} and Lemma \ref{Lemma:obstruction_transversalityII} to get regularity for their lifts to $\lR^4$.
\end{proof}

In the proof of the master equation, we additionally need the following fact. Using the removal of boundary and interior singularities for pseudoholomorphic maps, we can see any nodal annulus in the boundary of the 1-dimensional moduli space as an index zero pseudoholomorphic disk together with a boundary self-intersection or an interior intersection with $\lR\times\Lambda$.
\begin{lemma}[\cite{oh_rem_sing}]\label{Lemma:sing_removal}
Let $u:\mathring\Sigma\backslash\{\tau\}\to\lR^4$ be a pseudoholomorphic map with boundary on a Lagrangian $L$, where $\mathring\Sigma$ is a punctured Riemann surface and $\tau\in\mathring\Sigma$ a boundary or an interior point. Assume $u$ can be continuously extended at $\tau$. Then the extension $u:\mathring\Sigma\to \lR^4$ is smooth and pseudoholomorphic at $\tau$.
\end{lemma}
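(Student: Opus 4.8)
The plan is to reduce the statement to a local model around $\tau$ and then run elliptic bootstrapping for the nonlinear Cauchy--Riemann operator, exactly in the spirit of \cite{oh_rem_sing}. Since both smoothness and the equation $J\circ du=du\circ j$ are local conditions, it suffices to treat two models. When $\tau$ is an interior puncture I would take the domain to be a punctured disk $\lD\setminus\{0\}$ with $u$ continuous at $0$; when $\tau$ is a boundary puncture I would take a punctured half-disk $\{z\in\lD\,:\,\operatorname{Im}z\geq 0\}\setminus\{0\}$ with the diameter mapped into $L$ and $u$ again continuous at $0$. In both cases I would first choose coordinates on $\lR^4$ centred at $u(0)$ in which $J(u(0))=J_0$ is the standard complex structure (and, in the boundary case, in which $L$ is totally real and tangent to $\lR^2\subset\lC^2$ at the origin). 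Writing $z=s+it$, the equation $\partial_s u+J(u)\partial_t u=0$ then takes the perturbed form $\partial_{\bar z}u=A(u)\,\partial_z u$, where the matrix-valued $A$ vanishes wherever $J(u)=J_0$; by continuity of $u$ at $0$ we may shrink the disk so that $\|A(u)\|_{C^0}$ is as small as desired.

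The core step is to upgrade continuity to Sobolev regularity across $\tau$. First I would argue that the continuous extension $u$ is a weak solution of the Cauchy--Riemann equation on the full (half-)disk: because $u$ is bounded, smooth off $0$, and a single point has zero $2$-capacity, the boundary terms arising from integration by parts over $\lD\setminus\{|z|<r\}$ against a test form vanish as $r\to 0$, so no distribution supported at $0$ is produced and $u\in W^{1,2}_{\mathrm{loc}}$ satisfies $\partial_{\bar z}u=A(u)\,\partial_z u$ weakly. Applying the Calderón--Zygmund estimate for $\partial_{\bar z}$ and absorbing the term $A(u)\,\partial_z u$ using the smallness of $\|A(u)\|_{C^0}$ yields $u\in W^{1,p}_{\mathrm{loc}}$ for some $p>2$ near $\tau$. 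Once $u\in W^{1,p}$ with $p>2$, the right-hand side $A(u)\,\partial_z u$ gains regularity, and the standard bootstrap for the $\bar\partial$-operator (see \cite{mcduffsal04}) shows that $u$ is smooth at $\tau$ and $J$-holomorphic there.

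For the boundary puncture the same scheme applies once the totally real boundary condition is incorporated. Here I would either double the map across the straightened totally real boundary by a Schwarz-type reflection to reduce to the interior argument, or invoke directly the relative interior estimates for the $\bar\partial$-operator with totally real boundary data used in the disk regularity theory cited earlier (\cite{EES05I,CELN17}). I expect the boundary case to be the main obstacle: the reflection is only available after straightening $L$, which requires controlling the loss of regularity of $J$ and of the boundary arc under the change of coordinates, and the weak-solution/capacity step must be carried out with the boundary present so that the integration by parts picks up no spurious contribution along the real axis. These are precisely the technical points handled in \cite{oh_rem_sing}, from which the lemma follows.
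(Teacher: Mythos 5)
The paper itself contains no proof of this lemma---it is quoted directly from \cite{oh_rem_sing}---so your proposal has to be measured against the standard removal-of-singularities argument (Oh; see also \cite{mcduffsal04}). Your outline follows that template in most respects, but it has a genuine gap at its center: the step where you pass from continuity at $\tau$ to ``$u\in W^{1,2}_{\mathrm{loc}}$ satisfies $\partial_{\bar z}u=A(u)\,\partial_z u$ weakly.'' The zero-capacity/integration-by-parts device removes a point singularity in the weak formulation only for a map \emph{already known} to lie in $W^{1,2}$ (or at least to have $du\in L^1_{\mathrm{loc}}$) across the puncture; continuity gives neither. A continuous map that is $J$-holomorphic off a point could a priori have infinite energy near that point, in which case $A(u)\,\partial_z u$ need not be locally integrable, the distributional equation on the full (half-)disk is not even formulated, and the Calder\'on--Zygmund/Meyers higher-integrability step has nothing to start from. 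Establishing finite local energy is precisely the analytic heart of removal-of-singularity theorems---it is where Oh (whose theorem is in fact stated under a finite-energy hypothesis) and McDuff--Salamon spend their effort---and your proposal silently assumes it.

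The gap is fixable, because continuity does imply finite energy, but by a monotonicity/isoperimetric argument rather than by capacity: write $\omega=d\lambda$ near $u(\tau)$ with $\lambda(u(\tau))=0$ (and, in the boundary case, with $\lambda|_L=0$, which exists since $L$ is Lagrangian, so that the boundary terms along $L$ vanish); then $g(r)=\int_{|z|=r}u^*\lambda$ is monotone in $r$ by nonnegativity of the energy of $J$-holomorphic annuli, one has $|g(r)|\le C\,\epsilon(r)\,\ell(r)$ with $\epsilon(r)=\sup_{|z|\le r}|u-u(\tau)|\to 0$ and $\ell(r)$ the length of $u(\{|z|=r\})$, and Cauchy--Schwarz bounds $\ell(r)^2$ by the angular energy on the circle. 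In logarithmic coordinates this produces a differential inequality of the form $-H'(s)\ge c\,H(s)^2/\epsilon(s)^2$ for $H=-g$, whose integration is incompatible with $H\to\infty$; hence the energy near $\tau$ is finite. After that, your weak-equation, higher-integrability, and bootstrap steps go through essentially as you describe, with one further caveat: Schwarz reflection across the straightened totally real boundary is only available when $J$ can be made invariant under the reflection, which is generally not arrangeable, so in the boundary case one should invoke the boundary elliptic estimates for $\bar\partial$ with totally real boundary conditions directly, as Oh does.
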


\subsection{The boundary operator}\label{Sec:The_differential}
Let $\Lambda$ be a Legendrian knot and $\cC$ the vector space generated by strings and string pairs as before. We define a degree $-1$ linear map $d_\Lambda=d:\cC\to\cC$ by
$$d \alpha=\{\alpha,H\}+d_{\str}\alpha.$$
Now, it is easy to show $d\circ d=0$ using Proposition \ref{Prop:Master_eq1} and the properties of $d_{\str}$ and $\{\cdot,\cdot\}$.

\begin{prop}\label{Prop:d_square_vanishes}
The map $d:\cC\to\cC$ satisfies $d\circ d=0$.
\end{prop}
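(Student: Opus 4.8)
The plan is to reduce $d\circ d=0$ to a purely formal consequence of the master equation together with the graded Jacobi identity, the derivation property of $d_{\str}$, and $d_{\str}^2=0$; this is the familiar ``$d=\{\cdot,H\}+d_{\str}$'' computation from SFT-type theories. Since everything is linear, it suffices to work with a homogeneous generator $\alpha\in\cC$. First I would expand
\begin{align*}
d(d\alpha)&=\{\{\alpha,H\}+d_{\str}\alpha,\,H\}+d_{\str}\bigl(\{\alpha,H\}+d_{\str}\alpha\bigr)\\
&=\{\{\alpha,H\},H\}+\{d_{\str}\alpha,H\}+d_{\str}\{\alpha,H\}+d_{\str}^2\alpha.
\end{align*}
The last term vanishes by the lemma $d_{\str}^2=0$. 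Since $|H|=-2$ is even, the derivation property $d_{\str}\{\alpha,H\}=\{\alpha,d_{\str}H\}-(-1)^{|H|}\{d_{\str}\alpha,H\}=\{\alpha,d_{\str}H\}-\{d_{\str}\alpha,H\}$ makes the two $\{d_{\str}\alpha,H\}$ contributions cancel, leaving
\[
d(d\alpha)=\{\{\alpha,H\},H\}+\{\alpha,d_{\str}H\}.
\]

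Next I would rewrite the iterated bracket. Applying the graded Jacobi identity with $x=\alpha$, $y=z=H$ and simplifying the sign prefactors using $|H|=-2$ (so $|H|+|H|$ is even and $(|\alpha|+|H|)(|H|+1)\equiv|\alpha|\pmod 2$), together with the graded antisymmetry of $\{\cdot,\cdot\}$ (which holds for $\{\cdot,\cdot\}_1$ by Lemma \ref{Lemma:PropertiesReduced} and for $\{\cdot,\cdot\}_2$ by its defining relation, hence for the full bracket), I get $\{H,\alpha\}=(-1)^{|\alpha|}\{\alpha,H\}$ and $\{\{H,H\},\alpha\}=-\{\alpha,\{H,H\}\}$ (note $|\{H,H\}|=-3$). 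This is expected to collapse the three Jacobi terms into the standard identity
\[
\{\{\alpha,H\},H\}=\tfrac12\{\alpha,\{H,H\}\}.
\]
Substituting the master equation $\tfrac12\{H,H\}=-d_{\str}H$ from Proposition \ref{Prop:Master_eq1} then gives $\{\{\alpha,H\},H\}=-\{\alpha,d_{\str}H\}$, so that $d(d\alpha)=-\{\alpha,d_{\str}H\}+\{\alpha,d_{\str}H\}=0$.

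No geometric input beyond the already-established master equation enters; the only real work is bookkeeping. The main (and essentially only) obstacle is a sign slip in the step $\{\{\alpha,H\},H\}=\tfrac12\{\alpha,\{H,H\}\}$, where the two applications of antisymmetry and the Jacobi prefactors must be evaluated consistently at the specific degrees $|H|=-2$ and $|\{H,H\}|=-3$. I would guard against this by keeping $\alpha$ homogeneous and tracking every Koszul sign explicitly rather than invoking the even-$H$ identity as a black box, and by double-checking the even/odd parity of $|H|$ at each place where a $(-1)^{|H|}$ factor appears.
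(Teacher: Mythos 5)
Your proposal is correct and follows essentially the same route as the paper's proof: expand $d\circ d(\alpha)$, kill $d_{\str}^2\alpha$, cancel the two $\{d_{\str}\alpha,H\}$ terms via the derivation property (using $|H|=-2$ even), collapse $\{\{\alpha,H\},H\}$ by the Jacobi identity and antisymmetry, and invoke the master equation $\tfrac12\{H,H\}+d_{\str}H=0$. The only cosmetic difference is that the paper writes the final step as $-\{\tfrac12\{H,H\}+d_{\str}H,\alpha\}$ rather than flipping to $\tfrac12\{\alpha,\{H,H\}\}$, and your sign bookkeeping at degrees $|H|=-2$, $|\{H,H\}|=-3$ checks out.
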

\begin{proof}
Follows from
\begin{align*}
d\circ d(\alpha)=&\{\{\alpha,H\},H\}+\{d_{\str}\alpha,H\}+\\
&+d_{\str}\{\alpha,H\}+d_{\str}^2\alpha=\\
=&-\frac{1}{2}\{\{H,H\},\alpha\}+\{\alpha,d_{\str}H\}=\\
=&-\{1/2\{H,H\}+d_{\str}H,\alpha\}=\\
=&0.
\end{align*}
\end{proof}

\begin{rmk}
We defined $\cC(\Lambda)$ as the space generated by strings and string pairs with at least one positive puncture in order to simplify the definition of the SFT bracket. Alternatively, we can work with the space $\cA(\Lambda)$ generated by strings with zero or one positive puncture, and define the SFT bracket as an action of $\cC(\Lambda)$ on $\cA(\Lambda)$. We take this approach in Section \ref{Section:IIinvar_def}. The definition in Section \ref{Section:Algebraic_definition} additionally contains more algebraic structure and is easier to work with when it comes to computations.
\end{rmk}

There is a standard procedure for Legendrian knots called front resolution that gives us Lagrangian projection of a Legendrian knot from the front projection ($xz$-projection) of $\Lambda$ by smoothing out right cusps and replacing left cusps by loops.

\begin{thm}\label{Theorem:Invariance0}
Let $\Lambda_0,\Lambda_1$ be Legendrian isotopic knots and $\left(\cC(\widetilde\Lambda_0),d_{\widetilde\Lambda_0}\right),\left(\cC(\widetilde\Lambda_1),d_{\widetilde\Lambda_1}\right)$ the chain complexes associated to front resolutions $\widetilde\Lambda_0$ and $\widetilde\Lambda_1$ of $\Lambda_0$ and $\Lambda_1$. Then
$$H_*\left(\cC(\widetilde\Lambda_0),d_{\widetilde\Lambda_0}\right)\cong H_*\left(\cC(\widetilde\Lambda_1),d_{\widetilde\Lambda_1}\right).$$
\end{thm}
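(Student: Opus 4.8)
The plan is to prove the stronger statement that the complexes are invariant under each elementary move of a generic Legendrian isotopy, and then deduce the homology isomorphism by composing the resulting quasi-isomorphisms. First I would fix a generic one-parameter family $\Lambda_s$, $s\in[0,1]$, realizing the isotopy between the two front resolutions, and decompose it in the Lagrangian projection into finitely many elementary transitions: Reidemeister II moves (Figure \ref{Figure:ReidII}), Reidemeister III moves (Figure \ref{Figure:ReidIII}), and passages through a type IV degenerate knot, i.e.\ a value $s_0$ at which the obstruction section $\Omega$ vanishes on some rigid annulus in $\cM^\pi_{2,0}(\Lambda_{s_0})$. Away from these values the Reeb chords and all relevant moduli spaces are in canonical bijection, so the complexes $(\cC(\Lambda_s),d_{\Lambda_s})$ vary by an obvious identification, and it suffices to produce, across each transition, an explicit chain map inducing an isomorphism on homology.

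For a Reidemeister III move the Reeb chords are in canonical bijection before and after the move, and the only change to $H$ comes from a single triangle region; following \cite{Chekanov02} and \cite{Ng_rLSFT} I would write down the induced tame automorphism of the generators and verify directly, using the description of the disk and annulus moduli (Propositions \ref{Prop:compactness_disk}, \ref{Prop:compactness_annulus}) and the derivation property of $d_{\str}$ with respect to $\{\cdot,\cdot\}$, that it intertwines $d_\Lambda=\{\cdot,H\}+d_{\str}$ on the two sides. For a Reidemeister II move a canceling pair of Reeb chords $a,b$ is created, and I would show that the post-move complex is a stabilization of the pre-move one. The analytic input is control of the new curves: Lemma \ref{Lemma:OSection_ReidII} and Lemma \ref{Lemma:OSection_ReidIIb} show that the annuli pinched during the move either push $\Omega$ to $\pm\infty$ or leave the annular obstruction value unchanged, so that the newly created strings enter $H$ only through terms of the form $b+(\text{higher order})$. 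This yields the standard algebraic stabilization argument, now extended to the string and annulus sectors, giving a tame equivalence after stabilization.

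The genuinely new transition is the type IV wall. Here Corollary \ref{Corollary:counting_annuli_from_O} computes the jump in the count of index zero annuli as $s$ crosses $s_0$: the change equals the number of ways the rigid annulus $u_0\in\cM^\pi_{2,0}$ can be glued to a rigid disk with one positive puncture. I would package this jump as an explicit degree zero tame automorphism of $\cC$ and check that it conjugates $d_{\Lambda_{s_0-\epsilon}}$ to $d_{\Lambda_{s_0+\epsilon}}$; equivalently, by Proposition \ref{Prop:intro_combinatorial_count} one may replace the honest annulus count in $H$ by the count of zeros of a combinatorial obstruction section $\Omega^{\vir}$, which is locally constant across $s_0$ precisely because $\Omega^{\vir}$ is required to avoid $0$ on all of $\cM^\pi_{2,0}$. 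Composing the quasi-isomorphisms from all transitions yields the asserted isomorphism $H_*(\cC(\widetilde\Lambda_0),d_{\widetilde\Lambda_0})\cong H_*(\cC(\widetilde\Lambda_1),d_{\widetilde\Lambda_1})$.

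The main obstacle is the sign bookkeeping and the verification of the chain map property in the presence of the two new pieces of the differential, the loop product $\delta$ and the corrected loop coproduct $\nabla$. Unlike the purely disk based arguments of \cite{Chekanov02} and \cite{Ng_rLSFT}, the annulus and string pairs force one to track how boundary self-intersections and interior intersections of the bounding disks reorganize under each move, and to confirm that the degeneration analysis of $\Omega$ at the split and non-split boundary points of $\overline\cM^\pi_{2,1}$ matches the algebra exactly. The type IV case, where the annulus count actually jumps, is where this matching is most delicate.
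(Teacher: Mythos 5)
Your overall route is the paper's route: the same decomposition of a generic isotopy into Reidemeister III moves, Reidemeister II moves and type IV walls, with a tame automorphism for RIII, a stabilization argument for RII, and the jump formula of Corollary \ref{Corollary:counting_annuli_from_O} (equivalently the combinatorial obstruction section of Proposition \ref{Prop:intro_combinatorial_count}) across the type IV wall. However, there is one genuine gap at the Reidemeister II step: you assert that the stabilization argument works for \emph{every} RII move, but the paper can only prove it for \emph{admissible} moves (Definition \ref{Def:admissible_ReidII}), i.e.\ those for which there is no index zero disk with two positive punctures both asymptotic to the newly created long chord $a$. This restriction is precisely why Theorem \ref{Theorem:Invariance0} is stated for front resolutions --- a hypothesis your argument never actually uses --- together with the combinatorial argument of Remark \ref{Remark:typeII_extra_assumption} showing that front resolutions of Legendrian isotopic knots can be joined by an isotopy containing only admissible RII moves (by first crossing with right cusps to kill inadmissible disks). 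If inadmissibility is allowed, the pinching of Figure \ref{Fig:disk_pinching_inadmissible} and Figure \ref{Fig:annulus_pinching_inadmissible} occurs: then $\{q_a,q_a\}_{d_0}\neq 0$, the morphism $\psi_0$ of Lemma \ref{Lemma:phi_0_inverse_proj} is no longer consistent (the proof there uses $\{q_a,q_b\}_{\psi_0}=-\epsilon(-1)^{|q_a|}(\psi_0\otimes\psi_0)\{q_a,q_a\}_{d_0}=0$), and the curve correspondence in Lemma \ref{Lemma:prop2b} acquires configurations not matched by any term of the stabilized differential. So "$b+(\text{higher order})$ enters $H$" is not enough; your plan needs the extra step of arranging the isotopy to be admissible, which is where the front resolution hypothesis does its work.

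Two further, smaller omissions. First, your decomposition leaves out the change of base point: a generic isotopy forces the base point to cross Reeb chord endpoints, and the paper handles this as a separate step (Section \ref{Sec:invar_base_pt}, Proposition \ref{Prop:eq_after_T}) via the explicit tame isomorphism $q_i\mapsto q_it^+$ or $q_i\mapsto t^+q_i$; this cannot be absorbed into the other moves. Second, calling the RII step "the standard algebraic stabilization argument, now extended" understates what is needed in the second-order setting: the morphism $\phi_0$ only intertwines the two differentials modulo the ideal of words containing $q_a,q_b$ (Corollary \ref{Corollary:bootstrap_property}), and upgrading this to a genuine tame second-order dga isomorphism requires the inductive bootstrapping of Lemma \ref{Lemma:bootstrap}, with generators ordered by action, weakly filtered morphisms, and the chain homotopy $h$ of Lemma \ref{Lemma:Chain_homotopy} correcting $d$ and the antibracket generator by generator. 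That machinery, not present in \cite{Chekanov02} or \cite{Ng_rLSFT}, is an essential ingredient rather than bookkeeping.
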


After introducing the structure of a second-order differential graded algebra (quantum BV-algebra) in Section \ref{Section:Algebraic_definition}, we prove a stronger version of Theorem \ref{Theorem:Invariance0} in Section \ref{Sec:Invariance}, which can be seen as an analogue of stable tame equivalence introduced in \cite{Chekanov02}. Invariance up to Reidemeister II move is shown for a class of moves that we call admissible. This is the reason why we define the invariant of a Legendrian knot by looking at its front resolution. Similar situation appears in \cite{Ng_rLSFT} due to a different reason. We show that for front resolutions of two Legendrian isotopic knots there exists an isotopy that does not contain non-admissible Reidemeister II moves.

\section{Second-order dga for Legendrian knots}\label{Section:Algebraic_definition}
In this section we reformulate the definition of the Legendrian knot invariant defined in Section \ref{Sec:Invariant_definition}, giving it more algebraic structure and, hence, making it more suitable for computations. More precisely, we give a definition of a second-order differential graded algebra associated to a Legendrian knot $\Lambda$, which, seen as a chain complex, is analogous to the first definition. For simplicity, we describe only the part of the chain complex restricted to strings with one positive puncture. The full chain complex is constructed by combining our definition with the differential in \cite{Ng_rLSFT}, adding cyclic words with one letter $p$. 

\subsection{Second-order differential graded algebras}\label{Section:IIord_dga_definition}
In this section, we define the notion of a second-order differential graded algebra (second-order dga). This structure comes with an operator $d$ such that $d^2=0$ and an antibracket $\{\cdot,\cdot\}$ that measures the failure of $d$ to be a derivation. Additionally, we define the notion of a morphism. This section is purely algebraic.

Let $\widetilde\cA$ be the tensor algebra generated by $q_1,\dots,q_n$ over $\lQ$. Define a grading on $\widetilde\cA$ by taking $|q_i|=a_i$ for some $a_i\in\lZ$. Let $\widetilde\cA^{\cyc}$ be the corresponding vector space of cyclic words, i.e. the quotient space $\widetilde\cA/\cI$ for $\cI$ the vector subspace generated by $\{vw-(-1)^{|v||w|}wv\,|\,v,w\in\widetilde\cA\text{ words}\}$. We consider the graded vector space 
$$\cA=\widetilde\cA\oplus\hbar\,(\widetilde\cA\otimes \widetilde\cA^{\cyc}),$$ 
where $\hbar$ is a formal variable such that $|\hbar|=-1$. Elements in $\cA$ are denoted by $u+\hbar\, w$ for $u\in\widetilde\cA,w\in\widetilde\cA\otimes\widetilde\cA^{\cyc}$. The algebra structure on $\cA$ is given by 
\begin{align*} 
&w\cdot\hbar(v_1\otimes v_2)=(-1)^{|w|(|v_2|+1)}\hbar(wv_1\otimes v_2),\\
&\hbar(v_1\otimes v_2)\cdot w=\hbar(v_1w\otimes v_2),\\
&\hbar(v_1\otimes v_2)\cdot \hbar(w_1\otimes w_2)=0,
\end{align*} 
and by concatenation of words on $\widetilde\cA$. For $\omega\in\hbar\,(\widetilde\cA\otimes\widetilde\cA^{\cyc})$ and $s\in\cA$, we sometimes write $\omega\otimes s$ or $s\otimes\omega$, which should be seen as zero.

Recall the definition of a differential graded algebra (dga) structure on $\cA$ as a choice of a degree $-1$ linear map $d:\cA\to \cA$ such that
\begin{align*}
&d(w_1w_2)=d(w_1)w_2+(-1)^{|w_1|}w_1d(w_2),\\
&d(\hbar(v\otimes w))=(-1)^{|w|+1}\hbar(d_0v\otimes w)-\hbar(v\otimes d_0^{\cyc}w),\\
&d^2(w)=0,
\end{align*}
where $d_0\coloneq\pi_{\widetilde\cA}\circ d\circ \iota_{\widetilde\cA}:\widetilde\cA\to\widetilde\cA$ and $d_0^{\cyc}:\widetilde\cA^{\cyc}\to\widetilde\cA^{\cyc}$ is the linear map induced by $d_0$ on the cyclic quotient. A map $d$ that satisfies the first two conditions is called a derivation on $\cA$. For any $Q_i\in \cA$ such that $|Q_i|=|q_i|-1$, there exists a unique derivation $d:\cA\to \cA$ such that $d(q_i)=Q_i$. This map is a differential if and only if $d^2(s)=0$ for all $s=q_i,i\in\{1,\dots,n\}$.

A morphism of dg algebras $(\cA,d),(\cA',d')$ is a degree 0 linear map $f:\cA\to \cA'$ such that
\begin{align*}
&f(vw)=f(v)f(w),\\
&f(\hbar(v\otimes w))=\hbar(f_0v\otimes f_0^{\cyc}w),\\
&d'\circ f(w)=f\circ d(w),
\end{align*}
for all $v,w\in\cA$, where $f_0\coloneq\pi_{\widetilde\cA'}\circ f\circ \iota_{\widetilde\cA}:\widetilde\cA\to\widetilde\cA'$ and $f_0^{\cyc}:\widetilde\cA^{\cyc}\to\widetilde\cA'^{\cyc}$ is the map induced by $f_0$ on the cyclic quotient. For any $F_i\in\cA'$ such that $|F_i|=|q_i|$, there exists a unique linear map $f:\cA\to\cA'$ such that $f(q_i)=F_i$ that satisfies the first two conditions. This map is a dga morphism if and only if $d'\circ f(s)=f\circ d(s)$ for all $s=q_i,i=1,\dots,n$.

Next, we introduce the notion of a second-order dg algebra and a second-order dga morphism. Consider the algebra structure on $\widetilde\cA\otimes\widetilde\cA$ given by 
\begin{align*}
(v_1\otimes v_2)\cdot(w_1\otimes w_2)=(-1)^{|v_1||w_2|}(v_1 w_1\otimes v_2 w_2).
\end{align*}

\begin{defi}
A degree $0$ bilinear map $\{\cdot,\cdot\}:\widetilde\cA\times \widetilde\cA\to\widetilde\cA\otimes \widetilde\cA$ is called an \textit{antibracket} if
\begin{align*}
&\{v,w_1w_2\}=\{v,w_1\}\cdot(w_2\otimes 1)+(-1)^{|v||w_1|}(1\otimes w_1)\cdot\{v,w_2\},\\
&\{v_1v_2,w\}=(v_1\otimes 1)\cdot\{v_2,w\}+(-1)^{|v_2||w|}\{v_1,w\}\cdot(1\otimes v_2),
\end{align*}
for all words $v,v_1,v_2,w,w_1,w_2\in\widetilde\cA$. 
\end{defi}
Antibracket induces a degree $-1$ linear map $\{\cdot,\cdot\}_\hbar:\cA\otimes\cA\to\cA$ given by 
$$\{v,w\}_\hbar=\hbar\,\pi_{\cyc}\{\pi_{\widetilde\cA}v,\pi_{\widetilde\cA}w\},$$ 
where $\pi_{\cyc}:\widetilde\cA\otimes\widetilde\cA\to\widetilde\cA\otimes\widetilde\cA^{\cyc}$ is induced by the cyclic quotient.

For $f,g:\widetilde \cA\to\widetilde\cA'$ graded linear maps, we define a linear map $f\otimes g:\widetilde\cA\otimes\widetilde\cA\to\widetilde\cA'\otimes\widetilde\cA'$
\begin{align*}
(f\otimes g)(v_1\otimes v_2)=(-1)^{|f||v_2|}f(v_1)\otimes g(v_2).
\end{align*}

\begin{defi}\label{Definition:IIord_derivation}
A degree $-1$ linear map $d:\cA\to\cA$ is a \textit{second-order derivation} with respect to an antibracket $\{\cdot,\cdot\}$ on $\widetilde\cA$ if
\begin{align*}
&d(vw)=d(v)w+(-1)^{|v|}v d(w)+\{v,w\}_\hbar,\\
& d(\hbar(v\otimes w))=(-1)^{|w|+1}\hbar(d_0v\otimes w)-\hbar(v\otimes d_0^{\cyc}w),
\end{align*}
for all generators $v,w\in\cA$, where $d_0\coloneq\pi_{\widetilde\cA}\circ d\circ \iota_{\widetilde\cA}$ and $d_0^{\cyc}:\widetilde\cA^{\cyc}\to\widetilde\cA^{\cyc}$ is the linear map induced by $d_0$ on the cyclic quotient. Furthermore, we say $d:\cA\to\cA$ is a \textit{strong} second-order derivation with respect to $\{\cdot,\cdot\}$ if $d$ is additionally a derivation with respect to $\{\cdot,\cdot\}$, i.e. if
\begin{align}\label{Equation:third_condition}
(d_0\otimes 1+1\otimes d_0)\{v,w\}=\{d_0v,w\}+(-1)^{|v|}\{v,d_0w\}\in\widetilde\cA\otimes\widetilde\cA.
\end{align}
\end{defi}
A strong second-order derivation $d:\cA\to\cA$ such that $d^2(s)=0$ for all $s=q_i,i\in\{1,\dots,n\}$ satisfies $d^2(s)=0$ for all $s\in\cA$. It is not difficult to show that (\ref{Equation:third_condition}) holds if and only if 
$$(d_0\otimes 1+1\otimes d_0)\{q_i,q_j\}=\{d_0q_i,q_j\}+(-1)^{|q_i|}\{q_i,d_0q_j\}$$ 
for all $i,j\in\{1,\dots,n\}$.

\begin{defi}
A \textit{second-order differential graded algebra} structure $(\cA,d,\{\cdot,\cdot\})$ on $\cA$ consists of an antibracket $\{\cdot,\cdot\}$ on $\widetilde\cA$ and a strong second-order derivation $d:\cA\to\cA$ with respect to $\{\cdot,\cdot\}$ such that $d^2=0$. 
\end{defi}
\begin{rmk}
The structure of a second-order dg algebra is similar to quantum Batalin--Vilkovisky algebra (also known as Beilinson--Drinfeld algebra). One difference is that second-order dg algebra is not commutative or graded-commutative and is more suitable for recording the cyclic ordering of the boundary punctures of pseudoholomorphic curves.
\end{rmk}

Before we define second-order graded algebra morphisms, we need to introduce the notion of an $f$-antibracket for $f:\widetilde\cA\to\widetilde\cA'$ a degree zero algebra map. 
\begin{defi}
A degree 1 bilinear map $\{\cdot,\cdot\}_f:\widetilde\cA\times \widetilde\cA\to\widetilde\cA'\otimes \widetilde\cA'$ is called an $f$-\textit{antibracket} if
\begin{align*}
&\{v,w_1w_2\}_f=\{v,w_1\}_f\cdot(fw_2\otimes 1)+(-1)^{|w_1|(|v|+1)}(1\otimes fw_1)\cdot\{v,w_2\}_f,\\
&\{v_1v_2,w\}_f=(-1)^{|v_1|}(fv_1\otimes 1)\cdot\{v_2,w\}_f+(-1)^{|v_2||w|}\{v_1,w\}_f\cdot(1\otimes fv_2),
\end{align*}
for all words $v,v_1,v_2,w,w_1,w_2\in\widetilde\cA$. 
\end{defi}
The map $\{\cdot,\cdot\}_f$ induces a degree 0 linear map $\{\cdot,\cdot\}_{f,\hbar}:\cA\otimes\cA\to\cA'$ given by 
$$\{v,w\}_{f,\hbar}=\hbar\,\pi_{\cyc}\{\pi_{\widetilde\cA}v,\pi_{\widetilde\cA}w\}_f.$$

\begin{defi}A \textit{second-order graded algebra morphism} is a degree 0 linear map $f:\cA\to\cA'$ such that $f_0\coloneq\pi_{\widetilde\cA'}\circ f\circ i_{\widetilde\cA}$ is an algebra map, together with an $f_0$-antibracket $\{\cdot,\cdot\}_f$, such that
\begin{align*}
&f(1)=1,\\
&f(vw)=fv fw+\{v,w\}_{f,\hbar},\\
&f(\hbar(v\otimes w))=\hbar(f_0v\otimes f_0^{\cyc}w),
\end{align*}
for all $v,w\in\cA$, were $f_0^{\cyc}$ is the map induced by $f_0$ on the cyclic quotient as before.
\end{defi}
\begin{defi}
Let $(\cA,d,\{\cdot,\cdot\}_d)$ and $(\cA',d',\{\cdot,\cdot\}_{d'})$ be second-order dg algebras. We say a second-order graded algebra morphism $f:\cA\to\cA'$ with respect to an $f_0$-antibracket $\{\cdot,\cdot\}_f$ preserves the second-order dga structure on $\cA,\cA'$ if
\begin{align*}
&(f_0\otimes f_0)\{v,w\}_{d}+\{d_0v,w\}_f+(-1)^{|v|}\{v,d_0w\}_f=\\
&=\{f_0v,f_0w\}_{d'}-(d'_0\otimes 1+1\otimes d'_0)\{v,w\}_f,
\end{align*}
and
\begin{align*}
&d'\circ f(w)=f\circ d(w),
\end{align*}
for all $v,w\in\cA$. A map $f$ that satisfies these properties will be referred to as a \textit{second-order dga morphism}. It is not difficult to see that the second condition follows from the first one and $d'\circ f(s)=f\circ d(s)$ for all $s=q_i,i\in\{1,\dots,n\}$.
\end{defi}

The following lemmas are straightforward.

\begin{lemma}\label{Lemma:Second_order_derivation}
For any $Q_i\in\cA,i\in\{1,\dots,n\}$ and $R_{ij}=\sum R_{ij}^2\otimes R_{ij}^1\in\widetilde\cA\otimes\widetilde\cA,i,j\in\{1,\dots,n\}$, where each $R_{ij}^\bullet$ is a scalar multiple of a word, such that $|Q_i|=|q_i|-1,|R_{ij}|=|q_i|+|q_j|$, there exists a unique antibracket $\{\cdot,\cdot\}:\widetilde\cA\times \widetilde\cA\to \widetilde\cA\otimes\widetilde\cA$ such that $\{q_i,q_j\}=\sum(-1)^{(1+|q_i|)|R_{ij}^1|}R_{ij}^2\otimes R_{ij}^1$, and a unique second-order derivation $d:\cA\to\cA$ with respect to $\{\cdot,\cdot\}$ such that $d(q_i)=Q_i$. The map $d$ is a strong second-order derivation if additionally 
$$(d_0\otimes 1+1\otimes d_0)\{q_i,q_j\}=\{\pi_{\widetilde\cA}Q_i,q_j\}+(-1)^{|q_i|}\{q_i,\pi_{\widetilde\cA}Q_j\}$$ 
for all $i,j$.
\end{lemma}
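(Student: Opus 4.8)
The plan is to read the lemma as two successive free-algebra universal-property statements together with one reduction-to-generators argument, exploiting throughout that $\widetilde\cA$ is the free (tensor) algebra on $q_1,\dots,q_n$, so that a map out of $\widetilde\cA$ obeying Leibniz-type identities is determined by, and freely specified by, its values on the generators. First I would settle existence and uniqueness of the antibracket. Uniqueness is immediate: for any words $v,w$ the identity $\{v_1v_2,w\}=(v_1\otimes 1)\cdot\{v_2,w\}+(-1)^{|v_2||w|}\{v_1,w\}\cdot(1\otimes v_2)$ peels letters off the front of $v$, and $\{v,w_1w_2\}=\{v,w_1\}\cdot(w_2\otimes 1)+(-1)^{|v||w_1|}(1\otimes w_1)\cdot\{v,w_2\}$ peels letters off $w$, so by induction on $|v|+|w|$ the value $\{v,w\}$ is a forced universal expression in the $\{q_i,q_j\}$. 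For existence I would fix the closed-form expansion obtained by iterating these rules: a term of $\{v,w\}$, with $v=a_1\cdots a_p$ and $w=b_1\cdots b_q$, selects one letter $a_s$ and one letter $b_t$, inserts $\{a_s,b_t\}$, and distributes the rest by multiplying $a_1\cdots a_{s-1}$ and $b_{t+1}\cdots b_q$ into the first tensor factor and $b_1\cdots b_{t-1}$ and $a_{s+1}\cdots a_p$ into the second, with the Koszul signs dictated by the product on $\widetilde\cA\otimes\widetilde\cA$. Taking this as the definition makes $\{\cdot,\cdot\}$ manifestly bilinear, and the two Leibniz identities follow by splitting the double sum according to whether the distinguished letter lands in $w_1$ or $w_2$ (resp. $v_1$ or $v_2$); the only genuine point is that expanding the first argument first and the second argument first give the same answer, which is the compatibility visible from the explicit formula.

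Next I would build $d$. On $\widetilde\cA$ the second-order rule $d(vw)=d(v)w+(-1)^{|v|}vd(w)+\{v,w\}_\hbar$ determines $d$ recursively from $d(q_i)=Q_i$, so uniqueness on $\widetilde\cA$ is automatic and the only issue is consistency with associativity. Expanding $d((uv)w)$ and $d(u(vw))$ by the rule, all terms containing a factor $d(u),d(v),d(w)$ cancel in pairs, and consistency reduces to the purely bracket-theoretic identity $(\ast)$: $\{uv,w\}_\hbar-\{u,vw\}_\hbar=(-1)^{|u|}u\{v,w\}_\hbar-\{u,v\}_\hbar w$ in $\cA$. To prove $(\ast)$ I would substitute the two antibracket Leibniz rules into $\{uv,w\}$ and $\{u,vw\}$ before applying $\hbar\,\pi_{\cyc}$. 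One term from each expansion yields exactly $(-1)^{|u|}u\{v,w\}_\hbar$ and $-\{u,v\}_\hbar w$ after using $w\cdot\hbar(v_1\otimes v_2)=(-1)^{|w|(|v_2|+1)}\hbar(wv_1\otimes v_2)$ and $\hbar(v_1\otimes v_2)\cdot w=\hbar(v_1w\otimes v_2)$. The two remaining cross terms, of shapes $(1\otimes v)\cdot\{u,w\}$ and $\{u,w\}\cdot(1\otimes v)$, differ only in whether $v$ is attached to the left or the right end of the second (cyclic) tensor factor; since that factor lies in $\widetilde\cA^{\cyc}$, the two attachments agree up to the Koszul sign and the cross terms cancel. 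I expect this cyclic cancellation to be the main obstacle: it is the one place where the structure genuinely forces the passage to cyclic words engineered into $\{v,w\}_\hbar=\hbar\,\pi_{\cyc}\{v,w\}$, and the sign accounting has to be carried out carefully.

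Finally, I would extend $d$ over the $\hbar$-summand and treat the strong condition. Since $\{v,w\}_\hbar$ lands in the $\hbar$-part, projecting the Leibniz rule to $\widetilde\cA$ shows $d_0=\pi_{\widetilde\cA}\circ d\circ\iota_{\widetilde\cA}$ is an ordinary graded derivation of $\widetilde\cA$; hence it preserves the graded-commutator subspace $\cI$ and descends to $d_0^{\cyc}$ on $\widetilde\cA^{\cyc}$, so the prescribed formula $d(\hbar(v\otimes w))=(-1)^{|w|+1}\hbar(d_0v\otimes w)-\hbar(v\otimes d_0^{\cyc}w)$ is well defined on $\widetilde\cA\otimes\widetilde\cA^{\cyc}$ and is the unique possible extension, completing existence and uniqueness of the second-order derivation. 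For the strong condition I would set $T(v,w)=(d_0\otimes 1+1\otimes d_0)\{v,w\}-\{d_0v,w\}-(-1)^{|v|}\{v,d_0w\}$ and check, using that $d_0$ is a derivation and $\{\cdot,\cdot\}$ a biderivation, that $T$ again satisfies the two antibracket Leibniz rules in each argument. Being such a biderivation, $T$ vanishes identically as soon as it vanishes on every generator pair $(q_i,q_j)$, which is exactly the displayed hypothesis with $d_0q_i=\pi_{\widetilde\cA}Q_i$. This last step is a sign-laden but routine biderivation computation, so the substantive content of the lemma is the well-definedness secured by $(\ast)$.
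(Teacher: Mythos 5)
Your proof is correct and takes essentially the same route as the paper: the paper's proof consists precisely of the closed formulas your letter-by-letter expansion produces (the double sum over a chosen letter in each word for the antibracket, and the corresponding formula for $d$ on words and on the $\hbar$-summand). Your additional verifications — the consistency identity $(\ast)$ resolved by the cyclic cancellation under $\pi_{\cyc}$, the descent of the derivation $d_0$ to $\widetilde\cA^{\cyc}$, and the reduction of the strong condition to generators via the defect $T$ being a (degree $-1$) biderivation — are exactly the routine checks the paper leaves implicit, and they all go through with the stated sign conventions.
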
 
\begin{proof}
The antibracket is given by
\begin{align*}
&\{q_{s_1}\dots q_{s_l},q_{s_{l+1}}\dots q_{s_k}\}=\\
&=\sum_{i=1}^l\sum_{j={l+1}}^k  \sum(-1)^{\bullet}q_{s_1}\dots q_{s_{i-1}}R_{s_is_j}^2 q_{s_{j+1}}\dots q_{s_k} \otimes  q_{s_{l+1}}\dots q_{s_{j-1}} R_{s_is_j}^1 q_{s_{i+1}}\dots q_{s_l},
\end{align*}
where  
$$\bullet=\left(1+\sum_{a=1}^{j-1}|q_{s_a}|\right)\left(|R_{s_is_j}^1|+\sum_{b=i+1}^{j-1}|q_{s_b}|\right)+\sum_{a=l+1}^{j-1}|q_{s_a}|\left(|R_{s_is_j}^1|+\sum_{b=i+1}^l|q_{s_b}|\right).$$ 
The map $d:\cA\to\cA$ is given by
\begin{equation}
\label{Eq:Second_order_differential_formula}
\begin{aligned}
d(q_{s_1}\dots q_{s_k})=&\sum_{i=1}^k(-1)^{\sum_{a=1}^{i-1}|q_{s_a}|}q_{s_1}\dots q_{s_{i-1}} Q_{s_i} q_{s_{i+1}}\dots q_{s_k}+\\
&+\hbar\sum_{1\leq i<j\leq k}\sum(-1)^{\left(1+\sum_{a=1}^{j-1}|q_{s_a}|\right)\left(|R_{s_is_j}^1|+\sum_{b=i+1}^{j-1}|q_{s_b}|\right)}\cdot\\
&\cdot q_{s_1}\dots q_{s_{i-1}}R_{s_is_j}^2q_{s_{j+1}}\dots q_{s_k}\otimes R_{s_is_j}^1 q_{s_{i+1}}\dots q_{s_{j-1}},
\end{aligned}
\end{equation}
and
\begin{align*}
d(\hbar(q_{s_1}\dots q_{s_l}\otimes q_{s_{l+1}}\dots q_{s_k}))=&(-1)^{1+\sum_{a=l+1}^k|q_{s_a}|}\hbar(d(q_{s_1}\dots q_{s_l})\otimes q_{s_{l+1}}\dots q_{s_k})-\\
&-\hbar(q_{s_1}\dots q_{s_l}\otimes d(q_{s_{l+1}}\dots q_{s_k})).
\end{align*}
\end{proof}

\begin{lemma}\label{Lemma:Second_order_morphism}
For any $S_i\in\cA',i\in\{1,\dots,n\}$ and $U_{ij}=\sum U_{ij}^2\otimes U_{ij}^1\in\widetilde\cA'\otimes\widetilde\cA',i,j\in\{1,\dots,n\}$, where each $U_{ij}^\bullet$ is a scalar multiple of a word, such that $|S_i|=|q_i|,|U_{ij}|=|q_i|+|q_j|+1$, there exists a unique second-order graded algebra morphism $(f,\{\cdot,\cdot\}_f):\cA\to\cA'$ such that $\{q_i,q_j\}_f=\sum (-1)^{(1+|q_i|)(1+|U_{ij}^1|)}U_{ij}^2\otimes U_{ij}^1$ and $f(q_i)=S_i$. Moreover, if $(\cA,d,\{\cdot,\cdot\}_d)$, $(\cA',d',\{\cdot,\cdot\}_{d'})$ are second-order dg algebras, then $f$ is a second-order dga morphism if additionally $d'\circ f(q_i)=f\circ d(q_i)$ for all $i\in\{1,\dots,n\}$ and 
$$(f_0\otimes f_0)\{q_i,q_j\}_{d}+\{d_0q_i,q_j\}_f+(-1)^{|q_i|}\{q_i,d_0q_j\}_f=\{f_0q_i,f_0q_j\}_{d'}-(d'_0\otimes 1+1\otimes d'_0)\{q_i,q_j\}_f$$ 
for all $i,j\in\{1,\dots,n\}$.
\end{lemma}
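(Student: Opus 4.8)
The plan is to follow the same strategy as in Lemma \ref{Lemma:Second_order_derivation}: construct $f$ and $\{\cdot,\cdot\}_f$ by explicit formulas on words, determined by their prescribed values on generators, and then verify the defining identities by sign bookkeeping. First I would fix the underlying algebra map $f_0\colon\widetilde\cA\to\widetilde\cA'$. Since $\widetilde\cA$ is the tensor algebra on $q_1,\dots,q_n$, there is a unique algebra homomorphism with $f_0(q_i)=\pi_{\widetilde\cA'}S_i$, and because $|f_0v|=|v|$ it descends to a map $f_0^{\cyc}\colon\widetilde\cA^{\cyc}\to\widetilde\cA'^{\cyc}$ on cyclic words (the defining relation $vw-(-1)^{|v||w|}wv$ is sent into the corresponding relation in $\widetilde\cA'$). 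Next I would build the $f_0$-antibracket: prescribing $\{q_i,q_j\}_f$ as in the statement, the two Leibniz rules in the definition of an $f_0$-antibracket determine $\{v,w\}_f$ on arbitrary words $v=q_{s_1}\cdots q_{s_l}$, $w=q_{s_{l+1}}\cdots q_{s_k}$ by a single insertion of some $U_{s_is_j}$ with the remaining letters transported through $f_0$, giving a closed formula analogous to the one for the antibracket in Lemma \ref{Lemma:Second_order_derivation}. The point to verify here is that this formula is consistent, i.e. that expanding $\{v_1v_2,w_1w_2\}_f$ first in the left slot and then in the right gives the same element of $\widetilde\cA'\otimes\widetilde\cA'$ as the reverse order; the Koszul signs in the two Leibniz rules are arranged precisely so that this holds.

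With $f_0$ and $\{\cdot,\cdot\}_f$ in hand, I would define $f$ on $\widetilde\cA$ by $f(1)=1$ together with the quadratic rule $f(vw)=f(v)f(w)+\{v,w\}_{f,\hbar}$, and on the remaining summand by $f(\hbar(v\otimes w))=\hbar(f_0v\otimes f_0^{\cyc}w)$. Because $\hbar^2=0$ and products of two $\hbar$-terms vanish in $\cA'$, iterating the quadratic rule on a word $q_{s_1}\cdots q_{s_k}$ truncates after a single bracket insertion, yielding an explicit formula: the leading term $f_0(q_{s_1})\cdots f_0(q_{s_k})$, plus the terms where one letter carries the $\hbar$-part of $S_{s_i}$, plus the terms where one pair of letters is replaced by $\hbar\,\pi_{\cyc}\{q_{s_i},q_{s_j}\}_f$. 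Well-definedness amounts to the coherence $f((uv)w)=f(u(vw))$; expanding both sides via the quadratic rule and using the two Leibniz identities for $\{\cdot,\cdot\}_f$ (again with $\hbar^2=0$) reduces this to a finite sign identity. Uniqueness is then immediate: any second-order graded algebra morphism is forced on the generators and then on all words by exactly these relations, so it must coincide with the $f$ just constructed. This establishes the first assertion.

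For the second assertion I would show that the two conditions making $f$ a second-order dga morphism---the bracket-compatibility identity and $d'\circ f=f\circ d$---propagate from generators to all of $\cA$. Assuming them for each $q_i,q_j$, I would argue by induction on word length: applying the quadratic morphism rule and the second-order derivation formula (\ref{Eq:Second_order_differential_formula}) to a product, the inductive hypothesis together with the Leibniz properties of $\{\cdot,\cdot\}_d$, $\{\cdot,\cdot\}_f$ and $\{\cdot,\cdot\}_{d'}$ collapses the difference of the two sides into a sum of terms each of which already vanishes on shorter words; the $\hbar$-sector is handled directly from $f(\hbar(v\otimes w))=\hbar(f_0v\otimes f_0^{\cyc}w)$ and the definition of $d_0^{\cyc}$. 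The main obstacle throughout is bookkeeping the Koszul signs: verifying the consistency of the two Leibniz rules for $\{\cdot,\cdot\}_f$ and the associativity coherence $f((uv)w)=f(u(vw))$ are routine in structure but sign-delicate, and it is precisely the chosen conventions---the $(-1)^{(1+|q_i|)(1+|U_{ij}^1|)}$ twist in the prescribed bracket and the signs in the Leibniz rules---that have to match. I expect no conceptual difficulty beyond this, the argument being formally parallel to Lemma \ref{Lemma:Second_order_derivation}.
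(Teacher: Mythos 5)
Your proposal is correct and matches the paper's proof in substance: the paper simply exhibits the closed formulas for the $f_0$-antibracket and for $f$ on words (the ones your iteration of the quadratic rule $f(vw)=fv\,fw+\{v,w\}_{f,\hbar}$ produces, truncating after one $\hbar$-insertion), with uniqueness and the generator-to-word propagation of the dga-morphism conditions left as the same routine sign checks you outline. No gaps.
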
 
\begin{proof}
The $f$-antibracket is given by
\begin{align*}
&\{q_{s_1}\dots q_{s_l},q_{s_{l+1}}\dots q_{s_k}\}_f=\\
&=\sum_{i=1}^l\sum_{j={l+1}}^k  \sum(-1)^{\bullet}(S_{s_1}\dots S_{s_{i-1}}U_{s_is_j}^2 S_{s_{j+1}}\dots S_{s_k} \otimes  S_{s_{l+1}}\dots S_{s_{j-1}} U_{s_is_j}^1 S_{s_{i+1}}\dots S_{s_l}),
\end{align*}
where  
$$\bullet=\left(1+\sum_{a=1}^{j-1}|q_{s_a}|\right)\left(|U_{s_is_j}^1|+\sum_{b=i+1}^{j-1}|q_{s_b}|+1\right)+\sum_{a=l+1}^{j-1}|q_{s_a}|\left(|U_{s_is_j}^1|+\sum_{b=i+1}^l|q_{s_b}|\right).$$
The morphism $f:\cA\to\cA'$ is given by
\begin{equation}
\label{Eq:Second_order_morphism_formula}
\begin{aligned}
f(q_{s_1}\dots q_{s_k})=&S_{s_1}\dots S_{s_k}+\\
&+\hbar\sum_{1\leq i<j\leq k}\sum(-1)^{\left(1+\sum_{a=1}^{j-1}|q_{s_a}|\right)\left(|U_{s_is_j}^1|+\sum_{b=i+1}^{j-1}|q_{s_b}|+1\right)}\\
&S_{s_1}\dots S_{s_{i-1}}U_{s_is_j}^2 S_{s_{j+1}}\dots S_{s_k}\otimes U_{s_is_j}^1 S_{s_{i+1}}\dots S_{s_{j-1}},
\end{aligned}
\end{equation}
and
\begin{align*}
f(\hbar(q_{s_1}\dots q_{s_l}\otimes q_{s_{l+1}}\dots q_{s_k}))=&\hbar(S_{s_1}\dots S_{s_l}\otimes S_{s_{l+1}}\dots S_{s_k}).
\end{align*}
\end{proof}

We briefly discuss some elementary properties of second-order derivations and second-order graded algebra morphisms.

\begin{lemma}\label{Lemma:Second_order_properties}\begin{enumerate}
\item Composition of second-order graded algebra morphisms $(f,\{\cdot,\cdot\}_f):\cA\to\cA'$ and $(g,\{\cdot,\cdot\}_g):\cA'\to\cA''$ is a second-order graded algebra morphism with respect to the $(g_0\circ f_0)$-antibracket 
$$\{v,w\}_{g\circ f}=\{f_0v,f_0w\}_g+(g_0\otimes g_0)\{v,w\}_f.$$ 
Moreover, if $(\cA,d,\{\cdot,\cdot\}_{d})$,$(\cA',d',\{\cdot,\cdot\}_{d'})$ and $(\cA'',d'',\{\cdot,\cdot\}_{d''})$ are second-order dg algebras and $f,g$ are second-order dga morphisms, then so is $g\circ f$.
\item If a second-order graded algebra morphism $f:\cA\to\cA'$ is a bijection, then $f^{-1}$ is a second-order graded algebra morphism with respect to the $(f^{-1}_0)$-antibracket 
$$\{u,v\}_{f^{-1}}=-(f_0^{-1}\otimes f_0^{-1})\{f_0^{-1}u,f_0^{-1}v\}_f.$$ 
Moreover, if $(\cA,d,\{\cdot,\cdot\}_d),(\cA',d',\{\cdot,\cdot\}_{d'})$ are second-order dg algebras and $f$ is a second-order dga morphism, then so is $f^{-1}$.
\item For $(f,\{\cdot,\cdot\}_f):\cA'\to\cA$ an invertible second-order graded algebra morphism and $(d,\{\cdot,\cdot\}_d):\cA\to\cA$ a (strong) second-order derivation, the map $\widehat d=f^{-1}\circ d\circ f$ is a (strong) second-order derivation with respect to the antibracket 
\begin{align*}
\{u,v\}_{\widehat d}=&-(f_0^{-1}\otimes f_0^{-1})\circ (d_0\otimes 1+1\otimes d_0)\{u,v\}_f+(f_0^{-1}\otimes f_0^{-1})\{f_0u,f_0v\}_d+\\
&+\{d_0f_0u,f_0v\}_{f^{-1}}+(-1)^{|u|}\{f_0u,d_0f_0v\}_{f^{-1}}.
\end{align*}
\end{enumerate}
\end{lemma}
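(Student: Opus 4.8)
The plan is to verify the defining identities directly, exploiting that each of the three structures splits into an ``$\hbar$-sector'' governed purely by the linear maps $f_0,g_0,d_0$ and their cyclic quotients, and a ``product-rule sector'' in which the antibracket is produced. On the $\hbar$-part the claims are immediate: since $f(\hbar(v\otimes w))=\hbar(f_0v\otimes f_0^{\cyc}w)$ and likewise for $g$, and since the $\hbar$-formula of a second-order derivation involves only $d_0$ and $d_0^{\cyc}$, everything reduces to the elementary facts $(g\circ f)_0=g_0\circ f_0$, $(g\circ f)_0^{\cyc}=g_0^{\cyc}\circ f_0^{\cyc}$, $(f^{-1})_0=f_0^{-1}$, and $\widehat d_0=f_0^{-1}\circ d_0\circ f_0$. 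For the last two I would first note that invertibility of $f$ forces invertibility of $f_0$: because $f$ preserves the subspace $\hbar(\widetilde\cA\otimes\widetilde\cA^{\cyc})$ and acts there as $f_0\otimes f_0^{\cyc}$, it descends to the quotient $\widetilde\cA$ as $f_0$, which is therefore a bijection.

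The content of each part lies in the product-rule sector. For composition I would substitute $f(vw)=fv\,fw+\{v,w\}_{f,\hbar}$ and apply $g$, using $g(ab)=ga\,gb+\{a,b\}_{g,\hbar}$ on the first summand and $g(\hbar(\,\cdot\,))=\hbar(g_0\otimes g_0^{\cyc})(\,\cdot\,)$ on the second; collecting the two $\hbar$-contributions yields precisely $\{v,w\}_{g\circ f}=\{f_0v,f_0w\}_g+(g_0\otimes g_0)\{v,w\}_f$. For the inverse, substituting $v=f^{-1}a,\ w=f^{-1}b$ into the product rule for $f$ and solving gives $f^{-1}(ab)=f^{-1}a\,f^{-1}b-f^{-1}(\{f^{-1}a,f^{-1}b\}_{f,\hbar})$, and applying the $\hbar$-formula for $f^{-1}$ to the last term produces the stated $\{a,b\}_{f^{-1}}=-(f_0^{-1}\otimes f_0^{-1})\{f_0^{-1}a,f_0^{-1}b\}_f$. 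For conjugation I would expand $\widehat d(vw)=f^{-1}(d(f(vw)))$, apply the second-order Leibniz rule of $d$ to $d(fv\,fw)$ and the $\hbar$-formula of $d$ to $d(\{v,w\}_{f,\hbar})$ (using $d(\hbar\pi_{\cyc}X)=-\hbar\pi_{\cyc}(d_0\otimes 1+1\otimes d_0)X$), and finally push everything through $f^{-1}$ with its own product rule and $\hbar$-formula; the four resulting $\hbar$-terms reassemble into the claimed $\{v,w\}_{\widehat d}$.

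Once the antibrackets are identified, two residual verifications remain. First, one must check that each produced bracket is genuinely an antibracket (respectively a $(g_0\circ f_0)$-antibracket and an $(f_0^{-1})$-antibracket), i.e.\ satisfies the two Leibniz axioms; this follows formally because the bracket is assembled from brackets already satisfying those axioms together with the algebra maps $f_0,g_0$ and their inverses, which intertwine the products on $\widetilde\cA,\widetilde\cA',\widetilde\cA''$. Second, for the dga and strong statements one verifies the chain-map condition ($d''\circ(g\circ f)=(g\circ f)\circ d$, immediate from the hypotheses) and either the antibracket-compatibility condition in the definition of a second-order dga morphism (for parts (1),(2)) or the strong condition (\ref{Equation:third_condition}) for $\widehat d$ (for part (3)), the latter using $\widehat d_0=f_0^{-1}d_0f_0$ together with the strongness of $d$. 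By Lemmas \ref{Lemma:Second_order_derivation} and \ref{Lemma:Second_order_morphism} it suffices to check all of these identities on generators $q_i$, which keeps each computation finite.

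The main obstacle I anticipate is the sign bookkeeping, concentrated in the conjugation identity of part (3) and in confirming the strong condition for $\widehat d$. There the Koszul signs coming from the graded tensor-product algebra structure on $\widetilde\cA\otimes\widetilde\cA$, from the convention $(f\otimes g)(v_1\otimes v_2)=(-1)^{|f||v_2|}f(v_1)\otimes g(v_2)$, and from the $\hbar$-formulas all interact, so one must track them carefully to see the four terms combine with the correct signs; the conceptual content is otherwise entirely routine.
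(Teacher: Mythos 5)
The paper states this lemma without proof (it is introduced as an ``elementary property''), so the only benchmark is whether your direct verification goes through. For parts (1) and (2), and for the second-order-derivation part of (3), it does, and your substitutions are exactly the intended ones: applying $g$ to $f(vw)=fv\,fw+\{v,w\}_{f,\hbar}$ together with $g(\hbar\,\pi_{\cyc}X)=\hbar\,\pi_{\cyc}(g_0\otimes g_0)X$ produces the composite bracket, and solving $f(f^{-1}a\cdot f^{-1}b)=ab+\{f^{-1}a,f^{-1}b\}_{f,\hbar}$ produces the inverse bracket. One point to tighten: descending the bijection $f$ to the quotient $\cA/\hbar(\widetilde\cA\otimes\widetilde\cA^{\cyc})\cong\widetilde\cA$ only gives surjectivity of $f_0$; injectivity follows by applying $f$ to elements $\hbar(x\otimes 1)$, since $f_0x=0$ would force $f(\hbar(x\otimes 1))=0$. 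Together these show $f$ restricts to a bijection of the $\hbar$-part, which your part (2) and (3) computations implicitly use when writing $f^{-1}(\hbar(x\otimes y))=\hbar(f_0^{-1}x\otimes (f_0^{-1})^{\cyc}y)$.

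The genuine gap is the strongness claim in part (3), which you defer to ``sign bookkeeping'': it is not a sign issue, and it does not follow from strongness of $d$ together with $\widehat d_0=f_0^{-1}d_0f_0$ alone. Writing $P=\{\cdot,\cdot\}_f$ and substituting $\{u,v\}_{f^{-1}}=-(f_0^{-1}\otimes f_0^{-1})P(f_0^{-1}u,f_0^{-1}v)$, the lemma's formula becomes
\begin{align*}
\{u,v\}_{\widehat d}=(f_0^{-1}\otimes f_0^{-1})\Bigl[\{f_0u,f_0v\}_d-(d_0\otimes 1+1\otimes d_0)P(u,v)-P(\widehat d_0u,v)-(-1)^{|u|}P(u,\widehat d_0v)\Bigr].
\end{align*}
Feeding this into the strongness identity for $\widehat d$, the $\{\cdot,\cdot\}_d$-terms cancel by strongness of $d$, the $P(\widehat d_0u,\widehat d_0v)$-terms cancel in pairs, and, since the cross terms in $(d_0\otimes 1+1\otimes d_0)^2$ cancel by the Koszul convention, one is left with the defect
\begin{align*}
(f_0^{-1}\otimes f_0^{-1})\Bigl[P(\widehat d_0^{\,2}u,v)+P(u,\widehat d_0^{\,2}v)-(d_0^{2}\otimes 1+1\otimes d_0^{2})P(u,v)\Bigr],
\qquad \widehat d_0^{\,2}=f_0^{-1}d_0^{2}f_0.
\end{align*}
Projecting the second-order Leibniz rule shows $d_0$ is an odd derivation of $\widetilde\cA$, so $d_0^2$ is an even derivation; but an even derivation need not be a derivation of an arbitrary $f$-antibracket, and one can prescribe $P$ on generators so that the bracket above is nonzero. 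So for a general strong second-order derivation with $d_0^2\neq 0$, strongness is \emph{not} inherited by $\widehat d$.

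The fix is cheap but must be said: if $d^2=0$ then $\pi_{\widetilde\cA}\,d^2\,\iota_{\widetilde\cA}=d_0^2=0$ (the $\hbar$-part is $d$-invariant), and the defect vanishes identically. Since every invocation of part (3) in the paper conjugates an honest second-order differential (e.g.\ $\widehat d=\phi^{-1}\circ d^0\circ\phi$ in the Reidemeister II bootstrap, Lemma \ref{Lemma:bootstrap}), you should either add the hypothesis $d^2=0$ for the strongness assertion, or record the observation $d^2=0\Rightarrow d_0^2=0$ and restrict the ``strong'' transport to that case; as written, your claim that it follows from strongness of $d$ and $\widehat d_0=f_0^{-1}d_0f_0$ is false.
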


We define the notion of action on $\cA$. Let $l(q_i)>0,i=1,\dots,n$ be $\lZ$-linearly independent positive real numbers. For any word $w=q_{i_1}\dots q_{i_k}\in\widetilde\cA$, we define the action of $w$ as
$$l(w)=\sum_{j=1}^k l(q_{i_j}),$$
and similarly for $w=\hbar(w_1\otimes w_2)\in\hbar\,(\widetilde\cA\otimes\widetilde\cA^{\cyc})$
$$l(\hbar(w_1\otimes w_2))=l(w_1)+l(w_2).$$
Additionally, we define $l(\sum_{i=1}^k a_iw_i)=\max_{i=1,\dots, k}l(w_i)$ for $a_i\in\lQ,a_i\neq 0,w_i\in\cA,i\in\{1,\dots,k\}$. 

\begin{defi}
We say a linear map $f:\cA\to\cA'$ is \textit{filtered} if 
$$l(f(s))\leq l(s)$$ 
for all $s\in\cA$ and $f(\hbar\,(\widetilde\cA\otimes\widetilde\cA^{\cyc}))\subset\hbar\,(\widetilde\cA'\otimes\widetilde\cA'^{\cyc})$.
\end{defi}

\begin{lemma}\label{Lemma:inverse_existence_I_order}
Let $\phi:\cA\to\cA$ be a filtered algebra morphism such that
\begin{align*}
&\phi(q_i)=q_i+\omega_i
\end{align*} 
for some $\omega_i\in\cA,i\in\{1,\dots, n\}$ with $l(\omega_i)< l(q_i)$ for all $i$. Then $\phi$ is invertible and the inverse is a filtered graded algebra morphism.
\end{lemma}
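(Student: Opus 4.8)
The plan is to exploit the action filtration on $\cA$ and to show that $\phi$ is unipotent with respect to it, so that its inverse is given by a locally finite Neumann series. First I would record the two structural facts about the action $l$: it is additive on products of monomials (concatenation gives $l(vw)=l(v)+l(w)$, and $l(w\cdot\hbar(v_1\otimes v_2))=l(w)+l(v_1)+l(v_2)$), and the set of values $\{l(m)\}$ taken on monomials is the additive submonoid of $\lR_{\ge 0}$ generated by the finitely many positive reals $l(q_1),\dots,l(q_n)$. Since any value $\le C$ forces $\sum a_i\le C/\min_i l(q_i)$, only finitely many values lie below any bound, so this submonoid is discrete, bounded below by $0$, and hence well-ordered. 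Consequently the increasing filtration $F_{\le c}\cA=\operatorname{span}\{m:\ l(m)\le c\}$ is exhaustive and locally finite, and the product is filtered.

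Next I would write $\phi=\operatorname{id}+N$ with $N=\phi-\operatorname{id}$ and show that $N$ strictly decreases action. On a monomial $m=q_{i_1}\cdots q_{i_k}$, multiplicativity gives $\phi(m)=\prod_{j}(q_{i_j}+\omega_{i_j})$; the unique top-action term is $m$ itself (with coefficient $1$), while every other term replaces at least one $q_{i_j}$ by $\omega_{i_j}$ and therefore has strictly smaller action since $l(\omega_{i_j})<l(q_{i_j})$. On an $\hbar$-monomial $\hbar(v\otimes w)$ one uses $\phi(\hbar(v\otimes w))=\hbar(\phi_0 v\otimes\phi_0^{\cyc}w)$ together with $\phi_0(q_i)=q_i+\pi_{\widetilde\cA}(\omega_i)$, whose correction again lowers action. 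Hence $l(N(s))<l(s)$ whenever $N(s)\neq 0$. Because the action values are well-ordered and $N(1)=0$, the sequence $l(s),l(Ns),l(N^2s),\dots$ strictly decreases until it vanishes, so $N$ is locally nilpotent. I would then define
\[
\psi=\sum_{k\ge 0}(-N)^k,
\]
a finite sum when evaluated on any element, and check by telescoping that $\psi\phi=\phi\psi=\operatorname{id}$. Each summand is filtration non-increasing and $(-N)^k$ strictly decreases action for $k\ge 1$, so $l(\psi(s))\le l(s)$; since $\phi$, hence $N$, preserves the $\hbar$-part, so does $\psi$.

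Finally I would verify that $\psi$ is a graded algebra morphism. Being the two-sided inverse of the bijective algebra morphism $\phi$, it is automatically a unital algebra morphism: applying the injection $\phi$ to $\psi(ab)$ and to $\psi(a)\psi(b)$ yields $ab$ in both cases. It is degree $0$ because $\phi$, and therefore $N$ and $\psi$, are. For the required tensor form, I would note that $\phi$ descends to the (filtered, unipotent, hence invertible) algebra automorphism $\phi_0$ of $\widetilde\cA$ and restricts to $\hbar(1\otimes\widetilde\cA^{\cyc})$ as $c\mapsto\hbar(1\otimes\phi_0^{\cyc}c)$; inverting this and using $\hbar(v\otimes w)=(-1)^{|v|(|w|+1)}\,v\cdot\hbar(1\otimes w)$ gives $\psi(\hbar(v\otimes w))=\hbar(\psi_0 v\otimes\psi_0^{\cyc}w)$ with $\psi_0=\phi_0^{-1}$. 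Thus $\psi=\phi^{-1}$ is a filtered graded algebra morphism.

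The main obstacle is not any single deep step but making the filtration argument airtight: one must confirm that the product is compatible with the action so that $N$ genuinely strictly lowers action on every monomial (including the $\hbar$-terms), and that the discreteness and well-ordering of the action monoid force local nilpotence of $N$. This is exactly what makes the Neumann series terminate on each element and produces a filtered inverse rather than a merely formal one.
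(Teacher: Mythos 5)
Your proof is correct, but it takes a genuinely different route from the paper. The paper orders the generators by action, $l(q_1)<\dots<l(q_n)$, and observes that the constraint $l(\omega_i)<l(q_i)$ forces every letter of $\omega_i$ to be a generator of strictly smaller action; hence $\phi$ factors as a composition $\phi_n\circ\dots\circ\phi_1$ of elementary automorphisms, each moving a single generator $q_i\mapsto q_i+\omega_i$ and each trivially invertible via $q_i\mapsto q_i-\omega_i$ (the key point being that $\omega_i$ contains no $q_i$, so $\phi_i^{-1}(\omega_i)=\omega_i$). Your argument instead writes $\phi=\operatorname{id}+N$, proves $N$ strictly lowers the action on monomials (including the $\hbar$-part, where the top term of $\hbar(\phi_0 v\otimes\phi_0^{\cyc}w)$ cancels against $\hbar(v\otimes w)$), uses discreteness and well-ordering of the action values $\sum a_i\,l(q_i)$, $a_i\in\lN_0$, to get local nilpotence, and inverts by the locally finite Neumann series $\psi=\sum_{k\ge 0}(-N)^k$; multiplicativity and the tensor form of $\psi$ on $\hbar(\widetilde\cA\otimes\widetilde\cA^{\cyc})$ then come for free from bijectivity. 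Both proofs are sound, and both hinge on the same elementary observation that $l(\omega_i)<l(q_i)$ rules out $q_i$ appearing in $\omega_i$. What the paper's factorization buys is an explicit decomposition into elementary automorphisms, which is exactly the currency of the tame-equivalence framework of Section \ref{Sec:stabilizationI} where this lemma is deployed; your Neumann-series argument is more uniform (no ordering of generators needed, and it applies verbatim to any filtration-decreasing perturbation of the identity), but it produces the inverse only as a limit of iterates rather than as a composition of elementary maps, so if one wants tameness one must still extract it separately, e.g.\ by the paper's triangular factorization. Your care with discreteness of the action monoid is appropriate, though strictly speaking the paper's stated $\lZ$-linear independence of the $l(q_i)$ is not needed for either proof --- positivity alone bounds the number of letters and gives finiteness of values below any bound.
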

\begin{proof}
Without loss of generality, $q_i$ are ordered by action
\begin{align*}
l(q_1)<\dots<l(q_n).
\end{align*}
Then $\phi$ can be seen as a composition $\phi=\phi_n\circ\dots\phi_1$ of graded algebra morphisms given by
\begin{align*}
\phi_i(q_j)=\begin{cases}
q_j,&j\neq i\\
q_i+\omega_i,&j=i
\end{cases}
\end{align*}
Each $\phi_i$ is invertible with the inverse given by
\begin{align*}
\phi_i^{-1}(q_j)=\begin{cases}
q_j,&j\neq i\\
q_i-\omega_i,&j=i
\end{cases}
\end{align*}
\end{proof}

\begin{lemma}\label{Lemma:inverse_existence}
Let $\phi:\cA\to\cA$ be a second-order graded algebra morphism such that
\begin{align*}
&\phi(q_i)=q_i,\\
&\{q_i,q_j\}_{\phi}=\omega_{ij},
\end{align*} 
for some $\omega_{ij}\in\widetilde\cA\otimes\widetilde\cA, i,j\in\{1,\dots,n\}$. Then $\phi$ is invertible and the inverse is a second-order graded algebra morphism. If additionally  $l(\omega_{ij})\leq l(q_i q_j)$ for all $i,j\in\{1,\dots,n\}$, then the inverse is filtered.
\end{lemma}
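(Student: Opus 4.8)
The plan is to use that the hypothesis $\phi(q_i)=q_i$ forces the classical part $\phi_0=\pi_{\widetilde\cA}\circ\phi\circ\iota_{\widetilde\cA}$ to be the identity on $\widetilde\cA$, so that $\phi$ differs from the identity only by a term raising the $\hbar$-degree, which is therefore nilpotent. Write $N=\phi-\operatorname{id}$. From the explicit description in Lemma \ref{Lemma:Second_order_morphism} with $S_i=q_i$ and formula (\ref{Eq:Second_order_morphism_formula}), for any word $w=q_{s_1}\dots q_{s_k}\in\widetilde\cA$ one has $\phi(w)=w+\hbar(\cdots)$, so $N(w)\in\hbar\,(\widetilde\cA\otimes\widetilde\cA^{\cyc})$; and $\phi(\hbar(v\otimes w))=\hbar(\phi_0 v\otimes\phi_0^{\cyc}w)=\hbar(v\otimes w)$, so $N$ vanishes on $\hbar\,(\widetilde\cA\otimes\widetilde\cA^{\cyc})$. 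These two facts give $N^2=0$ at once, because $N(s)$ already lies in the $\hbar$-part, on which $N$ is zero.

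Consequently $\phi=\operatorname{id}+N$ is a linear bijection with two-sided inverse $\phi^{-1}=\operatorname{id}-N$, which settles invertibility. To identify $\phi^{-1}$ as a second-order graded algebra morphism I would invoke Lemma \ref{Lemma:Second_order_properties}(2), which states exactly that the inverse of a bijective second-order graded algebra morphism is one, equipped with the $\phi_0^{-1}$-antibracket $\{u,v\}_{\phi^{-1}}=-(\phi_0^{-1}\otimes\phi_0^{-1})\{\phi_0^{-1}u,\phi_0^{-1}v\}_\phi$; since $\phi_0=\operatorname{id}$ here, this simply reads $\{u,v\}_{\phi^{-1}}=-\{u,v\}_\phi$, so $\omega_{ij}\mapsto-\omega_{ij}$.

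For the filtered claim under $l(\omega_{ij})\le l(q_iq_j)$ I would estimate the action of $N$ term by term. A generic $\hbar$-summand of $N(q_{s_1}\dots q_{s_k})$ in (\ref{Eq:Second_order_morphism_formula}) has the form $\hbar\bigl(q_{s_1}\dots q_{s_{i-1}}U^2_{s_is_j}q_{s_{j+1}}\dots q_{s_k}\otimes U^1_{s_is_j}q_{s_{i+1}}\dots q_{s_{j-1}}\bigr)$ with $U^2_{s_is_j}\otimes U^1_{s_is_j}$ a term of $\omega_{s_is_j}$, and its action is
\begin{align*}
l(U^2_{s_is_j})+l(U^1_{s_is_j})+\sum_{a\ne i,j}l(q_{s_a})\le l(\omega_{s_is_j})+\sum_{a\ne i,j}l(q_{s_a})\le l(q_{s_1}\dots q_{s_k}),
\end{align*}
using $l(\omega_{s_is_j})\le l(q_{s_i})+l(q_{s_j})$. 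Hence $l(N(s))\le l(s)$ for all $s$. Since $N$ maps into and kills $\hbar\,(\widetilde\cA\otimes\widetilde\cA^{\cyc})$, the map $\phi^{-1}=\operatorname{id}-N$ carries $\hbar\,(\widetilde\cA\otimes\widetilde\cA^{\cyc})$ into itself and satisfies $l(\phi^{-1}(s))\le\max\bigl(l(s),l(N(s))\bigr)\le l(s)$, which are precisely the two defining conditions of a filtered map. The whole argument is essentially formal; the only point requiring care is the nilpotency $N^2=0$, which is the structural heart of the statement, together with the elementary action bookkeeping in the last step.
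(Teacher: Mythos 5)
Your proof is correct and takes essentially the same route as the paper, whose entire argument is to exhibit the inverse as $\phi^{-1}(q_i)=q_i$, $\{q_i,q_j\}_{\phi^{-1}}=-\omega_{ij}$; your observation that $N=\phi-\operatorname{id}$ lands in the $\hbar$-part and kills it, so $N^2=0$ and $\phi^{-1}=\operatorname{id}-N$, combined with Lemma \ref{Lemma:Second_order_properties}(2) (which with $\phi_0=\operatorname{id}$ yields exactly $\{u,v\}_{\phi^{-1}}=-\{u,v\}_\phi$), is just a structured derivation of that same formula. You additionally spell out the action estimate $l(U^2_{s_is_j})+l(U^1_{s_is_j})\le l(\omega_{s_is_j})\le l(q_{s_i})+l(q_{s_j})$ for the filtered claim, which the paper leaves as an unstated easy check.
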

\begin{proof}
It is easy to check that the inverse is given by
\begin{align*}
&\phi^{-1}(q_i)=q_i,\\
&\{q_i,q_j\}_{\phi^{-1}}=-\omega_{ij}.
\end{align*}
\end{proof}

\subsection{Definition of Legendrian knot second-order dg algebra}\label{Section:IIinvar_def}
In this section we introduce a second-order differential graded algebra associated to a Legendrian knot following the ideas from Section \ref{Sec:Invariant_definition}. The definition can be generalized to include Legendrian links in $\lR^3$.

Let $\Lambda\subset\lR^3$ be a Legendrian knot with Reeb chords $\cR=\{\gamma_1,\dots,\gamma_n\}$. Fix an orientation on $\Lambda$ and a base point $T\in\Lambda$ different from the Reeb chord endpoints. We denote by $\widetilde\cA=\widetilde\cA(\Lambda)$ the tensor algebra generated by $q_1,\dots,q_n,t^+,t^-$ with relations $t^-t^+=t^+t^-=1$, grading given by 
$$|q_i|=\mu_{CZ}(\gamma_i),|t^\pm|=\mp 2\operatorname{rot}(\Lambda),$$ 
and by $\cA$ be the graded algebra $\cA=\cA(\Lambda)=\widetilde\cA\oplus\hbar\,(\widetilde\cA\otimes\widetilde\cA^{\cyc})$ with $|\hbar|=-1$ as before. The action of $q_i$ is set to be the length of the Reeb chord $\gamma_i$. Fix $J$ to be the almost complex structure given by (\ref{Eq:intro_J_formula}). We define a second-order differential $(d,\{\cdot,\cdot\}_d):\cA(\Lambda)\to\cA(\Lambda)$ associated to the Legendrian knot $\Lambda$. We can think of $\cA(\Lambda)$ as the space of strings and string pairs on $\Lambda$ with negative punctures. The second-order differential $d$ is defined by counting index zero $J$-holomorphic disks with up to two and annuli with one positive puncture with boundary on $\lR\times\Lambda$. In addition to that, we add the corrected loop coproduct discussed in Section \ref{Sec:def_string_operator} in order to cancel out the nodal breaking. 
\begin{rmk}
The map $d$ corresponds to the boundary map defined in Section \ref{Sec:The_differential} modulo strings with two positive punctures. 
The part of the differential concerning strings with "unglued" positive punctures was introduced in \cite{Ng_rLSFT} and we omit it here for simplicity. The precise formula for the extension can be seen from (\ref{Eg:delta_formula}) and (\ref{Eq:d_unglued_pos_formula}) below. 
\end{rmk}

In order to define the second-order derivation $d:\cA\to\cA$, we need to fix $d(s)\in\cA$ and $\{s_1,s_2\}_{d}\in\widetilde\cA\otimes\widetilde\cA$ for $s,s_1,s_2\in\{q_i,t^\pm\,|\,i=1,\dots,n\}$.

First, we define $d(q_i),i\in\{1,\dots,n\}$. It consists of three parts: contribution of $J$-holomorphic disks $d_\lD(q_i)$, contribution of $J$-holomorphic annuli $d_A(q_i)$, and the first order part $d_f(q_i)$ of the corrected loop coproduct. We define $d_\lD(q_i),d_A(q_i),d_f(q_i)\in\cA$ below, and take $$d(q_i)=d_\lD(q_i)+d_A(q_i)+d_f(q_i).$$

To define $d_\lD(q_i)$, we consider the moduli space $\cM_{1}(J,\gamma_i^+)$ of index zero $J$-holomorphic disks in $\lR^4$ with boundary on $\lR\times\Lambda$, one positive puncture asymptotic to $\gamma_i$ and arbitrarily many negative punctures. As we have seen in Section \ref{Sec:moduli_spaces_all}, $\cM_1(J,\gamma_i^+)$ is in bijection with immersed polygons in $\lC$ with boundary on $\pi_{xy}(\Lambda)$, convex corners at the self-intersections of $\pi_{xy}(\Lambda)$ and one positive corner at $\gamma_i$. For every $u\in\cM_{1}(J,\gamma_i^+)$, we denote by $\widetilde w(u)$ the word in $\widetilde\cA$ obtained as follows. Let $\gamma_{i_1},\dots,\gamma_{i_k}$ be the Reeb chords at the negative punctures of $u$ in the order starting from the positive puncture, and $a_i\in\lZ,i\in\{0,\dots,k\}$ be the algebraic intersection number of the boundary of $u$ between the $i^{th}$ and the $(i+1)^{st}$ puncture and $\lR\times \{T\}$ (where the positive puncture is the $0^{th}$ and the $(k+1)^{st}$ puncture). Then we define 
$$\widetilde w(u)=t^{a_0}q_{i_1}t^{a_1}\dots t^{a_{k-1}}q_{i_k}t^{a_k}.$$ 
Additionally, we have an orientation sign $\epsilon(u)\in\{1,-1\}$ defined in Section \ref{Section:Orientations_disks} for every $u\in\cM_{1}(J,\gamma_i^+)$. Then we define
\begin{align*}
d_\lD(q_i)=\sum_{u\in\cM_{1}(J,\gamma_i^+)} \epsilon(u)\widetilde w(u).
\end{align*}

Next, we define $d_A(q_i),i\in\{1,\dots,n\}$. Denote by $\cM_{2}(J,\gamma_i^+)$ the moduli space of index zero $J$-holomorphic annuli in $\lR^4$ with boundary on $\lR\times\Lambda$ and one positive puncture asymptotic to $\gamma_i$. For every $u\in\cM_{2}(J,\gamma_i^+)$ and $e_2$ a (generic) marked point on its inner boundary component, we define $\overline w(u,e_2)\in\widetilde\cA\otimes \widetilde\cA$ as follows. As before, we define $\overline w_1(u),\overline w_2(u,e_2)\in\widetilde\cA$ by looking at the outer, inner boundary component of $u$. More precisely, we look at the negative punctures and the crossings of $\partial u$ over the base point in the order starting from the positive puncture for $\overline w_1(u)$ and in the order starting from the marked point $e_2$ for $\overline w_2(u,e_2)$. Then 
$$\overline w(u,e_2)=\overline w_1(u)\otimes \overline w_2(u,e_2).$$ 
Additionally, we have an orientation sign $\epsilon(u,e_2)\in\{1,-1\}$ defined in Section \ref{Section:Orientations_annuli}. Then we define
\begin{align*}
d_A(q_i)=\hbar\sum_{u\in\cM_{2}(J,\gamma_i^+)} \epsilon(u,e_2)\overline w(u,e_2).
\end{align*}
Recall from Section \ref{Section:Orientations} that $\epsilon(u,e_2)\overline w(u,e_2)\in\widetilde\cA\otimes\widetilde\cA^{\cyc}$ is independent of the choice of $e_2$.
\begin{rmk}
Using a similar formula as above, a virtual count of annuli for any combinatorial obstruction section can be used to define $d_A(q_i)$ instead of the count of actual $J$-holomorphic annuli. We prove in Section \ref{Section:Invar_IV_degeneration} that the second-order dg algebras obtained this way are isomorphic.
\end{rmk}
For two distinct points $A,B$ on $\Lambda$ different from $T$, we say $A<B$ if $B$ is on the arc of $\Lambda$ that starts at $T$ and ends at $A$ with respect to the orientation on $\Lambda$. Define
\begin{align*}
\delta(A,B)=\begin{cases}
1,&A<B\\
0,&\text{otherwise}
\end{cases}
\end{align*}

\begin{defi}
We say a Reeb chord $\gamma_i$ on $\Lambda$ is \textit{even} if the orientation of $\Lambda$ near $\gamma_i$ has the form as show in Figure \ref{Figure:twisted_chord}, left, in the Lagrangian projection (i.e. if $\mu_{CZ}(\gamma_i)$ is even). Otherwise, we say $\gamma_i$ is \textit{odd}.
\end{defi}

To motivate the definition of $d_f(q_i)$, we introduce the capping paths for $\gamma_i,i\in\{1,\dots,n\}$. Denote by $i^-$ and $i^+$ the starting point and the endpoint of the Reeb chord $\gamma_i$. There exists a unique embedded path $\widetilde c_i$ on $\Lambda$ starting at $i^-$ and ending at $i^+$ that does not pass through $T$. Denote by $c_i$ the knot obtained by shifting $\widetilde c_i$ in direction $(-1)^{|q_i|}J|_\xi\frac{d}{dt} \widetilde c_i$ and closing it off as shown in Figure \ref{Figure:shifted_cap}. Then we define $l_i\in\lZ$ as the linking number between knots $\Lambda$ and $c_i$. In other words,
\begin{align*}
l_i=\frac{1}{2}\sum_{j\neq i}(-1)^{|q_j|}\left(\delta(i^{+},j^{+})\delta(j^{+},i^{-})+\delta(i^{+},j^{-})\delta(j^{-},i^{-})\right)
\end{align*}
if $\delta(i^+,i^-)=1$, and
\begin{align*}
l_i=-\frac{1}{2}\sum_{j\neq i}(-1)^{|q_j|}\left(\delta(i^{-},j^{+})\delta(j^{+},i^{+})+\delta(i^{-},j^{-})\delta(j^{-},i^{+})\right)
\end{align*}
if $\delta(i^-,i^+)=1$. Then we define 
\begin{align*}
d_f(q_i)=\left(l_i-\delta(i^-,i^+)+1\right)\hbar(q_i\otimes 1)-\delta(i^-,i^+)\hbar(1\otimes q_i)
\end{align*}
for $|q_i|$ even, and
\begin{align*}
d_f(q_i)=\left(l_i+\delta(i^-,i^+)\right)\hbar(q_i\otimes 1)-\delta(i^-,i^+)\hbar(1\otimes q_i)
\end{align*}
for $|q_i|$ odd.

\begin{figure}
\def\svgwidth{74mm}
\begingroup%
  \makeatletter%
  \providecommand\rotatebox[2]{#2}%
  \newcommand*\fsize{\dimexpr\f@size pt\relax}%
  \newcommand*\lineheight[1]{\fontsize{\fsize}{#1\fsize}\selectfont}%
  \ifx\svgwidth\undefined%
    \setlength{\unitlength}{408.93776713bp}%
    \ifx\svgscale\undefined%
      \relax%
    \else%
      \setlength{\unitlength}{\unitlength * \real{\svgscale}}%
    \fi%
  \else%
    \setlength{\unitlength}{\svgwidth}%
  \fi%
  \global\let\svgwidth\undefined%
  \global\let\svgscale\undefined%
  \makeatother%
  \begin{picture}(1,0.35252701)%
    \lineheight{1}%
    \setlength\tabcolsep{0pt}%
    \put(0.15964599,0.10859364){\makebox(0,0)[lt]{\lineheight{1.25}\smash{\begin{tabular}[t]{l}$\gamma_i$\end{tabular}}}}%
    \put(0,0){\includegraphics[width=\unitlength,page=1]{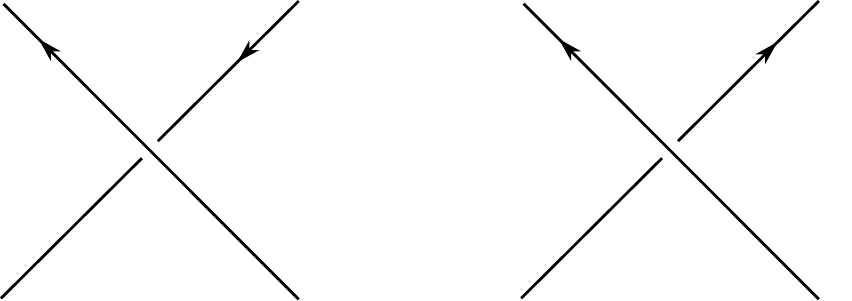}}%
    \put(0.77023397,0.1086902){\makebox(0,0)[lt]{\lineheight{1.25}\smash{\begin{tabular}[t]{l}$\gamma_i$\end{tabular}}}}%
  \end{picture}%
\endgroup%
\caption{Even and odd Reeb chord, respectively.}
\label{Figure:twisted_chord}
\end{figure}

\begin{figure}
\def\svgwidth{140mm}
\begingroup%
  \makeatletter%
  \providecommand\rotatebox[2]{#2}%
  \newcommand*\fsize{\dimexpr\f@size pt\relax}%
  \newcommand*\lineheight[1]{\fontsize{\fsize}{#1\fsize}\selectfont}%
  \ifx\svgwidth\undefined%
    \setlength{\unitlength}{551.28757944bp}%
    \ifx\svgscale\undefined%
      \relax%
    \else%
      \setlength{\unitlength}{\unitlength * \real{\svgscale}}%
    \fi%
  \else%
    \setlength{\unitlength}{\svgwidth}%
  \fi%
  \global\let\svgwidth\undefined%
  \global\let\svgscale\undefined%
  \makeatother%
  \begin{picture}(1,0.2560605)%
    \lineheight{1}%
    \setlength\tabcolsep{0pt}%
    \put(0.47481802,0.08331496){\makebox(0,0)[lt]{\lineheight{1.25}\smash{\begin{tabular}[t]{l}$T$\end{tabular}}}}%
    \put(0.96073746,0.08175991){\makebox(0,0)[lt]{\lineheight{1.25}\smash{\begin{tabular}[t]{l}$T$\end{tabular}}}}%
    \put(0.1137944,0.03068492){\makebox(0,0)[lt]{\lineheight{1.25}\smash{\begin{tabular}[t]{l}$c_i$\end{tabular}}}}%
    \put(0.28466676,0.21583589){\makebox(0,0)[lt]{\lineheight{1.25}\smash{\begin{tabular}[t]{l}$c_i$\end{tabular}}}}%
    \put(0.67045189,0.14198285){\makebox(0,0)[lt]{\lineheight{1.25}\smash{\begin{tabular}[t]{l}$c_i$\end{tabular}}}}%
    \put(0.76704401,0.10252974){\makebox(0,0)[lt]{\lineheight{1.25}\smash{\begin{tabular}[t]{l}$c_i$\end{tabular}}}}%
    \put(0.10815824,0.22600681){\makebox(0,0)[lt]{\lineheight{1.25}\smash{\begin{tabular}[t]{l}$T$\end{tabular}}}}%
    \put(0.33449724,0.01395081){\makebox(0,0)[lt]{\lineheight{1.25}\smash{\begin{tabular}[t]{l}$T$\end{tabular}}}}%
    \put(0,0){\includegraphics[width=\unitlength,page=1]{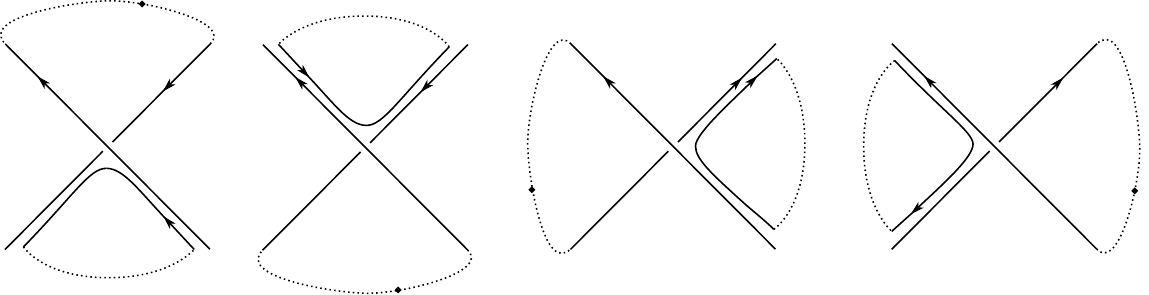}}%
  \end{picture}%
\endgroup%
\caption{Shift of $\widetilde c_i$ for $\gamma_i$ even when $\delta(i^+,i^-)=1$ and $\delta(i^-,i^+)=1$, and for $\gamma_i$ odd when $\delta(i^+,i^-)=1$ and $\delta(i^-,i^+)=1$, respectively.}
\label{Figure:shifted_cap}
\end{figure}

Additionally, we define $d_\lD(t^\pm)=d_A(t^\pm)=0$ and
\begin{align*}
&d_f(t^+)=\left(\operatorname{tb}(\Lambda)+1\right)\hbar(t^+\otimes 1),\\ 
&d_f(t^-)=-\operatorname{tb}(\Lambda)\hbar(t^-\otimes 1)-\hbar(1\otimes t^-),
\end{align*}
where $\operatorname{tb}(\Lambda)$ is the Thurston--Bennequin number of $\Lambda$. See the proof of Proposition \ref{Prop:dcircdvanishes} and Figure \ref{Figure:corrected_loop_coproduct_and_d_f} for the motivation behind the definition. Note that we are defining the normalized version (such that $d(1)=0$) of the operator from Section \ref{Sec:Invariant_definition}, see Remark \ref{Remark:normalization}.

Next, we define $\{q_i,q_j\}_d\in\widetilde\cA\otimes\widetilde\cA$ for $i,j\in\{1,\dots,n\}$. It consists of two parts: the contribution of $J$-holomorphic disks with two positive punctures $d_\lD(q_i,q_j)$, and the second-order part of the corrected loop coproduct $d_f(q_i,q_j)$. We define $d_\lD(q_i,q_j),d_f(q_i,q_j)$ below, and take $$\{q_i,q_j\}_d=d_\lD(q_i,q_j)+d_f(q_i,q_j).$$

To define $d_\lD(q_i,q_j)$, consider the moduli space $\cM_1(J,\gamma_i^+,\gamma_j^+)$ of index zero $J$-holomorphic disks on $\lR\times\Lambda$ with two positive punctures asymptotic to Reeb chords $\gamma_i,\gamma_j$. We have seen that $\cM_1(J,\gamma_i^+,\gamma_j^+)$ is in bijection with immersed polygons in $\lC$ with boundary on $\pi_{xy}(\Lambda)$, convex corners at the self intersections of $\pi_{xy}(\Lambda)$ and two positive corners at $\gamma_i$ and $\gamma_j$. Note that the positive punctures of the curves in $\cM_1(J,\gamma_i^+,\gamma_j^+)$ are ordered. Denote by $t^1,t^2$ the positive punctures of $u\in\cM_1(J,\gamma_i^+,\gamma_j^+)$ asymptotic to $\gamma_i,\gamma_j$, by $t_1,\dots, t_{k_1}$ the negative punctures on the arc $(t^1,t^2)$, and by $t_1',\dots, t_{k_2}'$ the negative punctures on the arc $(t^2,t^1)$. Let $\gamma_{i_j},\gamma_{i_j'}$ be the Reeb chord at $t_j,t_j'$, and $a_j\in\lZ$ ($a_j'\in\lZ$) be the algebraic intersection number of the boundary of $u$ restricted to the arc $(t_j,t_{j+1})$ $\left((t_j',t_{j+1}')\right)$ and $\lR\times\{T\}$, where we take $t_0=t_{k_2+1}'=t^1$ and $t_0'=t_{k_1+1}=t^2$. Then we denote 
\begin{align*}
&\widehat w_1(u,t^1,t^2)=t^{a_0}q_{i_1}t^{a_1}\dots t^{a_{k_1-1}}q_{i_{k_1}}t^{a_{k_1}},\\
&\widehat w_2(u,t^1,t^2)=t^{a_0'}q_{i_1'}t^{a_1'}\dots t^{a_{k_2-1}'}q_{i_{k_2}'}t^{a_{k_2}'},\\
&\widehat w(u,t^1,t^2)=\widehat w_1(u,t^1,t^2)\otimes\widehat w_2(u,t^1,t^2)\in\widetilde\cA\otimes \widetilde\cA.
\end{align*}
Additionally, we have an orientation sign $\epsilon(u,t^1,t^2)\in\{1,-1\}$ for every $u\in\cM_1(J,\gamma_i^+,\gamma_j^+)$ defined in Section \ref{Section:Orientations_disks}. Then we define
\begin{align*}
d_\lD(q_i,q_j)=\sum_{u\in\cM_1(J,\gamma_i^+,\gamma_j^+)}(-1)^{(1+|q_i|)|\widehat w_2|}\epsilon(u,t^1,t^2)\widehat w(u,t^1,t^2).
\end{align*}
If one of the entries $s_1,s_2$ is equal to $t^\pm$, we define $d_{\lD}(s_1,s_2)=0$.

Next, we define $d_f(q_i,q_j)$. Given $i\in\{1,\dots,n\}$, we denote
\begin{align*}
\delta(i)=\begin{cases}
1,&\gamma_i\text{ even}\\
0,&\gamma_i\text{ odd}
\end{cases}
\end{align*}
Then for $i,j\in\{1,\dots,n\}$ such that $i\neq j$, we define
\begin{align*}
d_f(q_i,q_j)=&\delta(j^+,i^+)q_j\otimes q_i+(-1)^{|q_i||q_j|}\delta(j^-,i^-)q_i\otimes q_j-\\
-&\delta(j^+,i^-) q_iq_j\otimes 1-(-1)^{|q_i||q_j|}\delta(j^-,i^+)1\otimes q_jq_i,
\end{align*}
and
\begin{align*}
d_f(q_i,q_i)=-\delta(i^+,i^-)q_iq_i\otimes 1-(-1)^{|q_i|}\delta(i^-,i^+)1\otimes q_iq_i+\delta(i)q_i\otimes q_i
\end{align*}
for $i=j$.

Additionally, we define
\begin{equation}
\label{Eq:bracket_for_t}
\begin{aligned}
&d_f(s,t^+)=\{s,t^+\}_d=t^+\otimes s-st^+\otimes 1,\\
&d_f(s,t^-)=\{s,t^-\}_d=s\otimes t^--1\otimes t^-s,\\
&d_f(t^+,s)=\{t^+,s\}_d=-t^+s\otimes 1+t^+\otimes s,\\
&d_f(t^-,s)=\{t^-,s\}_d=-1\otimes st^-+s\otimes t^-,
\end{aligned}
\end{equation}
for $s\in\{q_i,t^\pm\,|\,i=1,\dots,n\}$.

We have relations $t^+t^-=t^-t^+=1$ on $\cA(\Lambda)$. It is not difficult to check that all the operators above are well-defined. This uniquely determines a second-order derivation $(d,\{\cdot,\cdot\}_d)$ on $\cA(\Lambda)$. In Proposition \ref{Prop:strong_derivation} and Proposition \ref{Prop:dcircdvanishes} below, we show that $(\cA(\Lambda),d,\{\cdot,\cdot\}_d)$ is a second-order dg algebra.

\begin{rmk}\label{Remark:normalization}
The operator defined in Section \ref{Sec:The_differential} corresponds modulo strings with two positive punctures to the non-normalized operator
\begin{align}
\label{Eq:normalization_f}
\widehat d(s)=d(s)-\hbar(s\otimes 1),
\end{align}
that satisfies
\begin{align*}
&\widehat d(uv)=\widehat d(u)v+(-1)^{|u|}u\widehat d(v)-(-1)^{|u|}u\widehat d(1) v+\{u,v\}_\hbar,\\
&\widehat d(1)=-\hbar(1\otimes 1).
\end{align*}
It is not difficult to check that $d^2=0$ if and only if $\widehat d^2=0$.
\end{rmk}
\begin{rmk}
\label{Rmk:sign_difference}
The signs here are slightly different compared to the definition in Section \ref{Sec:Invariant_definition}. Summands of the form $\zeta=\hbar(\gamma_1\otimes\gamma_2)$ appear with an extra sign $(-1)^{|\gamma_2|}$. 
\end{rmk}

\vspace{3mm}
Denote by $\widetilde\cA^r$ the tensor algebra generated by $q_i,p_i,t^+,t^-,i\in\{1,\dots,n\}$ over $\lQ$, quotient by relations $t^+t^-=t^-t^+=1$ and words that contain more than $r$ letters $p$, with the grading given by $|q_i|=-1-|p_i|=\mu_{CZ}(\gamma_i),|t^\pm|=\mp2\operatorname{rot}(\Lambda)$.

Define a differential $\delta:\widetilde\cA^1\to\widetilde\cA^1$
\begin{equation}
\label{Eg:delta_formula}
\begin{aligned}
&\begin{aligned}
\delta(q_j)=\sum_{i\neq j}\Big(&\delta(j^+,i^+)q_ip_iq_j+(-1)^{|q_j|}\delta(j^-,i^-)q_jp_iq_i-\\
&-\delta(j^+,i^-)p_iq_iq_j+(-1)^{|q_j|+1}\delta(j^-,i^+)q_jq_ip_i\Big)+\\
&+(-1)^{|q_j|+1}\delta(j^-,j^+)q_jq_jp_j-\delta(j^+,j^-)p_jq_jq_j+\delta(j)q_jp_j q_j,
\end{aligned}\\
&\begin{aligned}
\delta(t)&=\sum_{i}(q_ip_it-p_iq_it),\\
\delta(t^-)&=\sum_{i}(t^-p_iq_i-t^-q_ip_i),\\
\delta(p_i)&=0.
\end{aligned}
\end{aligned}
\end{equation}
Map $\delta$ corresponds to the operator defined in Section \ref{Sec:loop_product_operator}.

\begin{prop}\label{Prop:strong_derivation}
The map $d:\cA(\Lambda)\to\cA(\Lambda)$ is a derivation with respect to the antibracket $\{\cdot,\cdot\}_d$, i.e.
\begin{align*}
(d_0\otimes 1+1\otimes d_0)\{v,w\}_d=\{d_0v,w\}_d+(-1)^{|v|}\{v,d_0w\}_d.
\end{align*}
\end{prop}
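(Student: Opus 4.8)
The plan is to verify the identity on generators and then invoke Lemma \ref{Lemma:Second_order_derivation}, which reduces the strong-derivation condition to
$$(d_0\otimes 1+1\otimes d_0)\{q_i,q_j\}_d=\{d_0q_i,q_j\}_d+(-1)^{|q_i|}\{q_i,d_0q_j\}_d$$
for all $i,j\in\{1,\dots,n\}$, together with the analogous identities in which one of the two slots is a $t^\pm$. First I would record that $d_0=\pi_{\widetilde\cA}\circ d\circ\iota_{\widetilde\cA}=d_\lD$, since both $d_A(q_i)$ and $d_f(q_i)$ land in $\hbar\,(\widetilde\cA\otimes\widetilde\cA^{\cyc})$ and are therefore annihilated by $\pi_{\widetilde\cA}$; thus $d_0$ is exactly the Chekanov--Eliashberg differential $d_{CE}$ on the glued-positive-puncture sector. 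Writing $\{q_i,q_j\}_d=d_\lD(q_i,q_j)+d_f(q_i,q_j)$, the identity is linear in this decomposition, but the two pieces interact because the derivation $d_{CE}$ occurring in the first slot of $d_\lD(d_\lD q_i,q_j)$ and of $d_f(d_\lD q_i,q_j)$ redistributes letters, so I would track the disk contribution and the combinatorial contribution in parallel.

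The geometric input controls the disk part. I would consider the $1$-dimensional moduli space of index $1$ $J$-holomorphic disks with two positive punctures asymptotic to $\gamma_i,\gamma_j$ and apply the compactification of Proposition \ref{Prop:compactness_disk}. Its two-building boundary points are pairs of index $0$ disks glued along a single Reeb chord; since the total number of positive punctures is two, one factor carries one positive puncture (a generator counted by $d_\lD=d_{CE}$) and the other carries two (a generator counted by $d_\lD(\cdot,\cdot)$). Such a configuration is either a capping of a negative puncture of the two-positive disk, and these assemble into $(d_\lD\otimes 1+1\otimes d_\lD)\,d_\lD(q_i,q_j)$, or a capping of one of its positive punctures, and these assemble into $d_\lD(d_\lD q_i,q_j)+(-1)^{|q_i|}d_\lD(q_i,d_\lD q_j)$. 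That a compact $1$-manifold has signed boundary count zero then yields the disk part of the desired identity up to the remaining boundary stratum, namely the trivial-strip-bubble and boundary-node degenerations.

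The combinatorial input matches this remaining stratum with $d_f$. I would either verify directly that $(d_\lD\otimes 1+1\otimes d_\lD)\,d_f(q_i,q_j)$ differs from the $d_f$-parts of $\{d_\lD q_i,q_j\}_d$ and $\{q_i,d_\lD q_j\}_d$ by exactly the leftover trivial-strip boundary terms, by computing how the derivation $d_{CE}$ transforms the position functions $\delta(i^\pm,j^\pm)$ and the capping/linking data defining $d_f$; or, more conceptually, I would transport the already-established derivation property of the string operator $d_{\str}$ with respect to the SFT bracket from Section \ref{Sec:properties} through the string-to-algebra dictionary of Section \ref{Section:IIinvar_def}, under which $d_\lD(\cdot,\cdot)+d_f(\cdot,\cdot)$ corresponds to the annulus-gluing bracket plus the $\nabla$ correction and $d_0$ corresponds to $\delta$ together with gluing against $H$ on one-positive-puncture strings. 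The cases involving $t^\pm$ are a short separate check from the explicit formulas (\ref{Eq:bracket_for_t}), together with $d_\lD t^\pm=0$ and the compatibility of $d_{CE}$ with the relation $t^+t^-=1$.

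The main obstacle I expect is precisely the identification of the combinatorial $d_f$ terms with the geometric trivial-strip and nodal degenerations \emph{with signs}: $d_f$ is defined through linking numbers of capping paths and the discrete position functions $\delta(i^\pm,j^\pm)$, whereas the degenerations are genuinely moduli-theoretic, so the bookkeeping must be carried out using the coherent orientations of Section \ref{Section:Orientations} and the sign conventions of Section \ref{Sec:orientation_and_algebraic_signs}; in particular one must absorb the sign discrepancy recorded in Remark \ref{Rmk:sign_difference}. Once the sign dictionary is fixed, each of the three ingredients above reduces to a finite case analysis over the relative cyclic positions of $i^\pm,j^\pm$ on $\Lambda$.
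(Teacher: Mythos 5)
Your outline rests on the same geometry as the paper's proof: the paper likewise checks the identity only on generators, disposes of the $t^\pm$ entries first, and its key input is ultimately the compactification of the $1$-dimensional moduli space of disks with two positive punctures. The organization, however, is genuinely different. Instead of arguing on the moduli space directly, the paper encodes the antibracket algebraically: it introduces the contraction $\operatorname{cl}(\cdot,\cdot):\widetilde\cA\times\widetilde\cA^1\to\widetilde\cA\otimes\widetilde\cA$ pairing a letter $q_I$ of the first argument against the letter $p_I$ of a one-$p$ word, proves $\{v,w\}_{d,1}=\operatorname{cl}(v,\widetilde d_{1,0}w)$ for the rational SFT differential $\widetilde d$ of \cite{Ng_rLSFT} (whose one-$p$ part packages the two-positive-puncture disks \emph{and} the trivial-strip operator $\delta$, hence both $d_\lD(\cdot,\cdot)$ and $d_f(\cdot,\cdot)$ simultaneously), and then reduces the entire Leibniz identity, via two bookkeeping identities for $\operatorname{cl}$ against $\widetilde d_{1,p}$ and $\widetilde d_{1,q}$, to the single known fact $\widetilde d\circ\widetilde d(q_j)=[F(H_0),q_j]$. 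What this buys is that the sign analysis is quarantined in Lemma \ref{Lemma:signs_FH} and Lemma \ref{Lemma:signs_disksII}, and the finite case analysis over relative positions of $i^\pm,j^\pm$ that you anticipate at the level of $d_f$ never has to be carried out by hand.

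The one place your plan is at real risk: you expect the trivial-strip stratum to cancel cleanly against the $d_{CE}$-redistribution of the $d_f$ terms, and you relegate $t^\pm$ to ``a short separate check'' of the generator identities from (\ref{Eq:bracket_for_t}). In fact $\widetilde d\circ\widetilde d(q_j)$ does \emph{not} vanish: because the antibracket takes values in the non-cyclic tensor product $\widetilde\cA\otimes\widetilde\cA$, the marked point matters, and trivial strips inserted adjacent to base-point crossings leave the residual commutator $[F(H_0),q_j]$ --- exactly the $F$-terms of Lemma \ref{Lemma:signs_FH}, which cancel only after passing to cyclic words. The identity of Proposition \ref{Prop:strong_derivation} holds only because the $t^\pm$-sector $\{\cdot,\cdot\}_{d,2}$ of the antibracket, evaluated on the $t^\pm$ letters occurring \emph{inside the words of} $d_0q_i$ through the Leibniz expansion of $\{d_0q_i,q_j\}_d$, contributes exactly $(-1)^{|q_i|+1}\operatorname{cl}\left(q_i,[F(H_0),q_j]\right)$; this is the paper's closing computation. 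Your first route (direct verification) would surface and resolve this, provided you keep those $\{t^\pm,q_j\}_d$ contributions rather than treating $t^\pm$ as an isolated case. Your alternative route --- transporting the derivation property of $d_{\str}$ with respect to the SFT bracket from Section \ref{Sec:properties} --- does not work as stated: that statement lives on cyclic strings, where the $F$-anomaly is invisible, while the strong-derivation condition (\ref{Equation:third_condition}) is precisely its non-cyclic refinement, and the anomaly lives in the difference.
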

\begin{proof}
If one of the entries is equal to $t^+$ or $t^-$, the statement is easy to check directly. Now, it is enough to show
\begin{align*}
(d_0\otimes 1+1\otimes d_0)\{q_i,q_j\}_d=\{d_0q_i,q_j\}_d+(-1)^{|q_i|}\{q_i,d_0q_j\}_d
\end{align*}
for all $i,j\in\{1,\dots,n\}$.

Let $\operatorname{cl}(\cdot,\cdot):\widetilde\cA\times\widetilde\cA^1\to\widetilde\cA\otimes\widetilde\cA$ be the bilinear map given by
\begin{align*}
&\operatorname{cl}(q_{j_1}\dots q_{j_l},q_{j_{l+1}}\dots q_{j_s}p_{I} q_{j_{s+1}}\dots q_{j_k})=\\
&=\sum_{\substack{i=1,\dots,l\\j_i=I}}(-1)^{\left(1+\sum_{a=1}^l|q_{j_a}|\right)\left(\sum_{b=i+1}^s|q_{j_b}|\right)}
q_{j_1}\dots q_{j_{i-1}}q_{j_{s+1}}\dots q_{j_k}\otimes q_{j_{l+1}}\dots q_{j_s} q_{j_{i+1}}\dots q_{j_l},
\end{align*}
and zero on $\widetilde\cA\times\widetilde\cA$. It is not difficult to see that $\operatorname{cl}(\cdot,\cdot)$ is the unique bilinear map that satisfies
\begin{align*}
&\operatorname{cl}(q_i,p_j)=\delta_{ij}1\otimes 1,\\
&\operatorname{cl}(q_i,q_j)=0,
\end{align*}
where $\delta_{ij}=1$ if $i=j$ and zero otherwise, and 
\begin{align*}
&\operatorname{cl}(v,w_1w_2)=\operatorname{cl}(v,w_1)\cdot(w_2\otimes 1)+(-1)^{(1+|v|)|w_1|}(1\otimes w_1)\cdot\operatorname{cl}(v,w_2),\\
&\operatorname{cl}(v_1v_2,w)=(v_1\otimes 1)
\cdot\operatorname{cl}(v_2,w)+(-1)^{|v_2|(|w|+1)}\operatorname{cl}(v_1,w)\cdot(1\otimes v_2). 
\end{align*}
Let $\{\cdot,\cdot\}_{d,1}$ be the antibracket given by 
\begin{align*}
&\{q_i,s\}_{d,1}=\{q_i,s\}_d,i\in\{1,\dots,n\}\\
&\{t^\pm,s\}_{d,1}=0.
\end{align*}
We additionally define a derivation $\widetilde d:\widetilde\cA^1\to\widetilde\cA^1$ given by
\begin{equation}
\label{Eq:d_unglued_pos_formula}
\begin{aligned}
&\widetilde d(q_j)=\delta(q_j)+\sum_{u=wp_j\in\cM_1(J,\gamma_j^+)}\epsilon(u)w+\sum_{u=w_1p_iw_2p_j\in\cM_1(J,\gamma_i^+,\gamma_j^+)}\epsilon(u,\gamma_j^+)w_1p_iw_2,\\
&\widetilde d(p_j)=\sum_{u=w_2q_jw_1p_i\in\cM_1(J,\gamma_i^+)}(-1)^{|q_i|+|w_1|}\epsilon(u)w_1p_iw_2=-\sum_{u=w_2q_jw_1p_i\in\cM_1(J,\gamma_i^+)}\epsilon(u,\gamma_j^-)w_1p_iw_2,\\
&\widetilde d(t^\pm)=\delta(t^\pm),
\end{aligned}
\end{equation}
where $\epsilon(u,\gamma_j^+)=\epsilon(u,t^1,t^2)$ denotes the product of the signs at the corners of $u\in\cM_1(J,\gamma_i^+,\gamma_j^+)$ and the sign at the marked point right after the positive puncture at $\gamma_j$, and $\epsilon(u,\gamma_j^-)$ is the product of the signs at the corners of $u\in\cM_1(J,\gamma_i^+)$ with a negative puncture at $\gamma_j$ and the sign at the marked point right after this puncture. Note that $\widetilde d$ is part of the operator defined in \cite{Ng_rLSFT} (with slightly different sign rules). Denote additionally 
\begin{align*}
\widetilde d_{0,0}=\pi_{0}\circ \widetilde d\circ \iota_{0},\\
\widetilde d_{1,0}=\pi_{1}\circ \widetilde d\circ \iota_{0},\\
\widetilde d_{1,1}=\pi_{1}\circ \widetilde d\circ \iota_{1},
\end{align*}
where $\pi_{0}$ is the projection to the subspace generated by words with no letter $p$, $\pi_1$ is the projection to the subspace generated by words with exactly one letter $p$, $\iota_0,\iota_1$ are the corresponding inclusions, and
\begin{align*}
&\widetilde d_{1,p}(q_{j_1}\dots q_{j_s} p_I q_{j_{s+1}}\dots q_{j_k})=(-1)^{\sum_{a=1}^s|q_{j_a}|}q_{j_1}\dots q_{j_s} \widetilde d(p_I)q_{j_{s+1}}\dots q_{j_k},\\
&\widetilde d_{1,q}=\widetilde d_{1,1}-\widetilde d_{1,p}.
\end{align*} 

Then by definition we have
\begin{equation}
\begin{aligned}\label{Eq:l2_bracket_cl_operator}
\{v,w\}_{d,1}=\operatorname{cl}(v,\widetilde d_{1,0} w).
\end{aligned}
\end{equation}
Moreover, it is not difficult to check that
\begin{align*}
&(\widetilde d_{0,0}\otimes 1+1\otimes \widetilde d_{0,0})\operatorname{cl}(q_i, \widetilde d_{1,0}q_j)=-(-1)^{|q_i|}\operatorname{cl}(q_i,\widetilde d_{1,q}\widetilde d_{1,0}q_j),
\end{align*}
and
\begin{align*}
&\operatorname{cl}(\widetilde d_{0,0}q_i,\widetilde d_{1,0}q_j)=(-1)^{|q_i|}\operatorname{cl}(q_i,\widetilde d_{1,p}\widetilde d_{1,0}q_j).
\end{align*} 
The first equality follows from
\begin{align*}
&(-1)^{|q_i|}\operatorname{cl}(q_i,\widetilde d_{1,q}\widetilde d_{1,0}q_j)=(-1)^{|q_i|}\sum\operatorname{cl}\left(q_i,\widetilde d_{0,0}(R_{ij}^1)p_iR_{ij}^2+(-1)^{|R_{ij}^1|+|q_i|+1}R_{ij}^1p_i\widetilde d_{0,0}(R_{ij}^2)\right)=\\
&=\sum(-1)^{(1+|q_i|)|R_{ij}^1|+1}R_{ij}^2\otimes \widetilde d_{0,0}(R_{ij}^1)+\sum(-1)^{|R_{ij}^1|+1+(1+|q_i|)|R_{ij}^1|}\widetilde d_{0,0}(R_{ij}^2)\otimes R_{ij}^1=\\
&=-\sum(\widetilde d_{0,0}\otimes 1+1\otimes \widetilde d_{0,0})\left((-1)^{(1+|q_i|)|R_{ij}^1|}R_{ij}^2\otimes R_{ij}^1\right)=-(\widetilde d_{0,0}\otimes 1+1\otimes \widetilde d_{0,0})\operatorname{cl}(q_i, \widetilde d_{1,0}q_j),
\end{align*}
where we write $\widetilde d_{1,0}(q_j)|_{p_j=0,j\neq i}=\sum R_{ij}^1 p_iR_{ij}^2$. \newline
The second equality follows from
\begin{align*}
&\operatorname{cl}(\widetilde d_{0,0}q_i,\widetilde d_{1,0}q_j)=\sum_{v=\overline h_1q_k\overline h_2p_i\in\cM_1}\sum_{u=h_1p_kh_2p_j\in\cM_1}\epsilon(u,\gamma_j^+)\epsilon(v)\operatorname{cl}(\overline h_1q_k\overline h_2,h_1p_kh_2)=\\
&=\sum_{v=\overline h_1q_k\overline h_2p_i\in\cM_1}\sum_{u=h_1p_kh_2p_j\in\cM_1}\epsilon(u,\gamma_j^+)\epsilon(v)(-1)^{|q_i|(|h_1|+|\overline h_2|)}\overline h_1 h_2\otimes h_1\overline h_2=\\
&=\sum_{v=\overline h_1q_k\overline h_2p_i\in\cM_1}\sum_{u=h_1p_kh_2p_j\in\cM_1}\epsilon(u,\gamma_j^+)\epsilon(v)(-1)^{|h_1|+|\overline h_2|}\operatorname{cl}(q_i,h_1\overline h_2p_i\overline h_1 h_2)=\\
&=(-1)^{|q_i|}\operatorname{cl}(q_i,\widetilde d_{1,p}\widetilde d_{1,0}q_j).
\end{align*}
This gives us
\begin{align*}
&-(d_{0}\otimes 1+1\otimes d_{0})\{q_i,q_j\}_{d,1}+\{d_{0}q_i,q_j\}_{d,1}+(-1)^{|q_i|}\{q_i,d_{0}q_j\}_{d,1}=\\
=&-(\widetilde d_{0,0}\otimes 1+1\otimes \widetilde d_{0,0})\operatorname{cl}(q_i, \widetilde d_{1,0}q_j)+\operatorname{cl}(\widetilde d_{0,0}q_i,\widetilde d_{1,0}q_j)+(-1)^{|q_i|}\operatorname{cl}(q_i,\widetilde d_{1,0}\widetilde d_{0,0}q_j)=\\
=&(-1)^{|q_i|}\operatorname{cl}(q_i,\pi_1\circ\widetilde d\circ \widetilde d(q_j)).
\end{align*}
From \cite{Ng_rLSFT} we have $\widetilde d\circ \widetilde d(q_j)=[F(H_0),q_j]$. Here $H_0=\sum_{u=wp_i\in\cM_1(J,\gamma_i^+)}\epsilon (u,\gamma_i)wp_i\in\widetilde\cA^1$ is the part of the Hamiltonian containing disks with one positive puncture, $[x,y]=xy-(-1)^{|x||y|}yx$ is the commutator and $F:\widetilde\cA^1\to\widetilde\cA^1$ is the linear map given by
\begin{align*}
F(h)=&-\sum_{h=\widetilde h_1 t\widetilde h_2p_i}(-1)^{|\widetilde h_1|(|h|-|\widetilde h_1|)}\widetilde h_2p_i\widetilde h_1 t  +\\
&+\sum_{h=\overline h_1 t^-\overline h_2p_i}(-1)^{|\overline h_1|(|h|-|\overline h_1|)}t^-\overline h_2p_i\overline h_1 
\end{align*}
for $h\in\widetilde\cA^1$ word. This is obtained by considering the compactification of the 1-dimensional moduli space of disks with two positive punctures. See also Lemma \ref{Lemma:signs_FH} and Lemma \ref{Lemma:signs_disksII} for more details.

Next, let $\{\cdot,\cdot\}_{d,2}:\widetilde\cA\times\widetilde\cA\to\widetilde\cA\otimes\widetilde\cA$ be the antibracket given by
\begin{align*}
\{v,w\}_{d,2}=&-\sum_{v=v_1t^+v_2}(-1)^{|v_1||v_2|}v_1t^+w\otimes v_2+\sum_{v=\widetilde v_1t^-\widetilde v_2}(-1)^{|\widetilde v_1||\widetilde v_2|}\widetilde v_1w\otimes t^-\widetilde v_2+\\
&+\sum_{v=v_1t^+v_2}(-1)^{|v_1||v_2|+|v||w|}v_1t^+\otimes wv_2-\sum_{v=\widetilde v_1t^-\widetilde v_2}(-1)^{|\widetilde v_1||\widetilde v_2|+|v||w|}\widetilde v_1\otimes wt^-\widetilde v_2
\end{align*}
for words $v,w\in\widetilde\cA$, where the sums go over all different ways to write $v$ as $v_1t^+v_2$ or $\widetilde v_1t^-\widetilde v_2$ for $v_1,v_2,\widetilde v_1,\widetilde v_2\in\widetilde\cA$. In other words, $\{\cdot,\cdot\}_{d,2}$ is the antibracket given by
\begin{align*}
&\{t^+,s\}_{d,2}=t^+\otimes s-t^+s\otimes 1,\\
&\{t^-,s\}_{d,2}=s\otimes t^--1\otimes st^-,\\
&\{q_i,s\}_{d,2}=0,
\end{align*}
for $s\in\{q_i,t^\pm\,|\,i=1,\dots,n\}$, i.e.
$$\{\cdot,\cdot\}_{d,2}=\{\cdot,\cdot\}_d-\{\cdot,\cdot\}_{d,1}.$$

Now, it is enough to show 
$$\{\widetilde d_{0,0}q_i,q_j\}_{d,2}=(-1)^{|q_i|+1}\operatorname{cl}\left(q_i,[F(H_0),q_j]\right).$$
This follows from
\begin{align*}
&\{\widetilde d_{0,0}q_i,q_j\}_{d,2}=\\
=&\sum_{u=\widetilde h p_i\in H_0}\sum_{\widetilde h=\widetilde h_1 t\widetilde h_2}\epsilon(u)\left(-(-1)^{|\widetilde h_1||\widetilde h_2|}\widetilde h_1 t q_j\otimes \widetilde h_2+ (-1)^{|\widetilde h_1||\widetilde h_2|+(|q_i|+1)|q_j|}\widetilde h_1 t\otimes q_j\widetilde h_2\right)+\\
+&\sum_{u=\overline h p_i\in H_0}\sum_{\overline h=\overline h_1 t^-\overline h_2}\epsilon(u)\left((-1)^{|\overline h_1||\overline h_2|}\overline h_1 q_j\otimes t^-\overline h_2-(-1)^{|\overline h_1||\overline h_2|+(|q_i|+1)|q_j|} \overline h_1 \otimes q_j t^-\overline h_2\right)=\\
=&\sum_{u=\widetilde h p_i\in H_0}\sum_{\widetilde h=\widetilde h_1 t\widetilde h_2}\epsilon(u)\left(-(-1)^{|\widetilde h_2|}\operatorname{cl}(q_i,\widetilde h_2p_i\widetilde h_1 t q_j)+(-1)^{|\widetilde h_2|}\operatorname{cl}(q_i,q_j\widetilde h_2p_i\widetilde h_1 t )\right)+\\
+&\sum_{u=\overline h p_i\in H_0}\sum_{\overline h=\overline h_1 t^-\overline h_2}\epsilon(u)\left((-1)^{|\overline h_2|}\operatorname{cl}(q_i,t^-\overline h_2p_i\overline h_1 q_j)-(-1)^{|\overline h_2|}\operatorname{cl}(q_i,q_j t^-\overline h_2p_i\overline h_1)\right)=\\
=&\operatorname{cl}\Big(q_i,\sum_{u=\widetilde h p_i\in H_0}\sum_{\widetilde h=\widetilde h_1 t\widetilde h_2}\epsilon(u)\big(-(-1)^{|\widetilde h_2|}\widetilde h_2p_i\widetilde h_1 t q_j+(-1)^{|\widetilde h_2|}q_j\widetilde h_2p_i\widetilde h_1 t\big) +\\
+&\sum_{u=\overline h p_i\in H_0}\sum_{\overline h=\overline h_1 t^-\overline h_2}\epsilon(u)\big((-1)^{|\overline h_2|}t^-\overline h_2p_i\overline h_1 q_j-(-1)^{|\overline h_2|}q_j t^-\overline h_2p_i\overline h_1\big)\Big)=\\
=&(-1)^{|q_i|+1}\operatorname{cl}\left(q_i,[F(H_0),q_j]\right).
\end{align*}
Here we use $|H_0|\equiv 0\pmod 2$.
\end{proof}

\begin{prop}\label{Prop:dcircdvanishes}
The map $d:\cA(\Lambda)\to\cA(\Lambda)$ satisfies $d\circ d=0$.
\end{prop}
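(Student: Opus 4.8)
The plan is to reduce $d\circ d=0$ to a finite check on generators and then identify that check with the master equation \eqref{Eq:master_eq_prop}. Since Proposition \ref{Prop:strong_derivation} shows that $d$ is a strong second-order derivation with respect to $\{\cdot,\cdot\}_d$, the algebraic observation recorded after Definition \ref{Definition:IIord_derivation} applies: a strong second-order derivation with $d^2(s)=0$ on every generator automatically satisfies $d^2=0$ on all of $\cA(\Lambda)$. Thus it suffices to verify $d^2(s)=0$ on the generating set $\{q_i,t^\pm\}$. The cases $s=t^\pm$ are immediate: by construction $d_\lD(t^\pm)=d_A(t^\pm)=0$, so $d(t^\pm)=d_f(t^\pm)\in\hbar\,(\widetilde\cA\otimes\widetilde\cA^{\cyc})$, and applying the rule for $d$ on $\hbar\,(\widetilde\cA\otimes\widetilde\cA^{\cyc})$ once more only feeds $d_0=d_\lD$ into the tensor factors, where it vanishes. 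Hence $d^2(t^\pm)=0$ directly.

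For $s=q_i$ I would organize $d^2(q_i)$ by powers of $\hbar$. Writing $d(q_i)=d_\lD(q_i)+d_A(q_i)+d_f(q_i)$ with $d_\lD(q_i)\in\widetilde\cA$ and $d_A(q_i),d_f(q_i)\in\hbar\,(\widetilde\cA\otimes\widetilde\cA^{\cyc})$, the $\hbar^0$-component of $d^2(q_i)$ is exactly $d_\lD(d_\lD(q_i))$ computed as a first-order derivation, since the antibracket as well as $d_A$ and $d_f$ contribute only to $\hbar^1$. This piece is the classical Chekanov--Eliashberg square and vanishes for the usual reason: by Proposition \ref{Prop:compactness_disk} the boundary of an index-one moduli space of disks with one positive puncture consists of two-level disk buildings, each level an index-zero disk with one positive puncture, so the signed count is zero. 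Concretely, this is the $\hbar^0$ shadow of the identity $\widetilde d\circ\widetilde d(q_j)=[F(H_0),q_j]$ already invoked in the proof of Proposition \ref{Prop:strong_derivation}, whose $\hbar^0$ part is trivial.

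The substance is the $\hbar^1$-component, which is assembled from three sources: (a) applying the $d_A$-, $d_f$-, and antibracket parts of $d$ to the word $d_\lD(q_i)$; (b) feeding $d_0=d_\lD$ into the two tensor factors of $d_A(q_i)$; and (c) doing the same for $d_f(q_i)$. I would match each resulting summand with a boundary point of the compactified index-one moduli spaces of Propositions \ref{Prop:compactness_disk} and \ref{Prop:compactness_annulus}, following the dictionary used to prove the geometric master equation. The annulus two-level buildings (a disk glued to an annulus at one gluing pair) correspond to the cross-terms produced by inserting $d_A$ into $d_\lD(q_i)$ together with the $d_\lD$-contributions to the factors of $d_A(q_i)$; the two-gluing-pair disk buildings (two index-zero disks glued along two Reeb chords) correspond to the antibracket terms $\{\cdot,\cdot\}_\hbar$ supplied by the disk part $d_\lD(q_i,q_j)$ of $\{q_i,q_j\}_d$, carrying exactly the commutator $[F(H_0),q_i]$; finally, the hyperbolic nodal annuli (boundary self-intersections) and elliptic nodal annuli (interior intersections with $\lR\times\Lambda$) correspond to the two pieces of the loop-coproduct correction recorded in $d_f(q_i)$ and $d_f(q_i,q_j)$. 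Summed over all boundary points, these four families constitute the terms of $\tfrac12\{H,H\}+d_{\str}H$, so their vanishing is precisely Proposition \ref{Prop:Master_eq1}.

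It remains to confirm that the signs cancel, and this is where I expect the main obstacle to lie. The orientation conventions of Section \ref{Section:Orientations} fix the sign of each boundary contribution, and the matching with the algebraic antibracket and $d_f$ signs must be checked term by term across all four degeneration types; the bookkeeping is delicate because the algebraic conventions differ from the string-operation conventions by the normalization of Remark \ref{Remark:normalization} and the $(-1)^{|\gamma_2|}$ shift of Remark \ref{Rmk:sign_difference}. I would reuse the sign computations already carried out in Proposition \ref{Prop:strong_derivation}, where the disk-building and loop-product signs were reconciled, and extend them to the annulus-building and nodal-annulus contributions. Once the signs are matched, $d^2(q_i)=0$ follows for all $i$, and together with the generator cases $d^2(t^\pm)=0$ this yields $d\circ d=0$ on all of $\cA(\Lambda)$.
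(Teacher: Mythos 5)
Your overall architecture matches the paper's: reduce to generators via Proposition \ref{Prop:strong_derivation}, identify the summands of $d^2(q_i)$ with boundary points of the compactified index-one moduli spaces of Propositions \ref{Prop:compactness_disk} and \ref{Prop:compactness_annulus}, invoke the master equation, and defer signs to the coherent-orientation analysis of Section \ref{Sec:orientation_and_algebraic_signs}. Your $\hbar^0$ analysis and the dictionary between buildings/nodal annuli and the terms of type (a), (b), (c) are correct as far as they go. However, there is a genuine gap at the step where you write that the hyperbolic and elliptic nodal annuli ``correspond to the two pieces of the loop-coproduct correction recorded in $d_f(q_i)$ and $d_f(q_i,q_j)$,'' and then conclude by citing Proposition \ref{Prop:Master_eq1}. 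The master equation is a statement about the geometric operators $\delta$ and $\nabla$ on the string space $\cC(\Lambda)$, whereas $d_f$ is defined purely combinatorially via the linking numbers $l_i$ of the shifted capping paths $c_i$, the order functions $\delta(i^\pm,j^\pm)$, and $\operatorname{tb}(\Lambda)$. That these combinatorial formulas compute $\nabla(w)+\hbar(w\otimes 1)$ for \emph{every} word $w$ is not automatic and is in fact the bulk of the paper's proof: one constructs, for each generator, an explicit admissible punctured disk $\widetilde u_i$ with boundary string $\beta_i$, computes $|\operatorname{sh}(\widetilde u_i)\cap L|$ in terms of $\operatorname{lk}(\Lambda,c_i)$ with the parity corrections for odd chords, and then extends to arbitrary words $w=w_1w_2$ by an inductive concatenation of spanning disks, identifying $d_f(w_1,w_2)$ with the trivial-strip intersections $\operatorname{cl}(w_1,\delta w_2)$ plus the connecting-strip terms $\{w_1,w_2\}_{d,2}$, and the extra interior intersection accounting for the normalization $\widehat d(s)=d(s)-\hbar(s\otimes 1)$ of Remark \ref{Remark:normalization}. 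Without this bridge, Proposition \ref{Prop:Master_eq1} does not apply to your algebraic $d$.

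A second, smaller omission: the passage from moduli-space boundary cancellation to cancellation in $\widetilde\cA\oplus\hbar(\widetilde\cA\otimes\widetilde\cA^{\cyc})$ is not purely formal, because a building whose annular component carries a \emph{bad} boundary word (an even cover of an odd word) contributes zero in the cyclic quotient, while the opposite endpoint of the same one-dimensional component of the moduli space may contribute nontrivially, breaking the pairwise cancellation. The paper closes this via Lemma \ref{Lemma:no_bad_words}, showing no bad index-zero disk or annulus exists on $\lR\times\Lambda$ for a knot (with Remark \ref{Rem:Cancelations_bad_curves} handling the general situation). You should either cite this lemma or reproduce an argument of this kind; as written, your cancellation claim is incomplete at exactly these boundary points.
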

\begin{proof}
We prove the proposition up to signs. Checking the signs is postponed until Section \ref{Sec:orientation_and_algebraic_signs}. Using Proposition \ref{Prop:strong_derivation}, it is enough to show $d^2(s)=0$ for $s\in\{q_i,t^\pm\,|\,i=1,\dots,n\}$.

The definition of $d_\lD(\cdot),d_{A}(\cdot)$ up to signs clearly corresponds to $\{\cdot,H_1^1\}_1+\{\cdot,H_1^2\}_2,\{\cdot,H_2\}_1$, where $H_1^\iota,\iota\in\{1,2\}$ is the part of the Hamiltonian $H=H(\Lambda)$ that contains disks with $\iota$ positive punctures, and $H_2$ the part that contains annuli.

Let $S\in\Lambda$ be a point on $\Lambda$ right after the base point $T$ and $\beta_i$ be an immersed string on $\lR\times\Lambda$ with the following properties. Let $\beta_i^+$ be an embedded string on $\lR\times\Lambda$ that goes from $(M,S),M>>1$ down to $(-\infty,i^+)$, $\beta_i^+$ an embedded string on $\lR\times\Lambda$ that goes from $(-\infty,i^-)$ up to $(M,S)$. Let $\beta_i$ be a string obtained by concatenating $\beta_i^+$ and $\beta_i^-$ such that $\beta_i$ does not pass through $T$, passes through $(M,S)$ at a marked point $t_0$ with tangency $\dot\beta_i(t_0)=(1,0)$, and in the Lagrangian projection has precisely one non-regular point at $t_0$. Let $u_i$ be a generic punctured disk with one negative puncture at $\gamma_i$ and embedded boundary that does not pass through the base point. Then by definition 
$$|\operatorname{sh}(u_i)\cap L|=\operatorname{lk}(\Lambda,c_i)$$
for $|q_i|$ even, and
$$|\operatorname{sh}(u_i)\cap L|=\operatorname{lk}(\Lambda,c_i)-(-1)^{\delta(i^-,i^+)}$$
for $|q_i|$ odd. Let $\widetilde u_i$ be a punctured disk with boundary $\beta_i$ and marked point $t_0$. Then
$$|\operatorname{sh}(\widetilde u_i)\cap L|=\operatorname{lk}(\Lambda,c_i)-\delta(i^-,i^+)$$
for $|q_i|$ even, and
$$|\operatorname{sh}(\widetilde u_i)\cap L|=\operatorname{lk}(\Lambda,c_i)-(-1)^{\delta(i^-,i^+)}-\delta(i^-,i^+)=\operatorname{lk}(\Lambda,c_i)+\delta(i^-,i^+)-1$$
for $|q_i|$ odd. Additionally
\begin{align*}
|\beta_i^-\cap\beta_i^+|= \delta(i^-,i^+),
\end{align*}
these points correspond to the words of the form $\hbar(1\otimes q_i)$ in $d_f(q_i)$. In other words, $d_f(q_i)$ corresponds to $\nabla(q_i)+\hbar(q_i\otimes 1)$. Similar holds for $t^\pm$.

\begin{figure}
\def\svgwidth{91mm}
\begingroup%
  \makeatletter%
  \providecommand\rotatebox[2]{#2}%
  \newcommand*\fsize{\dimexpr\f@size pt\relax}%
  \newcommand*\lineheight[1]{\fontsize{\fsize}{#1\fsize}\selectfont}%
  \ifx\svgwidth\undefined%
    \setlength{\unitlength}{678.71145526bp}%
    \ifx\svgscale\undefined%
      \relax%
    \else%
      \setlength{\unitlength}{\unitlength * \real{\svgscale}}%
    \fi%
  \else%
    \setlength{\unitlength}{\svgwidth}%
  \fi%
  \global\let\svgwidth\undefined%
  \global\let\svgscale\undefined%
  \makeatother%
  \begin{picture}(1,0.64491341)%
    \lineheight{1}%
    \setlength\tabcolsep{0pt}%
    \put(0.85435899,0.0018152){\makebox(0,0)[lt]{\lineheight{1.25}\smash{\begin{tabular}[t]{l}$S$\end{tabular}}}}%
    \put(0.90944603,0.0018152){\makebox(0,0)[lt]{\lineheight{1.25}\smash{\begin{tabular}[t]{l}$T$\end{tabular}}}}%
    \put(0.22978674,0.22675385){\makebox(0,0)[lt]{\lineheight{1.25}\smash{\begin{tabular}[t]{l}$\operatorname{cl}(w_1,\delta w_2)$\end{tabular}}}}%
    \put(0.85687798,0.38673175){\makebox(0,0)[lt]{\lineheight{1.25}\smash{\begin{tabular}[t]{l}${\{w_1,w_2\}_{d,2}}$\end{tabular}}}}%
    \put(0,0){\includegraphics[width=\unitlength,page=1]{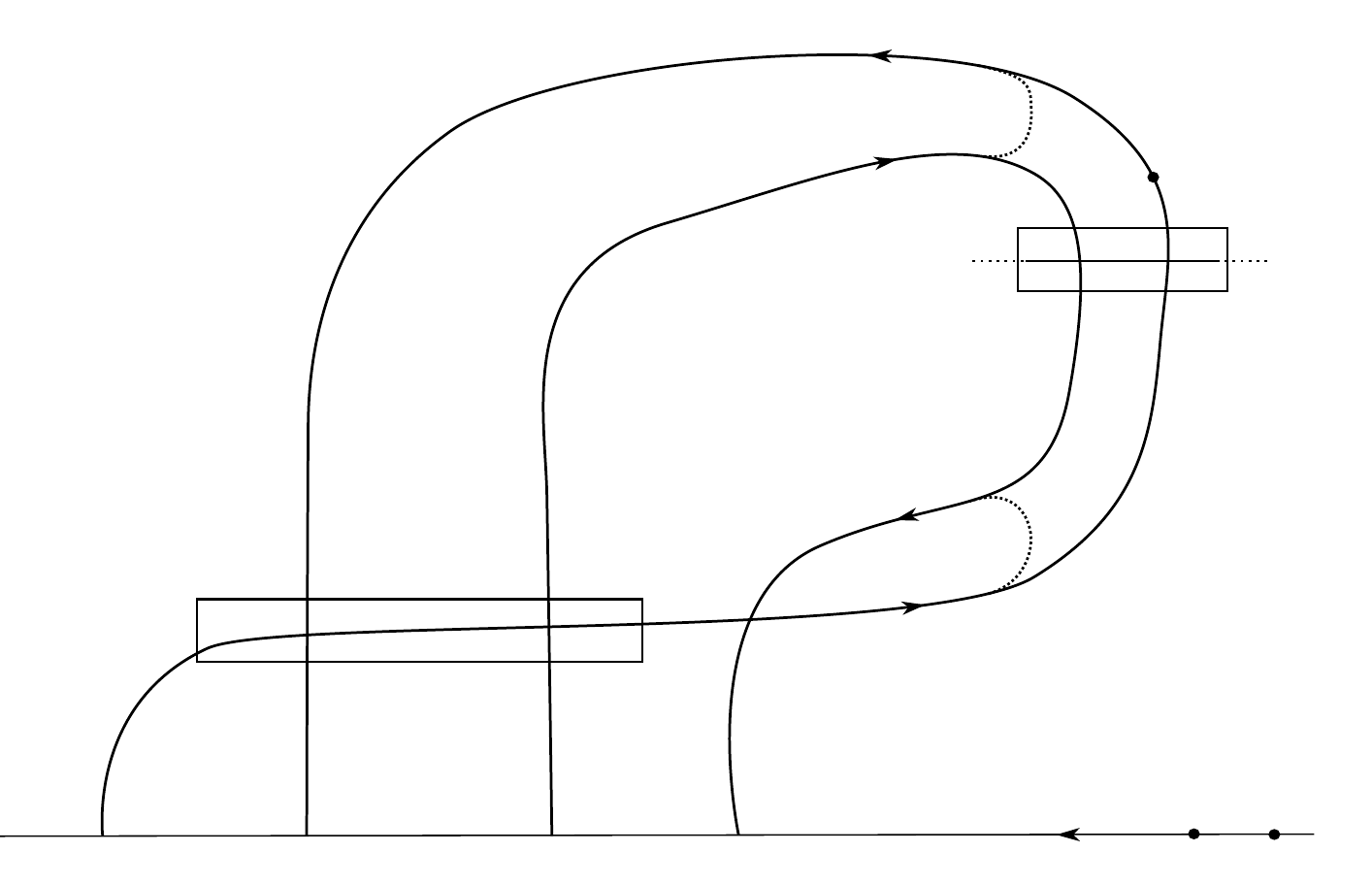}}%
    \put(0.6666481,0.61397242){\makebox(0,0)[lt]{\lineheight{1.25}\smash{\begin{tabular}[t]{l}$w_1$\end{tabular}}}}%
    \put(0.6688006,0.2897705){\makebox(0,0)[lt]{\lineheight{1.25}\smash{\begin{tabular}[t]{l}$w_2$\end{tabular}}}}%
  \end{picture}%
\endgroup%
\caption{Correspondence between $d_f(w_1w_2)$ and $\nabla(w_1w_2)+\hbar(w_1w_2\otimes 1)$.}
\label{Figure:corrected_loop_coproduct_and_d_f}
\end{figure}

Moreover, it is not difficult to see that $d_f(w)$ corresponds to $\nabla(w)+\hbar(w\otimes 1)$ for any word $w$. Let $w=w_1w_2$, where this is known for words $w_1,w_2$. Let $\beta_1,\beta_2$ be strings on $\lR\times\Lambda$ corresponding to $w_1,w_2$ as above (recursive construction). Then we construct a string $\beta$ for $w_1w_2$ and the corresponding spanning disk $\widetilde u$ by shifting $\beta_1,\widetilde u_i$ far above $\beta_2,\widetilde u_2$, and connecting them as shown in Figure \ref{Figure:corrected_loop_coproduct_and_d_f}. Then, $(d_f(w_1)-\hbar(w_1\otimes 1))w_2$ and $(-1)^{|w_1|}w_1 (d_f(w_2)-\hbar(w_2\otimes 1))$ correspond to the interior and the boundary intersection points of $\widetilde u$ coming form $\widetilde u_1$ and $\widetilde u_2$ separately, while $d_f(w_1,w_2)$ corresponds to boundary intersections of trivial strips at the negative punctures of $\beta_1$ with $\beta_2$ ($\operatorname{cl}(w_1,\delta w_2)$ by definition) and intersections of the connecting strip between $(M_1,S)$ and $(M_2,S)$ ($M_1>M_2$) with the parts of $\beta_1$ passing through $T$ ($\{w_1,w_2\}_{d,2}$ by definition) as depicted in Figure \ref{Figure:corrected_loop_coproduct_and_d_f}. An additional positive interior intersection corresponding to $\hbar(w_1w_2\otimes 1)$ of the shifted disk $\operatorname{sh}(\widetilde u)$ appears in the shift of the connecting strip. This shows (\ref{Eq:normalization_f}).

Denote by $\cM$ the 1-dimensional moduli space of disks and annuli with boundary on $\lR\times\Lambda$ and a positive puncture at $\gamma_i$. As we have seen in Section \ref{Sec:The_differential}, the summands in $d^2(q_i)$ correspond to points in $\partial\cM$. The boundary of $\cM$ consists of disk and annulus buildings and nodal annuli. The nodal annuli can be seen as index zero disks together with a boundary self-intersection or an interior intersection with $\lR\times\Lambda$. This gives us
\begin{align*}
d\circ d(q_i)=0
\end{align*}
over $\lZ_2$ coefficients as in Proposition \ref{Prop:d_square_vanishes}. Here we also use Lemma \ref{Lemma:no_bad_words}, which states that there are no bad index zero curves, see below. 

In the following section we show that the signs of the corresponding summands in $d\circ d(q_i)$ cancel out, which finishes the proof.
\end{proof}

\begin{defi}
We say a word $w\in\widetilde\cA$ is \textit{bad} if it is an even cover of an odd word, i.e. if there exists $w'\in\widetilde\cA,|w'|\equiv 1\pmod 2$ such that $w=\underbrace{w'\dots w'}_{2p}$ for some $p\in\lN$.
\end{defi}
A word $w$ is bad if it is equal to zero in $\widetilde\cA^{\cyc}$. To get cancellations in $d\circ d$, we additionally need to deal with index zero $J$-holomorphic curves that contain a bad word in a boundary component, which we call \textit{bad curves}. In our case, we show that there exists no such bad annulus or disk, see also Remark \ref{Rem:Cancelations_bad_curves} for a more general setting.

\begin{lemma}\label{Lemma:no_bad_words}
For $\Lambda$ a Legendrian knot, there exists no bad annulus or disk on $\lR\times\Lambda$ of index zero.
\end{lemma}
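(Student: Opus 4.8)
The plan is to work entirely with the boundary of the curve, projected to the knot. Let $u$ be an index zero disk or annulus carrying a bad word $(w')^{2p}$ (with $|w'|$ odd, so in particular $2p\ge 2$) on one of its boundary components $B$; for an annulus the relevant case is $B$ the inner boundary, which is a genuine closed loop with negative punctures only, and this is where I would concentrate. Reading the Reeb chords and base point crossings around $B$ exhibits $\pi_{xyz}\circ u|_{B}$ as a concatenation of $2p$ consecutive sub-arcs, each realizing $w'$. The first observation I would record is that this combinatorial $2p$-periodicity is geometrically rigid along $\Lambda$: since $w'$ prescribes a fixed cyclic list of Reeb chord endpoints, every sub-arc begins and ends at the \emph{same} chord endpoint $P\in\Lambda$, approached along the same embedded arc of $\Lambda$, hence with the same tangent direction. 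This is the step that uses that $\Lambda\subset\lR^3$ is a knot (an embedded circle), so that the arcs of $\Lambda$ between consecutive chord endpoints are fixed embedded sub-arcs.

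Next I would extract a degree. Capping each chord-jump of $B$ by a fixed path on $\Lambda$ turns $\pi_{xyz}\circ u|_{B}$ into a continuous loop $\bar\eta\colon \lS^1\to\Lambda\cong\lR/\lZ$ of some degree $m\in\lZ$, and I lift it to $\tilde\eta\colon\lR\to\lR$. Because all $2p$ sub-arcs traverse the very same embedded arcs of $\Lambda$, in the same order and the same directions, each contributes the \emph{same} net displacement $\Delta$ in the universal cover; and because every sub-arc starts and ends at a lift of the single point $P$, this common displacement $\Delta$ is an integer. Hence $m=2p\,\Delta$, so the winding of $B$ over $\Lambda$ is divisible by $2p$, in particular even. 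Thus a bad boundary forces the covering loop to be (combinatorially and homologically) a genuine $2p$-fold iterate.

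The contradiction I would then run uses that $\pi_{xy}\circ u$ is an \emph{honestly} holomorphic curve in $\lC$ with boundary on the immersed curve $\pi_{xy}(\Lambda)$ (an index zero annulus lies in $\cM^\pi_{2,1}$, immersed away from its single boundary branch point). The boundary $\pi_{xy}\circ u|_{B}$ then passes through the fixed point $\pi_{xy}(P)$ with the same tangent line $2p$ times, in the same cyclic pattern, so by unique continuation / Schwarz reflection across the real-analytic boundary condition this periodicity propagates into the interior, forcing $\pi_{xy}\circ u$ to be a $2p$-fold branched cover of a holomorphic disk or annulus. This is incompatible with $\operatorname{ind}(u)=0$: by the covering computation in Lemma \ref{Lemma:regularity_main}, a $d$-fold cover has index $\ge 2$ once $d\ge 2$, and moreover for an annulus the \emph{unique} positive puncture sits on the outer boundary and would have to be fixed by a fixed-point-free rotation of the domain annulus, which is impossible. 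Either way no such $u$ can be index zero. The disk case is handled by the same periodicity argument: a boundary component with pure bad word is again a combinatorially $2p$-periodic loop returning to $P$, and the same cover/index obstruction applies.

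I expect the genuine difficulty to be precisely the upgrade in the last paragraph — turning the combinatorial $2p$-periodicity of the boundary into an \emph{actual} order-$2p$ symmetry of the holomorphic map. The degree computation and the reduction are elementary once embeddedness of $\Lambda$ is invoked; the analytic content is showing that equal boundary values, tangents, and cyclic ordering at the $2p$ marked points rigidify the interior. I would either argue this via unique continuation for the holomorphic (in $\lC$) map together with the boundary regularity, or circumvent the symmetry entirely by deriving the contradiction directly from $m\equiv 0 \pmod{2p}$ against the primitivity forced on the boundary of a somewhere-injective (generic, index zero) curve, which is the formulation most clearly special to the one-dimensional Legendrian setting.
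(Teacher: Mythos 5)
There is a genuine gap, and it sits in your very first ``observation''. The word $w'$ records only \emph{which} self-intersections of $\pi_{xy}(\Lambda)$ the corners occur at (plus base-point crossings); it does not record which quadrant a corner occupies, i.e., along which of the two strands the boundary enters and leaves the crossing. A negative corner asymptotic to a fixed Reeb chord $\gamma_{i_j}$ can occupy either of two opposite quadrants, so two corners in consecutive iterations of $w'$ need not begin ``at the same chord endpoint, approached along the same embedded arc of $\Lambda$.'' In fact the paper shows the \emph{opposite} is forced: along any boundary loop of a curve in the projection, $|w|+\#\{\text{boundary branch points on that loop}\}$ is even (the side/orientation changes exactly that many times around the loop, cf.\ Section \ref{Section:Orientations_disks}); an index zero curve has at most one boundary branch point in the projection (exactly one for an annulus, none for a disk); and since $|w|=2p\,|w'|$ is even, the branch point cannot lie on the bad component. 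With no branch point available and $|w'|$ odd, corners at $q_{i_j}$ in consecutive iterations must occupy \emph{opposite} quadrants, so the sub-arcs alternate between the two complementary arcs of $\Lambda$ joining the chord endpoints. This destroys your displacement computation $m=2p\,\Delta$ (consecutive sub-arcs have complementary, not equal, displacements), and with it the $\lZ_{2p}$-symmetry/multiple-cover strategy --- which, as you yourself flag, you had not established anyway: finitely many boundary self-tangencies do not, via unique continuation, produce a deck transformation of a holomorphic map. Note also that although you mention the single boundary branch point of an index zero annulus parenthetically, your periodicity argument never engages with it; locating it off the bad component is exactly what the parity step accomplishes and cannot be skipped.

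For comparison, the paper's proof closes in one further elementary step: since consecutive corners at $q_{i_j}$ sit in opposite quadrants, the two boundary arcs from $q_{i_j}$ to $q_{i_{j+1}}$ in consecutive iterations of $w'$ together pass through \emph{every} point of $\pi_{xy}(\Lambda)$, which is impossible for the projection of an index zero curve when $\Lambda$ is a knot (for links one additionally uses that exactly one of the two arcs crosses the base point of the relevant component). So the correct mechanism is branch-point bookkeeping plus the quadrant-alternation forced by oddness of $w'$ --- precisely the alternation your proposal assumes away --- rather than any covering/index obstruction.
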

\begin{proof}
Assume there is such a curve $u_0$ and denote by $w$ the bad word in its boundary component. Then $w$ can be written as an even cover of an odd word $w'=t^{a_0}q_{i_1}t^{a_1}\dots q_{i_k}$, $w=\underbrace{w'\dots w'}_{2p},\,|w'|\equiv 1\pmod 2,\,p\in\lN$. There is at most one boundary branch point of $\pi_{xy}\circ u_0$ since $u_0$ is of index zero (one if $u_0$ is an annulus and zero if it is a disk). Since $|w|$ is even and $|w|+\#(\text{branch pts on }w$) is always even, this branch point cannot be on the boundary component corresponding to $w$. From this, we can conclude that two corners at $q_{i_j}$ in consecutive iterations of $w'$ have to be in opposite quadrants when seen in the Lagrangian projection. Then, the two corresponding boundary arcs between $q_{i_j}$ and $q_{i_{j+1}}$ in two consecutive iterations of $w'$ together pass through every point on $\pi_{xy}(\Lambda)$ when seen in the Lagrangian projection. This is clearly impossible for a projection of an index zero curve for $\Lambda$ a knot.

Similar also holds for Legendrian links when working with loop coefficients. Here we additionally use the fact that precisely one of the two arcs above passes through the base point of the corresponding link component, see Figure \ref{Fig:no_bad_words}. 
\end{proof}
\begin{rmk}\label{Rem:Cancelations_bad_curves}
In case we are working with a setting where Lemma \ref{Lemma:no_bad_words} does not hold, we can still get cancellations in $d\circ d$ as follows. Let for example $u$ be a bad annulus of index zero and $v$ an index zero disk that can be glued to the bad boundary component of $u$. These buildings are seen as zero in $d\circ d$, but the second boundary point of the corresponding connected component of the moduli space potentially has a non-zero contribution. We consider boundary points of $\overline\cM$ that are seen as zero in $d\circ d$ but do not contain a bad word when glued, i.e. that consist of a building that contains a bad curve and at least one of the gluing punctures comes from the bad component. For $u,v$ as above, the bad boundary component of $u$ is an even cover of an odd word, therefore, there is an even number of such buildings obtained by gluing $v$ to a different iteration of the odd word. Note that these buildings are not equivalent since annuli with one positive puncture have no non-trivial automorphisms. Moreover, it is easy to check that exactly half of such $(u,v)$ buildings come with orientation sign $+1$ and half with sign $-1$ (see below). Similar holds for bad index zero disks. This shows that half of the non-zero boundary points cobordant to a bad building come with orientation sign $+1$ and half with sign $-1$, and they cancel each other out. 

To see that exactly half of the buildings obtained by gluing $v$ to the bad boundary of $u$ come with a positive orientation sign, choose a marked point $e_2$ on the bad boundary component and let $\overline w(u,e_2)=q_{i_1}\dots q_{i_n}\otimes  (q_{j_1}\dots q_{j_m})\dots (q_{j_1}\dots q_{j_m})$, where $\overline w_2(u,e_2)=(q_{j_1}\dots q_{j_m})\dots (q_{j_1}\dots q_{j_m})$ is a $2p$-cover of a word $w_2=q_{j_1}\dots q_{j_m},|w_2|\cong 1\pmod 2$. Without loss of generality, $v$ has a positive puncture at $\gamma_{j_1}$. Denote by $B_i,i\in\{1,\dots,2p\}$ the $(u,v)$ building obtained by gluing $v$ to the $i^{th}$ iteration of $w_2$ with respect to the marked point $e_2$. The orientation near $\pi_{xy}B_i$ in the Lagrangian projection is given by
\begin{align*}
\epsilon_1\epsilon_2\epsilon(v)\left(\prod\epsilon_\bullet^u\right)(-1)^{|q_{i_1}\dots q_{i_l}|}(-1)^{i-1}\langle v_1,v_2\rangle,
\end{align*}
where $v_1$ is the branch point on $u$ (on the arc between the $l^{th}$ and the $(l+1)^{st}$ puncture on the outer boundary component), $v_2$ is the branch point that appears after gluing and $\epsilon_\bullet^u$ are the signs at the corners of $u$. The orientation of the boundary point $B_i$ is therefore given by
\begin{align*}
\pm\epsilon_1\epsilon_2\epsilon(v)\left(\prod\epsilon_\bullet^u\right)(-1)^{|q_{i_1}\dots q_{i_l}|}(-1)^{i-1},
\end{align*}
where the orientation normal in the $v_1$ direction (direction pointing from $\Omega<0$ to $\Omega>0$), $n=\pm v_1$, is the same for all $i$ by Lemma \ref{Lemma:OSection_elliptic}. Clearly, the sign is $+1$ for exactly half of the values of $i$.
\end{rmk}

\begin{figure}
\def\svgwidth{40mm}
\begingroup%
  \makeatletter%
  \providecommand\rotatebox[2]{#2}%
  \newcommand*\fsize{\dimexpr\f@size pt\relax}%
  \newcommand*\lineheight[1]{\fontsize{\fsize}{#1\fsize}\selectfont}%
  \ifx\svgwidth\undefined%
    \setlength{\unitlength}{277.97664226bp}%
    \ifx\svgscale\undefined%
      \relax%
    \else%
      \setlength{\unitlength}{\unitlength * \real{\svgscale}}%
    \fi%
  \else%
    \setlength{\unitlength}{\svgwidth}%
  \fi%
  \global\let\svgwidth\undefined%
  \global\let\svgscale\undefined%
  \makeatother%
  \begin{picture}(1,0.67219506)%
    \lineheight{1}%
    \setlength\tabcolsep{0pt}%
    \put(0,0){\includegraphics[width=\unitlength,page=1]{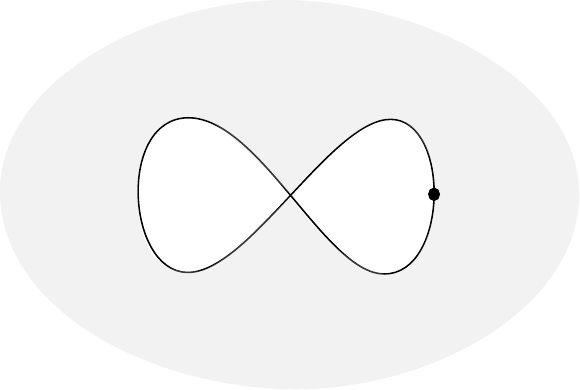}}%
    \put(0.77544506,0.31564197){\makebox(0,0)[lt]{\lineheight{1.25}\smash{\begin{tabular}[t]{l}$T$\end{tabular}}}}%
    \put(0.46711917,0.24783258){\makebox(0,0)[lt]{\lineheight{1.25}\smash{\begin{tabular}[t]{l}$q_i$\end{tabular}}}}%
    \put(0.46822479,0.40420719){\makebox(0,0)[lt]{\lineheight{1.25}\smash{\begin{tabular}[t]{l}$q_i$\end{tabular}}}}%
  \end{picture}%
\endgroup%
\caption{Bad word when $t=1$.}
\label{Fig:no_bad_words}
\end{figure}

As a final point in this section, we briefly discuss an important action filtration property of the differential $d$. The proof of the lemma below follows easily from the definition.

\begin{lemma}
For all $s\in\cA(\Lambda)$ and $s_1,s_2\in\widetilde\cA(\Lambda)$, we have
\begin{equation}\label{Eq:diff_action_property}
\begin{aligned}
&l(d(s))\leq l(s),\\
&l(\pi_{\widetilde\cA}\circ d(s))<l(s),\\
&l(\{s_1,s_2\}_d)\leq l(s_1s_2),
\end{aligned}
\end{equation}
where $\pi_{\widetilde\cA}:\cA(\Lambda)=\widetilde\cA\oplus\hbar\,(\widetilde\cA\otimes\widetilde\cA^{\cyc})\to\widetilde\cA$ is the projection onto the first coordinate.
\end{lemma}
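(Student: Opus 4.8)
The plan is to reduce the three estimates to a single geometric input — positivity of symplectic area — and then propagate the resulting bounds from the generators $q_i,t^\pm$ to all of $\cA(\Lambda)$ using the (second-order) Leibniz structure. The only non-formal ingredient is that any $J$-holomorphic disk or annulus $u$ counted in the differential, with positive punctures at $\gamma_i$ (and possibly $\gamma_j$) and negative punctures at $\gamma_{i_1},\dots,\gamma_{i_k}$, satisfies $\area(u)=\sum_{\mathrm{pos}}l(\gamma)-\sum_{\mathrm{neg}}l(\gamma)\geq 0$, with strict inequality whenever $u$ is non-constant; this follows from finite Hofer energy together with Stokes' theorem. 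Everything else is bookkeeping: recall that $l(t^\pm)=0$, that $l$ is additive on words and takes the maximum on sums, and that both the multiplication on $\widetilde\cA$ and the product on $\widetilde\cA\otimes\widetilde\cA$ (hence on $\cA$, including the $\hbar$-slots) are additive in action. Throughout I read the strict inequality $l(\pi_{\widetilde\cA}\circ d(s))<l(s)$ as vacuously true whenever $\pi_{\widetilde\cA}\circ d(s)=0$.

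First I would settle (\ref{Eq:diff_action_property}) on generators. The key observation is that $d_0=\pi_{\widetilde\cA}\circ d\circ\iota_{\widetilde\cA}$ is an \emph{ordinary} derivation of $\widetilde\cA$, since the second-order correction $\{\cdot,\cdot\}_\hbar$ and the contributions $d_A,d_f$ all land in $\hbar(\widetilde\cA\otimes\widetilde\cA^{\cyc})$ and are annihilated by $\pi_{\widetilde\cA}$. Thus $d_0(q_i)=d_\lD(q_i)=\sum_u\epsilon(u)\widetilde w(u)$ and $d_0(t^\pm)=0$. Each $u\in\cM_1(J,\gamma_i^+)$ is non-constant, so $\area(u)>0$ gives $l(\widetilde w(u))<l(\gamma_i)=l(q_i)$, hence $l(d_0 q_i)<l(q_i)$. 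For the full $d(q_i)=d_\lD(q_i)+d_A(q_i)+d_f(q_i)$, the annulus term satisfies $l(\overline w_1(u)\otimes\overline w_2(u,e_2))\leq l(\gamma_i)$ by $\area(u)\geq 0$, and each monomial of $d_f(q_i)$ (of the form $\hbar(q_i\otimes 1)$ or $\hbar(1\otimes q_i)$) has action exactly $l(q_i)$; so $l(d(q_i))\leq l(q_i)$. The cases $s=t^\pm$ are immediate from $d_\lD(t^\pm)=d_A(t^\pm)=0$ and the explicit $d_f(t^\pm)$, which have action $0=l(t^\pm)$. Finally $\{q_i,q_j\}_d=d_\lD(q_i,q_j)+d_f(q_i,q_j)$: positivity of area for the two-positive-puncture disks gives $l(\widehat w_1\otimes\widehat w_2)\leq l(q_i)+l(q_j)$, every monomial of $d_f(q_i,q_j)$ has action $l(q_iq_j)$, and the brackets involving $t^\pm$ are read off (\ref{Eq:bracket_for_t}) and preserve action because $l(t^\pm)=0$.

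The remaining step is propagation, which I would carry out by an induction on word length performed \emph{simultaneously} for $d$ and $\{\cdot,\cdot\}_d$; this coupling, forced by the non-derivation term in the second-order Leibniz rule $d(q_s w')=d(q_s)w'+(-1)^{|q_s|}q_s d(w')+\{q_s,w'\}_\hbar$, is the only genuine point to watch, and it is the main (mild) obstacle. For the third inequality I apply the defining Leibniz relations of the antibracket to write $\{w,w'\}_d$ as a sum of terms, each inserting a generator-level bracket $\{q_i,q_j\}_d$ (of action $\leq l(q_iq_j)$) into complementary words; additivity of $l$ then yields $l(\{w,w'\}_d)\leq l(ww')$. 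Feeding this back, the three summands of the Leibniz rule have actions $\leq l(q_s)+l(w')$, $\leq l(q_s)+l(w')$, and $\leq l(q_sw')$ respectively, giving $l(d(w))\leq l(w)$. The strict inequality is cleaner: since $d_0$ is an honest derivation with $l(d_0 q_i)<l(q_i)$ and $d_0 t^\pm=0$, the Leibniz rule on $\widetilde\cA$ forces $l(d_0 w)<l(w)$ for every word containing a $q$, i.e. $l(\pi_{\widetilde\cA}\circ d(s))<l(s)$ for $s\in\widetilde\cA$. Lastly, for $s=\hbar(v\otimes w)$ one has $\pi_{\widetilde\cA}\circ d(s)=0$ (so the strict bound is vacuous), while $d(\hbar(v\otimes w))=(-1)^{|w|+1}\hbar(d_0v\otimes w)-\hbar(v\otimes d_0^{\cyc}w)$ has action $\leq\max\{l(d_0v)+l(w),\,l(v)+l(d_0^{\cyc}w)\}\leq l(v)+l(w)=l(\hbar(v\otimes w))$, where I use that $d_0^{\cyc}$ is induced by $d_0$ and that $l$ descends to $\widetilde\cA^{\cyc}$ since cyclic permutation preserves the multiset of letters. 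This establishes all three estimates on $\cA(\Lambda)$.
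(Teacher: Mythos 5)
Your proof is correct and is essentially the paper's own argument written out in full: the paper disposes of the lemma with a one-line appeal to the definitions and to \cite[Lemma 6.1]{Chekanov02}, whose content is precisely your two ingredients --- strict positivity of the projected symplectic area (via Stokes and the Reeb-chord asymptotics) at the level of the generators $q_i,t^\pm$, and formal propagation through the second-order Leibniz rules, with the string-topological terms $d_f$ and the $t^\pm$-brackets preserving action on the nose. Your explicit handling of the annulus contribution, of the vacuous cases where $\pi_{\widetilde\cA}\circ d(s)=0$, and of the descent of $l$ to $\widetilde\cA^{\cyc}$ simply supplies details the paper leaves to the reader, so there is nothing to flag.
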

\begin{proof}
The proof follows easily from the definition, similar to \cite[Lemma 6.1]{Chekanov02}.
\end{proof}

\subsection{Algebraic and orientation signs}
\label{Sec:orientation_and_algebraic_signs}
In this section we complete the proof of Proposition \ref{Prop:dcircdvanishes}, i.e., we show that the signs cancel out. This follows from the fact that the orientations constructed in Section \ref{Section:Orientations} are coherent, see Definition \ref{Def:coherency} below.

We have seen that the summands in $d\circ d(q_i)$ correspond to boundary points of the 1-dimensional moduli space $\cM(J,\gamma_i^+)$ of disks and annuli with one positive puncture at $\gamma_i$, which consist of disk and annulus 2-buildings and nodal annuli. For every such point $\zeta\in\partial\cM(J,\gamma_i^+)$, we have an \textit{orientation sign} $\epsilon_O(\zeta)$ with respect to the orientations constructed in Section \ref{Section:Orientations}. Additionally, we define the \textit{algebraic sign} $\epsilon_A(\zeta)$ as the sign of the summand corresponding to $\zeta$ in $d\circ d(q_i)$. In Proposition \ref{Prop:strong_derivation}, we have seen that the summands in $\{d_0q_i,q_j\}_d+(-1)^{|q_i|}\{q_i,d_0q_j\}_d-(d_0\otimes 1+1\otimes d_0)\{q_i,q_j\}_d$, which correspond to summands in $\widetilde d\circ \widetilde d(q_j)-[F(H_0),q_j]$ that contain $p_i$, correspond to boundary points of the 1-dimensional moduli space $\cM(J,\gamma_i^+,\gamma_j^+)$ of disks with two positive punctures at $\gamma_i,\gamma_j$, which consist of disk 2-buildings and nodal disks with a trivial strip bubble. For every such point $\zeta\in\partial\cM(J,\gamma_i^+,\gamma_j^+)$, we have an orientation sign $\epsilon_O(\zeta)$ as before, and an algebraic sign $\epsilon_A(\zeta)$ defined as the sign of the summand corresponding to $\zeta$ in $\widetilde d\circ \widetilde d(q_j)$.

\begin{defi}\label{Def:coherency}
We say a set of orientations on the moduli space $\cM$ of $J$-holomorphic disks and annuli of index zero and one is \textit{coherent} if for all $\zeta\in\partial\cM$
\begin{align*}
\epsilon_O(\zeta)=(-1)^{|\partial\zeta|+1}\epsilon_A(\zeta),
\end{align*}
where $|\partial\zeta|$ is the number of boundary components of $\zeta$ after gluing. 
\end{defi}

We show that the orientations from Section \ref{Section:Orientations} are coherent in several steps.
\begin{lemma}\label{Lemma:signs_disks} 
Let $\zeta=(u,v)$ be a building in the boundary of the 1-dimensional moduli space of disks with one positive puncture. Then 
\begin{align*}
\epsilon_O(\zeta)=\epsilon_A(\zeta).
\end{align*}
\end{lemma}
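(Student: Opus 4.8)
The goal is to prove Lemma \ref{Lemma:signs_disks}, which is a purely combinatorial sign-matching statement comparing the \emph{orientation sign} $\epsilon_O(\zeta)$ coming from the coherent orientations of Section \ref{Section:Orientations} with the \emph{algebraic sign} $\epsilon_A(\zeta)$ coming from the formula for $d\circ d$, for $\zeta=(u,v)$ a two-building of disks with one positive puncture. Since the statement only concerns disks with one positive puncture, the relevant algebraic sign is the one appearing in $\{d_0q_i,H\}_1$-type terms, i.e. it is governed entirely by the restricted SFT bracket $\{\cdot,\cdot\}_1$ and the signs $\epsilon(\gamma,\gamma',t_i,t_j')$ together with the disk orientation signs $\epsilon(u),\epsilon(v)$ defined in Section \ref{Section:Orientations_disks}. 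The plan is to carry out the standard gluing-orientation comparison: express both $\epsilon_O(\zeta)$ and $\epsilon_A(\zeta)$ as explicit products of the quadrant signs from Figure \ref{Fig:final_signs}, the marked-point signs $\epsilon_1$, and the Koszul signs $(-1)^{P(\cdot,\cdot)}$, and then check that they agree.

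First I would fix notation for the building $\zeta=(u,v)$ glued at a pair of punctures $(z_1,z_2)$ along a Reeb chord $\gamma$, where $u$ carries the positive puncture at $\gamma_i$ and $v$ is glued at a negative puncture of $u$. I would then recall the definition of the combinatorial orientation $\sigma(l)$ on $\cC_1(\omega)$ and how, under gluing, the glued loop $l_u \# l_v$ acquires an orientation expressible through $\sigma(l_u)$ and $\sigma(l_v)$ up to a reordering sign. The key computation is to track how the branch-point tangent vectors $v_1,\dots,v_k$ from $u$ and $v$ are concatenated in the glued configuration, and which Koszul sign $(-1)^{\sum|s_j|}$ is picked up when the marked point of the glued string is moved to sit right after the positive puncture. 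Because a two-building with one positive puncture glues to a single disk (one boundary component after gluing, so $|\partial\zeta|=1$), the target identity $\epsilon_O(\zeta)=(-1)^{|\partial\zeta|+1}\epsilon_A(\zeta)=\epsilon_A(\zeta)$ reduces exactly to the plain equality stated in the lemma; this is consistent with the coherency Definition \ref{Def:coherency}.

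The central step is to match the Koszul sign in the orientation side with the bracket sign $\epsilon(\gamma,\gamma',t_i,t_j')$ from Section \ref{Sec:gluing_broken_strings}. I would write out $\epsilon(u)=\epsilon_1\prod\epsilon_\iota^u$ and the analogous expression for $v$, and then compute $\epsilon(u)\epsilon(v)$ times the reordering sign coming from interleaving the branch points and relocating the marked point. On the algebraic side, the sign of the summand in $d\circ d(q_i)$ is the product of the two differential coefficient signs and the bracket sign $\epsilon(\gamma,\gamma',t_i,t_j')$; I would expand the two cases ($t_i$ a negative versus positive puncture) from the case distinction defining $\epsilon(\gamma,\gamma',t_i,t_j')$. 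The comparison then comes down to verifying, case by case, that $\epsilon_1$ relocations and the $(-1)^{P(\gamma,t_i)(|\gamma'|+1)+\cdots}$ exponents agree with the $\prod(-1)^{\sum_{j\le n_i}|s_j|}$ factors in $\sigma(l)$; the already-established independence of $\sigma(l)$ from the marked point (the displayed computation showing $\sigma(l_2)=\sigma(l_1)$, using $k+\sum_j|s_j|\equiv 0 \pmod 2$) is exactly the tool that lets me normalize both sides to a common marked-point position.

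The main obstacle will be the bookkeeping of the marked-point conventions: the orientation $\sigma(l)$ is defined with a marked point whose position can be moved freely, whereas $\epsilon(u)$ is pinned to a marked point right after the positive puncture, and the bracket sign is defined relative to the ordering of punctures on each factor. Reconciling these requires carefully accounting for the grading contributions $|\gamma_{k_j}^\pm|$ of punctures that get absorbed or reindexed during gluing, and confirming that the parity identity used to show marked-point independence absorbs precisely the discrepancy. I expect the argument to split into the two cases for the sign of the glued puncture and, within each, to reduce to a parity identity of the form $P(\gamma,t_i)(|\gamma'|+1)+\text{(reordering)}\equiv \text{(branch-point Koszul sum)} \pmod 2$, which I would verify directly. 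Once this matching is done for a single gluing pair, the lemma follows, and the analogous statements for annulus buildings (Lemma \ref{Lemma:signs_disksII} and the annulus cases) will be handled by the same method with the extra obstruction-section orientation factor $\epsilon(u,e_2)$ included.
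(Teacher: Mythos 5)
Your overall frame is the paper's: reduce coherency to the plain equality via $|\partial\zeta|=1$, write both signs as products of quadrant signs, marked-point signs and Koszul factors, and compare directly. On the algebraic side, note that you do not actually need the two-case bracket sign $\epsilon(\gamma,\gamma',t_i,t_j')$ from Section \ref{Sec:gluing_broken_strings}: in the second-order dga formulation used in Definition \ref{Def:coherency}, $\epsilon_A(\zeta)$ for a disk building is just the Leibniz sign, i.e. $(-1)^{\sum_{j}|q_{i_j}|}\epsilon(u)\epsilon(v)$ where the exponent runs over the letters of $\widetilde w(u)$ preceding the glued negative corner, and only one gluing case occurs since the lower disk $v$ is always attached along its unique positive puncture.

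However, the step you call central contains a genuine misconception. Both $u$ and $v$ are index zero, so their Lagrangian projections are immersed polygons with \emph{no} boundary branch points; there are no ``branch-point tangent vectors $v_1,\dots,v_k$ from $u$ and $v$'' to interleave. The one branch point in play, call it $w_R$, is created by the gluing itself, near the glued corner of $u\# v$, and it carries the whole argument in two ways that your sketch never isolates: (i) its position is the local coordinate on the $1$-dimensional moduli space near breaking, so the combinatorial orientation near $u\# v$ is $A\langle w_R\rangle$ with $A=(-1)^{\sum_{j=1}^{k}|q_{i_j}|}\epsilon(u)\epsilon(v)$; and (ii) the outward-pointing vector at the boundary of the compactified moduli space is $+w_R$ (pushing the branch point into the corner is the breaking direction). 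Without identifying the outward normal, $\epsilon_O(\zeta)$ is simply not determined by any product of local signs, since it compares the orientation of a $1$-manifold with its boundary orientation. Note also that the parity input the paper uses to get the clean formula for $A$ is not the marked-point identity $k+\sum_j|s_j|\equiv 0 \pmod 2$ you cite, but the index-zero relation for $v$, namely $(-1)^{|q_{I_2}|+|q_{j_1}|+\dots+|q_{j_l}|}=-1$ for the Reeb chords at the punctures of $v$, which absorbs the quadrant signs and Koszul contributions of $v$'s letters into the single factor $\epsilon(v)$. With these two corrections your computation closes exactly as in the paper's proof.
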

\begin{proof}
Let $u\in\cM_1(J,\gamma_{I_1}^+)$ and $v\in\cM_1(J,\gamma_{I_2}^+)$ be index zero disks with one positive puncture that form a building $\zeta=(u,v)$ on $\lR\times\Lambda$ (with $u$ as the top level), and $u\# v$ a glued disk close to breaking. When gluing $\zeta$, we get a branch point $w_{R}$ on $u\# v$ near the glued corners as shown in Figure \ref{Fig:sign_rule_lemma}. It is not difficult to see that the orientation of the corresponding 1-dimensional moduli space near $u\# v$ is given by $A\langle w_R\rangle$ for
\begin{align*}
A=(-1)^{\sum_{j=1}^{k}|q_{i_j}|}\epsilon(u)\epsilon(v),
\end{align*}
where $\gamma_{i_1},\dots,\gamma_{i_{k}}$ are the Reeb chords at the negative punctures of $u$ that come before the negative corner at $\gamma_{I_2}$ glued to $v$ (starting from the positive puncture). Here we use $(-1)^{|q_{I_2}|+|q_{j_{1}}|+\dots+|q_{j_{l}}|}=-1$, for $\gamma_{j_{1}},\dots,\gamma_{j_{l}}$ the Reeb chords at the negative punctures of $v$, which follows from the fact that $v$ has index zero. Additionally, the outward-pointing vector at the boundary of the moduli space near $u\# v$ is given by $w_R$. Therefore, $\epsilon_O(\zeta)=A$. It is not difficult to see that this is equal to the sign of the summand in $d\circ d(q_{I_1})$ corresponding to $\zeta$. This finishes the proof.
\end{proof}
\begin{figure}
\def\svgwidth{126mm}
\begingroup%
  \makeatletter%
  \providecommand\rotatebox[2]{#2}%
  \newcommand*\fsize{\dimexpr\f@size pt\relax}%
  \newcommand*\lineheight[1]{\fontsize{\fsize}{#1\fsize}\selectfont}%
  \ifx\svgwidth\undefined%
    \setlength{\unitlength}{778.09755792bp}%
    \ifx\svgscale\undefined%
      \relax%
    \else%
      \setlength{\unitlength}{\unitlength * \real{\svgscale}}%
    \fi%
  \else%
    \setlength{\unitlength}{\svgwidth}%
  \fi%
  \global\let\svgwidth\undefined%
  \global\let\svgscale\undefined%
  \makeatother%
  \begin{picture}(1,0.20103277)%
    \lineheight{1}%
    \setlength\tabcolsep{0pt}%
    \put(0.06534756,0.14737284){\makebox(0,0)[lt]{\lineheight{1.25}\smash{\begin{tabular}[t]{l}$w_R$\end{tabular}}}}%
    \put(0.41106383,0.11988256){\makebox(0,0)[lt]{\lineheight{1.25}\smash{\begin{tabular}[t]{l}$w_R$\end{tabular}}}}%
    \put(0.67785085,0.06730411){\makebox(0,0)[lt]{\lineheight{1.25}\smash{\begin{tabular}[t]{l}$w_R$\end{tabular}}}}%
    \put(0.8745334,0.05075169){\makebox(0,0)[lt]{\lineheight{1.25}\smash{\begin{tabular}[t]{l}$w_R$\end{tabular}}}}%
    \put(0,0){\includegraphics[width=\unitlength,page=1]{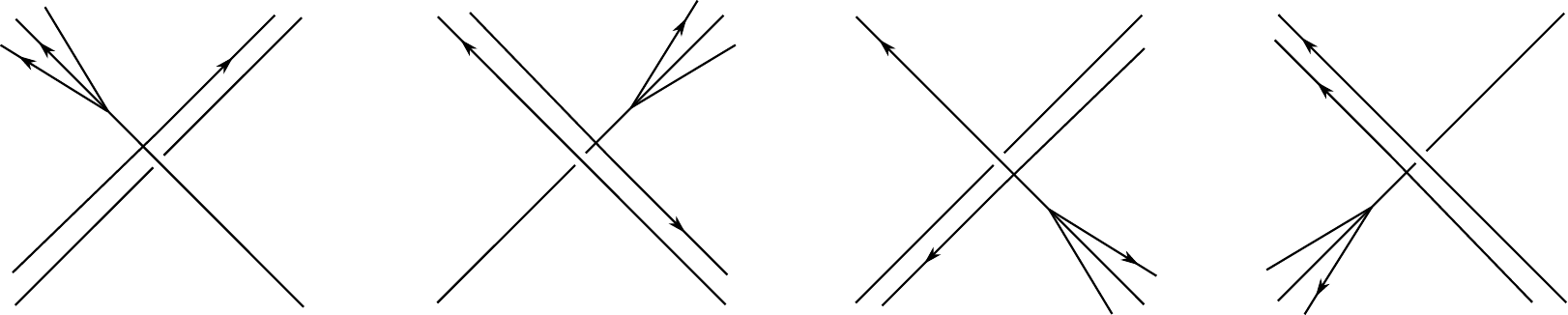}}%
  \end{picture}%
\endgroup%
\caption{Boundary branch point $w_{R}$ after gluing.}
\label{Fig:sign_rule_lemma}
\end{figure}

Let $F:\widetilde\cA^{r,\cyc}\to\widetilde\cA^r$ be the linear map given by
\begin{align*}
F(x)=\sum_{x=\widetilde w_1 t^{-} \widetilde w_2}(-1)^{|\widetilde w_1||\widetilde w_2|} t^{-}\widetilde w_2\widetilde w_1-\sum_{x=w_1 t^+ w_2}(-1)^{|w_1||w_2|}w_2w_1 t^+.
\end{align*}

We can see the SFT bracket $\{\cdot,\cdot\}_1$ defined in Section \ref{Sec:def_SFT_bracket} as a map $\{\cdot,\cdot\}_1:\widetilde\cA^{r_1}\otimes\widetilde\cA^{r_2,\cyc}\to\widetilde \cA^{r_1+r_2-1}$. Denote additionally by $\widetilde\delta:\widetilde\cA^{r,\cyc}\to\widetilde\cA^{r+1,\cyc}$ the map given by the first three lines in (\ref{Eg:delta_formula}) and 
\begin{align*}
\widetilde\delta(p_j)=\sum_{i\neq j}\Big(&(-1)^{|p_j|}\delta(j^+,i^-)p_jp_iq_i+\delta(j^-,i^+)q_ip_ip_j-\\
&-(-1)^{|p_j|}\delta(j^+,i^+)p_jq_ip_i-\delta(j^-,i^-)p_iq_ip_j\Big)+\\
&+(-1)^{|p_j|}\delta(j^+,j^-)p_jp_jq_j+\delta(j^-,j^+)q_jp_jp_j-(1-\delta(j))p_jq_j p_j.
\end{align*}
The following lemma is an extension of Lemma \ref{Lemma:reduced_prop} for words with a marked point.
\begin{lemma}{\cite[Proposition 2.30]{ng_linf}}\label{Lemma:signs_FH}
For all $x_1\in\widetilde\cA,x_2\in\widetilde\cA^{1,\cyc}$,
\begin{align*}
\delta\{x_1,x_2\}_1=\{x_1,\widetilde\delta x_2\}_1-(-1)^{|x_2|}\{\delta x_1,x_2\}_1+(-1)^{|x_2|}F(x_2)x_1-(-1)^{(|x_1|+1)|x_2|}x_1 F(x_2).
\end{align*}
\end{lemma}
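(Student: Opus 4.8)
The plan is to prove this as the signed, marked-point refinement of the unsigned identity of Lemma~\ref{Lemma:reduced_prop}(b). That lemma already isolates the geometric mechanism: inserting a trivial strip into a glued string commutes with the single-puncture gluing except at the corner created by the gluing, and those exceptional insertions cancel in pairs. The only genuinely new features are the orientation signs and the two boundary terms $(-1)^{|x_2|}F(x_2)x_1$ and $-(-1)^{(|x_1|+1)|x_2|}x_1F(x_2)$; these are produced by the base-point letters $t^{\pm}$ together with the marked point used to open the cyclic word $x_2$, and hence are invisible in the unsigned, unmarked setting of Lemma~\ref{Lemma:reduced_prop}.

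The core of the argument is the fibre-product model already used in Lemma~\ref{Lemma:reduced_prop}, now carrying the extra data of a marked point and the $t^{\pm}$-crossings. For representatives of $x_1$ and $x_2$ I would form the one-dimensional manifold $P$ from the arcs of the two underlying strings: its interior points give the summands of $\delta\{x_1,x_2\}_1$, its boundary points of types $A_{i,j}$ and $B_{i,j}$ give the insertions performed before gluing, namely $-(-1)^{|x_2|}\{\delta x_1,x_2\}_1$ and $\{x_1,\widetilde\delta x_2\}_1$, and its boundary points of type $C_{i,j}$ (strip inserted at the newly created corner) are paired and cancelled via the decoration $\varphi$ of Figures~\ref{Fig:decoration_AB} and~\ref{Fig:decoration_C}. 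The new input is the behaviour at a base-point crossing: when $x_2$ is opened next to a letter $t^{\pm}$, re-linearising the cyclic word across that letter is precisely the operation $F$, and the two choices of side relative to the marked point yield the two $F$-terms. As a consistency check I would also verify the identity by induction on the length of $x_1$, using that $\delta$ and $\{\cdot,x_2\}_1$ are graded derivations in $x_1$ and reducing to $x_1\in\{q_i,t^{\pm}\}$ with $x_2$ a cyclic word containing a single $p$.

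The main obstacle will be the sign bookkeeping. Each monomial carries a sign assembled from the quantities $P(\cdot,\cdot)$, the quadrant signs of Figure~\ref{Fig:final_signs}, and a reordering sign incurred when the marked point of the opened cyclic word is slid past a block of letters; the delicate point is the interplay between the cyclic-reordering sign $(-1)^{|\widetilde w_1||\widetilde w_2|}$ built into $F$ and the grading prefactors $(-1)^{|x_2|}$ and $(-1)^{(|x_1|+1)|x_2|}$. I would resolve these with the elementary move used in Lemma~\ref{Lemma:reduced_prop} and in the proof of Proposition~\ref{Prop:strong_derivation}, checking that sliding the marked point across one puncture multiplies the word by exactly the sign needed to identify paired boundary terms, and that the orientation sign is constant along each arc of $P$ so that its endpoints cancel. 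Since the statement coincides with \cite[Proposition~2.30]{ng_linf}, I would cross-check the resulting signs against that reference.
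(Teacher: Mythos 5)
Your proposal is correct and is essentially the paper's proof: the paper likewise re-runs the fibre-product/decoration argument of Lemma \ref{Lemma:reduced_prop} with the marked point carried along, identifies the summands that no longer cancel --- those where the puncture of an inserted trivial strip is glued immediately before or after a crossing of the base point $T$, i.e.\ at a letter $t^{\pm}$ of $x_2$ --- with $(-1)^{|x_2|}F(x_2)x_1-(-1)^{(|x_1|+1)|x_2|}x_1F(x_2)$, and then verifies the signs in representative cases exactly as you plan (cf.\ Figure \ref{Fig:delta_deriv_deform}, where the paper checks the $t^-$-insertion into $\delta x_1$ against $(-1)^{|x_2|}F(x_2)x_1$ and the $t^+$-insertion into $\widetilde\delta x_2$ against $-(-1)^{(|x_1|+1)|x_2|}x_1F(x_2)$). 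One bookkeeping slip in your recollection of Lemma \ref{Lemma:reduced_prop}, which does not affect the argument: the exceptional summands of $\delta\{x_1,x_2\}_1$ are carried by the type-$C$ boundary points of $P$ (strip inserted at the corner created by gluing), with $A$-points giving $\{x_1,\widetilde\delta x_2\}_1$ and $B$-points giving $\{\delta x_1,x_2\}_1$, while the interior points of $P$ only carry the intermediate glued loops, and the cancellation pairs the two decorated endpoints of each component of $P$ whatever their types, rather than pairing the $C$-points among themselves.
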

\begin{proof}
The proof goes as the proof of Lemma \ref{Lemma:reduced_prop}. The appearance of the marked point gives us additional summands which cancel out when seen as cyclic words. These appear when gluing a puncture from an inserted trivial strip to a puncture on $x_1$ or $x_2$ right before or after passing through the base point $T$ on $x_2$. This is precisely what the summands in $(-1)^{|x_2|}F(x_2)x_1-(-1)^{(|x_1|+1)|x_2|}x_1F(x_2)$ correspond to. See for example Figure \ref{Fig:delta_deriv_deform}. Note that here $x_1$ starts at $T$ in the positive and ends at $T$ in the negative direction with respect to the orientation on $\Lambda$. 

To check the signs, consider for example the first case shown in Figure \ref{Fig:delta_deriv_deform}. Let $x_2=x_{21} q_It^-x_{22}$. We have a summand $-p_Iq_I x_1$ in $\delta x_1$ and $-(-1)^{|x_2|+|x_{21}q_I||x_{22}|}t^- x_{22}x_{21}q_Ix_1$ in $-(-1)^{|x_2|}\{\delta x_1,x_2\}$. This summands appears in $(-1)^{|x_2|}F(x_2)x_1$ with the opposite sign.

Similarly for the second case in Figure \ref{Fig:delta_deriv_deform}, where $x_1=x_1'q_I,x_2=x_{21}t^+ x_{22}$.  We have a summand $-(-1)^{|x_{21}|}x_{21}t^+ p_Iq_I x_{22}$ in $\widetilde\delta x_2$ and $-(-1)^{(|x_1|+1)|x_2|+|x_{21}||x_{22}|}x_1x_{22}x_{21}t^+$ in $\{x_1,\widetilde\delta x_2\}$. This summands appears in $-(-1)^{(|x_1|+1)|x_2|}x_1 F(x_2)$ with the opposite sign. Other cases go similarly.
\end{proof}

\begin{figure}
\def\svgwidth{95mm}
\begingroup%
  \makeatletter%
  \providecommand\rotatebox[2]{#2}%
  \newcommand*\fsize{\dimexpr\f@size pt\relax}%
  \newcommand*\lineheight[1]{\fontsize{\fsize}{#1\fsize}\selectfont}%
  \ifx\svgwidth\undefined%
    \setlength{\unitlength}{599.14648426bp}%
    \ifx\svgscale\undefined%
      \relax%
    \else%
      \setlength{\unitlength}{\unitlength * \real{\svgscale}}%
    \fi%
  \else%
    \setlength{\unitlength}{\svgwidth}%
  \fi%
  \global\let\svgwidth\undefined%
  \global\let\svgscale\undefined%
  \makeatother%
  \begin{picture}(1,0.79643274)%
    \lineheight{1}%
    \setlength\tabcolsep{0pt}%
    \put(0.46464809,0.37960372){\makebox(0,0)[lt]{\lineheight{1.25}\smash{\begin{tabular}[t]{l}$\delta x_2$\end{tabular}}}}%
    \put(0.12136946,0.78015963){\makebox(0,0)[lt]{\lineheight{1.25}\smash{\begin{tabular}[t]{l}$x_2$\end{tabular}}}}%
    \put(0.15042739,0.49562406){\makebox(0,0)[lt]{\lineheight{1.25}\smash{\begin{tabular}[t]{l}$x_1$\end{tabular}}}}%
    \put(0.49439603,0.49691398){\makebox(0,0)[lt]{\lineheight{1.25}\smash{\begin{tabular}[t]{l}$\delta x_1$\end{tabular}}}}%
    \put(0.13704641,0.03519139){\makebox(0,0)[lt]{\lineheight{1.25}\smash{\begin{tabular}[t]{l}$x_1$\end{tabular}}}}%
    \put(0.11426018,0.37960372){\makebox(0,0)[lt]{\lineheight{1.25}\smash{\begin{tabular}[t]{l}$x_2$\end{tabular}}}}%
    \put(0,0){\includegraphics[width=\unitlength,page=1]{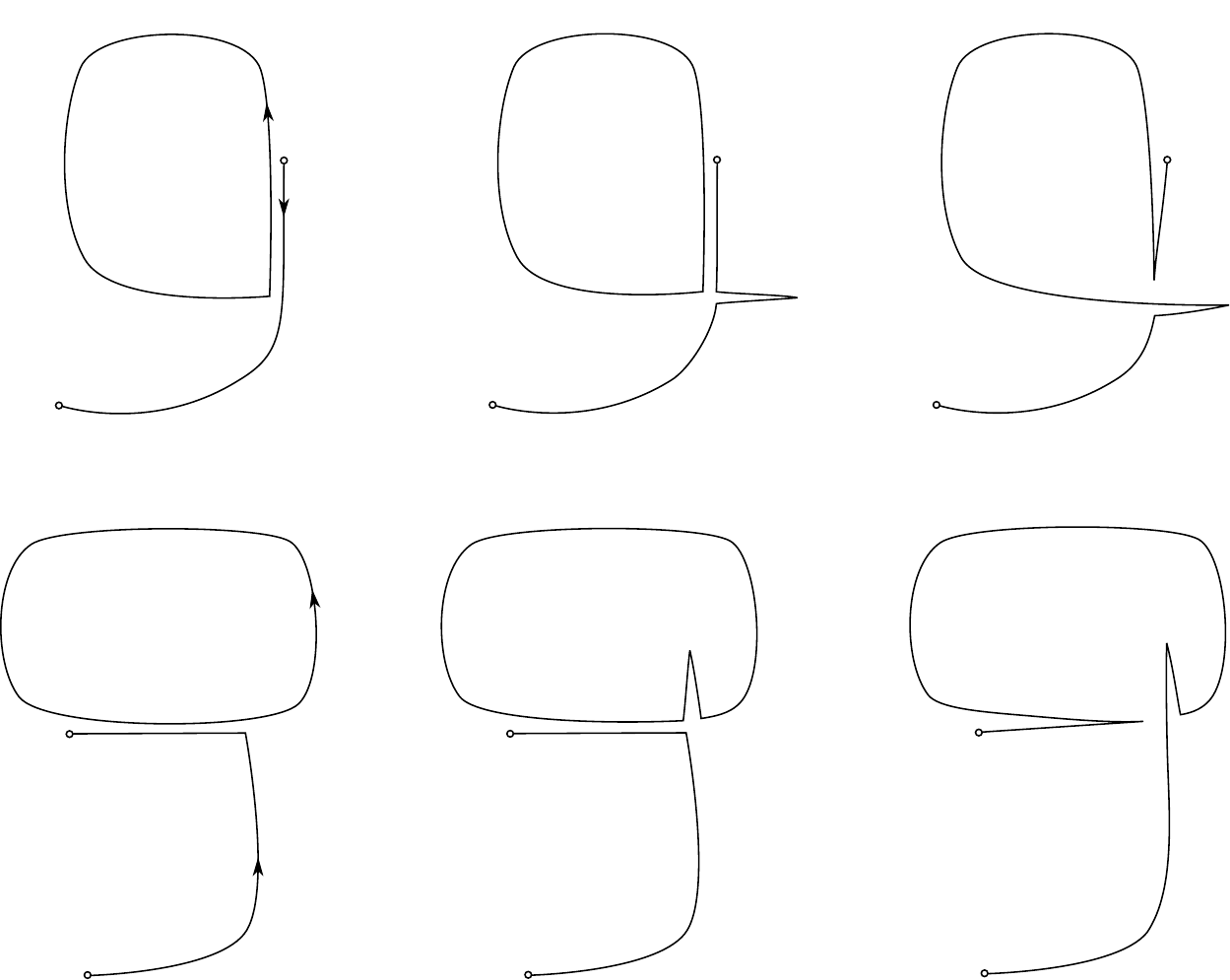}}%
    \put(0.17793623,0.57110377){\makebox(0,0)[lt]{\lineheight{1.25}\smash{\begin{tabular}[t]{l}$q_I$\end{tabular}}}}%
    \put(0.53295205,0.57204601){\makebox(0,0)[lt]{\lineheight{1.25}\smash{\begin{tabular}[t]{l}$q_I$\end{tabular}}}}%
    \put(0.59786574,0.57070484){\makebox(0,0)[lt]{\lineheight{1.25}\smash{\begin{tabular}[t]{l}$p_I$\end{tabular}}}}%
    \put(0.16094785,0.17473554){\makebox(0,0)[lt]{\lineheight{1.25}\smash{\begin{tabular}[t]{l}$q_I$\end{tabular}}}}%
    \put(0.51954003,0.17630368){\makebox(0,0)[lt]{\lineheight{1.25}\smash{\begin{tabular}[t]{l}$q_I$\end{tabular}}}}%
    \put(0.51506935,0.22217248){\makebox(0,0)[lt]{\lineheight{1.25}\smash{\begin{tabular}[t]{l}$p_I$\end{tabular}}}}%
    \put(0.23864765,0.66051669){\makebox(0,0)[lt]{\lineheight{1.25}\smash{\begin{tabular}[t]{l}$T$\end{tabular}}}}%
    \put(0.04514884,0.1634684){\makebox(0,0)[lt]{\lineheight{1.25}\smash{\begin{tabular}[t]{l}$T$\end{tabular}}}}%
    \put(0.06200858,0.01540141){\makebox(0,0)[lt]{\lineheight{1.25}\smash{\begin{tabular}[t]{l}$T$\end{tabular}}}}%
    \put(0.04301746,0.47703007){\makebox(0,0)[lt]{\lineheight{1.25}\smash{\begin{tabular}[t]{l}$T$\end{tabular}}}}%
  \end{picture}%
\endgroup%
\caption{Summands in $\delta\{x_1,x_2\}_1-\{x_1,\widetilde\delta x_2\}_1+(-1)^{|x_2|}\{\delta x_1,x_2\}_1$ that are not canceled out when seen as words with a marked starting point.}
\label{Fig:delta_deriv_deform}
\end{figure}

\begin{lemma}
\label{Lemma:signs_disksII}
Let $\zeta$ be a boundary point of the 1-dimensional moduli space of disks with two positive punctures. Then 
\begin{align*}
\epsilon_O(\zeta)=\epsilon_A(\zeta).
\end{align*}
This implies $\widetilde d\circ \widetilde d(q_J)=[F(H_0),q_J]$.
\end{lemma}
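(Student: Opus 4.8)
The plan is to prove the sign statement $\epsilon_O(\zeta) = \epsilon_A(\zeta)$ for boundary points $\zeta$ of the $1$-dimensional moduli space $\cM_1(J,\gamma_I^+,\gamma_J^+)$ of disks with two positive punctures, which consists of two types: disk $2$-buildings $(u,v)$ glued at a single gluing pair, and nodal disks consisting of an index-zero disk together with a trivial strip bubble at a newly created crossing. The strategy mirrors the proof of Lemma \ref{Lemma:signs_disks}, where the orientation sign was computed by tracking the branch point created during gluing, but now we must account for the ordering of the two positive punctures and the cyclic reordering on the inner data. Once the sign identity is verified at every such $\zeta$, summing over $\partial\cM_1(J,\gamma_I^+,\gamma_J^+)$ and invoking the description of the boundary established in Proposition \ref{Prop:strong_derivation} will yield $\widetilde d\circ \widetilde d(q_J)=[F(H_0),q_J]$, since the algebraic sign $\epsilon_A(\zeta)$ is by definition the sign of the corresponding summand in $\widetilde d\circ\widetilde d(q_J)$ and the total count over the compact $1$-manifold boundary must vanish up to the commutator correction.

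First I would set up, for a disk $2$-building $\zeta=(u,v)$ with $u$ the top level, the glued disk $u\# v$ close to breaking, producing a boundary branch point $w_R$ near the glued corners exactly as in Figure \ref{Fig:sign_rule_lemma}. As in Lemma \ref{Lemma:signs_disks}, the orientation of the moduli space near $u\# v$ is $A\langle w_R\rangle$ where $A$ collects the orientation signs $\epsilon(u,t^1,t^2),\epsilon(v)$ together with a Koszul sign $(-1)^{\sum|q_{i_j}|}$ coming from the punctures of $u$ preceding the glued corner, and where the identity $(-1)^{|q_{I}|+\sum|q_{j_a}|}=-1$ (forced by $\operatorname{ind}(v)=0$) is used to fix the overall sign. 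The outward-pointing vector at the boundary is $w_R$, so $\epsilon_O(\zeta)=A$. I would then compare $A$ against the algebraic sign appearing in the relevant composition, treating separately the two cases according to whether the positive puncture being glued is the first or the second of $v$, and whether the gluing attaches to the $(t^1,t^2)$-arc or the $(t^2,t^1)$-arc of the two-punctured disk.

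The nodal case — a disk with an inserted trivial strip at a self-crossing — is governed by Lemma \ref{Lemma:signs_FH}, which extends the derivation identity for $\delta$ to words with a marked point and introduces precisely the correction term $(-1)^{|x_2|}F(x_2)x_1-(-1)^{(|x_1|+1)|x_2|}x_1F(x_2)$. Here I would match the trivial-strip boundary points of $\cM_1(J,\gamma_I^+,\gamma_J^+)$ with the summands of $\widetilde\delta$ acting on the two-punctured boundary data and verify, case by case as in Figure \ref{Fig:delta_deriv_deform}, that the marked-point summands of $F(H_0)$ account for exactly the extra crossings created when a trivial strip is glued adjacent to the base point $T$. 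Combining the two cases, the coherence identity $\epsilon_O(\zeta)=(-1)^{|\partial\zeta|+1}\epsilon_A(\zeta)$ specializes to $\epsilon_O(\zeta)=\epsilon_A(\zeta)$ since all configurations here have $|\partial\zeta|=1$ after gluing (the glued object is a single disk).

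The main obstacle I anticipate is the bookkeeping of the Koszul signs across the cyclic reordering that is implicit in $F$ and in the definition of $\widehat w(u,t^1,t^2)$: the map $F$ reshuffles a word at a $t^\pm$ letter, and verifying that the orientation sign $A$ transforms correctly under this reshuffle requires carefully propagating the gradings past the $t^+$ or $t^-$ insertion point. The grading parity $|H_0|\equiv 0\pmod 2$ (used already in Proposition \ref{Prop:strong_derivation}) will be the key input that makes the commutator $[F(H_0),q_J]$ emerge with the correct global sign rather than its negative. I expect the disk-building case to follow almost verbatim from Lemma \ref{Lemma:signs_disks}, so the genuine work is confined to reconciling the trivial-strip contributions with the $F$-correction term from Lemma \ref{Lemma:signs_FH}, where the placement of the marked point relative to $T$ must be tracked explicitly through each of the cases illustrated in the figures.
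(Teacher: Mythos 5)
Your proposal is correct and takes essentially the same route as the paper: the two-building case is settled by the branch-point orientation computation of Lemma~\ref{Lemma:signs_disks} (the paper records the resulting orientation signs as $(-1)^{|w_1|}\epsilon(u)\epsilon(v,\gamma_i^+)$, resp.\ $-(-1)^{|w_1|}\epsilon(u,\gamma_J^+)\epsilon(v,\gamma_i^-)$, for gluing at a negative, resp.\ positive, puncture), the trivial-strip bubble is matched against the $\delta$-summands with the base-point/marked-point discrepancy giving the $F$-correction exactly as in Lemma~\ref{Lemma:signs_FH}, and compactness of the oriented $1$-manifold then forces $\widetilde d\circ\widetilde d(q_J)=[F(H_0),q_J]$. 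The one detail to adjust is the parity identity you transplanted from the one-positive case: for an index-zero disk $v$ with two positive punctures the index formula gives $|q_i|+|q_k|+\sum_a|q_{j_a}|\equiv 0\pmod 2$ rather than the odd relation, and for the nodal case the paper's direct computation shows the string-space orientation near $\zeta$ is $B\langle w_R\rangle$ with \emph{outward} vector $-w_R$ (opposite to the gluing case), so $\epsilon_O(\zeta)=-B$ --- a sign flip your sketch should make explicit when carrying out the bookkeeping.
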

\begin{proof}
For $\zeta$ a building, the proof follows similar as in Lemma \ref{Lemma:signs_disks}. More precisely, for $\zeta$ a building that consists of two disks $u=w_1q_iw_2p_J$ ($u=w_1p_iw_2p_J,v$) and $v$, such that $v$ is glued to the negative (positive) puncture $q_i$ ($p_i$) of $u$, the orientation sign of $\zeta$ is equal to $(-1)^{|w_1|}\epsilon(u)\epsilon(v,\gamma_i^+)$ ($-(-1)^{|w_1|}\epsilon(u,\gamma_J^+)\epsilon(v,\gamma_i^-)$), which is equal to the sign of the corresponding summand in $\widetilde d\circ\widetilde d(q_J)$.

Now, let $\zeta$ be a boundary point corresponding to an index zero disk $u\in\cM_1(J,\gamma_J^+)$ together with a trivial strip bubble at a point $\tau\in(t_{I-1},t_{I})\subset\lS^1$ such that $u(\tau)=i^\pm$. Let additionally $\epsilon_\tau^1$ be $1$ if $u(\tau)=i^+$ and $-1$ if $u(\tau)=i^-$, $\epsilon_\tau^2$ be $1$ if the orientation of $u$ at $\tau$ matches the orientation of $\Lambda$ and $-1$ otherwise, and $\epsilon_\tau=\epsilon_\tau^1\epsilon_\tau^2$. Denote by $w_R$ the branch point introduced by inserting the trivial strip at $\tau$. Then the orientation of the space of strings near $\zeta$ is given by $B\langle w_R\rangle$ for
\begin{align*}
B=-(-1)^{|q_{i_1}|+\dots+|q_{i_{I-1}}|}\epsilon_\tau\epsilon(u),
\end{align*}
where $\gamma_{i_k}$ is the Reeb chord at the $k^{th}$ negative puncture $t_k$ of $u$. The outward-pointing vector is given by $-w_R$. The orientation sign of $\zeta$ is, therefore, equal to $-B$, and it is not difficult to see that this is equal to the sign of the corresponding summand in $\widetilde d\circ\widetilde d(q_J)$. 
\end{proof}

\begin{figure}
\def\svgwidth{176mm}
\begingroup%
  \makeatletter%
  \providecommand\rotatebox[2]{#2}%
  \newcommand*\fsize{\dimexpr\f@size pt\relax}%
  \newcommand*\lineheight[1]{\fontsize{\fsize}{#1\fsize}\selectfont}%
  \ifx\svgwidth\undefined%
    \setlength{\unitlength}{1563.24391341bp}%
    \ifx\svgscale\undefined%
      \relax%
    \else%
      \setlength{\unitlength}{\unitlength * \real{\svgscale}}%
    \fi%
  \else%
    \setlength{\unitlength}{\svgwidth}%
  \fi%
  \global\let\svgwidth\undefined%
  \global\let\svgscale\undefined%
  \makeatother%
  \begin{picture}(1,0.67719789)%
    \lineheight{1}%
    \setlength\tabcolsep{0pt}%
    \put(0.08628123,0.6408714){\makebox(0,0)[lt]{\lineheight{1.25}\smash{\begin{tabular}[t]{l}$\epsilon_1$\end{tabular}}}}%
    \put(0.04315607,0.57684752){\makebox(0,0)[lt]{\lineheight{1.25}\smash{\begin{tabular}[t]{l}$s_{1}$\end{tabular}}}}%
    \put(0.09830691,0.51113992){\makebox(0,0)[lt]{\lineheight{1.25}\smash{\begin{tabular}[t]{l}$s_{i-1}$\end{tabular}}}}%
    \put(0.1566202,0.45031513){\makebox(0,0)[lt]{\lineheight{1.25}\smash{\begin{tabular}[t]{l}$\epsilon_3$\end{tabular}}}}%
    \put(0.26759357,0.47537518){\makebox(0,0)[lt]{\lineheight{1.25}\smash{\begin{tabular}[t]{l}$\epsilon_2$\end{tabular}}}}%
    \put(0.24242914,0.44349601){\makebox(0,0)[lt]{\lineheight{1.25}\smash{\begin{tabular}[t]{l}$s_{k+1}'$\end{tabular}}}}%
    \put(0.21300087,0.45223038){\makebox(0,0)[lt]{\lineheight{1.25}\smash{\begin{tabular}[t]{l}$s_{l}'$\end{tabular}}}}%
    \put(0.20725913,0.38092436){\makebox(0,0)[lt]{\lineheight{1.25}\smash{\begin{tabular}[t]{l}$s_{1}'$\end{tabular}}}}%
    \put(0.30996497,0.51555973){\makebox(0,0)[lt]{\lineheight{1.25}\smash{\begin{tabular}[t]{l}$s_{k}'$\end{tabular}}}}%
    \put(0.19526233,0.57741578){\makebox(0,0)[lt]{\lineheight{1.25}\smash{\begin{tabular}[t]{l}$s_{i+1}$\end{tabular}}}}%
    \put(0.15403128,0.66570645){\makebox(0,0)[lt]{\lineheight{1.25}\smash{\begin{tabular}[t]{l}$s_{n}$\end{tabular}}}}%
    \put(0.15346312,0.49886971){\makebox(0,0)[lt]{\lineheight{1.25}\smash{\begin{tabular}[t]{l}$w_2$\end{tabular}}}}%
    \put(0.18624055,0.47872728){\makebox(0,0)[lt]{\lineheight{1.25}\smash{\begin{tabular}[t]{l}$w_1$\end{tabular}}}}%
    \put(0.58055859,0.41067154){\makebox(0,0)[lt]{\lineheight{1.25}\smash{\begin{tabular}[t]{l}$s_{1}$\end{tabular}}}}%
    \put(0.58106371,0.46213125){\makebox(0,0)[lt]{\lineheight{1.25}\smash{\begin{tabular}[t]{l}$s_{i-1}$\end{tabular}}}}%
    \put(0.51951034,0.36678864){\makebox(0,0)[lt]{\lineheight{1.25}\smash{\begin{tabular}[t]{l}$\epsilon_1$\end{tabular}}}}%
    \put(0.6013386,0.49666912){\makebox(0,0)[lt]{\lineheight{1.25}\smash{\begin{tabular}[t]{l}$\epsilon_3$\end{tabular}}}}%
    \put(0.62318465,0.53682652){\makebox(0,0)[lt]{\lineheight{1.25}\smash{\begin{tabular}[t]{l}$\epsilon_2$\end{tabular}}}}%
    \put(0.67359177,0.48669305){\makebox(0,0)[lt]{\lineheight{1.25}\smash{\begin{tabular}[t]{l}$s_{1}'$\end{tabular}}}}%
    \put(0.6959726,0.57377641){\makebox(0,0)[lt]{\lineheight{1.25}\smash{\begin{tabular}[t]{l}$s_{l}'$\end{tabular}}}}%
    \put(0.65701565,0.62176275){\makebox(0,0)[lt]{\lineheight{1.25}\smash{\begin{tabular}[t]{l}$s_{l+1}'$\end{tabular}}}}%
    \put(0.56912623,0.60244196){\makebox(0,0)[lt]{\lineheight{1.25}\smash{\begin{tabular}[t]{l}$s_{k}'$\end{tabular}}}}%
    \put(0.45781204,0.49453547){\makebox(0,0)[lt]{\lineheight{1.25}\smash{\begin{tabular}[t]{l}$s_{i+1}$\end{tabular}}}}%
    \put(0.45327079,0.40909292){\makebox(0,0)[lt]{\lineheight{1.25}\smash{\begin{tabular}[t]{l}$s_{n}$\end{tabular}}}}%
    \put(0.11964186,0.22499052){\makebox(0,0)[lt]{\lineheight{1.25}\smash{\begin{tabular}[t]{l}$s_{1}$\end{tabular}}}}%
    \put(0.07818741,0.17599469){\makebox(0,0)[lt]{\lineheight{1.25}\smash{\begin{tabular}[t]{l}$s_{i-1}$\end{tabular}}}}%
    \put(0.12510309,0.17927783){\makebox(0,0)[lt]{\lineheight{1.25}\smash{\begin{tabular}[t]{l}$s_{i+1}$\end{tabular}}}}%
    \put(0.24380849,0.1667764){\makebox(0,0)[lt]{\lineheight{1.25}\smash{\begin{tabular}[t]{l}$s_{j-1}$\end{tabular}}}}%
    \put(0.32481914,0.1542114){\makebox(0,0)[lt]{\lineheight{1.25}\smash{\begin{tabular}[t]{l}$s_{j+1}$\end{tabular}}}}%
    \put(0.24854413,0.22618986){\makebox(0,0)[lt]{\lineheight{1.25}\smash{\begin{tabular}[t]{l}$s_{n}$\end{tabular}}}}%
    \put(0.16104014,0.25376027){\makebox(0,0)[lt]{\lineheight{1.25}\smash{\begin{tabular}[t]{l}$\epsilon_1$\end{tabular}}}}%
    \put(0.23466292,0.12363077){\makebox(0,0)[lt]{\lineheight{1.25}\smash{\begin{tabular}[t]{l}$\epsilon_2$\end{tabular}}}}%
    \put(0.17662599,0.09201843){\makebox(0,0)[lt]{\lineheight{1.25}\smash{\begin{tabular}[t]{l}$s_{1}'$\end{tabular}}}}%
    \put(0.06833954,0.05716517){\makebox(0,0)[lt]{\lineheight{1.25}\smash{\begin{tabular}[t]{l}$s_{k}'$\end{tabular}}}}%
    \put(0.00855505,0.12302034){\makebox(0,0)[lt]{\lineheight{1.25}\smash{\begin{tabular}[t]{l}$s_{k+1}'$\end{tabular}}}}%
    \put(0.22790257,0.05097763){\makebox(0,0)[lt]{\lineheight{1.25}\smash{\begin{tabular}[t]{l}$s_{l}'$\end{tabular}}}}%
    \put(0.08969104,0.15305367){\makebox(0,0)[lt]{\lineheight{1.25}\smash{\begin{tabular}[t]{l}$w_1$\end{tabular}}}}%
    \put(0.26819116,0.13531126){\makebox(0,0)[lt]{\lineheight{1.25}\smash{\begin{tabular}[t]{l}$w_2$\end{tabular}}}}%
    \put(0.64796936,0.57715198){\makebox(0,0)[lt]{\lineheight{1.25}\smash{\begin{tabular}[t]{l}$w_1$\end{tabular}}}}%
    \put(0.53905466,0.53118608){\makebox(0,0)[lt]{\lineheight{1.25}\smash{\begin{tabular}[t]{l}$w_2$\end{tabular}}}}%
    \put(0.50155,0.25925106){\makebox(0,0)[lt]{\lineheight{1.25}\smash{\begin{tabular}[t]{l}$\epsilon_1$\end{tabular}}}}%
    \put(0.60213049,0.09282105){\makebox(0,0)[lt]{\lineheight{1.25}\smash{\begin{tabular}[t]{l}$\epsilon_2$\end{tabular}}}}%
    \put(0.4308302,0.19887215){\makebox(0,0)[lt]{\lineheight{1.25}\smash{\begin{tabular}[t]{l}$s_{1}$\end{tabular}}}}%
    \put(0.42426359,0.12828385){\makebox(0,0)[lt]{\lineheight{1.25}\smash{\begin{tabular}[t]{l}$s_{i-1}$\end{tabular}}}}%
    \put(0.4878446,0.15360229){\makebox(0,0)[lt]{\lineheight{1.25}\smash{\begin{tabular}[t]{l}$s_{i+1}$\end{tabular}}}}%
    \put(0.62207811,0.14362649){\makebox(0,0)[lt]{\lineheight{1.25}\smash{\begin{tabular}[t]{l}$s_{j-1}$\end{tabular}}}}%
    \put(0.7200699,0.14406836){\makebox(0,0)[lt]{\lineheight{1.25}\smash{\begin{tabular}[t]{l}$s_{j+1}$\end{tabular}}}}%
    \put(0.60667229,0.23688005){\makebox(0,0)[lt]{\lineheight{1.25}\smash{\begin{tabular}[t]{l}$s_{n}$\end{tabular}}}}%
    \put(0.5615278,0.10485952){\makebox(0,0)[lt]{\lineheight{1.25}\smash{\begin{tabular}[t]{l}$s_{1}'$\end{tabular}}}}%
    \put(0.49769429,0.08699149){\makebox(0,0)[lt]{\lineheight{1.25}\smash{\begin{tabular}[t]{l}$s_{k}'$\end{tabular}}}}%
    \put(0.506155,0.04159576){\makebox(0,0)[lt]{\lineheight{1.25}\smash{\begin{tabular}[t]{l}$s_{k+1}'$\end{tabular}}}}%
    \put(0.60490435,0.0546649){\makebox(0,0)[lt]{\lineheight{1.25}\smash{\begin{tabular}[t]{l}$s_{l}'$\end{tabular}}}}%
    \put(0.47237978,0.06838636){\makebox(0,0)[lt]{\lineheight{1.25}\smash{\begin{tabular}[t]{l}$w_1$\end{tabular}}}}%
    \put(0.62789124,0.07078575){\makebox(0,0)[lt]{\lineheight{1.25}\smash{\begin{tabular}[t]{l}$w_2$\end{tabular}}}}%
    \put(0.79379261,0.52262855){\makebox(0,0)[lt]{\lineheight{1.25}\smash{\begin{tabular}[t]{l}$\Omega<0$\end{tabular}}}}%
    \put(0.88438838,0.52224004){\makebox(0,0)[lt]{\lineheight{1.25}\smash{\begin{tabular}[t]{l}$\Omega>0$\end{tabular}}}}%
    \put(0.89957518,0.55106304){\makebox(0,0)[lt]{\lineheight{1.25}\smash{\begin{tabular}[t]{l}$\Omega^{-1}(0)$\end{tabular}}}}%
    \put(0.83981619,0.46525316){\makebox(0,0)[lt]{\lineheight{1.25}\smash{\begin{tabular}[t]{l}$w_2$\end{tabular}}}}%
    \put(0.81753519,0.07441569){\makebox(0,0)[lt]{\lineheight{1.25}\smash{\begin{tabular}[t]{l}$w_2$\end{tabular}}}}%
    \put(0.78331694,0.08280346){\makebox(0,0)[lt]{\lineheight{1.25}\smash{\begin{tabular}[t]{l}$w_1$\end{tabular}}}}%
    \put(0.82420119,0.1684704){\makebox(0,0)[lt]{\lineheight{1.25}\smash{\begin{tabular}[t]{l}$\Omega<0$\end{tabular}}}}%
    \put(0.89769524,0.11940877){\makebox(0,0)[lt]{\lineheight{1.25}\smash{\begin{tabular}[t]{l}$\Omega>0$\end{tabular}}}}%
    \put(0.90651554,0.16588519){\makebox(0,0)[lt]{\lineheight{1.25}\smash{\begin{tabular}[t]{l}$\Omega^{-1}(0)$\end{tabular}}}}%
    \put(0,0){\includegraphics[width=\unitlength,page=1]{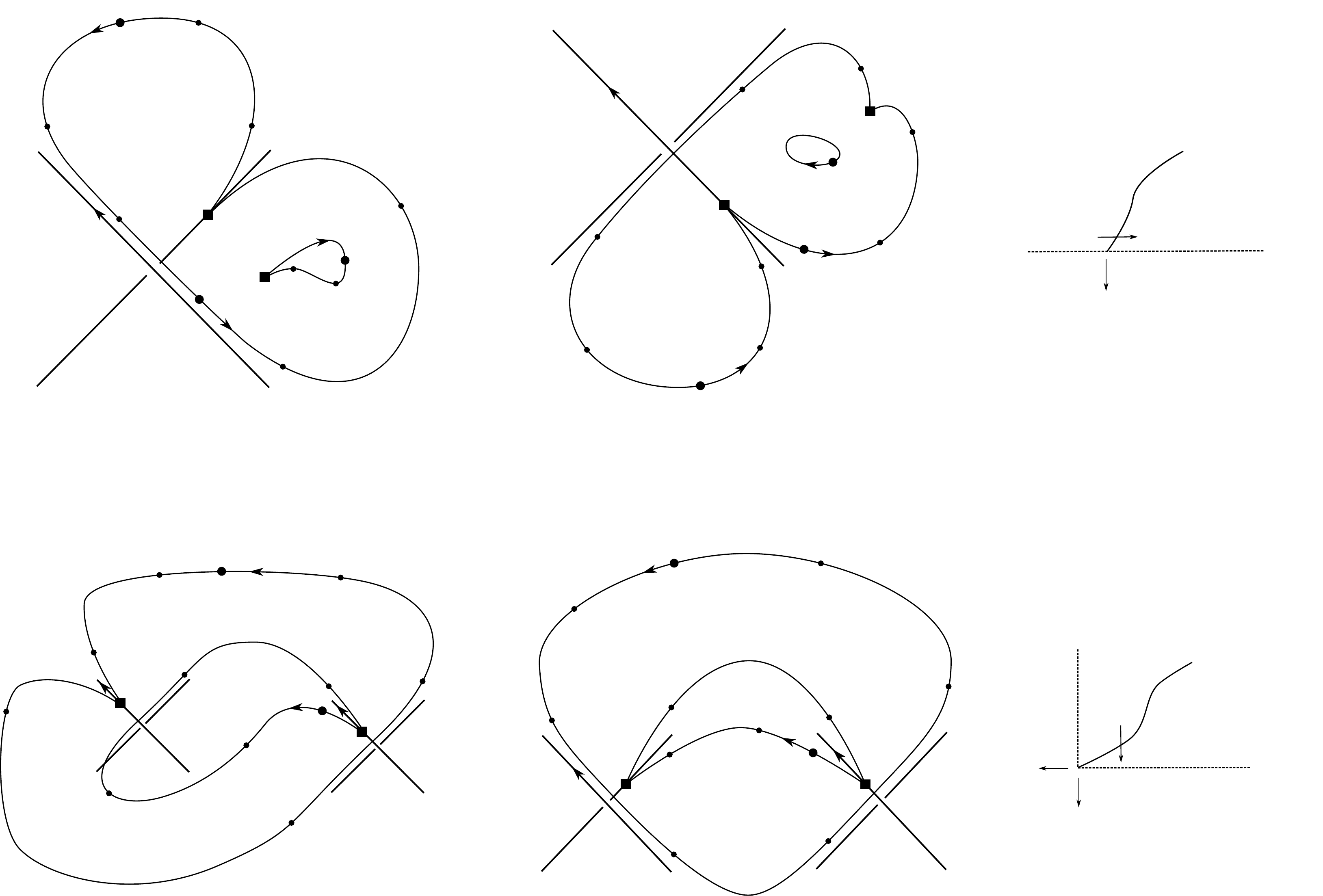}}%
  \end{picture}%
\endgroup%
\caption{Coherent orientations for annuli I.}
\label{Figure:annulus_orientation}
\end{figure}

\begin{lemma}\label{Lemma:signs_annulus_buildings}
Let $\zeta=(u,v)$ be a building in the boundary of the 1-dimensional moduli space of annuli with one positive puncture. Then 
\begin{align*}
\epsilon_O(\zeta)=-\epsilon_A(\zeta).
\end{align*}
\end{lemma}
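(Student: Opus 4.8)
The plan is to treat the two kinds of annulus buildings appearing in Proposition \ref{Prop:compactness_annulus} in parallel: the buildings $(u,v)$ with one gluing pair consisting of an index zero disk and an index zero annulus, and the buildings with two gluing pairs consisting of two index zero disks. In both cases the glued curve $u\# v$ is an index one annulus close to breaking, whose Lagrangian projection $\pi_{xy}(u\# v)$ lies near a boundary stratum of $\cM^\pi_{2,2}$, so that $u\# v$ sits in $\Omega^{-1}(0)\subset\cM^\pi_{2,2}\cong\cM_{2,1}$ near that stratum. I would compute $\epsilon_O(\zeta)$ directly from the orientation that $\cM^\pi_{2,2}$ induces on $\Omega^{-1}(0)$ together with the outward normal. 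Since the glued object is an annulus, $|\partial\zeta|=2$, so the coherency condition of Definition \ref{Def:coherency} reads $\epsilon_O(\zeta)=-\epsilon_A(\zeta)$; the entire content is to produce exactly one extra sign relative to the disk computation of Lemma \ref{Lemma:signs_disks}.

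First I would set up the projection orientation. An index one annulus in the projection generically carries two boundary branch points (or one interior branch point of degree two), so $\cM^\pi_{2,2}$ is two-dimensional and oriented by $\sigma\langle v_1,v_2\rangle$ via the $\cC_2$-orientation $\sigma(l_1)\#\sigma(l_2)$ of Section \ref{Section:Orientations_annuli}. When $(u,v)$ is glued, a boundary branch point $w_R$ appears near the glued corners exactly as in the proof of Lemma \ref{Lemma:signs_disks}; for two-disk buildings a second branch point appears at the second gluing pair, while for disk--annulus buildings the annular part already carries the branch point recorded by $\epsilon(u,e_2)$. Using the factorization $\sigma(l_1)\#\sigma(l_2)$ and the sign bookkeeping already carried out for disks, I would express the $\cC_2$-orientation of $\cM^\pi_{2,2}$ at $\pi_{xy}(u\# v)$ as an explicit product of $\epsilon(u)$ (respectively $\epsilon(u,e_2)$) and $\epsilon(v)$ times a Koszul sign determined by the gradings of the Reeb chords preceding the gluing corner.

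Next I would determine the induced orientation on $\Omega^{-1}(0)=\cM_{2,1}$ and the outward-pointing vector. This is where the obstruction section enters: the tangent line to $\Omega^{-1}(0)$ is oriented so that together with the $\nabla\Omega$-direction it recovers $\sigma\langle v_1,v_2\rangle$, and the direction of $\nabla\Omega$ near the relevant stratum is controlled by the arguments of Lemma \ref{Lemma:OSection_elliptic} for the disk--annulus (split-type) buildings and of Lemma \ref{Lemma:OSection_hyperbolic} for the two-disk (non-split-type) buildings. The continuity of $\Omega$ at split points, respectively its blow-up behaviour at non-split points, pins down on which side of $\Omega^{-1}(0)$ the glued family lies and hence fixes the outward normal. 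Combining this with the projection orientation from the previous step yields $\epsilon_O(\zeta)$, which I expect to differ by exactly $-1$ from the algebraic sign $\epsilon_A(\zeta)$, the sign of the corresponding summand of $d\circ d(q_i)$ coming from the SFT bracket applied to the annulus contribution of the Hamiltonian $H$.

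The main obstacle will be the bookkeeping in this last step: converting the preimage-orientation convention for $\Omega^{-1}(0)$ into a concrete sign and matching it against the Koszul signs built into $d_A(q_i)$ and the SFT bracket, all while tracking the marked point $e_2$ on the inner boundary component. The conceptual source of the extra $-1$ is that the glued object has two ordered boundary components, and, as recorded in Section \ref{Section:Orientations_annuli} following \cite{FOOOII}, the determinant line of the Cauchy--Riemann operator over a disk has odd index in our dimension, so inserting or interchanging a boundary component reverses orientation. I would verify that this parity is precisely what the obstruction-section preimage orientation reproduces, so that the disk result of Lemma \ref{Lemma:signs_disks} acquires exactly one sign change, and I would check that the two building types produce the same discrepancy, giving $\epsilon_O(\zeta)=-\epsilon_A(\zeta)$ uniformly.
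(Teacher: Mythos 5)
Your plan reproduces the paper's proof essentially step for step: the paper likewise proceeds case by case over the building types (disk over annulus, annulus over disk, and two disks glued at two puncture pairs), expresses the $\cC_2$-orientation of the glued configuration in $\cM^\pi_2$ as $\epsilon(u)$ (resp.\ $\epsilon(u,e_2)$) times $\epsilon(v)$ times a Koszul sign in the gradings preceding the gluing corner, pins down the $\nabla\Omega$-direction using exactly the split/non-split analysis of Lemma \ref{Lemma:OSection_elliptic} and Lemma \ref{Lemma:OSection_hyperbolic}, and compares the induced orientation on $\Omega^{-1}(0)$ with the outward-pointing branch-point vector to extract the single extra sign relative to Lemma \ref{Lemma:signs_disks}. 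Your closing attribution of that sign to the odd-index parity of disk components following \cite{FOOOII} is offered in the paper only as background to the Fredholm picture, not as part of this proof, but since you defer to the combinatorial verification this is a difference of motivation, not of argument.
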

\begin{proof}
First, we consider buildings $\zeta=(u,v)$ where the top curve $u$ is a disk and the bottom curve $v$ an annulus. In particular, we consider breaking as shown in Figure \ref{Figure:annulus_orientation}, top. We look at examples where the branch points are on different boundary components and on the same boundary component.

Let first $\zeta$ be a building breaking as shown in Figure \ref{Figure:annulus_orientation}, top left. The orientation of the glued string is given by
\begin{align*}
&\epsilon_1\epsilon_2(-1)^{\sum_{a=1}^{i-1}|q_{s_a}|+\sum_{b=1}^{l}|q_{s_b'}|}\left(\prod\epsilon_\bullet^u\right)\left(\prod\epsilon_\bullet^v\right)(-1)^{|q_{s_i}|}\langle w_2,w_1\rangle=\\
&=-\epsilon_2\epsilon(u)(-1)^{\sum_{a=1}^{i-1}|q_{s_a}|+\sum_{b=k+1}^{l}|q_{s_b'}|}\left(\prod\epsilon_\bullet^v\right)\langle w_2,w_1\rangle,
\end{align*}
where $\prod\epsilon_\bullet^u,\prod\epsilon_\bullet^v$ are the products of the signs at the corners of $u,v$, $w_1$ is the branch point on $v$ and $w_2$ is the branch point coming from gluing. The orientation of $\cM^\pi_2$ near $\pi_{xy}(v)$ is given by
\begin{align*}
-\epsilon_2(-1)^{\sum_{b=k+1}^l|q_{s_b'}|}\left(\prod\epsilon_\bullet^v\right)\langle w_1\rangle.
\end{align*}
The top right figure depicts the neighborhood of the moduli space near $\zeta$. The outward-pointing vector at the boundary of $\Omega^{-1}(0)$ is given by $w_2$, and the vector pointing from the region with $\Omega<0$ to the region with $\Omega>0$ by
\begin{align*}
-\epsilon(v)\epsilon_2(-1)^{\sum_{b=k+1}^l|q_{s_b'}|}\left(\prod\epsilon_\bullet^v\right) w_1.
\end{align*}
The orientation vector on $\Omega^{-1}(0)$ near $\zeta$ is then given by
\begin{align*}
-(-1)^{\sum_{a=1}^{i-1}|q_{s_a}|}\epsilon(u)\epsilon(v)\, w_2,
\end{align*}
i.e. $A=-(-1)^{\sum_{a=1}^{i-1}|q_{s_a}|}\epsilon(u)\epsilon(v)$ times the outward-pointing vector.

Similarly for the second figure, the orientation of the glued string is given by 
\begin{align*}
-\epsilon_1\epsilon_2(-1)^{\sum_{a=1}^l|q_{s_a'}|}\left(\prod\epsilon_\bullet^u\right)\left(\prod\epsilon_\bullet^v\right)\langle w_2,w_1\rangle,
\end{align*}
and the orientation on the moduli space containing $\pi_{xy}(v)$ by
\begin{align*}
-\epsilon_2(-1)^{\sum_{a=1}^l|q_{s_a'}|}\left(\prod\epsilon_\bullet^v\right)\langle w_1\rangle.
\end{align*}
The vector pointing from $\Omega<0$ to $\Omega>0$ is given by
\begin{align*}
-\epsilon(v)\epsilon_2(-1)^{\sum_{a=1}^l|q_{s_a'}|}\left(\prod\epsilon_\bullet^v\right) w_1,
\end{align*}
and the orientation vector on $\Omega^{-1}(0)$ near $\zeta$ is then
\begin{align*}
-\epsilon(u)\epsilon(v)\, w_2,
\end{align*}
i.e. $A=-\epsilon(u)\epsilon(v)$ times the outward-pointing vector.

Next, we consider the case where $\zeta$ is a building $(u,v)$ consisting of two disks glued at two punctures at $\gamma_{s_i},\gamma_{s_j}$. In particular, we first consider breaking as shown in Figure \ref{Figure:annulus_orientation}, bottom left. The orientation of the glued string is given by
\begin{align*}
&\epsilon_1\epsilon_2(-1)^{\sum_{a=1}^{i-1}|q_{s_a}|+\sum_{b=1}^k|q_{s_b'}|+\sum_{c=i+1}^{j-1}|q_{s_c}|}\left(\prod\epsilon_\bullet^u\right)\left(\prod\epsilon_\bullet^v\right)\langle w_1,w_2\rangle=\\
&=-(-1)^{\sum_{a=1}^{j-1}|q_{s_a}|}\epsilon(u)\epsilon(v)\langle w_1,w_2\rangle.
\end{align*}
The equality is a consequence of $\sum_{b=1}^k|q_{s_b'}|\equiv |q_i|+1\pmod{2}$, which follows from the fact that the part of the string $v$ starting at $\epsilon_2$ and ending right after the positive puncture at $\gamma_{s_i}$ changes the sign an even number of times. As in Lemma \ref{Lemma:OSection_hyperbolic}, we get that $w_2$ points from the region with $\Omega<0$ to the region with $\Omega>0$. Then the orientation of $\Omega^{-1}(0)$ near $\zeta$ is given by
\begin{align*}
(-1)^{\sum_{a=1}^{j-1}|q_{s_a}|}\epsilon(u)\epsilon(v)\, w_1,
\end{align*}  
i.e. $A=(-1)^{\sum_{a=1}^{j-1}|q_{s_a}|}\epsilon(u)\epsilon(v)$ times the outward-pointing vector $w_1$. 

Similarly for the last building in Figure \ref{Figure:annulus_orientation}. The orientation of the string is given by
\begin{align*}
&\epsilon_1\epsilon_2(-1)^{\sum_{a=i+1}^{j-1}|q_{s_a}|}\left(\prod\epsilon_\bullet^u\right)\left(\prod\epsilon_\bullet^v\right)(-1)^{|q_{s_i}|}\langle w_1,w_2\rangle=\\
&=(-1)^{\sum_{a=i}^{j-1}|q_{s_a}|}\epsilon(u)\epsilon(v)\langle w_1,w_2\rangle.
\end{align*}
Similar as above, we notice that $\sum_{a=i}^{j-1}|q_{s_a}|\equiv 0\pmod{2}$. Therefore, the orientation on $\Omega^{-1}(0)$ near $\zeta$ is given by
\begin{align*}
-\epsilon(u)\epsilon(v) w_1,
\end{align*}  
i.e. $A=-\epsilon(u)\epsilon(v)$ times the outward-pointing vector. Other cases go analogously.

We notice that in each case, the orientation sign is equal to minus the algebraic sign. 
\vspace{2.1mm}

Next, we consider buildings $\zeta=(u,v)$ where a disk $v$ (bottom level) is glued to an annulus $u$ (top level).

Let $\zeta$ be such a building with breaking as shown in Figure \ref{Figure:annulus_orientationII}, left.  The orientation of the glued string is given by
\begin{align*}
\epsilon_1\epsilon_2(-1)^{\sum_{a=1}^{i-1}|q_{s_a}|+\sum_{b=l}^{m}|q_{s_b}|}\left(\prod\epsilon_\bullet^u\right)\left(\prod\epsilon_\bullet^v\right)\langle w_2,w_1\rangle,
\end{align*}
and the orientation at $\pi_{xy}(u)$ is given by
\begin{align*}
\epsilon_1\epsilon_2(-1)^{\sum_{b=l}^{m}|q_{s_b}|}\left(\prod\epsilon_\bullet^u\right)\langle w_1\rangle.
\end{align*}
Then the orientation vector on $\Omega^{-1}(0)$ near $\zeta$ is given by
\begin{align*}
-(-1)^{\sum_{a=1}^{i-1}|q_{s_a}|}\epsilon(u)\epsilon(v)\, w_2,
\end{align*}
i.e. $A=-(-1)^{\sum_{a=1}^{i-1}|q_{s_a}|}\epsilon(u)\epsilon(v)$ times the outward-pointing vector.

Similarly for the building in Figure \ref{Figure:annulus_orientationII}, right. The orientation of the glued string is given by
\begin{align*}
\epsilon_1\epsilon_2(-1)^{\sum_{a=p}^{i-1}|q_{s_a}|}\left(\prod\epsilon_\bullet^u\right)\left(\prod\epsilon_\bullet^v\right)\langle w_1,w_2\rangle,
\end{align*}
and the orientation at $\pi_{xy}(u)$ by
\begin{align*}
\epsilon_1\epsilon_2(-1)^{\sum_{b=1}^{p-1}|q_{s_b}|}\left(\prod\epsilon_\bullet^u\right)\langle w_1\rangle.
\end{align*}
Then the orientation vector on $\Omega^{-1}(0)$ near $\zeta$ is given by
\begin{align*}
(-1)^{\sum_{a=1}^{i-1}|q_{s_a}|}\epsilon(u)\epsilon(v)\, w_2,
\end{align*}
i.e. $A=(-1)^{\sum_{a=1}^{i-1}|q_{s_a}|}\epsilon(u)\epsilon(v)$ times the outward-pointing vector. The calculation goes similarly when the disk is glued to the inner boundary component.

As before, we notice that the orientation sign is equal to minus the algebraic sign, which finishes the proof.
\end{proof}

\begin{figure}
\def\svgwidth{155mm}
\begingroup%
  \makeatletter%
  \providecommand\rotatebox[2]{#2}%
  \newcommand*\fsize{\dimexpr\f@size pt\relax}%
  \newcommand*\lineheight[1]{\fontsize{\fsize}{#1\fsize}\selectfont}%
  \ifx\svgwidth\undefined%
    \setlength{\unitlength}{1004.45105512bp}%
    \ifx\svgscale\undefined%
      \relax%
    \else%
      \setlength{\unitlength}{\unitlength * \real{\svgscale}}%
    \fi%
  \else%
    \setlength{\unitlength}{\svgwidth}%
  \fi%
  \global\let\svgwidth\undefined%
  \global\let\svgscale\undefined%
  \makeatother%
  \begin{picture}(1,0.35124424)%
    \lineheight{1}%
    \setlength\tabcolsep{0pt}%
    \put(0.84535187,0.33084799){\makebox(0,0)[lt]{\lineheight{1.25}\smash{\begin{tabular}[t]{l}$s_{1}$\end{tabular}}}}%
    \put(0.73872526,0.27427057){\makebox(0,0)[lt]{\lineheight{1.25}\smash{\begin{tabular}[t]{l}$w_1$\end{tabular}}}}%
    \put(0.28555162,0.33827411){\makebox(0,0)[lt]{\lineheight{1.25}\smash{\begin{tabular}[t]{l}$\epsilon_1$\end{tabular}}}}%
    \put(0.19615694,0.31359878){\makebox(0,0)[lt]{\lineheight{1.25}\smash{\begin{tabular}[t]{l}$s_{1}$\end{tabular}}}}%
    \put(0.21079914,0.08378207){\makebox(0,0)[lt]{\lineheight{1.25}\smash{\begin{tabular}[t]{l}$s_k'$\end{tabular}}}}%
    \put(0.13478461,0.15778367){\makebox(0,0)[lt]{\lineheight{1.25}\smash{\begin{tabular}[t]{l}$\epsilon_3$\end{tabular}}}}%
    \put(0.20057613,0.20183549){\makebox(0,0)[lt]{\lineheight{1.25}\smash{\begin{tabular}[t]{l}$s_{i-1}$\end{tabular}}}}%
    \put(0.04271392,0.14935564){\makebox(0,0)[lt]{\lineheight{1.25}\smash{\begin{tabular}[t]{l}$s_1'$\end{tabular}}}}%
    \put(0.22291425,0.11464396){\makebox(0,0)[lt]{\lineheight{1.25}\smash{\begin{tabular}[t]{l}$w_2$\end{tabular}}}}%
    \put(0.30967037,0.24849214){\makebox(0,0)[lt]{\lineheight{1.25}\smash{\begin{tabular}[t]{l}$\epsilon_2$\end{tabular}}}}%
    \put(0.28429209,0.21603822){\makebox(0,0)[lt]{\lineheight{1.25}\smash{\begin{tabular}[t]{l}$s_{l}$\end{tabular}}}}%
    \put(0.24503111,0.22894034){\makebox(0,0)[lt]{\lineheight{1.25}\smash{\begin{tabular}[t]{l}$s_{m}$\end{tabular}}}}%
    \put(0.21785141,0.2522756){\makebox(0,0)[lt]{\lineheight{1.25}\smash{\begin{tabular}[t]{l}$w_1$\end{tabular}}}}%
    \put(0.90865572,0.27945512){\makebox(0,0)[lt]{\lineheight{1.25}\smash{\begin{tabular}[t]{l}$\epsilon_1$\end{tabular}}}}%
    \put(0.73870262,0.32756479){\makebox(0,0)[lt]{\lineheight{1.25}\smash{\begin{tabular}[t]{l}$s_{p-1}$\end{tabular}}}}%
    \put(0.6689274,0.15741876){\makebox(0,0)[lt]{\lineheight{1.25}\smash{\begin{tabular}[t]{l}$\epsilon_3$\end{tabular}}}}%
    \put(0.73471893,0.20147069){\makebox(0,0)[lt]{\lineheight{1.25}\smash{\begin{tabular}[t]{l}$s_{i-1}$\end{tabular}}}}%
    \put(0.75705692,0.11427905){\makebox(0,0)[lt]{\lineheight{1.25}\smash{\begin{tabular}[t]{l}$w_2$\end{tabular}}}}%
    \put(0.82491149,0.20871748){\makebox(0,0)[lt]{\lineheight{1.25}\smash{\begin{tabular}[t]{l}$\epsilon_2$\end{tabular}}}}%
    \put(-0.00071183,0.24018735){\makebox(0,0)[lt]{\lineheight{1.25}\smash{\begin{tabular}[t]{l}\textcolor{white}{.}\end{tabular}}}}%
    \put(0,0){\includegraphics[width=\unitlength,page=1]{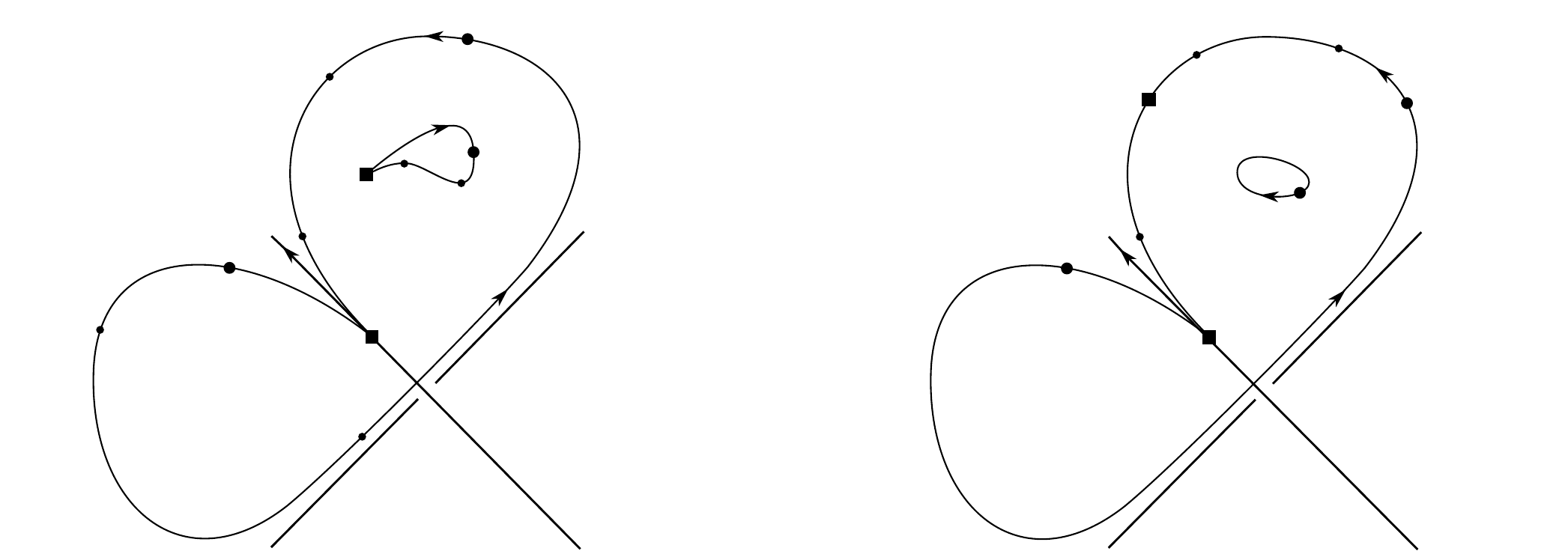}}%
  \end{picture}%
\endgroup%
\caption{Coherent orientations for annuli II.}
\label{Figure:annulus_orientationII}
\end{figure}

Let $u$ be a $J$-holomorphic disk on $\lR\times\Lambda$ and $P$ an interior intersection of $u$ with $\lR\times\Lambda$. We say the intersection at $P$ is positive and define $\epsilon(P)=1$ if 
$$\left(D(\pi_{xyz}\circ u)(X),D(\pi_{xyz}\circ u)(iX),e_\Lambda(P)\right)$$ 
is a positively oriented basis in $\lR^3$, where $e_\Lambda(P)$ is the positive unit vector tangent to $\Lambda$ at $P$ and $X\in T_{u^{-1}(P)}\Sigma$. Otherwise, we say the intersection is negative and define $\epsilon(P)=-1$. 

Similarly, let $Q$ be a boundary self-intersection of a $J$-holomorphic disk $v$. Let $\tau_1,\tau_2\in\lS^1\cong\partial\lD$ be two distinct points in the order of appearance starting from the positive puncture on $v$ such that $v|_\partial(\tau_1)=v|_\partial(\tau_2)=Q$. We say the boundary intersection at $Q$ is positive and define $\epsilon(Q)=1$ if 
$$\left(\frac{d}{dt}(v|_\partial)(\tau_1),\frac{d}{dt}(v|_\partial)(\tau_2)\right)$$ 
is a positively oriented basis in $\lR\times\Lambda$. Otherwise, we say the intersection is negative and define $\epsilon(Q)=-1$.

\begin{figure}
\def\svgwidth{145mm}
\begingroup%
  \makeatletter%
  \providecommand\rotatebox[2]{#2}%
  \newcommand*\fsize{\dimexpr\f@size pt\relax}%
  \newcommand*\lineheight[1]{\fontsize{\fsize}{#1\fsize}\selectfont}%
  \ifx\svgwidth\undefined%
    \setlength{\unitlength}{581.1604873bp}%
    \ifx\svgscale\undefined%
      \relax%
    \else%
      \setlength{\unitlength}{\unitlength * \real{\svgscale}}%
    \fi%
  \else%
    \setlength{\unitlength}{\svgwidth}%
  \fi%
  \global\let\svgwidth\undefined%
  \global\let\svgscale\undefined%
  \makeatother%
  \begin{picture}(1,0.28298252)%
    \lineheight{1}%
    \setlength\tabcolsep{0pt}%
    \put(0.11454789,0.06569366){\makebox(0,0)[lt]{\lineheight{1.25}\smash{\begin{tabular}[t]{l}$\epsilon_1$\end{tabular}}}}%
    \put(0.28289884,0.18620369){\makebox(0,0)[lt]{\lineheight{1.25}\smash{\begin{tabular}[t]{l}$\epsilon_2$\end{tabular}}}}%
    \put(0.18220808,0.00559225){\makebox(0,0)[lt]{\lineheight{1.25}\smash{\begin{tabular}[t]{l}$q_{s_1}$\end{tabular}}}}%
    \put(0.37851985,0.17543749){\makebox(0,0)[lt]{\lineheight{1.25}\smash{\begin{tabular}[t]{l}$q_{s_i}$\end{tabular}}}}%
    \put(0.23508758,0.15183942){\makebox(0,0)[lt]{\lineheight{1.25}\smash{\begin{tabular}[t]{l}$q_{s_{i+1}}$\end{tabular}}}}%
    \put(0.25129351,0.17145483){\makebox(0,0)[lt]{\lineheight{1.25}\smash{\begin{tabular}[t]{l}$q_{s_j}$\end{tabular}}}}%
    \put(0.37506679,0.20095247){\makebox(0,0)[lt]{\lineheight{1.25}\smash{\begin{tabular}[t]{l}$q_{s_{j+1}}$\end{tabular}}}}%
    \put(0.0842094,0.16260561){\makebox(0,0)[lt]{\lineheight{1.25}\smash{\begin{tabular}[t]{l}$q_{s_m}$\end{tabular}}}}%
    \put(0.37956035,0.10920571){\makebox(0,0)[lt]{\lineheight{1.25}\smash{\begin{tabular}[t]{l}$q_{s_{i-1}}$\end{tabular}}}}%
    \put(0.34126775,0.25266708){\makebox(0,0)[lt]{\lineheight{1.25}\smash{\begin{tabular}[t]{l}$q_{s_{j+2}}$\end{tabular}}}}%
    \put(0.60269738,0.06697259){\makebox(0,0)[lt]{\lineheight{1.25}\smash{\begin{tabular}[t]{l}$\epsilon_1$\end{tabular}}}}%
    \put(0.76846609,0.18699476){\makebox(0,0)[lt]{\lineheight{1.25}\smash{\begin{tabular}[t]{l}$\epsilon_2$\end{tabular}}}}%
    \put(0.67035753,0.00687118){\makebox(0,0)[lt]{\lineheight{1.25}\smash{\begin{tabular}[t]{l}$q_{s_1}$\end{tabular}}}}%
    \put(0.86666674,0.1767165){\makebox(0,0)[lt]{\lineheight{1.25}\smash{\begin{tabular}[t]{l}$q_{s_i}$\end{tabular}}}}%
    \put(0.72323634,0.15311842){\makebox(0,0)[lt]{\lineheight{1.25}\smash{\begin{tabular}[t]{l}$q_{s_{i+1}}$\end{tabular}}}}%
    \put(0.73944469,0.1732205){\makebox(0,0)[lt]{\lineheight{1.25}\smash{\begin{tabular}[t]{l}$q_{s_{j}}$\end{tabular}}}}%
    \put(0.86321369,0.20223148){\makebox(0,0)[lt]{\lineheight{1.25}\smash{\begin{tabular}[t]{l}$q_{s_{j+1}}$\end{tabular}}}}%
    \put(0.57236051,0.16388454){\makebox(0,0)[lt]{\lineheight{1.25}\smash{\begin{tabular}[t]{l}$q_{s_m}$\end{tabular}}}}%
    \put(0.86770714,0.11048464){\makebox(0,0)[lt]{\lineheight{1.25}\smash{\begin{tabular}[t]{l}$q_{s_{i-1}}$\end{tabular}}}}%
    \put(0.82941521,0.25394601){\makebox(0,0)[lt]{\lineheight{1.25}\smash{\begin{tabular}[t]{l}$q_{s_{j+2}}$\end{tabular}}}}%
    \put(0.79333684,0.16100777){\makebox(0,0)[lt]{\lineheight{1.25}\smash{\begin{tabular}[t]{l}$w_2$\end{tabular}}}}%
    \put(0.81974396,0.16520076){\makebox(0,0)[lt]{\lineheight{1.25}\smash{\begin{tabular}[t]{l}$w_1$\end{tabular}}}}%
    \put(-0.00123029,0.24599779){\makebox(0,0)[lt]{\lineheight{1.25}\smash{\begin{tabular}[t]{l}\textcolor{white}{.}\end{tabular}}}}%
    \put(0.31987057,0.15088737){\makebox(0,0)[lt]{\lineheight{1.25}\smash{\begin{tabular}[t]{l}$Q$\end{tabular}}}}%
    \put(0,0){\includegraphics[width=\unitlength,page=1]{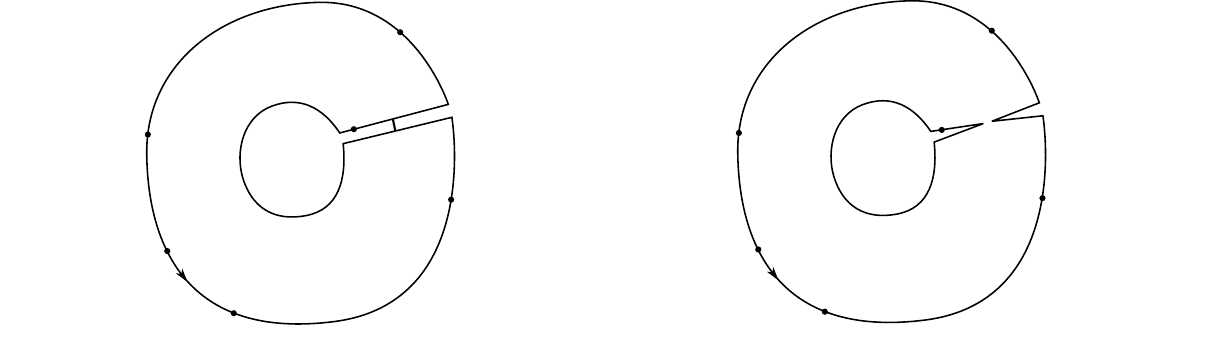}}%
  \end{picture}%
\endgroup%
\caption{Disk with a boundary self-intersection $Q$.}
\label{Fig:boundary_self-intersection}
\end{figure}

\begin{lemma}
\label{Lemma:signs_nabla}
Let $\zeta$ be a nodal annulus in the boundary of the 1-dimensional moduli space of annuli with one positive puncture. Then 
\begin{align*}
\epsilon_O(\zeta)=-\epsilon_A(\zeta).
\end{align*}
\end{lemma}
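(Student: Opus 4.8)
The plan is to prove Lemma \ref{Lemma:signs_nabla} by the same strategy used in Lemma \ref{Lemma:signs_annulus_buildings}, but now comparing the combinatorial orientation of $\Omega^{-1}(0) = \cM_2$ near a nodal degeneration against the algebraic sign with which the corresponding summand $\nabla_1(u,Q)$ or $\nabla_2(u,P)$ appears in $d\circ d(q_i)$. Recall from Proposition \ref{Prop:compactness_annulus} that a nodal annulus $\zeta$ in $\partial\overline\cM_2$ is, via Lemma \ref{Lemma:sing_removal}, an index zero $J$-holomorphic disk $u$ with one positive puncture together with either a transverse boundary self-intersection $Q$ (hyperbolic node) or a transverse interior intersection $P$ with $\lR\times\Lambda$ (elliptic node). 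Since $\zeta$ has a single boundary component after gluing the node, the coherency condition of Definition \ref{Def:coherency} demands precisely $\epsilon_O(\zeta)=(-1)^{|\partial\zeta|+1}\epsilon_A(\zeta)=(-1)^{1+1}\epsilon_A(\zeta)$; so I must in fact show $\epsilon_O(\zeta)=-\epsilon_A(\zeta)$ comes out as $\epsilon_O = (-1)^{|\partial\zeta|+1}\epsilon_A$ with $|\partial\zeta| = 2$ for the nodal annulus, matching the lemma's claimed sign once the boundary-component bookkeeping is fixed.

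First I would treat the hyperbolic case. Here the annulus degenerates by pinching a neck where the node $Q$ forms on the boundary of $u$, exactly the split/non-split picture studied in Lemma \ref{Lemma:OSection_elliptic} and Lemma \ref{Lemma:OSection_hyperbolic}. I would set up local coordinates as in Figure \ref{Fig:boundary_self-intersection}: let $Q$ be the intersection of the arcs $u|_\partial(t_i,t_{i+1})$ and $u|_\partial(t_j,t_{j+1})$, producing branch points $w_1$ (the branch point of the annular projection) and $w_2$ (the one appearing after resolving the node). The combinatorial orientation $\sigma(l_1,l_2)$ of $\cC_2$ at $\pi_{xy}\zeta$ is computed from the formula in Section \ref{Section:Orientations_annuli}, giving a sign $\epsilon_1\epsilon_2(\prod\epsilon_\bullet^u)(-1)^{P}\langle w_1,w_2\rangle$ for an explicit exponent $P$ read off from the gradings along the two resolved loops. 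Using $\Omega\pitchfork 0$ and the fact established in Lemma \ref{Lemma:OSection_elliptic}/\ref{Lemma:OSection_hyperbolic} that determines which side of $\Omega^{-1}(0)$ the branch point $w_1$ points toward, I would compute the induced orientation of $\Omega^{-1}(0)=\cM_2$ near $\zeta$ as $A$ times the outward vector, and read off $A=\epsilon_O(\zeta)$. The algebraic sign $\epsilon_A(\zeta)$ is the sign of $\nabla_1(u,Q)=\epsilon_1(u,Q)\nabla_1(u,Q)$ in $d_{\str}H$, with $\epsilon_1(u,Q)=(-1)^{(P(\gamma,t_{i+1})+1)|\gamma_2|+1}\epsilon(Q)$ from Section \ref{Sec:loop_coproduct}; I expect the two to match up to the required $-1$ after rewriting $\epsilon(Q)$ in terms of the oriented basis of tangent vectors and identifying it with the orientation data at $w_1,w_2$.

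Next I would handle the elliptic case, where the node $P$ is an interior intersection of $u$ with $\lR\times\Lambda$ and the inner boundary component shrinks to a point (so $\beta=\emptyset$, $b_0=0$ in Proposition \ref{Prop:compactness_annulus}). Here the relevant summand is $\nabla_2(u,P)=\gamma\otimes\pi_{xyz}(P)$ with sign $\epsilon_2(u,P)=\epsilon(P)$, and I would relate the positivity convention for $\epsilon(P)$ — defined via the basis $(D(\pi_{xyz}u)X,\,D(\pi_{xyz}u)(iX),\,e_\Lambda(P))$ — to the complex-orientation contribution of the shrinking boundary. The extra two dimensions of the interior branch point carry the canonical complex orientation (as discussed at the end of Section \ref{Section:Orientations_annuli}), and I expect this to supply exactly the factor converting $\epsilon(P)$ into the correct boundary orientation sign. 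The main obstacle, as in the companion lemmas, is the purely bookkeeping task of tracking all the grading-dependent Koszul signs: matching the exponent in the combinatorial orientation $\sigma(l_1,l_2)$ against the exponent in $\epsilon_1(u,Q)$ (and confirming $\sum|q_{s_a}|$-type parities vanish mod $2$ using that $u$ has index zero), and ensuring the shift of marked points in $\overline\cC=\widetilde\cC^1\otimes\widetilde\cC^0$ does not introduce a spurious sign. I would organize this by reducing to the representative configurations of Figure \ref{Fig:boundary_self-intersection}, verifying the sign in each, and noting that all remaining cases follow by the same computation with the roles of the arcs or boundary components interchanged.
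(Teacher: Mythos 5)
Your overall strategy is the right one (compare the combinatorial orientation of $\Omega^{-1}(0)$ near the nodal boundary point with the algebraic sign, treating hyperbolic and elliptic nodes separately, and using the obstruction-section asymptotics to locate the orientation normal), but there are two genuine gaps. First, you compare against the Section \ref{Sec:loop_coproduct} string-operator signs $\epsilon_1(u,A)=(-1)^{(P(\gamma,t_{i+1})+1)|\gamma_2|+1}\epsilon(A)$ and $\epsilon_2(u,B)=\epsilon(B)$, but in the setting of this lemma $\epsilon_A(\zeta)$ is the sign of the summand in $d\circ d(q_i)$ for the \emph{algebraic} differential of Section \ref{Section:IIinvar_def}, whose string part is $d_f$, defined through the $\delta(\cdot,\cdot)$ ordering functions --- and the conventions of the two definitions differ by $(-1)^{|\gamma_2|}$ on $\hbar(\gamma_1\otimes\gamma_2)$ terms (Remark \ref{Rmk:sign_difference}), so importing $\epsilon_1(u,A)$ verbatim produces a mismatch. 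More importantly, in the $d_f$ picture the nodal summand $\hbar(q_{s_1}\dots q_{s_i}q_{s_{j+1}}\dots q_{s_m}\otimes q_{s_{i+1}}\dots q_{s_j})$ is not a single term: it is the sum of four cross-terms coming from $d_f(q_{s_i},q_{s_j})$, $d_f(q_{s_{i+1}},q_{s_{j+1}})$, $d_f(q_{s_i},q_{s_{j+1}})$ and $d_f(q_{s_{i+1}},q_{s_j})$, and the actual content of the proof is the combinatorial identity
\begin{align*}
\delta(s_j^-,s_i^-)+\delta(s_{j+1}^+,s_{i+1}^+)-\delta(s_{j+1}^+,s_i^-)-\delta(s_j^-,s_{i+1}^+)=-\epsilon(Q),
\end{align*}
verified over the linked orderings of $s_i^-,s_{i+1}^+,s_j^-,s_{j+1}^+$ (and shown to give coefficient $0$ for the unlinked orderings, where no intersection exists). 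Your ``I expect the two to match up to the required $-1$'' is hand-waving at exactly this step.

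Second, your elliptic case invokes the wrong mechanism: the canonical complex orientation of an \emph{interior branch point} is relevant only for interior walls of the moduli space where two boundary branch points merge, not for the elliptic nodal boundary where the inner boundary component shrinks onto an interior intersection $P$ of $u$ with $\lR\times\Lambda$. The correct argument takes the $1$-parameter family of annuli near $\zeta$, which in the projection has two \emph{boundary} branch points $w_1,w_2$; the orientation of $\cM^\pi_2$ near $\zeta$ is $\epsilon_2\epsilon(u)\langle w_1,w_2\rangle$, and the orientation normal $-\epsilon_2\epsilon(P)\,w_2$ is pinned down by the degeneration analysis of Lemma \ref{Lemma:OSection_hyperbolic} (tracking the sign of $\Omega$ via the harmonic lifts, i.e.\ which strand sits higher in the $z$-coordinate), yielding $\epsilon_O(\zeta)=-\epsilon(P)\epsilon(u)$ against the $d_f$ coefficient $\epsilon(P)\epsilon(u)$. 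A similar explicit family $w_t$ of glued nodal annuli at a moving crossing $Q_t$, with $\operatorname{sgn}(\Omega(w_t))$ determined by comparing $\pi_z\circ v$ on the two strands, is what fixes the normal in the hyperbolic case; this needs to be carried out, not just cited. (Your opening vacillation on $|\partial\zeta|$ resolves correctly to $|\partial\zeta|=2$, so that is only a presentational blemish.)
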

\begin{proof}
First, let $v:\lD\backslash\{t_0,\dots,t_{m}\}\to\lR^4$ be an index zero $J$-holomorphic disk and $Q\in v((t_i,t_{i+1}))\cap v((t_j,t_{j+1})),i,j\in\{0,\dots,m\},i<j$ a boundary self-intersection as shown in Figure \ref{Fig:boundary_self-intersection}, left. Note that $i\neq j$ since $v$ is of index zero. Let $\gamma_{s_k}$ be the Reeb chord at the negative puncture $t_k,k\in\{1,\dots,m\}$ and $s_k^\pm\in\Lambda$ the endpoints of $\gamma_{s_k}$ as before. We first consider four possible orderings of points $s_i^-,s_{i+1}^+,s_j^-,s_{j+1}^+$ on $\Lambda$ shown in Figure \ref{Fig:signs_nabla_orderings}.

We have a 1-parameter family $v_t,t\in(0,\varepsilon)$ of $J$-holomorphic annuli on $\lR\times\Lambda$ converging to the nodal annulus $\zeta\coloneq(v,Q)$ when $t\to 0$.
\begin{figure}
\def\svgwidth{165mm}
\begingroup%
  \makeatletter%
  \providecommand\rotatebox[2]{#2}%
  \newcommand*\fsize{\dimexpr\f@size pt\relax}%
  \newcommand*\lineheight[1]{\fontsize{\fsize}{#1\fsize}\selectfont}%
  \ifx\svgwidth\undefined%
    \setlength{\unitlength}{721.26822832bp}%
    \ifx\svgscale\undefined%
      \relax%
    \else%
      \setlength{\unitlength}{\unitlength * \real{\svgscale}}%
    \fi%
  \else%
    \setlength{\unitlength}{\svgwidth}%
  \fi%
  \global\let\svgwidth\undefined%
  \global\let\svgscale\undefined%
  \makeatother%
  \begin{picture}(1,0.14733582)%
    \lineheight{1}%
    \setlength\tabcolsep{0pt}%
    \put(0.12283958,0.01745689){\makebox(0,0)[lt]{\lineheight{1.25}\smash{\begin{tabular}[t]{l}$s_j^-$\end{tabular}}}}%
    \put(0.15552841,0.03487289){\makebox(0,0)[lt]{\lineheight{1.25}\smash{\begin{tabular}[t]{l}$s_{i+1}^+$\end{tabular}}}}%
    \put(0.19974733,0.06020052){\makebox(0,0)[lt]{\lineheight{1.25}\smash{\begin{tabular}[t]{l}$s_{j+1}^+$\end{tabular}}}}%
    \put(0.24798091,0.08538867){\makebox(0,0)[lt]{\lineheight{1.25}\smash{\begin{tabular}[t]{l}$s_i^-$\end{tabular}}}}%
    \put(0.33167693,0.01378647){\makebox(0,0)[lt]{\lineheight{1.25}\smash{\begin{tabular}[t]{l}$s_{i+1}^+$\end{tabular}}}}%
    \put(0.38100314,0.03952104){\makebox(0,0)[lt]{\lineheight{1.25}\smash{\begin{tabular}[t]{l}$s_j^-$\end{tabular}}}}%
    \put(0.42522206,0.06276903){\makebox(0,0)[lt]{\lineheight{1.25}\smash{\begin{tabular}[t]{l}$s_i^-$\end{tabular}}}}%
    \put(0.46097737,0.08171813){\makebox(0,0)[lt]{\lineheight{1.25}\smash{\begin{tabular}[t]{l}$s_{j+1}^+$\end{tabular}}}}%
    \put(0.56162953,0.01764843){\makebox(0,0)[lt]{\lineheight{1.25}\smash{\begin{tabular}[t]{l}$s_i^-$\end{tabular}}}}%
    \put(0.59431845,0.03506443){\makebox(0,0)[lt]{\lineheight{1.25}\smash{\begin{tabular}[t]{l}$s_{j+1}^+$\end{tabular}}}}%
    \put(0.63853737,0.05831242){\makebox(0,0)[lt]{\lineheight{1.25}\smash{\begin{tabular}[t]{l}$s_{i+1}^+$\end{tabular}}}}%
    \put(0.68677087,0.08558042){\makebox(0,0)[lt]{\lineheight{1.25}\smash{\begin{tabular}[t]{l}$s_j^-$\end{tabular}}}}%
    \put(0.77464656,0.01230081){\makebox(0,0)[lt]{\lineheight{1.25}\smash{\begin{tabular}[t]{l}$s_{j+1}^+$\end{tabular}}}}%
    \put(0.82397262,0.03803539){\makebox(0,0)[lt]{\lineheight{1.25}\smash{\begin{tabular}[t]{l}$s_i^-$\end{tabular}}}}%
    \put(0.86403225,0.06128337){\makebox(0,0)[lt]{\lineheight{1.25}\smash{\begin{tabular}[t]{l}$s_j^-$\end{tabular}}}}%
    \put(0.89978789,0.08023271){\makebox(0,0)[lt]{\lineheight{1.25}\smash{\begin{tabular}[t]{l}$s_{i+1}^+$\end{tabular}}}}%
    \put(0,0){\includegraphics[width=\unitlength,page=1]{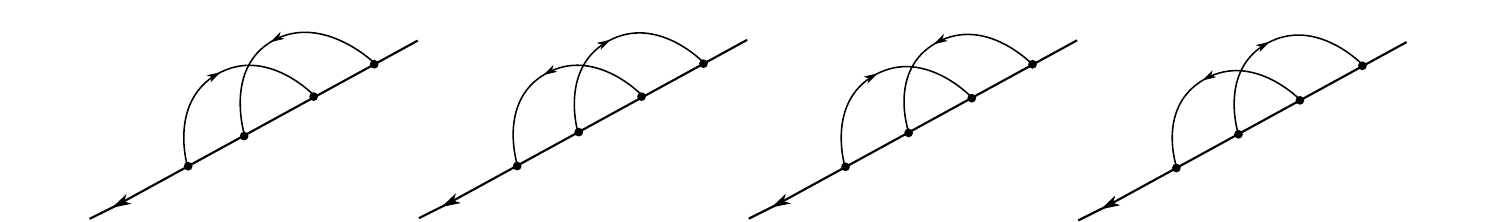}}%
    \put(0.11676946,0.13381799){\makebox(0,0)[lt]{\lineheight{1.25}\smash{\begin{tabular}[t]{l}$\epsilon(Q)=1$\end{tabular}}}}%
    \put(0.34011732,0.13132823){\makebox(0,0)[lt]{\lineheight{1.25}\smash{\begin{tabular}[t]{l}$\epsilon(Q)=-1$\end{tabular}}}}%
    \put(0.5518367,0.13134957){\makebox(0,0)[lt]{\lineheight{1.25}\smash{\begin{tabular}[t]{l}$\epsilon(Q)=-1$\end{tabular}}}}%
    \put(0.77128401,0.12979496){\makebox(0,0)[lt]{\lineheight{1.25}\smash{\begin{tabular}[t]{l}$\epsilon(Q)=1$\end{tabular}}}}%
    \put(-0.00099131,0.01898272){\makebox(0,0)[lt]{\lineheight{1.25}\smash{\begin{tabular}[t]{l}\textcolor{white}{.}\end{tabular}}}}%
  \end{picture}%
\endgroup%
\caption{Four linked orderings of points $s_i^-,s_{i+1}^+,s_j^-,s_{j+1}^+$ on $\Lambda$.}
\label{Fig:signs_nabla_orderings}
\end{figure}
In the Lagrangian projection, annuli $v_t$ have the form as shown in Figure \ref{Fig:boundary_self-intersection}, right. Denote by $w_1$ and $w_2$ the branch points on the outer and the inner boundary component of $v_t$. Let $e_2$ be a marked point on the inner boundary component right after the negative puncture at $t_{s_j}$ and $\epsilon_2$ be the orientation sign at $e_2$. For a neighborhood of the moduli space near the boundary point $\zeta\in\Omega^{-1}(0)$, see Figure \ref{Fig:orientation_normal_nabla}, we have an outward-pointing vector $w_1$ and the corresponding orientation normal $-\epsilon_2\epsilon(Q) w_2$. To see this, let $Q_t,t\in(-\varepsilon,\varepsilon)$ be a parameterization of a neighborhood of $\pi_{xy}(Q)\in\pi_{xy}(\Lambda)$ such that $Q_0=\pi_{xy}(Q)$ and $Q_{\varepsilon}$ is closer to $s_i^-$ than $Q_{-\varepsilon}$.  Additionally, let $w_t\in\cM^\pi_2,t\in(-\varepsilon,\varepsilon)$ be a family of nodal holomorphic annuli in $\lC$ obtained by gluing $\pi_{xy}(v)$ at $Q_t$. We can see that for $0<t<\varepsilon$ we have $-\epsilon_2\epsilon(Q)\Omega(w_t)>0$. For example, with the ordering as shown in the first image in Figure \ref{Fig:signs_nabla_orderings} we have $\operatorname{sgn}(\Omega(w_t))=\operatorname{sgn}(t)$. This follows from $\pi_z\circ v\left((\pi_{xy}v)^{-1}Q_t\cap(t_i,t_{i+1})\right)>\pi_z\circ v\left((\pi_{xy}v)^{-1}Q_t\cap(t_j,t_{j+1})\right)$ for all $0<t<\varepsilon$. 

The orientation of $\cM^\pi_2$ near $\zeta$ is given by
\begin{align*}
\epsilon_2\epsilon(v)(-1)^{|q_{s_1}|+\dots+|q_{s_i}|}\langle w_1,w_2\rangle,
\end{align*}
therefore, the orientation sign of $\zeta$ is equal to
\begin{align*}
\epsilon(Q)(-1)^{|q_{s_1}|+\dots+|q_{s_i}|}\epsilon(v).
\end{align*}

Now, we notice that there can be potentially four summands in $d_f(\epsilon(v)q_{s_1}\dots q_{s_m})$ corresponding to the word $\hbar(q_{s_1}\dots q_{s_i}q_{s_{j+1}}\dots q_{s_m}\otimes q_{s_{i+1}}\dots q_{s_j})$. These come from
\begin{align*}
&d_f(q_{s_i},q_{s_j})\ni (-1)^{|q_{s_i}||q_{s_j}|}\delta(s_j^-,s_i^-)q_{s_i}\otimes q_{s_j},\\
&d_f(q_{s_{i+1}},q_{s_{j+1}})\ni \delta(s_{j+1}^+,s_{i+1}^+)q_{s_{j+1}}\otimes q_{s_{i+1}},\\
&d_f(q_{s_{i}},q_{s_{j+1}})\ni -\delta(s_{j+1}^+,s_i^-)q_{s_i}q_{s_{j+1}}\otimes 1,\\
&d_f(q_{s_{i+1}},q_{s_j})\ni -(-1)^{|q_{s_{i+1}}||q_{s_j}|}\delta(s_j^-,s_{i+1}^+)1\otimes q_{s_j}q_{s_{i+1}}.
\end{align*}
More precisely, the summand $\hbar(q_{s_1}\dots q_{s_i}q_{s_{j+1}}\dots q_{s_m}\otimes q_{s_{i+1}}\dots q_{s_j})$ in $d_f(\epsilon(v)q_{s_1}\dots q_{s_m})$ comes with the coefficient
\begin{align*}
(-1)^{|q_{s_1}|+\dots+|q_{s_i}|}\epsilon(v)&\left(\delta(s_j^-,s_i^-)+\delta(s_{j+1}^+,s_{i+1}^+)-\delta(s_{j+1}^+,s_i^-)-\delta(s_j^-,s_{i+1}^+)\right).
\end{align*}
Moreover, we notice that 
$$\delta(s_j^-,s_i^-)+\delta(s_{j+1}^+,s_{i+1}^+)-\delta(s_{j+1}^+,s_i^-)-\delta(s_j^-,s_{i+1}^+)=-\epsilon(Q).$$
The summand in $d\circ d$ corresponding to $\zeta$ therefore comes with the sign
\begin{align*}
-\epsilon(Q)(-1)^{|q_{s_1}|+\dots+|q_{s_i}|}\epsilon(v),
\end{align*}
i.e. minus the corresponding orientation sign, as desired. For all the other orderings of $s_i^-,s_{i+1}^+,s_j^-,s_{j+1}^+$ such that there is an intersection  between $v(t_i,t_{i+1})$ and $v(t_j,t_{j+1})$ (see for example Figure \ref{Fig:boundary_self-intersectionII}), we check similarly that $\epsilon_O(\zeta)=-\epsilon_A(\zeta)$. Additionally, for the rest of the orderings of $s_i^-,s_{i+1}^+,s_j^-,s_{j+1}^+$ where the intersection number vanishes, we have $\delta(s_j^-,s_i^-)+\delta(s_{j+1}^+,s_{i+1}^+)-\delta(s_{j+1}^+,s_i^-)-\delta(s_j^-,s_{i+1}^+)=0$.

\begin{figure}
\def\svgwidth{175mm}
\begingroup%
  \makeatletter%
  \providecommand\rotatebox[2]{#2}%
  \newcommand*\fsize{\dimexpr\f@size pt\relax}%
  \newcommand*\lineheight[1]{\fontsize{\fsize}{#1\fsize}\selectfont}%
  \ifx\svgwidth\undefined%
    \setlength{\unitlength}{839.02986714bp}%
    \ifx\svgscale\undefined%
      \relax%
    \else%
      \setlength{\unitlength}{\unitlength * \real{\svgscale}}%
    \fi%
  \else%
    \setlength{\unitlength}{\svgwidth}%
  \fi%
  \global\let\svgwidth\undefined%
  \global\let\svgscale\undefined%
  \makeatother%
  \begin{picture}(1,0.13105108)%
    \lineheight{1}%
    \setlength\tabcolsep{0pt}%
    \put(0.22539915,0.00248987){\makebox(0,0)[lt]{\lineheight{1.25}\smash{\begin{tabular}[t]{l}$w_1$\end{tabular}}}}%
    \put(0.11245725,0.10815957){\makebox(0,0)[lt]{\lineheight{1.25}\smash{\begin{tabular}[t]{l}$w_2$\end{tabular}}}}%
    \put(0.15925876,0.0593787){\makebox(0,0)[lt]{\lineheight{1.25}\smash{\begin{tabular}[t]{l}$w_1$\end{tabular}}}}%
    \put(0.18715346,0.02860422){\makebox(0,0)[lt]{\lineheight{1.25}\smash{\begin{tabular}[t]{l}${\scriptstyle \Omega<0}$\end{tabular}}}}%
    \put(0.21482494,0.08289738){\makebox(0,0)[lt]{\lineheight{1.25}\smash{\begin{tabular}[t]{l}${\scriptstyle \Omega>0}$\end{tabular}}}}%
    \put(0.21353065,0.05522852){\makebox(0,0)[lt]{\lineheight{1.25}\smash{\begin{tabular}[t]{l}$w_2$\end{tabular}}}}%
    \put(0.42480409,0.00216917){\makebox(0,0)[lt]{\lineheight{1.25}\smash{\begin{tabular}[t]{l}$w_1$\end{tabular}}}}%
    \put(0.3118621,0.10783882){\makebox(0,0)[lt]{\lineheight{1.25}\smash{\begin{tabular}[t]{l}$w_2$\end{tabular}}}}%
    \put(0.35866363,0.05905802){\makebox(0,0)[lt]{\lineheight{1.25}\smash{\begin{tabular}[t]{l}$w_1$\end{tabular}}}}%
    \put(0.38655836,0.02828352){\makebox(0,0)[lt]{\lineheight{1.25}\smash{\begin{tabular}[t]{l}${\scriptstyle \Omega>0}$\end{tabular}}}}%
    \put(0.41422987,0.08257668){\makebox(0,0)[lt]{\lineheight{1.25}\smash{\begin{tabular}[t]{l}${\scriptstyle \Omega<0}$\end{tabular}}}}%
    \put(0.4129356,0.05490784){\makebox(0,0)[lt]{\lineheight{1.25}\smash{\begin{tabular}[t]{l}$-w_2$\end{tabular}}}}%
    \put(0.62378691,0.00322334){\makebox(0,0)[lt]{\lineheight{1.25}\smash{\begin{tabular}[t]{l}$w_1$\end{tabular}}}}%
    \put(0.51084641,0.10889302){\makebox(0,0)[lt]{\lineheight{1.25}\smash{\begin{tabular}[t]{l}$w_2$\end{tabular}}}}%
    \put(0.55764735,0.06011217){\makebox(0,0)[lt]{\lineheight{1.25}\smash{\begin{tabular}[t]{l}$w_1$\end{tabular}}}}%
    \put(0.58554161,0.0293377){\makebox(0,0)[lt]{\lineheight{1.25}\smash{\begin{tabular}[t]{l}${\scriptstyle \Omega<0}$\end{tabular}}}}%
    \put(0.61321282,0.08363085){\makebox(0,0)[lt]{\lineheight{1.25}\smash{\begin{tabular}[t]{l}${\scriptstyle \Omega>0}$\end{tabular}}}}%
    \put(0.61191862,0.05596199){\makebox(0,0)[lt]{\lineheight{1.25}\smash{\begin{tabular}[t]{l}$w_2$\end{tabular}}}}%
    \put(0.82318909,0.00290264){\makebox(0,0)[lt]{\lineheight{1.25}\smash{\begin{tabular}[t]{l}$w_1$\end{tabular}}}}%
    \put(0.7102487,0.10857227){\makebox(0,0)[lt]{\lineheight{1.25}\smash{\begin{tabular}[t]{l}$w_2$\end{tabular}}}}%
    \put(0.75704948,0.05979145){\makebox(0,0)[lt]{\lineheight{1.25}\smash{\begin{tabular}[t]{l}$w_1$\end{tabular}}}}%
    \put(0.7849437,0.029017){\makebox(0,0)[lt]{\lineheight{1.25}\smash{\begin{tabular}[t]{l}${\scriptstyle \Omega>0}$\end{tabular}}}}%
    \put(0.812615,0.08331015){\makebox(0,0)[lt]{\lineheight{1.25}\smash{\begin{tabular}[t]{l}${\scriptstyle \Omega<0}$\end{tabular}}}}%
    \put(0,0){\includegraphics[width=\unitlength,page=1]{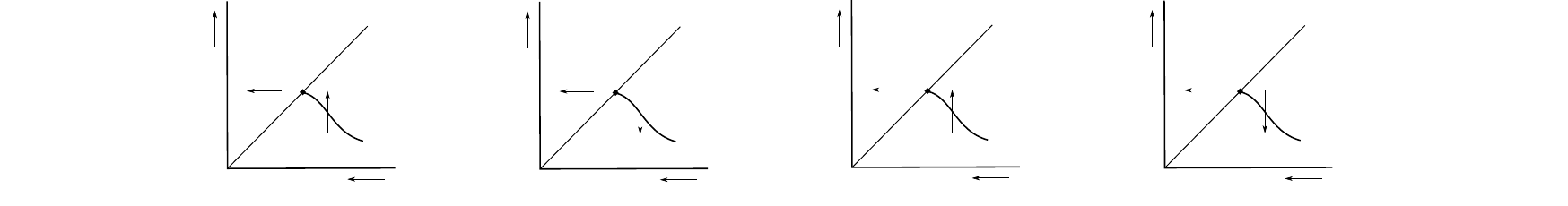}}%
    \put(0.8113207,0.05564129){\makebox(0,0)[lt]{\lineheight{1.25}\smash{\begin{tabular}[t]{l}$-w_2$\end{tabular}}}}%
    \put(-0.00085217,0.04007438){\makebox(0,0)[lt]{\lineheight{1.25}\smash{\begin{tabular}[t]{l}\textcolor{white}{.}\end{tabular}}}}%
  \end{picture}%
\endgroup%
\caption{Orientation of $\cM^\pi_2$ near $v_0$ for four ordering in Figure \ref{Fig:signs_nabla_orderings}.}
\label{Fig:orientation_normal_nabla}
\end{figure}
\vspace{1.5mm}

Next, let $u$ be an index zero $J$-holomorphic disk together with an interior intersection $P$ with $\lR\times\Lambda$ (see Figure \ref{Fig:interior_intersection}). We have a 1-parameter family $u_t,t\in(0,\varepsilon)$ of $J$-holomorphic annuli on $\lR\times\Lambda$ converging to the nodal annulus $\zeta\coloneq (u,P)$ when $t\to 0$. Annuli $u_t$ have the form as shown in Figure \ref{Fig:interior_intersection} in the Lagrangian projection. Denote the branch points on $u_t$ by $w_1,w_2$, and let $e_2$ be a marked point right before $w_1$, with the corresponding orientation sign $\epsilon_2$. 
\begin{figure}
\def\svgwidth{90mm}
\begingroup%
  \makeatletter%
  \providecommand\rotatebox[2]{#2}%
  \newcommand*\fsize{\dimexpr\f@size pt\relax}%
  \newcommand*\lineheight[1]{\fontsize{\fsize}{#1\fsize}\selectfont}%
  \ifx\svgwidth\undefined%
    \setlength{\unitlength}{654.79971817bp}%
    \ifx\svgscale\undefined%
      \relax%
    \else%
      \setlength{\unitlength}{\unitlength * \real{\svgscale}}%
    \fi%
  \else%
    \setlength{\unitlength}{\svgwidth}%
  \fi%
  \global\let\svgwidth\undefined%
  \global\let\svgscale\undefined%
  \makeatother%
  \begin{picture}(1,0.37541339)%
    \lineheight{1}%
    \setlength\tabcolsep{0pt}%
    \put(0,0){\includegraphics[width=\unitlength,page=1]{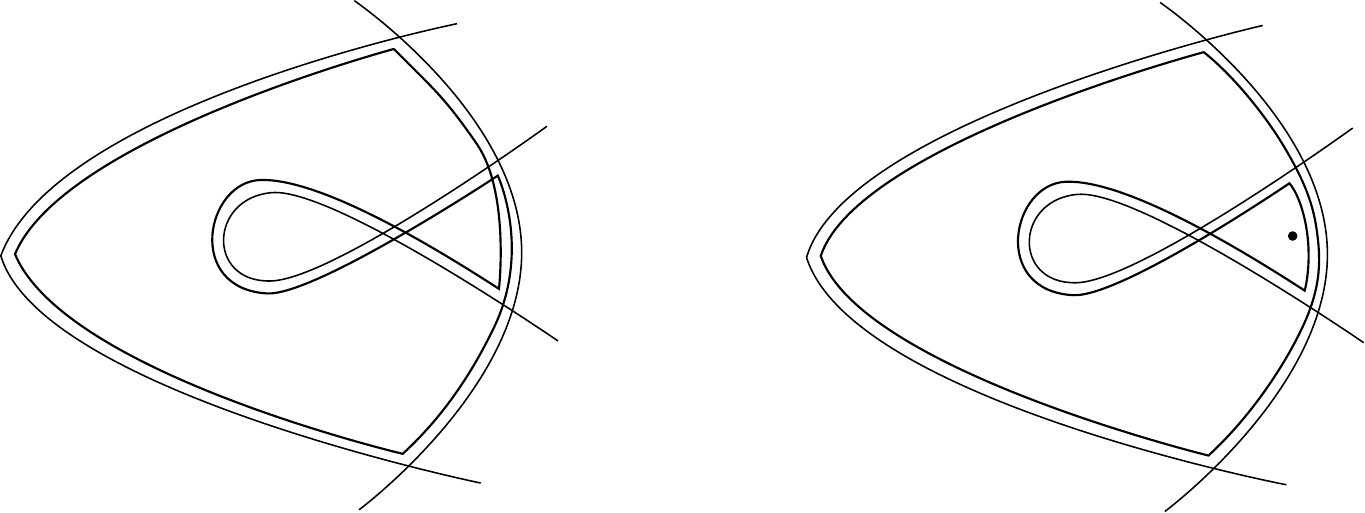}}%
    \put(0.33498158,0.03313838){\makebox(0,0)[lt]{\lineheight{1.25}\smash{\begin{tabular}[t]{l}$s_i^-$\end{tabular}}}}%
    \put(0.3954426,0.15439908){\makebox(0,0)[lt]{\lineheight{1.25}\smash{\begin{tabular}[t]{l}$s_j^-$\end{tabular}}}}%
    \put(0.38276182,0.23895435){\makebox(0,0)[lt]{\lineheight{1.25}\smash{\begin{tabular}[t]{l}$s_{i+1}^+$\end{tabular}}}}%
    \put(0.32606809,0.32811443){\makebox(0,0)[lt]{\lineheight{1.25}\smash{\begin{tabular}[t]{l}$s_{j+1}^+$\end{tabular}}}}%
  \end{picture}%
\endgroup%
\caption{Disk with a boundary self-intersection II.}
\label{Fig:boundary_self-intersectionII}
\end{figure}

\begin{figure}
\def\svgwidth{115mm}
\begingroup%
  \makeatletter%
  \providecommand\rotatebox[2]{#2}%
  \newcommand*\fsize{\dimexpr\f@size pt\relax}%
  \newcommand*\lineheight[1]{\fontsize{\fsize}{#1\fsize}\selectfont}%
  \ifx\svgwidth\undefined%
    \setlength{\unitlength}{540.06623269bp}%
    \ifx\svgscale\undefined%
      \relax%
    \else%
      \setlength{\unitlength}{\unitlength * \real{\svgscale}}%
    \fi%
  \else%
    \setlength{\unitlength}{\svgwidth}%
  \fi%
  \global\let\svgwidth\undefined%
  \global\let\svgscale\undefined%
  \makeatother%
  \begin{picture}(1,0.24751582)%
    \lineheight{1}%
    \setlength\tabcolsep{0pt}%
    \put(0.18166152,0.10685848){\makebox(0,0)[lt]{\lineheight{1.25}\smash{\begin{tabular}[t]{l}$P$\end{tabular}}}}%
    \put(0.10905143,0.22946252){\makebox(0,0)[lt]{\lineheight{1.25}\smash{\begin{tabular}[t]{l}$\epsilon(P)=1$\end{tabular}}}}%
    \put(0.74630975,0.11240887){\makebox(0,0)[lt]{\lineheight{1.25}\smash{\begin{tabular}[t]{l}$P$\end{tabular}}}}%
    \put(0.670922,0.22945791){\makebox(0,0)[lt]{\lineheight{1.25}\smash{\begin{tabular}[t]{l}$\epsilon(P)=-1$\end{tabular}}}}%
    \put(0.19073692,0.04494198){\makebox(0,0)[lt]{\lineheight{1.25}\smash{\begin{tabular}[t]{l}$\epsilon_2$\end{tabular}}}}%
    \put(0.12893912,0.08521488){\makebox(0,0)[lt]{\lineheight{1.25}\smash{\begin{tabular}[t]{l}$w_1$\end{tabular}}}}%
    \put(0.23517572,0.09468794){\makebox(0,0)[lt]{\lineheight{1.25}\smash{\begin{tabular}[t]{l}$w_2$\end{tabular}}}}%
    \put(0.76247728,0.0485298){\makebox(0,0)[lt]{\lineheight{1.25}\smash{\begin{tabular}[t]{l}$\epsilon_2$\end{tabular}}}}%
    \put(0.6995137,0.08719059){\makebox(0,0)[lt]{\lineheight{1.25}\smash{\begin{tabular}[t]{l}$w_1$\end{tabular}}}}%
    \put(0.80297323,0.09388641){\makebox(0,0)[lt]{\lineheight{1.25}\smash{\begin{tabular}[t]{l}$w_2$\end{tabular}}}}%
    \put(0,0){\includegraphics[width=\unitlength,page=1]{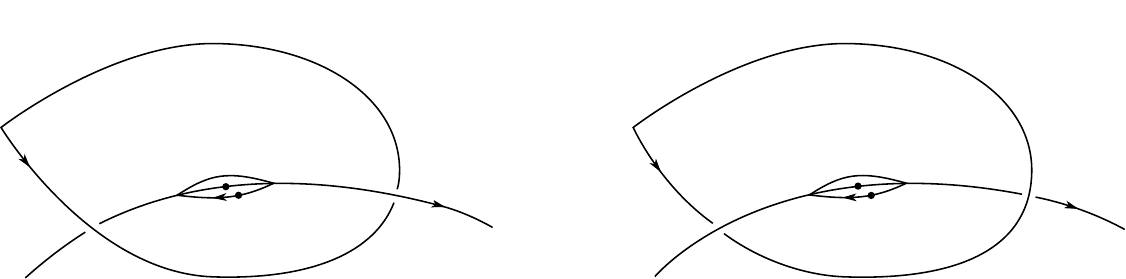}}%
  \end{picture}%
\endgroup%
\caption{Disk with an interior intersection $P$ with $\lR\times\Lambda$.}
\label{Fig:interior_intersection}
\end{figure}

\begin{figure}
\def\svgwidth{143mm}
\begingroup%
  \makeatletter%
  \providecommand\rotatebox[2]{#2}%
  \newcommand*\fsize{\dimexpr\f@size pt\relax}%
  \newcommand*\lineheight[1]{\fontsize{\fsize}{#1\fsize}\selectfont}%
  \ifx\svgwidth\undefined%
    \setlength{\unitlength}{576.13495184bp}%
    \ifx\svgscale\undefined%
      \relax%
    \else%
      \setlength{\unitlength}{\unitlength * \real{\svgscale}}%
    \fi%
  \else%
    \setlength{\unitlength}{\svgwidth}%
  \fi%
  \global\let\svgwidth\undefined%
  \global\let\svgscale\undefined%
  \makeatother%
  \begin{picture}(1,0.19038368)%
    \lineheight{1}%
    \setlength\tabcolsep{0pt}%
    \put(0.33937365,0.00315897){\makebox(0,0)[lt]{\lineheight{1.25}\smash{\begin{tabular}[t]{l}$w_1$\end{tabular}}}}%
    \put(0.17489548,0.1570466){\makebox(0,0)[lt]{\lineheight{1.25}\smash{\begin{tabular}[t]{l}$w_2$\end{tabular}}}}%
    \put(0.22482796,0.08600663){\makebox(0,0)[lt]{\lineheight{1.25}\smash{\begin{tabular}[t]{l}$-w_1$\end{tabular}}}}%
    \put(0.28367615,0.04118951){\makebox(0,0)[lt]{\lineheight{1.25}\smash{\begin{tabular}[t]{l}${\scriptstyle \Omega>0}$\end{tabular}}}}%
    \put(0.32397435,0.12025706){\makebox(0,0)[lt]{\lineheight{1.25}\smash{\begin{tabular}[t]{l}${\scriptstyle \Omega<0}$\end{tabular}}}}%
    \put(0.32208947,0.07996268){\makebox(0,0)[lt]{\lineheight{1.25}\smash{\begin{tabular}[t]{l}$w_2$\end{tabular}}}}%
    \put(0.74250911,0.00422714){\makebox(0,0)[lt]{\lineheight{1.25}\smash{\begin{tabular}[t]{l}$w_1$\end{tabular}}}}%
    \put(0.57803094,0.15811473){\makebox(0,0)[lt]{\lineheight{1.25}\smash{\begin{tabular}[t]{l}$w_2$\end{tabular}}}}%
    \put(0.62796336,0.08707479){\makebox(0,0)[lt]{\lineheight{1.25}\smash{\begin{tabular}[t]{l}$-w_1$\end{tabular}}}}%
    \put(0.68681132,0.04225768){\makebox(0,0)[lt]{\lineheight{1.25}\smash{\begin{tabular}[t]{l}${\scriptstyle \Omega<0}$\end{tabular}}}}%
    \put(0.72710975,0.12132522){\makebox(0,0)[lt]{\lineheight{1.25}\smash{\begin{tabular}[t]{l}${\scriptstyle \Omega>0}$\end{tabular}}}}%
    \put(0,0){\includegraphics[width=\unitlength,page=1]{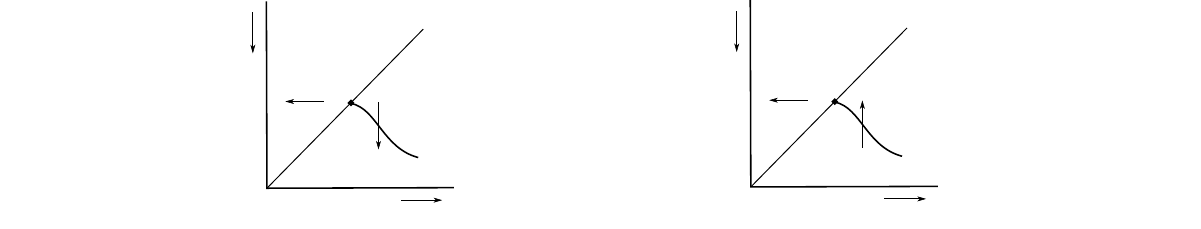}}%
    \put(0.72522492,0.08103085){\makebox(0,0)[lt]{\lineheight{1.25}\smash{\begin{tabular}[t]{l}$-w_2$\end{tabular}}}}%
    \put(-0.00169231,0.11725054){\makebox(0,0)[lt]{\lineheight{1.25}\smash{\begin{tabular}[t]{l}{\textcolor{white} t}\end{tabular}}}}%
  \end{picture}%
\endgroup%
\caption{Orientation of $\cM^\pi_2$ near $u_0$.}
\label{Fig:orientation_normal_nablaII}
\end{figure}

A neighborhood of the boundary point $\zeta\in\Omega^{-1}(0)$, as shown in Figure \ref{Fig:orientation_normal_nablaII}, with the outward-pointing vector $-w_1$, has the corresponding orientation normal $-\epsilon_2\epsilon(P)w_2$ using Lemma \ref{Lemma:OSection_hyperbolic}. 
The orientation of $\cM^\pi_2$ near $\zeta$ is given by
\begin{align*}
\epsilon_2\epsilon(u)\langle w_1,w_2\rangle,
\end{align*}
therefore, the orientation sign of $\zeta$ is equal to
\begin{align*}
-\epsilon(P)\epsilon(u).
\end{align*}
The sign of the corresponding summand in $d_f(\epsilon(u)q_{s_1}\dots q_{s_n})$ is equal to
$$\epsilon(P)\epsilon(u)$$
by construction, i.e. equal to minus the orientation sign, as desired.
\end{proof}

Since the constructed orientations descend to cyclic words as seen in Section \ref{Section:Orientations}, summands in $d\circ d$ cancel out when seen as elements in $\widetilde\cA\oplus\hbar(\widetilde\cA\otimes\widetilde\cA^{\cyc})$. We can now complete the proof of Proposition \ref{Prop:dcircdvanishes}.
\begin{cor}
For $\Lambda$ a Legendrian knot and $d:\cA(\Lambda)\to\cA(\Lambda)$ the second-order derivation defined in Section \ref{Section:IIinvar_def}, we have $d\circ d=0$ over $\lZ$ $(\lQ)$ coefficients.
\end{cor}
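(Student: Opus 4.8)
The plan is to upgrade the $\lZ_2$-level identity from Proposition \ref{Prop:dcircdvanishes} to a signed identity over $\lZ$, by combining coherence of the orientations of Section \ref{Section:Orientations} with the elementary fact that an oriented compact $1$-manifold carries opposite induced orientations at the two ends of each interval component. First I would reduce to generators: since $d$ is a strong second-order derivation (Proposition \ref{Prop:strong_derivation}), the identity $d^2=0$ on all of $\cA(\Lambda)$ follows once $d^2(s)=0$ is known for $s\in\{q_i,t^\pm\}$. For $s=q_i$, recall from Proposition \ref{Prop:dcircdvanishes} that the summands of $d^2(q_i)$ are in bijection with the boundary points $\zeta\in\partial\overline\cM(J,\gamma_i^+)$ of the compact $1$-dimensional moduli space of disks and annuli with one positive puncture at $\gamma_i$; each such $\zeta$ --- a disk building, an annulus building, or a nodal annulus --- contributes $\epsilon_A(\zeta)\,w(\zeta)$, where $w(\zeta)$ is the recorded string or string pair and $\epsilon_A(\zeta)\in\{\pm1\}$ is its algebraic sign. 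The cases $s=t^\pm$ are handled directly as in Proposition \ref{Prop:dcircdvanishes}, so everything reduces to proving $\sum_\zeta\epsilon_A(\zeta)\,w(\zeta)=0$.

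Next I would record that the four sign computations, Lemma \ref{Lemma:signs_disks}, Lemma \ref{Lemma:signs_disksII}, Lemma \ref{Lemma:signs_annulus_buildings} and Lemma \ref{Lemma:signs_nabla}, together assert exactly that the orientations of Section \ref{Section:Orientations} are coherent in the sense of Definition \ref{Def:coherency}, i.e.\ $\epsilon_O(\zeta)=(-1)^{|\partial\zeta|+1}\epsilon_A(\zeta)$ for every boundary point, with $|\partial\zeta|=1$ (coherence factor $+1$) on disk components and $|\partial\zeta|=2$ (coherence factor $-1$) on annulus components. The key geometric input is that the topological type of the curve, and hence both $|\partial\zeta|$ and the recorded object $w(\zeta)$, is locally constant on $\overline\cM(J,\gamma_i^+)$; consequently both endpoints of any interval component $C\cong[0,1]$ share the same value of $|\partial\zeta|$ and the same $w(C)\coloneqq w(\zeta_0)=w(\zeta_1)$.

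The cancellation then follows by the cobordism argument. By Proposition \ref{Prop:compactness_disk} and Proposition \ref{Prop:compactness_annulus} each connected component of $\overline\cM(J,\gamma_i^+)$ is a compact interval or a circle; circles have no boundary and contribute nothing. For an interval component $C$ with endpoints $\zeta_0,\zeta_1$, the two boundary orientations of the oriented $1$-manifold $C$ are opposite, so $\epsilon_O(\zeta_0)=-\epsilon_O(\zeta_1)$; since the coherence factor $(-1)^{|\partial\zeta|+1}$ is the same at both ends, this gives $\epsilon_A(\zeta_0)=-\epsilon_A(\zeta_1)$ and hence $\bigl(\epsilon_A(\zeta_0)+\epsilon_A(\zeta_1)\bigr)w(C)=0$. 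Summing over all components yields $d^2(q_i)=0$ over $\lZ$. The one place requiring genuine care is the annulus part of $d$, whose summands lie in $\hbar\,(\widetilde\cA\otimes\widetilde\cA^{\cyc})$: the two endpoints of an annulus component produce the same string pair only after passing to cyclic words, so the argument depends on the orientations descending to $\widetilde\cA^{\cyc}$. This descent was established in Section \ref{Section:Orientations} for words that are not bad, and Lemma \ref{Lemma:no_bad_words} guarantees that no bad index-zero disk or annulus occurs for a knot in $\lR^3$, removing the only potential obstruction (in a more general setting one would instead invoke the pairing of Remark \ref{Rem:Cancelations_bad_curves}). I expect this cyclic-descent and no-bad-curves bookkeeping to be the main subtlety of the assembly, the analytic and sign content having already been discharged by the preceding lemmas.
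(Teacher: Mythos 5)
Your proposal is correct and follows essentially the same route as the paper: the paper likewise reduces to generators via Proposition \ref{Prop:strong_derivation}, identifies the summands of $d^2(q_i)$ with boundary points of the compact $1$-dimensional moduli spaces, invokes Lemmas \ref{Lemma:signs_disks}, \ref{Lemma:signs_disksII}, \ref{Lemma:signs_annulus_buildings} and \ref{Lemma:signs_nabla} to establish coherency in the sense of Definition \ref{Def:coherency}, and concludes by the opposite-endpoint-orientation cancellation together with the descent of orientations to cyclic words, with Lemma \ref{Lemma:no_bad_words} disposing of bad curves. Your explicit bookkeeping of the coherence factor $(-1)^{|\partial\zeta|+1}$ being locally constant on components merely spells out what the paper leaves implicit.
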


\section{Invariance}\label{Sec:Invariance}
The main goal of this section is to show invariance of the second-order dg algebra defined in Section \ref{Section:IIinvar_def} under Legendrian knot isotopy. More precisely, we define the notion of stable tame equivalence for second-order dg algebras (Section \ref{Sec:stabilizationI}), which is an analogue of stable tame equivalence of dg algebras defined in \cite{Chekanov02}, and prove the following theorem.

\begin{thm}\label{Theorem:Invariance}
Let $\Lambda_0$ and $\Lambda_1$ be front resolutions of two Legendrian isotopic knots, then the second-order dg algebras $(\cA(\Lambda_0),d_{\Lambda_0},\{\cdot,\cdot\}_{d_{\Lambda_0}}),(\cA(\Lambda_1),d_{\Lambda_1},\{\cdot,\cdot\}_{d_{\Lambda_1}})$ associated to $\Lambda_0,\Lambda_1$ are stable tame equivalent. In particular, their homology groups are isomorphic
\begin{align*}
H_*(\cA(\Lambda_0),d_{\Lambda_0})\cong H_*(\cA(\Lambda_1),d_{\Lambda_1}).
\end{align*}
\end{thm}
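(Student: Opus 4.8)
The plan is to follow the combinatorial invariance strategy of \cite{Chekanov02} and \cite{Ng_rLSFT}, adapted to the second-order dga structure. First I would reduce the statement to invariance under the elementary moves realizing a generic Legendrian isotopy in the Lagrangian projection. A generic isotopy decomposes into Reidemeister II and Reidemeister III moves together with passages through type IV degenerate knots, where an index $-1$ $J$-holomorphic annulus obtained by lifting a rigid annulus in $\cM^\pi_{2,0}$ momentarily appears (cf. Proposition \ref{Prop:intro_obstruction}). As indicated in the introduction, working with front resolutions lets me arrange the isotopy so that every Reidemeister II move is admissible; establishing this reduction, that front resolutions of Legendrian isotopic knots are connected by an isotopy with only admissible Reidemeister II moves, is the first auxiliary step, and it is what forces the formulation in terms of front resolutions as in Theorem \ref{Theorem:Invariance0}.

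For Reidemeister III moves and type IV degenerations the desired equivalence will be a tame isomorphism, with no stabilization needed. A Reidemeister III move leaves the set of Reeb chords, their gradings, and their actions unchanged, altering only their relative heights; I would build an explicit filtered second-order dga isomorphism $\cA(\Lambda_0)\to\cA(\Lambda_1)$ using Lemma \ref{Lemma:Second_order_morphism}, verifying the morphism condition chord by chord and using Lemma \ref{Lemma:Second_order_properties} to produce its inverse. For type IV, Corollary \ref{Corollary:counting_annuli_from_O} already computes the change in the annulus count across the degeneration as a sum of gluings of the degenerating rigid annulus $u_0\in\cM^\pi_{2,0}$ with rigid disks; this is precisely matched by a modification of the $\hbar$-part of the differential that is absorbed by a tame isomorphism fixing all generators and adjusting only the antibracket, exactly as in the proof of Proposition \ref{Prop:intro_combinatorial_count} carried out in Section \ref{Section:Invar_IV_degeneration}.

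The substantive case, and the main obstacle, is the admissible Reidemeister II move. Here two new Reeb chords $a,b$ are born as a canceling pair, so algebraically I expect $\cA(\Lambda_1)$ to be a stabilization of $\cA(\Lambda_0)$ followed by a tame isomorphism. The difficulty is that stabilization must be compatible with the full second-order structure, not just the Chekanov--Eliashberg part: I must track how the annulus contribution $d_A$, the corrected loop coproduct terms $d_f(q_i)$ and $d_f(q_i,q_j)$ (whose coefficients are linking numbers of capping paths, which change under the move), and the antibracket $\{\cdot,\cdot\}_d$ behave across the move. I would isolate a local model for the admissible move, show that the new generators contribute a standard acyclic summand $d_0b=a$ at the level of $d_\lD$, and verify that all mixed terms involving $a,b$ in $d_A$, $d_f$, and the antibracket are generated by, hence can be cancelled by, a filtered elementary second-order automorphism built via Lemma \ref{Lemma:Second_order_morphism} and inverted via Lemma \ref{Lemma:inverse_existence}. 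The admissibility hypothesis is exactly the condition ensuring that no new rigid annulus or bad curve is created in the local model that would obstruct this canceling-pair normalization; ruling these out uses Lemma \ref{Lemma:no_bad_words} and the control on the obstruction section at the degenerate chord provided by Lemma \ref{Lemma:OSection_ReidII} and Lemma \ref{Lemma:OSection_ReidIIb}.

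Finally I would assemble the pieces. By Lemma \ref{Lemma:Second_order_properties} compositions and inverses of second-order dga morphisms are again second-order dga morphisms, and a composition of stable tame equivalences is a stable tame equivalence, so chaining the equivalences along the isotopy yields a stable tame equivalence between $(\cA(\Lambda_0),d_{\Lambda_0},\{\cdot,\cdot\}_{d_{\Lambda_0}})$ and $(\cA(\Lambda_1),d_{\Lambda_1},\{\cdot,\cdot\}_{d_{\Lambda_1}})$, in the sense of Section \ref{Sec:stabilizationI}. The homology statement then follows formally, since a tame isomorphism induces an isomorphism on homology and a stabilization changes the complex only by an acyclic direct summand, giving $H_*(\cA(\Lambda_0),d_{\Lambda_0})\cong H_*(\cA(\Lambda_1),d_{\Lambda_1})$.
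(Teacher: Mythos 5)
Your overall skeleton matches the paper's proof: decompose the isotopy into Reidemeister III moves, admissible Reidemeister II moves, and type IV isotopies, handle the first and third by tame isomorphisms, handle admissible Reidemeister II by stabilization plus a tame isomorphism, and use front resolutions precisely to arrange admissibility (Remark \ref{Remark:typeII_extra_assumption}). But there are three genuine gaps. First, you omit the base point entirely: the second-order dga depends on the choice of $T\in\Lambda$ (through $t^\pm$ and the linking-number terms in $d_f$), and a generic isotopy forces Reeb chord endpoints to cross the base point, so invariance under such crossings is a necessary fourth step; the paper devotes Section \ref{Sec:invar_base_pt} (Proposition \ref{Prop:eq_after_T}) to it, with the nontrivial isomorphism $q_i\mapsto q_it^+$. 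Second, your mechanism for type IV is backwards: you propose a tame isomorphism ``fixing all generators and adjusting only the antibracket,'' but an antibracket-only automorphism changes $d$ by insertions indexed by \emph{pairs} of letters, whereas Corollary \ref{Corollary:counting_annuli_from_O} shows the annulus count changes by a \emph{single-letter} substitution: each disk in $d_{0,\lD}(q_i)$ with a negative puncture at $\gamma_a$ contributes by replacing that one letter $q_a$ with the fixed word pair $\hbar(\overline w_2\otimes\overline w_1)$ of the dying annulus. This is realized only by the generator-shifting morphism $\phi(q_a)=q_a-\epsilon(u_0)\widetilde\omega$ of Proposition \ref{Prop:eq_after_I}, with the antibracket left untouched (the disk counts, hence the brackets, agree on both sides). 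As stated, your normalization cannot produce the required correction.

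Third, and most seriously, your Reidemeister II step skips the actual hard content. The natural morphism $\phi_0$ (sending $q_b$ to $d_0(q_a)$ up to sign, with bracket corrections $\{q_b,\cdot\}_{\phi_0}=\{q_a,\cdot\}_{d_0}$) does \emph{not} intertwine $d_0$ with the stabilized differential $d^s$; it does so only modulo the subspace $C$ of words containing $q_a,q_b$ (Lemmas \ref{Lemma:phi_0_inverse_proj}--\ref{Lemma:prop2b}). Upgrading this to an honest tame second-order dga isomorphism requires the bootstrap Lemma \ref{Lemma:bootstrap}: an induction over generators and generator pairs ordered by action, correcting each by a filtered automorphism built from the homotopy $h$ of Lemma \ref{Lemma:Chain_homotopy} with $hd^s+d^sh=\operatorname{id}-\tau$, and crucially using that $\pi_{\widetilde\cA}\circ d$ strictly decreases action while $h$ increases it by at most $l(q_a)-l(q_b)$. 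A single ``filtered elementary second-order automorphism,'' as you propose, cannot absorb all mixed terms at once. Moreover, establishing $\tau\circ\phi_0^{-1}d_0\phi_0=\tau\circ d^s$ is itself geometric: it is the pinching analysis of disks (Chekanov-style) and of annuli, where Lemmas \ref{Lemma:OSection_ReidII} and \ref{Lemma:OSection_ReidIIb} are used quantitatively to compute the $\pm\infty$ limits of the obstruction section and hence the annulus counts on the two sides (including the weight-$\tfrac12$ cancellations for double non-split pinching) --- not, as you assert, to rule out bad curves. Admissibility (Definition \ref{Def:admissible_ReidII}) excludes the specific pinching configuration with two positive punctures at $a$, which would break the correspondence encoded by $\psi_0$; it has nothing to do with Lemma \ref{Lemma:no_bad_words}, which holds unconditionally for knots.
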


The proof follows methods similar to \cite{Chekanov02} and is done by splitting a (generic) Legendrian knot isotopy into steps consisting of Reidemeister II and III moves in the Lagrangian projection (Figure \ref{Figure:ReidII} and Figure \ref{Figure:ReidIII}), crossings of the base point over a Reeb chord endpoint (Figure \ref{Figure:change_basept} and Figure \ref{Figure:change_basept_IIcase}), and passing through a Legendrian knot with a degenerate annulus of index $-1$. More precisely, we say a Legendrian knot $\Lambda$ is degenerate of type IV if the obstruction section $\Omega:\overline{\cM}^\pi_{2,1}\sqcup\cM^\pi_{2,0} \to\lR\cup\{\pm\infty\}$ defined in Section \ref{Sec:obstruction_section} maps some point in $\cM^\pi_{2,0}$ to zero, or equivalently, some boundary point in $\partial\overline{\cM}^\pi_{2,1}$ to zero. When passing through a degenerate knot of type IV, the count of annuli can change, while the count of disks remains the same. We say an isotopy that passes through one degenerate knot of type IV and no Reidemeister moves is an \textit{isotopy of type IV}. The reason we take front resolutions is that we are able to prove equivalence only under a special class of Reidemeister II moves which we call admissible, see Definition \ref{Def:admissible_ReidII}. There always exists a Legendrian knot isotopy between front resolutions of two Legendrian isotopic knots that is composed out of Reidemeister III moves, admissible Reidemeister II moves, and isotopies of type IV, see Remark \ref{Remark:typeII_extra_assumption}.

In Section \ref{Sec:invar_base_pt} we show invariance up to tame second-order dga isomorphism under any change of the base point, in Section \ref{Section:Invar_IV_degeneration} under type IV isotopy and in Section \ref{Section:Invar_III_degeneration} under Reidemeister III move. In Section \ref{Section:Invar_II_degeneration} we show invariance under Reidemeister II move up to stable tame equivalence. These four steps imply that for any two Legendrian isotopic knots, the second-order dg algebras associated to their front resolutions are stable tame equivalent.

\begin{figure}
\def\svgwidth{100mm}
\begingroup%
  \makeatletter%
  \providecommand\rotatebox[2]{#2}%
  \newcommand*\fsize{\dimexpr\f@size pt\relax}%
  \newcommand*\lineheight[1]{\fontsize{\fsize}{#1\fsize}\selectfont}%
  \ifx\svgwidth\undefined%
    \setlength{\unitlength}{578.76348251bp}%
    \ifx\svgscale\undefined%
      \relax%
    \else%
      \setlength{\unitlength}{\unitlength * \real{\svgscale}}%
    \fi%
  \else%
    \setlength{\unitlength}{\svgwidth}%
  \fi%
  \global\let\svgwidth\undefined%
  \global\let\svgscale\undefined%
  \makeatother%
  \begin{picture}(1,0.2076535)%
    \lineheight{1}%
    \setlength\tabcolsep{0pt}%
    \put(0,0){\includegraphics[width=\unitlength,page=1]{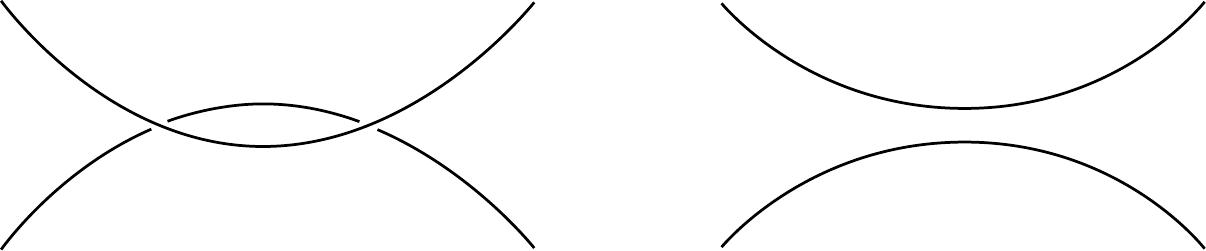}}%
    \put(0.49831363,0.09743584){\makebox(0,0)[lt]{\lineheight{1.25}\smash{\begin{tabular}[t]{l}$\longrightarrow$\end{tabular}}}}%
    \put(0.07492381,0.09612673){\makebox(0,0)[lt]{\lineheight{1.25}\smash{\begin{tabular}[t]{l}$a$\end{tabular}}}}%
    \put(0.34450171,0.09359504){\makebox(0,0)[lt]{\lineheight{1.25}\smash{\begin{tabular}[t]{l}$b$\end{tabular}}}}%
  \end{picture}%
\endgroup%
\caption{Reidemeister II move.}
\label{Figure:ReidII}
\end{figure}

\begin{figure}
\def\svgwidth{80mm}
\begingroup%
  \makeatletter%
  \providecommand\rotatebox[2]{#2}%
  \newcommand*\fsize{\dimexpr\f@size pt\relax}%
  \newcommand*\lineheight[1]{\fontsize{\fsize}{#1\fsize}\selectfont}%
  \ifx\svgwidth\undefined%
    \setlength{\unitlength}{447.2562528bp}%
    \ifx\svgscale\undefined%
      \relax%
    \else%
      \setlength{\unitlength}{\unitlength * \real{\svgscale}}%
    \fi%
  \else%
    \setlength{\unitlength}{\svgwidth}%
  \fi%
  \global\let\svgwidth\undefined%
  \global\let\svgscale\undefined%
  \makeatother%
  \begin{picture}(1,0.83461124)%
    \lineheight{1}%
    \setlength\tabcolsep{0pt}%
    \put(0.47581413,0.63589709){\makebox(0,0)[lt]{\lineheight{1.25}\smash{\begin{tabular}[t]{l}$\longrightarrow$\end{tabular}}}}%
    \put(0.3705494,0.74062745){\makebox(0,0)[lt]{\lineheight{1.25}\smash{\begin{tabular}[t]{l}$a$\end{tabular}}}}%
    \put(0.04619055,0.73415016){\makebox(0,0)[lt]{\lineheight{1.25}\smash{\begin{tabular}[t]{l}$b$\end{tabular}}}}%
    \put(0.24010364,0.51859586){\makebox(0,0)[lt]{\lineheight{1.25}\smash{\begin{tabular}[t]{l}$c$\end{tabular}}}}%
    \put(0.61369905,0.53973643){\makebox(0,0)[lt]{\lineheight{1.25}\smash{\begin{tabular}[t]{l}$a$\end{tabular}}}}%
    \put(0.93957713,0.53553502){\makebox(0,0)[lt]{\lineheight{1.25}\smash{\begin{tabular}[t]{l}$b$\end{tabular}}}}%
    \put(0.74727597,0.75806922){\makebox(0,0)[lt]{\lineheight{1.25}\smash{\begin{tabular}[t]{l}$c$\end{tabular}}}}%
    \put(0,0){\includegraphics[width=\unitlength,page=1]{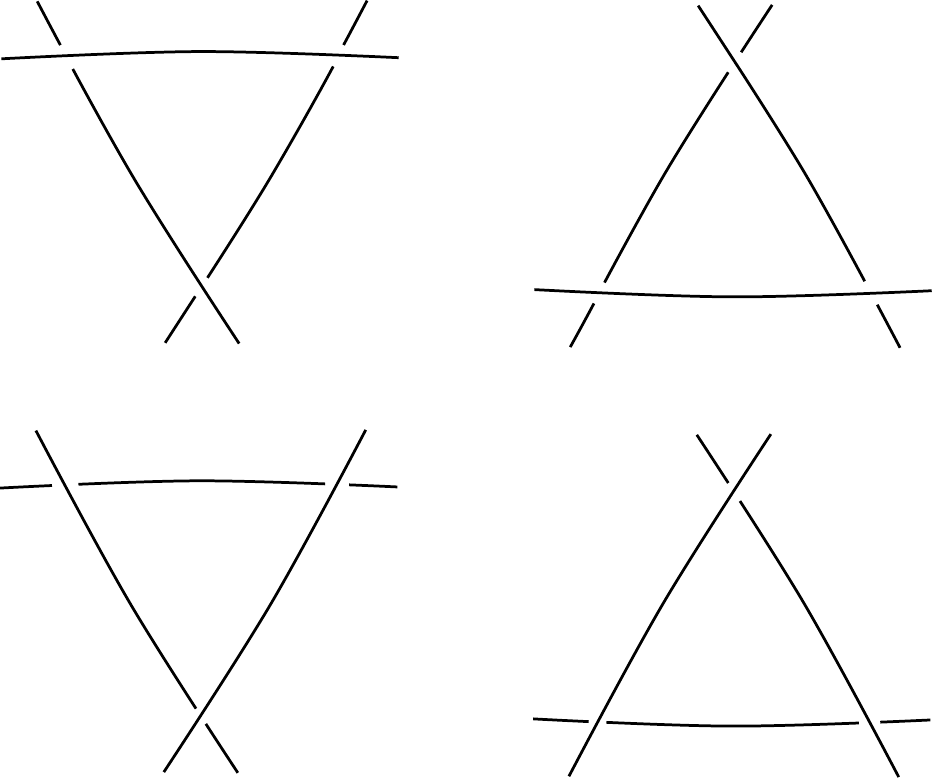}}%
    \put(0.47439774,0.1752241){\makebox(0,0)[lt]{\lineheight{1.25}\smash{\begin{tabular}[t]{l}$\longrightarrow$\end{tabular}}}}%
    \put(0.36913301,0.27995442){\makebox(0,0)[lt]{\lineheight{1.25}\smash{\begin{tabular}[t]{l}$a$\end{tabular}}}}%
    \put(0.04477427,0.27347731){\makebox(0,0)[lt]{\lineheight{1.25}\smash{\begin{tabular}[t]{l}$b$\end{tabular}}}}%
    \put(0.23868726,0.05792392){\makebox(0,0)[lt]{\lineheight{1.25}\smash{\begin{tabular}[t]{l}$c$\end{tabular}}}}%
    \put(0.61228266,0.079064){\makebox(0,0)[lt]{\lineheight{1.25}\smash{\begin{tabular}[t]{l}$a$\end{tabular}}}}%
    \put(0.93816016,0.07486271){\makebox(0,0)[lt]{\lineheight{1.25}\smash{\begin{tabular}[t]{l}$b$\end{tabular}}}}%
    \put(0.745859,0.29739634){\makebox(0,0)[lt]{\lineheight{1.25}\smash{\begin{tabular}[t]{l}$c$\end{tabular}}}}%
  \end{picture}%
\endgroup%
\caption{Reidemeister III move, first and second case, respectively.}
\label{Figure:ReidIII}
\end{figure}

\subsection{Stable tame equivalence}\label{Sec:stabilizationI}
We define the stabilization of a second-order dga $(\cA,d,\{\cdot,\cdot\}_d)$ in degree $i\in\lZ$ and the notion of stable tame equivalence between second-order dg algebras. 

Let $(\cA,d,\{\cdot,\cdot\}_d)$ be a second-order dga. Let $q_1,\dots,q_n,t^+,t^-$ be the generators of $\cA$ as before. Stabilization of $(\cA,d,\{\cdot,\cdot\}_d)$ in degree $i\in\lZ$ is a second-order dga $(\cA^s,d^s,\{\cdot,\cdot\}_{d^s})$ defined as follows. Graded algebra $\widetilde\cA^s$ is the tensor algebra generated by $q_1,\dots,q_n,q_a,q_b,t^+,t^-$ with the grading induced by the grading on $\cA$ and $|q_a|=i,|q_b|=i-1$, and relations $t^+t^-=t^-t^+=1$. We take $\cA^s=\widetilde\cA^s\oplus\hbar\, (\widetilde\cA^s\otimes\widetilde\cA^{s,\cyc})$ as before. We can see $\cA$ as a vector subspace of $\cA^s$. Denote by $C$ the vector subspace of $\cA^s$ generated by words that contain at least one $q_a$ or $q_b$. Then we have $\cA^s\cong\cA\oplus C$.

We define a second-order derivation $d^s$ on $\cA^s$ by taking
\begin{align*}
&d^s(q_i)=\begin{cases}
d(q_i),&i\neq a,b,\\
0,&i=b\\
q_b,&i=a
\end{cases}\\
&d^s(t^\pm)=d(t^\pm),\\
&\{q_i,q_j\}_{d^s}=\begin{cases}
0, &\text{for }i\in\{a,b\}\text{ or }j\in\{a,b\}\\
\{q_i,q_j\}_{d}, &\text{otherwise}
\end{cases}\\
&\{t^\pm,q_i\}_{d^s}=\begin{cases}
0, &\text{for }i\in\{a,b\}\\
\{t^\pm,q_i\}_{d}, &\text{otherwise}
\end{cases}\\
&\{q_i,t^\pm\}_{d^s}=\begin{cases}
0, &\text{for }i\in\{a,b\}\\
\{q_i,t^\pm\}_{d}, &\text{otherwise}
\end{cases}\\
&\{t^\pm,t^\pm\}_{d^s}=\{t^\pm,t^\pm\}_{d}.
\end{align*}

It is easy to show that this defines a second-order dga structure on $\cA^s$. For length 1 words $x,y\neq q_a,q_b$, we have $d^s\circ d^s(x)=0$ and $(1\otimes d^s+d^s\otimes 1)\{x,y\}_{d^s}=\{d^s x,y\}_{d^s}+(-1)^{|x|}\{x,d^s y\}_{d^s}$ using the properties of $d$. Additionally, we trivially get $d^s\circ d^s(q_a)=d^s\circ d^s(q_b)=0$ and $(1\otimes d^s+d^s\otimes 1)\{x,y\}_{d^s}=0=\{d^s x,y\}_{d^s}+(-1)^{|x|}\{x,d^s y\}_{d^s}$ if $x\in\{q_a,q_b\}$ or $y\in\{q_a,q_b\}$.

\vspace{2.1mm}
Next, we define the notion of stable tame equivalence for second-order dg algebras.
\begin{defi}
An algebra automorphism of $\cA$ is \textit{elementary} if it is of the form $\phi(q_i)=\pm q_i$ or $\phi(q_i)=q_i t^\pm$ or $\phi(q_i)=t^\pm q_i$ for some $i\in\{1,\dots, n\}$ and $\phi(q_j)=q_j$ for $j\neq i$, or if it is of the form
\begin{align*}
\phi(q_j)=\begin{cases}
q_j, &j\neq i\\
q_i +\omega_i, &j=i
\end{cases}
\end{align*}
for some $\omega_i\in\cA$ that does not contain letter $q_i$. A second-order algebra automorphism of $\cA$ is \textit{elementary} if it is of the form
\begin{align*}
&\phi(q_j)=q_j,\\
&\{q_j,q_k\}_\phi=\begin{cases}
\omega_{\iota,\kappa}, &j=\iota,k=\kappa\\
0,&\text{otherwise} 
\end{cases}
\end{align*}
for some $\iota,\kappa\in\{1,\dots,n\}$ and $\omega_{\iota,\kappa}\in\widetilde\cA\otimes\widetilde\cA$.
\end{defi}

\begin{defi}
A second-order algebra automorphism of $\cA$ is \textit{tame} if it is a composition of elementary automorphisms. A second-order algebra isomorphism $\phi:\cA\to\cA'$ for $\cA,\cA'$ generated by $q_1,\dots,q_n$ and $q_1',\dots,q_n'$ is tame if it is a composition of a tame automorphism of $\cA$ and the algebra isomorphism that sends $q_i$ to $q_i',i\in\{1,\dots,n\}$.
\end{defi}

\begin{defi}
We say second-order dg algebras $(\cA,d,\{\cdot,\cdot\}_d),(\cA',d',\{\cdot,\cdot\}_{d'})$ are \textit{stable tame equivalent} if there exist second-order dg algebras $(\widetilde\cA,\widetilde d,\{\cdot,\cdot\}_{\widetilde d}),(\widetilde\cA',\widetilde d',\{\cdot,\cdot\}_{\widetilde d'})$ obtained by taking stabilizations of $(\cA,d,\{\cdot,\cdot\}_d)$, $(\cA',d',\{\cdot,\cdot\}_{d'})$ finitely many times that are isomorphic through a tame second-order dga isomorphism. 
\end{defi}

\subsection{Change of the base point}\label{Sec:invar_base_pt}

Let $\Lambda_0,\Lambda_1$ be Legendrian knots that differ only in the choice of the base point. We denote the second-order dga corresponding to $\Lambda_\iota,\iota\in\{0,1\}$ by $(\cA(\Lambda_\iota),d_\iota,\{\cdot,\cdot\}_{d_\iota})$. Denote the Reeb chords by $\gamma_1,\dots,\gamma_n$. The main goal of this section is to prove the following proposition.
\begin{prop}\label{Prop:eq_after_T}
Second-order dg algebras $(\cA(\Lambda_0),d_0,\{\cdot,\cdot\}_{d_0}),(\cA(\Lambda_1),d_1,\{\cdot,\cdot\}_{d_1})$ associated to a Legendrian knot with two choices of the base point are tame isomorphic.
\end{prop}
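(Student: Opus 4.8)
The plan is to reduce to an elementary change of the base point and then exhibit an explicit tame second-order dga isomorphism intertwining $d_0$ and $d_1$. First I would connect the two base points by sliding $T$ along $\Lambda$ in the chosen orientation. As long as $T$ does not cross a Reeb chord endpoint, none of the combinatorial data entering the definition of $d$ changes: the crossing numbers over $T$, the ordering function $\delta(\cdot,\cdot)$, and the linking numbers $l_i$ all depend on $T$ only through which endpoints lie on which oriented arc from $T$, and this changes only when $T$ passes an endpoint. Hence the second-order dga is literally unchanged along endpoint-free motions, and it suffices to treat the case where the oriented arc from $T_0$ to $T_1$ contains exactly one endpoint, say $i^{+}$ (the case $i^{-}$ being symmetric) and no other. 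Since tame second-order dga isomorphisms compose (Lemma \ref{Lemma:Second_order_properties}), chaining these elementary steps gives the general statement.

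For this elementary move I would take $\Phi:\cA(\Lambda_0)\to\cA(\Lambda_1)$ to be the elementary tame automorphism fixing $t^{\pm}$ and $q_j$ for $j\neq i$ and sending $q_i\mapsto q_i t^{\epsilon}$ (or $t^{\epsilon}q_i$), where the exponent $\epsilon\in\{+,-\}$ and the side are dictated by the direction in which $T$ crosses $i^{+}$. The point is that in a recorded word $\widetilde w(u)=t^{a_0}q_{i_1}t^{a_1}\cdots q_{i_k}t^{a_k}$, moving $T$ past $i^{+}$ shifts by one unit exactly the power of $t$ on the single side of each letter $q_i$ that abuts the boundary arc through $i^{+}$, leaving the counts on all other arcs, and thus the letters $q_{i_j}$ with $i_j\neq i$, untouched. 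I would check that this shift is precisely the one produced by $\Phi$, so that $\Phi\circ d_\lD^{0}=d_\lD^{1}\circ\Phi$ on the disk contributions, and likewise for the two-positive-puncture part $d_\lD(q_i,q_j)$ and the annulus part $d_A$, where the same local $t$-bookkeeping applies on each boundary component. Since $\operatorname{tb}(\Lambda)$ is independent of $T$ and $\Phi(t^{\pm})=t^{\pm}$, the identity $d_1\Phi(t^{\pm})=\Phi d_0(t^{\pm})$ holds automatically.

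The substantive part, and the main obstacle, is the compatibility of the string-topological term $d_f$ and the antibracket $\{\cdot,\cdot\}_d$, which depend on $T$ globally rather than locally. When $T$ crosses $i^{+}$, the capping path $\widetilde c_i$ is forced onto the complementary arc, so $l_i$, the indicator $\delta(i^-,i^+)$, and the various $\delta(i^{+},j^{\pm})$ jump in a controlled way, and so do the summands of $\{q_i,q_j\}_d$ built from $\delta(j^{\pm},i^{+})$. I would compute these jumps directly from the formulas in Section \ref{Section:IIinvar_def} and show that their total algebraic effect on $d_f(q_i)$ and on $\{q_i,q_j\}_d$ is exactly reproduced by conjugation by $t^{\epsilon}$, after composing with a second-order elementary automorphism adjusting $\{q_i,q_j\}_\Phi$. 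Conceptually this should be transparent from the geometric picture behind $d_f$ (Figure \ref{Figure:corrected_loop_coproduct_and_d_f}): $d_f(q_i)$ records $\nabla(q_i)+\hbar(q_i\otimes1)$, computed from the shifted capping disk $\operatorname{sh}(\widetilde u_i)$ and the intersection $\beta_i^{-}\cap\beta_i^{+}$, and moving $T$ across $i^{+}$ only re-routes one strand past the base point, changing the relevant intersection counts by exactly the amount the conjugation predicts. Making this bookkeeping precise, with all signs, is where the main care is required.

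Having matched the disk, two-positive-puncture, annulus, and $d_f$ pieces, I would assemble $\Phi$ (the conjugation composed with the second-order elementary correction) into a tame second-order dga isomorphism and verify the two defining conditions of a second-order dga morphism from Section \ref{Section:IIord_dga_definition}, namely $d_1\circ\Phi=\Phi\circ d_0$ on generators together with the antibracket-compatibility relation; by Lemma \ref{Lemma:Second_order_morphism} these then propagate to all of $\cA(\Lambda_0)$. Composing the elementary isomorphisms over all endpoint crossings along the path from $T_0$ to $T_1$ yields the desired tame equivalence, and the isomorphism of homologies is immediate.
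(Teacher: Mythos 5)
Your proposal is correct and follows essentially the same route as the paper: the paper likewise reduces to single crossings of $T$ over Reeb chord endpoints and takes the elementary tame automorphism $\phi(q_i)=q_it^+$ (resp.\ $\phi(q_i)=t^+q_i$ for crossing $i^+$), verifying $\phi\circ d_0=d_1\circ\phi$ and $(\phi\otimes\phi)\{\cdot,\cdot\}_{d_0}=\{\phi\,\cdot,\phi\,\cdot\}_{d_1}$ on generators (Lemma \ref{Lemma:basept1} and Lemma \ref{Lemma:basept2}). The one point where you over-provision: the second-order elementary correction to $\Phi$ you anticipate is not needed, since the jumps in $d_f(q_i)$, $l_i$, and the $\delta$-terms of $\{q_i,q_j\}_d$ are absorbed exactly by the Leibniz expansion of $\{q_it^+,q_j\}_{d_1}$ through the built-in brackets $\{t^\pm,\cdot\}_d$ of (\ref{Eq:bracket_for_t}), so the plain algebra morphism intertwines the antibrackets on the nose.
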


Change of the base point can be seen as a sequence of crossings of the base point over Reeb chord endpoints, see Figure \ref{Figure:change_basept} and Figure \ref{Figure:change_basept_IIcase}. We consider a crossing over a negative Reeb chord endpoint, as shown in Figure \ref{Figure:change_basept}. Crossing over a positive end goes similarly.
\begin{figure}
\def\svgwidth{86mm}
\begingroup%
  \makeatletter%
  \providecommand\rotatebox[2]{#2}%
  \newcommand*\fsize{\dimexpr\f@size pt\relax}%
  \newcommand*\lineheight[1]{\fontsize{\fsize}{#1\fsize}\selectfont}%
  \ifx\svgwidth\undefined%
    \setlength{\unitlength}{476.59500395bp}%
    \ifx\svgscale\undefined%
      \relax%
    \else%
      \setlength{\unitlength}{\unitlength * \real{\svgscale}}%
    \fi%
  \else%
    \setlength{\unitlength}{\svgwidth}%
  \fi%
  \global\let\svgwidth\undefined%
  \global\let\svgscale\undefined%
  \makeatother%
  \begin{picture}(1,0.30256519)%
    \lineheight{1}%
    \setlength\tabcolsep{0pt}%
    \put(0.18253885,0.09317762){\makebox(0,0)[lt]{\lineheight{1.25}\smash{\begin{tabular}[t]{l}$i$\end{tabular}}}}%
    \put(0.23814431,0.23955228){\makebox(0,0)[lt]{\lineheight{1.25}\smash{\begin{tabular}[t]{l}$T$\end{tabular}}}}%
    \put(-0.002115,0.21049125){\makebox(0,0)[lt]{\lineheight{1.25}\smash{\begin{tabular}[t]{l}$\Lambda_0$\end{tabular}}}}%
    \put(0.84148745,0.09334416){\makebox(0,0)[lt]{\lineheight{1.25}\smash{\begin{tabular}[t]{l}$i$\end{tabular}}}}%
    \put(0.77790204,0.12052775){\makebox(0,0)[lt]{\lineheight{1.25}\smash{\begin{tabular}[t]{l}$T$\end{tabular}}}}%
    \put(0.65683339,0.21065797){\makebox(0,0)[lt]{\lineheight{1.25}\smash{\begin{tabular}[t]{l}$\Lambda_1$\end{tabular}}}}%
    \put(0,0){\includegraphics[width=\unitlength,page=1]{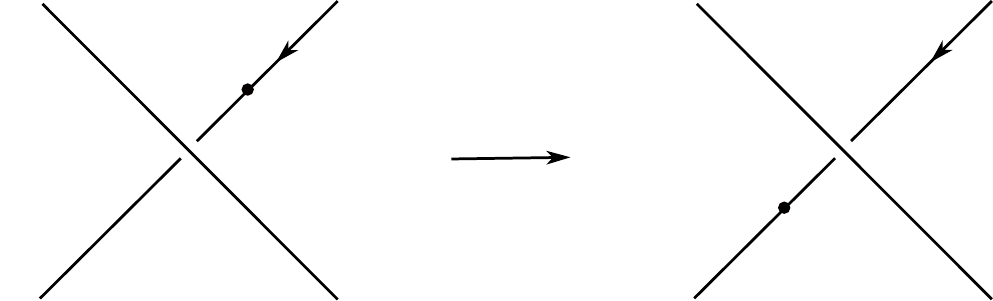}}%
  \end{picture}%
\endgroup%
\caption{Change of the base point, crossing over a negative Reeb chord endpoint.}
\label{Figure:change_basept}
\end{figure}

Consider the graded algebra isomorphism $\phi:\cA(\Lambda_0)\to\cA(\Lambda_1)$ given by
\begin{align*}
\phi(q_j)=\begin{cases}
q_j,&j\neq i\\
q_i t^+,&j=i
\end{cases}
\end{align*}
and $\phi(t^\pm)=t^\pm$.

\begin{lemma}\label{Lemma:basept1}
For all $s_1,s_2\in\{t^\pm,q_i\,|\,i=1,\dots,n\}$, we have
\begin{align*}
(\phi\otimes\phi)\{s_1,s_2\}_{d_0}=\{\phi s_1,\phi s_2\}_{d_1}.
\end{align*}
\end{lemma}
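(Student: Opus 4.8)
The plan is to split each antibracket as $\{\cdot,\cdot\}_{d_\iota}=d_\lD^{(\iota)}+d_f^{(\iota)}$ into its disk contribution and its loop-coproduct contribution (Section~\ref{Section:IIinvar_def}) and to verify the identity for the two parts separately. Throughout I use that $\phi$ is an honest degree-$0$ algebra isomorphism with $\phi(t^\pm)=t^\pm$ and $\phi(q_j)=q_j$ for $j\neq i$, so that the only generator $\phi$ moves is $q_i$, for which $\phi(q_i)=q_it^+$. Since the statement is about generators, the substantive content is to compare, for each pair of generators $s_1,s_2$, the result of applying $\phi\otimes\phi$ to $\{s_1,s_2\}_{d_0}$ with the Leibniz-extended bracket $\{\phi s_1,\phi s_2\}_{d_1}$ (note that $\phi s_1,\phi s_2$ need no longer be generators).

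For the disk part the argument is uniform over all pairs and is exactly the base-point-change argument for the Chekanov--Eliashberg differential \cite{Chekanov02,Ng_rLSFT}. The moduli spaces $\cM_1(J,\gamma_k^+,\gamma_l^+)$ are literally unchanged (same knot, same $J$); the sole effect of pushing $T$ across the negative endpoint $i^-$ is to shift by one the algebraic intersection number with $T$ of the boundary arc running alongside $\gamma_i$, and this shift is recorded uniformly, both when $\gamma_i$ occurs as a positive puncture (an argument) and when it occurs as a negative puncture (a letter of the output word), by the substitution $q_i\mapsto q_it^+$. Hence $(\phi\otimes\phi)d_\lD^{(0)}(q_k,q_l)=d_\lD^{(1)}(\phi q_k,\phi q_l)$. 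The pairs in which one entry is $t^\pm$ are governed by the knot-independent universal formulas \eqref{Eq:bracket_for_t}; since $\phi$ fixes $t^\pm$, these reduce to expanding $\{q_it^+,t^\pm\}_{d_1}$ and $\{t^\pm,q_it^+\}_{d_1}$ by the Leibniz rules and comparing with $(\phi\otimes\phi)\{q_i,t^\pm\}_{d_0}$ via the twisted product and $t^+t^-=1$.

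For the loop-coproduct part the identity is immediate unless $q_i$ is one of the two arguments: when $k,l\neq i$ the formula for $d_f(q_k,q_l)$ involves only the indicators $\delta(k^\pm,l^\pm)$, which do not involve $i^-$ and so are unchanged, and its output words contain no $q_i$, so $\phi$ acts trivially on both sides. When $q_i$ is an argument I would first record how the indicator changes: crossing $T$ over $i^-$ sends $\delta(i^-,B)$ from $1$ to $0$ and $\delta(A,i^-)$ from $0$ to $1$ for all $A,B\neq i^-$, and leaves every indicator not involving $i^-$ fixed. Substituting this into the explicit formula for $d_f(q_i,q_l)$ and $d_f(q_i,q_i)$ and comparing with the Leibniz expansion
\[
\{q_it^+,q_l\}_{d_1}=(q_i\otimes 1)\cdot\{t^+,q_l\}_{d_1}+(-1)^{|t^+||q_l|}\{q_i,q_l\}_{d_1}\cdot(1\otimes t^+)
\]
(together with its two-sided analogue when both arguments equal $q_i$) should show that the extra factor $t^+$ produced by $\phi$, combined with the universal terms $\{t^\pm,q_l\}_{d_1}$, reproduces exactly the change in the $\delta$-indicators.

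The hard part will be this last loop-coproduct computation. One must keep track of the Koszul signs arising both from the twisted product $(v_1\otimes v_2)\cdot(w_1\otimes w_2)=(-1)^{|v_1||w_2|}(v_1w_1\otimes v_2w_2)$ on $\widetilde\cA\otimes\widetilde\cA$ and from the grading $|t^\pm|=\mp2\operatorname{rot}(\Lambda)$, and verify that the four monomials of $d_f(q_i,q_l)$ together with all the cross terms produced by the Leibniz rule coincide after reduction by $t^+t^-=1$. The geometric mechanism is visible in Figure~\ref{Figure:change_basept}, which records which of the capping-path configurations entering the linking numbers $l_i$ change as $T$ passes $i^-$; the remaining work is to turn this into a term-by-term algebraic match, the case $s_1=s_2=q_i$ (requiring a double Leibniz expansion of $\{q_it^+,q_it^+\}_{d_1}$) being the most delicate.
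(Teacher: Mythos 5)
Your overall strategy is the same as the paper's: split $\{\cdot,\cdot\}_{d_\iota}$ into the disk part $d_{\iota,\lD}$ and the loop-coproduct part $d_{\iota,f}$, note that the moduli spaces are literally unchanged so the disk contribution transforms by the substitution $q_i\mapsto q_it^+$ (with the positive puncture at $\gamma_i$ contributing an extra factor, which the paper records as $(\phi\otimes\phi)d_{0,\lD}(q_i,q_j)=d_{1,\lD}(q_i,q_j)\cdot(1\otimes t^+)$ and $(\phi\otimes\phi)d_{0,\lD}(q_i,q_i)=d_{1,\lD}(q_i,q_i)\cdot(t^+\otimes t^+)$), track the flip of the $\delta$-indicators for $d_{\iota,f}$, and reassemble via the antibracket Leibniz rule together with the universal brackets \eqref{Eq:bracket_for_t}. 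However, there are two concrete problems. First, your indicator bookkeeping is reversed: with the crossing of Figure \ref{Figure:change_basept} and $\phi(q_i)=q_it^+$, in $\Lambda_0$ the endpoint $i^-$ is the first marked point after $T$, so $\delta_0(A,i^-)=1$ and $\delta_0(i^-,B)=0$, while in $\Lambda_1$ it is the last point before $T$, so $\delta_1(A,i^-)=0$ and $\delta_1(i^-,B)=1$; that is, $\delta(A,i^-)$ goes from $1$ to $0$ and $\delta(i^-,B)$ from $0$ to $1$ --- the opposite of what you wrote. With your values the deferred term-by-term match would fail (it would instead be consistent with $\phi(q_i)=q_it^-$, i.e.\ the opposite crossing), so this is not a cosmetic slip: it changes which cross terms of the Leibniz expansion are supposed to absorb the indicator flip.

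Second, you defer precisely the computation that constitutes the proof. The lemma reduces to verifying, e.g.\ for $s_1=q_i$, $s_2=q_j$ with $j\neq i$, the identity
\begin{align*}
(\phi\otimes\phi)\,d_{0,f}(q_i,q_j)=d_{1,f}(q_i,q_j)\cdot(1\otimes t^+)+(q_i\otimes 1)\cdot\{t^+,q_j\}_{d_1},
\end{align*}
which combined with the disk identity gives $(\phi\otimes\phi)\{q_i,q_j\}_{d_0}=\{q_it^+,q_j\}_{d_1}$ by the Leibniz rule; and for $s_1=s_2=q_i$, using $\delta_0(i^+,i^-)=1,\delta_0(i^-,i^+)=0$ versus $\delta_1(i^+,i^-)=0,\delta_1(i^-,i^+)=1$, one checks
\begin{align*}
d_{1,f}(q_i,q_i)\cdot(t^+\otimes t^+)-q_it^+q_it^+\otimes 1+(-1)^{|q_i|}\,t^+\otimes q_iq_it^+=(\phi\otimes\phi)\,d_{0,f}(q_i,q_i),
\end{align*}
whence $(\phi\otimes\phi)\{q_i,q_i\}_{d_0}=\{q_it^+,q_it^+\}_{d_1}$. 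These displays are the entire substance of the lemma, and a plan that ends with ``should show'' has not established them. One reassurance you can bank, though: the Koszul signs you flag as the hard part are harmless here, since $|t^\pm|=\mp 2\operatorname{rot}(\Lambda)$ is even, so no sign ever arises from commuting $t^\pm$ past anything in the twisted product on $\widetilde\cA\otimes\widetilde\cA$.
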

\begin{proof}
For $u$ a $J$-holomorphic curve on $\Lambda_0,\Lambda_1$ with no positive puncture at $\gamma_i$, the word corresponding to $u$ on $\Lambda_1$ is obtained by replacing all $q_i$'s in the word corresponding to $u$ on $\Lambda_0$ by $q_it^+$. The orientation signs do not change. For $s_1=q_j,s_2=q_k, j,k\neq i$, it follows that 
$$(\phi\otimes\phi)\{q_j,q_k\}_{d_0}=\{q_j,q_k\}_{d_1}=\{\phi q_j,\phi q_k\}_{d_1}.$$ 
The same holds for $s_1=t^\pm,s_2=q_j$ and $s_2=t^\pm,s_1=q_j$ for $j\neq i$.

Let now $s_1=q_i,s_2=q_j,j\neq i$. Similar as above, we have $(\phi\otimes\phi) d_{0,\lD}(q_i,q_j)=d_{1,\lD}(q_i,q_j)(1\otimes t^+)$. Additionally, we have $\delta_0(j^\pm,i^-)=1,\delta_1(j^\pm,i^-)=0,\delta_0(j^\pm,i^+)=\delta_1(j^\pm,i^+)$ and
\begin{align*}
&(\phi\otimes\phi) d_{0,f}(q_i,q_j)=\delta(j^+,i^+)q_j\otimes q_it^++(-1)^{|q_i||q_j|}q_it^+\otimes q_j-\\
&-q_it^+q_j\otimes 1-(-1)^{|q_i||q_j|}\delta(j^-,i^+)1\otimes q_jq_it^+=\\
&=d_{1,f}(q_i,q_j)(1\otimes t^+)+(q_i\otimes 1)\{t^+,q_j\}_{d_1}.
\end{align*}
Therefore,
\begin{align*}
&(\phi\otimes\phi)\{q_i,q_j\}_{d_0}=(\phi\otimes\phi) d_{0,\lD}(q_i,q_j)+(\phi\otimes\phi) d_{0,f}(q_i,q_j)=\\
&=\{q_i,q_j\}_{d_1}(1\otimes t^+)+(q_i\otimes 1)\{t^+,q_j\}_{d_1}=\\
&=\{q_i t^+,q_j\}_{d_1}=\{\phi q_i,\phi q_j\}_{d_1}.
\end{align*}
Similarly we get $(\phi\otimes\phi)\{q_j,q_i\}_{d_0}=\{\phi q_j,\phi q_i\}_{d_1}, (\phi\otimes\phi)\{t^\pm,q_i\}_{d_0}=\{\phi t^\pm,\phi q_i\}_{d_1}, (\phi\otimes\phi)\{q_i,t^\pm\}_{d_0}=\{\phi q_i,\phi t^\pm\}_{d_1}$.

Finally, for $s_1=s_2=q_i$ we have
\begin{align*}
(\phi\otimes\phi) d_{0,\lD}(q_i,q_i)=d_{1,\lD}(q_i,q_i)(t^+\otimes t^+),
\end{align*}
and
\begin{align*}
\delta_0(i^+,i^-)=1,\delta_0(i^-,i^+)=0,\\
\delta_1(i^+,i^-)=0,\delta_1(i^-,i^+)=1.
\end{align*}
Therefore,
\begin{align*}
&d_{1,f}(q_i,q_i)(t^+\otimes t^+)-q_it^+q_it^+\otimes 1+(-1)^{|q_i|}t^+\otimes q_iq_i t^+=\\
&=(\phi\otimes\phi)(-q_i q_i\otimes 1+\delta(i)q_i\otimes q_i)=(\phi\otimes\phi) d_{0,f}(q_i,q_i).
\end{align*}
From this we conclude
\begin{align*}
&(\phi\otimes\phi)\{q_i,q_i\}_{d_0}=\\
&=\{q_i,q_i\}_{d_1}(t^+\otimes t^+)-q_it^+q_it^+\otimes 1+(-1)^{|q_i|}t^+\otimes q_iq_i t^+=\\
&=\{q_i t^+,q_i t^+\}_{d_1}=\{\phi q_i,\phi q_i\}_{d_1}.
\end{align*}
\end{proof}

\begin{lemma}\label{Lemma:basept2}
For all $s\in\{t^\pm,q_i\,|\,i=1,\dots,n\}$, we have
\begin{align*}
\phi\circ d_0(s)=d_1\circ\phi(s).
\end{align*}
\end{lemma}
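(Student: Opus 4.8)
The plan is to verify the chain map identity $\phi\circ d_0(s)=d_1\circ\phi(s)$ separately for each generator $s\in\{t^\pm,q_i\,|\,i=1,\dots,n\}$, following the same philosophy as the antibracket computation in Lemma \ref{Lemma:basept1}: split $d_0(q_i)$ into its three constituent pieces $d_{0,\lD}(q_i)$, $d_{0,A}(q_i)$, and $d_{0,f}(q_i)$, and track how crossing the base point $T$ over the negative endpoint $i^-$ of $\gamma_i$ affects each piece. The key geometric observation is that for any $J$-holomorphic disk or annulus $u$ whose boundary does not have a positive puncture at $\gamma_i$, the word on $\Lambda_1$ is obtained from the word on $\Lambda_0$ by replacing every letter $q_i$ by $q_i t^+$, with orientation signs unchanged; this is exactly the behaviour that makes $\phi(q_i)=q_it^+$ the natural candidate.

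First I would handle the generators $s=q_j$ with $j\neq i$ and $s=t^\pm$. For these, the holomorphic-curve contributions $d_{\lD},d_A$ transform by the substitution rule above, so $\phi\circ d_{0,\lD}(q_j)=d_{1,\lD}(\phi q_j)$ and similarly for $d_A$, modulo the $q_i\mapsto q_it^+$ replacement that $\phi$ performs automatically. The capping-path part $d_f(q_j)$ depends on the linking number $l_j$ and the indicator functions $\delta(j^\pm,i^\pm)$, so I would compare $\delta_0(\cdot,\cdot)$ and $\delta_1(\cdot,\cdot)$ exactly as in the proof of Lemma \ref{Lemma:basept1}: only the relations involving the endpoint $i^-$ change when $T$ crosses $i^-$, and these changes must be absorbed by the extra $t^+$ introduced by $\phi$ together with the definition \eqref{Eq:bracket_for_t} of $\{s,t^+\}_d$. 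I expect the relevant identity to read schematically $\phi\circ d_{0,f}(q_j)=d_{1,f}(\phi q_j)+(\text{correction involving }t^+)$, where the correction is precisely what $\phi$ of the $q_i$-letters produces on the right-hand side.

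The main case is $s=q_i$, and here I would use $d_1(\phi q_i)=d_1(q_it^+)$, expanding via the second-order derivation property $d_1(q_it^+)=d_1(q_i)t^++(-1)^{|q_i|}q_id_1(t^+)+\{q_i,t^+\}_{d_1,\hbar}$. This is where the three separate corrections conspire: the $\{q_i,t^+\}_{d_1}$ term from \eqref{Eq:bracket_for_t}, the $d_f(t^+)=(\operatorname{tb}(\Lambda)+1)\hbar(t^+\otimes 1)$ contribution, and the change in $l_i$ and in $\delta(i^+,i^-)$ versus $\delta(i^-,i^+)$ all have to combine to match $\phi\circ d_0(q_i)$. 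I would compute $l_i$ for $\Lambda_1$ in terms of $l_i$ for $\Lambda_0$ using the linking-number formula, noting that moving $T$ past $i^-$ swaps which configuration ($\delta(i^+,i^-)=1$ versus $\delta(i^-,i^+)=1$) holds and correspondingly switches the formula for $d_f(q_i)$ between its even/odd-case variants.

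The hard part will be the bookkeeping in this last case: reconciling the discrete jump in the capping-path data ($l_i$, $\delta(i^\pm,i^\mp)$, and the normalization constants $-\delta(i^-,i^+)+1$ for even versus $+\delta(i^-,i^+)$ for odd chords) against the continuous algebraic effect of right-multiplication by $t^+$ and the $\hbar$-bracket term $\{q_i,t^+\}_{d_1}=t^+\otimes q_i-q_it^+\otimes 1$. I would verify that the shift in the linking number $l_i(\Lambda_1)-l_i(\Lambda_0)$ exactly cancels the extra $\hbar(q_i\otimes 1)$-type terms, using that $T$ crossing $i^-$ changes the writhe count by a controlled amount tied to $\operatorname{tb}(\Lambda)$. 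Once all generators are checked, combining Lemma \ref{Lemma:basept1} and Lemma \ref{Lemma:basept2} shows $\phi$ is a second-order dga morphism; since $\phi$ is manifestly invertible (its inverse sends $q_i\mapsto q_it^-$) and elementary, it is tame, which proves Proposition \ref{Prop:eq_after_T}. A general base-point change is then a finite composition of such crossings, so the full statement follows by iterating.
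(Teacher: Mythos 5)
Your plan follows the paper's proof essentially step for step: the paper likewise checks generators case by case, notes that the disk and annulus counts transform by the substitution $q_i\mapsto q_it^+$ with unchanged orientation signs, and for the main case $s=q_i$ expands $d_{1,f}(q_it^+)=d_{1,f}(q_i)t^++(-1)^{|q_i|}q_id_{1,f}(t^+)+d_{1,f}(q_i,t^+)$, matching the $\operatorname{tb}(\Lambda)$ contribution from $d_f(t^\pm)$ and the $\delta$-sums against $\phi\circ d_{0,f}(q_i)$ exactly as you propose. The one small overcomplication in your outline is the expected ``correction term'' for $j\neq i$: in fact $d_f(q_j)$ contains no letter $q_i$, and moving $T$ across $i^-$ leaves $l_j$ and $\delta(j^\pm,j^\mp)$ unchanged, so $d_{1,f}(q_j)=d_{0,f}(q_j)$ holds on the nose and the $j\neq i$ case is immediate.
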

\begin{proof}
Let first $s=q_j,j\neq i$. As in the previous lemma, we get
\begin{align*}
&d_{1,\lD}\circ\phi (q_j)=\phi \circ d_{0,\lD}(q_j),\\
&d_{1,A}\circ\phi (q_j)=\phi\circ d_{0,A}(q_j).
\end{align*}
Since $j\neq i$, $d_{1,f}(\phi q_j)=d_{1,f}(q_j)=d_{0,f}(q_j)=\phi d_{0,f}(q_j)$ trivially holds. This implies $d_1\circ\phi (q_j)=\phi\circ d_0(q_j)$. Similarly we get $d_1\circ\phi (t^\pm)=\phi\circ d_0(t^\pm)$.

For $s=q_i$, we have as before
\begin{align*}
&d_{1,\lD}(q_i)t^+=\phi d_{0,\lD}(q_i),\\
&d_{1,A}(q_i)t^+=\phi d_{0,A}(q_i).
\end{align*}
Additionally, it is not difficult to show
\begin{align*}
d_{1,f}(q_it^+)=\phi d_{0,f}(q_i).
\end{align*}
If $|q_i|$ is odd, this follows from
\begin{align*}
d_{1,f}(q_it^+)=&d_{1,f}(q_i)t^++(-1)^{|q_i|}q_id_{1,f}(t^+)+d_{1,f}(q_i,t^+)=\\
=&-\frac{1}{2}\sum_{j\neq i}(-1)^{|q_j|}\left(\delta_1(j^+,i^+)+\delta_1(j^-,i^+)\right)\hbar(q_it^+\otimes 1)+\\
&+\operatorname{tb}(\Lambda)\hbar(q_i t^+\otimes 1)+\hbar(q_i t^+\otimes 1)=\\
=&\left(\operatorname{tb}(\Lambda)-\frac{1}{2}\sum_{j\neq i}(-1)^{|q_j|}\left(\delta_0(j^+,i^+)+\delta_0(j^-,i^+)\right)-(-1)^{|q_i|}\right)\hbar(q_it^+\otimes 1)=\\
=&\phi\left(\frac{1}{2}\sum_{j\neq i}(-1)^{|q_j|}\left(\delta_0(i^+,j^+)+\delta_0(i^+,j^-)\right)\hbar(q_i\otimes 1)\right)=\\
=&\phi d_{0,f}(q_i).
\end{align*}
Similarly when $|q_i|$ is even. This implies 
\begin{align*}
\phi d_{0}(q_i)=d_1\phi (q_i),
\end{align*}
which finishes the proof.
\end{proof}

\begin{cor}\label{Cor:basept3}
For all $s\in\cA(\Lambda_0),s_1,s_2\in\widetilde\cA(\Lambda_0)$, we have
\begin{align*}
&\phi\circ d_0(s)=d_1\circ \phi(s),\\
&(\phi\otimes\phi)\{s_1,s_2\}_{d_0}=\{\phi s_1,\phi s_2\}_{d_1}.
\end{align*}
\end{cor}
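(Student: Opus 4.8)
The plan is to deduce the two global identities from their restrictions to the generators, which are exactly the content of Lemma \ref{Lemma:basept2} and Lemma \ref{Lemma:basept1}, by invoking the algebraic framework of Section \ref{Section:IIord_dga_definition}. The starting observation is that $\phi$ is a graded algebra isomorphism, hence a second-order graded algebra morphism with respect to the \emph{trivial} $\phi_0$-antibracket $\{\cdot,\cdot\}_\phi\equiv 0$: indeed $\phi(1)=1$, $\phi(\hbar(v\otimes w))=\hbar(\phi_0 v\otimes \phi_0^{\cyc}w)$ hold by construction, and with $\{\cdot,\cdot\}_\phi\equiv 0$ the defining relation $f(vw)=fv\,fw+\{v,w\}_{f,\hbar}$ collapses to plain multiplicativity. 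Since $\phi$ is grading-preserving (the factor $t^+$ absorbs the change of $\mu_{CZ}(\gamma_i)$ caused by moving the base point across $\gamma_i$), both $\phi_0$ and $\phi_0\otimes\phi_0$ are degree-zero maps, so the sign in $\phi_0\otimes\phi_0$ is trivial.

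First I would note that, because $\{\cdot,\cdot\}_\phi\equiv 0$, the two conditions in the definition of a second-order dga morphism simplify to
\begin{align*}
&d_1\circ\phi(s)=\phi\circ d_0(s),\\
&(\phi_0\otimes\phi_0)\{s_1,s_2\}_{d_0}=\{\phi_0 s_1,\phi_0 s_2\}_{d_1},
\end{align*}
for all $s\in\cA(\Lambda_0)$ and $s_1,s_2\in\widetilde\cA(\Lambda_0)$ — which are precisely the two assertions of the corollary (note $\phi|_{\widetilde\cA}=\phi_0$, so $\phi\otimes\phi=\phi_0\otimes\phi_0$ on $\widetilde\cA\otimes\widetilde\cA$). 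It therefore suffices to prove that $\phi$ is a second-order dga morphism. By the ``moreover'' part of Lemma \ref{Lemma:Second_order_morphism}, together with its evident extension to the invertible generators $t^\pm$, this in turn reduces to verifying the two displayed identities only on the generators $s,s_1,s_2\in\{q_i,t^\pm\mid i=1,\dots,n\}$.

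These generator identities are exactly Lemma \ref{Lemma:basept2} (the first) and Lemma \ref{Lemma:basept1} (the second), which closes the argument. The underlying mechanism of the reduction is induction on word length: the antibracket identity propagates because $\{\cdot,\cdot\}_{d_0}$ and $\{\cdot,\cdot\}_{d_1}$ obey the same Leibniz rules in each slot, and $\phi_0\otimes\phi_0$ is an algebra morphism for the product $(v_1\otimes v_2)\cdot(w_1\otimes w_2)=(-1)^{|v_1||w_2|}(v_1 w_1\otimes v_2 w_2)$ on $\widetilde\cA\otimes\widetilde\cA$ (using that $\phi_0$ is multiplicative and degree zero); the derivation identity then propagates via the second-order Leibniz rule $d(vw)=d(v)w+(-1)^{|v|}v\,d(w)+\{v,w\}_\hbar$, the multiplicativity of $\phi$, and the antibracket identity just established to handle the correction term $\{v,w\}_\hbar=\hbar\,\pi_{\cyc}\{\pi_{\widetilde\cA}v,\pi_{\widetilde\cA}w\}_{d_0}$.

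The step I expect to demand the most care is the bookkeeping of this reduction rather than any genuinely new computation: one must confirm that the relation $t^+t^-=t^-t^+=1$ is respected throughout, so that all maps are well defined on each $\cA(\Lambda_\iota)$ (this compatibility was already checked inside Lemma \ref{Lemma:basept1} and Lemma \ref{Lemma:basept2}), and that the sign conventions in $\phi_0\otimes\phi_0$ and in the definition of $\{\cdot,\cdot\}_\hbar$ are matched when transporting the generator identities through the Leibniz inductions. Once $\phi$ is identified as a second-order dga morphism, Lemma \ref{Lemma:Second_order_properties}(2) further shows that $\phi^{-1}$ is one as well; since $\phi$ is elementary, hence tame, this is what upgrades the corollary to the tame isomorphism asserted in Proposition \ref{Prop:eq_after_T}.
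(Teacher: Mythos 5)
Your proposal is correct and matches the paper's (implicit) argument: the paper states Corollary \ref{Cor:basept3} as an immediate consequence of Lemma \ref{Lemma:basept1} and Lemma \ref{Lemma:basept2}, with the extension from generators to all of $\cA(\Lambda_0)$ supplied by the generator-determined structure of second-order dga morphisms (the ``moreover'' part of Lemma \ref{Lemma:Second_order_morphism}, applied with trivial $\phi_0$-antibracket), exactly as you do. Your write-up simply makes explicit the Leibniz-induction bookkeeping that the paper leaves to the reader.
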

\paragraph{\textit{Proof of Proposition \ref{Prop:eq_after_T}:}} follows from Corollary \ref{Cor:basept3} when the base point crosses a negative Reeb chord endpoint. The proof in the case where the base point crosses $i^+$ follows similarly. Here we consider the graded algebra isomorphism $\phi$ given by $\phi(t^\pm)=t^\pm$ and 
\begin{align*}
\phi(q_j)=\begin{cases}
q_j,&j\neq i\\
t^+ q_i,&j=i
\end{cases}
\end{align*}

\begin{figure}
\def\svgwidth{86mm}
\begingroup%
  \makeatletter%
  \providecommand\rotatebox[2]{#2}%
  \newcommand*\fsize{\dimexpr\f@size pt\relax}%
  \newcommand*\lineheight[1]{\fontsize{\fsize}{#1\fsize}\selectfont}%
  \ifx\svgwidth\undefined%
    \setlength{\unitlength}{476.59508481bp}%
    \ifx\svgscale\undefined%
      \relax%
    \else%
      \setlength{\unitlength}{\unitlength * \real{\svgscale}}%
    \fi%
  \else%
    \setlength{\unitlength}{\svgwidth}%
  \fi%
  \global\let\svgwidth\undefined%
  \global\let\svgscale\undefined%
  \makeatother%
  \begin{picture}(1,0.30256495)%
    \lineheight{1}%
    \setlength\tabcolsep{0pt}%
    \put(0.18253882,0.09317724){\makebox(0,0)[lt]{\lineheight{1.25}\smash{\begin{tabular}[t]{l}$i$\end{tabular}}}}%
    \put(0.23814427,0.11995397){\makebox(0,0)[lt]{\lineheight{1.25}\smash{\begin{tabular}[t]{l}$T$\end{tabular}}}}%
    \put(-0.002115,0.21049085){\makebox(0,0)[lt]{\lineheight{1.25}\smash{\begin{tabular}[t]{l}$\Lambda_0$\end{tabular}}}}%
    \put(0.8414873,0.09334377){\makebox(0,0)[lt]{\lineheight{1.25}\smash{\begin{tabular}[t]{l}$i$\end{tabular}}}}%
    \put(0.77790191,0.23697789){\makebox(0,0)[lt]{\lineheight{1.25}\smash{\begin{tabular}[t]{l}$T$\end{tabular}}}}%
    \put(0.65683327,0.21065756){\makebox(0,0)[lt]{\lineheight{1.25}\smash{\begin{tabular}[t]{l}$\Lambda_1$\end{tabular}}}}%
    \put(0,0){\includegraphics[width=\unitlength,page=1]{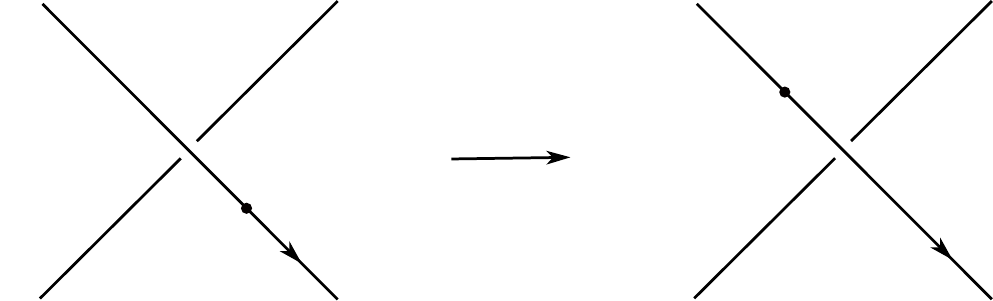}}%
  \end{picture}%
\endgroup%
\caption{Change of the base point, crossing over a positive Reeb chord endpoint.}
\label{Figure:change_basept_IIcase}
\end{figure}

\subsection{Type IV degeneration}\label{Section:Invar_IV_degeneration}

Let $\Lambda_s,s\in[0,1]$ be a generic Legendrian knot isotopy with a type IV degenerate knot at $s=\frac{1}{2}$ and $u_0\in\cM^\pi_{2,0}(\Lambda_\frac{1}{2})$ a rigid holomorphic annulus in the Lagrangian projection such that $\Omega_\frac{1}{2}(u_0)=0$. This annulus can be lifted to an index -1 $J$-holomorphic annulus on $\lR\times\Lambda_\frac{1}{2}$. Denote by $\gamma_1,\dots,\gamma_n$ the Reeb chords on $\Lambda_\iota$ and by $(\cA(\Lambda_\iota),d_\iota,\{\cdot,\cdot\}_{d_\iota})$ the second-order dga corresponding to $\Lambda_\iota,\iota\in\{0,1\}$. We find a graded algebra isomorphism $\phi:\cA(\Lambda_0)\to\cA(\Lambda_1)$ such that 
\begin{align*}
&d_1\circ\phi=\phi\circ d_0,\\
&\{\phi \cdot,\phi \cdot\}_{d_1}=(\phi\otimes\phi)\{\cdot,\cdot\}_{d_0},
\end{align*}
proving the following proposition.

\begin{prop}\label{Prop:eq_after_I}
Let $\Lambda_s,s\in[0,1]$ be a Legendrian knot isotopy with type IV degeneration as above, then the second-order dg algebras $(\cA(\Lambda_0),d_0,\{\cdot,\cdot\}_{d_0}),(\cA(\Lambda_1),d_1,\{\cdot,\cdot\}_{d_1})$ are tame isomorphic.
\end{prop}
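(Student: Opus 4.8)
The plan is to build an explicit tame second-order dga isomorphism $\phi\colon\cA(\Lambda_0)\to\cA(\Lambda_1)$ that fixes all generators except for a single $\hbar$-correction recording the degenerate annulus $u_0$. First I would observe that since the isotopy $\Lambda_s$ contains no Reidemeister moves, the Reeb chords of $\Lambda_0$ and $\Lambda_1$ are canonically identified and the moduli spaces of index zero and one disks vary in a one-parameter family without bifurcation. Hence the disk contribution $d_{\lD}(q_i)$, the two--positive--puncture antibracket $\{q_i,q_j\}_d=d_\lD(q_i,q_j)+d_f(q_i,q_j)$, and the combinatorial term $d_f(q_i)$ all agree for $\Lambda_0$ and $\Lambda_1$ under this identification (the last two depend only on crossing data and linking numbers of capping paths, which are isotopy invariants away from Reidemeister moves). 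By Corollary~\ref{Corollary:counting_annuli_from_O} the \emph{only} change is in the annulus count $d_A(q_i)$, and the difference equals the algebraic count of gluings of $u_0$ to index zero disks with one positive puncture.

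Let $\gamma_{i_0}$ be the Reeb chord at the positive puncture of $u_0$ and let $\overline w(u_0,e_2)=w_1\otimes w_2\in\widetilde\cA\otimes\widetilde\cA^{\cyc}$ be its boundary word pair with orientation sign $\epsilon(u_0)$, as in Section~\ref{Section:Orientations_annuli}; since $u_0$ has index $-1$, the element $\hbar\,\overline w(u_0,e_2)$ has degree $|q_{i_0}|$. I would then define $\phi$ to be the graded algebra morphism with trivial $f_0$-antibracket given by $\phi(t^\pm)=t^\pm$, $\phi(q_j)=q_j$ for $j\neq i_0$, and
$$\phi(q_{i_0})=q_{i_0}+\epsilon(u_0)\,\hbar\,\overline w(u_0,e_2).$$
Because $\phi_0=\pi_{\widetilde\cA}\circ\phi\circ\iota_{\widetilde\cA}=\operatorname{id}$, the morphism acts as the identity on the $\hbar$-part and, together with the unchanged antibracket, this makes the second-order dga morphism conditions automatic; what remains is the chain map equation $d_1\circ\phi=\phi\circ d_0$, which it suffices to check on generators.

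The verification distributes the two families of gluings from Corollary~\ref{Corollary:counting_annuli_from_O} onto the two sides of $d_1\phi=\phi d_0$. Applying $\phi$ to the word $d_{0,\lD}(q_i)\in\widetilde\cA$ and using $\hbar^2=0$ yields $d_{0,\lD}(q_i)$ plus an $\hbar$-term obtained by substituting $q_{i_0}\mapsto\epsilon(u_0)\hbar\,\overline w(u_0,e_2)$ at each negative corner; geometrically these are the annuli formed by capping a negative $\gamma_{i_0}$-puncture of a disk with the positive puncture of $u_0$. On the other side, using $d_1(\hbar(w_1\otimes w_2))=(-1)^{|w_2|+1}\hbar(d_{1,0}w_1\otimes w_2)-\hbar(w_1\otimes d_{1,0}^{\cyc}w_2)$ differentiates the two boundary words, which glues index zero disks onto the negative punctures of $u_0$ and contributes to $d_A(q_{i_0})$. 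The $\hbar$-free parts match by the invariance of the disk data, and the $\hbar$-terms reproduce exactly the difference $d_{1,A}(q_i)-d_{0,A}(q_i)$ predicted by Corollary~\ref{Corollary:counting_annuli_from_O}.

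Finally, positivity of energy for the annulus forces $l(\overline w(u_0,e_2))<l(q_{i_0})$, in the spirit of the action estimate~(\ref{Eq:diff_action_property}); in particular $\overline w(u_0,e_2)$ cannot contain the letter $q_{i_0}$, so $\phi$ is an elementary automorphism, hence tame, and invertible with filtered inverse by Lemma~\ref{Lemma:inverse_existence_I_order}. The main obstacle will be the sign bookkeeping: one must check that the orientation signs attached to the glued annuli by the coherent orientations of Section~\ref{Sec:orientation_and_algebraic_signs} match the algebraic signs coming from $\phi\circ d_{0,\lD}$ and from the second-order differential of $\hbar\,\overline w(u_0,e_2)$, with the overall sign pinned down by whether $\Omega(u_0)$ increases or decreases through $0$ (Lemma~\ref{Lemma:OSection_elliptic} and Lemma~\ref{Lemma:OSection_hyperbolic}). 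This is the same type of coherence computation used in Proposition~\ref{Prop:dcircdvanishes}, now applied to the one-parameter family degenerating at the type IV knot, and I expect it to be the technical heart of the proof.
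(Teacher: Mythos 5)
Your proposal is correct and follows essentially the same route as the paper: the paper defines exactly this elementary morphism $\phi(q_a)=q_a-\epsilon(u_0)\,\hbar\,\overline w(u_0,e_2)$ (your opposite sign is just the convention for the direction in which $\Omega$ crosses $0$), proves the key identity $d_{1,A}(q_i)-d_{0,A}(q_i)=\widetilde\phi\circ d_{0,\lD}(q_i)-d_{1,\lD}\circ\widetilde\phi(q_i)$ via Corollary~\ref{Corollary:counting_annuli_from_O}, and concludes tameness and invertibility from the fact that $\widetilde\omega$ lies in the $\hbar$-part, avoids the letter $q_a$, and has smaller action. Your reduction to checking the chain map equation on generators, the observation that the disk and $d_f$ data coincide so the antibrackets agree, and your identification of the sign bookkeeping via Lemmas~\ref{Lemma:OSection_elliptic} and~\ref{Lemma:OSection_hyperbolic} as the technical core all match the paper's Propositions~\ref{typeIIIcompactness} and~\ref{Prop:typeIV1}.
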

Let $\gamma_a$ be the Reeb chord at the positive puncture of $u_0$. Fix a marked point $e_2$ on the inner boundary of $u_0$ and denote by $\widetilde \omega=\hbar\, \overline w(u_0,e_2)=\hbar(\overline w_2(u_0)\otimes\overline w_1(u_0,e_2))\in\hbar\,( \widetilde\cA(\Lambda_1)\otimes \widetilde\cA(\Lambda_1))$ the word pair obtained by looking at the negative punctures of $u_0$ at the two boundary components and crossings over the base point as before. We consider the graded algebra morphism $\phi:\cA(\Lambda_0)\to\cA(\Lambda_1)$ given by $\phi(t^\pm)=t^\pm$ and
\begin{align*}
\phi(q_i)=
\begin{cases}
q_a-\epsilon(u_0)\widetilde \omega,&i=a\\
q_i,&\text{otherwise}
\end{cases}
\end{align*}
where $\epsilon(u_0)=\epsilon(u_0,e_2)$ is the sign of $u_0$ on $\pi_{xy}(\Lambda_0)$ with respect to the marked point $e_2$. More precisely, 
$$\epsilon(u_0)=\epsilon\,\epsilon_1\,\epsilon_2\prod\epsilon_\bullet^{u_0},$$ 
where $\epsilon=1$ if $\Omega_{\Lambda_0}(u_0)>0$ and $-1$ otherwise, $\epsilon_1,\epsilon_2$ are the orientation signs at the marked points with respect to the orientation of $\Lambda$ (where the first marked point is taken to be right after the positive puncture), and $\epsilon_\bullet^{u_0}$ are the signs at the corners of $u_0$. Note that $\widetilde\omega$ does not contain letter $q_a$, moreover, $l(\widetilde\omega)<l(q_a)$.

We show that $\phi$ is a second-order dga isomorphism. The main idea has already been seen in Corollary \ref{Corollary:counting_annuli_from_O}.

\begin{prop}\label{typeIIIcompactness}
For all $i\in\{1,\dots,n\}$, we have
\begin{equation}
\label{Eq:lemma_degIV_change}
\begin{aligned}
d_{1,A}(q_i)-d_{0,A}(q_i)=\widetilde\phi\circ d_{0,\lD}(q_i)-d_{1,\lD}\circ\widetilde\phi(q_i),
\end{aligned}
\end{equation}
where $\widetilde\phi=\phi-\operatorname{id}$.
\end{prop}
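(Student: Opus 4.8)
The plan is to prove the identity by analyzing how the count of index zero $J$-holomorphic annuli with one positive puncture at $\gamma_i$ changes as the isotopy $\Lambda_s$ passes through the type IV degenerate knot at $s=\tfrac12$, using the extended obstruction section and its behavior at split boundary points established in Lemma \ref{Lemma:OSection_elliptic} and Lemma \ref{Lemma:OSection_ReidIIb}. The key geometric fact, already packaged in Corollary \ref{Corollary:counting_annuli_from_O}, is that when the isotopy crosses a knot for which $\Omega_{1/2}(u_0)=0$, the only annuli whose count can change are precisely those born from gluing the rigid annulus $u_0\in\cM^\pi_{2,0}$ to a rigid disk in $\cM_1(J,\gamma_i^+)$ along one of its positive or negative corners. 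Thus the difference $d_{1,A}(q_i)-d_{0,A}(q_i)$ is a sum over all such gluings, each contributing the word pair $\widetilde\omega$ of $u_0$ modified by insertion of the disk's word at the appropriate corner, with a sign coming from $\epsilon(u_0)$ together with the orientation signs of the disk.

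First I would fix the combinatorial bookkeeping: enumerate the rigid disks $v\in\cM_1(J,\gamma_i^+)$ (equivalently, the summands of $d_{0,\lD}(q_i)=\sum\epsilon(v)\widetilde w(v)$) and, for each such $v$, the ways $u_0$ can be glued to $v$ at a corner asymptotic to $\gamma_a$. Each gluing produces an annulus whose boundary word pair is obtained by inserting $\widetilde w(v)$ (with its $\gamma_a$-corner removed) into one of the two boundary words of $\widetilde\omega$. This is exactly the combinatorial content of the substitution $q_a\mapsto q_a-\epsilon(u_0)\widetilde\omega$: applying $\widetilde\phi=\phi-\operatorname{id}$ to $d_{0,\lD}(q_i)$ replaces each occurrence of $q_a$ in each word $\widetilde w(v)$ by $-\epsilon(u_0)\widetilde\omega$, which is precisely the annulus obtained by gluing $u_0$ into the disk $v$ at that $q_a$-corner; dually, $d_{1,\lD}\circ\widetilde\phi(q_i)$ accounts for the disks on $\Lambda_1$ whose $\gamma_a$-corner must be resolved. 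The two sides of \eqref{Eq:lemma_degIV_change} therefore match term by term once the gluings are correctly indexed, since $d_{1,A}(q_i)$ and $d_{0,A}(q_i)$ count the same annuli except for those that appear or disappear across the degeneration.

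Next I would verify the signs. The orientation sign of each newly created or destroyed annulus is governed by the local model near the split boundary point of $\overline\cM^\pi_{2,1}$: by Lemma \ref{Lemma:OSection_elliptic} the obstruction section extends continuously with $\Omega(\overline u)=\Omega(u_0)$ at split points, and the orientation of $\cM_{2,1}$ near such a point is determined in Lemma \ref{Lemma:signs_annulus_buildings}. The definition of $\epsilon(u_0)=\epsilon\,\epsilon_1\,\epsilon_2\prod\epsilon_\bullet^{u_0}$, with $\epsilon=\operatorname{sgn}\Omega_{\Lambda_0}(u_0)$, is engineered so that the sign of a gluing equals $\epsilon(u_0)$ times the orientation sign $\epsilon(v)$ of the disk, which is exactly the coefficient produced by the substitution rule above. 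The main obstacle I expect is precisely this sign verification: one must track the orientation conventions from Section \ref{Section:Orientations} through the gluing, confirm that the marked point $e_2$ on the inner boundary of $u_0$ interacts correctly with the cyclic-word identification $\epsilon(u_0)\overline w(u_0,e_2)\in\widetilde\cA\otimes\widetilde\cA^{\cyc}$, and check that gluings at positive corners versus negative corners contribute with the correct relative sign so that the annulus contributions on the two sides of \eqref{Eq:lemma_degIV_change} cancel against the disk contributions. Once the term-by-term correspondence and the sign match are established, the identity follows directly; this proposition, combined with the analogous statements for $d_f$ and the bracket, will show that $\phi$ is a second-order dga isomorphism and hence prove Proposition \ref{Prop:eq_after_I} and, with Proposition \ref{Prop:intro_combinatorial_count}, the combinatorial invariance.
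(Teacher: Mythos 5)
Your proposal is correct and follows essentially the same route as the paper's proof: both reduce the geometry to Corollary \ref{Corollary:counting_annuli_from_O}, identify the annuli whose count jumps across $s=\tfrac12$ with gluings of $u_0$ to rigid disks, recognize this as the combinatorial content of the substitution $q_a\mapsto q_a-\epsilon(u_0)\widetilde\omega$, and then verify the signs by comparing the orientation of the glued configuration with the change in the algebraic count of zeros of the obstruction section (the paper does this explicitly for a disk $u=q_{j_1}\dots q_{j_n}p_i$ with $q_{j_k}=q_a$). One small imprecision: $d_{1,\lD}\circ\widetilde\phi(q_i)$ vanishes for $i\neq a$, and for $i=a$ it equals $-\epsilon(u_0)\,d_{1,\lD}(\widetilde\omega)$, which accounts for annuli obtained by gluing rigid disks to the \emph{negative} corners of $u_0$ rather than, as you phrase it, ``disks whose $\gamma_a$-corner must be resolved''; this is exactly the case the paper dispatches with ``the proof goes similarly for $i=a$.''
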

\begin{proof}
Let $i\neq a$. As we have seen in Corollary \ref{Corollary:counting_annuli_from_O}, the difference between the count of annuli on $\Lambda_0$ and $\Lambda_1$ corresponds to the ways $u_0$ can be glued to some rigid disk on $\lR\times\Lambda_0$ with one positive puncture. This implies (\ref{Eq:lemma_degIV_change}) up to signs. Let for example $u=q_{j_1}\dots q_{j_n}p_i$ be an index zero disk with a positive puncture at $\gamma_i$ and a negative puncture $q_{j_k}=q_a$ at $\gamma_a$. The orientation of the string obtained by gluing $u_0$ to $u$ is given by
\begin{align*}
(-1)^{|q_{j_1}|+\dots+|q_{j_{k-1}}|}\epsilon\,\epsilon(u)\,\epsilon(u_0)\langle v\rangle.
\end{align*}
It is not difficult to see that the difference between the algebraic counts of zeros of the obstruction sections $\Omega_{\Lambda_1}$ and $\Omega_{\Lambda_0}$ on this family of annuli is given by
\begin{align*}
-(-1)^{|q_{j_1}|+\dots+|q_{j_{k-1}}|}\epsilon(u)\epsilon(u_0),
\end{align*}
i.e. $d_{1,A}(q_i)-d_{0,A}(q_i)$ contains a summand equal to 
\begin{align*}
&-(-1)^{|q_{j_1}|+\dots+|q_{j_{k-1}}|}\epsilon(u)\epsilon(u_0) \hbar(q_{j_1}\dots q_{j_{k-1}}\overline w_2(u_0) q_{j_{k+1}}\dots q_{j_n}\otimes \overline w_1(u_0,e_2))=\\
&=-\epsilon(u)\epsilon(u_0)q_{j_1}\dots q_{j_{k-1}}\cdot\widetilde\omega\cdot q_{j_{k+1}}\dots q_{j_n}.
\end{align*} 
The corresponding summand in $\widetilde \phi\circ d_{0,\lD}(q_i)$ with the same sign is obtained by first taking $\epsilon(u)q_{j_1}\dots q_{j_n}\in d_{0,\lD}(q_i)$ and then applying $\widetilde\phi$. This finishes the proof in the case $i\neq a$. The proof goes similarly for $i=a$.
\end{proof}

\begin{prop}\label{Prop:typeIV1}
The map $\phi:\cA(\Lambda_0)\to \cA(\Lambda_1)$ defined above satisfies 
\begin{align*}
&d_1\circ \phi(s)=\phi\circ d_0(s),\\
&\{\phi s_1,\phi s_2\}_{d_1}=(\phi\otimes\phi)\{s_1,s_2\}_{d_0},
\end{align*}
for all $s\in\cA(\Lambda_0),s_1,s_2\in\widetilde\cA(\Lambda_0)$.
\end{prop}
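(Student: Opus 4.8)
The plan is to verify the two second-order dga morphism conditions directly on the generators $q_i,t^\pm$, exploiting that $\phi$ deforms only $q_a$ and that the deformation $-\epsilon(u_0)\widetilde\omega$ lives in the $\hbar$-summand of $\cA(\Lambda_1)$. Since $\phi$ is an honest graded algebra morphism, its projected part $\phi_0=\pi_{\widetilde\cA}\circ\phi\circ\iota_{\widetilde\cA}$ equals the identity (the $\hbar$-correction has trivial $\widetilde\cA$-component), and its associated $\phi_0$-antibracket vanishes. In the second-order morphism framework this collapses the bracket relation to the plain equality $\{s_1,s_2\}_{d_0}=\{s_1,s_2\}_{d_1}$, so the only substantive content is the differential relation $d_1\circ\phi=\phi\circ d_0$.

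First I would record the consequences of $\Lambda_s$ being an isotopy of type IV: since it contains no Reidemeister moves and fixes the base point, the moduli spaces of disks with one and two positive punctures are constant along the family, as are the combinatorial quantities (the linking numbers $l_i$, the functions $\delta(\cdot,\cdot)$, and $\operatorname{tb}$). Hence $d_{0,\lD}=d_{1,\lD}$, $d_{0,f}=d_{1,f}$, $d_{0,\lD}(\cdot,\cdot)=d_{1,\lD}(\cdot,\cdot)$ and $d_{0,f}(\cdot,\cdot)=d_{1,f}(\cdot,\cdot)$; the only piece of the structure that changes is the annulus count $d_A$, and that change is exactly Proposition~\ref{typeIIIcompactness}. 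In particular, $\{s_1,s_2\}_{d_0}=\{s_1,s_2\}_{d_1}$ is immediate, settling the bracket condition.

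Second, I would isolate the two structural facts needed for the differential. On $t^\pm$ the relation is trivial, since $\phi$ fixes $t^\pm$ and $d_\iota(t^\pm)=d_{\iota,f}(t^\pm)$ is combinatorial and unchanged. The crucial observation is that the projected differential satisfies $d_0=d_\lD$ on $\widetilde\cA$ (both $d_A$ and $d_f$ land in the $\hbar$-summand, so $\pi_{\widetilde\cA}\circ d=d_\lD$); consequently the second-order derivation rule for $\hbar$-terms forces $d_1$ and $d_{1,\lD}$ to agree on all of $\hbar\,(\widetilde\cA\otimes\widetilde\cA^{\cyc})$, in particular $d_1(\widetilde\omega)=d_{1,\lD}(\widetilde\omega)$. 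Writing $\phi=\mathrm{id}+\widetilde\phi$ and using that $\phi$ fixes every $\hbar$-term (because $\phi_0^{\cyc}=\mathrm{id}$), the generator computation reduces to
\begin{align*}
d_1(\phi(q_i))-\phi(d_0(q_i))=\bigl(d_{1,A}(q_i)-d_{0,A}(q_i)\bigr)-\widetilde\phi\bigl(d_{0,\lD}(q_i)\bigr)-\epsilon(u_0)\,\delta_{ia}\,d_1(\widetilde\omega),
\end{align*}
and substituting Proposition~\ref{typeIIIcompactness}, together with $d_{1,\lD}\circ\widetilde\phi(q_a)=-\epsilon(u_0)\,d_{1,\lD}(\widetilde\omega)$ and $d_1(\widetilde\omega)=d_{1,\lD}(\widetilde\omega)$, makes every term cancel, both for $i\neq a$ (where $\widetilde\phi(q_i)=0$) and for $i=a$.

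Finally I would conclude that $\phi$ is a tame second-order dga isomorphism: the deformation $-\epsilon(u_0)\widetilde\omega$ contains no $q_a$ and has strictly smaller action than $q_a$, so $\phi$ is an elementary automorphism and is invertible by Lemma~\ref{Lemma:inverse_existence_I_order}, with tame inverse. This gives Proposition~\ref{Prop:eq_after_I}. The main obstacle is the bookkeeping in the $i=a$ case: one must check that the extra term $-\epsilon(u_0)\,d_1(\widetilde\omega)$, produced by differentiating the $\hbar$-valued deformation, is cancelled precisely by the $d_{1,\lD}\circ\widetilde\phi(q_a)$ contribution appearing in Proposition~\ref{typeIIIcompactness}. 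This is exactly where the identity $d_1=d_{1,\lD}$ on $\hbar$-terms (that is, $d_0=d_\lD$) and the specific coefficient $\epsilon(u_0)$ attached to the degenerate annulus $u_0$ enter, confirming that the chosen deformation is the only one that intertwines the two differentials.
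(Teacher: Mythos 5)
Your proof is correct and takes essentially the same route as the paper: reduce to the generators $q_i,t^\pm$, observe that a type IV isotopy leaves $d_\lD$, $d_f$ and the antibrackets unchanged so the bracket condition collapses to $\{s_1,s_2\}_{d_0}=\{s_1,s_2\}_{d_1}$, and then convert Proposition \ref{typeIIIcompactness} into $d_1\circ\phi=\phi\circ d_0$ using $\widetilde\phi(\widetilde\cA)\subset\hbar\,(\widetilde\cA\otimes\widetilde\cA^{\cyc})$ together with the fact that $d_1$ acts on $\hbar$-terms through $d_{1,\lD}$. Your displayed cancellation identity merely unpacks the paper's ``is equivalent to'' step, and your closing tameness/invertibility remark belongs to the proof of Proposition \ref{Prop:eq_after_I} rather than to this proposition.
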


\begin{proof}
It is enough to show
\begin{align*}
&d_1\circ\phi(s)=\phi\circ d_0(s),\\
&\{\phi s_1,\phi s_2\}_{d_1}=(\phi\otimes\phi)\{s_1,s_2\}_{d_0},
\end{align*}
for all $s,s_1,s_2\in\{t^\pm,q_i\,|\,i=1,\dots,n\}$, or equivalently (since $\widetilde\omega\in\hbar\, (\widetilde\cA(\Lambda_1)\otimes\widetilde\cA^{\cyc}(\Lambda_1))$)
\begin{align*}
&d_1\circ\phi(s)=\phi\circ d_0(s),\\
&\{s_1,s_2\}_{d_1}=\{s_1,s_2\}_{d_0}.
\end{align*}
Disks with one or two positive punctures on $\Lambda_0$ and $\Lambda_1$ are the same. Additionally, $d_{0,f}(s)=d_{1,f}(s)$ and $d_{0,f}(s_1,s_2)=d_{1,f}(s_1,s_2)$ for all $s,s_1,s_2$, so the second equality follows trivially. 

Since $d_{1,\lD}=d_{0,\lD},d_{1,f}=d_{0,f}$ and $\widetilde\phi(\widetilde\cA)\subset\hbar\,(\widetilde\cA\otimes\widetilde\cA^{\cyc})$, 
$$d_{0,A}(q_i)-d_{1,A}(q_i)=d_{1,\lD}\widetilde\phi(q_i)-\widetilde\phi d_{0,\lD}(q_i)$$ 
is equivalent to
\begin{align*}
d_{0}(q_i)-d_{1}(q_i)=d_{1}\widetilde\phi(q_i)-\widetilde\phi d_{0}(q_i)=\\
=d_1\phi(q_i)-d_1(q_i)-\phi d_0(q_i)+d_0(q_i).
\end{align*}
This implies
$$d_1\circ \phi(q_i)=\phi\circ d_0(q_i)$$
for all $i$. Moreover, $d_1\phi(t^\pm)=d_{1,f}(t^\pm)=d_{0,f}(t^\pm)=\phi d_{0}(t^\pm)$.
\end{proof}

\paragraph{\textit{Proof of Proposition \ref{Prop:eq_after_I}:}} Follows from Proposition \ref{Prop:typeIV1}. The morphism $\phi$ is invertible and the inverse is given by the algebra morphism 
\begin{align*}
\phi^{-1}(q_i)=
\begin{cases}
q_a+\epsilon(u_0)\widetilde \omega,&i=a\\
q_i,&\text{otherwise}
\end{cases}
\end{align*}

\subsection{Reidemeister III move}
\label{Section:Invar_III_degeneration}

In this section, we show invariance under Reidemeister III move. Let $\Lambda_s,s\in[0,1]$ be a Legendrian knot isotopy with a Reidemeister III move at $s=\frac{1}{2}$ in the Lagrangian projection as shown in Figure \ref{Figure:ReidIII}. Denote the "small" triangle by $\omega_\Delta$ with punctures at the Reeb chords  $\gamma_a,\gamma_b,\gamma_c$. We distinguish two cases. First, when $\omega_\Delta$ has one positive puncture at $a$, and second, when it has two positive punctures at $b$ and $c$.  Denote by $\gamma_1,\dots,\gamma_n$ the Reeb chords on $\Lambda_\iota$ and by $(\cA(\Lambda_\iota),d_\iota,\{\cdot,\cdot\}_{d_\iota})$ the second-order dga corresponding to $\Lambda_\iota,\iota\in\{0,1\}$ as before. Note that there is a canonical isomorphism $\cA(\Lambda_0)\cong\cA(\Lambda_1)$ that identifies $q_i,i\in\{1,\dots,n\}$ on the two sides as shown in Figure \ref{Figure:ReidIII}. Our main goal is to find a second-order dga isomorphism $(\cA(\Lambda_0),d_0,\{\cdot,\cdot\}_{d_0})\to(\cA(\Lambda_1),d_1,\{\cdot,\cdot\}_{d_1})$, proving the following.

\begin{prop}\label{Prop:eq_after_III}
Let $\Lambda_s,s\in[0,1]$ be a Legendrian knot isotopy as above, then there exists a tame second-order dga isomorphism between $(\cA(\Lambda_0),d_0,\{\cdot,\cdot\}_{d_0})$ and $(\cA(\Lambda_1),d_1,\{\cdot,\cdot\}_{d_1})$.
\end{prop}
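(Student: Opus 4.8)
The plan is to follow the same template as the change-of-base-point and type IV invariance proofs: construct an explicit tame second-order graded algebra isomorphism $\phi:\cA(\Lambda_0)\to\cA(\Lambda_1)$ built from the canonical identification $q_i\mapsto q_i$ and show it intertwines the differentials and antibrackets. The geometric content is that a Reidemeister III move does not change the set of rigid disks and annuli except for those curves whose boundary sweeps through the small triangle $\omega_\Delta$. First I would set up the two cases separately according to whether $\omega_\Delta$ has one positive puncture (at $\gamma_a$) or two positive punctures (at $\gamma_b,\gamma_c$), since the combinatorics of which curves are created or destroyed differs. In each case, the key observation is that the triangle $\omega_\Delta$ itself is a rigid disk, so $d_{\Lambda_0}$ and $d_{\Lambda_1}$ differ precisely by how holomorphic curves can degenerate through the codimension-one wall at $s=\tfrac12$ where the triangle collapses.

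The core of the argument is a compactness/gluing analysis of the one-parameter family $\Lambda_s$ near $s=\tfrac12$, analogous to Proposition \ref{typeIIIcompactness}. I would identify, for each Reeb chord $\gamma_i$, the finitely many curves contributing to $d_0(q_i)$ that do not survive to $\Lambda_1$, and show they are in bijection with curves obtained by gluing $\omega_\Delta$ (at one of its punctures) to curves surviving on the $\Lambda_1$ side. This bijection, together with the matching of orientation signs from Section \ref{Section:Orientations}, produces the correction term and fixes the form of $\phi$. Concretely I expect $\phi$ to be the identity on all $q_i$ except possibly a shift $q_a\mapsto q_a + (\text{correction})$ (case one) or a compensating pair of elementary automorphisms together with an elementary second-order automorphism adjusting $\{\cdot,\cdot\}_\phi$ (case two), so that by Lemma \ref{Lemma:Second_order_properties} and Lemma \ref{Lemma:inverse_existence} the map is tame and invertible. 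The verification then reduces, exactly as in Proposition \ref{Prop:typeIV1}, to checking the two identities
\begin{align*}
&d_1\circ\phi(s)=\phi\circ d_0(s),\\
&\{\phi s_1,\phi s_2\}_{d_1}=(\phi\otimes\phi)\{s_1,s_2\}_{d_0},
\end{align*}
on generators $s,s_1,s_2\in\{t^\pm,q_i\}$, which by Lemma \ref{Lemma:Second_order_derivation} and Lemma \ref{Lemma:Second_order_morphism} suffices to establish the full second-order dga isomorphism.

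The main obstacle will be case two, where $\omega_\Delta$ carries two positive punctures. Here the triangle contributes to the antibracket $\{\cdot,\cdot\}_d$ (the disk-with-two-positive-punctures part $d_\lD(q_b,q_c)$) rather than to $d(q_i)$ directly, so the wall-crossing modifies both the second-order derivation and the antibracket simultaneously. The delicate point is keeping track of how gluing $\omega_\Delta$ to an annulus-building or to a two-punctured disk interacts with the cyclic quotient $\widetilde\cA^{\cyc}$ and with the $\hbar$-terms, so that the correction to $\{q_i,q_j\}_d$ and the correction to $d_A(q_i)$ are compatible. I expect the bookkeeping of signs and the verification of the compatibility condition in Definition \ref{Definition:IIord_derivation} (equation (\ref{Equation:third_condition})) to be the technically heaviest step, and I would handle it by reusing the coherence of orientations proved in Lemma \ref{Lemma:signs_annulus_buildings} and Lemma \ref{Lemma:signs_nabla} rather than recomputing signs from scratch.
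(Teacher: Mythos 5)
Your template is the paper's (two cases according to the punctures of $\omega_\Delta$, a wall-crossing bijection between appearing/disappearing curves and gluings with the triangle, verification on generators), but you have assigned the corrections to the wrong cases, and this swap would make the construction fail. When $\omega_\Delta=q_bq_cp_a$ has its single positive puncture at $a$ (case one), the index zero disks with one positive puncture are literally the same on $\Lambda_0$ and $\Lambda_1$, so $d_{0,\lD}(q_i)=d_{1,\lD}(q_i)$ for every $i$; a first-order shift $q_a\mapsto q_a+(\text{correction})$ then has nothing to absorb. What actually changes are the annulus counts $d_A(q_i)$ and the two-positive-puncture disk counts entering $\{\cdot,\cdot\}_d$: each disappearing/appearing annulus $\hbar(w_1w_3p_i\otimes q_aw_2)$ corresponds to a surviving disk $w_1q_cw_2q_bw_3p_i$ with negative corners at $b,c$. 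These $\hbar$-level discrepancies live at \emph{pairs} of letters $q_b,q_c$ inside words of $d_{0,\lD}(q_i)$ and can only be absorbed by a morphism that is the identity on generators with a nontrivial $f$-antibracket $\{q_b,q_c\}_{\phi_1}=(-1)^{|q_b|}\epsilon_\Delta\, q_a\otimes 1$ (and its companion $\{q_c,q_b\}_{\phi_1}$), via formula (\ref{Eq:Second_order_morphism_formula}), so that $\widetilde\phi_1\, d_{0,\lD}(q_i)=d_{1,A}(q_i)-d_{0,A}(q_i)$ as in Proposition \ref{Prop:typeIII1}; no substitution on generators, even an $\hbar$-valued one, can produce such terms. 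Conversely, when $\omega_\Delta=q_ap_bp_c$ carries the two positive punctures (case two), disks with one positive puncture and a negative corner at $a$ genuinely appear or disappear by gluing the triangle, so $\pi_{\widetilde\cA}\circ d(q_i)$ changes at first order; a second-order elementary automorphism fixing all generators cannot alter this part, and the correct map is the plain tame shift $\phi_2(q_a)=q_a-\epsilon_\Delta q_cq_b$ with trivial $\phi$-antibracket, as in Proposition \ref{Prop:typeIII2}.

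Relatedly, your stated verification identities $d_1\circ\phi=\phi\circ d_0$ and $\{\phi s_1,\phi s_2\}_{d_1}=(\phi\otimes\phi)\{s_1,s_2\}_{d_0}$ are the compatibility conditions only for a morphism with vanishing $f$-antibracket, hence apply only in case two. In case one you must instead verify the full morphism equation
\begin{align*}
\{\phi_0 s_1,\phi_0 s_2\}_{d_1}-(d_1\otimes 1+1\otimes d_1)\{s_1,s_2\}_{\phi_1}=(\phi_0\otimes\phi_0)\{s_1,s_2\}_{d_0}+\{d_0 s_1,s_2\}_{\phi_1}+(-1)^{|s_1|}\{s_1,d_0 s_2\}_{\phi_1},
\end{align*}
and the extra $(d\otimes 1+1\otimes d)\{\cdot,\cdot\}_{\phi_1}$ terms are essential to the cancellations (for instance matching the $d_f$ discrepancy $\delta_1(c^+,b^-)-\delta_0(c^+,b^-)$ for the pair $(b,c)$); citing equation (\ref{Equation:third_condition}) is not a substitute, as that is the strong-derivation condition, not the morphism condition. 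Finally, the annulus bookkeeping in this step rests on the extended obstruction section taking boundary values $\pm\infty$ (Lemma \ref{Lemma:OSection_hyperbolic}) together with direct sign computations using $\epsilon_\Delta$ and parity facts such as $|q_a|+|w_2|\equiv 1\pmod 2$, rather than on the coherence lemmas for orientations you propose to reuse.
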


We first consider the Reidemeister III move where $\omega_\Delta$ has one positive puncture, i.e. $\omega_\Delta=q_bq_cp_a$ (see Figure \ref{Figure:ReidIII}, top). Let $\phi_1:\cA(\Lambda_0)\to\cA(\Lambda_1)$ be the second-order graded algebra morphism given by
\begin{align*}
&\phi_1(q_i)=q_i,\phi_1(t^\pm)=t^\pm,\\
&\{q_i,q_j\}_{\phi_1}=\begin{cases}
(-1)^{|q_b|}\epsilon_\Delta q_a\otimes 1,&i=b,j=c\\
-(-1)^{(|q_c|+1)(|q_a|+1)}\epsilon_\Delta 1\otimes q_a,&i=c,j=b\\
0,&\text{otherwise}
\end{cases}\\
&\{t^\pm,q_i\}_{\phi_1}=0,\{q_i,t^\pm\}_{\phi_1}=0,\{t^\pm,t^\pm\}_{\phi_1}=0,
\end{align*}
where $\epsilon_\Delta$ is the product of the signs at the corners of $\omega_\Delta$ on $\Lambda_0$.

\begin{figure}
\def\svgwidth{90mm}
\begingroup%
  \makeatletter%
  \providecommand\rotatebox[2]{#2}%
  \newcommand*\fsize{\dimexpr\f@size pt\relax}%
  \newcommand*\lineheight[1]{\fontsize{\fsize}{#1\fsize}\selectfont}%
  \ifx\svgwidth\undefined%
    \setlength{\unitlength}{283.11066692bp}%
    \ifx\svgscale\undefined%
      \relax%
    \else%
      \setlength{\unitlength}{\unitlength * \real{\svgscale}}%
    \fi%
  \else%
    \setlength{\unitlength}{\svgwidth}%
  \fi%
  \global\let\svgwidth\undefined%
  \global\let\svgscale\undefined%
  \makeatother%
  \begin{picture}(1,0.38175043)%
    \lineheight{1}%
    \setlength\tabcolsep{0pt}%
    \put(0,0){\includegraphics[width=\unitlength,page=1]{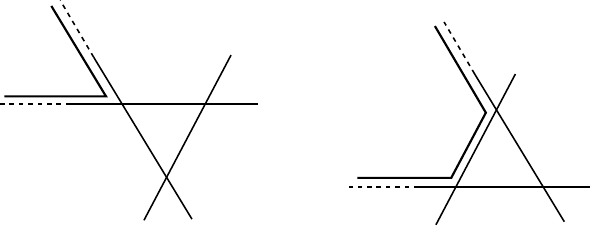}}%
  \end{picture}%
\endgroup%
\caption{Appearance of a disk during Reidemeister III move.}
\label{Figure:disk_reidIII}
\end{figure}

\begin{figure}
\def\svgwidth{96mm}
\begingroup%
  \makeatletter%
  \providecommand\rotatebox[2]{#2}%
  \newcommand*\fsize{\dimexpr\f@size pt\relax}%
  \newcommand*\lineheight[1]{\fontsize{\fsize}{#1\fsize}\selectfont}%
  \ifx\svgwidth\undefined%
    \setlength{\unitlength}{441.1280053bp}%
    \ifx\svgscale\undefined%
      \relax%
    \else%
      \setlength{\unitlength}{\unitlength * \real{\svgscale}}%
    \fi%
  \else%
    \setlength{\unitlength}{\svgwidth}%
  \fi%
  \global\let\svgwidth\undefined%
  \global\let\svgscale\undefined%
  \makeatother%
  \begin{picture}(1,0.3697912)%
    \lineheight{1}%
    \setlength\tabcolsep{0pt}%
    \put(0,0){\includegraphics[width=\unitlength,page=1]{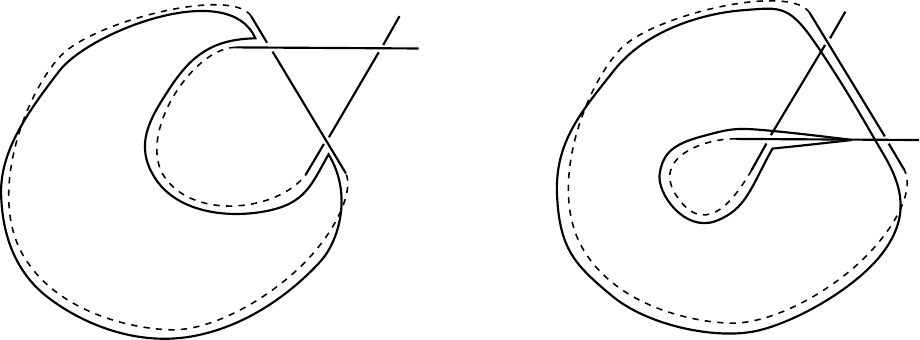}}%
    \put(0.84643616,0.17979672){\makebox(0,0)[lt]{\lineheight{1.25}\smash{\begin{tabular}[t]{l}$a$\end{tabular}}}}%
    \put(0.2076517,0.32431259){\makebox(0,0)[lt]{\lineheight{1.25}\smash{\begin{tabular}[t]{l}$b$\end{tabular}}}}%
    \put(0.34245219,0.1433642){\makebox(0,0)[lt]{\lineheight{1.25}\smash{\begin{tabular}[t]{l}$c$\end{tabular}}}}%
  \end{picture}%
\endgroup%
\caption{Appearance of an annulus during Reidemeister III move.}
\label{Figure:Annuli_reidIII}
\end{figure}

\begin{prop}\label{Prop:typeIII1}
The map $\phi_1:\cA(\Lambda_0)\to \cA(\Lambda_1)$ defined above satisfies 
$$d_1\circ \phi_1(s)=\phi_1\circ d_0(s)$$
for all $s\in\cA(\Lambda_0)$, and
\begin{align*}
&\{\phi_1 s_1,\phi_1 s_2\}_{d_1}-(d_1\otimes 1+1\otimes d_1)\{s_1,s_2\}_{\phi_1}=(\phi_1\otimes \phi_1)\{s_1,s_2\}_{d_0}+\{d_0 s_1,s_2\}_{\phi_1}+(-1)^{|s_1|}\{s_1,d_0 s_2\}_{\phi_1}
\end{align*}
for all $s_1,s_2\in\widetilde\cA(\Lambda_0)$.
\end{prop}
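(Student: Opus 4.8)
The plan is to verify that $\phi_1$ is a second-order dga morphism: the two displayed identities are exactly the conditions that a second-order graded algebra morphism with $\phi_{1,0}=\operatorname{id}$ preserve the second-order dga structures $(\cA(\Lambda_0),d_0,\{\cdot,\cdot\}_{d_0})$ and $(\cA(\Lambda_1),d_1,\{\cdot,\cdot\}_{d_1})$. By Lemma \ref{Lemma:Second_order_morphism} it suffices to check both identities on the generators $s,s_1,s_2\in\{q_i,t^\pm\}$. The geometric input is that $\Lambda_0$ and $\Lambda_1$ coincide outside a neighborhood of the triangle $\omega_\Delta$, so every rigid $J$-holomorphic disk or annulus not meeting the triangle region is common to both sides; the differences between $d_0$ and $d_1$ come entirely from the one-parameter bifurcation of the family $\Lambda_s$ at $s=\frac{1}{2}$, where rigid curves are created or destroyed through two-story buildings one of whose levels is $\omega_\Delta=q_bq_cp_a$ (Figures \ref{Figure:disk_reidIII} and \ref{Figure:Annuli_reidIII}). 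I would organize these buildings by how the negative corners $b,c$ of $\omega_\Delta$ are glued to positive punctures of the remaining curve.

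For the chain-map equation on $q_i$, note first that $\phi_1$ is the identity on $\widetilde\cA$ and on the $\hbar$-part, and that $d_{0,f}(q_i)=d_{1,f}(q_i)$ since the capping/linking data $l_i$ (hence $d_f$) and the invariants $\operatorname{tb},\operatorname{rot}$ are unchanged by a local Reidemeister III move; the $t^\pm$ cases then reduce to $d_{0,f}(t^\pm)=d_{1,f}(t^\pm)$ and are immediate. Thus $d_1q_i=\phi_1d_0q_i$ reduces to $d_{1,\lD}(q_i)+d_{1,A}(q_i)=\phi_1(d_{0,\lD}(q_i))+d_{0,A}(q_i)$, which I split into its $\widetilde\cA$- and $\hbar$-components. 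The $\widetilde\cA$-component asserts $d_{1,\lD}(q_i)=d_{0,\lD}(q_i)$: the disks with one positive puncture are in bijection, and the two-story buildings gluing $\omega_\Delta$ at a single negative corner appearing in $\partial\cM(J,\gamma_i^+)$ cancel in pairs exactly as in Chekanov's Reidemeister III argument, so the disk differential is genuinely invariant here. The $\hbar$-component is the substantive one: by the morphism formula \ref{Eq:Second_order_morphism_formula}, $\phi_1(d_{0,\lD}(q_i))-d_{0,\lD}(q_i)$ is the sum, over words of $d_{0,\lD}(q_i)$ containing a $q_bq_c$ or $q_cq_b$ subword, of the $\hbar$-correction built from $\{q_b,q_c\}_{\phi_1}$; I would match this against $d_{1,A}(q_i)-d_{0,A}(q_i)$ using the analysis of Corollary \ref{Corollary:counting_annuli_from_O} and Proposition \ref{typeIIIcompactness}, in which an annulus born at the Reidemeister III instant degenerates into $\omega_\Delta$ glued along both $b$ and $c$ to a rigid disk, so that reading off the two boundary words yields exactly the $\hbar$-pair produced by inserting $q_a$ in place of $q_bq_c$.

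The antibracket equation on $q_i,q_j$ is then obtained the same way, now tracking the one-dimensional moduli space of disks with two positive punctures at $\gamma_i,\gamma_j$: its boundary bifurcation at $s=\frac{1}{2}$ produces the $\{q_b,q_c\}_{\phi_1}$-terms paired with $d_0$ (respectively $d_1$) on the two sides, which is precisely the content of the stated identity once one uses that $\{q_i,q_j\}_{\phi_1}$ is supported only on $(b,c)$ and $(c,b)$. The hard part will be the sign bookkeeping: matching the orientation signs of the glued annuli and of the two-story disk buildings (computed via Lemmas \ref{Lemma:signs_disks}, \ref{Lemma:signs_disksII} and \ref{Lemma:signs_annulus_buildings}) against the algebraic signs carried by $\epsilon_\Delta$, the factors $(-1)^{|q_b|}$ and $-(-1)^{(|q_c|+1)(|q_a|+1)}$ in $\{\cdot,\cdot\}_{\phi_1}$, and the signs in \ref{Eq:Second_order_morphism_formula}. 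In particular I expect the delicate point to be checking that the two orderings $(b,c)$ and $(c,b)$ of the triangle corners contribute with the compensating signs recorded in the definition of $\phi_1$, so that the cyclic-word reduction of the annulus term is well defined and independent of the marked point, exactly as in Section \ref{Section:Orientations}.
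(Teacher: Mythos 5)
Your overall architecture is the same as the paper's: reduce both identities to generators via Lemma \ref{Lemma:Second_order_morphism}, identify the jump in the annulus count and in the two-positive-puncture disk count across $s=\tfrac12$ with the $\hbar$-corrections generated by $\{q_b,q_c\}_{\phi_1}$ through formula (\ref{Eq:Second_order_morphism_formula}), and push the rest into sign bookkeeping. But one of your stated inputs is false, and it fails exactly at the delicate spots. You assert that $d_{0,f}=d_{1,f}$ because ``the capping/linking data $l_i$ (hence $d_f$) \dots are unchanged by a local Reidemeister III move.'' This is wrong for the chords of the triangle: during the move the endpoints of $\gamma_a,\gamma_b,\gamma_c$ slide past one another along $\Lambda$, so the orderings $\delta(\cdot,\cdot)$ entering $l_a$ and the binary terms $d_f(q_i,q_j)$ jump. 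The paper's proof hinges on these jumps. Concretely, $d_{1,f}(q_a)-d_{0,f}(q_a)=\epsilon(a)\hbar(q_a\otimes 1)$, and this is precisely what cancels $\widetilde\phi_1\circ d_{0,\lD}(q_a)=\widetilde\phi_1\bigl((-1)^{|q_b|}\epsilon(a)\epsilon_\Delta\,q_bq_c\bigr)=\epsilon(a)\hbar(q_a\otimes 1)$, the $\hbar$-correction produced by the triangle term itself; since $d_{1,A}(q_a)=d_{0,A}(q_a)$, there is no annulus change to absorb it, so under your assumption the chain-map identity at $s=q_a$ is left with an uncancelled $\epsilon(a)\hbar(q_a\otimes 1)$. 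The same problem recurs in the antibracket identity: the cases $(s_1,s_2)=(q_b,q_a)$ and $(q_b,q_c)$ require $d_{1,f}(q_b,q_a)-d_{0,f}(q_b,q_a)=(-1)^{|q_b|}\epsilon(a)\,q_a\otimes q_b$ and $d_{1,f}(q_b,q_c)-d_{0,f}(q_b,q_c)=\epsilon(a)\,q_bq_c\otimes 1$ in order to balance $(-1)^{|q_b|}\{q_b,\widetilde d_0 q_a\}_{\phi_1}$ and $\bigl((d_1-\widetilde d_0)\otimes 1\bigr)\{q_b,q_c\}_{\phi_1}$, where $\widetilde d_0 q_a=d_0q_a-(-1)^{|q_b|}\epsilon_\Delta\epsilon(a)\,q_bq_c$ excludes the triangle. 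Declaring $d_f$ invariant (the appeal to $\operatorname{tb}$ and $\operatorname{rot}$ only covers the $t^\pm$ terms) leaves these identities unbalanced, so the proposal as written does not close.

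Two smaller points. In the $\hbar$-component of the chain map, the correction is not indexed by words of $d_{0,\lD}(q_i)$ ``containing a $q_bq_c$ or $q_cq_b$ subword'' in the adjacent sense: the sum in (\ref{Eq:Second_order_morphism_formula}) runs over all pairs of positions $i<j$, so every disk $w_1q_bw_2q_cw_3p_i$ with the two corners arbitrarily far apart contributes, and these are exactly the paper's appearing/disappearing annuli $\hbar(w_1w_3p_i\otimes q_aw_2)$, with the intermediate word $w_2$ becoming the inner boundary component. Also, for this version of the move no pairwise cancellation of broken configurations is needed to see $d_{1,\lD}=d_{0,\lD}$: since $\omega_\Delta$ has its single positive puncture at $a$, the rigid disks with one positive puncture are literally identical on the two sides, which is what makes $\phi_1=\operatorname{id}$ on generators viable at all.
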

\begin{proof}
It is enough to show
\begin{align*}
&d_1(q_i)=\phi_1\circ d_0(q_i),\\
&\{q_i,q_j\}_{d_1}-(d_1\otimes 1+1\otimes d_1)\{q_i,q_j\}_{\phi_1}=\{q_i,q_j\}_{d_0}+\{d_0q_i,q_j\}_{\phi_1}+(-1)^{|q_i|}\{q_i,d_0q_j\}_{\phi_1},
\end{align*}
for all $i,j\in\{1,\dots,n\}$. To show the first equality, we notice that disks with one positive puncture are the same on $\Lambda_0$ and $\Lambda_1$. Moreover, every disappearing/appearing annulus is of the form $\hbar(w_1w_3p_i\otimes q_aw_2)$ or $\hbar(w_1q_aw_3p_i\otimes w_2)$, where $w_1q_cw_2q_bw_3p_i$ or $w_1q_bw_2q_cw_3p_i$ is an index zero disk other than $q_bq_cp_a$ with negative punctures at $b$ and $c$. This can be written as
\begin{align*}
d_{1,A}(q_i)-d_{0,A}(q_i)=\widetilde\phi_1 d_{0,\lD}(q_i),
\end{align*}
for $i\neq a$, where $\widetilde\phi_1=\phi_1-\operatorname{id}$. To see that the signs on the two sides are equal, take for example a disk $u'=w_1q_cw_2q_bw_3p_i$ of the form as shown in Figure \ref{Figure:Annuli_reidIII}, other cases go similarly. We have a 1-parameter family of annuli $w_1w_3p_i\otimes q_a w_2$ on $\pi_{xy}(\Lambda_1)$ that does not exist on $\Lambda_0$. The values of the obstruction section at the boundary are equal to $+\infty$ and $-\infty$. The count of zeros of the obstruction section is given by
\begin{align*}
-\epsilon_\Delta\epsilon(u'),
\end{align*}
and the corresponding summand in $\widetilde\phi_1\circ d_{0,\lD}(q_i)$ also comes with the sign (see (\ref{Eq:Second_order_morphism_formula}))
$$-\epsilon_\Delta\epsilon(u')(-1)^{(1+|w_1q_cw_2|)(|q_a|+|w_2|+1)}=-\epsilon_\Delta\epsilon(u'),$$ 
since here $|q_a|+|w_2|\equiv 1\pmod 2$. We obviously have $d_{0,f}(q_i)=d_{1,f}(q_i)$ for $i\neq a,b,c$. Additionally, it is not difficult to see $d_{0,f}(q_b)=d_{1,f}(q_b),d_{0,f}(q_c)=d_{1,f}(q_c)$. This shows $d_1(q_i)=\phi_1\circ d_0(q_i)$ for $i\neq a$. Moreover, we have
\begin{align*}
&d_{1,\lD}(q_a)=d_{0,\lD}(q_a),d_{1,A}(q_a)=d_{0,A}(q_a),\\
&d_{1,f}(q_a)-d_{0,f}(q_a)=\epsilon(a)\hbar(q_a\otimes 1),\\
&\widetilde\phi_1\circ d_{0,\lD}(q_a)=\widetilde\phi_1\left((-1)^{|q_b|}\epsilon(a)\epsilon_\Delta q_bq_c\right)=\epsilon(a)\hbar(q_a\otimes 1),
\end{align*}
where $\epsilon(a)$ is the sign of the arc $b^-c^+$ on the small triangle on $\Lambda_0$
This implies $d_1\circ{\phi_1}(q_a)={\phi_1}\circ d_0(q_a)$.

Next, we show 
\begin{equation}
\label{Eq:degIII_bracket}
\begin{aligned}
\{q_i,q_j\}_{d_1}-(d_1\otimes1 +1\otimes d_1)\{q_i,q_j\}_{\phi_1}=\{q_i,q_j\}_{d_0}+\{d_0q_i,q_j\}_{\phi_1}+(-1)^{|q_i|}\{q_i,d_0q_j\}_{\phi_1}. 
\end{aligned}
\end{equation}
If $i,j$ are both different from $b,c$, then
\begin{align*}
LHS=\{q_i,q_j\}_{d_1}=\{q_i,q_j\}_{d_0}=RHS,
\end{align*}
since the disks with positive punctures at $i$ and $j$ are the same on $\Lambda_0$ and $\Lambda_1$ and they come with the same orientation signs. Let now for example $i=b,j\neq a,b,c$ (similarly for $i=c,j\neq a,b,c$ and $i\neq a,b,c,j\in\{b,c\}$). Disks with positive punctures at $b$ and $j$ that appear on $\Lambda_1$ and not on $\Lambda_0$ are of the form $u'=w_1p_bq_aw_2p_j$, where $u''=w_1q_cw_2p_j$ is an index zero disk with one positive puncture at $j$ and a negative corner at $c$ that does not cover the small triangle on $\Lambda_0$. Similarly, disks with positive punctures at $b,j$ that appear on $\Lambda_0$ and not on $\Lambda_1$ are of the form $\overline u'=w_1p_bq_aw_2p_j$, where $\overline u''=w_1q_cw_2p_j$ is an index zero disk with one positive puncture at $j$ and a negative corner at $c$ that covers the small triangle on $\Lambda_0$. Additionally, $d_{1,f}(q_b,q_j)=d_{0,f}(q_b,q_j)$. From this we get
\begin{align*}
&\{q_b,q_j\}_{d_1}-\{q_b,q_j\}_{d_0}=(-1)^{|q_b|}\{q_b,d_0q_j\}_{\phi_1},
\end{align*}
i.e. (\ref{Eq:degIII_bracket}) holds for $i=b,j\neq a,b,c$.
To see that the signs match, we consider the case where we have a disk $u'=w_1p_bq_aw_2p_j$ on $\Lambda_1$ that does not appear on $\Lambda_0$, which corresponds to a disk $u''=w_1q_cw_2p_j$ on $\Lambda_0$ with a negative corner at $c$ that does not cover the small triangle. Let $\epsilon(u'')$ be the sign of $u''$ on $\Lambda_0$. Then the sign $\epsilon(u',\gamma_j^+)$ of the disk $u'$ is equal to
\begin{align*}
\epsilon(u',\gamma_j^+)=\epsilon_\Delta\epsilon(u''),
\end{align*}
and the corresponding summand $q_a w_2\otimes w_1$ appears on the LHS and on the RHS with the sign $(-1)^{(|q_b|+1)|w_1|}\epsilon(u'')\epsilon_\Delta$.

If $(i,j)=(b,a)$, similar as above we get
\begin{align*}
&d_{1,\lD}(q_b,q_a)=d_{1,\lD}(q_b,q_a)+(-1)^{|q_b|}\{q_b,\widetilde d_0q_a\}_{\phi_1},
\end{align*}
where
\begin{align*}
&\widetilde d_0(q_a)=\sum_{u\in\cM_1(\Lambda_0,J,\gamma_a), u\neq q_bq_cp_a}\epsilon(u)\widetilde w(u)=d_0(q_a)-(-1)^{|q_b|}\epsilon_\Delta\epsilon(a)q_bq_c.
\end{align*} 
Additionally, we have
\begin{align*}
&d_{1,f}(q_b,q_a)-d_{0,f}(q_b,q_a)=\left(\delta_1(a^+,b^+)-\delta_0(a^+,b^+)\right)q_a\otimes q_b=\\
&=(-1)^{|q_b|}\epsilon(a)q_a\otimes q_b=(-1)^{|q_b|}\{q_b,(-1)^{|q_b|}\epsilon_\Delta\epsilon(a)q_bq_c\}_{\phi_1}.
\end{align*}
This implies (\ref{Eq:degIII_bracket}) for $i=b,j=a$.

For $(i,j)=(b,c)$, we notice that for every disk with positive punctures at $b,c$ that does not exist on the other side, there is a corresponding disk with a positive corner at $a$ different from $q_bq_cp_a$. This implies
\begin{align*}
&d_{1,\lD}(q_b,q_c)-d_{0,\lD}(q_b,q_c)=(\widetilde d_0\otimes 1)\{q_b,q_c\}_{\phi_1}.
\end{align*}
Additionally, we have
\begin{align*}
&d_{1,f}(q_b,q_c)-d_{0,f}(q_b,q_c)=\left(-\delta_1(c^+,b^-)+\delta_0(c^+,b^-)\right)q_bq_c\otimes 1=\\
&=\epsilon(a)q_bq_c\otimes 1=\left((d_1-\widetilde d_0)\otimes 1\right)\{q_b,q_c\}_{\phi_1}.
\end{align*}
This implies (\ref{Eq:degIII_bracket}) for $i=b,j=c$. Other cases go similarly.
\end{proof}

Next, we consider the second Reidemeister III move where $\omega_\Delta=q_ap_bp_c$ (see Figure \ref{Figure:ReidIII}, bottom). Let $\phi_2:\cA(\Lambda_0)\to\cA(\Lambda_1)$ be the graded algebra morphism given by
\begin{align*}
&\phi_2(q_i)=
\begin{cases}
q_i,&i\neq a\\
q_a-\epsilon_\Delta q_cq_b,& i=a
\end{cases}\\
&\phi_2(t^\pm)=t^\pm,
\end{align*}
where $\epsilon_\Delta$ is the product of the signs at the corners of $\omega_\Delta$ on $\Lambda_0$.

\begin{prop}\label{Prop:typeIII2}
The map $\phi_2:\cA(\Lambda_0)\to \cA(\Lambda_1)$ defined above satisfies 
$$d_1\circ \phi_2(s)=\phi_2\circ d_0(s)$$
for all $s\in\cA(\Lambda_0)$, and 
\begin{align*}
&\{\phi_2 s_1,\phi_2 s_2\}_{d_1}=(\phi_2\otimes\phi_2)\{s_1,s_2\}_{d_0}
\end{align*}
for all $s_1,s_2\in\widetilde\cA(\Lambda_0)$.
\end{prop}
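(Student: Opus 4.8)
The plan is to verify the two displayed identities on the generators $q_i,t^\pm$ and then to promote them to all of $\cA(\Lambda_0)$ using the extension principle of Lemma \ref{Lemma:Second_order_morphism} together with the derivation property of $d_0,d_1$. First observe that $\phi_2$ is an \emph{elementary} tame algebra isomorphism: it is the identity on every generator except $q_a$, to which it adds the word $-\epsilon_\Delta q_cq_b$ not containing the letter $q_a$ (as $a\neq b,c$). Hence by Lemma \ref{Lemma:inverse_existence_I_order} it is invertible, with inverse $q_a\mapsto q_a+\epsilon_\Delta q_cq_b$, so establishing the two identities suffices to produce the tame second-order dga isomorphism claimed in Proposition \ref{Prop:eq_after_III} for this case. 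Since the $\phi_2$-antibracket is trivial, the second identity is the plain compatibility $\{\phi_2 q_i,\phi_2 q_j\}_{d_1}=(\phi_2\otimes\phi_2)\{q_i,q_j\}_{d_0}$, with none of the correction terms that appeared in Proposition \ref{Prop:typeIII1}.

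The combinatorial heart of the argument is the correspondence mediated by the triangle $\omega_\Delta=q_ap_bp_c$, a disk with a negative corner at $a$ and two positive corners at $b,c$. For $i\neq a$ one has $\phi_2(q_i)=q_i$, so the differential identity reduces to $d_1(q_i)-d_0(q_i)=\phi_2\bigl(d_0(q_i)\bigr)-d_0(q_i)$; that is, the change in $d$ across the move must equal the substitution $q_a\mapsto -\epsilon_\Delta q_cq_b$ applied termwise to $d_0(q_i)$. I would split $d=d_\lD+d_A+d_f$ and treat the pieces separately. For disks with one positive puncture, a compactness argument on the one-parameter family of disks occurring across the Reidemeister III move (using Proposition \ref{Prop:compactness_disk}) shows that the curves which appear or disappear are exactly those whose corner at $a$ is replaced by consecutive corners at $c,b$ via gluing $\omega_\Delta$ along $\gamma_a$; this matches $d_{1,\lD}(q_i)-d_{0,\lD}(q_i)$ with the substitution applied to $d_{0,\lD}(q_i)$. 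For the annulus part I would invoke the obstruction section together with Corollary \ref{Corollary:counting_annuli_from_O}: the algebraic count of index-zero annuli changes precisely by the annuli carrying a negative corner at $a$ after the same triangle substitution, so the change is again captured by applying $\phi_2$ to $d_{0,A}(q_i)$. The $d_f$ part is purely combinatorial, and I would verify $d_{1,f}\circ\phi_2=\phi_2\circ d_{0,f}$ on generators by tracking how the capping paths, the linking numbers $l_i$, and the step functions $\delta(\cdot,\cdot)$ change as the arcs $a,b,c$ slide past one another, handling the cases $i\notin\{a,b,c\}$, $i\in\{b,c\}$, and $i=a$ exactly as in the proof of Proposition \ref{Prop:typeIII1}.

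The case $i=a$ is where the second-order structure genuinely enters. Here $d_1\circ\phi_2(q_a)=d_1(q_a)-\epsilon_\Delta d_1(q_cq_b)$, and expanding $d_1(q_cq_b)$ by the second-order Leibniz rule of Definition \ref{Definition:IIord_derivation} introduces the term $\{q_c,q_b\}_{d_1,\hbar}$, to which the triangle $\omega_\Delta$ contributes as a two-positive-puncture disk. Thus verifying $d_1\phi_2(q_a)=\phi_2 d_0(q_a)$ forces one to relate the disks with a positive puncture at $a$ on the two sides through this antibracket, and this coupling is the correct bookkeeping of the two-positive-puncture triangle against the quadratic substitution. The antibracket identity itself is handled analogously: for $i$ or $j$ equal to $a$ one expands $\{\phi_2 q_a,\phi_2 q_j\}_{d_1}=\{q_a-\epsilon_\Delta q_cq_b,q_j\}_{d_1}$ using the antibracket axioms, and matches the two-positive-puncture disk counts $d_{1,\lD}(q_i,q_j)$ and the combinatorial terms $d_{1,f}(q_i,q_j)$ against $\Lambda_0$ via the triangle, just as in the corresponding portion of Proposition \ref{Prop:typeIII1}.

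I expect the main obstacle to be the sign bookkeeping, concentrated in the annulus contributions and in the $i=a$ coupling. Confirming that $d_{1,A}(q_i)-d_{0,A}(q_i)$ agrees with $\phi_2$ applied to $d_{0,A}(q_i)$ \emph{with the correct signs} requires combining the coherent orientations of Section \ref{Section:Orientations} with the boundary-value analysis of the obstruction section (Lemma \ref{Lemma:OSection_hyperbolic} and Lemma \ref{Lemma:OSection_elliptic}), since an appearing family of annuli is detected by the values $\pm\infty$ of $\Omega$ at its non-split boundary points. As in Section \ref{Sec:orientation_and_algebraic_signs} this should reduce to checking that the orientation sign of each relevant degeneration equals the algebraic sign produced by $\epsilon_\Delta$ and the sign conventions of (\ref{Eq:Second_order_morphism_formula}); the added subtlety here, compared with the first Reidemeister III case, is that $\omega_\Delta$ carries two positive corners, so the sign $\epsilon_\Delta$ and the parity factors attached to both boundary components of the resulting annulus must be tracked at once.
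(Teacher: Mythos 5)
Your proposal is correct and takes essentially the same route as the paper's proof: reduction to the generators, the geometric disk/annulus correspondence in which the negative corner at $a$ is resolved into consecutive corners at $c,b$ across the shrinking triangle (split according to whether the corner covers the small triangle), the coupling at $i=a$ between appearing/disappearing annuli with positive puncture at $a$ and two-positive-puncture disks at $b,c$ via the second-order Leibniz expansion of $d_1(q_cq_b)$, and the direct combinatorial verification of the $d_f$-terms (including the correction $d_{1,f}(q_a)-d_{0,f}(q_a)=\epsilon(a)\hbar(q_a\otimes 1)$) alongside the orientation-sign checks. One small repair: Corollary \ref{Corollary:counting_annuli_from_O} governs type IV degenerations and does not apply during a Reidemeister III move; the change in the annulus count is instead read off from the $\pm\infty$ boundary values of the obstruction section on the appearing/disappearing one-parameter families (Lemma \ref{Lemma:OSection_hyperbolic}, Lemma \ref{Lemma:OSection_elliptic}), which is exactly the tool you correctly invoke afterwards for the sign bookkeeping.
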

\begin{proof}
It is enough to show
\begin{align*}
&d_1\circ\phi_2(q_i)=\phi_2\circ d_0(q_i),\\
&\{\phi_2 q_i,\phi_2 q_j\}_{d_1}=(\phi_2\otimes\phi_2)\{q_i,q_j\}_{d_0},
\end{align*}
for all $i,j\in\{1,\dots,n\}$.

Let first $i\neq a$. Every index zero disk on $\Lambda_1$ with a positive puncture at $\gamma_i$ that does not exist on $\Lambda_0$ is of the form  $u'=w_1q_cq_bw_2p_i$, where $u=w_1q_aw_2p_i$ is the corresponding index zero disk on $\Lambda_0$ with a negative corner at $a$ that does not cover the small triangle (see Figure \ref{Figure:disk_reidIII}). Moreover, if the disk $u$ comes with a sign $\epsilon(u)$, then it is not difficult to see that $u'$ comes with a sign $-\epsilon_\Delta\epsilon(u)$. Similarly, every index zero disk on $\Lambda_0$ with a positive puncture at $\gamma_i$ that does not exist on $\Lambda_1$ is of the form  $\overline u'=w_1q_cq_bw_2p_i$, where $\overline u=w_1q_aw_2p_i$ is the corresponding index zero disk on $\Lambda_0$ with a negative corner at $a$ that covers the small triangle. If $\overline u$ comes with a sign $\epsilon(\overline u)$, then $\overline u'$ comes with a sign $\epsilon_\Delta\epsilon(\overline u)$. Analogous holds for annuli with one positive puncture at $i\neq a$ and disks with two positive punctures at $i,j\neq a$ different from $\omega_\Delta$. 

Additionally, it is not difficult to see $d_{1,f}(q_i)=d_{0,f}(q_i)$ for $i\neq a$, $d_{1,f}(q_i,q_j)=d_{0,f}(q_i,q_j)$ for $\{i,j\}\neq \{b,c\},i,j\neq a$ and 
\begin{align*}
&d_{1,f}(q_b,q_c)-d_{0,f}(q_b,q_c)=-(-1)^{|q_b||q_c|}\left(\delta_1(c^-,b^+)-\delta_0(c^-,b^+)\right)1\otimes q_cq_b=\\
&=(-1)^{|q_b||q_c|}\epsilon(a)1\otimes q_cq_b=(\phi_2-\operatorname{id})\{q_b,q_c\}_{\widetilde d},
\end{align*}
where $\{q_b,q_c\}_{\widetilde d}=-(-1)^{|q_b||q_c|}\epsilon(a)\epsilon_\Delta 1\otimes q_a$ is obtained by gluing disk $\omega_\Delta=q_ap_bp_c$. Similarly, 
\begin{align*}
&d_{1,f}(q_c,q_b)-d_{0,f}(q_c,q_b)=-\left(\delta_1(b^+,c^-)-\delta_0(b^+,c^-)\right)q_cq_b\otimes 1=\\
&=-\epsilon(a)q_cq_b\otimes 1=(\phi_2-\operatorname{id})\{q_c,q_b\}_{\widetilde d},
\end{align*} 
where $\{q_c,q_b\}_{\widetilde d}=\epsilon(a)\epsilon_\Delta q_a\otimes 1$. This proves $d_{1}\circ \phi_2(q_i)=\phi_2\circ d_0(q_i)$ and $\{\phi_2 q_i,\phi_2 q_j\}_{d_1}=(\phi_2\otimes\phi_2)\{q_i,q_j\}_{d_0}$ for $i,j\neq a$.

Let now $i=a$. Every disk $u'$ with a positive puncture at $a$ that appears on $\Lambda_0$ and not on $\Lambda_1$ is of the form $q_cwp_a$ or $wq_bp_a$, where $u=wp_b$, respectively $u=wp_c$ is an index zero disk on $\Lambda_0$ whose positive corner at $b$ or $c$ covers the small triangle. If $u= w p_c$ ($u= w p_b$) comes with a sign $\epsilon(u)$, then it is not difficult to see that $u'$ comes with a sign $-\epsilon_\Delta\epsilon(u)$ ($-\epsilon_\Delta\epsilon(u)(-1)^{|q_c|}$). Similarly, for every index zero disk $u=wp_b$ or $u=wp_c$ on $\Lambda_0$ whose positive corner at $b$ or $c$ does not cover the small triangle, we have a corresponding disk $u'=q_cwp_a$, respectively $u'=wq_bp_a$ on $\Lambda_1$ that does not appear on $\Lambda_0$. Moreover, if $u= wp_c$ ($u= wp_b$) comes with a sign $\epsilon(u)$, then $u'$ comes with a sign $\epsilon_\Delta\epsilon(u)$ ($\epsilon_\Delta\epsilon(u)(-1)^{|q_c|}$). Similar holds for disks with two positive punctures and annuli. Additionally, similar as in the previous proposition, for every disk $w_1p_bw_2p_c$ other than $\omega_\Delta$ with two positive punctures at $b$ and $c$, we have a disappearing/appearing annulus $\hbar(w_1p_a\otimes w_2)$ with a positive puncture at $a$. 
This can be written as
\begin{align*}
(d_{1,\lD}+d_{1,A})(q_a)-(d_{0,\lD}+d_{0,A})(q_a)=\epsilon_\Delta(d_{1,\lD}+d_{1,A})(q_cq_b)- \epsilon(a)\hbar(q_a\otimes 1),
\end{align*}
where $\epsilon(a)$ is the orientation sign of the arc $b^+c^-$ on the small triangle on $\Lambda_0$. 

Additionally, it is not difficult to check that
\begin{align*}
&d_{1,f}(q_a)-d_{0,f}(q_a)=\epsilon(a)\hbar(q_a\otimes 1),
\end{align*}
and
\begin{align*}
\epsilon_\Delta d_{1,f}(q_cq_b)=-(\phi_2-\operatorname{id}) d_{0,f}(q_a).
\end{align*}
This shows $d_1\circ \phi_2(q_a)=\phi_2\circ d_0(q_a)$. Proof of $\{\phi_2 q_i,\phi_2 q_j\}_{d_1}=(\phi_2\otimes\phi_2)\{q_i,q_j\}_{d_0}$ for $i$ or $j$ equal to $a$ goes similarly.
\end{proof}

\paragraph{\textit{Proof of Proposition \ref{Prop:eq_after_III}:}}
Follows from Proposition \ref{Prop:typeIII1}  and Proposition \ref{Prop:typeIII2}. The inverses of the morphisms above are given by
\begin{align*}
&\phi_1^{-1}(q_i)=q_i,\phi_1^{-1}(t^\pm)=t^\pm,\\
&\{q_i,q_j\}_{\phi_1^{-1}}=\begin{cases}
-(-1)^{|q_b|}\epsilon_\Delta q_a\otimes 1,&i=b,j=c\\
(-1)^{(|q_c|+1)(|q_a|+1)}\epsilon_\Delta 1\otimes q_a,&i=c,j=b\\
0,&\text{otherwise}
\end{cases}\\
&\{t^\pm,q_i\}_{\phi_1^{-1}}=0,\{q_i,t^\pm\}_{\phi_1^{-1}}=0,\{t^\pm,t^\pm\}_{\phi_1^{-1}}=0,
\end{align*} 
and 
\begin{align*}
&\phi_2^{-1}(q_i)=
\begin{cases}
q_i,&i\neq a\\
q_a+\epsilon_\Delta q_cq_b,& i=a
\end{cases}\\
&\phi_2^{-1}(t^\pm)=t^\pm.
\end{align*}

\subsection{Reidemeister II move}

Let $\Lambda_s,s\in[0,1]$ be a Legendrian knot isotopy with a Reidemeister II move at $s=\frac{1}{2}$ in the Lagrangian projection as shown in Figure \ref{Figure:ReidII}. Denote by $\gamma_1,\dots,\gamma_n$ the Reeb chords on $\Lambda_1$ and by $(\cA(\Lambda_\iota),d_\iota,\{\cdot,\cdot\}_{d_\iota})$ the second-order dga corresponding to $\Lambda_\iota,\iota\in\{0,1\}$. Denote the two disappearing chords on $\Lambda_0$ by $a$ and $b$, where the action of $a$ is larger than the action of $b$. This section is divided into two parts. First, we show the stabilization $(\cA^s,d^s,\{\cdot,\cdot\}_{d^s})$ of the second-order dga $(\cA(\Lambda_1),d_1,\{\cdot,\cdot\}_{d_1})$ in degree $|q_a|$ is quasi-isomorphic to $(\cA(\Lambda_1),d_1)$ as a chain complex. Then, we show that the stabilized second-order dga $(\cA^s,d^s,\{\cdot,\cdot\}_{d^s})$ is tame isomorphic to $(\cA(\Lambda_0),d_0,\{\cdot,\cdot\}_{d_0})$.

\subsubsection{Stabilizations}\label{Sec:stabilizationII}
In Section \ref{Sec:stabilizationI} we defined the notion of stabilization of a second-order dga. In this section we show a stabilization $(\cA,d^s,\{\cdot,\cdot\}_{d^s})$ of a second-order dga $(\cA,d,\{\cdot,\cdot\}_d)$ is quasi-isomorphic to $(\cA,d)$ as a chain complex. Additionally, we describe a sufficient condition for a stabilization of one second-order dga to be isomorphic to another second-order dga that will be used to show invariance under Reidemeister II move in the following section.

Denote by $C\subset\cA^s$ the subspace generated by words that contain at least one letter $q_a,q_b$.

\begin{lemma}\label{Lemma:Chain_homotopy}
There exists a linear map $h:\cA^s\to C\subset\cA^s$ such that
\begin{align*}
h\circ d^s+d^s\circ h=\operatorname{id}-\tau,
\end{align*}
where $\tau:\cA^s\cong\cA\oplus C\to\cA\subset \cA^s$ is the projection. In particular, the chain complexes $(\cA,d)$ and $(\cA^s,d^s)$ are quasi-isomorphic.
\end{lemma}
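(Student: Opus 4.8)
The plan is to produce the homotopy $h$ explicitly and then read off the quasi-isomorphism formally. First I would record that the two structural maps are chain maps: the inclusion $\iota:\cA\hookrightarrow\cA^s$ satisfies $d^s\circ\iota=\iota\circ d$ because the stabilized differential restricted to words containing no $q_a,q_b$ is literally $d$, and the projection $\tau$ is a chain map because $d^s(C)\subseteq C$ — indeed every Leibniz term of $d^s$ either differentiates an original letter (leaving the special letters in place), converts $q_a$ to $q_b$, or produces a bracket $\{q_i,q_j\}_{d^s}$ among original generators, and in each case a special letter that was present stays present (the brackets involving $q_a$ or $q_b$ vanish by construction). Granting an $h:\cA^s\to C$ with $h d^s+d^s h=\operatorname{id}-\tau$, the maps $\iota,\tau$ become mutually inverse quasi-isomorphisms ($\tau\iota=\operatorname{id}_\cA$ and $\iota\tau\simeq\operatorname{id}_{\cA^s}$ via $h$), which gives the final assertion. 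So the whole content is the construction of $h$.

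Second, I would split $d^s=\partial_o+\partial_s$, where $\partial_s$ is the part converting each $q_a$ into $q_b$ and $\partial_o$ is everything else (the original $d$ on the $q_i$ together with the brackets among original generators); the key feature is that $\partial_s$ acts only on the special letters while $\partial_o$ never creates or destroys a special letter. The building block is the two-term acyclic complex $E$ spanned by $q_a,q_b$ with $q_a\mapsto q_b$, contracted by $h_E(q_b)=q_a,\ h_E(q_a)=0$, so that $\partial h_E+h_E\partial=\operatorname{id}_E$. On a monomial $w\in\widetilde\cA^s$ I would define $h$ by the tensor-product contraction associated to the decomposition of $w$ into original "coefficient" blocks interleaved with special letters: locate the leftmost special letter and apply $h_E$ to it, so $h(u q_b v)=(-1)^{|u|}u q_a v$ when $u$ has no special letter, and $h=0$ when the leftmost special letter is $q_a$ or when there is none. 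A direct Koszul-sign computation then gives $h\partial_s+\partial_s h=\operatorname{id}-\tau$ on $\widetilde\cA^s$ (this is exactly the acyclicity of a tensor product one of whose factors is $E$), while $h\partial_o+\partial_o h=0$ because $\partial_o$ acts on letters disjoint from the special slot that $h$ modifies, so the two graded-commute.

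Third, I would extend $h$ to the quantum part $\hbar(\widetilde\cA^s\otimes\widetilde\cA^{s,\cyc})$. On this summand the differential is diagonal, $d^s(\hbar(v\otimes w))=(-1)^{|w|+1}\hbar(d_0^s v\otimes w)-\hbar(v\otimes d_0^{s,\cyc}w)$, so up to the degree shift $\hbar(\widetilde\cA^s\otimes\widetilde\cA^{s,\cyc})$ is the tensor product of the complexes $(\widetilde\cA^s,d_0^s)$ and $(\widetilde\cA^{s,\cyc},d_0^{s,\cyc})$. The standard tensor contraction prescribes $h=h_1\otimes\operatorname{id}+(\iota_1\tau_1)\otimes h_2$, where $h_1$ is the leftmost-special-letter homotopy on the first factor built above and $h_2$ must be a contraction of the cyclic factor $(\widetilde\cA^{s,\cyc},d_0^{s,\cyc})$ onto $\widetilde\cA^{\cyc}$.

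Fourth, and this is where I expect the real difficulty, I would construct $h_2$. The inclusion $\widetilde\cA^{\cyc}\hookrightarrow\widetilde\cA^{s,\cyc}$ is a quasi-isomorphism for the same acyclic-pair reason, but the "leftmost special letter" recipe does not make sense on cyclic words, since there is no base letter against which to measure "leftmost". The plan is to break the cyclic symmetry by a fixed convention singling out one occurrence of a special letter in each cyclic word — for instance by cutting at a distinguished letter determined by the $\lZ$-linear independence of the actions $l(q_i),l(q_a),l(q_b)$ and then applying the linear recipe to the resulting honest word — and then to verify that the induced $h_2$ is well defined on cyclic classes and satisfies $h_2 d_0^{s,\cyc}+d_0^{s,\cyc}h_2=\operatorname{id}-\tau^{\cyc}$; the delicate points are independence of the cut and the Koszul signs generated by cyclic reordering, for which an averaging argument over $\lQ$ is the natural fallback should a single cut fail to descend manifestly. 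Once $h_1,h_2$ are in hand, assembling $h$ on all of $\cA^s$ and checking $h d^s+d^s h=\operatorname{id}-\tau$ reduces to the two already-verified sub-identities together with the compatibility of $h$ with the bracket terms of $\partial_o$, which move a word from $\widetilde\cA^s$ into the $\hbar$-part without touching special letters and hence follow from the disjoint-support commutation used in the second step. This completes the construction of $h$ and therefore the quasi-isomorphism.
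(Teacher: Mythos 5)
Your reduction to constructing $h$ is fine, and your observations that $d^s(C)\subseteq C$ and that $\iota,\tau$ are chain maps match the paper. But your primary construction has a genuine gap: the ``leftmost special letter'' contraction does not anticommute with the second-order part of $d^s$, contrary to your claim in the second step. The bracket terms of $d^s$ redistribute letters between the word factor and the cyclic factor, and in doing so they change which occurrence of a special letter is leftmost in each factor. Concretely, take $w=q_i\,q_b\,q_j\,q_b$ with $\{q_i,q_j\}_{d^s}=R^2\otimes R^1$ nonzero. Then $d^s(w)$ contains $\pm\hbar\,(R^2 q_b\otimes R^1 q_b)$, and your $h$ (via $h_1\otimes\operatorname{id}$, since $\tau_1(R^2q_b)=0$ kills the second term) converts the occurrence of $q_b$ in the \emph{first} factor, producing $\pm\hbar\,(R^2 q_a\otimes R^1 q_b)$; whereas $d^s(h(w))=d^s(\pm\,q_iq_aq_jq_b)$ produces $\pm\hbar\,(R^2 q_b\otimes R^1 q_a)$, where the \emph{other} occurrence was converted. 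These two $\hbar$-terms are linearly independent, so the $\hbar$-component of $(hd^s+d^sh)(w)$ cannot vanish, and the identity $hd^s+d^sh=\operatorname{id}-\tau$ fails on $w$. The same asymmetry is why your fourth step stalls: the cyclic homotopy $h_2$ is never actually constructed, only a cut convention is proposed, and you correctly flag that its well-definedness and its compatibility with the splitting are unresolved.

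The fix is exactly the ``averaging over $\lQ$'' you mention as a fallback, and it is the paper's proof: instead of privileging one occurrence, define the degree-one \emph{derivation} $\widehat h$ with $\widehat h(q_b)=q_a$ and $\widehat h=0$ on all other generators (extended to the $\hbar$-part by $\widehat h\otimes 1+1\otimes\widehat h$), and set $h=\tfrac{1}{n(w)}\widehat h(w)$ for $n(w)\neq 0$, where $n(w)$ counts occurrences of $q_a,q_b$. Because all brackets involving $q_a,q_b$ vanish, $\widehat h$ satisfies $(\widehat h\otimes 1+1\otimes\widehat h)\{u,v\}_{d^s}=\{\widehat h u,v\}_{d^s}+(-1)^{|u|}\{u,\widehat h v\}_{d^s}$, from which induction on word length gives $\widehat h\circ d^s+d^s\circ\widehat h=n(\cdot)\operatorname{id}$ simultaneously on the free and the $\hbar$-parts (using $n(\hbar(u\otimes v))=n(u)+n(v)$ and that $d^s$ preserves $n$); dividing by the eigenvalue yields the homotopy. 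This symmetric choice is precisely what makes the cross-terms with the bracket cancel in your failing example, and since a graded derivation preserves the ideal $\{vw-(-1)^{|v||w|}wv\}$, $\widehat h$ descends to $\widetilde\cA^{s,\cyc}$ automatically, eliminating the cut-dependence problem for $h_2$ entirely.
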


\begin{proof}
We define a (first order) derivation $\widehat h:\cA^s\to\cA^s$ by
\begin{align*}
&\widehat h(q_i)=
\begin{cases}
0,&i\neq b\\
q_a,&i=b
\end{cases}
\end{align*}
and a linear map
\begin{align*}
&h(w)=\begin{cases}
\frac{1}{n(w)}\widehat h(w), &n(w)\neq 0\\
0, &n(w)=0
\end{cases}
\end{align*}
where $n(w)$ is the number of appearances of $q_a$ and $q_b$ in word $w$. Obviously, we have $h(s)\in C$ for all $s\in\cA^s$.

We show that $h\circ d^s(s)+d^s\circ h(s)=s-\tau(s)$, or equivalently, $\widehat h\circ d^s(s)+d^s\circ \widehat h(s)=n(s)s$ for all words $s\in\cA^s$. It is easy to see that this holds for $s=q_a,q_b$
\begin{align*}
\widehat h\circ d^s(q_a)+d^s\circ \widehat h(q_a)=\widehat h(q_b)=q_a=n(q_a)q_a,\\
\widehat h\circ d^s(q_b)+d^s\circ \widehat h(q_b)=d^s(q_a)=q_b=n(q_b)q_b.
\end{align*}
We have $d^s(\cA)\subset\cA\subset\ker \widehat h$, and therefore
\begin{align*}
\widehat h\circ d^s(s)+d^s\circ \widehat h(s)=0=n(s)s
\end{align*}
for any word $s\in\cA$. It is not difficult to see that 
\begin{equation}
\label{Eq:homotopy_bracket}
\begin{aligned}
(\widehat h\otimes 1+1\otimes \widehat h)\{u,v\}_{d^s}=\{\widehat h u,v\}_{d^s}+(-1)^{|u|}\{u,\widehat h v\}_{d^s}
\end{aligned}
\end{equation} 
for all $u,v\in\cA^s$. First, we have $(\widehat h\otimes 1+1\otimes \widehat h)\{q_\iota,q_\kappa\}_{d^s}=0=\{\widehat h q_\iota,q_\kappa\}_{d^s}+(-1)^{|q_\iota|}\{q_\iota,\widehat h q_\kappa\}_{d^s}$ when $\iota$ or $\kappa$ is equal to $a$ or $b$, which follows trivially from $\{s_1,s_2\}_{d^s}=0$ for $s_1$ or $s_2$ equal to $q_a$ or $q_b$. Additionally, $(\widehat h\otimes 1+1\otimes\widehat h)\{s_1,s_2\}_{d^s}=0=\{\widehat h s_1,s_2\}_{d^s}+(-1)^{|s_1|}\{s_1,\widehat h s_2\}_{d^s}$ follows trivially for $s_1,s_2\in\cA$. This implies (\ref{Eq:homotopy_bracket}) for all $u,v\in\cA^s$ using the properties of the antibracket. Using (\ref{Eq:homotopy_bracket}), we get
\begin{align*}
&\widehat h\circ d^s(uv)+d^s\circ \widehat h(uv)=\left(\widehat hd^s(u)+d^s\widehat h(u)\right)v+u\left(\widehat hd^s(v)+d^s\widehat h(v)\right)
\end{align*}
for any $u,v\in\cA^s$, so the statement follows by induction on the length of the word. Moreover, for $\sigma=u\otimes v\in\widetilde\cA^s\otimes\widetilde\cA^{s}$ we have
\begin{align*}
&(\widehat h\otimes 1+1\otimes \widehat h)(d^s\otimes 1+1\otimes d^s)\sigma+(d^s\otimes 1+1\otimes d^s)(\widehat h\otimes 1+1\otimes \widehat h)\sigma=\\
=&\widehat hd^s(u)\otimes v+d^s\widehat h(u)\otimes v+u\otimes \widehat hd^s(v)+u\otimes d^s\widehat h(v)=\\
=&(n(u)+n(v))\sigma=n(\sigma)\sigma,
\end{align*}
which finishes the proof.
\end{proof}

Recall the notion of action on $\cA(\Lambda)$. Let $l(q_i)\in\lR_{>0}$ be the length of the Reeb chord $\gamma_i$. For any word $w=t^{\pm j_0}q_{i_1}t^{\pm j_1}\dots t^{\pm j_{k-1}}q_{i_k}t^{\pm j_k}\in\cA$, we define the action of $w$ as
$$l(w)=\sum_{j=1}^k l(q_{i_j}),$$
and similarly for $w=\hbar(w_1\otimes w_2)$
$$l\left(\hbar(w_1\otimes w_2)\right)=l(w_1)+l(w_2).$$
Additionally, we define 
$$l\left(\sum_{i=1}^k a_iw_i\right)=\max_{i=1,\dots, k}l(w_i)$$ 
for $w_i\in\cA$ generators and $a_i\in\lQ, a_i\neq 0,i=1,\dots,k$. For $(\cA^s,d^s,\{\cdot,\cdot\}_{d^s})$ a stabilization of $(\cA,d,\{\cdot,\cdot\}_d)$ and $l_a,l_b\in\lR_{>0}$, we extend the action on $\cA$ to $\cA^s$ as above by taking $l(q_a)=l_a,l(q_b)=l_b$. For a stabilization corresponding to a Reidemeister II move, $|l_a-l_b|$ is small.
\vspace{3.1mm}

Let $W\coloneq\{s=q_i,t^\pm q_i,q_i t^\pm,q_\iota q_\kappa\,|\,i=1,\dots,n;\iota,\kappa=1,\dots,n,a,b;l(s)>l(q_a)\}$. We order the words $s_1,\dots,s_k$ in $W$ by their action
\begin{align*}
&l(s_1)\leq l(s_2)\leq \dots\leq l(s_k),
\end{align*}
additionally requiring that $q_i$ comes before $t^\pm q_i,q_i t^\pm$. For $\Lambda$ generic, we can assume that the inequalities $l(s_i)\leq l(s_{i+1})$ are strict except for $s_i=q_\iota q_\kappa,s_{i+1}=q_\kappa q_\iota$ for some $\iota,\kappa$ and $\{s_i,s_{i+1}\}\subset\{q_j,q_jt^\pm,t^\pm q_j\}$ for some $j$. Denote by $L_i$ and $P_i,i\in\{0,\dots,k\}$ the subsets of $\{s_1,\dots,s_i\}$ consisting of words of length 1 and 2, respectively, such that 
$$L_i\sqcup P_i=\{s_1,\dots,s_i\}.$$

\begin{rmk} Since $0<l(q_a)-l(q_b)$ is small, the map $\pi_{\widetilde\cA^s}\circ d^s\circ\iota_{\widetilde\cA^s}$ decreases the action by at least $l(q_a)-l(q_b)$. The map $h$ from Lemma \ref{Lemma:Chain_homotopy} increases the action by at most $l(q_a)-l(q_b)$.
\end{rmk}

\begin{defi}
We say a linear map $f:\cA^s\to\cA'$ is \textit{weakly filtered} if 
\begin{align*}
&l(\pi_{\widetilde\cA}\circ f(s))\leq l(s),\\
&l(f(s))\leq l(s)+l(q_a)-l(q_b),
\end{align*}
and $f(\hbar\,(\widetilde\cA\otimes\widetilde\cA^{\cyc}))\subset\hbar(\widetilde\cA'\otimes\widetilde\cA'^{\cyc})$.
\end{defi}

The following lemma is the main ingredient in the proof of invariance under Reidemeister II move.

\begin{lemma}\label{Lemma:bootstrap}
Let $(\cA_0,d^0,\{\cdot,\cdot\}_{d^0}),(\cA_1,d^1,\{\cdot,\cdot\}_{d^1})$ be second-order dg algebras associated to Legendrian knots $\Lambda_0,\Lambda_1$ close enough to Reidemeister II degeneration as before, and let $(\cA^s,d^s,\{\cdot,\cdot\}_{d^s})$ be the stabilization of $(\cA_1,d^1,\{\cdot,\cdot\}_{d^1})$ in degree $|q_a|$. Assume there exists a weakly filtered tame second-order graded algebra isomorphism $\phi:\cA^s\to\cA_0$ with a weakly filtered inverse such that for the second-order differential
$$\widehat d=\phi^{-1}\circ d^0\circ \phi$$
and the corresponding antibracket $\{\cdot,\cdot\}_{\widehat d}$, we have
\begin{equation}\label{Eq:bootstrapping}
\begin{aligned}
&\tau\circ\widehat d(s)=\tau\circ d^s(s),\\
&(\tau\otimes \tau)\{s_1,s_2\}_{\widehat d}=(\tau\otimes \tau)\{s_1,s_2\}_{d^s},
\end{aligned}
\end{equation} 
for all $s\in\cA^s,s_1,s_2\in\widetilde\cA^s$, where $\tau:\cA^s\cong\cA_1\oplus  C\to\cA_1$ is the projection. Assume additionally that $\widehat d(s)=d^s(s),\{s_1,s_2\}_{\widehat d}=\{s_1,s_2\}_{d^s}$ whenever $l(s)\leq l(q_a),l(s_1s_2)\leq l(q_a)$. Then there exists a tame second-order dg algebra isomorphism $\Phi:(\cA^s,d^s,\{\cdot,\cdot\}_{d^s})\to(\cA_0,d^0,\{\cdot,\cdot\}_{d^0})$.
\end{lemma}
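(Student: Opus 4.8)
The plan is to reduce the statement to an internal conjugation problem on $\cA^s$ and then solve that problem by a Chekanov-style induction over the action filtration, using the contraction homotopy of Lemma \ref{Lemma:Chain_homotopy} to cancel the discrepancy one generator (or one pair of generators) at a time.

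First I would record that, by Lemma \ref{Lemma:Second_order_properties}(3), the pullback $\widehat d=\phi^{-1}\circ d^0\circ\phi$ is a strong second-order derivation on $\cA^s$ with $\widehat d^2=0$ and an induced antibracket $\{\cdot,\cdot\}_{\widehat d}$; moreover $\widehat d$ is weakly filtered since $\phi,\phi^{-1}$ are, and the same lemma makes $\phi$ itself a second-order dga morphism from $(\cA^s,\widehat d,\{\cdot,\cdot\}_{\widehat d})$ to $(\cA_0,d^0,\{\cdot,\cdot\}_{d^0})$. By Lemma \ref{Lemma:Second_order_properties}(1), compositions of second-order dga morphisms are again such; hence it suffices to produce a \emph{tame} second-order automorphism $\psi\colon\cA^s\to\cA^s$ realizing a second-order dga morphism from $(\cA^s,d^s,\{\cdot,\cdot\}_{d^s})$ to $(\cA^s,\widehat d,\{\cdot,\cdot\}_{\widehat d})$, i.e.\ with $\widehat d\circ\psi=\psi\circ d^s$ together with the bracket-compatibility relation of a second-order dga morphism. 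Then $\Phi\coloneqq\phi\circ\psi$ is the desired tame isomorphism, since
\[
d^0\circ\Phi=d^0\circ\phi\circ\psi=\phi\circ\widehat d\circ\psi=\phi\circ\psi\circ d^s=\Phi\circ d^s.
\]
Thus the problem becomes: conjugate $d^s$ into $\widehat d$ by a tame automorphism of $\cA^s$, while matching antibrackets.

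Second I would build $\psi$ as an action-convergent composition $\psi=\cdots\circ g_{i+1}\circ g_i\circ\cdots$ of elementary tame automorphisms indexed by the action-ordered list $W=\{s_1,\dots,s_k\}$, correcting at each step either the differential on a length-one generator or the antibracket on a length-two word. The base case is the hypothesis that $\widehat d$ and $\{\cdot,\cdot\}_{\widehat d}$ already agree with $d^s,\{\cdot,\cdot\}_{d^s}$ on all data of action $\le l(q_a)$ (in particular on $t^\pm,q_a,q_b$). For the step at a generator $q_j$ of action $l(s_{i+1})$, write $\widehat d_i$ for the partially corrected differential and set $e\coloneqq\widehat d_i(q_j)-d^s(q_j)$. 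Using $\widehat d_i^2=0$ and the inductive matching on strictly lower action (where $\widehat d_i$ and $d^s$ coincide as second-order derivations, since $d^s$ strictly lowers action on the $\widetilde\cA^s$-part and every length-two subword of $d^s q_j$ has already been processed), $e$ is a $\widehat d_i$-cycle and hence a $d^s$-cycle modulo strictly higher action, with $\tau(e)=0$. The identity $h\circ d^s+d^s\circ h=\operatorname{id}-\tau$ of Lemma \ref{Lemma:Chain_homotopy} then gives $d^s(-h(e))=-e$ modulo higher action, so I would take the elementary automorphism $g$ with $g(q_j)=q_j-h(e)$. The weak-filtration bound $l(h(e))\le l(e)+\bigl(l(q_a)-l(q_b)\bigr)$, together with $0<l(q_a)-l(q_b)$ small, guarantees that the correction lies in $C$, raises action, and leaves the already-matched lower-action generators untouched.

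Third, for a step correcting the antibracket on a length-two word $q_\iota q_\kappa$, I would run the same argument on the tensor square, using the contraction $\widehat h\otimes 1+1\otimes\widehat h$ whose homotopy identity $(\widehat h\otimes 1+1\otimes \widehat h)(d^s\otimes 1+1\otimes d^s)+(d^s\otimes 1+1\otimes d^s)(\widehat h\otimes 1+1\otimes \widehat h)=n(\cdot)$ is established inside the proof of Lemma \ref{Lemma:Chain_homotopy}: the discrepancy $\{q_\iota,q_\kappa\}_{\widehat d_i}-\{q_\iota,q_\kappa\}_{d^s}$ satisfies $(\tau\otimes\tau)(\cdot)=0$ and is a cycle modulo higher action, so it is cancelled by the elementary second-order automorphism with the corresponding $\{q_\iota,q_\kappa\}_{g}$. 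Crucially, I would not re-verify the strong derivation relation \eqref{Equation:third_condition} by hand at each stage: since every $g_i$ is a second-order graded algebra morphism, Lemma \ref{Lemma:Second_order_properties}(3) ensures that each conjugate $\widehat d_i$ is automatically a strong second-order derivation with respect to its induced antibracket, which is what keeps the differential- and bracket-corrections mutually consistent. Convergence of the composition follows because each $g_i$ differs from the identity only in terms of action strictly above $l(s_i)\ge l(q_a)$, so only finitely many $g_i$ affect any fixed word; the resulting $\psi$ is tame and weakly filtered with weakly filtered inverse by Lemma \ref{Lemma:inverse_existence}.

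The main obstacle I anticipate is precisely the verification, at each inductive step, that the discrepancy $e$ is a genuine cycle with $\tau(e)=0$ of strictly higher action than the data processed so far, so that $-h(e)$ (respectively the tensor-square correction) is an admissible \emph{elementary} correction. This requires simultaneously balancing $\widehat d_i^2=0$, $(d^s)^2=0$, the homotopy identities of Lemma \ref{Lemma:Chain_homotopy} on both $\cA^s$ and its tensor square, and the two weak-filtration inequalities, while confirming that $\tau\circ\widehat d_i=\tau\circ d^s$ and $(\tau\otimes\tau)\{\cdot,\cdot\}_{\widehat d_i}=(\tau\otimes\tau)\{\cdot,\cdot\}_{d^s}$ are preserved under conjugation. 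Interleaving the differential- and bracket-corrections in the single action order of $W$, so that neither type of correction undoes the other, is the delicate bookkeeping point where the smallness of $l(q_a)-l(q_b)$ and the precise definition of \emph{weakly filtered} are used in an essential way.
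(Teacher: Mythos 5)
Your overall strategy coincides with the paper's: reduce to conjugating $d^s$ into $\widehat d=\phi^{-1}\circ d^0\circ\phi$ by a tame automorphism built as a composition of elementary corrections indexed by the action-ordered list $W$, using the homotopy of Lemma \ref{Lemma:Chain_homotopy} for length-one generators and its tensor-square analogue for the antibracket on length-two words, then setting $\Phi=\phi\circ(\text{corrections})$ and invoking uniqueness of second-order derivations determined on generators. However, there is a genuine gap in your per-generator step, and it is exactly the obstacle you flagged without resolving. Your one-step correction $g(q_j)=q_j-h(e)$ with $e=\widehat d_i(q_j)-d^s(q_j)$ only yields $d^s(-h(e))=-e-h(d^se)$, and $d^se\neq 0$ in general: the inductive matching gives $\widehat d_i=d^s$ only on terms of \emph{strictly} lower action in $\widetilde\cA^s$, whereas $e$ and $d^s q_j$ contain $\hbar$-components such as $\hbar(q_j\otimes 1)$ and $\hbar(1\otimes q_j)$ whose action equals $l(q_j)$ — not ``strictly higher action,'' as you assert. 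On these the two differentials need not yet agree, so $e$ is not an exact cycle and the leftover $h(d^se)$ survives. Deferring this error is fatal to your scheme: the induction never revisits $q_j$ (later elementary automorphisms fix all generators of action $\leq l(q_j)$), so the invariant $d_i(s_j)=d^s(s_j)$ for processed $s_j$, which every subsequent step relies on, is broken.

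The paper closes precisely this gap with a \emph{two-step} correction per generator. A first conjugation by $\phi_i'(q_\iota)=q_\iota+h(d^sq_\iota-d_{i-1}q_\iota)$ achieves only $d_i'(q_\iota)\sim d^s(q_\iota)$, i.e.\ agreement of the $\pi_{\widetilde\cA^s}$-parts. This partial agreement is then leveraged: since the second-order derivation property forces $d(\hbar(q_\iota\otimes 1))$ and $d(\hbar(1\otimes q_\iota))$ to be determined by $d_0q_\iota$, equality of the $\widetilde\cA^s$-parts gives $d^s\circ d_i'(q_\iota)=d_i'\circ d_i'(q_\iota)=0$ \emph{exactly}, so a second conjugation by $\phi_i''(q_\iota)=q_\iota+h(d^sq_\iota-d_i'q_\iota)$ — whose correction term now has vanishing $\pi_{\widetilde\cA^s}$-part — kills the remaining $\hbar$-discrepancy on the nose via Lemma \ref{Lemma:Chain_homotopy} and $\tau(d^s-d_i')=0$. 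Your bracket-correction step is essentially the paper's (with $\{q_\iota,q_\kappa\}_{\phi_i}=(h\otimes 1+1\otimes h)$ applied to the discrepancy), but its exact cancellation likewise depends on first knowing $d_{i-1}=d^s$ on all generators entering the relevant summands, which is an artifact of the length-one steps having been completed exactly — another reason the deferred-error version of your induction does not close.
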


\begin{proof}
Let $s_i,i=1,\dots,k$ be words in $W$ ordered as above. We construct inductively a sequence of second-order graded algebra morphisms $\phi_i,i\in\{0,1,\dots,k\}$ and second-order differentials $d_i:\cA^s\to\cA^s,i\in\{-1,0,1,\dots,k\}$ (together with antibrackets $\{\cdot,\cdot\}_{d_i}$)
\begin{align*}
&d_{-1}=d^0,\\
&\phi_0=\phi,
\end{align*}
such that $d_i$ satisfies property (\ref{Eq:diff_action_property}), for $i\geq 0$
\begin{align*}
&d_i=\phi_i^{-1}\circ d_{i-1}\circ \phi_i,\\
&\tau\circ d_i=\tau\circ d^s,\\
&(\tau\otimes \tau)\{\cdot,\cdot\}_{d_i}=(\tau\otimes\tau)\{\cdot,\cdot\}_{d^s},\\
&d_i(s)=d^s(s)\text{ for }s\in L_i,\\
&\{s_1,s_2\}_{d_i}=\{s_1,s_2\}_{d^s}\text{ for }s_1s_2\in P_i,
\end{align*}
and such that $d_i(s)=d^s(s),\{s_1,s_2\}_{d_i}=\{s_1,s_2\}_{d^s}$ whenever $l(s)\leq l(q_a),l(s_1s_2)\leq l(q_a)$.

The proof goes by induction. By assumption, $d_0=\widehat d$ and $\phi_0=\phi$ satisfy the conditions above. The fact that $\phi,\phi^{-1}$ are weakly filtered implies that $\widehat d$ satisfies (\ref{Eq:diff_action_property}). Assume now we have constructed $\phi_j,d_j$ for $j<i$ such that the conditions above are satisfied.

First, assume $s_i=q_\iota$ ($\iota\neq a,b$). The construction is done in two steps. Consider the graded algebra morphism $\phi_i':\cA^s\to\cA^s$ given by
\begin{align*}
\phi_i'(q_j)=\begin{cases}
q_j,&j\neq\iota\\
q_\iota+h(d^sq_\iota-d_{i-1}q_\iota),&j=\iota
\end{cases}
\end{align*}
and $\phi_i'(t^\pm)=t^\pm$. Summands in $d^sq_\iota-d_{i-1}q_\iota$ are either of action $< l(q_\iota)-2l(q_a)+2l(q_b)$ or of the form $\hbar(q_\iota\otimes 1),\hbar(1\otimes q_\iota)$ (since $\iota\neq a,b$). Therefore, all the summands in $h(d^sq_\iota-d_{i-1}q_\iota)$ are of action smaller than $l(q_\iota)$, using $h(\hbar(q_\iota\otimes 1))=0=h(\hbar(1\otimes q_\iota))$. It follows that $\phi_i'$ is filtered. By Lemma \ref{Lemma:inverse_existence_I_order}, $\phi_i'$ is invertible with a filtered inverse. The second-order differential $d_{i}'=\phi_i'^{-1}\circ d_{i-1}\circ \phi_i'$ therefore satisfies (\ref{Eq:diff_action_property}). 
 
All summands in $d_{i-1}(s_j)$ for $j<i$ are of action $\leq l(s_j)<l(q_\iota)$, therefore, for all $j<i$ we have
\begin{align*}
d_i'(s_j)= \phi_i'^{-1}\circ d_{i-1}(s_j)=d_{i-1}(s_j)=d^s(s_j).
\end{align*}
Additionally, $\{s_1,s_2\}_{d_i'}=(\phi_i'^{-1}\otimes \phi_i'^{-1})\{\phi_i's_1,\phi_i's_2\}_{d_{i-1}}=\{s_1,s_2\}_{d^s}$ trivially follows for any $s_1s_2\in P_i=P_{i-1}$. Similarly, it is easy to see $d_i'(q_i)=d^s(q_i),\{q_j,q_k\}_{d_i'}=\{q_j,q_k\}_{d^s}$ whenever $l(q_i)\leq l(q_a),l(q_jq_k)\leq l(q_a)$.

For $w_1,w_2\in\cA^s$, we say $w_1\sim w_2$ if $\pi_{\widetilde\cA^s}(w_1-w_2)=0$. Next we show $d_i'(q_\iota)\sim d^s(q_\iota)$. First, notice that
\begin{align*}
d_{i-1}\left(h(d^sq_\iota-d_{i-1}q_\iota)\right)=d^s(h(d^sq_\iota-d_{i-1}q_\iota)),
\end{align*}
since $l(\sigma)<l(q_\iota)$ for all summands $\sigma$ in $h(d^sq_\iota-d_{i-1}q_\iota)$. Using Lemma \ref{Lemma:Chain_homotopy} and $\tau\circ d^s=\tau\circ d_{i-1}$ we get
\begin{align*}
&d^s(h(d^sq_\iota-d_{i-1}q_\iota))=d^sq_\iota-d_{i-1}q_\iota-h(d^s(d^sq_\iota-d_{i-1}q_\iota))=\\
=&d^sq_\iota-d_{i-1}q_\iota+h(d^s\circ d_{i-1}q_\iota).
\end{align*}
Since $d_{i-1}$ satisfies (\ref{Eq:diff_action_property}), we get $d^s(d_{i-1}q_\iota)\sim d_{i-1}^2 q_\iota= 0$, and therefore
\begin{align*}
d_{i-1}(h(d^sq_\iota-d_{i-1}q_\iota))\sim d^sq_\iota-d_{i-1}q_\iota.
\end{align*}
Now we have
\begin{align*}
d_i'(q_\iota)=&\phi_i'^{-1}\circ d_{i-1}\circ \phi_i'(q_\iota)=\\
=&\phi_i'^{-1}(d_{i-1}q_\iota+d_{i-1}(h(d^sq_\iota-d_{i-1}q_\iota)))\sim\\
\sim&\phi_i'^{-1}\circ d^s (q_\iota).
\end{align*}

Finally, we have $\phi_i'^{-1}\circ d^s(q_\iota)\sim d^s(q_\iota)$ since $d^s$ satisfies (\ref{Eq:diff_action_property}). More precisely, $d^sq_\iota= m\hbar(q_\iota\otimes 1)+n \hbar(1\otimes q_\iota) + \widetilde S$ for some $m,n\in\lZ$ and $\widetilde S\in\cA^s$ such that $l(\widetilde S)<l(q_\iota)$, for which we have $\phi_i'^{-1}(\widetilde S)=\widetilde S$.

Moreover, for all $s\in\cA^s,s_1,s_2\in\widetilde\cA^s$ we show
\begin{align*}
&\tau\circ d_{i}'(s)=\tau\circ d^s(s),\\
&(\tau\otimes \tau)\{s_1,s_2\}_{d_i'}=(\tau\otimes\tau)\{s_1,s_2\}_{d^s}.
\end{align*} 
It is enough to prove $\tau(d_i'(s)-d_{i-1}(s))=0$ and $(\tau\otimes\tau)(\{s_1,s_2\}_{d_i'}-\{s_1,s_2\}_{d_{i-1}})=0$. Since $d^s(C)\subset C$, using $\tau(d^s-d_{i-1})=0$ we get $d_{i-1}(C)\subset C$. It is easy to see from the definition that the images of $\phi_i'-\operatorname{id}$ and $\phi_i'^{-1}-\operatorname{id}$ are in $C$ since the image of $h$ is in $C$. Then $\tau(d_i'(s)-d_{i-1}(s))=0$ follows trivially. Similarly, for $x$ or $y$ in $C$ we have $\{x,y\}_{d^s}\in C$, which implies $\{x,y\}_{d_{i-1}}\in C$. Together with $\phi_i'(s)-s\in C$ and $\phi_i'^{-1}(s)-s\in C,\forall s\in\cA^s$, we get $(\tau\otimes\tau)\{s_1,s_2\}_{d_i'}=(\tau\circ \phi_i'^{-1}\otimes\tau\circ \phi_i'^{-1})\{\phi_i's_1,\phi_i's_2\}_{d_{i-1}}=(\tau\otimes \tau)\{s_1,s_2\}_{d_{i-1}}$.


In the second step, we consider the graded algebra morphism $\phi_i'':\cA^s\to\cA^s$ given by
\begin{align*}
\phi_i''(q_j)=\begin{cases}
q_j,&j\neq \iota\\
q_\iota+h(d^sq_\iota-d_{i}'q_\iota),&j=\iota
\end{cases}
\end{align*}
and define $\phi_i=\phi_i'\circ\phi_i''$. Similar as above, all the summands in $h(d^sq_\iota-d_i'q_\iota)$ are of action $<l(q_\iota)$. Then, $\phi_i''$ is filtered with a filtered inverse, and  $d_{i}\coloneq\phi_i''^{-1}\circ d_i'\circ \phi_i''=\phi_i^{-1}\circ d_{i-1}\circ \phi_i$ satisfies (\ref{Eq:diff_action_property}). Moreover, $\pi_{\widetilde\cA^s}\circ h(d^sq_\iota-d_i'q_\iota)=0$ since $d^s q_\iota\sim d_{i}'q_\iota$.

All summands in $d_i'(s_j),j<i$ are of action $\leq l(s_j)<l(q_\iota)$, therefore, for all $j<i$ we have
\begin{align*}
d_i(s_j)= \phi_i''^{-1}\circ d_{i}'(s_j)=d_{i}'(s_j)=d^s(s_j).
\end{align*}
Additionally, $\{s_1,s_2\}_{d_i}=(\phi_i''^{-1}\otimes\phi_i''^{-1})\{\phi_i''s_1,\phi_i''s_2\}_{d_{i}'}=\{s_1,s_2\}_{d^s}$ trivially follows for any $s_1s_2\in P_i=P_{i-1}$ since $l(s_1s_2)<l(q_\iota)$. Similarly, it is easy to see $d_i(s)=d^s(s),\{s_1,s_2\}_{d_i}=\{s_1,s_2\}_{d^s}$ whenever $l(s)\leq l(q_a),l(s_1s_2)\leq l(q_a)$.

Next, we show $d_i(q_\iota)=d^s(q_\iota)$. Since $l(h(d^sq_\iota-d_i'q_\iota))<l(q_\iota)$, we have $d_i'(h(d^sq_\iota-d_i'q_\iota))=d^s(h(d^sq_\iota-d_i'q_\iota))$.  Moreover, since $\pi_{\widetilde\cA^s}(d_i'(q_\iota)-d^s(q_\iota))=0$, we have $d^s(\hbar(1\otimes q_\iota))=d_i'(\hbar(1\otimes q_\iota)),d^s(\hbar(q_\iota\otimes 1))=d_i'(\hbar(q_\iota\otimes 1))$. This implies $d^s\circ d_i'(q_\iota)=d_i'\circ d_i'(q_\iota)=0$. Now, we get 
\begin{align*}
&d_{i}'(h(d^sq_\iota-d_{i}'q_\iota))=d^s(h(d^sq_\iota-d_{i}'q_\iota))=\\
=&d^sq_\iota-d_{i}'q_\iota
\end{align*} 
using Lemma \ref{Lemma:Chain_homotopy} and $\tau(d^s-d_i')=0$. Similarly, using $\pi_{\widetilde\cA^s}(\phi_i''(q_\iota)-q_\iota)=0$ together with the fact that $d^s$ satisfies (\ref{Eq:diff_action_property}), we show $\phi_i''^{-1}\circ d^s(q_\iota)=d^s(q_\iota)$. This implies,
\begin{align*}
d_i(q_\iota)=&\phi_i''^{-1}\circ d_{i}'\circ \phi_i''(q_\iota)=\\
=&\phi_i''^{-1}(d_{i}'(q_\iota)+d_{i}'(h(d^sq_\iota-d_{i}'q_\iota)))=\\
=&\phi_i''^{-1}\circ d^s(q_\iota)=\\
=&d^s(q_\iota).
\end{align*}

Moreover, we show
\begin{align*}
&\tau\circ d_{i}(s)=\tau\circ d^s(s),\\
&(\tau\otimes\tau)\{s_1,s_2\}_{d_i}=(\tau\otimes\tau)\{s_1,s_2\}_{d^s},
\end{align*}
same as before for all $s\in\cA^s,s_1,s_2\in\widetilde\cA^s$.

Next, we consider the case $s_i=q_\iota q_\kappa$ ($t^\pm q_j,q_j t^\pm$ goes similarly). Note that here we can have $\iota$ or $\kappa$ equal to $a,b$. We define a second-order graded algebra morphism $\phi_i$ as follows
\begin{align*}
&\phi_i(q_i)=q_i,i=1,\dots,n,a,b,\\
&\phi_i(t^\pm)=t^\pm,\\
&\{q_i, q_j\}_{\phi_i}=
\begin{cases}
(h\otimes 1+1\otimes h)(\{q_\iota, q_\kappa\}_{d_{i-1}}-\{q_\iota, q_\kappa\}_{d^s}),&i=\iota,j=\kappa\\
0,&\text{otherwise}
\end{cases}
\end{align*}
We have $d_{i-1}(s)=d^s(s)$ for $s=q_\iota,q_\kappa,q_a,q_b$, as well as for any $q_j$ such that $l(q_j)<l(q_\iota q_\kappa)$. This implies
\begin{align}
\label{Eq:higher_action_works}
(d^s\otimes 1+1\otimes d^s)\sigma=(d_{i-1}\otimes 1+1\otimes d_{i-1})\sigma
\end{align}
for all summands $\sigma$ in $\{q_\iota, q_\kappa\}_{d_{i-1}},\{q_\iota, q_\kappa\}_{d^s}$ and $\{q_\iota,q_\kappa\}_{\phi_i}\in\widetilde\cA\otimes\widetilde\cA$. Additionally, we have $\{s_1,s_2\}_{d^s}=\{s_1,s_2\}_{d_{i-1}}$ for all $s_1,s_2\in\cA^s$ such that $l(s_1s_2)<l(q_\iota q_\kappa)$. All summands $\sigma$ in $(h\otimes 1+1\otimes h)\left(\{q_\iota, q_\kappa\}_{d_{i-1}}-\{q_\iota, q_\kappa\}_{d^s}\right)$ have action smaller than or equal to $l(q_\iota q_\kappa)+l(q_a)-l(q_b)$. It follows that $\phi_i$ is weakly filtered. Moreover, by Lemma \ref{Lemma:inverse_existence}, it is invertible and the inverse is weakly filtered. 
 
We define second-order derivation $d_i=\phi_i^{-1}\circ d_{i-1}\circ \phi_i$ (with respect to the antibracket $\{v,w\}_{d_i}=-(\phi_i^{-1}\otimes \phi_i^{-1})(d_{i-1}\otimes 1+1\otimes d_{i-1})\{v,w\}_{\phi_{i}}+(\phi_i^{-1}\otimes \phi_i^{-1})\{\phi_i v,\phi_i w\}_{d_{i-1}}+\{d_{i-1}\phi_i v,\phi_i w\}_{\phi_i^{-1}}+(-1)^{|v|}\{\phi_i v,d_{i-1}\phi_i w\}_{\phi_i^{-1}}$, see Lemma \ref{Lemma:Second_order_properties}).

Maps $(\phi_i-\operatorname{id})=\pi_{\widetilde\cA\otimes\widetilde\cA^{\cyc}}\circ(\phi_i-\operatorname{id}),(\phi_i^{-1}-\operatorname{id})=(\phi_i^{-1}-\operatorname{id})\circ\pi_{\widetilde\cA^s}$ increase the action by at most $l(q_a)-l(q_b)$ and $\pi_{\widetilde\cA^s}\circ d_{i-1},d_{i-1}\circ\pi_{\widetilde\cA\otimes\widetilde\cA^{\cyc}}$ decrease it by at least $l(q_a)-l(q_b)$, therefore, $d_i=d_{i-1}+(\phi_i^{-1}-\operatorname{id})\circ d_{i-1}+d_{i-1}\circ(\phi_i-\operatorname{id})$ satisfies (\ref{Eq:diff_action_property}).

It is easy to see that $(\phi_i^{-1}\otimes \phi_i^{-1})(v\otimes w)=v\otimes w$ for all $v,w$ and $\{s_1,s_2\}_{\phi_i^{-1}}=0$ whenever $l(s_1s_2)<l(q_\iota q_\kappa)$. From this and the fact that $\pi_{\widetilde\cA}\circ d_{i-1}$ decreases the action, we conclude $\{q_\iota,q_\kappa\}_{d_i}=-(d^s\otimes 1+1\otimes d^s)\{q_\iota,q_\kappa\}_{\phi_{i}}+\{ q_\iota, q_\kappa\}_{d_{i-1}}$. Using Lemma \ref{Lemma:Chain_homotopy}, $(\tau\otimes\tau)(\{\cdot,\cdot\}_{d^s}-\{\cdot,\cdot\}_{d_{i-1}})=0$, and the observations above, we get
\begin{align*}
-&(d^s\otimes 1+1\otimes d^s)\{q_\iota,q_\kappa\}_{\phi_{i}}-\{q_\iota, q_\kappa\}_{d^s}+\{q_\iota, q_\kappa\}_{d_{i-1}}= \\
=&(d^s\otimes 1+1\otimes d^s)\circ (h\otimes 1+1\otimes h)\left(\{q_\iota, q_\kappa\}_{d^s}-\{q_\iota, q_\kappa\}_{d_{i-1}}\right)-\\
-&\{q_\iota, q_\kappa\}_{d^s}+\{q_\iota, q_\kappa\}_{d_{i-1}}=\\
=&-(h\otimes 1+1\otimes h)\circ (d^s\otimes 1+1\otimes d^s)(\{q_\iota, q_\kappa\}_{d^s}-\{q_\iota, q_\kappa\}_{d_{i-1}})=\\
=&-(h\otimes 1+1\otimes h)((d^s\otimes 1+1\otimes d^s)\{q_\iota, q_\kappa\}_{d^s}-(d_{i-1}\otimes 1+1\otimes d_{i-1})\{q_\iota, q_\kappa\}_{d_{i-1}})=\\
=&-(h\otimes 1+1\otimes h)(\{d^sq_\iota, q_\kappa\}_{d^s}+(-1)^{|q_\iota|}\{q_\iota, d^sq_\kappa\}_{d^s}-\{d^sq_\iota, q_\kappa\}_{d_{i-1}}-(-1)^{|q_\iota|}\{q_\iota, d^sq_\kappa\}_{d_{i-1}})=\\
=&0.
\end{align*}

It follows that $\{q_\iota,q_\kappa\}_{d_i}=\{q_\iota,q_\kappa\}_{d^s}$. The last equality follows from the fact that $\pi_{\widetilde\cA^s}\circ d^s$ decreases the action.

For all $s_1s_2\in P_{i-1}$, we have
\begin{align*}
\{s_1,s_2\}_{d_i}=&\{s_1,s_2\}_{d_{i-1}}+\{d_{i-1}s_1,s_2\}_{\phi_i^{-1}}+(-1)^{|s_1|}\{s_1,d_{i-1}s_2\}_{\phi_i^{-1}}=\\
=&\{s_1,s_2\}_{d_{i-1}}=\{s_1,s_2\}_{d^s}.
\end{align*}
Additionally, for all $s\in L_i=L_{i-1}$ we have
\begin{align*}
d_i(s)=\phi_i^{-1}\circ d_{i-1}(s)=d_{i-1}(s)=d^s(s).
\end{align*}
Similarly, $d_i(q_i)=d^s(q_i),\{q_j,q_k\}_{d_i}=\{q_j,q_k\}_{d^s}$ whenever $l(q_i)\leq l(q_a),l(q_jq_k)\leq l(q_a)$.

We show $\tau\circ d_{i}(s)=\tau\circ d^s(s),\forall s\in\cA^s$ and $(\tau\otimes\tau)\{s_1,s_2\}_{d_{i}}=(\tau\otimes\tau)\{s_1,s_2\}_{d^s},\forall s_1,s_2\in\widetilde\cA^s$ as before using $\operatorname{Im}(\phi_i-\operatorname{id}),\operatorname{Im}(\phi_i^{-1}-\operatorname{id})\subset C$.

Finally, we define a second-order graded algebra isomorphism $\Phi=\phi\circ\phi_1\circ\dots\circ\phi_k:\cA^s\to\cA_0$. Then by construction, $D\coloneq\Phi^{-1}\circ d^0\circ\Phi$ is a second-order differential such that $D(s)=d_k(s)=d^s(s)$ and $\{s_1,s_2\}_D=\{s_1,s_2\}_{d_k}=\{s_1,s_2\}_{d^s}$ for all length 1 words $s,s_1,s_2$. Using Lemma \ref{Lemma:Second_order_derivation}, we get 
$$\Phi^{-1}\circ d_0\circ\Phi(s)=d^s(s)$$
for all $s\in\cA^s$, which finishes the proof.
\end{proof}

\subsubsection{Invariance under Reidemeister II move}
\label{Section:Invar_II_degeneration}

Let $\Lambda_s,s\in[0,1]$ be a Legendrian knot isotopy with a Reidemeister II move as before. Denote the "small" disappearing bigon with a positive puncture at $a$ and a negative puncture at $b$ by $\widetilde\omega=q_bp_a$. 

\begin{defi}\label{Def:admissible_ReidII}
We say a Reidemeister II move is \textit{admissible} if there are no index zero disks on $\lR\times \Lambda_0$ with two positive punctures, where both of them are at $a$. 
\end{defi}

In this section we show invariance under admissible Reidemeister II moves. For any two knots obtained by taking front resolutions of Legendrian isotopic knots, there is an isotopy that does not pass through non-admissible Reidemeister II moves (see Remark \ref{Remark:typeII_extra_assumption} below). 

We denote by $(\cA^s,d^s,\{\cdot,\cdot\}_{d^s})$ the stabilization of $(\cA(\Lambda_1),d_1,\{\cdot,\cdot\}_{d_1})$ in degree $|q_a|$ as before. There is an obvious algebra isomorphism $\cA^s\cong\cA(\Lambda_0)$. The main goal of this section is to prove the following proposition.

\begin{prop}\label{Prop:eq_after_II}
Let $\Lambda_s,s\in[0,1]$ be an admissible Reidemeister II move as above, then the second-order dg algebras $(\cA(\Lambda_0),d_0,\{\cdot,\cdot\}_{d_0})$ and $(\cA^s,d^s,\{\cdot,\cdot\}_{d^s})$ are tame isomorphic. In particular, 
$$H_*(\cA(\Lambda_0),d_0)\cong H_*(\cA(\Lambda_1),d_1).$$
\end{prop}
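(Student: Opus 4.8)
The plan is to deduce Proposition \ref{Prop:eq_after_II} from the bootstrapping Lemma \ref{Lemma:bootstrap}, applied to the stabilization $(\cA^s,d^s,\{\cdot,\cdot\}_{d^s})$ of $(\cA(\Lambda_1),d_1,\{\cdot,\cdot\}_{d_1})$ in degree $|q_a|$ and to $(\cA(\Lambda_0),d_0,\{\cdot,\cdot\}_{d_0})$. Here I use the action filtration with $l(q_a),l(q_b)$ set to the lengths of the two disappearing chords, so that $0<l(q_a)-l(q_b)$ is small, as required in Section \ref{Sec:stabilizationII}. To invoke the lemma I must produce a weakly filtered tame second-order graded algebra isomorphism $\phi\colon\cA^s\to\cA(\Lambda_0)$ with weakly filtered inverse, whose transported differential $\widehat d=\phi^{-1}\circ d_0\circ\phi$ satisfies the two projected identities in (\ref{Eq:bootstrapping}) together with the exact small-action agreement $\widehat d(s)=d^s(s)$ and $\{s_1,s_2\}_{\widehat d}=\{s_1,s_2\}_{d^s}$ whenever $l(s)\le l(q_a)$, $l(s_1s_2)\le l(q_a)$. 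Granting these hypotheses, Lemma \ref{Lemma:bootstrap} upgrades $\phi$ to a genuine tame second-order dga isomorphism $\Phi$, and the isomorphism of homologies follows by combining this with the fact that stabilization is a quasi-isomorphism of the underlying complexes (Lemma \ref{Lemma:Chain_homotopy}).

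The initial $\phi$ will be built from the correspondence between $J$-holomorphic curves on $\lR\times\Lambda_0$ and $\lR\times\Lambda_1$ near the move, modelled locally as in Lemma \ref{Lemma:ReidII-local_form}. For a sufficiently thin bigon $\widetilde\omega=q_bp_a$ the only rigid disk on $\lR\times\Lambda_0$ with a positive puncture at $a$ is the bigon itself, while there are no rigid curves with a positive puncture at $b$; hence $d_0(q_a)=\pm q_b$ and $d_0(q_b)=0$. After an elementary sign-normalizing automorphism I may therefore take $\phi$ to be, at leading order, the obvious identification of generators with trivial $\phi$-bracket, so that $\widehat d=d_0$ on the identified algebras; this $\phi$ and its inverse are manifestly weakly filtered. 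The exact small-action conditions then reduce to the statement that curves of action $\le l(q_a)$ cannot reach the finger region, so that $d_0(q_i)=d_1(q_i)$ for the low-action chords $q_i$ of $\Lambda_1$ and $\widehat d(q_a)=q_b$, $\widehat d(q_b)=0$ match $d^s$; these are standard consequences of the locality of the move.

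The substance of (\ref{Eq:bootstrapping}) is the geometric assertion that, modulo all terms containing $q_a$ or $q_b$ (that is, after applying $\tau$ and $\tau\otimes\tau$), the disk, annulus, and corrected-loop-coproduct contributions to $d_0$ and $\{\cdot,\cdot\}_{d_0}$ reproduce those of $d_1$ and $\{\cdot,\cdot\}_{d_1}$. For the disk and two-positive-puncture part this is the Chekanov--Ng curve correspondence carried out in \cite{Chekanov02,Ng_rLSFT,ng_linf}, which I will reuse verbatim. The admissibility hypothesis of Definition \ref{Def:admissible_ReidII} enters precisely in the bracket part: since $q_a,q_b$ form an acyclic pair eliminated by the bootstrapping, the only contribution involving $a$ that could survive $\tau\otimes\tau$ in a non-absorbable way is an index zero disk with two positive punctures both at $a$, contributing a term to $\{q_a,q_a\}_{d_0}$ that is quadratic in $q_a$; admissibility rules these out, so that $(\tau\otimes\tau)\{\cdot,\cdot\}_{\widehat d}$ agrees with $(\tau\otimes\tau)\{\cdot,\cdot\}_{d^s}$, which vanishes on any pair involving $a$ or $b$. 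The existence of an isotopy between front resolutions avoiding non-admissible moves is recorded in Remark \ref{Remark:typeII_extra_assumption}.

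I expect the main obstacle to be the annulus and corrected-loop-coproduct contributions, which have no counterpart in the Chekanov--Ng disk theory. Concretely, I must show that modulo the acyclic $q_a,q_b$ part the count of index zero annuli and the maps $\delta,\nabla$ are preserved across the move. The key tools are Lemma \ref{Lemma:OSection_ReidII} and Lemma \ref{Lemma:OSection_ReidIIb}, which identify the boundary values of the obstruction section for the two ways a $1$-parameter family of annuli on $\lR\times\Lambda_s$ can pinch at the bigon (Figure \ref{Fig:annulus_pinch_reidII}): families that pinch off entirely acquire boundary values $\pm\infty$, while those degenerating into an annular part plus the bigon keep their obstruction value. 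Matching these with the self-intersection and interior-intersection bookkeeping that defines $\nabla$, as the finger is pushed through, is the delicate point; once these identities are established, weak filtration of $\phi,\phi^{-1}$ and the projected identities follow as in the disk case, and Lemma \ref{Lemma:bootstrap} completes the proof.
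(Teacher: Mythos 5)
Your high-level strategy coincides with the paper's: construct a weakly filtered tame second-order graded algebra isomorphism $\phi:\cA^s\to\cA(\Lambda_0)$ satisfying (\ref{Eq:bootstrapping}) and the small-action agreement, invoke Lemma \ref{Lemma:bootstrap}, and conclude the homology statement via Lemma \ref{Lemma:Chain_homotopy}. However, there is a genuine gap in your construction of the initial $\phi$. You claim that for a sufficiently thin bigon $d_0(q_a)=\pm q_b$ and $d_0(q_b)=0$, so that after a sign normalization you may take $\phi$ to be the identity identification with trivial $\phi$-antibracket, giving $\widehat d=d_0$. This is false: thinness of the bigon controls only the action gap $l(q_a)-l(q_b)$, not the absence of other rigid disks with positive puncture at $a$, and generically $d_0(q_a)=\epsilon(-1)^{|q_a|}q_b+\omega$ with $\omega\neq 0$ (of action $<l(q_a)$, containing no $q_b$). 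With $\phi=\operatorname{id}$ the hypothesis $\tau\circ\widehat d=\tau\circ d^s$ of Lemma \ref{Lemma:bootstrap} fails: $\tau\circ d_0(q_i)$ counts only rigid disks on $\Lambda_0$ with no corners at $a,b$, whereas $d_1(q_i)=\tau\circ d^s(q_i)$ counts all rigid disks on $\Lambda_1$, including those that pinch across the bigon region into broken configurations with corners at $a$ and $b$ (Figure \ref{Fig:disk_pinching}). The Chekanov--Ng correspondence you invoke does not identify $d_1$ with $\tau\circ d_0$; it identifies $d_1$ with $\psi_0\circ d_0$, where $\psi_0$ is the substitution $q_a\mapsto 0$, $q_b\mapsto -\epsilon(-1)^{|q_a|}\pi_{\cA(\Lambda_1)}\omega$ (Lemma \ref{Lemma:prop2b}). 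Moreover, the bootstrapping iteration \emph{preserves} (\ref{Eq:bootstrapping}) at every step rather than repairing it --- all corrections lie in $C$ --- so one cannot hope the induction fixes this failure afterwards.

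What is actually needed is the paper's non-identity seed morphism: $\phi_0(q_b)=d_0(q_a)$ (up to normalization) together with the nontrivial $\phi_0$-antibracket $\{q_b,q_j\}_{\phi_0}=\{q_a,q_j\}_{d_0}$ and $\{q_b,q_b\}_{\phi_0}=\frac{1}{2}\{q_a,d_0q_a\}_{d_0}-\frac{(-1)^{|q_a|}}{2}\{d_0q_a,q_a\}_{d_0}$; verifying (\ref{Eq:bootstrapping}) then splits into the algebraic identities $\psi_0\circ\phi_0=\tau$ and the vanishing of $\psi_0\circ d_0\circ\phi_0$ on $C$ (Lemma \ref{Lemma:phi_0_inverse_proj}, Lemma \ref{Lemma:prop2a}), plus the geometric correspondence $d_1=\psi_0\circ d_0$ with its bracket analogue. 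You correctly flag the annulus part of the latter (via Lemmas \ref{Lemma:OSection_ReidII} and \ref{Lemma:OSection_ReidIIb}) as the delicate point, and deferring that verification is reasonable in outline; but note also that your account of admissibility is imprecise: in the paper it is used not only for the two-positive-puncture bracket terms (excluding the pinching of Figure \ref{Fig:disk_pinching_inadmissible}, which is what makes $\{q_b,q_a\}_{\psi_0}=0$ and hence $\psi_0$ usable), but also to exclude the annulus pinching of Figure \ref{Fig:annulus_pinching_inadmissible}. Finally, your small-action reduction should be compared with Remark \ref{Remark:pairing_t}, where the agreement $d_0(q_i)=d_1(q_i)$ is recorded for $l(q_i)<l(q_b)$ together with the explicit identities for $s\in\{q_a,q_b,t^\pm\}$, rather than derived from a locality claim alone.
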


We define a second-order algebra morphism $\phi_0:\cA^s\to\cA(\Lambda_0)\cong\cA^s$ by taking
\begin{align*}
&\phi_0(q_i)=
\begin{cases}
q_i,&i\neq b\\
\epsilon(-1)^{|q_a|} q_b+\omega,&i=b
\end{cases}\\
&\phi_0(t^\pm)=t^\pm
\end{align*}
where $d_0(q_a)=\epsilon(-1)^{|q_a|}q_b+\omega$,
\begin{align*}
&\{q_i,q_j\}_{\phi_0}=\begin{cases}
\{q_a,q_j\}_{d_0},&i=b,j\neq b\\
(-1)^{|q_i|}\{q_i,q_a\}_{d_0},&i\neq b,j=b\\
\frac{1}{2}\{q_a,d_0(q_a)\}_{d_0}-\frac{(-1)^{|q_a|}}{2}\{d_0(q_a),q_a\}_{d_0},&i=j=b
\end{cases}\\
&\{t^\pm,q_b\}_{\phi_0}=\{t^\pm,q_a\}_{d_0},\\
&\{q_b,t^\pm\}_{\phi_0}=\{q_a,t^\pm\}_{d_0},
\end{align*}
and zero otherwise. Note that the product of the signs at the corners of $\widetilde\omega$ is equal to $(-1)^{|q_a|}$, and $\epsilon$ is the orientation sign of the arc $a^+b^+$ with respect to the orientation on $\Lambda_0$. Summands in $\omega=d_0(q_a)-\epsilon(-1)^{|q_a|}q_b$ do not contain $q_b$.

Our goal is to show that the morphism $\phi_0$ satisfies the properties of Lemma \ref{Lemma:bootstrap}.

\begin{rmk}\label{Remark:pairing_t}
For any $i,j,k\in\{1,\dots,n\}$ such that $l(q_i),l(q_jq_k)<l(q_b)$, we have $d_0(q_i)=d_1(q_i),\{q_j,q_k\}_{d_0}=\{q_j,q_k\}_{d_1}$. Additionally, by definition we have
\begin{align*}
\phi_0\circ d^s(s)=d_0\circ \phi_0(s)
\end{align*}
for $s\in\{q_a,q_b,t^\pm\}$, and 
\begin{align*}
&\{d^ss_1,s_2\}_{\phi_0}+(-1)^{|s_1|}\{s_1,d^ss_2\}_{\phi_0}+(\phi_0\otimes\phi_0)\{s_1,s_2\}_{d^s}=\\
=&\{\phi_0s_1,\phi_0s_2\}_{d_0}-(d_0\otimes 1+1\otimes d_0)\{s_1,s_2\}_{\phi_0}
\end{align*} 
for $s_1=t^\pm, s_2\in\{q_a,q_b\}$ and $s_1\in\{q_a,q_b\},s_2=t^\pm$.
\end{rmk}

\begin{rmk}\label{Remark:typeII_extra_assumption}
We show that there exists an admissible isotopy between the front resolutions of any two Legendrian isotopic knots. Let $\Lambda_s,s\in[0,1]$ be a Legendrian knot isotopy that in the front projection has one Reidemeister II degeneration (see \cite{Ng03_front}) where a right or a left cusp crosses some arc. 
Denote the newly created chords on $\Lambda_1$ by $a$ and $b$, with $l(a)>l(b)$. We say an index zero disk after the move is \textit{inadmissible} if it has two positive punctures, both of them at the left quadrant at $a$ if we have a right cusp and at the right quadrant at $a$ if we have a left cusp.\\ 
Assume first that a right cusp crosses either over or under. We notice that any inadmissible disk $u$ would have to have at least one more positive puncture different from $a$, since the two positive punctures at $a$ are in the left quadrant. Therefore, in this case there exists no inadmissible disk with two positive punctures. It follows that the corresponding isotopy of the front resolutions is admissible.
\\
Next, assume that a left cusp crosses an arc and that there exists an inadmissible disk $u$ in the front resolution of $\Lambda_1$. Then we can change the isotopy into an admissible isotopy as follows. Since $u$ is of index zero, i.e. the rotation number of its boundary is one, and it has two positive corners at $a$ in the right quadrant, then there exists a point $s_0$ on $\partial u$ such that $u$ has a vertical tangency at $s_0$ and such that $u$ is on the right side of its boundary in a neighborhood of $s_0$. In other words, $\partial u$ passes through a right cusp from the right in the front projection. Fix a path on $u$ from $s_0$ to another boundary point that separates the two positive punctures on $u$. We change the original non-admissible isotopy in a way such that, first, we move the cusp $s_0$ along the chosen path, crossing the boundary of $u$. Since $s_0$ is a right cusp, this isotopy is admissible. We can repeat this step until all inadmissible disks in the original isotopy are gone. Afterwards, we cross with the original left cusp. Finally, we cross back with all the right cusps the same way as in the first step.\\
Note additionally that the Reidemeister I degeneration in the front projection gives us an admissible Reidemeister II move in the Lagrangian projection after resolution. Here, there are no index zero disks with two positive punctures at $a$.
\end{rmk}

\begin{lemma}
Map $\phi_0:\cA^s\to\cA(\Lambda_0)\cong\cA^s$ is invertible and weakly filtered.
\end{lemma}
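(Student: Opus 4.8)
The plan is to establish the two assertions separately: first bijectivity, which I will deduce from the structure of $\phi_0$ as a triangular map with respect to the splitting $\cA^s=\widetilde\cA^s\oplus\hbar\,(\widetilde\cA^s\otimes\widetilde\cA^{s,\cyc})$, and then the two action estimates in the definition of weakly filtered, which I will obtain from the action property \eqref{Eq:diff_action_property} of $d_0$ together with the explicit formula \eqref{Eq:Second_order_morphism_formula}. Throughout write $\phi_{0,0}\coloneq\pi_{\widetilde\cA}\circ\phi_0\circ\iota_{\widetilde\cA}\colon\widetilde\cA^s\to\widetilde\cA^s$ for the classical part of $\phi_0$, and recall that $\phi_0$ is recovered from the data $(\phi_{0,0},\{\cdot,\cdot\}_{\phi_0})$ via Lemma \ref{Lemma:Second_order_morphism}.

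For invertibility I would first check that $\phi_{0,0}$ is an algebra automorphism of $\widetilde\cA^s$. Indeed $\phi_{0,0}(q_i)=q_i$ for $i\neq b$, $\phi_{0,0}(t^\pm)=t^\pm$, and $\phi_{0,0}(q_b)=\epsilon(-1)^{|q_a|}q_b+\pi_{\widetilde\cA}(\omega)$, where $\epsilon(-1)^{|q_a|}=\pm1$ and, by hypothesis, $\omega$ contains no letter $q_b$. Thus $\phi_{0,0}$ is the composition of the elementary automorphism rescaling $q_b$ by a unit with the elementary automorphism $q_b\mapsto q_b+(\text{a }q_b\text{-free word})$; in particular it is tame and invertible, with $\phi_{0,0}^{-1}$ of the same elementary form. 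Since $\phi_0$ is a second-order graded algebra morphism it acts on the $\hbar$-part by $\hbar(v\otimes w)\mapsto\hbar(\phi_{0,0}v\otimes\phi_{0,0}^{\cyc}w)$, a bijection of $\hbar\,(\widetilde\cA^s\otimes\widetilde\cA^{s,\cyc})$, while on $\widetilde\cA^s$ the $\widetilde\cA^s$-component of $\phi_0$ is exactly $\phi_{0,0}$ and the remainder lands in $\hbar\,(\widetilde\cA^s\otimes\widetilde\cA^{s,\cyc})$. With respect to the splitting above $\phi_0$ is therefore triangular with invertible diagonal blocks $\phi_{0,0}$ and $\phi_{0,0}\otimes\phi_{0,0}^{\cyc}$, hence a bijection; that the inverse is again a second-order graded algebra morphism is Lemma \ref{Lemma:Second_order_properties}(2).

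For the filtration estimates I would exploit that $\Lambda_0$ is close to the Reidemeister II degeneration, so that $0<l(q_a)-l(q_b)$ is smaller than every positive gap between word-actions; in particular there is no word of action strictly between $l(q_b)$ and $l(q_a)$. By \eqref{Eq:diff_action_property} every word of $\pi_{\widetilde\cA}(d_0(q_a))$ has action $<l(q_a)$, hence $\le l(q_b)$, so $l(\pi_{\widetilde\cA}(\omega))\le l(q_b)=l(\phi_{0,0}(q_b))$. As $\phi_{0,0}$ preserves the action of the remaining generators and is multiplicative, this gives $l(\phi_{0,0}(w))\le l(w)$ for all words, i.e. the first inequality $l(\pi_{\widetilde\cA}\circ\phi_0(s))\le l(s)$. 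For the second inequality I would read off \eqref{Eq:Second_order_morphism_formula}: the $\hbar$-part of $\phi_0(q_{s_1}\dots q_{s_k})$ replaces a single pair $q_{s_i}q_{s_j}$ by the bracket datum and sends every other letter through $\phi_{0,0}$ (only the $\widetilde\cA^s$-part of each $\phi_0(q_{s_m})$ survives, since products of two $\hbar$-terms vanish). The only nonzero brackets $\{\cdot,\cdot\}_{\phi_0}$ are those in which a factor $q_b$ (or $t^\pm$ paired with $q_b$) is promoted to the $d_0$-bracket $\{q_a,\cdot\}_{d_0}$ or $\{\cdot,q_a\}_{d_0}$, whose action — using \eqref{Eq:diff_action_property}, and for $\{q_b,q_b\}_{\phi_0}$ the fact that the antibracket only sees $\pi_{\widetilde\cA}(d_0(q_a))$ of action $\le l(q_b)$ — exceeds that of the corresponding word in $q_b$'s by at most $l(q_a)-l(q_b)$. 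Summing, $l(\phi_0(s))\le l(s)+l(q_a)-l(q_b)$, and $\phi_0(\hbar(\cdots))\subset\hbar(\cdots)$ holds by definition, so $\phi_0$ is weakly filtered.

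Finally, to see that $\phi_0^{-1}$ is weakly filtered I would combine the explicit $\phi_{0,0}^{-1}$ (again action non-increasing, with $\phi_{0,0}^{-1}(q_b)=\pm q_b+(\text{action}\le l(q_b))$) with the formula $\{u,v\}_{\phi_0^{-1}}=-(\phi_{0,0}^{-1}\otimes\phi_{0,0}^{-1})\{\phi_{0,0}^{-1}u,\phi_{0,0}^{-1}v\}_{\phi_0}$ of Lemma \ref{Lemma:Second_order_properties}(2); since $\phi_{0,0}^{-1}$ does not raise the action, the bracket of $\phi_0^{-1}$ is bounded exactly as for $\phi_0$, so the same two estimates hold. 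The one genuinely nontrivial point — the step I expect to require the most care — is the action-gap argument forcing $l(\pi_{\widetilde\cA}(\omega))\le l(q_b)$ rather than merely $<l(q_a)$; it is what pins down the precise meaning of ``$\Lambda_0$ close enough to Reidemeister II degeneration,'' it governs both filtration estimates (including the delicate $\{q_b,q_b\}_{\phi_0}$ term), and it is exactly what keeps $\phi_0$ inside the weakly filtered class demanded by Lemma \ref{Lemma:bootstrap}. Everything else is bookkeeping with \eqref{Eq:Second_order_morphism_formula} and the triangular structure.
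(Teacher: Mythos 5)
Your proof is correct, and it reaches the lemma by a genuinely different route than the paper. The paper factorizes the \emph{map}: it writes $\phi_0=\phi_2\circ\phi_1$ with $\{\cdot,\cdot\}_{\phi_0}=\{\phi_1\cdot,\phi_1\cdot\}_{\phi_2}$, where $\phi_1$ is the first-order elementary morphism $q_b\mapsto d_0(q_a)=\epsilon(-1)^{|q_a|}q_b+\omega$ (invertible as in Lemma \ref{Lemma:inverse_existence_I_order} because $\omega$ is $q_b$-free, with explicit inverse $\phi_1^{-1}(q_b)=\epsilon(-1)^{|q_a|}(q_b-\omega)$), and $\phi_2$ is the identity on generators carrying the suitably corrected brackets (invertible by Lemma \ref{Lemma:inverse_existence}); you instead decompose the \emph{space}, observing that $\phi_0$ is block-triangular for $\cA^s=\widetilde\cA^s\oplus\hbar\,(\widetilde\cA^s\otimes\widetilde\cA^{s,\cyc})$ with invertible diagonal blocks $\phi_{0,0}$ and $\phi_{0,0}\otimes\phi_{0,0}^{\cyc}$, and invoke Lemma \ref{Lemma:Second_order_properties}(2) for the inverse's second-order structure. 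Both are sound, and your treatment of the filtration estimates is in fact more explicit than the paper's, which disposes of them essentially by the remark preceding the lemma (that $\pi_{\widetilde\cA^s}\circ d^s\circ\iota_{\widetilde\cA^s}$ decreases action by at least $l(q_a)-l(q_b)$). Your refined bound $l(\pi_{\widetilde\cA}(\omega))\le l(q_b)$ is exactly that remark applied to $d_0$ — the action-gap argument you give is the correct justification and is indeed where ``close enough to the degeneration'' enters — and you rightly flag that without it the term $\{q_b,q_b\}_{\phi_0}=\tfrac{1}{2}\{q_a,d_0(q_a)\}_{d_0}-\tfrac{(-1)^{|q_a|}}{2}\{d_0(q_a),q_a\}_{d_0}$ would only be bounded by $l(q_bq_b)$ plus \emph{two} gaps via the crude estimate $l(\pi_{\widetilde\cA}d_0(q_a))<l(q_a)$ from \eqref{Eq:diff_action_property}, which would fall outside the weakly filtered class.

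One comparative point worth noting: what the paper's factorization buys beyond the statement of this lemma is that $\phi_0$ is visibly a composition of elementary first- and second-order morphisms, i.e.\ \emph{tame}, which is the hypothesis consumed downstream by Lemma \ref{Lemma:bootstrap} and Proposition \ref{Prop:eq_after_II}; your triangularity argument establishes invertibility without exhibiting tameness (though your own observation that $\phi_{0,0}$ is a composition of elementary automorphisms, combined with the bracket part, reassembles precisely the paper's $\phi_2\circ\phi_1$). Also, in your last step the general formula \eqref{Eq:Second_order_morphism_formula} for $\phi_0^{-1}$ on long words requires the full generators' images $\phi_0^{-1}(q_i)$ including their $\hbar$-components, not just $\phi_{0,0}^{-1}$ and the inverse bracket; this is harmless bookkeeping — e.g.\ $\pi_{\hbar\,(\widetilde\cA\otimes\widetilde\cA^{\cyc})}\phi_0^{-1}(q_b)=-\epsilon(-1)^{|q_a|}(\phi_{0,0}^{-1}\otimes\phi_{0,0}^{-1,\cyc})\pi_{\hbar\,(\widetilde\cA\otimes\widetilde\cA^{\cyc})}\omega$ has action at most $l(q_b)+l(q_a)-l(q_b)$ — but deserves a sentence if you write this up in full.
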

\begin{proof}
It is not difficult to see that $\phi_0=\phi_2\circ \phi_1,\{\cdot,\cdot\}_{\phi_0}=\{\phi_1\cdot,\phi_1\cdot\}_{\phi_2}$, for $\phi_1:\cA^s\to\cA^s$ graded algebra morphism given by
\begin{align*}
&\phi_1(q_i)=\begin{cases}
q_i,&i\neq b\\
d_0(q_a),&i=b
\end{cases}\\
&\phi_1(t^\pm)=t^\pm,
\end{align*}
and  $\phi_2:\cA^s\to\cA^s$ second-order graded algebra morphism given by
\begin{align*}
&\phi_2(q_i)=q_i,\phi_2(t^\pm)=t^\pm,\\
&\{q_i,q_j\}_{\phi_2}=\begin{cases}
\{q_b,q_b\}_{\phi_0}-\{q_b,\omega\}_{\phi_0}-\{\omega,q_b\}_{\phi_0},&i=j=b\\
\epsilon(-1)^{|q_a|}\{q_b,q_j\}_{\phi_0},&i=b,j\neq b\\
\epsilon(-1)^{|q_a|}\{q_i,q_b\}_{\phi_0},&i\neq b,j= b
\end{cases}\\
&\{q_b,t^\pm\}_{\phi_2}=\epsilon(-1)^{|q_a|}\{q_b,t^\pm\}_{\phi_0},\\
&\{t^\pm,q_b\}_{\phi_2}=\epsilon(-1)^{|q_a|}\{t^\pm,q_b\}_{\phi_0},
\end{align*}
and zero otherwise. Map $\phi_2$ is invertible by Lemma \ref{Lemma:inverse_existence}. Similar as in Lemma \ref{Lemma:inverse_existence_I_order}, $\phi_1$ is invertible and the inverse is given by
\begin{align*}
\phi_1^{-1}(q_i)=\begin{cases}
q_i,&i\neq b\\
\epsilon(-1)^{|q_a|}(q_b-\omega),&i=b
\end{cases}
\end{align*}
and $\phi_1(t^\pm)=t^\pm$.
\end{proof}

Next, we define a second-order graded algebra morphism $\psi_0:\cA^s\to\cA(\Lambda_1)$ by
\begin{align*}
&\psi_0(q_i)=
\begin{cases}
-\epsilon(-1)^{|q_a|}\pi_{\cA(\Lambda_1)}\omega,&i=b\\
0,&i=a\\
q_i,&i\neq a,b
\end{cases}\\
&\psi_0(t^\pm)=t^\pm,
\end{align*}
which uniquely determines $\psi_0\otimes\psi_0$ on $\hbar\,(\widetilde\cA^s\otimes\widetilde\cA^{s})$, together with
\begin{align*}
&\{q_i,q_j\}_{\psi_0}=
\begin{cases}
-\epsilon(-1)^{|q_a|}(\psi_0\otimes\psi_0)\{q_a,q_j\}_{d_0},&i=b,j\neq b\\
-\epsilon(-1)^{|q_a|+|q_i|}(\psi_0\otimes\psi_0)\{q_i,q_a\}_{d_0},&i\neq b,j=b\\
\frac{1}{2}(\psi_0\otimes\psi_0)\left(-\{q_a,\epsilon(-1)^{|q_a|}q_b-\omega\}_{d_0}+(-1)^{|q_a|}\{\epsilon(-1)^{|q_a|}q_b-\omega,q_a\}_{d_0}\right),&i=j=b
\end{cases}
\end{align*}
and zero otherwise. Note that by assumption we have $\{q_b,q_a\}_{\psi_0}=-\epsilon(-1)^{|q_a|}(\psi_0\otimes\psi_0)\{q_a,q_a\}_{d_0}=0$ and similarly $\{q_a,q_b\}_{\psi_0}=0$. From this we can conclude $\{q_a,s\}_{\psi_0}=0,\{s,q_a\}_{\psi_0}=0$ for all $s$.

In the following lemma, we show that $\psi_0$ is the inverse of $\phi_0$ modulo words with $q_a,q_b$. Recall we defined $C\subset \cA^s$ as the subspace generated by words with at least one letter $q_a,q_b$.

\begin{lemma}\label{Lemma:phi_0_inverse_proj}
We have 
\begin{align*}
&\psi_0\circ\phi_0=\tau,\\
&\{\cdot,\cdot\}_{\psi_0\circ\phi_0}=\{\phi_0\cdot,\phi_0\cdot\}_{\psi_0}+(\psi_0\otimes\psi_0)\{\cdot,\cdot\}_{\phi_0}=0,
\end{align*}
where $\tau:\cA^s\cong\cA(\Lambda_1)\oplus C\to\cA(\Lambda_1)$ is the projection. 
\end{lemma}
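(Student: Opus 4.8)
The plan is to treat both assertions uniformly as a statement about second-order graded algebra morphisms. First I would record that the projection $\tau\colon\cA^s\cong\cA(\Lambda_1)\oplus C\to\cA(\Lambda_1)$ is itself a second-order graded algebra morphism whose associated antibracket vanishes identically: since $C$ is a two-sided ideal, $\tau$ is the quotient algebra map, so $\tau(vw)-\tau(v)\tau(w)=0$ and hence $\{\cdot,\cdot\}_\tau=0$. By Lemma~\ref{Lemma:Second_order_properties}(1) the composite $\psi_0\circ\phi_0$ is again a second-order graded algebra morphism, with $(\psi_{0,0}\circ\phi_{0,0})$-antibracket $\{v,w\}_{\psi_0\circ\phi_0}=\{\phi_{0,0}v,\phi_{0,0}w\}_{\psi_0}+(\psi_{0,0}\otimes\psi_{0,0})\{v,w\}_{\phi_0}$. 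By the uniqueness statement in Lemma~\ref{Lemma:Second_order_morphism}, such a morphism is determined by its values on the generators $q_i,t^\pm$ together with its antibracket on pairs of generators; therefore it suffices to check that $\psi_0\circ\phi_0$ and $\tau$ agree on generators (which yields the first displayed equality) and that $\{q_i,q_j\}_{\psi_0\circ\phi_0}=0$ for all $i,j$ and the $t^\pm$ analogues (which yields the second).

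The only nontrivial generator is $q_b$, since $\phi_0$ and $\psi_0$ fix the others up to $\psi_0(q_a)=0$. Recall $\phi_0(q_b)=d_0(q_a)=\epsilon(-1)^{|q_a|}q_b+\omega$. The key observation I would isolate is that $\psi_0$ and $\pi_{\cA(\Lambda_1)}$ agree on $\omega$. Indeed, $\omega$ contains no letter $q_b$ by construction, and its $\widetilde\cA$-part $\omega'=\pi_{\widetilde\cA}\,d_0(q_a)-\epsilon(-1)^{|q_a|}q_b$ contains no $q_a$ either, because $\pi_{\widetilde\cA}\circ d_0$ strictly decreases the action by \eqref{Eq:diff_action_property} whereas any word containing $q_a$ has action at least $l(q_a)$; on such words $\psi_0$ acts as the identity and the relevant $\psi_0$-antibrackets vanish, so $\psi_0$ and $\pi_{\cA(\Lambda_1)}$ coincide there. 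On the remaining summands of $\omega$, namely those containing $q_a$ (including the $\hbar$-terms from $d_A(q_a)$ and $d_f(q_a)$), both $\psi_0$ and $\pi_{\cA(\Lambda_1)}$ vanish, using $\psi_0(q_a)=0$ and $\{q_a,\cdot\}_{\psi_0}=\{\cdot,q_a\}_{\psi_0}=0$. Combining this with $\psi_0(q_b)=-\epsilon(-1)^{|q_a|}\pi_{\cA(\Lambda_1)}\omega$ gives $\psi_0\phi_0(q_b)=-\pi_{\cA(\Lambda_1)}\omega+\pi_{\cA(\Lambda_1)}\omega=0=\tau(q_b)$, and the remaining generators are immediate.

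For the antibracket with $i$ or $j\neq b$, inserting the definitions of $\{\cdot,\cdot\}_{\phi_0}$ and $\{\cdot,\cdot\}_{\psi_0}$ into the composite formula makes the two contributions cancel by design. For $i=b$, $j\neq b$, writing $\phi_{0,0}q_b=\epsilon(-1)^{|q_a|}q_b+\omega'$ and using $\epsilon(-1)^{|q_a|}\{q_b,q_j\}_{\psi_0}=-(\psi_{0,0}\otimes\psi_{0,0})\{q_a,q_j\}_{d_0}$ together with $\{q_b,q_j\}_{\phi_0}=\{q_a,q_j\}_{d_0}$, the $\{q_a,q_j\}_{d_0}$ terms cancel and leave $\{\omega',q_j\}_{\psi_0}$. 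This vanishes because $\omega'$ and $q_j$ contain no $q_b$ while the $\psi_0$-antibracket of a generator pair is nonzero only when one of them is $q_b$; expanding $\{\omega',q_j\}_{\psi_0}$ by the $\psi_0$-antibracket derivation rules then produces only vanishing generator-pair brackets. The case $i\neq b$, $j=b$ is symmetric. The subcase $j=a$ (resp.\ $i=a$) is where I would invoke admissibility (Definition~\ref{Def:admissible_ReidII}): it forces $d_\lD(q_a,q_a)=0$, so every summand of $\{q_a,q_a\}_{d_0}=d_f(q_a,q_a)$ carries a $q_a$ and is annihilated by $\psi_{0,0}\otimes\psi_{0,0}$, giving $\{q_b,q_a\}_{\psi_0}=0$ and hence $\{q_a,\cdot\}_{\psi_0}=\{\cdot,q_a\}_{\psi_0}=0$; the two contributions then vanish separately.

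The delicate case, which I expect to be the main obstacle, is $\{q_b,q_b\}_{\psi_0\circ\phi_0}$, where the disk bracket and the loop-coproduct bracket enter symmetrically. Here I would substitute $\phi_{0,0}q_b=\epsilon(-1)^{|q_a|}q_b+\omega'$ and $\psi_{0,0}q_b=-\epsilon(-1)^{|q_a|}\omega'$, expand $\{\phi_{0,0}q_b,\phi_{0,0}q_b\}_{\psi_0}$ bilinearly into $\{q_b,q_b\}_{\psi_0}+\epsilon(-1)^{|q_a|}\big(\{q_b,\omega'\}_{\psi_0}+\{\omega',q_b\}_{\psi_0}\big)$ (the $\{\omega',\omega'\}_{\psi_0}$ term vanishing as before), and compare with $(\psi_{0,0}\otimes\psi_{0,0})\{q_b,q_b\}_{\phi_0}$. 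The symmetrized defining formulas $\{q_b,q_b\}_{\phi_0}=\tfrac12\{q_a,d_0q_a\}_{d_0}-\tfrac{(-1)^{|q_a|}}2\{d_0q_a,q_a\}_{d_0}$ and $\{q_b,q_b\}_{\psi_0}=\tfrac12(\psi_0\otimes\psi_0)\big(-\{q_a,\epsilon(-1)^{|q_a|}q_b-\omega\}_{d_0}+(-1)^{|q_a|}\{\epsilon(-1)^{|q_a|}q_b-\omega,q_a\}_{d_0}\big)$ are engineered so that, after reducing $d_0q_a$ to $\epsilon(-1)^{|q_a|}q_b+\omega'$ and applying the antibracket derivation identities together with $\{q_a,\cdot\}_{\psi_0}=0$, every surviving term cancels against its partner. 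I expect the bookkeeping of the Koszul signs and of the ordering in the tensor factors to be the only genuinely laborious point; conceptually the cancellation is forced once $\psi_{0,0}q_b=-\epsilon(-1)^{|q_a|}\omega'$ and $\phi_{0,0}q_b=\epsilon(-1)^{|q_a|}q_b+\omega'$ are inserted.
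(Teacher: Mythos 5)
Your proposal is correct and follows essentially the same route as the paper's proof: direct verification on generators using $\psi_0(\omega)=\pi_{\cA(\Lambda_1)}\omega$ (the part of $\omega$ outside $\cA(\Lambda_1)$ consists of $\hbar$-terms carrying $q_a$, killed by $\psi_0$), the definitional cancellation in the $\{q_b,q_j\}$-brackets, admissibility to force $(\psi_0\otimes\psi_0)\{q_a,q_a\}_{d_0}=0$ and hence $\{q_a,\cdot\}_{\psi_0}=\{\cdot,q_a\}_{\psi_0}=0$, and the inductively extended identities $\{q_b,s\}_{\psi_0}=-\epsilon(-1)^{|q_a|}(\psi_0\otimes\psi_0)\{q_a,s\}_{d_0}$ and $\{s,q_b\}_{\psi_0}=-\epsilon(-1)^{|q_a|+|s|}(\psi_0\otimes\psi_0)\{s,q_a\}_{d_0}$ feeding the symmetrized $\{q_b,q_b\}$ cancellation, exactly as in the paper. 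The sign bookkeeping you deferred does close up (the term $\{\omega',q_b\}_{\psi_0}$ picks up $(-1)^{|\omega'|}$ with $|\omega'|=|q_a|-1$, which is what makes the four surviving terms cancel in pairs), and your explicit appeal to the uniqueness clause of Lemma \ref{Lemma:Second_order_morphism} to reduce everything to generator pairs is just a cleaner packaging of the reduction the paper performs implicitly.
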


\begin{proof}
For $i\neq a,b$ we have
\begin{align*}
\psi_0(\phi_0(q_i))=\psi_0(q_i)=q_i=\tau(q_i).
\end{align*}
Additionally,
\begin{align*}
&\psi_0(\phi_0(q_a))=\psi_0(q_a)=0,\\
&\psi_0(\phi_0(q_b))=\psi_0\left(\epsilon(-1)^{|q_a|}q_b+\omega\right)=0.
\end{align*}
For the second equality we use $\omega-\pi_{\cA(\Lambda_1)}\omega=A\hbar(q_a\otimes 1)+B\hbar(1\otimes q_a)\subset\ker\psi_0$ for some $A,B\in\lZ$ and $\psi_0(\pi_{\cA(\Lambda_1)}\omega)=\pi_{\cA(\Lambda_1)}\omega$. 

Next, we show that $\{\cdot,\cdot\}_{\psi_0\circ\phi_0}=\{\phi_0\cdot,\phi_0\cdot\}_{\psi_0}+(\psi_0\otimes\psi_0)\{\cdot,\cdot\}_{\phi_0}$ vanishes. For entries different from $q_b$ it follows trivially. For $q_i\neq q_b$ we have
\begin{align*}
\{q_b,q_i\}_{\psi_0\circ\phi_0}&=\{\phi_0q_b,\phi_0q_i\}_{\psi_0}+(\psi_0\otimes\psi_0)\{q_b,q_i\}_{\phi_0}=\\
&=\{\epsilon(-1)^{|q_a|}q_b+\omega,q_i\}_{\psi_0}+(\psi_0\otimes\psi_0)\{q_b,q_i\}_{\phi_0}=\\
&=-(\psi_0\otimes\psi_0)\{q_a,q_i\}_{d_0}+(\psi_0\otimes\psi_0)\{q_a,q_i\}_{d_0}=0.
\end{align*}
Similarly, $\{q_i,q_b\}_{\psi_0\circ\phi_0}=0$ for $q_i\neq q_b$ and $\{t^\pm,q_b\}_{\psi_0\circ\phi_0}=0=\{q_b,t^\pm\}_{\psi_0\circ\phi_0}$. 

Using the properties of the antibracket, we easily get
\begin{align*}
&\{s,q_b\}_{\psi_0}=-\epsilon(-1)^{|q_a|+|s|}(\psi_0\otimes\psi_0)\{s,q_a\}_{d_0},\\
&\{q_b,s\}_{\psi_0}=-\epsilon(-1)^{|q_a|}(\psi_0\otimes\psi_0)\{q_a,s\}_{d_0},
\end{align*}
for all words $s\in\widetilde\cA(\Lambda_1)$ by induction on the length of the word. This also gives us
\begin{align*}
&\{q_b,q_b\}_{\psi_0\circ\phi_0}=\{\phi_0q_b,\phi_0q_b\}_{\psi_0}+(\psi_0\otimes\psi_0)\{q_b,q_b\}_{\phi_0}=\\
&=\{q_b,q_b\}_{\psi_0}+\epsilon(-1)^{|q_a|}\{q_b,\omega\}_{\psi_0}+\epsilon(-1)^{|q_a|}\{\omega,q_b\}_{\psi_0}+(\psi_0\otimes\psi_0)\{q_b,q_b\}_{\phi_0}=\\
&=0.
\end{align*}
This finishes the proof.
\end{proof}

\begin{lemma}\label{Lemma:prop2a}
For all $s\in C$, we have 
\begin{align*}
&\psi_0\circ d_0\circ\phi_0(s)=0.
\end{align*}
Additionally,
\begin{equation}\label{Eq:ReidII_part1}
\begin{aligned}
&\{d_0\circ\phi_0(s_1),\phi_0(s_2)\}_{\psi_0}+(-1)^{|s_1|}\{\phi_0(s_1),d_0\circ\phi_0(s_2)\}_{\psi_0}+\\
&+(\psi_0\otimes\psi_0)\{\phi_0(s_1),\phi_0(s_2)\}_{d_0}-(\psi_0\otimes\psi_0)\circ (d_0\otimes 1+1\otimes d_0)\{s_1,s_2\}_{\phi_0}=0
\end{aligned}
\end{equation}
for all $s_1,s_2\in\cA^s$ such that at least one of them is in $C$.
\end{lemma}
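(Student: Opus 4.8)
The plan is to reduce both assertions to statements about the conjugated differential $\widehat d=\phi_0^{-1}\circ d_0\circ\phi_0$ on $\cA^s$ and then to verify those on generators from the explicit formulas for $\phi_0$ and $\psi_0$. The starting observation is that by construction $\phi_0(q_a)=q_a$ and $\phi_0(q_b)=d_0(q_a)$, since $\omega=d_0(q_a)-\epsilon(-1)^{|q_a|}q_b$; hence $d_0\phi_0(q_a)=\phi_0(q_b)$ and, using $d_0^2=0$ (Proposition \ref{Prop:dcircdvanishes}), $d_0\phi_0(q_b)=0$, so that $\widehat d(q_a)=q_b$ and $\widehat d(q_b)=0$, matching $d^s$. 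Combined with Lemma \ref{Lemma:phi_0_inverse_proj}, which gives $\psi_0\circ\phi_0=\tau$ and hence $\psi_0=\tau\circ\phi_0^{-1}$ as linear maps, the two assertions rewrite as
$$\psi_0\circ d_0\circ\phi_0=\tau\circ\widehat d,\qquad (\tau\otimes\tau)\{s_1,s_2\}_{\widehat d}=\text{the left-hand side of }(\ref{Eq:ReidII_part1}),$$
the second identity following from the composition formula for antibrackets in Lemma \ref{Lemma:Second_order_properties} together with $\{\cdot,\cdot\}_\tau=0$. Thus the first assertion becomes $\widehat d(C)\subseteq C$ and the second becomes $(\tau\otimes\tau)\{s_1,s_2\}_{\widehat d}=0$ whenever an argument lies in $C$.

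Next I would show that the product assertion is a consequence of the bracket assertion. Writing a word $s\in C$ as $s=uv$ with $u,v$ shorter and at least one of them in $C$, and expanding $\phi_0(uv)$, then $d_0(\phi_0 u\cdot\phi_0 v)$, then $\psi_0$ by the second-order Leibniz and morphism rules, I use $\psi_0\circ\phi_0=\tau$ and the inductive hypotheses $\psi_0 d_0\phi_0(u)=\psi_0 d_0\phi_0(v)=0$ (each valid because $\tau$ kills $C$ and the induction applies to whichever factor lies in $C$). All the genuinely first-order contributions then drop out, and what remains is exactly the four $\hbar$-valued terms of $(\ref{Eq:ReidII_part1})$ for the pair $(u,v)$. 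Hence $\psi_0 d_0\phi_0(uv)=0$ follows from the bracket assertion applied to $(u,v)$, while the base cases $s=q_a,q_b$ are the computation above.

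It remains to prove the bracket assertion, which I would do by induction on word length. Since $\{\cdot,\cdot\}_{\widehat d}$ is an antibracket and $\tau\otimes\tau$ is multiplicative with kernel $C\otimes\widetilde\cA^s+\widetilde\cA^s\otimes C$, the biderivation property reduces $(\tau\otimes\tau)\{s_1,s_2\}_{\widehat d}=0$ to generator pairs in which at least one entry is $q_a$ or $q_b$: the Leibniz terms that push a $q_a$ or $q_b$ into a tensor slot are annihilated by $\tau\otimes\tau$, and the rest are covered by the induction. The pairs involving $t^\pm$ are exactly those recorded in Remark \ref{Remark:pairing_t}, so the remaining work is the pairs $(q_a,q_j),(q_j,q_a),(q_b,q_j),(q_j,q_b)$. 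For these I would substitute $\phi_0 q_a=q_a$, $d_0\phi_0 q_a=\phi_0 q_b$, the vanishing $\{q_a,\cdot\}_{\psi_0}=\{\cdot,q_a\}_{\psi_0}=0$, and the identities $\{q_b,s\}_{\psi_0}=-\epsilon(-1)^{|q_a|}(\psi_0\otimes\psi_0)\{q_a,s\}_{d_0}$ and $\{s,q_b\}_{\psi_0}=-\epsilon(-1)^{|q_a|+|s|}(\psi_0\otimes\psi_0)\{s,q_a\}_{d_0}$ established in the proof of Lemma \ref{Lemma:phi_0_inverse_proj}; the four terms then collapse by the strong-derivation relation of Proposition \ref{Prop:strong_derivation} and $d_0^2=0$.

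The main obstacle is this last collapse: the sign bookkeeping for three interlocking second-order operations ($\phi_0$, $d_0$, $\psi_0$), each carrying its own antibracket and Koszul signs, which must be shown to cancel on generators. The cancellation is \emph{forced} by $d_0^2=0$ and the strong-derivation identity, but matching the signs term by term—in particular tracking how $d_0\phi_0(q_a)=\phi_0(q_b)$ feeds a $q_b$, on which $\psi_0$ is nonzero, back through the $\psi_0$-antibracket—is the delicate part, entirely analogous to but heavier than the sign verifications in Lemma \ref{Lemma:basept1} and Proposition \ref{Prop:typeIII1}.
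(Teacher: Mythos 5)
Your proposal is correct and takes essentially the same route as the paper: after reducing, via the antibracket Leibniz rules and the multiplicativity of $\tau$ (which kills $C$), to generator pairs with an entry $q_a$ or $q_b$, both arguments rest on exactly the same ingredients --- $\phi_0(q_b)=d_0(q_a)$, $d_0^2=0$, the identities $\{q_b,s\}_{\psi_0}=-\epsilon(-1)^{|q_a|}(\psi_0\otimes\psi_0)\{q_a,s\}_{d_0}$ and its mirror from the proof of Lemma \ref{Lemma:phi_0_inverse_proj}, the vanishing $\{q_a,\cdot\}_{\psi_0}=\{\cdot,q_a\}_{\psi_0}=0$, Remark \ref{Remark:pairing_t} for the $t^{\pm}$ pairs, and the strong-derivation relation of Proposition \ref{Prop:strong_derivation}. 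Your packaging through $\widehat d=\phi_0^{-1}\circ d_0\circ\phi_0$ and $\psi_0=\tau\circ\phi_0^{-1}$ (legitimate, since the preceding lemma shows $\phi_0$ is invertible) simply makes explicit the reduction the paper leaves implicit in the sentence ``it is enough to show $R(s_1,s_2)=0$'' and in deducing the first assertion from the generator computations, so the two proofs coincide in substance.
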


\begin{proof}
We have
\begin{align*}
&\psi_0(d_0(\phi_0(q_b)))=\psi_0(d_0^2(q_a))=0,\\
&\psi_0(d_0(\phi_0(q_a)))=\psi_0(d_0(q_a))=
\psi_0(\phi_0(q_b))=\tau(q_b)=0.
\end{align*}
In order to prove (\ref{Eq:ReidII_part1}), it is enough to show
\begin{align*}
&R(s_1,s_2)\coloneq\{d_0\circ\phi_0(s_1),\phi_0(s_2)\}_{\psi_0}+(-1)^{|s_1|}\{\phi_0(s_1),d_0\circ\phi_0(s_2)\}_{\psi_0}+\\
&+(\psi_0\otimes\psi_0)\{\phi_0(s_1),\phi_0(s_2)\}_{d_0}-(\psi_0\otimes\psi_0)\circ (d_0\otimes 1+1\otimes d_0)\{s_1,s_2\}_{\phi_0}=0
\end{align*}
for $s_1,s_2\in\{t^\pm,q_i\,|\,i=1,\dots,n\}$ such that at least one of $s_1,s_2$ equal to $q_a$ or $q_b$. This also implies $\psi_0\circ d_0\circ\phi_0(s)=0$ for all $s\in C$.

First, if $s_1=q_a,s_2=q_i,i\neq b$, we have
\begin{align*}
&R(q_a,q_i)=\{\epsilon(-1)^{|q_a|}q_b,q_i\}_{\psi_0}+(-1)^{|q_a|}\{q_a,d_0q_i\}_{\psi_0}+\\
&+(\psi_0\otimes\psi_0)\{q_a,q_i\}_{d_0}-(\psi_0\otimes\psi_0)\circ (d_0\otimes 1+1\otimes d_0)\{q_a,q_i\}_{\phi_0}=\\
&=-(\psi_0\otimes\psi_0)\{q_a,q_i\}_{d_0}+(\psi_0\otimes\psi_0)\{q_a,q_i\}_{d_0}=0,
\end{align*}
and similarly $R(q_i,q_a)=0,i\neq b$. 

Let now $s_1=q_b$ and $s_2=q_i,i\neq b$. First, we notice that for $j\neq b$
\begin{align*}
&\{d_0q_a,q_j\}_{\psi_0}+(\psi_0\otimes\psi_0)\{q_a,q_j\}_{d_0}=-(\psi_0\otimes\psi_0)\{q_a,q_j\}_{d_0}+(\psi_0\otimes\psi_0)\{q_a,q_j\}_{d_0}=0,\\
&\{d_0q_a,t^\pm\}_{\psi_0}+(\psi_0\otimes\psi_0)\{q_a,t^\pm\}_{d_0}=0,
\end{align*}
and similarly for $j=b$
\begin{align*}
&\{d_0q_a,q_b\}_{\psi_0}+(\psi_0\otimes\psi_0)\{q_a,q_b\}_{d_0}=\epsilon(-1)^{|q_a|}\{q_b,q_b\}_{\psi_0}+\epsilon(\psi_0\otimes\psi_0)\{\omega,q_a\}_{d_0}+(\psi_0\otimes\psi_0)\{q_a,q_b\}_{d_0}=\\
&=\epsilon\frac{1}{2}(\psi_0\otimes\psi_0)(\{d_0q_a,q_a\}_{d_0}+(-1)^{|q_a|}\{q_a,d_0q_a\}_{d_0})=\\
&=\epsilon\frac{1}{2}(\psi_0\otimes\psi_0)(d_0\otimes 1+1\otimes d_0)\{q_a,q_a\}_{d_0}=0.
\end{align*}
From this we easily get
\begin{align*}
\{d_0(q_a),d_0(q_i)\}_{\psi_0}+(\psi_0\otimes\psi_0)\{q_a,d_0(q_i)\}_{d_0}=0
\end{align*}
using the properties of the antibracket, which implies $R(q_b,q_i)=0$ for $i\neq b$. Similarly we get $R(q_i,q_b)=0$.

Finally, for $s_1=s_2=q_b$ we have
\begin{align*}
&R(q_b,q_b)=(\psi_0\otimes\psi_0)\{\phi_0q_b,\phi_0q_b\}_{d_0}-(\psi_0\otimes\psi_0)\circ (d_0\otimes 1+1\otimes d_0)\{q_b,q_b\}_{\phi_0}=\\
&=(\psi_0\otimes\psi_0)\{d_0q_a,d_0q_a\}_{d_0}-\frac{1}{2}(\psi_0\otimes\psi_0)\circ (d_0\otimes 1+1\otimes d_0)\left(\{q_a,d_0q_a\}_{d_0}-(-1)^{|q_a|}\{d_0q_a,q_a\}_{d_0}\right)=\\
&=(\psi_0\otimes\psi_0)\{d_0q_a,d_0q_a\}_{d_0}-\frac{1}{2}(\psi_0\otimes\psi_0)\left(\{d_0q_a,d_0q_a\}_{d_0}+\{d_0q_a,d_0q_a\}_{d_0}\right)=0.
\end{align*}
This finishes the proof.
\end{proof}

\begin{lemma}\label{Lemma:prop2b}
For all $s\in \cA(\Lambda_1),s_1,s_2\in \widetilde\cA(\Lambda_1)$, we have 
\begin{align*}
&d_1(s)=\psi_0\circ d_0(s),\\
&\{s_1,s_2\}_{d_1}=\{d_0 s_1,s_2\}_{\psi_0}+(-1)^{|s_1|}\{s_1,d_0s_2\}_{\psi_0}+(\psi_0\otimes\psi_0)\{s_1,s_2\}_{d_0}.
\end{align*}
\end{lemma}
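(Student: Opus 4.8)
The plan is to recognize the two identities as the statement that $\psi_0$, restricted to $\cA(\Lambda_1)\subset\cA^s$, is the chain-level part of a second-order dga morphism $(\cA(\Lambda_0),d_0,\{\cdot,\cdot\}_{d_0})\to(\cA(\Lambda_1),d_1,\{\cdot,\cdot\}_{d_1})$. Indeed, since $\phi_0$ is the identity on $\cA(\Lambda_1)$ and $\psi_0=\tau\circ\phi_0^{-1}$ by Lemma \ref{Lemma:phi_0_inverse_proj}, these are precisely the identities $\tau\circ\widehat d=\tau\circ d^s$ and its bracket analogue on the $\cA(\Lambda_1)$-summand needed to feed Lemma \ref{Lemma:bootstrap}, the $C$-summand being handled by Lemma \ref{Lemma:prop2a}. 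By the generator-determination principle for second-order morphisms and derivations (Lemma \ref{Lemma:Second_order_morphism}, Lemma \ref{Lemma:Second_order_derivation}), both identities propagate from generators to words once the antibracket axioms and the defining Leibniz rules for $d_0,d_1,\psi_0$ are used; so I would first reduce to checking them on the generators $s,s_1,s_2\in\{q_i,t^\pm : i\neq a,b\}$ of $\cA(\Lambda_1)$. The $q_a,q_b$ directions play no role here and are already covered: for instance $\psi_0 d_0(q_a)=\epsilon(-1)^{|q_a|}\psi_0(q_b)+\psi_0(\omega)=-\pi_{\cA(\Lambda_1)}\omega+\psi_0(\omega)=0$, since $\omega$ contains no $q_b$ and hence $\psi_0(\omega)=\pi_{\cA(\Lambda_1)}\omega$.

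The $t^\pm$ cases are immediate from the definitions: $d_\iota(t^\pm)=d_{\iota,f}(t^\pm)$ is determined by $\operatorname{tb}(\Lambda_\iota)$, which is unchanged by a Reidemeister II move, and $\psi_0(t^\pm)=t^\pm$, while the bracket identity with a $t^\pm$ entry follows verbatim as in Remark \ref{Remark:pairing_t}. For $s=q_i$ with $i\neq a,b$ I would split $d_\iota(q_i)=d_{\iota,\lD}(q_i)+d_{\iota,A}(q_i)+d_{\iota,f}(q_i)$ and match the three parts separately, exactly paralleling the Reidemeister III analysis of Proposition \ref{typeIIIcompactness}, Proposition \ref{Prop:typeIII1} and Proposition \ref{Prop:typeIII2}. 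The key geometric input is the correspondence of moduli spaces as the small bigon $\widetilde\omega=q_bp_a$ is shrunk: every index zero disk (resp. annulus) on $\lR\times\Lambda_1$ with positive puncture at $\gamma_i$ either extends to a $\Lambda_0$-curve with no corner at $b$, on which $\psi_0$ acts as the identity, or arises by gluing a $\Lambda_0$-curve with a negative $b$-corner to a capping disk with positive puncture at $a$. The latter is precisely what the substitution $q_b\mapsto -\epsilon(-1)^{|q_a|}\pi_{\cA(\Lambda_1)}\omega$ in the definition of $\psi_0$ records, because $\pi_{\cA(\Lambda_1)}\omega$ is the sum of disks on $\lR\times\Lambda_0$ with positive corner at $a$ other than the bigon. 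Using the compactifications of Proposition \ref{Prop:compactness_disk} and Proposition \ref{Prop:compactness_annulus} to pair appearing and disappearing curves, this yields $d_{1,\lD}(q_i)+d_{1,A}(q_i)=\psi_0\big(d_{0,\lD}(q_i)+d_{0,A}(q_i)\big)$ up to sign, and the $d_f$-parts match because the linking and $\delta(\cdot,\cdot)$ data entering the corrected loop coproduct are unchanged away from the bigon. The bracket identity is treated the same way, with the extra terms $\{d_0q_i,q_j\}_{\psi_0}+(-1)^{|q_i|}\{q_i,d_0q_j\}_{\psi_0}$ accounting for the two–positive–puncture disks on $\Lambda_1$ one of whose corners is resolved through the $a$-capping, thereby reconciling $(\psi_0\otimes\psi_0)\{q_i,q_j\}_{d_0}$ with $\{q_i,q_j\}_{d_1}$.

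The main obstacle is this last step: establishing the curve correspondence precisely and, above all, checking that the orientation signs agree, since the substitution $q_b\mapsto-\epsilon(-1)^{|q_a|}\pi_{\cA(\Lambda_1)}\omega$ must simultaneously reproduce the Koszul signs built into $\psi_0$ and the coherent orientation signs of Section \ref{Sec:orientation_and_algebraic_signs} on the glued configurations. Here the admissibility hypothesis (Definition \ref{Def:admissible_ReidII}) is essential: it excludes index zero disks on $\lR\times\Lambda_0$ with both positive punctures at $a$, which would otherwise contribute ill-controlled terms to $\{q_a,q_a\}_{d_0}$ and break the consistency of the bracket part of $\psi_0$. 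I expect the disk and annulus bookkeeping to follow the template already developed for Reidemeister III, so the genuinely new work is confined to tracking the signs produced by the $a$–$b$ gluing and confirming, case by case on the cyclic positions of the chord endpoints, that the $d_f$- and $\{\cdot,\cdot\}_{d_f}$-contributions on the two sides of the move cancel exactly as dictated by the formula defining $\psi_0$.
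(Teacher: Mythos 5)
Your overall architecture matches the paper's proof: you reduce to generators, observe $d_{1,f}=\psi_0\circ d_{0,f}$ on $\cA(\Lambda_1)$, handle $t^\pm$ by invariance of $\operatorname{tb}$, prove $\pi_{\widetilde\cA}\circ\psi_0\circ d_{0,\lD}(q_i)=d_{1,\lD}(q_i)$ by the Chekanov-style disk pinching (a $\Lambda_1$-disk degenerates into a $\Lambda_0$-disk with negative $b$-corners capped by $a$-disks other than the bigon, which is exactly what the substitution $q_b\mapsto-\epsilon(-1)^{|q_a|}\pi_{\cA(\Lambda_1)}\omega$ records), treat the bracket identity via pinching of two-positive-puncture disks with the $\psi_0$-antibracket terms absorbing the configurations containing an $a$-positive/$b$-negative disk, and correctly identify that admissibility excludes the pinching producing two positive $a$-corners. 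All of this is the paper's argument.

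The genuine gap is in the annulus identity, $\pi_{\hbar\,\widetilde\cA\otimes\widetilde\cA^{\cyc}}\circ\psi_0\circ d_{0,\lD}(q_i)+\psi_0\circ d_{0,A}(q_i)=d_{1,A}(q_i)$. You justify it by ``using the compactifications of Proposition \ref{Prop:compactness_disk} and Proposition \ref{Prop:compactness_annulus} to pair appearing and disappearing curves,'' but Gromov compactness alone cannot determine the annulus counts: an index zero $J$-holomorphic annulus on $\lR\times\Lambda_s$ is a zero of the obstruction section $\Omega$ on a $1$-dimensional family $\cM^\pi\subset\cM^\pi_{2,1}(\Lambda_s)$ of projected annuli, and the count is read off from the boundary values of $\Omega$. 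The paper's proof therefore hinges on the degeneration lemmas for $\Omega$ under the Reidemeister II pinching (Lemma \ref{Lemma:OSection_ReidII} and Lemma \ref{Lemma:OSection_ReidIIb}, combined with Lemma \ref{Lemma:OSection_hyperbolic} and Lemma \ref{Lemma:OSection_elliptic}) and a case analysis over the types of boundary points of $\cM^\pi$ (split/split, split/non-split, non-split/non-split) against the types of pinching (split, non-split, and pinching along the branch-point path). Your geometric dichotomy — extend with no $b$-corner, or glue one $a$-capping disk at a $b$-corner — also misses the configurations that actually feed $\pi_{\hbar}\circ\psi_0\circ d_{0,\lD}(q_i)$: a single $\Lambda_0$-disk with \emph{two} negative $b$-corners whose cappings involve a two-positive-puncture disk, i.e.\ the second-order part of $\psi_0$ coming from $\{q_b,q_b\}_{\psi_0}$ and $\{q_i,q_b\}_{\psi_0}$. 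These enter with coefficient $\tfrac{1}{2}$, and the paper must show that the two such configurations on a given component either cancel (when $\Omega$ tends to the same $\pm\infty$ at both boundary points, so no annulus lifts) or add to $\pm1$ (when the signs differ, so exactly one annulus lifts); no pairing-of-broken-curves argument can produce this half-weight bookkeeping without the obstruction-section input. So the disk half of your plan is sound and essentially the paper's, but the annulus half needs the analytic $\Omega$-machinery of Section \ref{Sec:Elliptic_hyperbolic}, not just compactness.
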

\begin{proof}
\begin{figure}
\def\svgwidth{135mm}
\begingroup%
  \makeatletter%
  \providecommand\rotatebox[2]{#2}%
  \newcommand*\fsize{\dimexpr\f@size pt\relax}%
  \newcommand*\lineheight[1]{\fontsize{\fsize}{#1\fsize}\selectfont}%
  \ifx\svgwidth\undefined%
    \setlength{\unitlength}{419.28634219bp}%
    \ifx\svgscale\undefined%
      \relax%
    \else%
      \setlength{\unitlength}{\unitlength * \real{\svgscale}}%
    \fi%
  \else%
    \setlength{\unitlength}{\svgwidth}%
  \fi%
  \global\let\svgwidth\undefined%
  \global\let\svgscale\undefined%
  \makeatother%
  \begin{picture}(1,0.15953323)%
    \lineheight{1}%
    \setlength\tabcolsep{0pt}%
    \put(0.93457948,0.07686235){\makebox(0,0)[lt]{\lineheight{1.25}\smash{\begin{tabular}[t]{l}$p_i$\end{tabular}}}}%
    \put(0.71549133,0.04849778){\makebox(0,0)[lt]{\lineheight{1.25}\smash{\begin{tabular}[t]{l}$a$\end{tabular}}}}%
    \put(0.76276565,0.0470387){\makebox(0,0)[lt]{\lineheight{1.25}\smash{\begin{tabular}[t]{l}$b$\end{tabular}}}}%
    \put(0,0){\includegraphics[width=\unitlength,page=1]{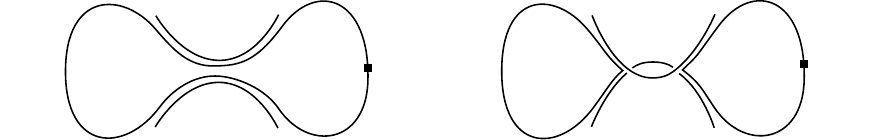}}%
    \put(0.43533781,0.07156491){\makebox(0,0)[lt]{\lineheight{1.25}\smash{\begin{tabular}[t]{l}$p_i$\end{tabular}}}}%
    \put(-0.00170527,0.07890665){\makebox(0,0)[lt]{\lineheight{1.25}\smash{\begin{tabular}[t]{l}\textcolor{white}{.}\end{tabular}}}}%
  \end{picture}%
\endgroup%
\caption{Disk pinching during Reidemeister II move.}
\label{Fig:disk_pinching}
\end{figure}

\begin{figure}
\def\svgwidth{130mm}
\begingroup%
  \makeatletter%
  \providecommand\rotatebox[2]{#2}%
  \newcommand*\fsize{\dimexpr\f@size pt\relax}%
  \newcommand*\lineheight[1]{\fontsize{\fsize}{#1\fsize}\selectfont}%
  \ifx\svgwidth\undefined%
    \setlength{\unitlength}{515.11381244bp}%
    \ifx\svgscale\undefined%
      \relax%
    \else%
      \setlength{\unitlength}{\unitlength * \real{\svgscale}}%
    \fi%
  \else%
    \setlength{\unitlength}{\svgwidth}%
  \fi%
  \global\let\svgwidth\undefined%
  \global\let\svgscale\undefined%
  \makeatother%
  \begin{picture}(1,0.58198055)%
    \lineheight{1}%
    \setlength\tabcolsep{0pt}%
    \put(0.37994901,0.34878642){\makebox(0,0)[lt]{\lineheight{1.25}\smash{\begin{tabular}[t]{l}$p_i$\end{tabular}}}}%
    \put(0.38078097,0.23813125){\makebox(0,0)[lt]{\lineheight{1.25}\smash{\begin{tabular}[t]{l}$p_j$\end{tabular}}}}%
    \put(0.95685792,0.16260892){\makebox(0,0)[lt]{\lineheight{1.25}\smash{\begin{tabular}[t]{l}$p_i$\end{tabular}}}}%
    \put(0.64265244,0.00419153){\makebox(0,0)[lt]{\lineheight{1.25}\smash{\begin{tabular}[t]{l}$p_j$\end{tabular}}}}%
    \put(0.71905273,0.0765081){\makebox(0,0)[lt]{\lineheight{1.25}\smash{\begin{tabular}[t]{l}$p_a$\end{tabular}}}}%
    \put(0.58738627,0.07267763){\makebox(0,0)[lt]{\lineheight{1.25}\smash{\begin{tabular}[t]{l}$q_b$\end{tabular}}}}%
    \put(0.66383867,0.56664568){\makebox(0,0)[lt]{\lineheight{1.25}\smash{\begin{tabular}[t]{l}$p_i$\end{tabular}}}}%
    \put(0.95264934,0.42002409){\makebox(0,0)[lt]{\lineheight{1.25}\smash{\begin{tabular}[t]{l}$p_j$\end{tabular}}}}%
    \put(0,0){\includegraphics[width=\unitlength,page=1]{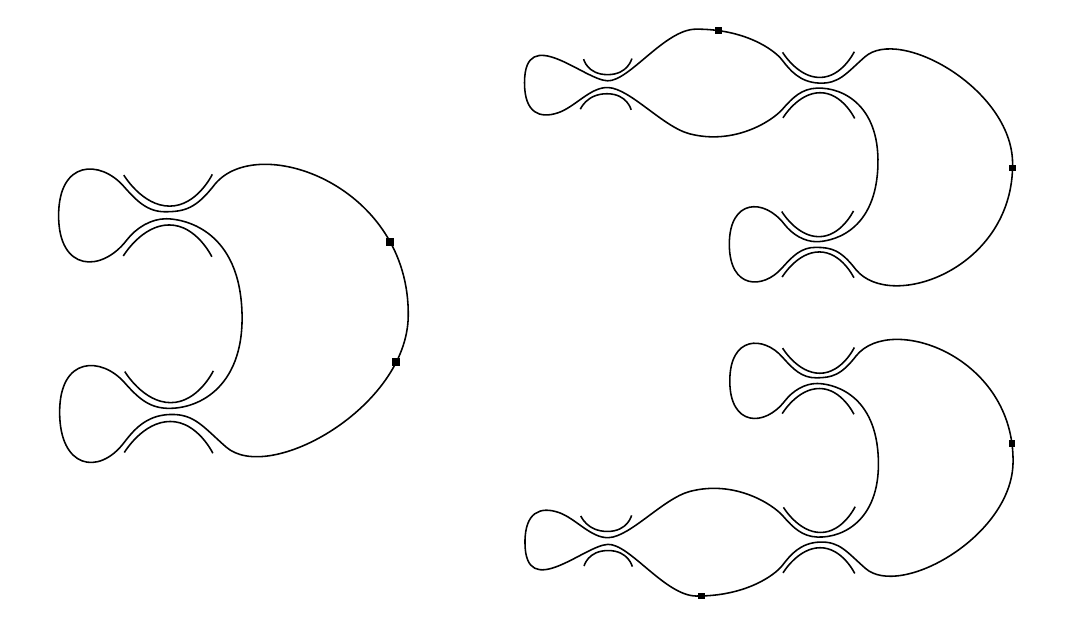}}%
    \put(0.71776583,0.50092726){\makebox(0,0)[lt]{\lineheight{1.25}\smash{\begin{tabular}[t]{l}$p_a$\end{tabular}}}}%
    \put(0.58410886,0.49990591){\makebox(0,0)[lt]{\lineheight{1.25}\smash{\begin{tabular}[t]{l}$q_b$\end{tabular}}}}%
    \put(-0.00138804,0.47138619){\makebox(0,0)[lt]{\lineheight{1.25}\smash{\begin{tabular}[t]{l}\textcolor{white}{.}\end{tabular}}}}%
  \end{picture}%
\endgroup%
\caption{Pinching of disks with two positive punctures during Reidemeister II move.}
\label{Fig:disk_pinchingII}
\end{figure}

First, it is easy to see that $d_{1,f}(q_i)=d_{0,f}(q_i)=\psi_0\circ d_{0,f}(q_i)$ and $d_{1,f}(q_i,q_j)=d_{0,f}(q_i,q_j)=(\psi_0\otimes\psi_0)\circ d_{0,f}(q_i,q_j)$ for all $i,j\neq a,b$. Therefore, $d_{1,f}=\psi_0\circ d_{0,f}$ on $\cA(\Lambda_1)$. Now, it is enough to show 
\begin{align*}
&\pi_{\widetilde\cA}\circ \psi_0\circ d_{0,\lD}(q_i)=d_{1,\lD}(q_i),\\
&(\psi_0\otimes\psi_0)\circ d_{0,\lD}(q_i,q_j)+\{d_{0,\lD}q_i,q_j\}_{\psi_0}+(-1)^{|q_i|}\{q_i,d_{0,\lD}q_j\}_{\psi_0}=d_{1,\lD}(q_i,q_j),\\
&\pi_{\hbar\;\widetilde\cA\otimes\widetilde\cA^{\cyc}}\circ \psi_0\circ d_{0,\lD}(q_i)+\psi_0\circ d_{0,A}(q_i)=d_{1,A}(q_i),
\end{align*}
for all $i,j\in\{1,\dots,n\}$. 

The first equality follows from \cite[Lemma 8.2.]{Chekanov02}. Let $u$ be an index zero disk on $\Lambda_1$ with one positive puncture at $i$. During the isotopy, $u$ is pinched as shown in Figure \ref{Fig:disk_pinching} at $k\in\lN_0$ places. It is decomposed into an index zero disk on $\Lambda_0$ that contains the original positive puncture and has $k$ negative punctures at $b$ (and no negative punctures at $a$), and $k$ index zero disks with one positive puncture at $a$ different from the bigon $q_bp_a$. Moreover, for every such collection of disks, there is a unique disk on $\Lambda_1$ that decomposes into it. It is not difficult to check that the signs on the two sides also match. This shows $\pi_{\widetilde\cA}\circ\psi_0\circ d_{0,\lD}(q_i)=d_{1,\lD}(q_i)$ for all $i\neq a,b$.

The proof of the second equality
\begin{align*}
(\psi_0\otimes\psi_0)\circ d_{0,\lD}(q_i,q_j)+\{d_{0,\lD}q_i,q_j\}_{\psi_0}+(-1)^{|q_i|}\{q_i,d_{0,\lD}q_j\}_{\psi_0}=d_{1,\lD}(q_i,q_j)
\end{align*}
follows the same idea. Pinching of disks with arbitrarily many positive punctures also appears in \cite{Ng_rLSFT}. Let $v$ be an index zero disk on $\Lambda_1$ with two positive punctures at $\gamma_i$ and $\gamma_j$. The disk is pinched during the isotopy as before, however, disks that have both a positive puncture at $a$ and a negative puncture at $b$ (and one more positive puncture) can appear. The possible disk configurations are depicted in Figure \ref{Fig:disk_pinchingII} and correspond to summands in $(\psi_0\otimes\psi_0)\circ d_{0,\lD}(q_i,q_j),(-1)^{|q_i|}\{q_i,d_{0,\lD}q_j\}_{\psi_0},\{d_{0,\lD}q_i,q_j\}_{\psi_0}$, respectively. Since we do not have index zero disks with two positive punctures at $a$ on $\Lambda_0$ by assumption, pinching shown in Figure \ref{Fig:disk_pinching_inadmissible} does not appear. It is not difficult to check that the signs on the two sides also match.

\begin{figure}
\def\svgwidth{55mm}
\begingroup%
  \makeatletter%
  \providecommand\rotatebox[2]{#2}%
  \newcommand*\fsize{\dimexpr\f@size pt\relax}%
  \newcommand*\lineheight[1]{\fontsize{\fsize}{#1\fsize}\selectfont}%
  \ifx\svgwidth\undefined%
    \setlength{\unitlength}{235.4292342bp}%
    \ifx\svgscale\undefined%
      \relax%
    \else%
      \setlength{\unitlength}{\unitlength * \real{\svgscale}}%
    \fi%
  \else%
    \setlength{\unitlength}{\svgwidth}%
  \fi%
  \global\let\svgwidth\undefined%
  \global\let\svgscale\undefined%
  \makeatother%
  \begin{picture}(1,0.61110343)%
    \lineheight{1}%
    \setlength\tabcolsep{0pt}%
    \put(0,0){\includegraphics[width=\unitlength,page=1]{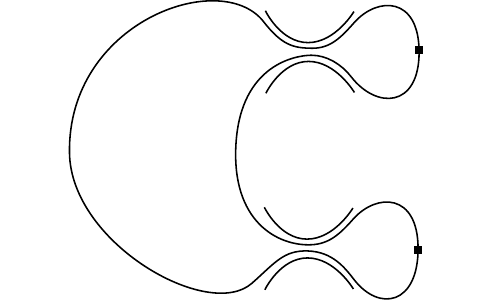}}%
    \put(0.88348969,0.49344279){\makebox(0,0)[lt]{\lineheight{1.25}\smash{\begin{tabular}[t]{l}$p_i$\end{tabular}}}}%
    \put(0.87802841,0.08749737){\makebox(0,0)[lt]{\lineheight{1.25}\smash{\begin{tabular}[t]{l}$p_j$\end{tabular}}}}%
    \put(0.49392759,0.50163474){\makebox(0,0)[lt]{\lineheight{1.25}\smash{\begin{tabular}[t]{l}$p_a$\end{tabular}}}}%
    \put(0.47799946,0.08795263){\makebox(0,0)[lt]{\lineheight{1.25}\smash{\begin{tabular}[t]{l}$p_a$\end{tabular}}}}%
    \put(-0.003037,0.3086741){\makebox(0,0)[lt]{\lineheight{1.25}\smash{\begin{tabular}[t]{l}\textcolor{white}{.}\end{tabular}}}}%
  \end{picture}%
\endgroup%
\caption{Disk pinching excluded for admissible Reidemeister II move.}
\label{Fig:disk_pinching_inadmissible}
\end{figure}

Finally, we show
\begin{align}
\label{Eq:annulus_pinching}
\pi_{\hbar\;\widetilde\cA\otimes\widetilde\cA^{\cyc}}\circ \psi_0\circ d_{0,\lD}(q_i)+\psi_0\circ d_{0,A}(q_i)=d_{1,A}(q_i).
\end{align}
For annuli, the pinching can be on one boundary component (Figure \ref{Fig:annulus_pinching}) or between two boundary components (Figure \ref{Figure:signs_annuli_II_final}), which we call \textit{split} and \textit{non-split pinching}. 
\begin{figure}
\def\svgwidth{130mm}
\begingroup%
  \makeatletter%
  \providecommand\rotatebox[2]{#2}%
  \newcommand*\fsize{\dimexpr\f@size pt\relax}%
  \newcommand*\lineheight[1]{\fontsize{\fsize}{#1\fsize}\selectfont}%
  \ifx\svgwidth\undefined%
    \setlength{\unitlength}{511.59734734bp}%
    \ifx\svgscale\undefined%
      \relax%
    \else%
      \setlength{\unitlength}{\unitlength * \real{\svgscale}}%
    \fi%
  \else%
    \setlength{\unitlength}{\svgwidth}%
  \fi%
  \global\let\svgwidth\undefined%
  \global\let\svgscale\undefined%
  \makeatother%
  \begin{picture}(1,0.18349228)%
    \lineheight{1}%
    \setlength\tabcolsep{0pt}%
    \put(0,0){\includegraphics[width=\unitlength,page=1]{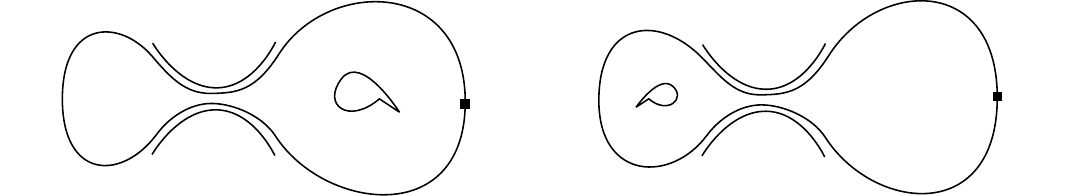}}%
    \put(0.44635964,0.07699727){\makebox(0,0)[lt]{\lineheight{1.25}\smash{\begin{tabular}[t]{l}$p_i$\end{tabular}}}}%
    \put(0.94638375,0.08501218){\makebox(0,0)[lt]{\lineheight{1.25}\smash{\begin{tabular}[t]{l}$p_i$\end{tabular}}}}%
    \put(-0.00139758,0.09186665){\makebox(0,0)[lt]{\lineheight{1.25}\smash{\begin{tabular}[t]{l}\textcolor{white}{.}\end{tabular}}}}%
  \end{picture}%
\endgroup%
\caption{Split annulus pinching during Reidemeister II move.}
\label{Fig:annulus_pinching}
\end{figure}
Consider a connected component $\cM^\pi$ of the 1-dimensional moduli space $\cM^\pi_{2,1}(\Lambda_1)$ of holomorphic annuli with boundary on $\pi_{xy}(\Lambda_1)$. Assume first that the pinching is away from the path of the branch point. We distinguish three cases.
\newline
First, assume both boundary points of $\cM^\pi$ are split. If we have a non-split pinching, by Lemma \ref{Lemma:OSection_ReidII} the obstruction section at both boundary points of $\cM^\pi$ goes either to $+\infty$ or $-\infty$. Therefore, the count of annuli on $\lR\times\Lambda_1$ coming from $\cM^\pi$ is zero. Correspondingly, there are no summands on the LHS of (\ref{Eq:annulus_pinching}) coming from the pinched configuration of curves. If we have a split pinching, either the branch point is not separated from the annular part, or it is. In the first case, we have a corresponding 1-dimensional family $\cM^{\pi}_t,t\in[0,1/2]$ of holomorphic annuli on $\pi_{xy}(\Lambda_{t})$. By Lemma \ref{Lemma:OSection_ReidIIb}, the values of the obstruction section at $\partial\cM^{\pi}$ are close to the values of the obstruction section at $\partial\cM^{\pi}_\frac{1}{2}$. Therefore, the count of annuli on $\lR\times\Lambda_1$ coming from $\cM^\pi$ is equal to the count of annuli on $\lR\times\Lambda_0$ coming from $\cM^\pi_{0}$. In the second case, the values of the obstruction section at $\partial\cM^\pi$ are close to the same (generically) non-zero value, so the corresponding count of annuli is zero. Note that by assumption we cannot have non-split pinching at two places in this case (see Figure \ref{Fig:annulus_pinching_inadmissible}). This shows the desired correspondence of annuli in the first case.
\newline
Second, assume exactly one boundary point of $\cM^\pi$ is split. We first consider the case of non-split pinching. The non-split boundary point consists of a disk $\overline v$ glued to itself at a crossing $\overline\gamma$. Disk $\overline v$ gets pinched in a way that separates the positive and the negative puncture at $\overline\gamma$ (see Figure \ref{Figure:signs_annuli_II_final}). Denote the two disks that appear after pinching by $v_1,v_2$, where $v_1$ is the disk that contains the positive puncture at $a$. Here we simplify and assume that there are no other pinching points (which have to be split), the general case goes analogously. If $v_1$ contains the positive puncture at $\overline\gamma$, one value of the obstruction section at $\partial\cM^\pi$ converges to $+\infty$ and the other to $-\infty$ by Lemma \ref{Lemma:OSection_ReidII} and Lemma \ref{Lemma:OSection_hyperbolic}. Therefore, the count of annuli in $\cM^\pi$ that lift to $\Lambda_1$ is equal to $\pm 1$. Correspondingly, we have a summand in $\psi_0\circ d_\lD(q_i)$ obtained by first gluing $v_2$ to $q_i$ and then $v_1$ to $v_2$ at $q_b$ and $\overline\gamma$. On the other hand, if $v_1$ contains the negative puncture at $\overline\gamma$, the values of the obstruction section at $\partial\cM^\pi$ converge either both to $+\infty$ or both to $-\infty$. Then the count of annuli that lift to $\lR\times\Lambda_1$ is zero. Additionally, there are no corresponding summands on the LHS of (\ref{Eq:annulus_pinching}). In the case of split pinching, as before, we have a corresponding family $\cM^{\pi}_t,t\in[0,1/2]$ of annuli on $\pi_{xy}(\Lambda_{t})$ and the values of the obstruction section at $\partial\cM^{\pi}$ are close to the values of the obstruction section at $\partial\cM^{\pi}_\frac{1}{2}$. This shows the correspondence of annuli in the second case.
\newline
Third, assume both boundary points of $\cM^\pi$ are non-split. We first consider the case of non-split pinching (assume for simplicity there is no other split pinching). As before, the first (second) point in $\partial\cM^\pi$ consists of a disk $v$ ($\overline v$) glued to itself at $j$ ($\overline j$), and the pinching gives us two disks $v_1,v_2$ ($\overline v_1,\overline v_2$), where we denote by $v_1$ ($\overline v_1$) the disk that contains the positive puncture at $a$ as before. The values of the obstruction section at $\partial\cM^\pi$ are $\pm\infty$, depending on whether $v_1,\overline v_1$ contain the positive or the negative puncture at $j,\overline j$. If the values of the obstruction section are equal, then the count of annuli that lift to $\lR\times\Lambda_1$ is zero and there are zero or two corresponding summands (with canceling signs, see below) on the LHS of (\ref{Eq:annulus_pinching}). If the two values are different, then the count of annuli that lift is $\pm 1$ and there is one corresponding summand on the LHS of (\ref{Eq:annulus_pinching}). Next, in this case we can also have two non-split pinching points (see Figure \ref{Figure:signs_annuli_I_final}). After pinching, we get an index zero disk on $\Lambda_0$ with two negative punctures at $b$ and the original positive puncture at $i$, and two pairs of disks, one for each non-split boundary point, each with one positive puncture at $a$, that can be glued together at $j,\overline j$. For each of the two curve configurations, we get a corresponding summand in (\ref{Eq:annulus_pinching}) with weight $\pm\frac{1}{2}$. They come with opposite signs if the values of the obstruction section at $\partial\cM^\pi$ are the same, and with the same sign otherwise (see below). On the other hand, we have zero annuli that lift to $\lR\times\Lambda_1$ in the first case and one in the second case. Finally, in the case of split pinching, we have a corresponding family $\cM^{\pi}_t,t\in[0,1/2]$ of annuli on $\pi_{xy}(\Lambda_t)$ as before, and the values $\Omega(\partial\cM^\pi)$ converge to $\Omega(\partial\cM^{\pi}_\frac{1}{2})$. This shows the desired correspondence of annuli in the third case.
\begin{figure}
\def\svgwidth{70mm}
\begingroup%
  \makeatletter%
  \providecommand\rotatebox[2]{#2}%
  \newcommand*\fsize{\dimexpr\f@size pt\relax}%
  \newcommand*\lineheight[1]{\fontsize{\fsize}{#1\fsize}\selectfont}%
  \ifx\svgwidth\undefined%
    \setlength{\unitlength}{253.85780838bp}%
    \ifx\svgscale\undefined%
      \relax%
    \else%
      \setlength{\unitlength}{\unitlength * \real{\svgscale}}%
    \fi%
  \else%
    \setlength{\unitlength}{\svgwidth}%
  \fi%
  \global\let\svgwidth\undefined%
  \global\let\svgscale\undefined%
  \makeatother%
  \begin{picture}(1,0.55638115)%
    \lineheight{1}%
    \setlength\tabcolsep{0pt}%
    \put(0,0){\includegraphics[width=\unitlength,page=1]{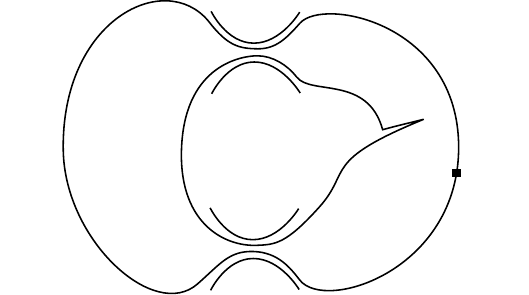}}%
    \put(0.89194765,0.21407749){\makebox(0,0)[lt]{\lineheight{1.25}\smash{\begin{tabular}[t]{l}$p_i$\end{tabular}}}}%
    \put(0.36732872,0.45380698){\makebox(0,0)[lt]{\lineheight{1.25}\smash{\begin{tabular}[t]{l}$p_a$\end{tabular}}}}%
    \put(0.35846578,0.07606456){\makebox(0,0)[lt]{\lineheight{1.25}\smash{\begin{tabular}[t]{l}$p_a$\end{tabular}}}}%
    \put(-0.00281653,0.26978942){\makebox(0,0)[lt]{\lineheight{1.25}\smash{\begin{tabular}[t]{l}\textcolor{white}{.}\end{tabular}}}}%
  \end{picture}%
\endgroup%
\caption{Annulus pinching excluded for admissible Reidemeister II move.}
\label{Fig:annulus_pinching_inadmissible}
\end{figure}

We additionally check that the signs on the two sides match. For example, consider the case of a connected component of $\cM^\pi_{2,1}$ with two non-split boundary points, such that we have two non-split pinching points and the values of the obstruction section $\Omega$ at the boundary are $+\infty$ and $-\infty$, as shown in Figure \ref{Figure:signs_annuli_I_final}. Let the disks obtained after pinching be $u=q_{j_1}\dots q_{j_K} p_{j_0}$, $v_1=q_{i_1}\dots q_{i_s}p_I q_{k_j}\dots q_{k_m} p_a,v_2= q_{k_1}\dots q_{k_{j-1}}q_Iq_{i_{s+1}}\dots q_{i_n}p_a$ and $\widetilde v_1=q_{i_1}\dots q_{i_t}q_J q_{k_{j+1}}\dots q_{k_m} p_a,\widetilde v_2=q_{k_1}\dots  q_{k_{j}}p_Jq_{i_{t+1}}\dots q_{i_n}p_a$ as shown in Figure \ref{Figure:signs_annuli_I_final}, where $u$ has two negative punctures at $b$ ($j_k=j_l=b,1\leq k< l\leq K$). We have two corresponding summands in $\psi_0(\epsilon(u) q_{j_1}\dots  q_{j_K})$ with coefficients
\begin{align*}
\frac{1}{2}\epsilon(u)\epsilon_2\epsilon_3(-1)^{|q_{k_1}|+\dots+|q_{k_{j-1}}|}\prod\epsilon_\bullet^{v_1}\prod\epsilon_\bullet^{v_2}
\end{align*}
and
\begin{align*}
\frac{1}{2}\epsilon(u)\epsilon_2\epsilon_3(-1)^{|q_{k_1}|+\dots+|q_{k_{j-1}}|}\prod\epsilon_\bullet^{\widetilde v_1}\prod\epsilon_\bullet^{\widetilde v_2}.
\end{align*}
Additionally, the count of annuli that lift to $\lR\times\Lambda$ on this connected component is equal to
\begin{align*}
&\epsilon(u) \epsilon_2\epsilon_3(-1)^{|q_{k_1}|+\dots+|q_{k_{j-1}}|}\prod \epsilon_\bullet^{v_1}\prod\epsilon_\bullet^{v_2}=\\
=&\epsilon(u)\epsilon_2\epsilon_3(-1)^{|q_{k_1}|+\dots+|q_{k_{j-1}}|}\prod \epsilon_\bullet^{\widetilde v_1}\prod\epsilon_\bullet^{\widetilde v_2}.
\end{align*} 
Similarly, if we have $\Omega=-\infty$ or $\Omega=+\infty$ for both boundary points, then the two summands in $\psi_0(\epsilon(u)q_{j_1}\dots q_{j_K})$ come with opposite signs and the count of annuli that lift is zero.

\begin{figure}
\def\svgwidth{158mm}
\begingroup%
  \makeatletter%
  \providecommand\rotatebox[2]{#2}%
  \newcommand*\fsize{\dimexpr\f@size pt\relax}%
  \newcommand*\lineheight[1]{\fontsize{\fsize}{#1\fsize}\selectfont}%
  \ifx\svgwidth\undefined%
    \setlength{\unitlength}{886.19733431bp}%
    \ifx\svgscale\undefined%
      \relax%
    \else%
      \setlength{\unitlength}{\unitlength * \real{\svgscale}}%
    \fi%
  \else%
    \setlength{\unitlength}{\svgwidth}%
  \fi%
  \global\let\svgwidth\undefined%
  \global\let\svgscale\undefined%
  \makeatother%
  \begin{picture}(1,0.29288742)%
    \lineheight{1}%
    \setlength\tabcolsep{0pt}%
    \put(0.26402617,0.05021638){\makebox(0,0)[lt]{\lineheight{1.25}\smash{\begin{tabular}[t]{l}$j_l=b$\end{tabular}}}}%
    \put(0.25742912,0.23320184){\makebox(0,0)[lt]{\lineheight{1.25}\smash{\begin{tabular}[t]{l}$j_k=b$\end{tabular}}}}%
    \put(0.33201106,0.14396774){\makebox(0,0)[lt]{\lineheight{1.25}\smash{\begin{tabular}[t]{l}$u$\end{tabular}}}}%
    \put(0.10836502,0.21988364){\makebox(0,0)[lt]{\lineheight{1.25}\smash{\begin{tabular}[t]{l}$v_1$\end{tabular}}}}%
    \put(0.09383162,0.0771145){\makebox(0,0)[lt]{\lineheight{1.25}\smash{\begin{tabular}[t]{l}$v_2$\end{tabular}}}}%
    \put(0.08652179,0.1797463){\makebox(0,0)[lt]{\lineheight{1.25}\smash{\begin{tabular}[t]{l}$\epsilon_3$\end{tabular}}}}%
    \put(0.18712687,0.0580505){\makebox(0,0)[lt]{\lineheight{1.25}\smash{\begin{tabular}[t]{l}$\epsilon_2$\end{tabular}}}}%
    \put(0.40435824,0.18717549){\makebox(0,0)[lt]{\lineheight{1.25}\smash{\begin{tabular}[t]{l}$\epsilon_1$\end{tabular}}}}%
    \put(0.15234952,0.28153153){\makebox(0,0)[lt]{\lineheight{1.25}\smash{\begin{tabular}[t]{l}$i_1$\end{tabular}}}}%
    \put(0.03243389,0.21410456){\makebox(0,0)[lt]{\lineheight{1.25}\smash{\begin{tabular}[t]{l}$i_s$\end{tabular}}}}%
    \put(-0.00082149,0.1431474){\makebox(0,0)[lt]{\lineheight{1.25}\smash{\begin{tabular}[t]{l}$i_{s+1}$\end{tabular}}}}%
    \put(0.14351999,0.00205371){\makebox(0,0)[lt]{\lineheight{1.25}\smash{\begin{tabular}[t]{l}$i_n$\end{tabular}}}}%
    \put(0.16655086,0.10893405){\makebox(0,0)[lt]{\lineheight{1.25}\smash{\begin{tabular}[t]{l}$k_1$\end{tabular}}}}%
    \put(0.14168975,0.14065147){\makebox(0,0)[lt]{\lineheight{1.25}\smash{\begin{tabular}[t]{l}$k_{j-1}$\end{tabular}}}}%
    \put(0.12134399,0.16281029){\makebox(0,0)[lt]{\lineheight{1.25}\smash{\begin{tabular}[t]{l}$k_j$\end{tabular}}}}%
    \put(0.158891,0.21489075){\makebox(0,0)[lt]{\lineheight{1.25}\smash{\begin{tabular}[t]{l}$k_m$\end{tabular}}}}%
    \put(0.7821806,0.05057004){\makebox(0,0)[lt]{\lineheight{1.25}\smash{\begin{tabular}[t]{l}$j_l=b$\end{tabular}}}}%
    \put(0.77558356,0.2335557){\makebox(0,0)[lt]{\lineheight{1.25}\smash{\begin{tabular}[t]{l}$j_k=b$\end{tabular}}}}%
    \put(0.85016701,0.14432162){\makebox(0,0)[lt]{\lineheight{1.25}\smash{\begin{tabular}[t]{l}$u$\end{tabular}}}}%
    \put(0.62651721,0.22023749){\makebox(0,0)[lt]{\lineheight{1.25}\smash{\begin{tabular}[t]{l}$\widetilde v_1$\end{tabular}}}}%
    \put(0.6238321,0.07746816){\makebox(0,0)[lt]{\lineheight{1.25}\smash{\begin{tabular}[t]{l}$\widetilde v_2$\end{tabular}}}}%
    \put(0.70528006,0.05840416){\makebox(0,0)[lt]{\lineheight{1.25}\smash{\begin{tabular}[t]{l}$\epsilon_2$\end{tabular}}}}%
    \put(0.92251633,0.1875294){\makebox(0,0)[lt]{\lineheight{1.25}\smash{\begin{tabular}[t]{l}$\epsilon_1$\end{tabular}}}}%
    \put(0.67050232,0.28188538){\makebox(0,0)[lt]{\lineheight{1.25}\smash{\begin{tabular}[t]{l}$i_1$\end{tabular}}}}%
    \put(0.53607843,0.11048287){\makebox(0,0)[lt]{\lineheight{1.25}\smash{\begin{tabular}[t]{l}$i_t$\end{tabular}}}}%
    \put(0.54211682,0.06975113){\makebox(0,0)[lt]{\lineheight{1.25}\smash{\begin{tabular}[t]{l}$i_{t+1}$\end{tabular}}}}%
    \put(0.66167272,0.00240737){\makebox(0,0)[lt]{\lineheight{1.25}\smash{\begin{tabular}[t]{l}$i_n$\end{tabular}}}}%
    \put(0.6847038,0.10928771){\makebox(0,0)[lt]{\lineheight{1.25}\smash{\begin{tabular}[t]{l}$k_1$\end{tabular}}}}%
    \put(0.65984243,0.14100535){\makebox(0,0)[lt]{\lineheight{1.25}\smash{\begin{tabular}[t]{l}$k_{j-1}$\end{tabular}}}}%
    \put(0.63163772,0.1631641){\makebox(0,0)[lt]{\lineheight{1.25}\smash{\begin{tabular}[t]{l}$k_j$\end{tabular}}}}%
    \put(0.67704373,0.21524451){\makebox(0,0)[lt]{\lineheight{1.25}\smash{\begin{tabular}[t]{l}$k_m$\end{tabular}}}}%
    \put(0.58992851,0.0858051){\makebox(0,0)[lt]{\lineheight{1.25}\smash{\begin{tabular}[t]{l}$+$\end{tabular}}}}%
    \put(0.0617323,0.1922634){\makebox(0,0)[lt]{\lineheight{1.25}\smash{\begin{tabular}[t]{l}$+$\end{tabular}}}}%
    \put(0.39522797,0.22167083){\makebox(0,0)[lt]{\lineheight{1.25}\smash{\begin{tabular}[t]{l}$j_1$\end{tabular}}}}%
    \put(0.40421196,0.10417452){\makebox(0,0)[lt]{\lineheight{1.25}\smash{\begin{tabular}[t]{l}$j_K$\end{tabular}}}}%
    \put(0.91344796,0.22216457){\makebox(0,0)[lt]{\lineheight{1.25}\smash{\begin{tabular}[t]{l}$j_1$\end{tabular}}}}%
    \put(0,0){\includegraphics[width=\unitlength,page=1]{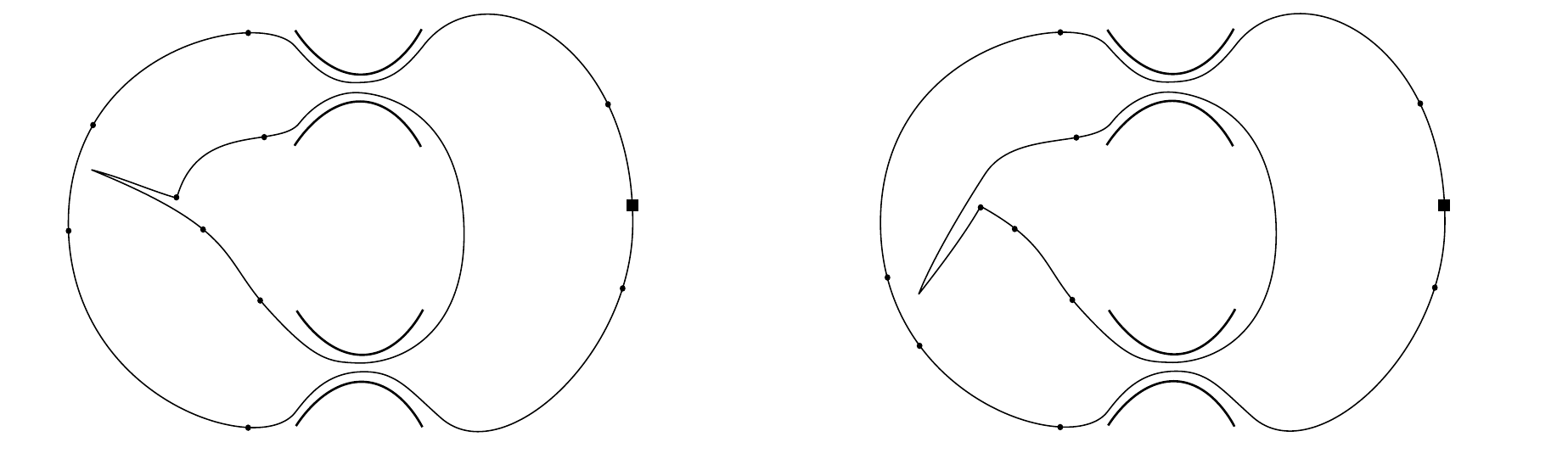}}%
    \put(0.92243199,0.10466826){\makebox(0,0)[lt]{\lineheight{1.25}\smash{\begin{tabular}[t]{l}$j_K$\end{tabular}}}}%
  \end{picture}%
\endgroup%
\caption{Two non-split pinching points on an annulus.}
\label{Figure:signs_annuli_I_final}
\end{figure}

Second, consider for example the case shown in Figure \ref{Figure:signs_annuli_II_final} with one non-split pinching, where we have a connected component of $\cM^\pi_{2,1}$ with one non-split boundary point with $\Omega=-\infty$, and the other boundary is split or non-split with $\Omega=+\infty$. The count of annuli that lift to $\lR\times\Lambda$ is given by
\begin{align*}
\epsilon(u)(-1)^{|q_a|}\epsilon\,\epsilon_2\prod\epsilon_\bullet^v,
\end{align*}
which is also the sign of the corresponding summand in $\pi_{\hbar\;\widetilde\cA\otimes\widetilde\cA^{\cyc}}\circ \psi_0\circ d_{0,\lD}(q_i)$. If the second boundary point is also non-split with $\Omega=-\infty$, then no annuli lift and we have two corresponding summands with signs
\begin{align*}
\epsilon(u)(-1)^{|q_a|}\epsilon\,\epsilon_2\prod\epsilon_\bullet^v
\end{align*}
and 
\begin{align*}
\epsilon(\widetilde u)(-1)^{|q_a|}\epsilon\,\epsilon_2\prod\epsilon_\bullet^{\widetilde v}=-\epsilon(u)(-1)^{|q_a|}\epsilon\,\epsilon_2\prod\epsilon_\bullet^v
\end{align*}
that cancel out. Other cases go similarly.

\begin{figure}
\def\svgwidth{70mm}
\begingroup%
  \makeatletter%
  \providecommand\rotatebox[2]{#2}%
  \newcommand*\fsize{\dimexpr\f@size pt\relax}%
  \newcommand*\lineheight[1]{\fontsize{\fsize}{#1\fsize}\selectfont}%
  \ifx\svgwidth\undefined%
    \setlength{\unitlength}{410.39801887bp}%
    \ifx\svgscale\undefined%
      \relax%
    \else%
      \setlength{\unitlength}{\unitlength * \real{\svgscale}}%
    \fi%
  \else%
    \setlength{\unitlength}{\svgwidth}%
  \fi%
  \global\let\svgwidth\undefined%
  \global\let\svgscale\undefined%
  \makeatother%
  \begin{picture}(1,0.63638963)%
    \lineheight{1}%
    \setlength\tabcolsep{0pt}%
    \put(0.67645632,0.31294861){\makebox(0,0)[lt]{\lineheight{1.25}\smash{\begin{tabular}[t]{l}$u$\end{tabular}}}}%
    \put(0.18771784,0.16858795){\makebox(0,0)[lt]{\lineheight{1.25}\smash{\begin{tabular}[t]{l}$v$\end{tabular}}}}%
    \put(0.36359336,0.12604051){\makebox(0,0)[lt]{\lineheight{1.25}\smash{\begin{tabular}[t]{l}$\epsilon_2$\end{tabular}}}}%
    \put(0.83268482,0.40624985){\makebox(0,0)[lt]{\lineheight{1.25}\smash{\begin{tabular}[t]{l}$\epsilon_1$\end{tabular}}}}%
    \put(0.71795689,0.57344864){\makebox(0,0)[lt]{\lineheight{1.25}\smash{\begin{tabular}[t]{l}$i_1$\end{tabular}}}}%
    \put(-0.00177388,0.2398786){\makebox(0,0)[lt]{\lineheight{1.25}\smash{\begin{tabular}[t]{l}$i_t$\end{tabular}}}}%
    \put(0.01126518,0.14826915){\makebox(0,0)[lt]{\lineheight{1.25}\smash{\begin{tabular}[t]{l}$i_{t+1}$\end{tabular}}}}%
    \put(0.26942937,0.00443471){\makebox(0,0)[lt]{\lineheight{1.25}\smash{\begin{tabular}[t]{l}$i_k$\end{tabular}}}}%
    \put(0.33378198,0.22633302){\makebox(0,0)[lt]{\lineheight{1.25}\smash{\begin{tabular}[t]{l}$k_1$\end{tabular}}}}%
    \put(0.20412549,0.34494822){\makebox(0,0)[lt]{\lineheight{1.25}\smash{\begin{tabular}[t]{l}$k_{j-1}$\end{tabular}}}}%
    \put(0.2554816,0.49774732){\makebox(0,0)[lt]{\lineheight{1.25}\smash{\begin{tabular}[t]{l}$k_j$\end{tabular}}}}%
    \put(0.60285207,0.21546895){\makebox(0,0)[lt]{\lineheight{1.25}\smash{\begin{tabular}[t]{l}$k_m$\end{tabular}}}}%
    \put(0,0){\includegraphics[width=\unitlength,page=1]{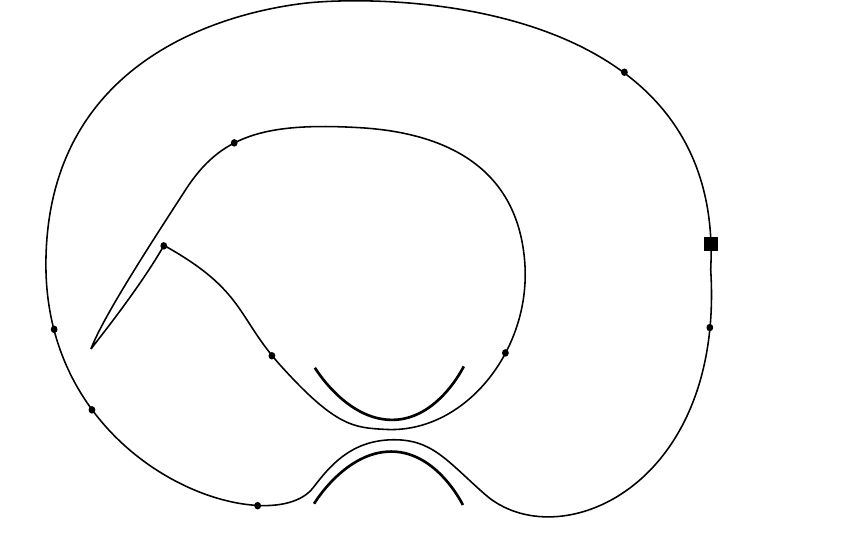}}%
    \put(0.84136207,0.24325446){\makebox(0,0)[lt]{\lineheight{1.25}\smash{\begin{tabular}[t]{l}$i_K$\end{tabular}}}}%
    \put(0.10772021,0.18528501){\makebox(0,0)[lt]{\lineheight{1.25}\smash{\begin{tabular}[t]{l}$+$\end{tabular}}}}%
  \end{picture}%
\endgroup%
\caption{Non-split annulus pinching during Reidemeister II move.}
\label{Figure:signs_annuli_II_final}
\end{figure}

Next, we discuss the case where we have pinching along the path of the branch point. In this case, we have more pinching points on one boundary of $\cM^\pi$ compared to the other. We consider some of the more complicated cases, other cases go similarly. For example, in the case shown in Figure \ref{Figure:1-2_pinching}, we have an annulus $u_0$ on $\Lambda_1$ (no annuli on $\Lambda_0$) and three corresponding configurations of curves that contribute to $\psi_0\circ d_0$ as shown in Figure \ref{Figure:1-2_pinching}. It is not difficult to see that the summand corresponding to the left figure comes with a sign equal to the orientation sign of $u_0$ (and coefficient $1$). The summands corresponding to the second and the third figure come with opposite signs and coefficient $\frac{1}{2}$, and they cancel out. Next, consider the case shown in Figure \ref{Figure:1-2_pinchingIII}, where we have one split boundary point. Here there is no annulus on $\Lambda_1$ by Lemma \ref{Lemma:OSection_hyperbolic} and Lemma \ref{Lemma:OSection_ReidII}, and we have two summands in $\psi_0\circ d_0$ that cancel out as before. Similarly in the case shown in Figure \ref{Figure:1-2_pinchingII}. Here we have an annulus $u_0$ on $\Lambda_1$ and two summands in $\psi_0\circ d_0$ with equal signs and coefficients $\frac{1}{2}$. It is not difficult to check that their sign agrees with the orientation sign of $u_0$.

\begin{figure}
\def\svgwidth{163mm}
\begingroup%
  \makeatletter%
  \providecommand\rotatebox[2]{#2}%
  \newcommand*\fsize{\dimexpr\f@size pt\relax}%
  \newcommand*\lineheight[1]{\fontsize{\fsize}{#1\fsize}\selectfont}%
  \ifx\svgwidth\undefined%
    \setlength{\unitlength}{1148.63652806bp}%
    \ifx\svgscale\undefined%
      \relax%
    \else%
      \setlength{\unitlength}{\unitlength * \real{\svgscale}}%
    \fi%
  \else%
    \setlength{\unitlength}{\svgwidth}%
  \fi%
  \global\let\svgwidth\undefined%
  \global\let\svgscale\undefined%
  \makeatother%
  \begin{picture}(1,0.22208364)%
    \lineheight{1}%
    \setlength\tabcolsep{0pt}%
    \put(0.02033246,0.0618795){\makebox(0,0)[lt]{\lineheight{1.25}\smash{\begin{tabular}[t]{l}$+$\end{tabular}}}}%
    \put(0.24809893,0.131498){\makebox(0,0)[lt]{\lineheight{1.25}\smash{\begin{tabular}[t]{l}$+$\end{tabular}}}}%
    \put(0.08448875,0.18709184){\makebox(0,0)[lt]{\lineheight{1.25}\smash{\begin{tabular}[t]{l}$+$\end{tabular}}}}%
    \put(0.16787956,0.18755825){\makebox(0,0)[lt]{\lineheight{1.25}\smash{\begin{tabular}[t]{l}$-$\end{tabular}}}}%
    \put(0.42933843,0.19152258){\makebox(0,0)[lt]{\lineheight{1.25}\smash{\begin{tabular}[t]{l}$+$\end{tabular}}}}%
    \put(0.51534081,0.19460075){\makebox(0,0)[lt]{\lineheight{1.25}\smash{\begin{tabular}[t]{l}$-$\end{tabular}}}}%
    \put(0.79452265,0.18965702){\makebox(0,0)[lt]{\lineheight{1.25}\smash{\begin{tabular}[t]{l}$+$\end{tabular}}}}%
    \put(0.88052552,0.19273519){\makebox(0,0)[lt]{\lineheight{1.25}\smash{\begin{tabular}[t]{l}$-$\end{tabular}}}}%
    \put(0.79760089,0.02651327){\makebox(0,0)[lt]{\lineheight{1.25}\smash{\begin{tabular}[t]{l}$+$\end{tabular}}}}%
    \put(0.88099193,0.02959144){\makebox(0,0)[lt]{\lineheight{1.25}\smash{\begin{tabular}[t]{l}$-$\end{tabular}}}}%
    \put(0.48446572,0.0475941){\makebox(0,0)[lt]{\lineheight{1.25}\smash{\begin{tabular}[t]{l}$+$\end{tabular}}}}%
    \put(0.5145013,0.03005781){\makebox(0,0)[lt]{\lineheight{1.25}\smash{\begin{tabular}[t]{l}$-$\end{tabular}}}}%
    \put(0,0){\includegraphics[width=\unitlength,page=1]{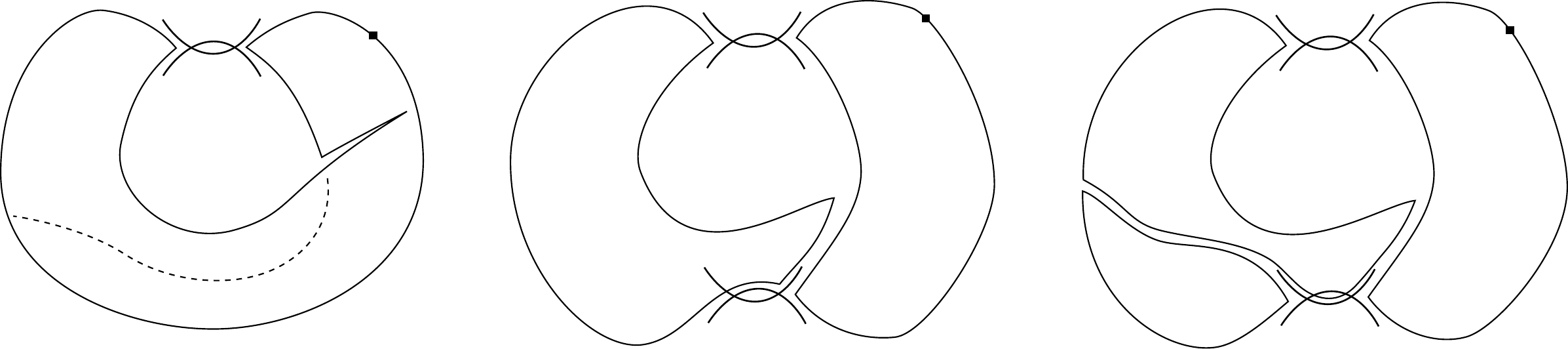}}%
    \put(0.69518182,0.07319914){\makebox(0,0)[lt]{\lineheight{1.25}\smash{\begin{tabular}[t]{l}$+$\end{tabular}}}}%
  \end{picture}%
\endgroup%
\caption{Corners marked with $+$ ($-$) are positive (negative).}
\label{Figure:1-2_pinching}
\end{figure}

\begin{figure}
\def\svgwidth{163mm}
\begingroup%
  \makeatletter%
  \providecommand\rotatebox[2]{#2}%
  \newcommand*\fsize{\dimexpr\f@size pt\relax}%
  \newcommand*\lineheight[1]{\fontsize{\fsize}{#1\fsize}\selectfont}%
  \ifx\svgwidth\undefined%
    \setlength{\unitlength}{1148.63652806bp}%
    \ifx\svgscale\undefined%
      \relax%
    \else%
      \setlength{\unitlength}{\unitlength * \real{\svgscale}}%
    \fi%
  \else%
    \setlength{\unitlength}{\svgwidth}%
  \fi%
  \global\let\svgwidth\undefined%
  \global\let\svgscale\undefined%
  \makeatother%
  \begin{picture}(1,0.22208364)%
    \lineheight{1}%
    \setlength\tabcolsep{0pt}%
    \put(0.02033246,0.0618795){\makebox(0,0)[lt]{\lineheight{1.25}\smash{\begin{tabular}[t]{l}$+$\end{tabular}}}}%
    \put(0.08448875,0.18709184){\makebox(0,0)[lt]{\lineheight{1.25}\smash{\begin{tabular}[t]{l}$+$\end{tabular}}}}%
    \put(0.16787956,0.18755825){\makebox(0,0)[lt]{\lineheight{1.25}\smash{\begin{tabular}[t]{l}$-$\end{tabular}}}}%
    \put(0.42933843,0.19152258){\makebox(0,0)[lt]{\lineheight{1.25}\smash{\begin{tabular}[t]{l}$+$\end{tabular}}}}%
    \put(0.51534081,0.19460075){\makebox(0,0)[lt]{\lineheight{1.25}\smash{\begin{tabular}[t]{l}$-$\end{tabular}}}}%
    \put(0.79452265,0.18965702){\makebox(0,0)[lt]{\lineheight{1.25}\smash{\begin{tabular}[t]{l}$+$\end{tabular}}}}%
    \put(0.88052552,0.19273519){\makebox(0,0)[lt]{\lineheight{1.25}\smash{\begin{tabular}[t]{l}$-$\end{tabular}}}}%
    \put(0.79760089,0.02651327){\makebox(0,0)[lt]{\lineheight{1.25}\smash{\begin{tabular}[t]{l}$+$\end{tabular}}}}%
    \put(0.88099193,0.02959144){\makebox(0,0)[lt]{\lineheight{1.25}\smash{\begin{tabular}[t]{l}$-$\end{tabular}}}}%
    \put(0.48446572,0.0475941){\makebox(0,0)[lt]{\lineheight{1.25}\smash{\begin{tabular}[t]{l}$+$\end{tabular}}}}%
    \put(0.5145013,0.03005781){\makebox(0,0)[lt]{\lineheight{1.25}\smash{\begin{tabular}[t]{l}$-$\end{tabular}}}}%
    \put(0,0){\includegraphics[width=\unitlength,page=1]{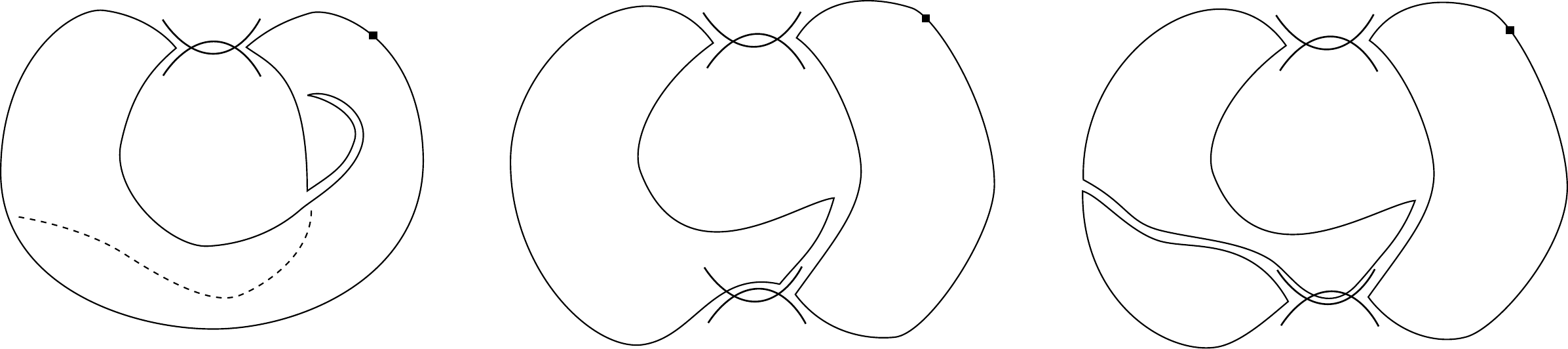}}%
    \put(0.69518182,0.07319914){\makebox(0,0)[lt]{\lineheight{1.25}\smash{\begin{tabular}[t]{l}$+$\end{tabular}}}}%
  \end{picture}%
\endgroup%
\caption{Corners marked with $+$ ($-$) are positive (negative).}
\label{Figure:1-2_pinchingIII}
\end{figure}

\begin{figure}
\def\svgwidth{110mm}
\begingroup%
  \makeatletter%
  \providecommand\rotatebox[2]{#2}%
  \newcommand*\fsize{\dimexpr\f@size pt\relax}%
  \newcommand*\lineheight[1]{\fontsize{\fsize}{#1\fsize}\selectfont}%
  \ifx\svgwidth\undefined%
    \setlength{\unitlength}{775.17429449bp}%
    \ifx\svgscale\undefined%
      \relax%
    \else%
      \setlength{\unitlength}{\unitlength * \real{\svgscale}}%
    \fi%
  \else%
    \setlength{\unitlength}{\svgwidth}%
  \fi%
  \global\let\svgwidth\undefined%
  \global\let\svgscale\undefined%
  \makeatother%
  \begin{picture}(1,0.32907874)%
    \lineheight{1}%
    \setlength\tabcolsep{0pt}%
    \put(0.15440601,0.28379402){\makebox(0,0)[lt]{\lineheight{1.25}\smash{\begin{tabular}[t]{l}$+$\end{tabular}}}}%
    \put(0.28184248,0.28835519){\makebox(0,0)[lt]{\lineheight{1.25}\smash{\begin{tabular}[t]{l}$-$\end{tabular}}}}%
    \put(0.6955281,0.28102967){\makebox(0,0)[lt]{\lineheight{1.25}\smash{\begin{tabular}[t]{l}$+$\end{tabular}}}}%
    \put(0.82296529,0.28559084){\makebox(0,0)[lt]{\lineheight{1.25}\smash{\begin{tabular}[t]{l}$-$\end{tabular}}}}%
    \put(0.70008937,0.03928679){\makebox(0,0)[lt]{\lineheight{1.25}\smash{\begin{tabular}[t]{l}$+$\end{tabular}}}}%
    \put(0.82365641,0.04384795){\makebox(0,0)[lt]{\lineheight{1.25}\smash{\begin{tabular}[t]{l}$-$\end{tabular}}}}%
    \put(0.23609244,0.07052391){\makebox(0,0)[lt]{\lineheight{1.25}\smash{\begin{tabular}[t]{l}$+$\end{tabular}}}}%
    \put(0.28059852,0.04453901){\makebox(0,0)[lt]{\lineheight{1.25}\smash{\begin{tabular}[t]{l}$-$\end{tabular}}}}%
    \put(0.548327,0.1084649){\makebox(0,0)[lt]{\lineheight{1.25}\smash{\begin{tabular}[t]{l}$-$\end{tabular}}}}%
    \put(0.42185712,0.18425815){\makebox(0,0)[lt]{\lineheight{1.25}\smash{\begin{tabular}[t]{l}$-$\end{tabular}}}}%
    \put(0,0){\includegraphics[width=\unitlength,page=1]{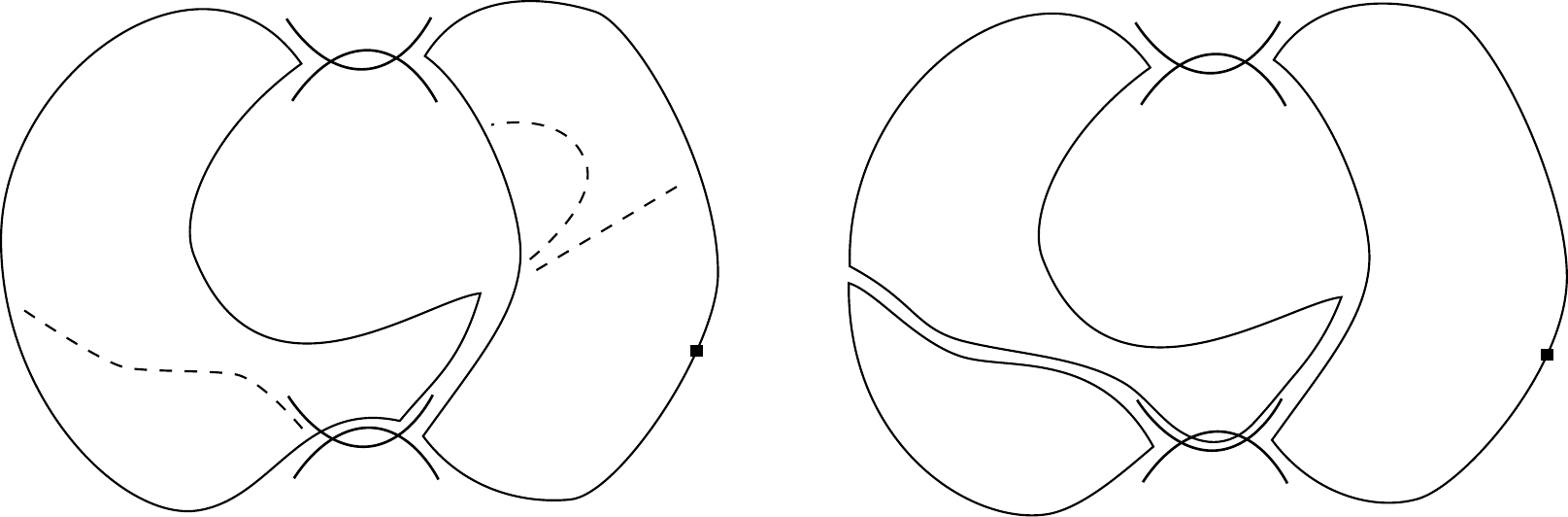}}%
    \put(0.0243428,0.09137583){\makebox(0,0)[lt]{\lineheight{1.25}\smash{\begin{tabular}[t]{l}$-$\end{tabular}}}}%
  \end{picture}%
\endgroup%
\caption{Corners marked with $+$ ($-$) are positive (negative).}
\label{Figure:1-2_pinchingII}
\end{figure}

Finally, this shows
\begin{align*}
\pi_{\hbar\;\widetilde\cA\otimes\widetilde\cA^{\cyc}}\circ \psi_0\circ d_{0,\lD}(q_i)+\psi_0\circ d_{0,A}(q_i)=d_{1,A}(q_i),
\end{align*} 
which finishes the proof.
\end{proof}
From Lemma \ref{Lemma:prop2a} and Lemma \ref{Lemma:prop2b} we now get the following corollary.
\begin{cor}\label{Corollary:bootstrap_property}
For all $s\in\cA^s,s_1,s_2\in\widetilde\cA^s$ we have
\begin{align*}
&\tau\circ\phi_0^{-1}\circ d_0\circ\phi_0(s)=\tau\circ d^s(s),\\
&(\tau\otimes\tau)\{s_1,s_2\}_{\phi_0^{-1}\circ d_0\circ\phi_0}=(\tau\otimes\tau)\{s_1,s_2\}_{d^s},
\end{align*}
where $\tau:\cA^s\to\cA(\Lambda_1)$ is the projection.
\end{cor}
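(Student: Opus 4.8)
The plan is to deduce both identities of Corollary \ref{Corollary:bootstrap_property} from Lemmas \ref{Lemma:phi_0_inverse_proj}, \ref{Lemma:prop2a} and \ref{Lemma:prop2b}, using the elementary properties of second-order morphisms collected in Lemma \ref{Lemma:Second_order_properties}. The starting point is that $\phi_0$ is invertible (as shown above) and $\psi_0\circ\phi_0=\tau$ by Lemma \ref{Lemma:phi_0_inverse_proj}, so $\psi_0=\tau\circ\phi_0^{-1}$ as linear maps, and hence
$$\tau\circ\widehat d=\tau\circ\phi_0^{-1}\circ d_0\circ\phi_0=\psi_0\circ d_0\circ\phi_0.$$
I will also record the standing facts about the stabilized structure that follow directly from the definition of $d^s$ in Section \ref{Sec:stabilizationI}: $d^s(C)\subseteq C$ and $\{s_1,s_2\}_{d^s}=0$ whenever $s_1$ or $s_2$ lies in $C$; together with $\tau|_{\cA(\Lambda_1)}=\mathrm{id}$ and $\phi_0|_{\cA(\Lambda_1)}=\mathrm{id}$. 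The last fact holds because $\{\cdot,\cdot\}_{\phi_0}$ vanishes on words not involving $q_b$, so $\phi_0$ acts on $\cA(\Lambda_1)$ as an ordinary algebra map fixing its generators. Throughout I write $\phi_{0,0},\psi_{0,0},\tau_0$ for the underlying algebra maps, so that $\psi_{0,0}\circ\phi_{0,0}=\tau_0$.

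\emph{First identity.} Writing $\cA^s\cong\cA(\Lambda_1)\oplus C$ and using linearity, I would split into the two summands. On $C$, Lemma \ref{Lemma:prop2a} gives $\psi_0\circ d_0\circ\phi_0(s)=0$, while $d^s(C)\subseteq C$ gives $\tau\circ d^s(s)=0$. On $\cA(\Lambda_1)$, $\phi_0$ is the identity, so $\psi_0\circ d_0\circ\phi_0(s)=\psi_0\circ d_0(s)=d_1(s)$ by Lemma \ref{Lemma:prop2b}, and $\tau\circ d^s(s)=d^s(s)=d_1(s)$ since $d^s$ restricts to $d_1$ on $\cA(\Lambda_1)$. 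This gives $\tau\circ\widehat d=\tau\circ d^s$.

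\emph{Second identity.} The goal is to rewrite $(\tau\otimes\tau)\{s_1,s_2\}_{\widehat d}$ so that it matches the left-hand sides of Lemma \ref{Lemma:prop2a} and Lemma \ref{Lemma:prop2b}. Since $\widehat d=\phi_0^{-1}\circ d_0\circ\phi_0$, Lemma \ref{Lemma:Second_order_properties}(3) makes $\phi_0$ a second-order dga morphism from $(\cA^s,\widehat d)$ to $(\cA(\Lambda_0),d_0)$, so I would apply $\psi_{0,0}\otimes\psi_{0,0}$ to its structure-preservation relation. Two simplifications then enter: $(\psi_{0,0}\otimes\psi_{0,0})(\phi_{0,0}\otimes\phi_{0,0})=\tau_0\otimes\tau_0$, and $(\psi_{0,0}\otimes\psi_{0,0})\{v,w\}_{\phi_0}=-\{\phi_{0,0}v,\phi_{0,0}w\}_{\psi_0}$, the latter being exactly the vanishing $\{\cdot,\cdot\}_{\psi_0\circ\phi_0}=0$ of Lemma \ref{Lemma:phi_0_inverse_proj} expanded through the composition rule Lemma \ref{Lemma:Second_order_properties}(1). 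Combined with $\phi_{0,0}\widehat d_0=d_{0,0}\phi_{0,0}$ (which holds because $\phi_0^{-1}$ preserves the $\widetilde\cA$-- and $\hbar$--summands), this yields
$$(\tau\otimes\tau)\{s_1,s_2\}_{\widehat d}=\{d_0\phi_0 s_1,\phi_0 s_2\}_{\psi_0}+(-1)^{|s_1|}\{\phi_0 s_1,d_0\phi_0 s_2\}_{\psi_0}+(\psi_0\otimes\psi_0)\{\phi_0 s_1,\phi_0 s_2\}_{d_0}-(\psi_0\otimes\psi_0)(d_0\otimes 1+1\otimes d_0)\{s_1,s_2\}_{\phi_0}.$$
Now I would split by cases: if $s_1$ or $s_2$ lies in $C$, the right-hand side vanishes by equation (\ref{Eq:ReidII_part1}) of Lemma \ref{Lemma:prop2a}, matching $(\tau\otimes\tau)\{s_1,s_2\}_{d^s}=0$; if both $s_1,s_2\in\widetilde\cA(\Lambda_1)$, then $\phi_0$ fixes them and $\{s_1,s_2\}_{\phi_0}=0$, so the expression collapses to the right-hand side of the antibracket identity of Lemma \ref{Lemma:prop2b}, namely $\{s_1,s_2\}_{d_1}=\{s_1,s_2\}_{d^s}$.

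The routine content above is harmless; the genuine difficulty is the sign-accurate bookkeeping in the second identity. Specifically, I expect the main obstacle to be verifying $\phi_{0,0}\widehat d_0=d_{0,0}\phi_{0,0}$ and checking that applying $\psi_{0,0}\otimes\psi_{0,0}$ to the morphism relation for $\phi_0$ reproduces the four terms of (\ref{Eq:ReidII_part1}) exactly, respecting the Koszul signs built into the definition of $f\otimes g$ and into the $\psi_0$-- and $\phi_0$--antibrackets. Once that identification is confirmed term by term, the corollary follows as a direct assembly of the preceding lemmas.
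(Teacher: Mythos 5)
Your proof is correct and matches the paper's intended argument: the paper derives this corollary directly from Lemma \ref{Lemma:prop2a} and Lemma \ref{Lemma:prop2b} (stating it without written detail), and your assembly --- using $\psi_0\circ\phi_0=\tau$ from Lemma \ref{Lemma:phi_0_inverse_proj}, splitting $\cA^s\cong\cA(\Lambda_1)\oplus C$, and unwinding the composition formulas of Lemma \ref{Lemma:Second_order_properties} to reduce the bracket identity to equation (\ref{Eq:ReidII_part1}) --- is exactly the intended route. The sign bookkeeping you flag does check out, in particular $(\psi_{0,0}\otimes\psi_{0,0})\{v,w\}_{\phi_0}=-\{\phi_{0,0}v,\phi_{0,0}w\}_{\psi_0}$ follows from $\{\cdot,\cdot\}_{\psi_0\circ\phi_0}=0$ as you say.
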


\paragraph{\textit{Proof of Proposition \ref{Prop:eq_after_II}:}}
The morphism $\phi_0:\cA^s\to\cA(\Lambda_0)$ satisfies (\ref{Eq:bootstrapping}) by Corollary \ref{Corollary:bootstrap_property}. By Lemma \ref{Lemma:bootstrap} and Remark \ref{Remark:pairing_t}, there exists a tame second-order dg algebra isomorphism $\Phi:(\cA^s,d^s,\{\cdot,\cdot\}_{d^s})\to(\cA(\Lambda_0),d_0,\{\cdot,\cdot\}_{d_0})$. Note additionally that the chain complexes $(\cA(\Lambda_1),d_1)$ and $(\cA^s,d^s)$ are quasi-isomorphic by Lemma \ref{Lemma:Chain_homotopy}.

\section{Augmentations}\label{Sec:Augmentations}
\subsection{Second-order augmentations}
In this section, we define the notion of a second-order augmentation and construct a second-order augmentation from an exact Lagrangian filling. More specifically, we consider exact Lagrangian disk fillings for Legendrian knots $\Lambda$ with $\operatorname{tb}(\Lambda)=-1$. The more general case goes similarly but requires working with homological coefficients on the filling.
\begin{defi}
A \textit{second-order augmentation} is a second-order dg algebra morphism $(\varepsilon,\{\cdot,\cdot\}_\varepsilon):(\cA,d,\{\cdot,\cdot\}_d)\to(\lQ\oplus\hbar\,\lQ,0,0)$. In other words, a second-order graded algebra morphism $\varepsilon:\cA\to\lQ\oplus\hbar\,\lQ$ with respect to $\varepsilon_0$-antibracket $\{\cdot,\cdot\}_\varepsilon:\widetilde\cA\times\widetilde\cA\to\lQ\cong\lQ\otimes\hbar\lQ$ is a second-order augmentation if 
\begin{align*}
&\varepsilon\circ d(u)=0,\\
&(\varepsilon\otimes\varepsilon)\{u,v\}_d+\{du,v\}_\varepsilon+(-1)^{|u|}\{u,dv\}_\varepsilon=0,
\end{align*}
for all $u,v\in\cA$.
\end{defi}

Let $L\subset\lR^4$ be an exact Lagrangian disk filling of a Legendrian knot $\Lambda$. Denote by $\cM_1(L,\gamma_i)$ the moduli space of  $J$-holomorphic disks and by $\cM_2(L,\gamma_i)$ the moduli space of $J$-holomorphic annuli in $\lR^4$ with boundary on $L$ and one positive puncture at $\gamma_i$. We define
\begin{align*}
\varepsilon_L(q_i)=\sum_{\substack{u\in\cM_1(L,\gamma_i)\\\ind(u)=0}}\epsilon(u)+\hbar\sum_{\substack{v\in\cM_2(L,\gamma_i)\\\ind(v)=0}}\epsilon(v),
\end{align*}
where $\epsilon(\cdot)\in\{+1,-1\}$ are the corresponding orientation signs, and $\varepsilon_L(t^\pm)=1$. Additionally, we define
\begin{align*}
\{q_i,q_j\}_{\varepsilon_L}=\sum_{\substack{u\in\cM_1(L,\gamma_i,\gamma_j)\\ \ind(u)=0}}\epsilon(u,\gamma_j^+),
\end{align*}
where $\cM_1(L,\gamma_i,\gamma_j)$ is the moduli space of $J$-holomorphic disks on $L$ with positive punctures at $\gamma_i,\gamma_j$ and $\varepsilon(u,\gamma_j^+)$ are the corresponding orientation signs. We additionally define $\{t^\pm,s\}_{\varepsilon_L}=\{s,t^\pm\}_{\varepsilon_L}=0$. The index of a $J$-holomorphic curve $u$ on $L$ is given by
\begin{align*}
\operatorname{ind}(u)\coloneqq k_++\mu_L([u])+\sum_{i=1}^{k}\epsilon_i\mu_{CZ}(\gamma_{i})-1,
\end{align*}
where $\mu_L$ is the Maslov class, $k^+$ the number of positive punctures and $\gamma_i$ the Reeb chords at the punctures (positive if $\epsilon_i=1$ and negative if $\epsilon_i=-1$). Similar as in the symplectization, the moduli space of index zero curves on $L$ is a compact manifold of dimension zero, therefore, $(\varepsilon,\{\cdot,\cdot\}_\varepsilon)$ is well defined. 

\begin{prop}
For $L$ an exact Lagrangian disk filling of a Legendrian knot $\Lambda$, the second-order graded algebra morphism $(\varepsilon_L,\{\cdot,\cdot\}_{\varepsilon_L})$ defined above is a second-order augmentation of $(\cA(\Lambda),d,\{\cdot,\cdot\}_d)$.
\end{prop}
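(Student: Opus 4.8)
The plan is to deduce both defining identities of a second-order augmentation from the compactness and gluing structure of moduli of $J$-holomorphic curves with boundary on the exact filling $L$, in exact parallel with the proof of the master equation (Proposition \ref{Prop:Master_eq1}) and of $d\circ d=0$ (Proposition \ref{Prop:dcircdvanishes}), but with one of the two SFT levels now lying on $L$ rather than in the symplectization. First I would record that $(\varepsilon_L,\{\cdot,\cdot\}_{\varepsilon_L})$ is a well-defined second-order graded algebra morphism: by Lemma \ref{Lemma:Second_order_morphism} it suffices to exhibit the generator data $\varepsilon_L(q_i)\in\lQ\oplus\hbar\,\lQ$ and $\{q_i,q_j\}_{\varepsilon_L}$ with the correct degrees $|\varepsilon_L(q_i)|=|q_i|$ and $|\{q_i,q_j\}_{\varepsilon_L}|=|q_i|+|q_j|+1$, and to check finiteness of the relevant counts. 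The degree constraints hold by the index formula on $L$: the vanishing-index condition forces the disk count of $\varepsilon_L(q_i)$ into degree $0$ and the annulus count into degree $|\hbar|=-1$, matching the target, and likewise for the two-positive-puncture count. Finiteness follows from Gromov compactness for index-$0$ curves on an exact filling, where exactness excludes disk and sphere bubbling, so $\cM_1(L,\gamma_i)$, $\cM_2(L,\gamma_i)$ and $\cM_1(L,\gamma_i,\gamma_j)$ are compact $0$-manifolds exactly as in Propositions \ref{Prop:moduliI} and \ref{Prop:moduliII}.

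By the remark following the definition of a second-order dga morphism, and since the target $(\lQ\oplus\hbar\,\lQ,0,0)$ carries the zero differential and zero antibracket, it then remains only to verify, on generators,
\begin{align*}
&\varepsilon_L\circ d(q_i)=0,\\
&(\varepsilon_L\otimes\varepsilon_L)\{q_i,q_j\}_d+\{d_0q_i,q_j\}_{\varepsilon_L}+(-1)^{|q_i|}\{q_i,d_0q_j\}_{\varepsilon_L}=0.
\end{align*}
For the first identity I would consider the $1$-dimensional moduli space $\overline\cM_1(L,\gamma_i)\sqcup\overline\cM_2(L,\gamma_i)$ of index-$1$ disks and annuli on $L$ with a single positive puncture at $\gamma_i$, whose signed boundary count is zero. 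By a compactness analysis identical to Propositions \ref{Prop:compactness_disk} and \ref{Prop:compactness_annulus}, with the top SFT level transplanted into $\lR\times\Lambda$ and the bottom level kept on $L$, the boundary points are of three kinds: two-level buildings whose top level is a rigid curve in the symplectization asymptotic to $\gamma_i$ (contributing to $d_\lD(q_i)$ or $d_A(q_i)$) capped at its negative ends by index-$0$ curves on $L$, which assemble into $\varepsilon_L$ applied to $d_\lD(q_i)+d_A(q_i)$; nodal annuli on $L$ arising from boundary self-intersections and interior intersections with $L$, which by the corrected-loop-coproduct bookkeeping of Section \ref{Sec:loop_coproduct} match $\varepsilon_L$ of the $d_f(q_i)$ terms; and trivial-strip bubbling matching the $\delta$ contribution. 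Summing with coherent orientations gives $\varepsilon_L\circ d(q_i)=0$.

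For the second identity I would run the same argument on $\overline\cM_1(L,\gamma_i,\gamma_j)$, the index-$1$ disks on $L$ with two ordered positive punctures at $\gamma_i,\gamma_j$. Its boundary splits into buildings whose top level is an index-$0$ disk with two positive punctures (yielding the disk part of $\{q_i,q_j\}_d$) capped on $L$, assembling into $(\varepsilon_L\otimes\varepsilon_L)\{q_i,q_j\}_d$; buildings where a rigid disk in the symplectization contributes to $d_0q_i$ or $d_0q_j$ at one positive end while a two-positive-puncture disk on $L$ supplies the rest, assembling into $\{d_0q_i,q_j\}_{\varepsilon_L}+(-1)^{|q_i|}\{q_i,d_0q_j\}_{\varepsilon_L}$; together with the $d_f(q_i,q_j)$ matching terms. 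Vanishing of the total boundary count yields the identity.

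The main obstacle will be the string-topological and nodal bookkeeping together with signs: one must re-establish, for curves with boundary on $L$, the analogue of the obstruction-section/combinatorial-count framework so that the $\hbar$-annulus contributions in $\varepsilon_L$ and in $d$ are counted consistently, show that the nodal-annulus degenerations on $L$ cancel against the $d_f$, $\delta$ and $\nabla$ terms, invoke Lemma \ref{Lemma:no_bad_words} to discard bad configurations, and match the coherent-orientation signs exactly as in Section \ref{Sec:orientation_and_algebraic_signs}. Exactness of $L$ and $\operatorname{tb}(\Lambda)=-1$ (so that $L$ is a disk) are used throughout to rule out index-negative and bubbled configurations, which is what makes the boundary of each $1$-dimensional moduli space consist solely of the enumerated pieces.
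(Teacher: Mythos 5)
Your architecture is the paper's: both augmentation identities are read off from the boundaries of $1$-dimensional moduli spaces of curves on $L$ (index-$1$ disks and annuli with one positive puncture for $\varepsilon_L\circ d(q_i)=0$, index-$1$ disks with two positive punctures for the bracket identity), with two-level buildings accounting for $\varepsilon_L$ applied to $d_\lD+d_A$ and nodal degenerations accounting for the string-topological terms. But there are three concrete problems. First, your stratum of ``trivial-strip bubbling matching the $\delta$ contribution'' in the first identity does not exist: the differential here is $d=d_\lD+d_A+d_f$ with no $\delta$ term (that operator belongs to the extension with unglued positive punctures), and trivial strips are configurations in the symplectization, not on $L$. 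On the filling, the codimension-one nodal strata are exactly: index-$0$ disks on $L$ with a boundary self-intersection or an interior intersection with $L$ (for the annulus moduli space), and pairs of index-$0$ disks on $L$ meeting at a boundary point (for the two-positive-puncture moduli space).

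Second, you compress the only genuinely new computation into ``bookkeeping.'' The paper's proof of the bracket identity hinges on evaluating the nodal contribution: the signed count of boundary intersections of two index-$0$ disks on $L$ with positive punctures at $\gamma_i,\gamma_j$ equals $E(q_i,q_j)=\pm\varepsilon_L(q_i)\varepsilon_L(q_j)$ precisely when $\gamma_i,\gamma_j$ are \emph{linked} on $\Lambda$ (exactly one endpoint of one chord lies between the endpoints of the other), and zero otherwise; one must then verify separately, directly from the endpoint-ordering terms $\delta(\cdot,\cdot)$ in the definition of $d_f(q_i,q_j)$, that $(\varepsilon_L\otimes\varepsilon_L)\,d_f(q_i,q_j)=E(q_i,q_j)$. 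Likewise, in the $\hbar$-part of the first identity, the matching of nodal annuli with $\varepsilon_L\circ d_f(q_i)$ uses that all loops on $L$ are contractible --- this is where the disk-filling hypothesis actually enters, since the linking-number terms in $d_f$ then compute the honest intersection counts on $L$. Third, your assertion that transversality holds ``exactly as in Propositions \ref{Prop:moduliI} and \ref{Prop:moduliII}'' fails in the case $i=j$: on $L$ the index of a $d$-fold cover satisfies $\operatorname{ind}(u)=d\operatorname{ind}(v)+d-1$, so a double cover of an index-$0$ disk with one positive puncture is an index-$1$ disk with two positive punctures at the \emph{same} chord, and the symplectization argument excluding low-index multiple covers does not carry over. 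The paper explicitly flags that this case requires virtual perturbations and regards the proposition as conjectural on that account; your proof should either restrict the bracket identity to $i\neq j$ or record the same caveat.
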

\begin{rmk}
The following proof relies on the construction of virtual perturbations for the moduli space of index 1 disks on $L$ with two positive punctures asymptotic to the same Reeb chord. The proposition should therefore be seen as conjectural. 
\end{rmk}
\begin{proof}
The moduli space of regular index one curves with boundary on $L$ satisfies analogous versions of Proposition \ref{Prop:compactness_disk} and Proposition \ref{Prop:compactness_annulus}. It is a manifold of dimension 1 with a compactification consisting of 2-buildings (with the bottom level on $L$ and the top level in the symplectization) and nodal disks and annuli on $L$.

By looking at the boundary of the moduli space of index one disks on $L$ with one positive puncture, we get
\begin{align*}
\varepsilon_L\circ d(q_i)|_{\hbar=0}=0.
\end{align*}

We say Reeb chords $\gamma_i,\gamma_j,i\neq j$ are linked if there is exactly one point $i^+,i^-$ between $j^+,j^-$ on $\Lambda$. For $i,j\in\{1,\dots,n\},i\neq j$, let
\begin{align*}
E(q_i,q_j)= \begin{cases}
\pm\varepsilon_L(q_i)\varepsilon_L(q_j),&i,j\text{ linked}\\
0,&\text{ otherwise}
\end{cases}
\end{align*}
The boundary of the moduli space of index 1 disks on $L$ with two positive punctures at $\gamma_i,\gamma_j$  contains disk buildings consisting of a level on $\lR\times\Lambda$ and a level on $L$, and nodal curves consisting of two index zero disks on $L$ together with their boundary intersection. From this we get
\begin{align*}
\{d_0q_i,q_j\}_{\varepsilon_L}+(-1)^{|q_i|}\{q_i,d_0 q_j\}_{\varepsilon_L}+(\varepsilon_L\otimes\varepsilon_L) d_{\lD}(q_i,q_j)+E(q_i,q_j)=0.
\end{align*}
Moreover, it is not difficult to see from the definition that
\begin{align*}
(\varepsilon_L\otimes\varepsilon_L) d_f(q_i,q_j)=E(q_i,q_j),
\end{align*}
which implies
\begin{align*}
(\varepsilon_L\otimes\varepsilon_L)\{q_i,q_j\}_d+\{d_0q_i,q_j\}_{\varepsilon_L}+(-1)^{|q_i|}\{q_i,d_0q_j\}_{\varepsilon_L}=0.
\end{align*}
The proof for $i=j$ goes similarly but relies on the construction of virtual perturbations for the moduli space of index 1 disks on $L$ with two positive punctures at $\gamma_i$.

Finally, the boundary of the moduli space of index 1 annuli on $L$ with one positive puncture consists of nodal annuli and annulus buildings with one level on $L$ and one level on $\lR\times\Lambda$. Obviously, the summands in $\pi_{\hbar\lQ}\circ\varepsilon_L\circ(d_\lD+d_A)$ correspond to the annulus buildings. Nodal annuli can be seen as index zero disks with a boundary self-intersection or an interior intersection with $L$. Since all loops on $L$ are contractible, it is not difficult to see that the summands in $\varepsilon_L\circ d_f(q_i)$ correspond to the contributions of nodal annuli with a positive puncture at $\gamma_i$. From this we get 
\begin{align*}
\pi_{\hbar\lQ}\circ\varepsilon_L\circ d(q_i)=0.
\end{align*}
This shows that $\varepsilon_L$ is a second-order augmentation of $(\cA(\Lambda),d,\{\cdot,\cdot\}_d)$.
\end{proof}

\subsection{$\hbar$-linearization}

We introduce the notion of $\hbar$-linearization of a second-order dga $(\cA,d,\{\cdot,\cdot\}_{d})$ with respect to a second-order augmentation. We define an invariant of Legendrian knots that consists of finite dimensional homology groups and is potentially easier to work with. This is a generalization of the linearized Legendrian knot invariant defined in \cite[Section 5]{Chekanov02}.

Let $(\cA,d,\{\cdot,\cdot\}_d)$ be a second-order dga, where $\cA$ is generated by $q_1,\dots,q_n$ as before, and $(\varepsilon,\{\cdot,\cdot\}_\varepsilon)$ a second-order augmentation of $(\cA,d,\{\cdot,\cdot\}_d)$. As an example, we should think of the second-order dga corresponding to a slice Legendrian knot. Denote by $\psi_\varepsilon$ the second-order algebra automorphism of $\cA$ given by 
\begin{equation}
\label{Eq:augment_conjug}
\begin{aligned}
&\psi_\varepsilon(q_i)=q_i-\varepsilon(q_i),\\
&\{q_i,q_j\}_{\psi_\varepsilon}=-\{q_i,q_j\}_\varepsilon,
\end{aligned}
\end{equation}
with the inverse given by
\begin{align*}
&\psi_\varepsilon^{-1}(q_i)=q_i+\varepsilon(q_i),\\
&\{q_i,q_j\}_{\psi_\varepsilon}=\{q_i,q_j\}_\varepsilon.
\end{align*}

Denote 
$$d_\varepsilon=\psi_\varepsilon^{-1}\circ d\circ \psi_\varepsilon.$$ 
Let $\cL^Q_\cA=\cL^Q\subset\cA/_{t^\pm=1,x\otimes y=(-1)^{|x||y|}y\otimes x}$ be the vector subspace generated by $q_i,\hbar(q_i\otimes 1),q_iq_j$ for $i,j\in\{1,\dots,n\}$. Since $\varepsilon$ is a second-order augmentation, we have a well-defined map $d_\varepsilon^Q:\cL^Q\to\cL^Q$ given by 
\begin{align*}
d_\varepsilon^Q(s)=\pi\circ d_\varepsilon(s),
\end{align*}
where $\pi:\cA\to\cL^Q$ is the projection, such that 
$$d_\varepsilon^Q\circ d_\varepsilon^Q=0.$$

\begin{thm}
The set
\begin{align*}
P(\Lambda)\coloneq P\left(\cA(\Lambda),d,\{\cdot,\cdot\}_d\right)\coloneq\left\{ H_*(\cL^Q_{\cA(\Lambda)},d_\varepsilon^Q)\,|\,\varepsilon\text{ II ord. augmentation}\right\}
\end{align*}
is an invariant of the Legendrian knot $\Lambda$ up to Legendrian knot isotopy.
\end{thm}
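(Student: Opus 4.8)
The plan is to reduce the statement to the invariance of $P$ under the two elementary operations that generate stable tame equivalence, namely tame second-order dga isomorphism and stabilization, and then invoke Theorem \ref{Theorem:Invariance}. By that theorem, the second-order dg algebras associated to front resolutions of Legendrian isotopic knots are stable tame equivalent, and a change of base point gives a tame isomorphism by Proposition \ref{Prop:eq_after_T}; so once $P$ is shown to be preserved under tame isomorphism and stabilization, the set $P(\Lambda)$ will be well defined and an isotopy invariant. Throughout I may use that $d_\varepsilon^Q\circ d_\varepsilon^Q=0$, which is already established in the setup.

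First I would treat tame isomorphisms. Let $f\colon(\cA,d,\{\cdot,\cdot\}_d)\to(\cA',d',\{\cdot,\cdot\}_{d'})$ be a tame second-order dga isomorphism. Given a second-order augmentation $(\varepsilon',\{\cdot,\cdot\}_{\varepsilon'})$ of $\cA'$, the composite $\varepsilon=\varepsilon'\circ f$, equipped with the antibracket $\{\cdot,\cdot\}_\varepsilon=\{f\cdot,f\cdot\}_{\varepsilon'}+(\varepsilon'\otimes\varepsilon')\{\cdot,\cdot\}_f$, is a second-order augmentation of $\cA$ by Lemma \ref{Lemma:Second_order_properties}(1). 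Conjugating $f$ by the augmentation twists, I would form $g=\psi_{\varepsilon'}^{-1}\circ f\circ\psi_\varepsilon$, a tame isomorphism intertwining the conjugated differentials $d_\varepsilon$ and $d'_{\varepsilon'}$ and sending each generator to a generator plus higher-order terms with no constant part. Its linearization (the part respecting the filtration by word length and the $\hbar$-grading) descends to a linear map $\bar g\colon\cL^Q_{\cA}\to\cL^Q_{\cA'}$, and I would check that $\bar g\circ d_\varepsilon^Q={d'}^Q_{\varepsilon'}\circ\bar g$ and that $\bar g$ is invertible, so it is an isomorphism of complexes. Since $f$ is bijective, $\varepsilon'\mapsto\varepsilon'\circ f$ is a bijection between the second-order augmentations of $\cA'$ and those of $\cA$; combining this with the homology isomorphisms gives $P(\cA)=P(\cA')$.

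Next I would treat stabilization. Let $(\cA^s,d^s,\{\cdot,\cdot\}_{d^s})$ be the stabilization of $(\cA,d,\{\cdot,\cdot\}_d)$ in degree $i$, with new generators $q_a,q_b$ and $d^s(q_a)=q_b$. Any second-order augmentation $\varepsilon$ of $\cA$ extends to one of $\cA^s$ by declaring it to vanish on all words containing $q_a$ or $q_b$ and all brackets involving them; conversely every augmentation of $\cA^s$ restricts to one of $\cA$, and necessarily $\varepsilon^s(q_b)=\varepsilon^s(d^sq_a)=0$, while any value of $\varepsilon^s(q_a)$ (possible only when $|q_a|=0$) is absorbed by the conjugation $\psi_{\varepsilon^s}$ and does not affect the homology. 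I would then show that $\cL^Q_{\cA^s}$ splits as a chain complex into $\cL^Q_\cA$ and the subspace $C^Q$ spanned by the generators involving $q_a,q_b$, with $C^Q$ acyclic. Acyclicity follows from the pairing $q_a\leftrightarrow q_b$, $q_aq_j\leftrightarrow q_bq_j$, $q_jq_a\leftrightarrow q_jq_b$ together with the homotopy of Lemma \ref{Lemma:Chain_homotopy}: using the action filtration (cf. Lemma \ref{Lemma:bootstrap}) one performs a filtered change of basis turning $d^Q_{\varepsilon^s}$ on $C^Q$ into the tautological acyclic differential $q_a\mapsto q_b$, $q_aq_j\mapsto q_bq_j,\ \dots$. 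Hence $H_*(\cL^Q_{\cA^s},d^Q_{\varepsilon^s})\cong H_*(\cL^Q_{\cA},d^Q_\varepsilon)$ for corresponding augmentations, and the extension/restriction correspondence gives $P(\cA^s)=P(\cA)$.

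The main obstacle will be carrying out the second and third steps at the level of the finite-dimensional truncation $\cL^Q$ rather than on all of $\cA$: the conjugated differential $d_\varepsilon$ and the tame isomorphism $g$ do not preserve word length on the nose, so one must verify that after projecting to $\cL^Q$ the induced maps remain chain maps and isomorphisms. This is exactly where the interplay between the quadratic terms $q_iq_j$, the $\hbar$-linear terms $\hbar(q_i\otimes 1)$, and the second-order antibracket contributions enters, and where the action-filtration bookkeeping of Lemma \ref{Lemma:bootstrap} must be adapted. In particular, proving that the acyclic complement $C^Q$ in the stabilized linearization is genuinely split off, so that the differential creates no essential coupling between $C^Q$ and $\cL^Q_\cA$ after the filtered normalization, is the technical heart of the argument.
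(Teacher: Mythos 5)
Your proposal follows the paper's route at the structural level: the paper proves exactly your two lemmas---invariance of $P$ under tame second-order dga isomorphism and under stabilization---and then invokes Theorem \ref{Theorem:Invariance}, so the reduction is the same. For the tame step, your conjugation $g=\psi_{\varepsilon'}^{-1}\circ f\circ\psi_\varepsilon$ is the same map as the paper's factorization $e\circ\eta=\widetilde\eta\circ r$ (there $r$ plays the role of your $g$, $\widetilde\eta$ is the pushed-forward augmentation, and the paper reduces without loss of generality to elementary automorphisms and checks that $r|_{\cL^Q}$ has no constant terms, so that $(\pi r)_*$ is the homology isomorphism); this part matches in substance. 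The genuine divergence is in the stabilization lemma: the paper uses neither a filtration nor the homotopy of Lemma \ref{Lemma:Chain_homotopy}, but instead writes down the explicit chain map
\begin{align*}
\Phi\left(q_aA+\widetilde A q_a+q_b B+\widetilde B q_b+C+D\right)=D-\pi\{q_a,B\}_{d^{s}_\varepsilon}-(-1)^{|\widetilde B|}\pi\{\widetilde B,q_a\}_{d^s_\varepsilon},
\end{align*}
verifies the chain-map identity from the derivation property of the antibracket, and shows directly that cycles with exact image are exact. The correction terms in $\Phi$ are precisely the coupling you identify as the technical heart: a second-order augmentation of $\cA^s$ may carry nonzero antibracket values involving $q_a,q_b$, so the conjugated differential pushes the $q_a,q_b$-sector into $\cL^Q_{\cA}$ through $\hbar(q_i\otimes 1)$-terms, and the naive splitting $\cL^Q_{\cA^s}\cong\cL^Q_{\cA}\oplus C^Q$ is false before a change of basis---$\Phi$ is that change of basis made explicit, which buys a short self-contained verification where your version defers to a normalization procedure. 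Two cautions about your sketch: the appeal to the action filtration of Lemma \ref{Lemma:bootstrap} is out of place here, since the stabilization lemma is purely algebraic and conjugation by the augmentation constants destroys any action estimate; and your augmentation bookkeeping overlooks that augmentations of $\cA^s$ include bracket data $\{q_a,\cdot\}_{\varepsilon}$, $\{\cdot,q_a\}_{\varepsilon}$ not determined by the restriction to $\cA$---for the set equality you only need that every augmentation of $\cA^s$ restricts (with the homology isomorphism supplied by $\Phi$) and that every augmentation of $\cA$ extends by zero, so the remark about absorbing $\varepsilon^s(q_a)$ by conjugation is unnecessary. With these repairs your outline coincides with the paper's argument.
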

The proof of the theorem follows from the lemmas below.

We say a second-order dga $(\cA,d,\{\cdot,\cdot\}_d)$ is \textit{augmented} if the images of $d$ and $\{\cdot,\cdot\}_d$ do not contain any constant terms. If $(\cA,d,\{\cdot,\cdot\}_d)$ is augmented, $(\cL^Q_\cA,\pi\circ d)$ is a well-defined chain complex. For a second-order algebra morphism $(\varepsilon,\{\cdot,\cdot\}_\varepsilon):\cA\to\lQ\oplus\hbar\lQ$ and the second-order algebra isomorphism $\psi_\varepsilon:\cA\to\cA$ given by (\ref{Eq:augment_conjug}), the second-order dga $(\cA,\psi^{-1} d \psi,\{\cdot,\cdot\}_{\psi^{-1} d \psi})$ is augmented if and only if $(\varepsilon,\{\cdot,\cdot\}_\varepsilon)$ is a second-order augmentation.  

\begin{lemma}
Let $(\cA,d,\{\cdot,\cdot\}_d)$ and $(\cA,d',\{\cdot,\cdot\}_{d'})$ be tame isomorphic second-order dg algebras. Then
\begin{align*}
P(\cA,d,\{\cdot,\cdot\}_d)=P(\cA,d',\{\cdot,\cdot\}_{d'}).
\end{align*}
\end{lemma}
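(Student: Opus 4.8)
The plan is to show that the tame isomorphism induces a bijection on the sets of second-order augmentations and, under this bijection, an isomorphism of the corresponding linearized homologies, so that the two sets $P$ coincide termwise. Let $\Phi\colon(\cA,d,\{\cdot,\cdot\}_d)\to(\cA,d',\{\cdot,\cdot\}_{d'})$ be a tame second-order dga isomorphism. First I would set up the pullback of augmentations. Since $(\lQ\oplus\hbar\,\lQ,0,0)$ is a second-order dga, Lemma \ref{Lemma:Second_order_properties}(1) shows that for any second-order augmentation $(\varepsilon',\{\cdot,\cdot\}_{\varepsilon'})$ of $(\cA,d',\{\cdot,\cdot\}_{d'})$ the composite $\varepsilon\coloneq\varepsilon'\circ\Phi$, together with the induced $\varepsilon_0$-antibracket, is again a second-order dga morphism into $(\lQ\oplus\hbar\,\lQ,0,0)$, i.e. a second-order augmentation of $(\cA,d,\{\cdot,\cdot\}_d)$. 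Because $\Phi$ is invertible (Lemma \ref{Lemma:Second_order_properties}(2)), the assignment $\varepsilon'\mapsto\varepsilon'\circ\Phi$ is a bijection between the augmentations of the two algebras. It therefore suffices to prove $H_*(\cL^Q,d_\varepsilon^Q)\cong H_*(\cL^Q,(d')^Q_{\varepsilon'})$ for each matched pair $\varepsilon=\varepsilon'\circ\Phi$.

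Next I would pass to the augmented setting by conjugating. Set $d'_{\varepsilon'}=\psi_{\varepsilon'}^{-1}\circ d'\circ\psi_{\varepsilon'}$ and $\widehat\Phi\coloneq\psi_{\varepsilon'}^{-1}\circ\Phi\circ\psi_\varepsilon$. From $d'\circ\Phi=\Phi\circ d$ one computes directly that $\widehat\Phi\circ d_\varepsilon=d'_{\varepsilon'}\circ\widehat\Phi$, so $\widehat\Phi$ is a second-order dga isomorphism between the augmented second-order dg algebras $(\cA,d_\varepsilon,\{\cdot,\cdot\}_{d_\varepsilon})$ and $(\cA,d'_{\varepsilon'},\{\cdot,\cdot\}_{d'_{\varepsilon'}})$. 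Moreover $\widehat\Phi$ carries no constant term: denoting by $\mathbf 0$ the trivial augmentation one has $\mathbf 0\circ\psi_{\varepsilon'}^{-1}=\varepsilon'$, hence $\mathbf 0\circ\widehat\Phi=(\varepsilon'\circ\Phi)\circ\psi_\varepsilon=\varepsilon\circ\psi_\varepsilon=\mathbf 0$, exactly because the conjugations (\ref{Eq:augment_conjug}) are built to cancel the augmentation values. Finally, $\widehat\Phi$ is tame: $\Phi$ is tame by hypothesis, and each $\psi_\varepsilon,\psi_{\varepsilon'}$ is itself a composition of elementary automorphisms, namely the translation type $q_i\mapsto q_i-\varepsilon(q_i)$ (with $\varepsilon(q_i)\in\lQ\oplus\hbar\,\lQ$ not containing $q_i$) and the second-order type $\{q_i,q_j\}_{\psi}=-\{q_i,q_j\}_\varepsilon$.

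Then I would linearize. Since $\widehat\Phi$ and both differentials are augmented, they descend to the symmetrized, $t^\pm=1$ quotient, and the projection $\pi$ onto $\cL^Q$ yields a map $\widehat\Phi^Q\coloneq\pi\circ\widehat\Phi|_{\cL^Q}\colon(\cL^Q,d_\varepsilon^Q)\to(\cL^Q,(d')^Q_{\varepsilon'})$. The crux, which I expect to be the main obstacle, is showing that $\widehat\Phi^Q$ is a chain isomorphism. For the chain-map identity $\widehat\Phi^Q\circ d_\varepsilon^Q=(d')^Q_{\varepsilon'}\circ\widehat\Phi^Q$ one must extract the weight $\le 2$ component of $\widehat\Phi\circ d_\varepsilon=d'_{\varepsilon'}\circ\widehat\Phi$ and check that the cross terms $\pi\circ d'_{\varepsilon'}\circ(1-\pi)\circ\widehat\Phi$ vanish on $\cL^Q$; this is where augmentedness together with the strict action-decrease of $\pi_{\widetilde\cA}\circ d$ from (\ref{Eq:diff_action_property}) is needed to block higher-weight terms from feeding back into the finite-dimensional space $\cL^Q=\langle q_i,\hbar(q_i\otimes 1),q_iq_j\rangle$. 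For invertibility I would write $\widehat\Phi$ as a composition of elementary second-order automorphisms and verify each induces an invertible map on $\cL^Q$: the factors $q_i\mapsto\pm q_i,\,q_it^\pm,\,t^\pm q_i$ act invertibly after setting $t^\pm=1$; an elementary factor $q_i\mapsto q_i+\omega_i$ with $\omega_i$ free of $q_i$ acts as a shear, hence unipotent, for the word-length grading; and a second-order elementary factor, which only modifies quadratic generators by $q_\iota q_\kappa\mapsto q_\iota q_\kappa+\hbar\,\pi_{\cyc}(\omega_{\iota,\kappa})$, is unipotent for the filtration by $\hbar$-degree. Composing these gives an invertible $\widehat\Phi^Q$, so $H_*(\cL^Q,d_\varepsilon^Q)\cong H_*(\cL^Q,(d')^Q_{\varepsilon'})$, and combined with the bijection of augmentations from the first step this yields $P(\cA,d,\{\cdot,\cdot\}_d)=P(\cA,d',\{\cdot,\cdot\}_{d'})$.
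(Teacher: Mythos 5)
Your overall strategy coincides with the paper's: both arguments match augmentations across the tame isomorphism, conjugate by the constant shifts $\psi_\varepsilon,\psi_{\varepsilon'}$ of (\ref{Eq:augment_conjug}) to reach the augmented setting, and then show that the projection to $\cL^Q$ of the resulting constant-free isomorphism is a chain isomorphism. The organizational difference is that the paper reduces to a single elementary automorphism $e$ and constructs an explicit decomposition $e\circ\eta=\widetilde\eta\circ r$, with explicit formulas for the new constants $\widetilde\varepsilon_i,\widetilde\varepsilon_{ij}$ and for the constant-free part $r$, whereas you treat $\Phi$ globally via the composition and inversion statements of Lemma \ref{Lemma:Second_order_properties} and detect constant-freeness of $\widehat\Phi=\psi_{\varepsilon'}^{-1}\circ\Phi\circ\psi_\varepsilon$ by the clean computation $\mathbf{0}\circ\widehat\Phi=\mathbf{0}$ (which, via the composite antibracket formula, also kills the constant part of $\{\cdot,\cdot\}_{\widehat\Phi}$). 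Your version is arguably slicker and fills in precisely the two points the paper leaves implicit when it asserts that $(\pi r)_*$ is an isomorphism.

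That said, two of your justifications need repair. First, the vanishing of the cross terms does not, and cannot, use the action estimate (\ref{Eq:diff_action_property}): the lemma concerns abstract tame isomorphic second-order dg algebras, for which no action filtration is given --- that estimate is a property of the geometric differential $d_\Lambda$. What actually blocks higher terms from feeding back into $\cL^Q$ is word-length and $\hbar$-weight bookkeeping alone: since $d_\varepsilon$, $d'_{\varepsilon'}$ and $\widehat\Phi$ (including its antibracket) have no constant terms, $(1-\pi)$ of their values on $\cL^Q$ consists of constants, pure-$t$ monomials, words of $q$-length at least $3$, and $\hbar$-terms of weight at least $2$; an augmented second-order derivation or a constant-free second-order morphism maps this set into itself, hence into $\ker\pi$. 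Second, your invertibility argument through elementary factors has a genuine hole: the individual factors of $\psi_{\varepsilon'}^{-1}\circ\Phi\circ\psi_\varepsilon$ generally do carry constant terms, and for morphisms with constants the assignment $\phi\mapsto\pi\circ\phi|_{\cL^Q}$ is not compatible with composition (for instance, a constant shift $q_a\mapsto q_a+1$ applied after a substitution producing a cubic word $q_aq_bq_c$ creates the quadratic term $q_bq_c$, which survives $\pi$), so invertibility of each factor's induced map does not give invertibility of the composite's induced map. The fix is the same counting applied to $\widehat\Phi^{-1}$, which is also constant-free since $\mathbf{0}\circ\widehat\Phi=\mathbf{0}$ forces $\mathbf{0}\circ\widehat\Phi^{-1}=\mathbf{0}$: then $\pi\widehat\Phi^{-1}\circ\pi\widehat\Phi=\pi\circ(\widehat\Phi^{-1}\widehat\Phi)=\operatorname{id}_{\cL^Q}$ and symmetrically, so $\widehat\Phi^Q$ is invertible with inverse $(\widehat\Phi^{-1})^Q$, and the rest of your argument goes through.
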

\begin{proof}
Let $e:(\cA,d,\{\cdot,\cdot\}_d)\to(\cA,d',\{\cdot,\cdot\}_{d'})$ be a tame isomorphism and $\eta:\cA\to\cA$
\begin{align*}
\eta(q_i)=q_i+\varepsilon_i,\\
\{q_i,q_j\}_\eta=\varepsilon_{ij}
\end{align*}
a second-order algebra morphism with $\varepsilon_i,\varepsilon_{ij},i,j\in\{1,\dots n\}$ constants terms such that $\eta^{-1}d\eta$ is augmented. Without loss of generality, we can assume $e$ is elementary. We show there exists an automorphism $\widetilde\eta:\cA\to\cA$ of the form
\begin{align*}
&\widetilde\eta(q_i)=q_i+\widetilde\varepsilon_i,\\
&\{q_i,q_j\}_{\widetilde\eta}=\widetilde\varepsilon_{ij},
\end{align*}
for $\widetilde\varepsilon_i,\widetilde\varepsilon_{ij}$ constant terms, and an automorphism $r:\cA\to\cA$ such that $r|_{\cL^Q}$ does not contain any constant terms, such that
\begin{equation}
\begin{aligned}
\label{Eq:tame_iso_invar}
&e\circ\eta=\widetilde\eta\circ r,\\
&\{\cdot,\cdot\}_{e\circ\eta}=\{\cdot,\cdot\}_{\widetilde\eta\circ r}.
\end{aligned}
\end{equation}

Assume first $e$ is of the form
\begin{align*}
&e(q_i)=q_i,\\
&\{q_j,q_k\}_e=
\omega_{jk},
\end{align*}
for some $\omega_{jk}\in\cA\otimes\cA$. Take
\begin{align*}
&\widetilde\varepsilon_i=\varepsilon_i,\\
&r_i=0,\\
&\widetilde\varepsilon_{ij}=(\varepsilon\otimes\varepsilon)\omega_{ij}+\varepsilon_{ij},\\
&r_{ij}=(\eta^{-1}\otimes\eta^{-1})(\omega_{ij}+\varepsilon_{ij}-\widetilde\varepsilon_{ij}),
\end{align*}
where $\varepsilon=\pi'\circ\eta^{-1}$ for $\pi'$ the projection to constant terms, and define an automorphism $r$ by
\begin{align*}
&r(q_i)=q_i+r_i,\\
&\{q_i,q_j\}_r=r_{ij}.
\end{align*}
It is not difficult to check that $r|_{\cL^Q}$ does not contain any constant terms and that (\ref{Eq:tame_iso_invar}) holds.

Similarly if $e$ is of the form
\begin{align*}
e(q_j)=\begin{cases}
q_j,&j\neq i\\
q_i+\omega_i,&j=i
\end{cases}
\end{align*}
for some $i\in\{1,\dots,n\}$, where $\omega_i\in\widetilde A$ does not contain letter $q_i$. Then for
\begin{align*}
&\widetilde\varepsilon_j=
\begin{cases}
\varepsilon_j,&j\neq i\\
\varepsilon_i+\varepsilon(\omega_i),&j=i
\end{cases}
\\
&r_j=\begin{cases}
0,&j\neq i\\
\eta^{-1}(\omega_i+\varepsilon_i-\widetilde\varepsilon_i),&j=i
\end{cases}\\
&\widetilde\varepsilon_{jk}=\varepsilon_{jk},r_{jk}=0\text{ for }j,k\neq i,
\end{align*}
we get $\{q_j,q_k\}_{e\circ\eta}=\{q_j,q_k\}_{\widetilde\eta\circ r}$ for $j,k\neq i$. Additionally, for any $j\in\{1,\dots,n\}$, $e\circ\eta(q_j)=\widetilde\eta\circ r(q_j)$ and $r(q_j)$ does not contain constant terms. Similarly we find $\widetilde\varepsilon_{jk},r_{jk}$ for $j=i$ or $k=i$.

From (\ref{Eq:tame_iso_invar}) we then have
\begin{align*}
H_*(\cL^Q,\pi\eta^{-1}d\eta)=H_*(\cL^Q,\pi r^{-1}\widetilde\eta^{-1}d' \widetilde \eta r)\overset{(\pi r)_*}{\cong}H_*(\cL^Q,\pi \widetilde\eta^{-1}d' \widetilde\eta).
\end{align*}
We additionally notice that $\eta^{-1}d \eta=r^{-1}(\widetilde\eta^{-1}d' \widetilde \eta) r$ is augmented if and only if $\widetilde\eta^{-1}d'\widetilde\eta$ is augmented. This finishes the proof of the lemma.
\end{proof}

\begin{lemma}
Let $(\cA^s,d^s,\{\cdot,\cdot\}_{d^s})$ be a stabilization of a second-order dga $(\cA,d,\{\cdot,\cdot\}_d)$. Then
\begin{align*}
P(\cA,d,\{\cdot,\cdot\}_d)=P(\cA^s,d_s,\{\cdot,\cdot\}_{d_s}).
\end{align*}
\end{lemma}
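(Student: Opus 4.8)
The plan is to reduce the equality of the two sets $P$ to two facts: that every second-order augmentation of $\cA$ is the restriction of one of $\cA^s$, and that passing to the stabilization only adds an acyclic summand to the linearized complex $(\cL^Q,d_\varepsilon^Q)$. Writing $\varepsilon^s\mapsto\varepsilon^s|_\cA$ for the restriction map on augmentations, I would first check this is a surjection from second-order augmentations of $(\cA^s,d^s,\{\cdot,\cdot\}_{d^s})$ onto those of $(\cA,d,\{\cdot,\cdot\}_d)$. Surjectivity is witnessed by the trivial extension $\varepsilon^s_0$ of a given $\varepsilon$, defined by $\varepsilon^s_0|_\cA=\varepsilon$, $\varepsilon^s_0(q_a)=\varepsilon^s_0(q_b)=0$, with $\{\cdot,\cdot\}_{\varepsilon^s_0}$ agreeing with $\{\cdot,\cdot\}_\varepsilon$ on $\widetilde\cA\times\widetilde\cA$ and vanishing whenever an entry is $q_a$ or $q_b$; since $d^s$ and $\{\cdot,\cdot\}_{d^s}$ agree with $d,\{\cdot,\cdot\}_d$ on the generators of $\cA$ and satisfy $d^s q_a=q_b$, $d^s q_b=0$ with trivial brackets on $q_a,q_b$, the augmentation identities for $\varepsilon^s_0$ reduce to those for $\varepsilon$ plus relations that hold trivially. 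Conversely every $\varepsilon^s$ satisfies $\varepsilon^s(q_b)=\varepsilon^s(d^s q_a)=0$ and restricts to a genuine augmentation $\varepsilon:=\varepsilon^s|_\cA$. Thus $P(\cA^s)=\{H_*(\cL^Q_{\cA^s},d^s_{\varepsilon^s})\}$ and $P(\cA)=\{H_*(\cL^Q_\cA,d_\varepsilon^Q)\}$, and it remains to match these fibrewise.

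Second, I would exhibit $\cL^Q_\cA$ as a subcomplex of $\cL^Q_{\cA^s}$ on which the induced differential is exactly $d_\varepsilon^Q$. Indeed, $\psi_{\varepsilon^s}$ restricts to $\psi_\varepsilon$ on the subalgebra generated by $q_1,\dots,q_n,t^\pm$ (the value $\varepsilon^s(q_a)$ only affects $\psi_{\varepsilon^s}$ on $q_a$), and $\{q_i,q_j\}_{\varepsilon^s}=\{q_i,q_j\}_\varepsilon$, $\{q_i,q_j\}_{d^s}=\{q_i,q_j\}_d$ for $i,j\neq a,b$; hence $d^s_{\varepsilon^s}=\psi_{\varepsilon^s}^{-1}\circ d^s\circ\psi_{\varepsilon^s}$ carries the span of $\{q_i,\hbar(q_i\otimes 1),q_iq_j:i,j\neq a,b\}$ into itself and acts there as $d_\varepsilon^Q$, independently of the extension. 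The quotient complex $\mathcal{K}:=\cL^Q_{\cA^s}/\cL^Q_\cA$ is spanned by the linearized generators containing at least one letter $q_a$ or $q_b$, and the short exact sequence $0\to\cL^Q_\cA\to\cL^Q_{\cA^s}\to\mathcal{K}\to 0$ reduces the lemma to showing $\mathcal{K}$ is acyclic.

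For acyclicity I would first treat the trivial extension $\varepsilon^s_0$, where $\psi_{\varepsilon^s_0}$ fixes $q_a,q_b$ and all brackets touching $q_a,q_b$ vanish. There the induced differential on $\mathcal{K}$ is the projection of $\psi_\varepsilon^{-1}d^s\psi_\varepsilon$, whose leading behaviour is $q_a\mapsto q_b$, $\hbar(q_a\otimes 1)\mapsto-\hbar(q_b\otimes 1)$, and $q_aq_j,\,q_jq_a,\,q_bq_j,\dots\mapsto$ the corresponding words with $q_a$ replaced by $q_b$; the contracting homotopy $h$ of Lemma \ref{Lemma:Chain_homotopy} (which replaces $q_b$ by $q_a$ with the normalization $1/n(w)$) descends to $\mathcal{K}$ and yields $h\,d^s_{\varepsilon^s_0}+d^s_{\varepsilon^s_0}\,h=\operatorname{id}$, so $\mathcal{K}$ is acyclic. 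For a general $\varepsilon^s$ restricting to $\varepsilon$, the terms by which $d^s_{\varepsilon^s}$ differs from $d^s_{\varepsilon^s_0}$ (coming from a nonzero $\varepsilon^s(q_a)$ and the brackets $\{q_a,-\}_{\varepsilon^s}$) strictly lower the action, so filtering $\mathcal{K}$ by the action filtration (\ref{Eq:diff_action_property}) makes the associated graded differential equal to that of $\varepsilon^s_0$; the filtration spectral sequence then gives $H_*(\mathcal{K})=0$ in all cases. Combining, the inclusion $\cL^Q_\cA\hookrightarrow\cL^Q_{\cA^s}$ is a quasi-isomorphism, so $H_*(\cL^Q_{\cA^s},d^s_{\varepsilon^s})\cong H_*(\cL^Q_\cA,d_\varepsilon^Q)$, and ranging over all augmentations gives $P(\cA^s)=P(\cA)$.

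The main obstacle I anticipate is the bookkeeping in the acyclicity step at the quadratic level: unlike the classical linearized complex, $\cL^Q$ retains the degree-two words $q_iq_j$ and the terms $\hbar(q_i\otimes 1)$, so one must check that the homotopy $h$ of Lemma \ref{Lemma:Chain_homotopy}, after conjugation by $\psi_\varepsilon$ and projection onto $\cL^Q$, still satisfies the homotopy identity on these quadratic and $\hbar$-pieces, and that the action filtration genuinely splits off the non-leading corrections for an arbitrary augmentation. All the remaining verifications (surjectivity of restriction, the subcomplex property) are direct consequences of the definitions together with the action estimates already established.
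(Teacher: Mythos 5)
Your architecture is sound, and it is genuinely different from the paper's. The paper does not pass through a short exact sequence: given a second-order augmentation $\varepsilon$ of $\cA^s$ (it handles the restriction direction in one line, and leaves the extension/surjectivity direction that you spell out via the trivial extension implicit), it writes a general element of $\cL^Q_{\cA^s}$ as $q_aA+\widetilde Aq_a+q_bB+\widetilde Bq_b+C+D$ and defines an explicit chain map
$\Phi(S)=D-\pi\{q_a,B\}_{d^{s}_\varepsilon}-(-1)^{|\widetilde B|}\pi\{\widetilde B,q_a\}_{d^s_\varepsilon}$
onto $\cL^Q_\cA$; the bracket correction terms are precisely the leakage of the differential of $q_b$-words into $\cL^Q_\cA$ that your quotient construction sweeps into the subcomplex. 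The paper then verifies surjectivity and analyzes cycles directly: $d^{s,Q}_\varepsilon S=0$ forces $A,\widetilde A$ to be determined by $B,\widetilde B$, forces the purely $(q_a,q_b)$-part $C$ to be exact, and forces $\Phi(S)$ to be a cycle, from which $\Phi_*$ is an isomorphism by an explicit preimage formula. Your route (subcomplex, quotient $\mathcal{K}$, contracting homotopy descended from Lemma \ref{Lemma:Chain_homotopy}) is more structural and avoids the correction terms; the paper's route buys an explicit formula for the quasi-isomorphism. Your observation that $\cL^Q_\cA$ is a subcomplex with induced differential $d^Q_\varepsilon$, independent of the extension, is correct and is implicitly what makes the paper's $D$-component work.

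The one step that does not survive scrutiny as written is the appeal to the action filtration (\ref{Eq:diff_action_property}) for a general extension $\varepsilon^s$. The lemma is stated for an \emph{abstract} second-order dga: no action is part of the data, and (\ref{Eq:diff_action_property}) is established in the paper only for the geometric differential $d_\Lambda$ (and for stabilizations only after choosing $l_a,l_b$ in the Reidemeister II setting); even granting an action, you would still have to prove that the conjugation corrections strictly drop it. Fortunately the spectral sequence is unnecessary, because the induced differential on $\mathcal{K}=\cL^Q_{\cA^s}/\cL^Q_\cA$ is extension-independent on the nose: $d^s_{\varepsilon^s}(q_a)=q_b$ and $d^s_{\varepsilon^s}(q_b)=0$ exactly (using $\varepsilon^s(q_b)=\varepsilon^s(d^sq_a)=0$); the spectator linear part $\pi_L d_\varepsilon(q_j)$ appearing in $d^{s,Q}_{\varepsilon^s}(q_aq_j)$ etc.\ depends only on $\varepsilon^s|_{\cA}$; and every term in which the letter $q_a$ or $q_b$ is consumed by a bracket $\{q_a,\cdot\}_{\varepsilon^s}$ or $\{q_b,\cdot\}_{\varepsilon^s}$ is a constant or a word without the letters $q_a,q_b$, hence vanishes in $\mathcal{K}$. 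So your trivial-extension homotopy computation already disposes of the general case; alternatively, filter $\mathcal{K}$ by the letter count $n(w)$ of $q_a,q_b$ as in Lemma \ref{Lemma:Chain_homotopy}, which exists abstractly. With that repair, and the sign bookkeeping on the $\hbar(q_a\otimes 1)$ and quadratic pieces that you already flag (which transports directly from the proof of Lemma \ref{Lemma:Chain_homotopy}, since that identity only uses $d q_a=q_b$, $dq_b=0$, the vanishing of brackets touching $q_a,q_b$, and the Leibniz structure), your argument is complete.
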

\begin{proof}
Let $(\varepsilon,\{\cdot,\cdot\}_{\varepsilon})$ be a second-order augmentation of $(\cA^s,d^s,\{\cdot,\cdot\}_{d^s})$. Then the restriction of $(\varepsilon,\{\cdot,\cdot\}_\varepsilon)$ to $\cA$ is a second-order augmentation of $(\cA,d,\{\cdot,\cdot\}_d)$. We show that
\begin{align*}
H_*(\cL^Q_{\cA^s},d^{s,Q}_\varepsilon)\cong H_*(\cL^Q_{\cA},d^{Q}_\varepsilon).
\end{align*}

Define a linear map $\Phi:\cL_{\cA^s}^Q\to\cL_\cA^Q$ by
$$\Phi\left(q_aA+\widetilde A q_a+q_b B+\widetilde B q_b+C+D\right)=D-\pi\{q_a,B\}_{d^{s}_\varepsilon}-(-1)^{|\widetilde B|}\pi\{\widetilde B,q_a\}_{d^s_\varepsilon},$$
where $A,\widetilde A,B,\widetilde B$ are linear combinations of $q_i,i\neq a,b$, $C\in\cL_{\cA^s}^Q$ contains only letters $q_a,q_b$ and $D\in\cL_\cA^Q$. Note that $\pi\{q_a,B\}_{d^{s}_\varepsilon},\pi\{\widetilde B,q_a\}_{d^s_\varepsilon}\in\cL_\cA^Q$. Using
$$(d_\varepsilon^s\otimes 1+1\otimes d_\varepsilon^s)\left(\{q_a,B\}_{d^s_\varepsilon}+(-1)^{|\widetilde B|}\{\widetilde B,q_a\}_{d^s_\varepsilon}\right)=\{q_b,B\}_{d^s_\varepsilon}+(-1)^{|q_a|}\{q_a,d_\varepsilon^s B\}_{d^s_\varepsilon}+(-1)^{|\widetilde B|}\{d^s_\varepsilon\widetilde B,q_a\}_{d^s_\varepsilon}+\{\widetilde B,q_b\}_{d^s_\varepsilon},$$
we get that
$$\Phi:(\cL_{\cA^s}^Q,d_\varepsilon^{s,Q})\to(\cL_\cA^Q,d^Q_\varepsilon)$$
is a chain map. Map $\Phi$ is obviously surjective. 

For $S=q_aA+\widetilde A q_a+q_b B+\widetilde B q_b+C+D$, we have $d_\varepsilon^{s,Q}(S)=0$ if and only if
\begin{align*}
&A=(-1)^{|q_a|}\pi_L d_\varepsilon^{s,Q}(B),\\
&\widetilde A=(-1)^{|\widetilde B|}\pi_L d_\varepsilon^{s,Q}(\widetilde B),\\
&d_\varepsilon^{s,Q}(C)=0,\\
&d_\varepsilon^{Q}\circ\Phi(S)=0,
\end{align*}
where $\pi_{L}$ is the projection to linear terms. Additionally, it is not difficult to see that $d_\varepsilon^{s,Q}(C)=0$ implies $C=d_\varepsilon^{s,Q} (C')$ for some $C'\in\cL_{\cA^s}^Q$ that contains only letters $q_a,q_b$. If for $S\in\ker d_\varepsilon^{s,Q}$ we have $\Phi(S)=d_\varepsilon^Q(D')$ for some $D'\in\cL_\cA^Q$, this implies
\begin{align*}
S=d_\varepsilon^{s,Q}\left(q_aB+(-1)^{|\widetilde B|}\widetilde B q_a+C'+D'\right).
\end{align*}
This shows that
\begin{align*}
\Phi_*:H_*(\cL^Q_{\cA^s},d^{s,Q}_\varepsilon)\to H_*(\cL^Q_{\cA},d^{Q}_\varepsilon)
\end{align*}
is an isomorphism, which finishes the proof.
\end{proof}

\section{Examples}
\label{Sec:Examples}
\noindent In this section, we compute the invariant for some simple examples.

\begin{example}
\begin{figure}
\def\svgwidth{80mm}
\begingroup%
  \makeatletter%
  \providecommand\rotatebox[2]{#2}%
  \newcommand*\fsize{\dimexpr\f@size pt\relax}%
  \newcommand*\lineheight[1]{\fontsize{\fsize}{#1\fsize}\selectfont}%
  \ifx\svgwidth\undefined%
    \setlength{\unitlength}{377.00268716bp}%
    \ifx\svgscale\undefined%
      \relax%
    \else%
      \setlength{\unitlength}{\unitlength * \real{\svgscale}}%
    \fi%
  \else%
    \setlength{\unitlength}{\svgwidth}%
  \fi%
  \global\let\svgwidth\undefined%
  \global\let\svgscale\undefined%
  \makeatother%
  \begin{picture}(1,0.52794642)%
    \lineheight{1}%
    \setlength\tabcolsep{0pt}%
    \put(0,0){\includegraphics[width=\unitlength,page=1]{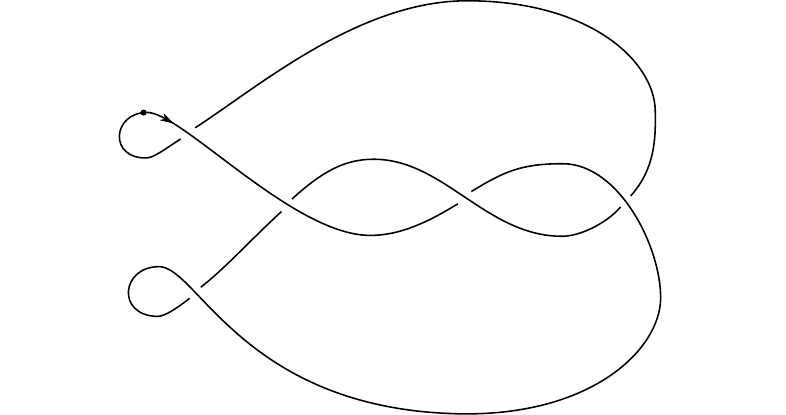}}%
    \put(0.26485429,0.34790636){\makebox(0,0)[lt]{\lineheight{1.25}\smash{\begin{tabular}[t]{l}$\gamma_1$\end{tabular}}}}%
    \put(0.26811019,0.14319014){\makebox(0,0)[lt]{\lineheight{1.25}\smash{\begin{tabular}[t]{l}$\gamma_2$\end{tabular}}}}%
    \put(0.38889372,0.26113164){\makebox(0,0)[lt]{\lineheight{1.25}\smash{\begin{tabular}[t]{l}$\gamma_3$\end{tabular}}}}%
    \put(0.61909293,0.2682366){\makebox(0,0)[lt]{\lineheight{1.25}\smash{\begin{tabular}[t]{l}$\gamma_4$\end{tabular}}}}%
    \put(0.8123466,0.26113164){\makebox(0,0)[lt]{\lineheight{1.25}\smash{\begin{tabular}[t]{l}$\gamma_5$\end{tabular}}}}%
    \put(0.17738259,0.40080931){\makebox(0,0)[lt]{\lineheight{1.25}\smash{\begin{tabular}[t]{l}$t$\end{tabular}}}}%
    \put(-0.00189653,0.27540994){\makebox(0,0)[lt]{\lineheight{1.25}\smash{\begin{tabular}[t]{l}\textcolor{white}{.}\end{tabular}}}}%
  \end{picture}%
\endgroup%
\caption{Right-handed trefoil.}
\label{Fig:right_trefoilb}
\end{figure}
Consider the Legendrian right-handed trefoil shown in Figure \ref{Fig:right_trefoilb}. The algebra is generated by $t^\pm,q_i,i\in\{1,\dots,5\}$, with grading $|q_1|=|q_2|=1,|q_3|=|q_4|=|q_5|=0,|t^\pm|=0$. The second-order dga structure is given by
\begin{align*}
&d(q_1) = t^--q_3q_4q_5-q_5-q_3-\hbar(q_1\otimes 1)-\hbar(1\otimes q_1),\\
&d(q_2) = 1+q_5q_4q_3+q_5+q_3+\hbar(q_2\otimes 1)-\hbar(1\otimes q_2),\\
&d(q_3) = -\hbar(1\otimes q_3),\\
&d(q_4) = \hbar(q_4\otimes 1),\\
&d(q_5) = -\hbar(1\otimes q_5),\\
&d(t^+)=2 \hbar(t^+\otimes 1),\\
&d(t^-)=- \hbar(t^-\otimes 1)-\hbar(1\otimes t^-),
\end{align*}
and the antibracket
\begin{align*}
&\{q_1, q_1\}_d = 1\otimes q_1q_1,\\
&\{q_1, q_2\}_d = q_2\otimes q_1+1\otimes q_2q_1,\\
&\{q_1, q_3\}_d = q_3\otimes q_1-1\otimes q_3q_1,\\
&\{q_1, q_4\}_d = q_4\otimes q_1-1\otimes q_4q_1,\\
&\{q_1, q_5\}_d = q_5\otimes q_1-1\otimes q_5q_1,\\
&\{q_2, q_1\}_d = -q_2\otimes q_1+1\otimes q_1q_2,\\
&\{q_2, q_2\}_d = 1\otimes q_2q_2,\\
&\{q_2, q_3\}_d = q_2\otimes q_3-1\otimes q_3q_2,\\
&\{q_2, q_4\}_d = q_4\otimes q_2-q_2q_4\otimes 1,\\
&\{q_2, q_5\}_d = q_2\otimes q_5-1\otimes q_5q_2,\\
&\{q_3, q_1\}_d = q_3\otimes q_1-1\otimes q_1q_3,\\
&\{q_3, q_2\}_d = q_2\otimes q_3-1\otimes q_2q_3,\\
&\{q_3, q_3\}_d = -1\otimes q_3q_3+q_3\otimes q_3,\\
&\{q_3, q_4\}_d = -1\otimes 1+q_4\otimes q_3-q_3q_4\otimes 1-1\otimes q_4q_3,\\
&\{q_3, q_5\}_d = q_5\otimes q_3+q_3\otimes q_5-1\otimes q_5q_3,\\
&\{q_4, q_1\}_d = q_4\otimes q_1-1\otimes q_1q_4,\\
&\{q_4, q_2\}_d = q_4\otimes q_2-q_4q_2\otimes 1,\\
&\{q_4, q_3\}_d = 1\otimes 1+q_4\otimes q_3,\\
&\{q_4, q_4\}_d = -q_4q_4\otimes 1+q_4\otimes q_4,\\
&\{q_4, q_5\}_d = -1\otimes 1+q_4\otimes q_5-q_4q_5\otimes 1-1\otimes q_5q_4,\\
&\{q_5, q_1\}_d = q_5\otimes q_1-1\otimes q_1q_5,\\
&\{q_5, q_2\}_d = q_2\otimes q_5-1\otimes q_2q_5,\\
&\{q_5, q_3\}_d = -1\otimes q_3q_5,\\
&\{q_5, q_4\}_d = 1\otimes 1+q_4\otimes q_5,\\
&\{q_5, q_5\}_d = -1\otimes q_5q_5+q_5\otimes q_5,
\end{align*}
together with (\ref{Eq:bracket_for_t}). The differential from (\ref{Eq:P2}) is not trivial. For example, we have
\begin{align*}
D[q_3q_4]=[-\hbar (1\otimes 1)-\hbar(1\otimes q_4q_3)]\neq 0.
\end{align*}
The degree zero homology of the Chekanov--Eliashberg dga of the right-handed trefoil is isomorphic to
\begin{align*}
\lQ[q_3,q_4,q_5,t^\pm]/(t^--q_3q_4q_5-q_5-q_3,1+q_5q_4q_3+q_5+q_3)
\end{align*}
(see also \cite[p. 290]{MurCas19}). From this, we conclude $D[q_3q_4]\neq 0$.
\end{example}

\begin{example}
\begin{figure}
\def\svgwidth{60mm}
\begingroup%
  \makeatletter%
  \providecommand\rotatebox[2]{#2}%
  \newcommand*\fsize{\dimexpr\f@size pt\relax}%
  \newcommand*\lineheight[1]{\fontsize{\fsize}{#1\fsize}\selectfont}%
  \ifx\svgwidth\undefined%
    \setlength{\unitlength}{247.97865315bp}%
    \ifx\svgscale\undefined%
      \relax%
    \else%
      \setlength{\unitlength}{\unitlength * \real{\svgscale}}%
    \fi%
  \else%
    \setlength{\unitlength}{\svgwidth}%
  \fi%
  \global\let\svgwidth\undefined%
  \global\let\svgscale\undefined%
  \makeatother%
  \begin{picture}(1,0.74133467)%
    \lineheight{1}%
    \setlength\tabcolsep{0pt}%
    \put(0.17324927,0.36248436){\makebox(0,0)[lt]{\lineheight{1.25}\smash{\begin{tabular}[t]{l}$\gamma_1$\end{tabular}}}}%
    \put(0.49649039,0.52867155){\makebox(0,0)[lt]{\lineheight{1.25}\smash{\begin{tabular}[t]{l}$\gamma_2$\end{tabular}}}}%
    \put(0.51593319,0.22406585){\makebox(0,0)[lt]{\lineheight{1.25}\smash{\begin{tabular}[t]{l}$\gamma_3$\end{tabular}}}}%
    \put(0.64987299,0.66477198){\makebox(0,0)[lt]{\lineheight{1.25}\smash{\begin{tabular}[t]{l}$\gamma_4$\end{tabular}}}}%
    \put(0.68011764,0.37096788){\makebox(0,0)[lt]{\lineheight{1.25}\smash{\begin{tabular}[t]{l}$\gamma_5$\end{tabular}}}}%
    \put(0.69091898,0.07284314){\makebox(0,0)[lt]{\lineheight{1.25}\smash{\begin{tabular}[t]{l}$\gamma_6$\end{tabular}}}}%
    \put(0,0){\includegraphics[width=\unitlength,page=1]{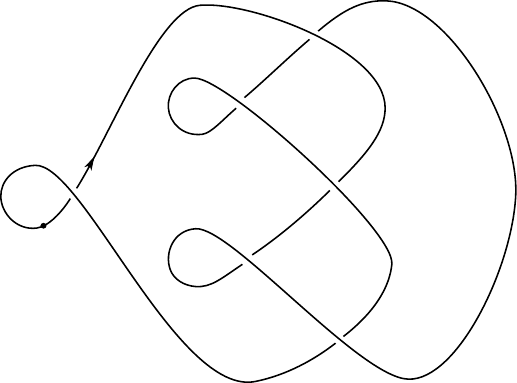}}%
    \put(0.07273103,0.25249568){\makebox(0,0)[lt]{\lineheight{1.25}\smash{\begin{tabular}[t]{l}$t$\end{tabular}}}}%
  \end{picture}%
\endgroup%
\caption{Left-handed trefoil.}
\label{Fig:left_trefoil}
\end{figure}
Next, we consider the Legendrian left-handed trefoil shown in Figure \ref{Fig:left_trefoil}. The algebra is generated by $t^\pm,q_i,i\in\{1,\dots,6\}$, with grading $|q_1|=|q_2|=|q_3|=|q_5|=1,|q_4|=|q_6|=-1,|t^\pm|=0$, and the second-order differential is given by
\begin{align*}
&d(q_1) = t^+ +q_5q_6q_2q_5+q_5q_3q_4q_5+q_5q_5+q_2q_5+q_5q_3-5\hbar(q_1\otimes 1)+\hbar(q_2q_5\otimes q_3)+\hbar(q_5q_3\otimes q_2),\\
&d(q_2)= 1+q_5q_4,\\
&d(q_3)=1+q_6q_5,\\
&d(q_4)=3\hbar(q_4\otimes 1)-\hbar(1\otimes q_4),\\
&d(q_5)=-3\hbar(q_5\otimes 1),\\
&d(q_6)=3\hbar(q_6\otimes 1)-\hbar(1\otimes q_6),\\
&d(t^+)=-5\hbar(t^+\otimes 1),\\
&d(t^-)=6\hbar(t^-\otimes 1)-\hbar(1\otimes t^-),
\end{align*}
and the antibracket
\begin{align*}
&\{q_1,q_1\}_d=-q_1q_1\otimes 1,\\
&\{q_1,q_2\}_d=-q_1\otimes q_2-q_1q_2\otimes 1,\\
&\{q_1,q_3\}_d=-q_1\otimes q_3-q_1q_3\otimes 1,\\
&\{q_1,q_4\}_d=-q_5q_6q_2q_2\otimes 1-q_5q_3q_4q_2\otimes 1-q_5q_2\otimes 1-q_2q_2\otimes 1-q_1\otimes q_4-q_1q_4\otimes 1,\\
&\{q_1,q_5\}_d=-q_5q_3\otimes q_2q_5-q_1\otimes q_5-q_1q_5\otimes 1,\\
&\{q_1,q_6\}_d=-1\otimes q_3q_6q_2q_5-1\otimes q_3q_3q_4q_5-1\otimes q_3q_5-1\otimes q_3q_3-q_1\otimes q_6-q_1q_6\otimes 1,\\
&\{q_2,q_1\}_d=q_1\otimes q_2-q_2q_1\otimes 1,\\
&\{q_2,q_2\}_d=-q_2q_2\otimes 1,\\
&\{q_2,q_3\}_d=0,\\
&\{q_2,q_4\}_d=0,\\
&\{q_2,q_5\}_d=q_5\otimes q_2-q_2q_5\otimes 1,\\
&\{q_2,q_6\}_d=1\otimes 1-q_2\otimes q_6+1\otimes q_6q_2,\\
&\{q_3,q_1\}_d=q_1\otimes q_3-q_3q_1\otimes 1,\\
&\{q_3,q_2\}_d=q_2\otimes q_3-q_3\otimes q_2-q_3q_2\otimes 1+1\otimes q_2q_3,\\
&\{q_3,q_3\}_d=-q_3q_3\otimes 1,\\
&\{q_3,q_4\}_d=1\otimes 1-q_3\otimes q_4+1\otimes q_4q_3,\\
&\{q_3,q_5\}_d=q_5\otimes q_3-q_3q_5\otimes 1,\\
&\{q_3,q_6\}_d=q_6\otimes q_3-q_3\otimes q_6-q_3q_6\otimes 1+1\otimes q_6q_3,\\
&\{q_4,q_1\}_d=-1\otimes q_5q_6q_2q_2-1\otimes q_5q_3q_4q_2-1\otimes q_5q_2-1\otimes q_2q_2+q_1\otimes q_4-q_4q_1\otimes 1,\\
&\{q_4,q_2\}_d=q_2\otimes q_4-q_4\otimes q_2-q_4q_2\otimes 1+1\otimes q_2q_4,\\
&\{q_4,q_3\}_d=1\otimes 1+q_3\otimes q_4+1\otimes q_3q_4,\\
&\{q_4,q_4\}_d=1\otimes q_4q_4,\\
&\{q_4,q_5\}_d=q_5\otimes q_4-q_4q_5\otimes 1+1\otimes q_5q_4,\\
&\{q_4,q_6\}_d=q_6\otimes q_4-q_4\otimes q_6+1\otimes q_6q_4,\\
&\{q_5,q_1\}_d=-q_2q_5\otimes q_5q_3+q_1\otimes q_5-q_5q_1\otimes 1,\\
&\{q_5,q_2\}_d=-q_5\otimes q_2-q_5q_2\otimes 1,\\
&\{q_5,q_3\}_d=-q_5\otimes q_3-q_5q_3\otimes 1,\\
&\{q_5,q_4\}_d=-q_5\otimes q_4,\\
&\{q_5,q_5\}_d=-q_5q_5\otimes 1,\\
&\{q_5,q_6\}_d=-q_5\otimes q_6-q_5q_6\otimes 1+1\otimes q_6q_5,\\
&\{q_6,q_1\}_d=-q_3q_6q_2q_5\otimes 1-q_3q_3q_4q_5\otimes 1-q_3q_5\otimes 1-q_3q_3\otimes 1+q_1\otimes q_6-q_6q_1\otimes 1,\\
&\{q_6,q_2\}_d=1\otimes 1+q_2\otimes q_6+1\otimes q_2q_6,\\
&\{q_6,q_3\}_d=0,\\
&\{q_6,q_4\}_d=1\otimes q_4q_6,\\
&\{q_6,q_5\}_d=q_5\otimes q_6,\\
&\{q_6,q_6\}_d=1\otimes q_6q_6,
\end{align*}
together with (\ref{Eq:bracket_for_t}). Here we have two $J$-holomorphic annuli $q_2q_5p_1\otimes q_3$ and $q_5q_3p_1\otimes q_2$ (counted with signs).
\end{example}

\begin{example}
For any Legendrian knot we have $H_*(\cA(\Lambda),d_\Lambda)\cong H^\hbar_*(\Lambda)$, see (\ref{Eq:P2}). Then, for every stabilized Legendrian knot $\Lambda_{\operatorname{stab}}$ (see \cite{Chekanov02},\cite[Appendix B]{Ng_rLSFT}) the homology group  $H_*(\cA(\Lambda_{\operatorname{stab}}),d_{\Lambda_{\operatorname{stab}}})$ vanishes.
\end{example}

\bibliography{main.bib}{}
\bibliographystyle{amsplain}
\end{document}